\renewcommand\labelenumi{{\rm (\roman{enumi})}}
\renewcommand\theenumi{{\rm (\roman{enumi})}}
\newcommand{\Ga}{{\mathbb G}_{\mathrm{a}}}
\newcommand{\Gm}{{\mathbb G}_{\mathrm{m}}}
\newcommand{\FF}{\mathbb{F}}
\newcommand{\TT}{\mathbb{T}}
\newcommand{\CC}{\mathbb{C}}
\newcommand{\ZZ}{\mathbb{Z}}
\newcommand{\PP}{\mathbb{P}}
\newcommand{\OO}{\mathbb{O}}
\newcommand{\OOO}{{\mathscr{O}}}
\newcommand{\LLL}{{\mathscr{L}}}
\newcommand{\PPP}{{\mathscr{P}}}
\newcommand{\SSS}{{\mathscr{S}}}
\newcommand{\NNN}{{\mathscr{N}}}
\newcommand{\Levi}{{\operatorname{L}}}
\newcommand{\tr}{{\operatorname{tr}}}
\newcommand{\mumu}{{\boldsymbol{\mu}}}
\newcommand{\W}{\mathbb{W}}
\newcommand{\ol}{\ell}
\newcommand{\G}{\operatorname{G}_2}
\newcommand{\pr}{\operatorname{pr}}
\newcommand{\Lie}{\operatorname{Lie}}
\newcommand{\Transl}{\operatorname{Transl}}
\newcommand{\Fl}{\operatorname{Fl}}
\newcommand{\aaa}{\mathrm{a}}
\newcommand{\s}{\mathrm{s}}
\newcommand{\n}{\mathrm{n}}
\newcommand{\x}{\mathbf{x}}
\newcommand{\y}{\mathbf{y}}
\newcommand{\Aut}{\operatorname{Aut}}
\newcommand{\Ad}{\operatorname{Ad}}
\newcommand{\Stab}{\operatorname{Stab}}
\newcommand{\GL}{\operatorname{GL}}
\newcommand{\gl}{\mathfrak{gl}}
\newcommand{\SL}{\operatorname{SL}}
\newcommand{\sll}{\mathfrak{sl}}
\newcommand{\PGL}{\operatorname{PGL}}
\newcommand{\bb}{\operatorname{b}}
\newcommand{\id}{\operatorname{id}}
\newcommand{\diag}{\operatorname{diag}}
\newcommand{\codim}{\operatorname{codim}}
\newcommand{\Ru}{\operatorname{R}_{\operatorname{u}}}
\newcommand{\xref}[1]{{\rm \ref{#1}}}
\newcommand{\z}{\operatorname{z}}
\newcommand{\rk}{\operatorname{rk}}
\newcommand{\Pic}{\operatorname{Pic}}
\newcommand{\Sing}{\operatorname{Sing}}
\newcommand{\red}{\operatorname{red}}
\newcommand{\Gr}{\operatorname{Gr}}
\newcommand{\Cl}{\operatorname{Cl}}
\newcommand{\Cone}{\operatorname{Cone}}
\renewcommand{\hat}[1]{\widehat{#1}}
\renewcommand{\tilde}[1]{\widetilde{#1}}
\renewcommand{\emptyset}{\varnothing}
\theoremstyle{plain}
\newtheorem{theorem}[subsection]{Theorem}
\newtheorem{lemma}[subsection]{Lemma}
\newtheorem{proposition}[subsection]{Proposition}
\newtheorem{scorollary}[equation]{Corollary}
\newtheorem*{claim*}{Claim}
\newtheorem{sclaim}[equation]{Claim}
\newtheorem{slemma}[equation]{Lemma}
\newtheorem{sproposition}[equation]{Proposition}
\newtheorem{problem}[subsection]{Problem}
\theoremstyle{definition}
\newtheorem{definition}[subsection]{Definition}
\newtheorem{mdefinition}[subsection]{}
\newtheorem{sdefinition}[equation]{}
\newtheorem{notation}[subsection]{Notation}
\newtheorem{snotation}[equation]{Notation}
\newtheorem{remarks}[subsection]{Remarks}
\newtheorem{remark}[subsection]{Remark}
\newtheorem{sremarks}[equation]{Remarks}
\newtheorem{sremark}[equation]{Remark}
\newtheorem{construction}[subsection]{Construction}
\newtheorem{sconstruction}[equation]{Construction}
\newtheorem{question}[subsection]{Question}
\newcounter{NN}
\renewcommand{\theNN}{\rm\arabic{NN}${}^o$}
\def\nr{\refstepcounter{NN}{\theNN}}
\newcounter{NNN}
\renewcommand{\theNNN}{\rm(\alph{NNN})}
\def\nnr{\refstepcounter{NNN}{\theNNN}}
\newcounter{NNNN}
\renewcommand{\theNNNN}{\rm(\roman{NNNN})}
\def\nnnr{\refstepcounter{NNNN}{\theNNNN}}
\title{Fano-Mukai fourfolds of genus $10$ \\ as compactifications of $\CC^4$}
\author{Yuri Prokhorov}
\thanks{
The first author was partially supported by
the RFBR grants 15-01-02164, 15-01-02158,
and by the Russian Academic Excellence Project '5-100'.
}
\address{Yuri Prokhorov:\newline\indent
Steklov Mathematical Institute,
8 Gubkina street, Moscow 119991, Russia
\newline\indent
Faculty of Mathematics, Moscow State
University, Russia
\newline\indent
National Research University Higher School of Economics, Russia
}
\email{prokhoro@mi.ras.ru}
\author{Mikhail Zaidenberg}
\address{Mikhail Zaidenberg:\newline\indent
Univ.\ Grenoble Alpes, CNRS, Institut Fourier, F-38000 Grenoble, France}
\email{Mikhail.Zaidenberg@univ-grenoble-alpes.fr}
\begin{document}

\begin{abstract} 
It is known that the moduli space of smooth Fano-Mukai fourfolds $V_{18}$ of 
genus $10$ has dimension one. We show that any such fourfold is a completion of 
$\mathbb{C}^4$ in two different ways. Up to isomorphism, there is a unique 
fourfold $V_{18}^{\mathrm s}$ acted upon by $\operatorname{SL}_2(\mathbb{C})$. 
The group $\operatorname{Aut}(V_{18}^{\mathrm s})$ is a semidirect product 
$\operatorname{GL}_2(\mathbb{C})\rtimes(\mathbb{Z}/2\mathbb{Z})$. Furthermore, 
$V_{18}^{\mathrm s}$ is a $\operatorname{GL}_2(\mathbb{C})$-equivariant 
completion of $\mathbb{C}^4$, and as well of $\operatorname{GL}_2(\mathbb{C})$. 
The restriction of the $\operatorname{GL}_2(\mathbb{C})$-action on 
$V_{18}^{\mathrm s}$ to $\mathbb{C}^4\hookrightarrow V_{18}^{\mathrm s}$ yields 
a faithful representation with an open orbit. There is also a unique, up to 
isomorphism, fourfold $V_{18}^{\mathrm a}$ such that the group 
$\operatorname{Aut}(V_{18}^{\mathrm a})$ is a semidirect product $({\mathbb 
G}_{\mathrm{a}}\times{\mathbb G}_{\mathrm{m}})\rtimes (\mathbb{Z}/2\mathbb{Z})$. 
For a Fano-Mukai fourfold $V_{18}$ neither isomorphic to $V_{18}^{\mathrm s}$, 
nor to $V_{18}^{\mathrm a}$, the group $\operatorname{Aut}(V_{18})$ is a 
semidirect product of $({\mathbb G}_{\mathrm{m}})^2$ and a finite cyclic group 
whose order is a factor of $6$. Besides, we establish that the 
affine cone over any polarized Fano-Mukai variety $V_{18}$ is flexible 
in codimension one, and flexible if $V_{18}=V_{18}^{\mathrm s}$.
\end{abstract}

\subjclass[2010]{Primary 14J35, 14J45, 14J50; Secondary 14L30, 14R10, 14R20}
\keywords{Fano variety, fourfold, compactification of $\CC^n$, Sarkisov link, 
group
action, automorphism group, affine cone}
\maketitle

\tableofcontents

\section{Introduction}
\label{sec:intro}
Let $V$ be
a compact complex manifold, and let $A$ be a closed analytic subset in $V$. The 
pair $(V,A)$ is
called a \emph{compactification of $\CC^n$} if $V\setminus A$ is
biholomorphically equivalent to $\CC^n$. A compactification $(V,A)$ of
$\CC^n$
is said to be \emph{projective} if $V$ is a smooth projective variety.
A celebrated Hirzebruch problem (\cite[Problem~27]{Hirzebruch1954}) asks to 
describe all
possible compactifications of $\CC^n$ with $\bb_2(V)=1$.
For any projective compactification $(V,A)$ of $\CC^n$ the Kodaira
dimension of $V$
is negative (see~\cite[Theorem~3]{Kodaira1971/72}). It is unknown, however,
whether
the complement $V\setminus A$ must be automatically biregularly isomorphic to
$\CC^n$. There is a
related open problem (\cite{Za})
on existence of
an affine algebraic variety $X$ non-isomorphic to $\CC^n$ biregularly, but
analytically isomorphic to $\CC^n$.

In this paper
we deal
with projective compactifications
$(V,A)$ of $\CC^n$ with
$\bb_2(V)=1$ and with $V\setminus A$ biregularly isomorphic to $\CC^n$.
Then $V$ is a Fano manifold with
Picard number one. In the first nontrivial case $n=3$
the classification was completed in a series of papers
\cite{Peternell1988},~\cite{Furushima1990},
\cite{Prokhorov1991a},~\cite{Furushima1993a}, see also
the references therein.
Such a threefold $V$ is one of the following:

\begin{itemize}

\item
$\PP^3$;

\item
a smooth quadric $Q\subset\PP^4$;

\item
a del Pezzo quintic threefold $V_5\subset\PP^6$;

\item
a Fano threefolds $V_{22}\subset\PP^{13}$ varying in a
proper subset of codimension $2$ in the moduli space.
\end{itemize}

All the possibilities for the divisor $A$ are also
described.

The four-dimensional
case is much more complicated. Indeed, already the very
classification of smooth Fano fourfolds is still lacking.

Recall that
the \emph{Fano index} $\iota(V)$ of a Fano manifold $V$ is the largest integer
$\iota$
such that the canonical class $K_V$ is divisible by $\iota$ in the Picard
group.
It is well known that $1\le\iota(V)\le\dim V+1$.
Furthermore, $\iota(V)=\dim V+1$ if and only if $V\cong\PP^n$ and
$\iota(V)=\dim V$ if and only if $V$ is a quadric $Q^n\subset\PP^{n+1}$.
This implies immediately that $(\PP^{n},\PP^{n-1})$ and $(Q^{n}, Q')$, where
$Q'$
is a singular hyperplane section of the quadric $Q^n$, are the only projective
compactifications of $\CC^n$
with $\bb_2(V)=1$ and $\iota(V)\ge n$.

A Fano manifold $V$ with $\rk\Pic (V)=1$ is called a \emph{del Pezzo manifold}
if $\iota(V)=\dim V-1$
and a \emph{Fano-Mukai manifold} if $\iota(V)=\dim V-2$. Compactifications
$(V,A)$ of $\CC^4$ with $\bb_2(V)=1$ and $\iota(V)=3$
were classified in~\cite{Prokhorov1994}. Such a del Pezzo fourfold
$V=W_5\subset\PP^7$ is unique up to isomorphism, and there are exactly
four possible choices for a divisor $A\subset V$ with $V\setminus A\cong
\CC^4$.
All such divisors $A$ are singular.
The singular locus of $A$ is either a plane, or a special line of one of
two possible types,
or finally a unique ordinary double point.

In~\cite[\S~6.3]{Prokhorov-Zaidenberg-2015} we asked the following
question:\smallskip

\noindent \emph{Which Fano-Mukai fourfolds can serve as compactifications of
$\CC^4$?}\smallskip

\noindent We give the answer for
the Fano-Mukai fourfolds
$V_{18}\subset\PP^{12}$
of genus $10$ and Fano index $2$.
According to~\cite{Mukai-1989} there is a homogeneous space
$\G/P\subset\PP^{13}$ of the simple algebraic group of type $\G$ such that any
variety $V_{18}$
is isomorphic to a hyperplane section of $\G/P$.
Up to isomorphism, such varieties $V_{18}$ form a one-parameter family, see 
Remark~\xref{cor:moduli}. It
occurs that all of them are compactifications of $\CC^4$, see 
Theorem~\xref{thm:main}.
There are two distinguished members of the family. One of them, denoted by 
$V_{18}^{\s}$, is quasihomogeneous with respect to
a $\GL_2(\CC)$-action
and
yields a $\GL_2(\CC)$-equivariant compactification of $\CC^4$ and of $\GL_2(\CC)$.
Another one,
denoted by $V_{18}^{\aaa}$, is acted upon by the product $\Ga\times\Gm$. This 
group is the identity component of $\Aut(V_{18}^{\aaa})$ and has index two in the 
full automorphism group. Any other smooth member $V_{18}$ of the family is acted upon 
by the torus of rank two, see Theorem~\xref{thm:main-aut-n} below.
More formally, our main results are the following three theorems.

\begin{theorem}
\label{thm:main}
Any smooth Fano-Mukai fourfold $V=V_{18}\subset\PP^{12}$ of genus $10$
contains at least two distinct $\Aut^0(V)$-invariant cones
over rational twisted cubic curves. If $S\subset V$ is such a cone, then there
is a unique $\Aut^0(V)$-invariant hyperplane section
$A=A_S$ of $V$ with $\Sing(A)=S$ such that $(V,A)$ is an $\Aut^0(V)$-equivariant
compactification of $\CC^4$.
\end{theorem}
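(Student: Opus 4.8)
The plan is to handle both assertions through the symmetry group of $V$, which I identify with a centralizer inside $\G=\Aut^0(\G/P)$, and then to produce the isomorphism $V\setminus A_S\cong\CC^4$ by an explicit Sarkisov link. Writing $V=\{H=0\}\cap(\G/P)$ and using the Killing form to regard the hyperplane $H$ as a line $\CC\xi\subset\mathfrak{g}_2$, the group $\Aut^0(V)$ is the identity component of the stabilizer of $\CC\xi$ in $\G$, that is the identity component $Z_{\G}(\xi)^0$ of the centralizer of $\xi$. This matches the three cases recalled in the introduction: $\xi$ regular semisimple gives the rank-two torus $(\Gm)^2$; $\xi$ in the centre of a Levi subgroup of semisimple rank one gives $\GL_2(\CC)$, hence $V^{\s}$; and a suitable non-semisimple $\xi$ gives $\Ga\times\Gm$, hence $V^{\aaa}$.

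To exhibit the invariant cones I would decompose the $13$-dimensional space $U$, where $\PP^{12}=\PP(U)$, as an $\Aut^0(V)$-module. For $V^{\s}$ this decomposition contains a summand isomorphic to $\mathrm{Sym}^3$ of the standard $\GL_2(\CC)$-module, and the orbit closure of its highest-weight line is a cone over a twisted cubic lying on $V$; a second such summand, interchanged with the first by the involution generating $\Aut(V)/\Aut^0(V)$, gives a second invariant cone, so that $V^{\s}$ already carries the two cones $S_1,S_2$. For a general member the rank-two torus has only finitely many fixed points on $V$, and I would recover the two cones as the closures of the two one-parameter families of invariant rational cubics issuing from such a fixed point, specialization from $V^{\s}$ ensuring that exactly two families survive. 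This yields the first assertion.

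For the second assertion I fix an invariant cone $S$ and build $A_S$. Hyperplane sections are parametrized by $\PP(U^\ast)$, and the $\Aut^0(V)$-invariant ones form the projectivization of the trivial subrepresentation of $U^\ast$; on the other hand the sections singular along $S$ are cut out by the hyperplanes containing every embedded tangent space to $V$ at the points of $S$, a linear condition defining a linear subsystem of $|H|$. Intersecting this subsystem with the locus of invariant sections, a weight count should isolate a single point, giving at once the existence and the uniqueness of an invariant $A_S$ with $S\subseteq\Sing(A_S)$; the equality $\Sing(A_S)=S$ I would then confirm by a local computation at the vertex of $S$ and at a general point of its smooth locus.

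The essential and hardest point is that $(V,A_S)$ is an $\Aut^0(V)$-equivariant compactification of $\CC^4$, i.e.\ that $V\setminus A_S$ is biregularly isomorphic to $\CC^4$. To this end I would run the Sarkisov link initiated by the blow-up $\sigma\colon\tilde{V}\to V$ of $S$, with exceptional divisor $E$, studying the linear system $|\sigma^\ast H-mE|$ for the value of $m$ rendering it base-point free: I expect the associated morphism to contract the proper transform of $A_S$ and to terminate on $\PP^4$, carrying $A_S$ onto a hyperplane. The resulting birational map would then restrict to an isomorphism from $V\setminus A_S$ onto $\PP^4\setminus\PP^3\cong\CC^4$, automatically $\Aut^0(V)$-equivariant since $S$, $A_S$ and the whole construction are invariant. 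The obstacle is to upgrade ``birational'' to ``biregular over the complement'': one must show that every indeterminacy point of the projection and every divisor extracted or contracted by the link lies inside $A_S$. I would establish this by intersecting $S$ and the tangent locus with $A_S$ and verifying that the total base locus of the projecting system meets $V$ exactly along $A_S$, thereby confining all non-isomorphic behaviour to $A_S$ and finishing the proof.
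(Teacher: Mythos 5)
There are two genuine gaps, one in each half of your argument.

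First, your construction of the invariant cones rests on the identification of $\Aut^0(V)$ with the identity component $\Stab_{\G}(\xi)^0$ of the centralizer of $\xi$ in $\G$. That identification is not available at this stage: it is precisely Theorem~\xref{thm:G/P} of the paper, one of its main results. What comes cheaply (Lemma~\xref{lem:stabilizers-in-Omega}) is only the inclusion $\Stab_{\G}(\xi)^0\subseteq\Aut^0(V)$, since \emph{a priori} an automorphism of the hyperplane section $V$ need not extend to $\Omega=\G/P$. With only this inclusion, your highest-weight-orbit cones are invariant under the possibly smaller group $\Stab_{\G}(\xi)^0$, not under $\Aut^0(V)$ as the theorem requires; and in the paper the equality is proved only \emph{after}, and by means of, the existence of $\Aut^0(V)$-invariant cubic cones, so invoking it here is circular. (Your fallback for general $V$ --- ``specialization from $V^{\s}$ ensures exactly two families survive'' --- is not an argument.) The paper's existence proof (Proposition~\xref{lem:fixed-pt}) needs no knowledge of $\Aut^0(V)$ at all: the equivariant rational map $\varsigma\colon\SSS_i(V)\cong\PP^2\dashrightarrow\Sigma(V)$ sending a smooth cubic scroll to its exceptional section is non-constant, while every \emph{morphism} $\PP^2\to\Sigma(V)$ is constant (Lemma~\xref{lem:no-morphism}); hence $\varsigma$ has indeterminacy points, and these are automatically $\Aut^0(V)$-fixed points corresponding to invariant cubic cones, one in each component $\SSS_i(V)$.

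Second, and more seriously, the Sarkisov link you propose does not exist. Blowing up the cone $S$ gives $\tilde W$ with $\rk\Pic(\tilde W)=2$, so $\tilde W$ carries exactly two extremal contractions: $\xi$ back to $V$, and the contraction $\eta$ defined by $|\sigma^*H-E|$, whose image is the projection of $V$ from $\langle S\rangle\cong\PP^4$ --- and that image is the quintic del Pezzo fourfold $W_5\subset\PP^7$, not $\PP^4$ (Proposition~\xref{prop:reversion}). No choice of $m$ makes $|\sigma^*H-mE|$ land on $\PP^4$, and the proper transform of $A_S$ is contracted by $\eta$ to a quintic \emph{scroll} $F\subset W_5$ (a surface), not carried onto a hyperplane; the exceptional divisor over $S$ maps onto the hyperplane section $B=W_5\cap\langle F\rangle$, so that $V\setminus A_S\cong W_5\setminus B$. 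To get $\CC^4$ from here you need two ingredients your plan never touches: (i) the fact that because $S$ is a \emph{cone} and $V$ is smooth, the contraction $\xi$ has a two-dimensional fiber and this forces $B$ to equal the special hyperplane section $R$ swept out by the planes of $W_5$ (Proposition~\xref{lemma-classification-singular-fibers}); and (ii) a \emph{second} Sarkisov link, the projection of $W_5$ from the plane $\Xi=\Sing R$, under which $W_5\setminus R\cong\PP^4\setminus\PP^3\cong\CC^4$ (Corollary~\xref{cor:2.2.2}). This is exactly where the cone hypothesis does its work: for a smooth cubic scroll $S$ one gets $B\neq R$ and the complement is \emph{not} $\CC^4$, so no base-locus bookkeeping of the single projection, however careful, can close your argument; the two-link factorization through $(W_5,R)$ as in Corollary~\xref{rem:interrompu} is essential.
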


\begin{theorem}
\label{thm:G/P} Given a
Fano-Mukai fourfold $V=V_{18}\subset\PP^{12}$ of genus $10$ consider a Mukai realization of $V$ 
as a hyperplane section of $\G/P\hookrightarrow\PP^{13}$, see 
Section~\xref{appendix}. Then
the $\Aut^0(V)$-action on $V$
extends to an $\Aut^0(V)$-action on $\G/P$ induced by the natural $\G$-action.
\end{theorem}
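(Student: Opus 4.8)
The plan is to identify $\Aut^0(V)$ with the stabilizer in $\G=\Aut^0(\G/P)$ of the hyperplane cutting out $V$, and to read off the extension from this identification. Recall first that, $\G/P$ being a flag variety of $\G$ different from a projective space or a quadric, one has $\Aut^0(\G/P)=\G$ and the natural map $\mathfrak{g}_2\to H^0(\G/P,T_{\G/P})$ is an isomorphism; here $\G$ acts on $\PP^{13}=\PP(\mathfrak{g}_2)$ through the adjoint representation, and $\OOO_{\G/P}(1)$ denotes the ample generator of $\Pic(\G/P)$, restricting to the polarization of $V$. Write $V=\G/P\cap H$ for the chosen Mukai realization, let $G_H=\Stab_{\G}(H)$ be the stabilizer of the hyperplane $H\in\PP(\mathfrak{g}_2^{*})$, and consider the restriction homomorphism $\rho\colon (G_H)^0\to\Aut^0(V)$. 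This $\rho$ is injective: if $g\in(G_H)^0$ acts trivially on $V$ then, $V$ being nondegenerate in $H\cong\PP^{12}$, $g$ fixes $H$ pointwise, so $g\in\G\subset\PGL_{14}$ centralizes a $13$-dimensional subspace of $\mathfrak{g}_2$; as $\G$ has trivial center this forces $g=\id$.

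The heart of the matter is the surjectivity of $\rho$, which I would establish on Lie algebras using $\Lie\Aut^0(V)=H^0(V,T_V)$. The normal bundle of $V$ in $\G/P$ is $\OOO_V(1)$, so the sequence $0\to T_V\to T_{\G/P}|_V\to\OOO_V(1)\to 0$ exhibits $H^0(V,T_V)$ as a subspace of $H^0(V,T_{\G/P}|_V)$. On the other hand, the restriction sequence $0\to T_{\G/P}\otimes\OOO_{\G/P}(-1)\to T_{\G/P}\to T_{\G/P}|_V\to 0$ shows, via its connecting map, that the restriction $\mathfrak{g}_2=H^0(\G/P,T_{\G/P})\to H^0(V,T_{\G/P}|_V)$ is surjective provided $H^1\!\left(\G/P,\,T_{\G/P}\otimes\OOO_{\G/P}(-1)\right)=0$. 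Granting this vanishing, any $\xi\in H^0(V,T_V)$ lifts to some $\tilde\xi\in\mathfrak{g}_2$ with $\tilde\xi|_V=\xi$; since $\xi$ is tangent to $V$, the one-parameter subgroup $\exp(t\tilde\xi)\subset\G$ carries $V$ into itself, hence preserves its linear span $H$, so that $\tilde\xi\in\Lie G_H$ and $d\rho(\tilde\xi)=\xi$. Thus $d\rho$ surjects onto $\Lie\Aut^0(V)$, and being also injective it is an isomorphism; since both groups are connected, $\rho\colon(G_H)^0\to\Aut^0(V)$ is an isomorphism of algebraic groups. In particular $\Aut^0(V)$ is realized as the subgroup $(G_H)^0$ of $\G=\Aut^0(\G/P)$, and its action on $V$ is the restriction of the natural $\G$-action on $\G/P$, which is exactly the asserted extension.

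The main obstacle is the cohomological input $H^1\!\left(\G/P,\,T_{\G/P}\otimes\OOO_{\G/P}(-1)\right)=0$. I would verify it by writing $T_{\G/P}$ as a homogeneous vector bundle and applying the Bott--Borel--Weil theorem after setting up the relevant $\G$-weights explicitly. The decisive feature is that this single vanishing lives on the fixed ambient variety $\G/P$ and is therefore uniform in $V$; consequently the identification $\Aut^0(V)\cong(G_H)^0$ holds simultaneously for the generic fourfold and for the two distinguished members $V_{18}^{\s}$ and $V_{18}^{\aaa}$, with no case analysis.
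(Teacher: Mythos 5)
Your route is genuinely different from the paper's. The paper obtains Theorem~\xref{thm:G/P} as a one-line corollary of its classification machinery: Lemma~\xref{lem:stabilizers-in-Omega} gives an embedding $\Stab_{\G}(g)^0\hookrightarrow\Aut^0(V^g)$, Proposition~\xref{prop:orbits} computes the stabilizers $\Stab_{\G}(g)$ case by case, and Theorem~\xref{cor:final} independently pins down $\Aut^0(V)$ via the geometry of cubic cones, the surface $\Sigma_{\s}(V)$, and the Sarkisov link to $W_5$; matching the two lists yields $\Aut^0(V^g)=\Stab_{\G}(g)^0$ for every $[g]\notin D_{\ol}$, with no cohomology. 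Your proposal replaces all of that by the standard deformation-theoretic scheme (injectivity of restriction by linear nondegeneracy, surjectivity on Lie algebras from a single vanishing on the ambient variety), which is uniform in $V$ and is exactly in the spirit of the Fu--Hwang results the paper cites in~\xref{sit:citations} for Hermitian symmetric spaces --- a setting which, note, does not cover the adjoint variety of $\G$, so you cannot simply quote them.

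There is, however, a genuine gap at your ``decisive'' step: the vanishing $H^1(\Omega,T_\Omega(-1))=0$, $\Omega=\G/P$, is \emph{not} a plain Bott--Borel--Weil vanishing, and the method you describe (decompose $T_\Omega$ as a homogeneous bundle and apply BBW to the weights) fails if executed piecewise. The contact grading $\mathfrak{g}_2=\mathfrak{g}_{-2}\oplus\mathfrak{g}_{-1}\oplus\mathfrak{g}_0\oplus\mathfrak{g}_1\oplus\mathfrak{g}_2$ gives the exact sequence $0\to F\to T_\Omega\to\OOO(1)\to 0$ with $F$ of rank $4$, so $T_\Omega(-1)$ has five line-bundle graded pieces over $\G/B$, of fiber weights $\nu\in\{\boldsymbol\alpha_2,\,\boldsymbol\alpha_1+\boldsymbol\alpha_2,\,2\boldsymbol\alpha_1+\boldsymbol\alpha_2,\,3\boldsymbol\alpha_1+\boldsymbol\alpha_2,\,0\}$. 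With $\rho=5\boldsymbol\alpha_1+3\boldsymbol\alpha_2$ one checks that $\rho-\nu$ is singular for $\nu=\boldsymbol\alpha_1+\boldsymbol\alpha_2$ (orthogonal to $\boldsymbol\alpha_2^\vee$), for $\nu=2\boldsymbol\alpha_1+\boldsymbol\alpha_2$ (it equals $\omega_2$, orthogonal to $\boldsymbol\alpha_1^\vee$), and for $\nu=3\boldsymbol\alpha_1+\boldsymbol\alpha_2$ (orthogonal to $(3\boldsymbol\alpha_1+\boldsymbol\alpha_2)^\vee$); but for $\nu=\boldsymbol\alpha_2$ the weight $\rho-\boldsymbol\alpha_2=5\boldsymbol\alpha_1+2\boldsymbol\alpha_2$ is regular and is made dominant by the single reflection $s_{\boldsymbol\alpha_2}$, so this piece has $H^1=\CC$ (and nothing else), while the piece $\OOO$ has $H^0=\CC$. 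Thus the graded object of $T_\Omega(-1)$ has a surviving $H^1$, and $\chi(T_\Omega(-1))=0$ with $h^0=h^1\in\{0,1\}$: the vanishing you need holds \emph{if and only if} the connecting map $H^0(\OOO)\to H^1(F(-1))$ is nonzero, i.e.\ if and only if the contact sequence $0\to F\to T_\Omega\to\OOO(1)\to 0$ does not split. This missing step can be supplied: since $H^0(F(-1))=0$ (all four pieces contribute nothing in degree $0$), a splitting, if it existed, would be unique, hence $\G$-equivariant, hence would give a $P$-module splitting of $\mathfrak{g}_2/\mathfrak{p}$ --- impossible because $[\mathfrak{g}_1,\mathfrak{g}_{-2}]$ maps onto a nonzero subspace of $\mathfrak{g}_{-1}$ modulo $\mathfrak{p}$. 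So your statement is true and the strategy is salvageable, but as written the proof is incomplete at precisely the point you flag as routine.

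Two smaller points. Your injectivity argument is too quick: an element of $\PGL_{14}(\CC)$ fixing the hyperplane $H$ pointwise is a homology or an elation, not a central element, so ``$\G$ has trivial center'' does not by itself force $g=\id$; you need the eigenvalue bookkeeping (self-duality of the adjoint representation forcing the semisimple part to be trivial, then ruling out a rank-one unipotent) exactly as in the paper's Lemma~\xref{lem:stabilizers-in-Omega}, which you may simply cite. Finally, once both repairs are made, your isomorphism $\rho\colon(G_H)^0\stackrel{\cong}{\to}\Aut^0(V)$ does give the theorem, and indeed recovers the paper's Corollary~\xref{cor:induced-action} without any case analysis --- a real gain, at the cost of making the structure results of Theorem~\xref{cor:final} an input elsewhere rather than a byproduct.
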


\begin{theorem}
\label{thm:main-aut-n}
\setenumerate[0]{leftmargin=8pt,itemindent=7pt}
\begin{enumerate}
\item
\label{thm:main-aut-n-GL2} 
There exists a smooth Fano-Mukai fourfold $V_{18}^{\s}$
of genus $10$ with
\begin{equation}
\label{eq:aut-GL2} 
\Aut(V_{18}^{\s})\cong\GL_2(\CC)\rtimes(\ZZ/2\ZZ),
\end{equation}
where the generator of $\ZZ/2\ZZ$ acts on $\GL_2(\CC)$ via $M\mapsto 
(M^t)^{-1}$.
Furthermore, $\Aut^0(V_{18}^{\s})\cong\GL_2(\CC)$ has a principal dense open orbit in 
$V_{18}^{\s}$ and exactly two fixed points.
Any Fano-Mukai fourfold $V_{18}$
of genus $10$ whose automorphism group has non-abelian identity component
is isomorphic to $V_{18}^{\s}$.

\item
\label{thm:main-aut-n-Ga}
There exists a smooth Fano-Mukai fourfold $V_{18}^{\aaa}$
of
genus $10$ with
\begin{equation}
\label{eq:aut-Ga-Gm} 
\Aut(V_{18}^{\aaa})\cong (\Ga\times\Gm)\rtimes (\ZZ/2\ZZ),
\end{equation} 
where the generator of $\ZZ/2\ZZ$ acts by the inversion $g\mapsto g^{-1}$ on 
$\Ga\times\Gm$.
Such a fourfold is unique up to isomorphism.

\item
\label{thm:main-aut-n-Gm}
For any smooth Fano-Mukai fourfold $V_{18}$ of
genus $10$ non-isomorphic to one of the $V_{18}^{\s}$ and $V_{18}^{\aaa}$ one 
has
\begin{equation}
\label{eq:aut-Gm-2} (\Gm)^2\subset\Aut(V_{18})\subset (\Gm)^2\rtimes (\ZZ/6\ZZ).
\end{equation}
\end{enumerate}
\end{theorem}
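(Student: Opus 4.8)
The plan is to translate the computation of $\Aut(V)$, $V=V_{18}$, into a stabilizer problem inside $\G$. Via the Mukai realization $V=(\G/P)\cap H$ with $\G/P\subset\PP^{13}=\PP(\Lie\G)$ the adjoint variety, and the self-duality of the adjoint representation under the Killing form, the hyperplane $H$ corresponds to a line $[v]\subset\Lie\G$; thus $V$ is singular exactly when $[v]$ lies on the projective dual $(\G/P)^\vee$. By Theorem~\ref{thm:G/P} the extended action realizes $\Aut^0(V)$ as a subgroup of $\G$ preserving $[v]$, while conversely $\Stab_\G([v])^0$ acts faithfully on $V$; hence $\Aut^0(V)=\Stab_\G([v])^0$, the connected stabilizer of $[v]$ for the adjoint action. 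Everything then depends on the $\G$-orbit of $[v]$ in the open set $U=\PP(\Lie\G)\setminus(\G/P)^\vee$ parametrizing smooth sections.

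Next I would classify the orbits in $U$ and their connected stabilizers. Writing $v=s+n$ for the Jordan decomposition, one has $\Stab_\G([v])^0=Z_\G(v)^0$ whenever $v$ is not nilpotent, because scaling a non-nilpotent element alters its conjugacy class, so only the trivial scaling survives in the identity component. For regular semisimple $v$ this gives the maximal torus $Z_\G(v)^0=T\cong(\Gm)^2$, the generic case~(iii). The only smaller degenerations compatible with smoothness should be: a semisimple $v$ in the centre of a Levi subgroup $\GL_2\subset\G$, with $Z_\G(v)=\GL_2$, producing $V_{18}^{\s}$; and the mixed element $v=s+e$, with $s$ central in that $\GL_2$ and $e$ a regular nilpotent of $\gl_2$, with $Z_\G(v)=Z_{\GL_2}(e)\cong\Gm\times\Ga$, producing $V_{18}^{\aaa}$. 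Exhaustiveness is the first serious point: I must rule out, for smooth $V$, the purely unipotent stabilizers (a regular nilpotent $v$ lies in $\G/P$ and forces $[v]\in(\G/P)^\vee$) and the larger reductive centralizers $\SL_3$ and $\operatorname{SO}_4$ coming from the pseudo-Levi subgroups of $\G$; I would do this through an explicit description of $(\G/P)^\vee$ (governed by the degree-$6$ fundamental invariant of $\G$), showing these orbits meet the dual variety. Since the only non-abelian connected stabilizer occurring is $\GL_2$, any $V$ with non-abelian $\Aut^0(V)$ falls in the single $\GL_2$-orbit and is therefore isomorphic to $V_{18}^{\s}$.

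I would then compute the finite component group $\Stab_\G([v])/\Stab_\G([v])^0$ in each case. For regular semisimple $v$, $g[v]=[v]$ forces $g$ to normalize $Z_\G(v)=T$, so the component group embeds into $W(\G)=D_6$ as $\{w:w(v)\in\CC^{*}v\}$. Reflections are excluded: the $(-1)$-eigenline of a reflection $s_\alpha$ is the coroot line $\CC\alpha^\vee$, which is non-regular in $\G$ because in $G_2$ every root is orthogonal to some other root, so $\alpha^\vee$ lies on a root hyperplane. Hence only the cyclic rotation subgroup of $D_6$ can occur, giving a cyclic group of order dividing $6$ and the bracketing $(\Gm)^2\subset\Aut(V)\subset(\Gm)^2\rtimes(\ZZ/6\ZZ)$ of~(iii). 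For the cases~(i) and~(ii) the relevant finite symmetry is the involution in $N_\G(\GL_2)$ that interchanges the two $\Aut^0(V)$-invariant cubic cones supplied by Theorem~\ref{thm:main}; I would identify its conjugation action on $\GL_2$ as the outer involution $M\mapsto(M^t)^{-1}$ (inner on $\SL_2$, inversion on the centre), which restricts to $g\mapsto g^{-1}$ on the subgroup $\Gm\times\Ga$ in case~(ii), yielding the two stated semidirect products.

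Finally I would pass from the identity component to the full group and settle existence, uniqueness, and the orbit structure of~(i). Because $\Pic(V)=\ZZ$, every automorphism fixes $\OOO_V(1)$, hence is linear on $\PP^{12}$; showing it permutes the finitely many distinguished cones compatibly with a $\G$-element upgrades the equality to $\Aut(V)=\Stab_\G([v])$. Existence and uniqueness of $V_{18}^{\s}$ and $V_{18}^{\aaa}$ follow since each is a single $\G$-orbit in $U$. For~(i), the principal open $\GL_2$-orbit is the $\CC^4$ of Theorem~\ref{thm:main} carrying the prehomogeneous $\GL_2$-structure, and the two $\GL_2$-fixed points are the vertices of the two invariant cones. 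The \emph{main obstacle}, as indicated, is twofold: establishing exhaustiveness of the orbit list in $U$ --- i.e.\ a sharp enough handle on $(\G/P)^\vee$ to discard the $\SL_3$-, $\operatorname{SO}_4$- and unipotent-type orbits --- and nailing down the finite component groups together with their exact conjugation actions, including the proof that no automorphism lies outside $\Stab_\G([v])$.
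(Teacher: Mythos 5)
Your analysis of the identity component is essentially sound and in fact parallels the paper's own route through the $\G$-picture (Section~\xref{appendix}): smoothness of $V^g$ is equivalent to $[g]\notin D_{\ol}$, the complement of $D_{\ol}$ consists of the three orbit types (regular semisimple, subregular semisimple, regular mixed), and the connected stabilizers are $(\Gm)^2$, $\GL_2(\CC)$, $\Ga\times\Gm$ --- this is Proposition~\xref{prop:orbits} and Corollary~\xref{cor:aut-orbits}, obtained there from the Kapustka--Ranestad lemma. Two local slips: a regular nilpotent does \emph{not} lie in $\G/P$ (that is the projectivized \emph{minimal} nilpotent orbit); the correct reason nilpotent $v$ are excluded is that every positive-degree $\Ad(\G)$-invariant polynomial vanishes on them, so $[v]\in D_{\ol}$. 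Also, the claims in (i) that $\GL_2(\CC)$ has \emph{exactly} two fixed points and a principal open orbit are asserted rather than proved; the paper needs the orbit analysis of Theorem~\xref{thm:main-aut} and Proposition~\xref{proposition-GL2-action}\xref{proposition-GL2-action-cc} for this.

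The genuine gap is the step you yourself flag as the obstacle: the equality $\Aut(V)=\Stab_{\G}([v])$ for the \emph{full} automorphism group. Theorem~\xref{thm:G/P} extends only the $\Aut^0(V)$-action to $\Omega$, and restriction gives an injection $\Stab_{\G}([v])\hookrightarrow\Aut(V)$, i.e.\ a \emph{lower} bound; but your computation of the component groups inside $N_{\G}(T)/T$ produces the required \emph{upper} bound on $\Aut(V)$ only if every automorphism of $V$ is the restriction of an element of $\G$ stabilizing $[v]$. Linearity on $\PP^{12}$ (from $\Pic(V)\cong\ZZ$) yields an element of $\PGL_{13}(\CC)$, not an element of $\G$ acting on $\PP^{13}$ and preserving $\Omega$; ``permuting the distinguished cones compatibly with a $\G$-element'' is not an argument, and the paper itself treats this extension question as delicate and open (Problem~\xref{prob:stabilizers}; moreover the Remark in Section~\xref{sec:open-problems} asserts that the involution of $V^{\aaa}_{18}$ does \emph{not} extend to an element of $\G$, and leaves open whether it extends to an automorphism of $\Omega$ --- if that is so, then in case (ii) your method would return a group strictly smaller than $\Aut(V^{\aaa}_{18})$, missing the $\ZZ/2\ZZ$). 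The paper circumvents this entirely by a different mechanism: $\Aut(V)$ acts faithfully on the Hilbert scheme of lines $\Sigma(V)\subset\PP^2\times\PP^2$ preserving the sextic surface $\Sigma_{\s}(V)$ (Lemma~\xref{lem:group-embedding}, Corollary~\xref{cor:embedding-Aut-V-to-Aut-X}), whence $\Aut(V)\hookrightarrow\operatorname{PCent}_{\tilde{\PGL_3(\CC)}}(C)$ for an explicit $C\in\sll_3(\CC)$ (Lemma~\xref{lem:pcentralizer}), and these projective centralizers are computed case by case in Lemma~\xref{lem:centralizers}; the extra $\ZZ/2\ZZ$'s in (i) and (ii) are then realized \emph{geometrically} --- via uniqueness up to $\Ru$ of $\GL_2(\CC)$-invariant quintic scrolls (Corollary~\xref{cor:unique-GL2}) and the isomorphism of linked pairs $(W,F_1)\cong(W,F_2)$ (Proposition~\xref{prop:center-inv-scroll}\xref{prop:center-inv-scroll-d}) --- not by exhibiting elements of $\G$. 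Without a proof that $\Aut(V)$ lands in $\G$ (or an alternative faithful model like $\Sigma(V)$), your upper bounds, and hence all three isomorphisms \eqref{eq:aut-GL2}, \eqref{eq:aut-Ga-Gm}, \eqref{eq:aut-Gm-2}, remain unestablished.
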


See also Theorems~\xref{cor:final} and~\xref{thm:main-aut} for some additional 
information. In particular, it occurs that the fourfold $V_{18}^{\s}$ contains 
two one-parameter families of cones over twisted cubics and a unique pair of 
$\Aut^0(V)$-invariant such cubic cones. The number of cubic cones in $V_{18}^{\aaa}$ 
equals $4$, and equals 
$6$ in $V_{18}\not\cong V_{18}^{\s},V_{18}^{\aaa}$. Any such cone $S$ defines 
an 
$\Aut^0(V)$-invariant compactification $(V_{18},A_S)$ of $\CC^4$ as in 
Theorem~\xref{thm:main}. 

The proofs of Theorems~\xref{thm:main}--\xref{thm:main-aut-n} are done in 
Section~\xref{sec:proofs}. They use a construction of $V_{18}$ starting with 
$\PP^4$ via a sequence of two Sarkisov links, see 
Sections~\xref{sec-2}--\xref{sec-3} for details. The first Sarkisov link gives 
the quintic del Pezzo fourfold $W_5\subset\PP^7$. In 
Sections~\xref{sec-2bis}--\xref{sec-3-bis} we study the automorphism group of 
$W_5$ (cf.~\cite{Piontkowski-Van-de-Ven-1999}), its action on $W_5$, and the 
stabilizers of certain rational normal quintic scrolls in $W_5$. Such a scroll 
$F$ serves as the center of blowup for the second Sarkisov link. For a properly 
chosen $F$ the stabilizer of $F$ in $\Aut^0(W_5)$ is isomorphic to the 
automorphism group $\Aut^0(V_{18})$ of the resulting Fano-Mukai fourfold 
$V_{18}$. On the other hand, the construction of S.~Mukai (\cite{Mukai-1988}) 
embeds any Fano-Mukai fourfold $V_{18}$ onto a hyperplane section of 
$\G/P\subset\PP^{13}$. Using results of M.~Kapustka and K.~Ranestad 
(\cite{KapustkaRanestad2013}) we compute in Section~\xref{appendix} the 
stabilizers of these hyperplane sections as subgroups of $\Aut^0(V_{18})$. It 
remains to show that such a stabilizer coincides actually with the whole group 
$\Aut^0(V_{18})$, cf.\ Theorem~\xref{thm:G/P}. To this end, following again 
(\cite{KapustkaRanestad2013}) we study in detail the Hilbert schemes of lines 
and of rational normal cubic scrolls on $V_{18}$ (see 
Sections~\xref{sec:lines-in-V}--\xref{sec:thm-1.2}), and the subschemes of 
rational cubic cones (see Section~\xref{sec:del-Pezzo}). The latter cones occur 
to be in one-to-one correspondence with the lines on certain singular del Pezzo 
sextic surfaces. This geometry allows us to describe the automorphism groups of 
the fourfolds $V_{18}$, see Sections~\xref{sec:1.3-1.4}-\xref{sec:1.5-1.6}. This 
leads finally in Section~\xref{sec:proofs} to our main results. In 
Section~\xref{sec:aut-aff-cones} (see Theorem~\xref{thm:flexible-cones}) we 
deduce the flexibility in codimension one of the affine cones over the 
Fano-Mukai fourfolds $V_{18}$ and the flexibility if $V_{18}=V_{18}^{\s}$, cf.\ 
\cite{AFKKZ}. The concluding Section~\xref{sec:open-problems} contains some 
remarks and open problems.

\section{Linking the del Pezzo fourfold $W_5$ to $\PP^4$}
\label{sec-2}

\begin{mdefinition}
In this and the next sections $W=W_5\subset\PP^7$ stands
for a del Pezzo quintic fourfold realized as a smooth section of the
Grassmannian $\Gr(2,5)$ under its Pl\"ucker embedding in $\PP^9$ by a general
linear subspace of codimension $2$ in $\PP^9$.
In fact, a del Pezzo quintic fourfold is unique up to isomorphism
(\cite{Fujita-620281}). We use the following
description of the
planes in $W$ (see the classical paper~\cite{Todd1930} for more details and
\cite[\S~6]{Piontkowski-Van-de-Ven-1999} for a modern treatment).
\end{mdefinition}

\begin{sdefinition}
We regard the Grassmannian $\Gr(2,5)$ as the variety
of lines in $\PP^4=\PP(\CC^5)$.
Recall that any plane in $\Gr(2,5)\subset\PP^9$ is a Schubert variety of one of
the following two types :

\begin{itemize}

\item
$\sigma_{2,2}$, that is, the Schubert variety
of lines in a fixed plane $\PP^2\subset\PP^4$;

\item
$\sigma_{3,1}$, that is, the Schubert variety
of lines passing
through a fixed point and contained
in
a fixed 3-space $\PP^3\subset\PP^4$.
\end{itemize}
\end{sdefinition}

\begin{sdefinition}
\label{def:skew-symmetric}
Let $\PPP$ be a pencil of hyperplane sections which cut out a del Pezzo
quintic fourfold $W$ in $\Gr(2,5)\subset\PP^9$.
It can be treated as a pencil
of skew-symmetric bilinear forms
$\lambda_1 q_1+\lambda_2 q_2\in (\wedge^2\CC^5)^\vee$.
Since $W$ is smooth, each form $\lambda_1 q_1+\lambda_2 q_2\in\PPP$ is of
rank 4. Consider
the map $\upsilon:\PPP\to\PP(\CC^5)$ that sends a
form $\lambda_1 q_1+\lambda_2 q_2$
to the projectivization of its kernel
$\ker (\lambda_1 q_1+\lambda_2 q_2)\subset\CC^5$. This map
is given by the Pfaffians of the corresponding matrix. Hence
the image $\upsilon(\PPP)$ is a conic in
$\PP^4=\PP(\CC^5)$ (\cite[Prop.~6.3]{Piontkowski-Van-de-Ven-1999}).
The linear span $\Theta=\langle\upsilon(\PPP)\rangle$ is a plane in
$\PP(\CC^5)$, which is a maximal common isotropic
subspace for the forms $\lambda_1 q_1+\lambda_2 q_2\in\PPP$. Such a
subspace is unique. This defines a unique
$\sigma_{2,2}$-plane $\Xi\subset W$ parameterizing the lines in
$\Theta\cong\PP^2$.
On the other hand, there is a one-parameter family of
$\sigma_{3,1}$-planes
$\Pi_{\gamma}$ parameterizing the lines passing through a point $P_{\gamma}\in
\upsilon(\PPP)$
and contained in the three-dimensional subspace in $\PP^4$ orthogonal to
$P_{\gamma}$ with respect to
any form $\lambda_1 q_1+\lambda_2 q_2\in\PPP$. Let $\Upsilon\subset\Xi$
be the dual conic of $\upsilon(\PPP)\subset\Theta$.
Each plane $\Pi_\gamma$ meets $\Xi$ along a tangent line $l_{\gamma}$
to $\Upsilon$ at a point $\gamma\in\Upsilon$.
Any two distinct planes $\Pi_\gamma$ and $\Pi_{\gamma'}$ meet at a
unique point $l_\gamma\cap l_{\gamma'}$ on $\Xi\setminus\Upsilon$.

The planes $\{\Pi_\gamma\}_{\gamma\in\Upsilon}$ and $\Xi$ are the only planes
contained in $W$. The union $R=\bigcup_{\gamma\in\Upsilon}\Pi_\gamma$ is a
hyperplane
section of $W$. The threefold $R$ contains also $\Xi$ and
is singular along $\Xi$ (see, 
e.g.,~\cite{Todd1930},~\cite[Prop.~3.4]{Prokhorov-Zaidenberg-4-Fano} and the 
references therein).
The triple $W\supset R\supset\Xi$ plays an important
role in what follows.
\end{sdefinition}
Consider the following Sarkisov link (for the proofs, see~\cite{Fujita-620281},
\cite{Prokhorov1994},
\cite[Prop.~4.9]{Prokhorov-Zaidenberg-4-Fano} and the references therein).

\begin{proposition}
\label{prop:link-1}
In the notation as before, the following hold.

\begin{enumerate}
\item
\label{prop:link-1-1}
There is a commutative diagram
\begin{equation}
\label{equation-diagram-1}
\vcenter{
\xymatrix@1{
&\hat E\ar[dl]
\,\ar@{}[r]|-*[@]{\subset}
&&\hat W\ar[dr]^{\varphi}\ar[dl]_{\rho}\, &\ar@{}[r]|-*[@]{\supset}&\hat
R\ar[dr]
\\
\Xi\,\ar@{}[r]|-*[@]{\subset} &R\,\ar@{}[r]|-*[@]{\subset}
&W\ar@{-->}[rr]^{\phi}&&
\PP^4\ar@{}[r]|-*[@]{\supset}&E=\langle\Gamma\rangle\ar@{}[r]|-*[@]{\supset}
&\Gamma
}}
\end{equation}
where $\rho$ is the blowup of the plane $\Xi$ in $W$ and
$\varphi$ is the blowup of a rational twisted cubic curve
$\Gamma\subset\PP^4$.

\item
\label{prop:link-1-2}
The $\rho$-exceptional divisor $\hat E=\rho^{-1}(\Xi)$
is the proper transform in
$\hat W$ of the linear span $\langle
\Gamma\rangle\cong\PP^3$ of $\Gamma$ in $\PP^4$.
The $\varphi$-exceptional divisor $\hat R$ is the proper transform
of $R$ in $\hat W$.

\item
\label{prop:link-1-3}
The morphism $\varphi$ \textup{(}$\rho$, respectively\textup{)} is defined by
the linear system $|H^*-\hat E|$ \textup{(}$|2L^*-\hat R|$,
respectively\textup{)} on $\hat W$,
where $H$ \textup{(}$L$, respectively) is the class of hyperplane section on
$W$
\textup{(}on $\PP^4$, respectively\textup{)}, and $H^*=\rho^*(H)$,
$L^*=\varphi^*(L)$. The birational map
$\phi: W\subset\PP^7\dashrightarrow\PP^4$ is the linear
projection with center $\Xi$.
\end{enumerate}
\end{proposition}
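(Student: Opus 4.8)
```latex
\textbf{Proof proposal for Proposition~\ref{prop:link-1}.}

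The plan is to construct the diagram~\eqref{equation-diagram-1} from the left,
starting with the blowup $\rho\colon\hat W\to W$ of the $\sigma_{2,2}$-plane
$\Xi\subset W$, and then to identify the second extremal contraction on $\hat W$
as the blowup of a twisted cubic in $\PP^4$. The key geometric input is the
birational map $\phi\colon W\dashrightarrow\PP^4$ given in
part~\eqref{prop:link-1-3}: since $\deg W=5$ in $\PP^7$ and $\Xi$ is a plane,
linear projection from $\langle\Xi\rangle\cong\PP^2$ lands in
$\PP^7\dashrightarrow\PP^4$, and a degree count shows the image is all of
$\PP^4$. I would first verify that $\phi$ is dominant and that its generic fibre
is a single point, so $\phi$ is birational; here the self-intersection numbers
of $W$ along $\Xi$, together with the fact that $\Xi$ is a smooth plane cut out
as a Schubert variety, control the multiplicities.

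\emph{Running the Minimal Model Program.} The main mechanism is to run the MMP
on $\hat W$ over a point, or equivalently to analyze the two-dimensional
extremal face of the Mori cone $\NE(\hat W)$. After blowing up $\Xi$, the
Picard rank of $\hat W$ is two, so $\NE(\hat W)$ is a cone with two extremal
rays. One ray is contracted by $\rho$; I would show the other ray is
$K_{\hat W}$-negative and that its contraction $\varphi$ is a divisorial
contraction to a smooth fourfold, which must then be $\PP^4$ by the
classification of Fano fourfolds of the relevant index together with
$\bb_2=1$. The exceptional divisor $\hat R$ of $\varphi$ is forced to be the
proper transform of the unique hyperplane section $R\supset\Xi$ described in
Definition~\ref{def:skew-symmetric}, because $R$ is precisely the locus swept
out by the $\sigma_{3,1}$-planes $\Pi_\gamma$ meeting $\Xi$, and these are the
lines contracted by the projection. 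Symmetrically, the $\rho$-exceptional
divisor $\hat E$ is the proper transform of the linear span
$\langle\Gamma\rangle\cong\PP^3$, which one identifies by tracking how the
planes $\Pi_\gamma$ map forward under $\phi$.

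\emph{Identifying the linear systems.} For part~\eqref{prop:link-1-3} I would
compute the pullbacks $H^*=\rho^*H$ and $L^*=\varphi^*L$ and express each
contraction's defining system intrinsically on $\hat W$. The map $\varphi$ is
the blowup of $\Gamma=\upsilon(\PPP)$-related twisted cubic, so its linear
system is $|H^*-\hat E|$; dually, $\rho$ corresponds to $|2L^*-\hat R|$, and the
coefficient $2$ reflects that $W$ has degree $5=2\cdot 3-1$ and that the twisted
cubic $\Gamma$ spans a $\PP^3$. The relations are pinned down by intersecting
with the two extremal curve classes and matching discrepancies: writing
$K_{\hat W}=\rho^*K_W+a\hat E=\varphi^*K_{\PP^4}+b\hat R$ and using
$K_W=-3H$, $K_{\PP^4}=-5L$ together with the blowup discrepancy formulas
determines the integer coefficients uniquely.

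\emph{Main obstacle.} The hard part will be confirming that the second extremal
contraction $\varphi$ is genuinely the \emph{smooth} blowup of a twisted cubic
in $\PP^4$, rather than a small contraction or a blowup along a singular or
higher-dimensional centre. This requires showing $\hat W$ is smooth (so that the
blowup $\rho$ introduces no singularities along $\Xi$, which follows because
$\Xi$ is a smooth plane and $W$ is smooth along it), and then checking that the
contracted divisor $\hat R$ is a $\PP^2$-bundle over $\Gamma$ with the numerics
of a blowup exceptional divisor. The curve $\Gamma$ itself must be shown to be a
smooth rational twisted cubic rather than a nodal or reducible curve; this is
where the explicit Pfaffian description of the conic $\upsilon(\PPP)$ and the
genericity of the pencil $\PPP$ enter, guaranteeing that $\Gamma$ is smooth and
spans exactly a $\PP^3$. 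I would resolve this by appealing to the cited
references~\cite{Fujita-620281},~\cite{Prokhorov1994},~\cite[Prop.~4.9]{Prokhorov-Zaidenberg-4-Fano},
which establish the analogous Sarkisov link in dimension four and provide the
required smoothness and incidence statements.
```
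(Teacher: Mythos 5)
The paper offers no proof of Proposition~\xref{prop:link-1} at all: it is stated as a known Sarkisov link, with the proofs delegated to \cite{Fujita-620281}, \cite{Prokhorov1994}, and \cite[Prop.~4.9]{Prokhorov-Zaidenberg-4-Fano}. Your outline follows the standard construction carried out in those sources, and your numerical scaffolding is correct: with the discrepancy relations $K_{\hat W}=\rho^*K_W+\hat E=\varphi^*K_{\PP^4}+2\hat R$, the formulas $K_W=-3H$, $K_{\PP^4}=-5L$, and the projection relation $L^*\sim H^*-\hat E$, one gets $\hat R\sim H^*-2\hat E$, hence $|H^*-\hat E|=|L^*|$ indeed defines $\varphi$ and $|2L^*-\hat R|=|H^*|$ indeed defines $\rho$ --- exactly the identities the paper records and uses later in~\eqref{eq:exprim}. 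So in substance your route is the same as that of the cited references, and since you ultimately resolve your ``main obstacle'' by citing those very references, your text sits at the same level of rigor as the paper's own treatment (a statement with citations), not above it.

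Two genuine defects are worth fixing, though neither invalidates the skeleton. First, the crux --- that the second extremal contraction is the blowup of a \emph{smooth} twisted cubic spanning a $\PP^3$, rather than a small contraction or a contraction with a degenerate center --- is precisely what you do not prove; if you want a self-contained argument, the right tool is the Andreatta--Wi\'sniewski classification of contractions of smooth fourfolds \cite{Andreatta1998a}, which the paper itself invokes for the analogous link in Proposition~\xref{prop:link-2}: one bounds the fiber dimensions of the second contraction and concludes it is a smooth blowup along a smooth curve, then computes the degree and linear span of that curve. Second, two factual slips: (a) $R$ is \emph{not} ``the unique hyperplane section $R\supset\Xi$'' --- the hyperplane sections through $\Xi$ form the four-dimensional linear system defining $\phi$; what singles out $R$ is that it is the unique hyperplane section \emph{singular along} $\Xi$, equivalently the union of the planes $\Pi_\gamma$; and (b) your phrase ``$\Gamma=\upsilon(\PPP)$-related twisted cubic'' conflates two curves living in different spaces: $\upsilon(\PPP)$ is a conic in $\PP(\CC^5)$, the space whose lines $\Gr(2,5)$ parametrizes, whereas $\Gamma$ is a twisted cubic in the target $\PP^4$ of the projection of $W\subset\PP^7$ from $\Xi$, so the smoothness of $\Gamma$ cannot be read off from the Pfaffian description of $\upsilon(\PPP)$.
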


\begin{scorollary}[{\cite{Prokhorov1994}}]
\label{cor:2.2.2}
$(W,R)$ is a compactification of $\CC^4$.
\end{scorollary}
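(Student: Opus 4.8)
The plan is to use the Sarkisov link of Proposition~\xref{prop:link-1} to produce an explicit isomorphism between $\CC^4$ and the complement $W\setminus R$, thereby exhibiting $(W,R)$ as a compactification of $\CC^4$ in the sense of the Introduction. The birational map $\phi:W\dashrightarrow\PP^4$ in the diagram~\eqref{equation-diagram-1} is the linear projection with center the plane $\Xi$, and it is resolved by the blowups $\rho$ and $\varphi$. The strategy is to track precisely which loci are contracted or created by each of these two birational morphisms, and to show that away from the indicated divisors the map $\phi$ restricts to a biregular isomorphism onto its image.

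First I would identify, on the smooth model $\hat W$, the exact locus on which $\rho$ and $\varphi$ fail to be isomorphisms. By Proposition~\xref{prop:link-1}\xref{prop:link-1-1}, the morphism $\rho$ is the blowup of $\Xi\subset W$ with exceptional divisor $\hat E$, and $\varphi$ is the blowup of $\Gamma\subset\PP^4$ with exceptional divisor $\hat R$. Part~\xref{prop:link-1-2} tells us that $\hat E$ is the proper transform of the hyperplane $E=\langle\Gamma\rangle\cong\PP^3\subset\PP^4$, while $\hat R$ is the proper transform of the hyperplane section $R\subset W$. Thus on $\hat W$ we have the two distinguished divisors $\hat E$ and $\hat R$, with $\varphi$ contracting $\hat R$ onto $\Gamma$ and $\rho$ contracting $\hat E$ onto $\Xi$.

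Next I would argue that $\varphi$ restricts to an isomorphism $\hat W\setminus\hat R\xrightarrow{\ \sim\ }\PP^4\setminus\Gamma$ and that $\rho$ restricts to an isomorphism $\hat W\setminus\hat E\xrightarrow{\ \sim\ }W\setminus\Xi$, these being the standard properties of a blowup away from its center and its exceptional divisor. Intersecting, the open set $U=\hat W\setminus(\hat E\cup\hat R)$ maps isomorphically under $\varphi$ onto $\PP^4\setminus(E\cup\Gamma)=\PP^4\setminus E\cong\CC^4$, since $\Gamma\subset E$, and under $\rho$ onto $W\setminus(\Xi\cup R)=W\setminus R$, since $\Xi\subset R$. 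Composing these two isomorphisms yields the desired biregular isomorphism $W\setminus R\cong\CC^4$, realized concretely by the linear projection $\phi$ from the plane $\Xi$, whose indeterminacy and exceptional behaviour are confined to $R$ on the source and to $E$ on the target.

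The main point requiring care is the bookkeeping of the inclusions $\Gamma\subset E$ and $\Xi\subset R$, which guarantee that removing the hyperplanes $E$ and $R$ already removes the blowup centers, so that no further locus must be excised; this is where the precise description in Proposition~\xref{prop:link-1}\xref{prop:link-1-2} of $\hat E$ and $\hat R$ as proper transforms of $E$ and $R$ is essential. I expect the main obstacle to be verifying that the two open immersions glue to a genuine biregular isomorphism rather than merely a birational one, i.e. that $U$ surjects onto the full complements on both sides with no missing points; this follows once one checks that the complement of $U$ in $\hat W$ is exactly $\hat E\cup\hat R$ and that these map onto $E$ and $R$ respectively, as recorded above.
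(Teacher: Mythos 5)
Your proposal is correct and follows exactly the paper's own argument: the paper's proof is precisely the chain of isomorphisms $W\setminus R\cong\hat W\setminus(\hat R\cup\hat E)\cong\PP^4\setminus\langle\Gamma\rangle\cong\CC^4$ read off from diagram~\eqref{equation-diagram-1}, which is what you establish, just with the bookkeeping (proper transforms, inclusions $\Gamma\subset E$ and $\Xi\subset R$) made explicit.
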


\begin{proof}
Indeed, due to~\eqref{equation-diagram-1} we have isomorphisms

\begin{equation}
\label{eq:2.2.2}
W\setminus R\cong\hat W\setminus (\hat R\cup\hat E)\cong\PP^4\setminus
\langle\Gamma\rangle\cong\CC^4.
\end{equation}
\end{proof}

\begin{scorollary}
\label{cor:Aut-W5}\footnote{Cf.\ Lemma \ref{lem:PvdV}.} There is an isomorphism
$\Aut(W)\cong\Aut(\PP^4,\Gamma)$. 
\end{scorollary}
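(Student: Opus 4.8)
The plan is to exploit the canonicity of every piece of data entering the Sarkisov link of Proposition~\ref{prop:link-1}. The crucial observation is that the center of the projection $\phi$ is intrinsically attached to $W$: by Definition~\ref{def:skew-symmetric} the plane $\Xi\subset W$ is the \emph{unique} $\sigma_{2,2}$-plane contained in $W$, hence it is invariant under the whole group $\Aut(W)$. Consequently any $g\in\Aut(W)$ lifts canonically to an automorphism $\hat g$ of the blowup $\rho\colon\hat W\to W$ of $\Xi$, and $\hat g$ preserves the exceptional divisor $\hat E=\rho^{-1}(\Xi)$. I would next argue that $\hat g$ preserves both extremal contractions of $\hat W$: since $\rho$ contracts $\hat E$ onto the surface $\Xi$ while $\varphi$ contracts $\hat R$ onto the curve $\Gamma$, the two extremal rays of $\NE(\hat W)$ are of different type and cannot be interchanged by any automorphism. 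On the level of divisor classes this means that $\hat g$ fixes the nef generators $H^*=\rho^*H$ and $L^*=\varphi^*L$ together with the two exceptional classes $\hat E$ and $\hat R$; in particular $\hat g$ preserves the linear system $|H^*-\hat E|$ defining $\varphi$ by Proposition~\ref{prop:link-1}\,\ref{prop:link-1-3}. Therefore $\hat g$ acts on $H^0(\hat W,H^*-\hat E)$ and descends through $\varphi$ to an automorphism $\phi_*(g)$ of $\PP^4$; because $\varphi$ contracts the $\hat g$-invariant divisor $\hat R$ precisely onto $\Gamma$, the automorphism $\phi_*(g)$ preserves $\Gamma$. This yields a group homomorphism $\Aut(W)\to\Aut(\PP^4,\Gamma)$, $g\mapsto\phi_*(g)$.

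Symmetrically, I would build the inverse homomorphism. The curve $\Gamma$ and its linear span are recovered from the intrinsic pair $(\hat W,\hat R)$, so any $h\in\Aut(\PP^4,\Gamma)$ lifts canonically to the blowup $\varphi\colon\hat W\to\PP^4$ of $\Gamma$. The lift preserves $L^*$ and $\hat R$, hence the system $|2L^*-\hat R|$ defining $\rho$ in Proposition~\ref{prop:link-1}\,\ref{prop:link-1-3}, and thus descends to an automorphism of $W$. Both assignments are induced by conjugation by the single birational map $\phi$, that is, $\phi_*(g)=\phi\comp g\comp\phi^{-1}$ as birational self-maps of $\PP^4$; this makes them mutually inverse group homomorphisms and gives the desired isomorphism $\Aut(W)\cong\Aut(\PP^4,\Gamma)$.

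The step requiring the most care—and the main potential obstacle—is verifying that the lifted birational maps are in fact \emph{regular} automorphisms and genuinely descend, i.e.\ that the defining linear systems are preserved. This hinges on two canonicity facts: that no automorphism of $\hat W$ can swap the two extremal contractions (immediate from the different dimensions of $\Xi$ and $\Gamma$), and that $\hat g^*$ acts trivially on the primitive generators of the relevant rays of $\Pic(\hat W)$. Both reduce ultimately to the uniqueness of $\Xi$ established in Definition~\ref{def:skew-symmetric} and the intrinsic recovery of $\Gamma$ as $\varphi(\hat R)$; once these are in place, the remainder is formal functoriality of blowups and of morphisms associated to invariant complete linear systems.
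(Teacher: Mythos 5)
Your proposal is correct and follows essentially the same route as the paper, whose two-line proof ("the plane $\Xi\subset W$ is $\Aut(W)$-invariant; due to diagram~\eqref{equation-diagram-1} one has $\Aut(W)=\Aut(W,\Xi)\cong\Aut(\PP^4,\Gamma)$") is exactly your argument in compressed form. The details you supply --- that the two extremal contractions of $\hat W$ cannot be interchanged since $\rho$ contracts $\hat E$ to a surface while $\varphi$ contracts $\hat R$ to a curve, and that the lifted automorphisms preserve the defining linear systems $|H^*-\hat E|$ and $|2L^*-\hat R|$ and hence descend --- are precisely what the paper leaves implicit, so your write-up is a faithful expansion rather than a different proof.
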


\begin{proof}
Indeed, the plane $\Xi\subset W$ is
$\Aut(W)$-invariant.
Due to diagram
\eqref{equation-diagram-1} one has $\Aut(W)=\Aut(W,\Xi)\cong
\Aut(\PP^4,\Gamma)$. 
\end{proof}

\begin{lemma}
\label{lem:hatR}
In the notation as before, the following hold.

\begin{enumerate}
\renewcommand\labelenumi{\rm (\alph{enumi})}
\renewcommand\theenumi{\rm (\alph{enumi})}

\item
\label{lem:hatR-a}
$\varphi|_{\hat R}:\hat R\to\Gamma$ is a
$\PP^2$-bundle.

\item
\label{lem:hatR-b}
For any
$\gamma\in\Gamma$, the fiber $\hat{\Pi}_\gamma:=\textup(\varphi|_{\hat
R})^{-1}(\gamma)\subset\hat R$ is sent by $\rho$ isomorphically onto a plane
$\Pi_\gamma\subset R$.

\item
\label{lem:hatR-c}
$\rho|_{\hat R}:\hat R\to R$ is the normalization morphism.

\item
\label{lem:hatR-d}
Let $\hat\Xi:=\hat E\cap\hat R=(\rho|_{\hat R})^{-1}(\Xi)$.
Then $\hat\Xi\cong\PP^1\times\PP^1$ and $\varphi|_{\hat\Xi}:\hat\Xi\to\Xi$ is a 
double cover branched along the conic $\Upsilon\subset\Xi$
\textup(see~\eqref{def:skew-symmetric}\textup).
\end{enumerate}
\end{lemma}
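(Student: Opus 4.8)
The plan is to handle the four assertions in turn, using the blowup description of Proposition~\ref{prop:link-1}. For \ref{lem:hatR-a}, recall from Proposition~\ref{prop:link-1} that $\hat R$ is the $\varphi$-exceptional divisor and that $\varphi$ is the blowup of the smooth rational curve $\Gamma$ in the smooth fourfold $\PP^4$. Hence $\hat R=\PP(N_{\Gamma/\PP^4})$, and since $\codim(\Gamma\subset\PP^4)=3$, the projection $\varphi|_{\hat R}\colon\hat R\to\Gamma$ is the associated $\PP^2$-bundle. This is immediate and needs no further work.

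For \ref{lem:hatR-b} I would first analyze the planes $\Pi_\gamma\subset W$ under the linear projection $\phi$ from $\Xi$. Each $\Pi_\gamma$ is a $\sigma_{3,1}$-plane meeting $\Xi$ along the tangent line $l_\gamma$ to $\Upsilon$ (see~\ref{def:skew-symmetric}), so $\langle\Xi,\Pi_\gamma\rangle$ is a $\PP^3$ containing the center $\Xi$ as a hyperplane. Consequently $\phi$ contracts $\Pi_\gamma$ to a single point, and as $\gamma$ runs over $\Upsilon$ these points fill out the twisted cubic $\Gamma$, giving an isomorphism $\Upsilon\cong\Gamma$ via $\gamma\mapsto\phi(\Pi_\gamma)$. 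Since $\Pi_\gamma$ meets the center $\Xi$ along the divisor $l_\gamma$ of $\Pi_\gamma$, a local computation in the blowup $\rho$ shows that the proper transform $\hat\Pi_\gamma$ of $\Pi_\gamma$ is carried isomorphically onto $\Pi_\gamma$ by $\rho$; in particular $\hat\Pi_\gamma\cong\PP^2$. As each $\hat\Pi_\gamma\subset\hat R$ is contracted by $\varphi$ to the point $\phi(\Pi_\gamma)\in\Gamma$, it is an irreducible surface inside the $\PP^2$-fiber of \ref{lem:hatR-a} over that point, hence equal to it; and as $\gamma$ varies these surfaces sweep out all of $\hat R$. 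Thus the fibers of $\varphi|_{\hat R}$ are exactly the $\hat\Pi_\gamma$, which proves \ref{lem:hatR-b}.

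For \ref{lem:hatR-c}, the morphism $\rho|_{\hat R}\colon\hat R\to R$ is birational with $\hat R$ smooth, hence normal; it therefore suffices to prove it is finite, since a finite birational morphism from a normal variety is the normalization. It is an isomorphism over $R\setminus\Xi$, so only the fibers over $\Xi$ need control. A point $x\in\Xi$ lies on $\Pi_\gamma$ precisely when $x\in l_\gamma$, i.e.\ when $l_\gamma$ is a tangent line to the conic $\Upsilon$ passing through $x$; there are at most two such lines, so every fiber is finite. Being proper and quasi-finite, $\rho|_{\hat R}$ is finite, and \ref{lem:hatR-c} follows.

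Finally, for \ref{lem:hatR-d} I would identify $\hat\Xi=\hat E\cap\hat R$ through the other side of the link. By Proposition~\ref{prop:link-1} the divisor $\hat E$ is the proper transform of $\langle\Gamma\rangle\cong\PP^3$, so $\varphi|_{\hat E}\colon\hat E\to\PP^3$ is the blowup of $\Gamma\subset\PP^3$, and $\hat\Xi=\hat E\cap\varphi^{-1}(\Gamma)$ is its exceptional divisor $\PP(N_{\Gamma/\PP^3})$. Since a twisted cubic has $N_{\Gamma/\PP^3}\cong\OOO(5)^{\oplus2}$ and projectivization is insensitive to twisting, $\hat\Xi\cong\PP(\OOO^{\oplus2})\cong\PP^1\times\PP^1$. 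The natural projection to $\Xi$, namely $\rho|_{\hat\Xi}$, is the restriction to the singular locus $\Xi$ of the normalization of \ref{lem:hatR-c}; over a general point of $\Xi$ there are two tangent lines to $\Upsilon$, hence two sheets, while over a point of $\Upsilon$ there is a unique one, so $\rho|_{\hat\Xi}$ is a double cover ramified exactly along $\Upsilon$. I expect the main obstacle to be the local transversality step in \ref{lem:hatR-b}---checking that $\rho$ restricts to an isomorphism on each $\hat\Pi_\gamma$ and that these coincide with the bundle fibers---together with the tangent-line count underlying the finiteness in \ref{lem:hatR-c} and the branch locus $\Upsilon$ in \ref{lem:hatR-d}.
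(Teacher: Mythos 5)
Your proof is correct in substance, and it takes a genuinely different route from the paper's. For~\xref{lem:hatR-b} the paper argues in the opposite direction: using the relations $H^*\sim 2L^*-\hat R$ and $\OOO_{\hat R}(\hat R)=\OOO_{\PP(\NNN_{\Gamma/\PP^4}^{\vee})}(-1)$ it shows that $H^*$ restricts to $\OOO_{\PP^2}(1)$ on each fiber of $\varphi|_{\hat R}$, concludes that the images of the fibers form a family of planes covering $R$, and then invokes the fact that there is only one such family (see~\xref{def:skew-symmetric}). You instead start from the known planes $\Pi_\gamma$: the projection $\phi$ with center $\Xi$ contracts each of them, the proper transform $\hat\Pi_\gamma$ is isomorphic to $\Pi_\gamma$ because the scheme-theoretic intersection $\Pi_\gamma\cap\Xi=l_\gamma$ is a Cartier divisor on $\Pi_\gamma$, and an irreducible surface in $\hat R$ contracted by $\varphi$ must be a whole $\PP^2$-fiber. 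Likewise, in~\xref{lem:hatR-c} the paper excludes contracted curves by contradiction (a contracted curve would meet every fiber and force all the planes through one common point), while you bound the fibers over $\Xi$ directly by the count of tangent lines to $\Upsilon$ through a point; and in~\xref{lem:hatR-d} the paper computes the degree of the cover as the intersection number $(H^*)^2\cdot\hat E\cdot\hat R=2$, whereas you read off both the degree and the branch locus from the same tangent-line count. Your route is more elementary and avoids the intersection-theoretic computations entirely. (You also correctly interpret the map in~\xref{lem:hatR-d} as $\rho|_{\hat\Xi}$, which is indeed what the paper's own proof treats.)

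One step should be shored up. In~\xref{lem:hatR-b} you assert that the surfaces $\hat\Pi_\gamma$, $\gamma\in\Upsilon$, ``sweep out all of $\hat R$'' and hence account for \emph{every} fiber of $\varphi|_{\hat R}$; a priori their union only contains the dense open subset $\hat R\setminus\hat E$, and your arguments for~\xref{lem:hatR-c} and~\xref{lem:hatR-d} then use this exhaustion precisely over points of $\Xi$, i.e.\ on $\hat E\cap\hat R$, where it is not yet established. The patch is short: no fiber of $\varphi|_{\hat R}$ is contained in $\hat E$, because $\hat E\cap\hat R$ is the exceptional divisor of the blowup $\varphi|_{\hat E}\colon\hat E\to\langle\Gamma\rangle\cong\PP^3$ of $\Gamma$ (exactly the identification you use in~\xref{lem:hatR-d}), i.e.\ the $\PP^1$-bundle $\PP\bigl(\NNN_{\Gamma/\PP^3}^{\vee}\bigr)$, which meets each $\PP^2$-fiber along a line only. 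Hence every fiber of $\varphi|_{\hat R}$ meets $\hat R\setminus\hat E$, so it meets some $\hat\Pi_\gamma$, and since $\hat\Pi_\gamma$ is itself a fiber by your argument, the two fibers coincide. With this remark (and the standard fact $\NNN_{\Gamma/\PP^3}\cong\OOO_{\PP^1}(5)\oplus\OOO_{\PP^1}(5)$, for which the paper cites~\cite{Eisenbud-van-de-Ven-1981}), your proof is complete.
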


\begin{proof}
\xref{lem:hatR-a}
follows since
$\hat R=\PP(\NNN_{\Gamma/\PP^4}^{\vee})$ is the exceptional divisor of
the blowup of $\Gamma$ in $\PP^4$.

\xref{lem:hatR-b} Let $L$ be a hyperplane in $\PP^4$. On $\hat W$ one has
(\cite[Prop.~4.9]{Prokhorov-Zaidenberg-4-Fano}):

\begin{equation*}
L^*\sim H^*-\hat E\quad\text{and}\quad\hat R\sim H^*-2\hat E.
\end{equation*}
Hence $\varphi$ is defined by the linear system $|H^*-\hat E|$ on $\hat W$, and
\begin{equation}
\label{eq:exprim}
H^*\sim 2L^*-\hat R\quad\text{and}\quad\hat E\sim L^*-\hat R.
\end{equation}
So, $\rho$
is the morphism given by the base point free linear system $|2L^*-\hat R|$ on
$\hat W$, and the map
$\phi^{-1}$ in~\eqref{equation-diagram-1} is defined by the family of quadrics
in $\PP^4$ passing through
$\Gamma$.

We have: $\OOO_{\hat R}(\hat R)
=\OOO_{\PP(\NNN^\vee_{\Gamma/\PP^4})}(-1)$. Since
$\OOO_{\hat{\Pi}_\gamma}(L^*)=\OOO_{\hat{\Pi}_\gamma}$, we obtain

\begin{equation*}
\OOO_{\hat{\Pi}_\gamma}(H^*)=\OOO_{\hat{\Pi}_\gamma}(2L^*-\hat
R)\cong\OOO_{\PP^2}(1).
\end{equation*}
It follows that
\begin{equation*}
H^2\cdot\Pi_\gamma=(H^*)^2\cdot\hat{\Pi}_\gamma=1.
\end{equation*}
Therefore,
$\{\Pi_\gamma\}$ is a family of planes in $\PP^7$ that covers $R$. However,
there is just one such family, see~\xref{def:skew-symmetric}. This proves
\xref{lem:hatR-b}.

\xref{lem:hatR-c}
$\hat R$ is smooth being an exceptional divisor of the blowup $\varphi$ with a
smooth center. Hence to establish~\xref{lem:hatR-c} it
suffices to show that
$\rho|_{\hat R}:\hat R\to R$ is a finite birational morphism.
Suppose that $\varphi|_{\hat R}$ contracts a curve $J\subset\hat R$.
Clearly, $J$ is not contained in a fiber $\cong\PP^2$ of $\rho|_{\hat R}$.
Therefore, $J$ meets any fiber of $\rho|_{\hat R}$.
But then the planes $\Pi_\gamma$ should pass all through a common point
$\rho(J)$, which
is not the case (see~\xref{def:skew-symmetric}). Since $\rho$ is a birational
morphism of smooth varieties,
$\rho|_{\hat R}$ is birational.

\xref{lem:hatR-d}
By construction, $\hat\Xi=\hat E\cap\hat R$ is the exceptional divisor of
the blowup $\varphi|_{\hat E}:\hat E\to\langle\Gamma\rangle\cong\PP^3$
with center $\Gamma$. Since
$\NNN_{\Gamma/\PP^3}\cong\OOO_{\PP^1}(5)\oplus\OOO_{\PP^1}(5)$
(see e.g.~\cite{Eisenbud-van-de-Ven-1981}), we have
$\hat E\cap\hat
R\cong\PP\bigl(\NNN^\vee_{\Gamma/\PP^3}\bigr)\cong\PP^1\times\PP^1$.
On $\hat W$ one has ${L^*}^2\cdot\hat R^2=0={L^*}^3\cdot\hat R$.
By~\eqref{eq:exprim} and~\cite[Lem.~2.3, Lem.~4.6]{Prokhorov-Zaidenberg-4-Fano}
the degree of the morphism $\rho|_{\hat\Xi}:\hat\Xi\to\Xi$ equals
\begin{equation*}
{H^*}^2\cdot\hat E\cdot\hat R=(2L^*-\hat R)\cdot (L^*-\hat R)\cdot\hat R=5\hat
R^3\cdot L^*-\hat R^4=2.
\end{equation*}
It follows that
$\rho|_{\hat\Xi}:\hat\Xi\to\Xi$ is equivalent to a covering
$\PP^1\times\PP^1\to\PP^2$ of degree $2$ branched along a smooth conic.
Furthermore, $\varphi|_{\hat\Xi}:\hat\Xi\to\Gamma$ is equivalent to a
canonical projection $\PP^1\times\PP^1\to\PP^1$, with fibers being the
lines $\hat\Pi_\gamma\cap\hat\Xi$. These lines are sent by $\rho$
to the tangent lines $\Pi_\gamma\cap\Xi$ of $\Upsilon$. Hence $\Upsilon$ is
the branching divisor of $\hat\Xi\to\Xi$ and
the
image of the ramification divisor of type $(1,1)$ on
$\hat\Xi\cong\PP^1\times\PP^1$. Now~\xref{lem:hatR-d} follows.
\end{proof}

\begin{lemma}
\label{lem:Gamma}
There exists a smooth rational
curve $\hat\Gamma\subset\hat R$ such that

\begin{itemize}

\item
the restrictions $\varphi|_{\hat\Gamma}:\hat\Gamma\to\Gamma$ and 
$\rho|_{\hat\Gamma}:\hat\Gamma\to\rho(\hat\Gamma)$
are isomorphisms;

\item
$\rho(\hat\Gamma)\subset R$ is a twisted cubic;

\item
$\hat E\cap\hat\Gamma=\emptyset$ and
$\bigl\langle\rho(\hat\Gamma)\bigr\rangle\cap\Xi=\emptyset$.
\end{itemize}
\end{lemma}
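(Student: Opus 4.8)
The plan is to realize $\hat\Gamma$ as a distinguished section of the $\PP^2$-bundle $\varphi|_{\hat R}\colon\hat R\to\Gamma$: the section singling out, over each point of $\Gamma$, the normal direction that leaves the linear span $\langle\Gamma\rangle\cong\PP^3$. First I would pin down the bundle. By Lemma~\xref{lem:hatR} one has $\hat R=\PP(\NNN_{\Gamma/\PP^4}^\vee)$, and $\NNN_{\Gamma/\PP^3}\cong\OOO_{\PP^1}(5)^{\oplus 2}$ by~\xref{lem:hatR-d}. Since $\Gamma\subset\langle\Gamma\rangle=\PP^3\subset\PP^4$, the normal sequence reads
\[
0\to\NNN_{\Gamma/\PP^3}\to\NNN_{\Gamma/\PP^4}\to\NNN_{\PP^3/\PP^4}|_\Gamma\to 0,
\]
with $\NNN_{\PP^3/\PP^4}|_\Gamma=\OOO_{\PP^3}(1)|_\Gamma\cong\OOO_{\PP^1}(3)$. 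As $\operatorname{Ext}^1(\OOO(3),\OOO(5)^{\oplus2})=H^1(\PP^1,\OOO(2)^{\oplus2})=0$, this sequence splits, so $\NNN_{\Gamma/\PP^4}\cong\OOO(5)^{\oplus2}\oplus\OOO(3)$ and the summand $\NNN_{\PP^3/\PP^4}|_\Gamma\cong\OOO(3)$ is a sub-line-bundle of $\NNN_{\Gamma/\PP^4}$. I let $\hat\Gamma\subset\hat R$ be the section of $\varphi|_{\hat R}$ determined by this sub-line-bundle; being a section over $\Gamma\cong\PP^1$, it is a smooth rational curve carried isomorphically onto $\Gamma$ by $\varphi$.

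Next I would check $\hat\Gamma\cap\hat E=\emptyset$. Since $\hat\Gamma\subset\hat R$, this is equivalent to $\hat\Gamma\cap\hat\Xi=\emptyset$, where $\hat\Xi=\hat E\cap\hat R$. By~\xref{lem:hatR-d}, $\hat\Xi$ is the exceptional divisor of the blowup $\varphi|_{\hat E}\colon\hat E\to\PP^3$ of $\Gamma$, that is $\hat\Xi=\PP(\NNN_{\Gamma/\PP^3}^\vee)$. Fibrewise over $\gamma\in\Gamma$, the fibre $\hat\Pi_\gamma\cong\PP^2$ is the projectivized normal space $\PP(\NNN_{\Gamma/\PP^4,\gamma})$; inside it $\hat\Xi$ cuts the line $\PP(\NNN_{\Gamma/\PP^3,\gamma})$ of directions tangent to $\PP^3$, while $\hat\Gamma$ meets it in the single point $\PP(\NNN_{\PP^3/\PP^4,\gamma})$. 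Because these two subbundles are complementary direct summands, that point lies off that line for every $\gamma$, whence $\hat\Gamma\cap\hat\Xi=\emptyset$. Consequently $\rho$ maps $\hat\Gamma$ isomorphically onto a curve $C:=\rho(\hat\Gamma)\subset R$ with $C\cap\Xi=\emptyset$, and $\varphi|_{\hat\Gamma}\colon\hat\Gamma\to\Gamma$ is an isomorphism.

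It remains to identify $C$ as a twisted cubic and to verify $\langle C\rangle\cap\Xi=\emptyset$. For the degree I would use $H^*\sim2L^*-\hat R$ from~\eqref{eq:exprim}: since $\varphi|_{\hat\Gamma}$ is an isomorphism, $L^*\cdot\hat\Gamma=\deg\Gamma=3$; and restricting $\OOO_{\hat R}(\hat R)=\OOO_{\PP(\NNN^\vee)}(-1)$ to the section cut out by $\OOO(3)\hookrightarrow\NNN_{\Gamma/\PP^4}$ yields $\OOO_{\PP^1}(3)$, so $\hat R\cdot\hat\Gamma=3$ and $\deg C=H^*\cdot\hat\Gamma=2\cdot3-3=3$. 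As $C\cong\hat\Gamma\cong\PP^1$ is smooth, it is a smooth rational curve of degree $3$; such a curve is automatically nondegenerate and spans exactly a $\PP^3$, i.e.\ it is a twisted cubic. Finally, the linear projection $\phi\colon W\dashrightarrow\PP^4$ with centre $\Xi$ carries $C$ isomorphically onto $\Gamma$, and being linear it sends $\langle C\rangle$ onto $\langle\Gamma\rangle=\PP^3$. For the $3$-plane $\Lambda=\langle C\rangle$ one has $\dim\phi(\Lambda)=\dim\Lambda-\dim(\Lambda\cap\Xi)-1$, so $\dim\phi(\Lambda)=3$ forces $\Lambda\cap\Xi=\emptyset$, i.e.\ $\langle\rho(\hat\Gamma)\rangle\cap\Xi=\emptyset$.

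The hard part is to upgrade a merely numerical disjointness to a genuine, set-theoretic one: an arbitrary section with $\hat\Gamma\cdot\hat\Xi=0$ need not avoid $\hat\Xi$, and it is precisely the choice of $\hat\Gamma$ as the complementary summand $\NNN_{\PP^3/\PP^4}|_\Gamma$ — fibrewise the unique normal direction exiting $\PP^3$ — that forces honest disjointness. The only other delicate point is the nondegeneracy of $C$, which I settle using that a smooth rational cubic cannot lie in a plane.
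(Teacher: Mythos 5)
Your proof is correct, and it reaches the lemma by a genuinely different route from the paper's. The paper argues synthetically: fix $P\in\PP^4\setminus\langle\Gamma\rangle$, let $N=\Cone(\Gamma,P)$ with proper transform $\hat N\subset\hat W$, and set $\hat\Gamma=\hat N\cap\hat R$. Since $\Gamma\subset N$ is a Cartier divisor, $\varphi|_{\hat N}$ is an isomorphism; since $N$ meets $\langle\Gamma\rangle$ transversely along $\Gamma$, the disjointness $\hat\Gamma\cap\hat E=\emptyset$ is immediate; the twisted-cubic claim needs no intersection numbers, because $\phi|_{\rho(\hat\Gamma)}$ is a degree-preserving isomorphism onto $\Gamma$; and the span condition is settled by secants (a point of $\langle\rho(\hat\Gamma)\rangle\cap\Xi$ would force a $2$-secant line of the cubic through the center of projection, contradicting injectivity of $\phi|_{\rho(\hat\Gamma)}$). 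Your bundle-theoretic substitute is sound at every step: the $\operatorname{Ext}^1$-vanishing does split $0\to\NNN_{\Gamma/\PP^3}\to\NNN_{\Gamma/\PP^4}\to\NNN_{\PP^3/\PP^4}|_\Gamma\to 0$; fibrewise complementarity of the resulting $\OOO_{\PP^1}(3)$-subbundle to $\NNN_{\Gamma/\PP^3}$ yields set-theoretic (not merely numerical) disjointness from $\hat\Xi=\PP\bigl(\NNN_{\Gamma/\PP^3}^\vee\bigr)$, which is exactly the delicate point you flag; the degree computation is right, since the section $\sigma$ attached to a subbundle $M\subset\NNN_{\Gamma/\PP^4}$ satisfies $\sigma^*\OOO_{\PP(\NNN_{\Gamma/\PP^4}^\vee)}(-1)=M$, giving $\hat R\cdot\hat\Gamma=3$ and hence $H^*\cdot\hat\Gamma=3$; and a smooth rational space cubic is indeed nondegenerate in a $\PP^3$ (a smooth plane cubic has genus one). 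The two constructions are secretly the same: the tangent planes of the cone $N$ along $\Gamma$ cut out in $\NNN_{\Gamma/\PP^4}$ a line subbundle complementary to $\NNN_{\Gamma/\PP^3}$ (the ruling through $\gamma$ points out of $\PP^3$ because $P\notin\PP^3$), so each vertex $P$ manufactures one of your splittings. What the paper's version buys is explicitness and equivariance: the same cone, taken with an invariant vertex, recurs in the proof of Proposition~\xref{prop:quintic-scroll}, and the torus-equivariant form of your splitting is exactly what is performed later in the proof of Proposition~\xref{proposition-GL2-action-c}\xref{proposition-GL2-action-c-d}; what yours buys is the whole family of admissible sections at once, of which the cones form a subfamily. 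One phrasing correction: $\NNN_{\PP^3/\PP^4}|_\Gamma$ is canonically a quotient of $\NNN_{\Gamma/\PP^4}$, not a sub-line-bundle; what you actually use (and should say) is the image of a chosen splitting map $\OOO_{\PP^1}(3)\to\NNN_{\Gamma/\PP^4}$ — splittings form a torsor under $\operatorname{Hom}\bigl(\OOO_{\PP^1}(3),\NNN_{\Gamma/\PP^3}\bigr)\cong H^0(\PP^1,\OOO_{\PP^1}(2))^{\oplus 2}\neq 0$, so there is no distinguished one, but any choice serves.
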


\begin{proof}
Fix a point $P\in\PP^4\setminus\langle\Gamma\rangle$. Let $N\subset\PP^4$ be
the cone over $\Gamma$ with vertex $P$, and let $\hat N$ be the proper
transform of $N$
in $\hat W$. Consider the curve $\hat\Gamma=\hat
N\cap\hat R$. We claim that $\hat\Gamma$ satisfies the conditions of the
lemma.
Indeed, since $\Gamma\subset N$ is a Cartier divisor,
$\varphi|_{\hat N}:\hat N\to N$ is an isomorphism, and
$\varphi(\hat\Gamma)=\Gamma$. Hence $\varphi|_{\hat\Gamma}:
\hat\Gamma\to\Gamma$ is an isomorphism as well.

Furthermore, one has $\Gamma=N\cap\langle\Gamma\rangle$,
and $N$ meets $\langle\Gamma\rangle$ transversely
along $\Gamma$.
It follows that
$\hat E$ and $\hat\Gamma$ are disjoint. Since $\hat E$ is the
exceptional divisor of $\rho$ and $\rho(\hat E)=\Xi$, we conclude that
$\rho|_{\hat\Gamma}:\hat\Gamma\to\rho(\hat\Gamma)$ is an isomorphism,
and $\rho(\hat\Gamma)\cap\Xi=\emptyset$. Moreover, since $\rho(\hat\Gamma)$
does not meet the center $\Xi$ of the birational projection $\phi:
W\dashrightarrow\PP^4$, the restriction 
$\phi|_{\rho(\hat\Gamma)}:\rho(\hat\Gamma)\to\Gamma$ is an isomorphism 
preserving the degree. Therefore,
$\rho(\hat\Gamma)$ is a twisted cubic, since $\Gamma\subset\PP^4$ is.

Finally, if $\Xi$ meets $\langle\rho(\hat\Gamma)\rangle\cong\PP^3$,
then $\rho(\hat\Gamma)$ has a $2$-secant line meeting $\Xi$.
However, in the latter case $\phi|_{\rho(\hat\Gamma)}:\rho(\hat\Gamma)\to
\Gamma$
cannot be an isomorphism, a contradiction.
\end{proof}

\section{Linking the Fano-Mukai fourfolds $V_{18}$ to $W_5$}
\label{sec-4}

Any smooth Fano-Mukai fourfold $V_{18}$ of genus 10 can be obtained starting
with $W_5$ via a
Sarkisov link. This link can be described as follows.

\begin{proposition}
\label{prop:reversion}
Let $V=V_{18}\subset\PP^{12}$ be a smooth Fano-Mukai fourfold, and let 
$S\subset V$ be either a smooth two-dimensional cubic scroll, or a cone over a 
rational twisted cubic curve\footnote{Such cones are the only cubic cones in 
$V$, see~\xref{def:cubic-cones}. Hence in the sequel we call them simply cubic 
cones. We will show that any smooth variety $V_{18}$ contains a cubis scroll or a cubic cone.}. 
Then $V\cap\langle S\rangle=S$ as a scheme, and there is a Sarkisov 
link
\begin{equation}
\label{diagram-2}
\vcenter{
\xymatrix@1{
&\tilde B\ar[dl]\,\ar@{}[r]|-*[@]{\subset}& &
\tilde W\ar[dr]^{\eta}\ar[dl]_{\xi}&\,\ar@{}[r]|-*[@]{\supset}&\tilde A\ar[dr]
\\
S\,\ar@{}[r]|-*[@]{=}&\langle S\rangle\cap V\subset A\,
\ar@{}[r]|-*[@]{\subset}&
V\ar@{-->}[rr]^{\theta}&& W\,\ar@{}[r]|-*[@]{\supset}& B=W\cap\langle
F\rangle\,\ar@{}[r]|-*[@]{\supset}&F
}}
\end{equation}
where
\begin{itemize}

\item
$\xi$ is the blowup of $S$ in $V$ with exceptional divisor $\tilde B$;

\item
$\eta$ is the blowup of a smooth rational quintic scroll $F\subset W$
with exceptional divisor $\tilde A$;

\item
$\xi$ sends $\tilde A$ to a hyperplane section $A$ of $V$ with
$\Sing(A)=S$, and $\eta$ sends $\tilde B$ to the hyperplane section
$B=W\cap\langle
F\rangle$;

\item
the map $\theta: V\dashrightarrow W=W_5$
comes from the linear projection $\PP^{12}\dashrightarrow\PP^7$
with center $\langle S\rangle\cong\PP^4$.
\end{itemize}
\end{proposition}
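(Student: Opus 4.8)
The plan is to realise the birational map $\theta$ as the linear projection of $V\subset\PP^{12}$ with centre the $4$-plane $\langle S\rangle$, and to resolve it by blowing up $S$. Since $S$ is a cubic surface (a smooth scroll or a cone over a twisted cubic), $\langle S\rangle\cong\PP^4$ and the hyperplanes of $\PP^{12}$ containing $\langle S\rangle$ form a $\PP^7$, cutting out on $V$ exactly the members of $|H_V|$ through $S$; here $H_V$ denotes the hyperplane class of $V$, with $-K_V=2H_V$ and $H_V^4=\deg V=18$.

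The first and most delicate step is the scheme-theoretic equality $V\cap\langle S\rangle=S$. The homogeneous ideal of the cubic surface $S$ in $\langle S\rangle\cong\PP^4$ is generated by the three quadrics given by the $2\times2$ minors of a $2\times3$ matrix of linear forms, and $V$ is an intersection of quadrics. I would therefore check, by a dimension count, that the quadrics vanishing on $V$ restrict onto $\langle S\rangle$ precisely to the net of quadrics through $S$; this forces $V\cap\langle S\rangle$ to be reduced, of pure codimension $2$ in $V$, and equal to $S$. \emph{This is the main obstacle.} In the scroll case it is essentially a transversality computation, whereas in the cone case one must in addition rule out an embedded component at the vertex of $S$ and verify that the total space of the blowup stays smooth there; this calls for a local analysis at the singular point.

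Granting this, let $\xi\colon\tilde W\to V$ be the blowup of $S$ with exceptional divisor $\tilde B$, and put $H=\xi^{*}H_V$. Because $V\cap\langle S\rangle=S$ holds scheme-theoretically and with multiplicity one, the proper transforms of the hyperplanes through $\langle S\rangle$ form the base-point-free system $|H-\tilde B|$; it defines a morphism $\eta\colon\tilde W\to W\subset\PP^{7}$ resolving $\theta$, with $L:=\eta^{*}\OOO_{\PP^7}(1)=H-\tilde B$. A direct computation of top self-intersections on $\tilde W$, using $H^4=18$, the vanishing $H^3\cdot\tilde B=0$, and the Chern numbers of $\NNN_{S/V}$, gives $L^4=(H-\tilde B)^4=5$. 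As a nondegenerate fourfold in $\PP^7$ has degree at least $4$, the equality $5=(\deg\theta)\cdot\deg W$ forces $\deg\theta=1$, so $\eta$ is birational and $\deg W=5$. Moreover $\rk\Pic(W)=\rk\Pic(\tilde W)-1=1$, and pushing $K_{\tilde W}=\xi^{*}K_V+\tilde B=-2H+\tilde B$ forward along $\eta$ (using $\eta_{*}\tilde B=H_W$, which follows from $L^3\cdot\tilde B=5$) yields $-K_W=3L$. Thus $W$ is a del Pezzo quintic fourfold, which is unique up to isomorphism \cite{Fujita-620281}, and $W\cong W_5$.

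It remains to describe $\eta$ and to track the exceptional divisors. Writing $K_{\tilde W}=\eta^{*}K_W+\tilde A=-3L+\tilde A$ and comparing with $K_{\tilde W}=-2H+\tilde B$, together with $L=H-\tilde B$, yields
\[
\tilde A=L-\tilde B=H-2\tilde B .
\]
This class is $\eta$-exceptional; an intersection-number computation identifies $\eta$ as the blowup of a smooth rational normal quintic scroll $F\subset W$, so that $\tilde W=\mathrm{Bl}_F W$ is smooth (which in turn confirms the blowup description of $\xi$ even when $S$ is a cone). Such a scroll spans a hyperplane $\langle F\rangle\cong\PP^{6}\subset\PP^{7}$, whence $B:=W\cap\langle F\rangle$ is a hyperplane section of $W$. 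Finally, pushing forward one computes $\eta_{*}\tilde B=H_W$ and $\xi_{*}\tilde A=H_V$, so that $\eta(\tilde B)=B$ and $\xi(\tilde A)=A$ are hyperplane sections; and since $\tilde A=H-2\tilde B$ has multiplicity two along the exceptional divisor $\tilde B$, the image $A$ is double along $S$, i.e.\ $\Sing(A)=S$. This assembles diagram~\eqref{diagram-2}.
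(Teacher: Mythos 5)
Your skeleton is the same as the paper's: project from $\langle S\rangle$, resolve by the blowup $\xi$ of $S$, compute $(H-\tilde B)^4=5$ to obtain birationality and $\deg W=5$ (this degree argument is fine and is exactly the paper's), then identify $W\cong W_5$ and read off the divisor classes. But the two steps where the real content lies are not proven. For the scheme equality $V\cap\langle S\rangle=S$ you only say you ``would check by a dimension count'' that the quadrics through $V$ restrict to the full net of quadrics through $S$, and in the cone case you propose a local analysis at the vertex. The actual argument (Claim~\xref{claim:they-coinside}) is short, global, and uniform in both cases: the restricted system $\mathcal{Q}'$ has vector-space dimension $2$ or $3$, since it cuts out a two-dimensional scheme and lies in the three-dimensional space of quadrics through $S$; if $\dim\mathcal{Q}'=2$, the two quadrics cut out a degree-four complete intersection $S\cup\Pi$ in $\langle S\rangle\cong\PP^4$, so the residual plane $\Pi$ would lie in $V$, which is impossible because a Fano--Mukai fourfold of genus $10$ contains no plane \cite{KapustkaRanestad2013}. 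This exclusion of a residual plane is the missing idea; no transversality or vertex analysis enters.

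The second gap is more serious. You assert $\rk\Pic(W)=\rk\Pic(\tilde W)-1=1$, you write the discrepancy formula $K_{\tilde W}=\eta^{*}K_W+\tilde A$, and you claim that ``an intersection-number computation identifies $\eta$ as the blowup of a smooth rational normal quintic scroll'' --- but each of these presupposes exactly what must be proven, namely that $\eta$ is a divisorial extremal contraction which is the blowup of a smooth surface in a smooth fourfold; intersection numbers alone cannot deliver that. The paper's route is Mori-theoretic: $-K_{\tilde W}=(H-\tilde B)+H$ is ample and $\rk\Pic(\tilde W)=2$, so the Stein factorization $\eta'$ is either an isomorphism or an extremal Mori contraction. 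To rule out the isomorphism one needs a contracted curve; this is why the paper first proves (Claim~\xref{claim:3.3.2}) that $V$ contains a line $l$ meeting $S$, whose proper transform $\tilde l$ satisfies $(H-\tilde B)\cdot\tilde l=0$. Next, two-dimensional fibers of $\eta'$ are excluded using \cite{Andreatta1998a} together with the fact that $V$ contains neither a plane nor a quadric surface \cite{KapustkaRanestad2013}; only then does the main theorem of \cite{Andreatta1998a} give that the target and the exceptional divisor are smooth and that $\eta'$ is the blowup of a smooth surface (which, incidentally, is also what settles the smoothness of $\tilde W$ in the cone case, a point you left hanging). After that, Fujita's classification \cite{Fujita-620281} identifies the target as the quintic del Pezzo fourfold, and very ampleness of its hyperplane class shows the Stein map is an isomorphism, so $W$ itself is $W_5$. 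Without the line, the fiber-dimension bound, and the contraction theorem, your argument has no control on $\eta$: a priori it could be a small contraction or contract a divisor to a point or a curve, with singular image, and then the formulas you use for $K_W$, $\Pic(W)$, and $F$ have no justification.
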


\begin{proof}
The proof proceeds in several steps.

\begin{sclaim}
\label{claim:they-coinside}
We have $V\cap\langle S\rangle=S$ as a scheme.
\end{sclaim}

\begin{proof}
It is well known that any linearly non-degenerate surface of degree $3$
in $\PP^4$
is an intersection of three quadrics.
On the other hand, $V=V_{18}\subset\PP^{12}$ is an intersection of
quadrics too (see~\cite[Lem.~2.10]{Iskovskih1977a}).
Let $\mathcal{Q}\subset H^0(\PP^{12},\OOO_{\PP^{12}}(2))$ be the linear system 
of quadrics passing through $V$, and let
$\mathcal{Q}'$
be the restriction of $\mathcal{Q}$ to $\langle S\rangle$.
Then $\mathcal{Q}'$ cuts out on $\langle S\rangle$ a scheme $V\cap\langle
S\rangle$ of dimension $2$ containing $S$. Hence $2\le\dim\mathcal{Q}'\le 3$.
Suppose that $\dim\mathcal{Q}'=2$, that is, $\mathcal{Q}'$ is
generated by two linearly independent quadrics $Q_1, Q_2\in\mathcal{Q}'$. Then
$Q_1\cap Q_2=S\cup\Pi$, where $\Pi\subset V$ is the residual plane.
However, a Fano-Mukai fourfold of genus $10$ does not contain any plane
(\cite[Lem.~3]{KapustkaRanestad2013}).
Hence $\dim\mathcal{Q}'=3$. So, any
quadric in $\langle S\rangle$ containing $S$ is a member of $\mathcal{Q}'$.
Since these quadrics cut out in $\PP^4$ the surface $S$,
it follows that $S=V\cap\langle S\rangle$ (as a scheme).
\end{proof}

\begin{sclaim}
\label{claim:3.3.2}
There exists a line $l$ on $V$
meeting $S$.
\end{sclaim}

\begin{proof}
Let $L=V\cap\PP^{11}$ be a general hyperplane section of $V$. Then $L$ meets
$S$ along a twisted cubic curve, say, $\Gamma'$.
By the adjunction formula and the Lefschetz hyperplane section theorem,
$L\subset\PP^{11}$ is an anticanonically embedded Fano threefold
with $\Pic L=\ZZ\cdot K_{L}$.
It is known (\cite[Prop.~4.2.2]{Iskovskikh-Prokhorov-1999}) that
$L\subset\PP^{11}$ carries a one-parameter family of lines. Since $\Pic
L\cong\ZZ$, the surface $T\subset L$ swept up by these lines meets $\Gamma'$. 
Now the claim follows.
\end{proof}

The next claim concludes the proof of Proposition~\xref{prop:reversion}.

\begin{sclaim}
Consider the linear projection $\theta:\PP^{12}\supset
V\dashrightarrow W:=\theta(V)\subset\PP^7$ with center
$\langle S\rangle\cong\PP^4$. Consider also the blowup $\xi:\tilde W\to V$ with 
center $S$.
Then $W\subset\PP^7$ is a smooth del Pezzo quintic fourfold, and there is a
morphism $\eta:\tilde W\to W$ such that these objects and morphisms fit in
diagram~\eqref{diagram-2}.
\end{sclaim}

\begin{proof}
Let $\tilde B$ be the exceptional divisor of the blowup $\xi:\tilde W\to V$ with 
center $S$. One can check that $\tilde W$ is smooth.
Let $L$ be a hyperplane section of $V$. Consider
the proper transform $|L^*-\tilde B|$ on $\tilde W$ of the linear system
of hyperplanes in $\PP^{12}$
passing through $S$. It is nef and base point free. The morphism
$\eta=\Phi_{|L^*-\tilde B|}:\tilde W\to W\subset\PP^7$ resolves
indeterminacies of $\theta$.
Using~\cite[Lem.~2.3]{Prokhorov-Zaidenberg-4-Fano} one computes
$(L^*-\tilde B)^4=5$ (cf.~\eqref{eq:zero-intersection} and the
subsequent paragraph).
Thus, $\deg W=5$, and so, $\eta$ is birational. The divisor
$-K_{\tilde W}=(L^*-\tilde B)+L^*$ is ample being the sum of two
non-proportional nef divisors. Let $\eta:\tilde W
\stackrel{\eta'}{\longrightarrow} W'\longrightarrow W$ be the Stein
factorization. Then $\eta'$ is either an isomorphism, or an extremal Mori 
contraction.
Let, as before, $H$ be a hyperplane section of $W\subset\PP^7$, and let
$H^*=\eta^*H$. Thus, $H^*\sim L^*-\tilde B$ on $\tilde W$.

Let $\tilde l\subset\widetilde W$ be the proper transform of a line $l\subset
V$ which meets $S$
properly, see~\xref{claim:3.3.2}. Then $\tilde B\cdot\tilde l\ge 1$ and
$L^*\cdot\tilde l=1$. Hence
\begin{equation*}
H^*\cdot\tilde l=(L^*-\tilde B)\cdot\tilde l\le
0.
\end{equation*}
Since the linear system $|H^*|$ is base point free, we have
\begin{equation}
\label{eq:decompos-0}
H^*\cdot\tilde l=0,\qquad\tilde B\cdot\tilde l=1,\qquad -K_{\tilde W}\cdot\tilde 
l=1.
\end{equation}
It follows that a non-ample, nef divisor $H^*$ supports an extremal ray $\tilde
l$ contracted by $\eta$.
In particular, $\eta'$ is not an isomorphism.

Suppose that $\eta'$ has a two-dimensional fiber, say, $\tilde M$.
By the main theorem
in~\cite{Andreatta1998a}, $\tilde M$ is isomorphic either to $\PP^2$, or to
a quadric $Q\subset\PP^3$. Moreover, $\OOO_{\tilde M}(-K_{\tilde W})\cong
\OOO_{\PP^2}(1)$, or,
respectively, $\OOO_{\tilde M}(-K_{\tilde W})\cong\OOO_{Q}(1)$
(\cite[Prop.~4.11]{Andreatta1998a}). On the other hand,
$\OOO_{\tilde M}(-K_{\tilde W})\cong\OOO_{\tilde M}(L^*)$
(cf.~\eqref{eq:3}).
Therefore, $\xi(\tilde M)$ is
either a plane, or a quadric surface in $V$.
However, $V$ contains neither a plane, nor a quadric surface, see~\cite[Lem.~3
and Cor.~3]{KapustkaRanestad2013}.
Thus, $\eta'$ has no two-dimensional fiber, and so, it is not flipping
(see e.g.~\cite[Main Theorem]{Andreatta1998a}).
Thus,
$\eta'$ contracts a divisor, say $\tilde A\subset\tilde W$, to a surface, say,
$F'=\eta'(\tilde A)\subset W'$. Again by the
main theorem
in~\cite{Andreatta1998a}, $W'$ and $\tilde A$ are smooth, and $\eta':\tilde
W\to W'$ is the blowup of a smooth surface $F'\subset W'$.

We have $-K_{\tilde W}=2H^*+\tilde B$. Set $H'=\eta'_* H^*$ and
$B'=\eta'_*\tilde B$. Then $-K_{W'}=2H'+B'$. Since $\rk\Pic W'=1$, the Fano
index of $W'$ is at least $3$.
Since ${H'}^4={H^*}^4=5$, $H'$ is not divisible. It follows from the
classification that $W'$ is a del Pezzo fourfold of degree $5$ and $H'\sim B'$ 
(\cite{Fujita-620281}).
Moreover, $H'=-\frac{1}{3}K_{W'}$ is very ample.
Hence $W'\to W$ is an isomorphism. Thus we have $-K_W\sim 3H$ and
\begin{equation}
\label{eq:decompos-2}
-K_{\tilde W}\sim 3H^*-\tilde A\sim 2H^*+\tilde B,\quad\tilde B\sim H^*-\tilde 
A.
\end{equation}

It remains to show that the smooth surface $F\subset W$ is a rational quintic
scroll. On $\tilde W$
we have $\tilde B\sim H^* -\tilde A$ due to~\eqref{eq:decompos-2}.
Since $V$ is smooth and $S\subset V$ has at worst isolated singularities,
the intersection numbers $(L^*)^i\cdot{\tilde B}^{4-i}$ for $i>0$ can be
computed in a usual way (see, 
e.g.,~\cite[Lem.~2.3]{Prokhorov-Zaidenberg-4-Fano}). This gives:
\begin{equation}
\label{eq:inters-indices}
(L^*)^4=18,\quad (L^*)^3\cdot\tilde B=0,\quad (L^*)^2\cdot\tilde B^2=-3,
\quad L^*\cdot\tilde B^3=-1.
\end{equation}
Since $H^*\sim L^*-\tilde B$ and, by~\eqref{eq:decompos-2},
\begin{equation}
\label{eq:inters-ALB}
\tilde A\sim L^*-2\tilde B,
\end{equation}
then the equality
$(H^*)^3\cdot\tilde A=0$ reads
\begin{equation}
\label{eq:zero-intersection}
(L^*-\tilde B)^3\cdot (L^*-2\tilde B)=0.
\end{equation}
Expressing ${\tilde B}^4$ via the intersection numbers
$(L^*)^i\cdot{\tilde B}^{4-i}$, $i=1,\ldots,4$, from~\eqref{eq:inters-indices}
and~\eqref{eq:zero-intersection} one gets ${\tilde B}^4=1$.
Furthermore,
\begin{equation*}
\deg F=-(H^*)^2\cdot{\tilde A}^{2}=-(L^*-\tilde B)^2\cdot (L^*-2\tilde B)^2=5.
\end{equation*}
Since $\tilde B\sim H^* -\tilde A$, the divisor $B:=\eta(\tilde B)$ is a
hyperplane section of
$W$ passing through $F$.
For any such hyperplane section, say, $D$, the proper
transform $\tilde D$ of $D$ in $\tilde W$ belongs to the linear system $|B|$.
However, this linear system contains just a single member $\tilde B$. Indeed,
the $\xi$-exceptional divisor $\tilde B$ is covered by lines
with negative intersection with $\tilde B$, and so, is not movable.
Therefore, there is just one hyperplane
section $B$ in $W$ through $F$, that is, $\langle F\rangle\cap W=B$.
Finally, $F\subset\langle F\rangle\cong\PP^6$ is a linearly nondegenerate
surface of minimal degree, hence a rational quintic scroll, see
\cite[Thm.~1]{Eisenbud1987}.
\end{proof}

This ends the proof of Proposition~\xref{prop:reversion}.
\end{proof}

\section{Linking $W_5$ to $V_{18}$}
\label{sec-3}

\begin{notation}
\label{nota:quintic}
Consider again a smooth del Pezzo
quintic fourfold $W=W_5\subset\PP^7$ with a distinguished hyperplane
section
$R$ of $W$
swept
up by the planes contained in $W$, with a distinguished plane $\Xi=\Sing
R\cong\PP^2$ and a distinguished smooth conic $\Upsilon\subset\Xi$.
Let $F$ be a smooth rational quintic scroll in $W$, not necessarily contained
in
$R$.
Let $\eta:\tilde W\to W$ be the blowup with center $F$ and exceptional
divisor $\tilde A$. Consider the hyperplane section $B=W\cap\langle F\rangle$
of $W$, and let $\tilde B$ be the
proper transform of $B$ in $\tilde W$.
Clearly, $B$ is smooth in codimension $1$, and so, $\tilde B\sim H^*-\tilde A$
on $\tilde W$, where $H$ is the class of a
hyperplane section of $W$.
\end{notation}

\begin{sremark}
\label{remark-quintic-scroll}
Recall that a rational normal quintic scroll $F$ spans $\PP^6$. There exist such 
scrolls of two different kinds, namely,
\begin{enumerate}
\item
\label{remark-quintic-scroll-F1}
$F\cong\FF_1$, where the embedding $\FF_1\hookrightarrow\PP^6$ is given by
the
linear system $|s_0+3f|$;

\item
\label{remark-quintic-scroll-F3}
$F\cong\FF_3$, where the embedding $\FF_3\hookrightarrow\PP^6$ is given by
the
linear system $|s_0+4f|$,
\end{enumerate} with the exceptional section $s_0$ and a fiber $f$.
\end{sremark}

The following proposition is an analog of Theorem~2.1.a in
\cite{Prokhorov-Zaidenberg-2015}. The latter theorem is proven in
\cite{Prokhorov-Zaidenberg-2015} under
an additional assumption that the given rational quintic scroll $F\subset W$
does not
meet any plane in $W$ along a conic. We do not keep any longer this assumption.
Nonetheless, a part
of the proof of Theorem~2.1.a in~\cite{Prokhorov-Zaidenberg-2015} goes through
in our setup as well.

\begin{proposition}
\label{prop:link-2}
For any smooth rational quintic scroll $F\subset
\PP^7$ contained in $W$, the data $(W,F,\tilde W,\tilde A,\tilde B)$ as in
\xref{nota:quintic} fits in a Sarkisov link~\eqref{diagram-2}, where
\begin{enumerate}
\renewcommand\labelenumi{\rm (\alph{enumi})}
\renewcommand\theenumi{\rm (\alph{enumi})}
\item
\label{prop:link-2-a}
$V=V_{18}\subset\PP^{12}$ is a Fano-Mukai fourfold of genus $10$ with at
worst
ordinary double points as singularities;

\item
\label{prop:link-2-b}
$\xi$ is a birational Mori contraction that contracts the divisor $\tilde B$
to a surface $S=V\cap\langle S\rangle$, and sends $\tilde A$ to a hyperplane
section $A$ of $V$ with $\Sing(A)=S$;

\item
\label{prop:link-2-c}
$S$ is either
a smooth rational cubic scroll, or a cone over a
rational normal twisted cubic curve;

\item
\label{prop:link-2-d}
\label{lemma-classification-singular-fibers-c}
$\xi$ has at most a finite number of two-dimensional fibers.
If $\tilde Y_1,\dots,\tilde Y_k$ are the two-dimensional fibers of $\xi$,
then
both $V$ and $S$ are smooth outside the finite set $\xi(\bigcup_i\tilde Y_i)$,
and $\xi|_{\tilde W\setminus\bigcup_i\tilde Y_i}:\tilde W\setminus\bigcup_i
\tilde Y_i\to
V\setminus\{\xi(\bigcup_i\tilde Y_i)\}$
is the blowup of $S\setminus\{\xi(\bigcup_i\tilde Y_i)\}$.
In particular, if $\xi$ has no two-dimensional fiber, then both $V$ and $S$ are
smooth,
and $\tilde W\to V$ is the blowup of $S$.
\end{enumerate}
\end{proposition}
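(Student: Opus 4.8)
The strategy is to run the two-ray game on the smooth fourfold $\tilde W$ and to recognize the second contraction as the reverse of the link in Proposition~\ref{prop:reversion}. Since $\eta$ is the blowup of the smooth surface $F$, of codimension two in the smooth fourfold $W$, the variety $\tilde W$ is smooth with $\Pic\tilde W=\ZZ H^*\oplus\ZZ\tilde A$, and from $-K_W\sim 3H$ one gets $-K_{\tilde W}\sim 3H^*-\tilde A$. Put $L^*:=2H^*-\tilde A$, so that $-K_{\tilde W}\sim L^*+H^*$. First I would check that both $L^*$ and $H^*$ are nef, equivalently that $-K_{\tilde W}$ is ample and $\tilde W$ is Fano (as in the proof of Theorem~2.1.a in \cite{Prokhorov-Zaidenberg-2015}); then $\NE(\tilde W)\subset\RR^2$ is spanned by two $K$-negative extremal rays, namely $R_\eta$ generated by a fibre of $\eta|_{\tilde A}\colon\tilde A\to F$ (on which $H^*$ vanishes), and a second ray $R_\xi$ on which $L^*$ vanishes. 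Let $\xi\colon\tilde W\to V:=\xi(\tilde W)$ be the contraction of $R_\xi$, defined by $|mL^*|$ for $m\gg 0$; the induced map $\theta=\eta\comp\xi^{-1}$ is the announced linear projection.

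Next I would identify $V$. Using the intersection formulae on the blowup of a smooth surface in a smooth fourfold (as in \cite[Lem.~2.3]{Prokhorov-Zaidenberg-4-Fano}, applied to $\eta$, where $\deg F=5$ gives $(H^*)^2\cdot\tilde A^2=-5$), a direct expansion of $L^*=2H^*-\tilde A$ and $\tilde B=H^*-\tilde A$ yields, exactly as in~\eqref{eq:inters-indices},
\[
(L^*)^4=18,\quad (L^*)^3\cdot\tilde B=0,\quad (L^*)^2\cdot\tilde B^2=-3,\quad L^*\cdot\tilde B^3=-1,
\]
together with $h^0(\tilde W,L^*)=13$ by Riemann--Roch and Kawamata--Viehweg vanishing. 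Since $(L^*)^4=18>0$, the class $L^*$ is big and $\xi$ is birational onto the fourfold $V\subset\PP^{12}$ of degree $18$; being a $K$-negative divisorial extremal contraction from a smooth fourfold, $V$ is a terminal Gorenstein Fano with $\rho(V)=1$. Moreover $\xi^*(2L)=4H^*-2\tilde A=-K_{\tilde W}+\tilde B$ with $L:=\xi_*L^*$, so $K_{\tilde W}=\xi^*K_V+\tilde B$ and $-K_V\sim 2L$; as $18$ is divisible by no nontrivial fourth power, $L$ generates $\Pic V\cong\ZZ$ and $\iota(V)=2$. Thus $V$ is a Fano-Mukai fourfold $V_{18}$ of genus $10$ with discrepancy $+1$ along $\tilde B$, which gives the numerical content of part (a).

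For part (b) I would show $\xi$ is divisorial with exceptional divisor $\tilde B$. A curve class $C$ generating $R_\xi$ satisfies $L^*\cdot C=0$, i.e.\ $\tilde A\cdot C=2\,H^*\cdot C$, and $H^*\cdot C>0$ since $R_\xi\ne R_\eta$; hence $\tilde B\cdot C=(H^*-\tilde A)\cdot C=-\,H^*\cdot C<0$, so every curve of $R_\xi$ lies on $\tilde B$. Therefore $\tilde B$ is the $\xi$-exceptional divisor, and $S:=\xi(\tilde B)$ is its image. From the intersection numbers above, $\deg S=-(L^*)^2\cdot\tilde B^2=3$, so $S$ is a surface; since $\langle S\rangle$ is the centre of the projection $\theta\colon\PP^{12}\dashrightarrow\PP^7$ one has $\dim\langle S\rangle=4$ and $V\cap\langle S\rangle=S$. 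The image $A:=\xi(\tilde A)$ is a hyperplane section of $V$, and the structure of $\tilde A\to A$ shows $\Sing(A)=S$. As $S\subset\PP^4$ is a linearly nondegenerate irreducible surface of minimal degree $3$, the classification of surfaces of minimal degree \cite{Eisenbud1987} forces $S$ to be either a smooth rational cubic scroll or a cone over a rational twisted cubic, which is part (c).

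It remains to prove part (d), together with the precise singularity statement in part (a), and this is the crux. By the classification of four-dimensional divisorial Mori contractions from a smooth variety \cite{Andreatta1998a}, the general fibre of $\xi$ is $\PP^1$, and over the open locus where the fibres are one-dimensional $\xi$ is the blowup of a smooth surface; there $V$ and $S$ are smooth and $\tilde B\to S$ is a $\PP^1$-bundle, so if $\xi$ has no two-dimensional fibre then $V$ and $S$ are smooth and $S$ is a smooth cubic scroll. A two-dimensional fibre $\tilde Y_i$ of $\xi$ is, by \cite[Prop.~4.11]{Andreatta1998a}, isomorphic to $\PP^2$ or to a quadric in $\PP^3$; there are only finitely many such fibres, since $\tilde B$ is irreducible with generically one-dimensional fibres over $S$, and each is contracted to an isolated point $p_i=\xi(\tilde Y_i)$. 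The hard part will be the local analysis near $\tilde Y_i$: I would show that the contraction there produces exactly an ordinary double point of $V$ and that $S$ correspondingly acquires an isolated singularity at $p_i$, so that the minimal-degree surface $S$ degenerates from a smooth scroll into the cone over a rational twisted cubic with vertex $p_i$. This step --- combining the fine structure of the contraction in \cite{Andreatta1998a} with a computation of the normal geometry of $\tilde B$ along $\tilde Y_i$ --- is where the main effort lies; granting it, $V$ has at worst ordinary double points and part (d) follows, completing the proof.
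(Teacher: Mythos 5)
Your skeleton coincides with the paper's: the same nef divisors $H^*$ and $L^*=2H^*-\tilde A$ spanning the two rays, the same intersection numbers, the contraction of $\tilde B$ to a degree-$3$ surface, the minimal-degree classification of \cite{Eisenbud1987} for~\xref{prop:link-2-c}, and \cite{Andreatta1998a} for~\xref{prop:link-2-d}. The genuine gap is in your identification of $V$. The linear system $|L^*|$ gives a priori only a morphism $\Phi_{|L^*|}\colon\tilde W\to\PP^{12}$, and its Stein factorization $\tilde W\overset{\xi}\to U\overset{\psi}\to V$ leaves open the possibility that $\psi$ has degree $2$: since $(L^*)^4=18$, one would then have $\deg V=9$, which is exactly the minimal degree of a nondegenerate fourfold in $\PP^{12}$ by del Pezzo--Bertini, so this case cannot be excluded on degree grounds alone. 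The paper rules it out by noting that a degree-$9$ fourfold of minimal degree is a scroll or a cone, whose class group is incompatible with the injection $\psi^*\colon\Cl(V)\hookrightarrow\Cl(U)\cong\ZZ\cdot L$. Even granting birationality, your assertion ``$V\subset\PP^{12}$ of degree $18$'' requires $L$ to be \emph{very ample} on $U$, i.e.\ that the ring $\operatorname{R}(U,L)$ be generated in degree $1$; the paper proves this separately by slicing down to a K3 surface and a canonical curve and invoking Noether's theorem, via \cite[Lem.~2.9]{Iskovskih1977a}. Both your degree statement in~\xref{prop:link-2-a} and your claim $\dim\langle S\rangle=4$ (on which the appeal to \cite{Eisenbud1987} rests) depend on these omitted steps; note also that the paper does not get nondegeneracy of $S$ from the projection center --- the center being a $\PP^4$ does not force $S$ to span it --- but from the computation $L|_S\cdot K_S=-5\neq -K_S^2=-3$.

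There is also a factual error in your sketch of the ``hard part''. You propose to show that at a two-dimensional fiber $\tilde Y_i$ the contraction produces an ordinary double point of $V$ \emph{and} simultaneously a cone vertex of $S$ at $p_i=\xi(\tilde Y_i)$. These are mutually exclusive alternatives: by Proposition~\xref{lemma-classification-singular-fibers} of the paper (resting on \cite[Prop.~6.3]{Andreatta1998a}), either $\xi(\tilde Y)$ is a smooth point of $V$ and the unique singular point of $S$, of type $\frac13(1,1)$ --- this is the case relevant to cubic cones --- or $\xi(\tilde Y)$ is an ordinary double point of $V$ and $S$ is smooth there. A local analysis that concluded both at once would be proving something false. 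Moreover, no bespoke analysis is needed for~\xref{prop:link-2-a} and~\xref{prop:link-2-d}: the main theorem of \cite{Andreatta1998a} directly gives that $U$ has at worst ordinary double points and that $\xi$ is the blowup of a smooth surface away from the finitely many two-dimensional fibers, which is how the paper disposes of these items.
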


\begin{proof}[Proof of Proposition~\xref{prop:link-2}]
The following two facts are borrowed in~\cite{Prokhorov-Zaidenberg-2015}.

\begin{sclaim} [{\cite[Lem.~5.5]{Prokhorov-Zaidenberg-2015}}]
On $\tilde W$ the following equalities hold:
\begin{equation}
\label{equation-intersectin-theory-W}
(H^*)^4=5,\quad (H^*)^3\cdot\tilde A=0,\quad
(H^*)^2\cdot\tilde A^2=-5,\quad H^*\cdot\tilde A^3=-8,\quad\tilde A^4=-6.
\end{equation}
\end{sclaim}

\begin{sclaim} [{\cite[Lem.~5.2]{Prokhorov-Zaidenberg-2015}}]
The linear system $|2H^*-\tilde A|$ is base point free and defines a morphism
$\Phi_{|2H^*-\tilde A|}:\tilde W\to\PP^{12}$.
\end{sclaim}

Letting
$V=\Phi_{|2H^*-\tilde A|}(\tilde W)\subset\PP^{12}$, consider the
Stein factorization
\begin{equation}
\label{eq:Stein}
\Phi_{|2H^*-\tilde A|}:\tilde W\overset{\xi}\longrightarrow U
\overset{\psi}\longrightarrow V\subset\PP^{12}.
\end{equation}
An easy computation using~\eqref{equation-intersectin-theory-W}
gives
\begin{equation*}
(2H^*-\tilde A)^4=18\quad\text{and}\quad (2H^*-\tilde A)^3\cdot\tilde
B=(2H^*-\tilde A)^3\cdot (H^*-\tilde A)=0.
\end{equation*}
Therefore, $\xi$ in~\eqref{eq:Stein} is birational and contracts $\tilde B$.
Furthermore,
\begin{equation}
\label{equation-degree}
\deg V\cdot\deg\psi=18.
\end{equation}
We have $\rk\Pic(\tilde W)=2$. Hence the Mori cone of $\tilde W$ is generated by
two extremal rays. These rays are generated by the nef divisors $2H^*-\tilde A$
and $H^*$. Their sum is an ample anticanonical divisor $-K_{\tilde W}$. So, the
morphism $\xi$ in~\eqref{eq:Stein} is a Mori contraction, and $\Pic (U)\cong
\ZZ\cdot L$, where $\xi^*L=2H^*-\tilde A$. One has
\begin{equation}
\label{equation-degree-S}
L^2\cdot S=-(2H^*-\tilde A)^2\cdot\tilde B^2=-(2H^*-\tilde A)^2\cdot
(H^*-\tilde A)^2=3.
\end{equation} Thus,
$S=\xi(\tilde A)$ is a surface in $\PP^{12}$ of degree $3$.
By the main theorem of~\cite{Andreatta1998a}, $U$ has at worst ordinary double
points as singularities.
Since $-K_{\tilde W}=2(2H^*-\tilde A)-\tilde B$, we have $-K_U=2L$, that is,
$U$ is a Fano-Mukai fourfold (possibly with ordinary double points).
Moreover, outside an at most finite union of two-dimensional fibers
of $\xi$, the morphism $\xi$ is a blowup of a smooth surface in a
smooth fourfold.
Consequently, the surface $S$ has at worst isolated singularities.

\begin{sclaim}
The morphism $\psi$ in~\eqref{eq:Stein} is birational.
\end{sclaim}

\begin{proof}
Suppose that $\deg\psi>1$. Then by~\eqref{equation-degree},
$V\subset\PP^{12}$ is a fourfold of degree $\le 9$.
By the del Pezzo-Bertini Theorem (see, e.g.,~\cite[Thm.~1]{Eisenbud1987}) we
have
$\deg V\ge\codim_{\PP^{12}} V+1=9$.
Thus, $\deg V=9$, $\deg\psi=2$, and $V$ is either a smooth scroll,
or a cone over a smooth scroll, or finally a cone with vertex a line over a 
rational
normal curve of
degree $9$ in $\PP^9$, see \emph{loc. cit.} In the first two cases, $\rk
\Cl(V)=2$. On the other hand,
there is a natural injection $\psi^*:\Cl(V)\hookrightarrow\Cl(U)\cong
\ZZ\cdot L$. This leads to a contradiction.
In the latter case $\Cl(U)=\ZZ\cdot P$, where $P$ is the class of a plane.
As before, this gives a contradiction.
\end{proof}

The following assertion is well known, even in a more general form. However,
due to the lack of
a reference, we provide an argument.

\begin{sclaim}
The morphism
$\psi$ is an isomorphism.
\end{sclaim}

\begin{proof}
It suffices to show that the graded algebra
\begin{equation*}
\operatorname{R}(U,L)=\bigoplus_{n\ge 0} H^0(U,\OOO_U(nL))
\end{equation*}
is generated by its component of degree $1$.
By our construction, the linear system $|L|$
is base point free. By Bertini's theorem,
a general member $X\in |L|$ is smooth.
Then $X$ is a smooth Fano threefold, and the anticanonical
linear system $|-K_X|=|\OOO_{X}(L)|$ is base point free.

By the Kawamata-Viehweg vanishing theorem, we have
$H^1(U,\OOO_U)=0$. Applying Lemma~2.9 in~\cite{Iskovskih1977a}, it is enough to
show that the graded algebra

\begin{equation*}
\operatorname{R}(X,L)=\bigoplus_{n\ge 0} H^0(X,\OOO_X(nL))
\end{equation*}
is generated by its component of degree $1$.
Continuing in the same manner, one arrives at a smooth K3 surface $Z\in |-K_X|$
and a smooth curve $C\in |L|_Z|$, where $K_C=L|_C$. Since the map given by
$|K_C|=|L|_C|$
is birational, $C$ is not hyperelliptic.
By a theorem of Max Noether (\cite[Ch.~2, \S~3]{Griffiths-Harris-1994}), the
algebra

\begin{equation*}
\operatorname{R}(C,K_C)=\bigoplus_{n\ge 0} H^0(C,\OOO_C(nK_C))
\end{equation*}
is generated by its component $H^0(C,\OOO_C(K_C))$.
Now the claim follows.
\end{proof}

Thus, we may suppose in the sequel that $V=U$
is a Fano-Mukai fourfold of degree $18$ (so, of genus $10$) with at worst
ordinary double points as singularities, as claimed in~\xref{prop:link-2-a}, and
$\xi=\Phi_{|2H^*-\tilde A|}$.

Since $L^*\sim 2H^*-\tilde A$ and $\tilde B\sim H^*-\tilde A$, one has $\tilde
A\sim L^*-2\tilde
B$ on $\tilde W$. Note that $\tilde
A\subset\tilde W$ in~\eqref{diagram-2} is smooth being a $\PP^1$-bundle over a
smooth surface $F$.
Letting $A=\xi(\tilde A)\subset V$, one deduces that $A$ is a hyperplane
section
of $V$ with desingularization $\tilde A$, and $S=\Sing(A)$. This concludes the
proof of~\xref{prop:link-2-b}.

To show~\xref{prop:link-2-c}, note that
$\deg S=3$ by~\eqref{equation-degree-S}, and $S$ has at
worst isolated singularities.
We claim that $\dim\langle S\rangle>3$.
Indeed,
using~\eqref{equation-intersectin-theory-W} one can compute
\begin{equation*}
L^*\cdot\tilde B^3=(2H^*-\tilde A)\cdot (H^*-\tilde A)^3=-1.
\end{equation*}
On the other hand, using~\cite[Lem.~2.3(ii)]{Prokhorov-Zaidenberg-4-Fano} we
obtain
\begin{equation*}
L^*\cdot\tilde B^3=-L|_{S}\cdot K_S+K_V\cdot L\cdot S\,
\end{equation*}
(the corresponding formula in \emph{loc. cit.} works since $S$ has at worst
isolated singularities, and $L$ is movable). So,
\begin{equation*}
L|_{S}\cdot K_S=-L^*\cdot\tilde B^3-2 L^2\cdot S=1-6=-5.
\end{equation*}
If $\dim\langle S\rangle <4$, then $S$ is a cubic surface in $\PP^3$. In this
case
$L|_{S}\cdot K_S=-K_S^2=-3$, a contradiction.
Therefore, $\dim\langle S\rangle=4$, and so, $S\subset\PP^4$ is a linearly
nondegenerate cubic
surface, that is, a
surface of minimal degree in $\PP^4$. Now the assertion follows by the del
Pezzo
theorem
(\cite[Thm.~1]{Eisenbud1987}).
This proves~\xref{prop:link-2-c}.
Finally,~\xref{prop:link-2-d} follows from~\cite[Main Theorem]{Andreatta1998a}.
This ends the proof of Proposition~\xref{prop:link-2}.
\end{proof}

Next we examine the alternative case, where $\xi$ in
\eqref{diagram-2} has a two-dimensional
fiber (cf.~\xref{prop:link-2}.\xref{prop:link-2-d}). We need such a simple 
observation.

\begin{remark}[cf. {\cite [Rem.~5.2.14]{Kuznetsov-Prokhorov-Shramov}}]
\label{rem:touching-conics}
A smooth conic $J\subset\Xi$
touches $\Upsilon$ with even multiplicities if and only if one of the following
holds \textup(see Figure
\xref{fig-touching-conics}\textup):
\begin{enumerate}
\renewcommand\labelenumii{\rm (\roman{enumii})}
\renewcommand\theenumii{\rm (\roman{enumii})}
\item
\label{lem:touching-conics-Gl2}
$J=\Upsilon$;

\item
\label{lem:touching-conics-Ga}
$J$ is tangent to $\Upsilon$ in a single point
with multiplicity $4$;

\item
\label{lem:touching-conics-Gm}
$J$ is tangent to $\Upsilon$ in two points.
\end{enumerate}
\end{remark}
\begin{figure}[h]
\begin{tabular}{l@{\hspace{70pt}}l@{\hspace{70pt}}l}
\xref{lem:touching-conics-Gl2}
\includegraphics[width=70pt,height=80pt]{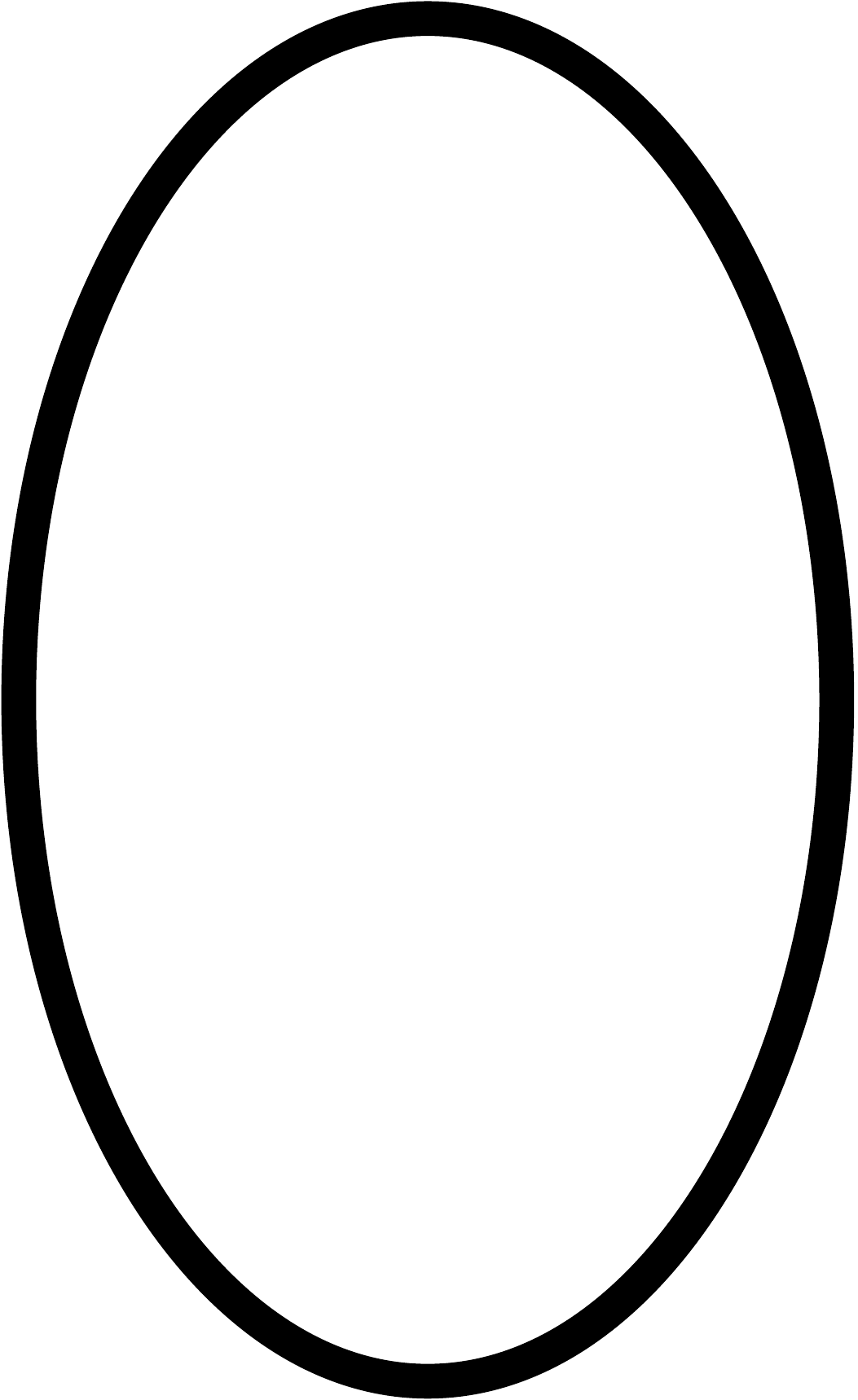} &
\xref{lem:touching-conics-Ga}
\includegraphics[width=70pt,height=80pt]{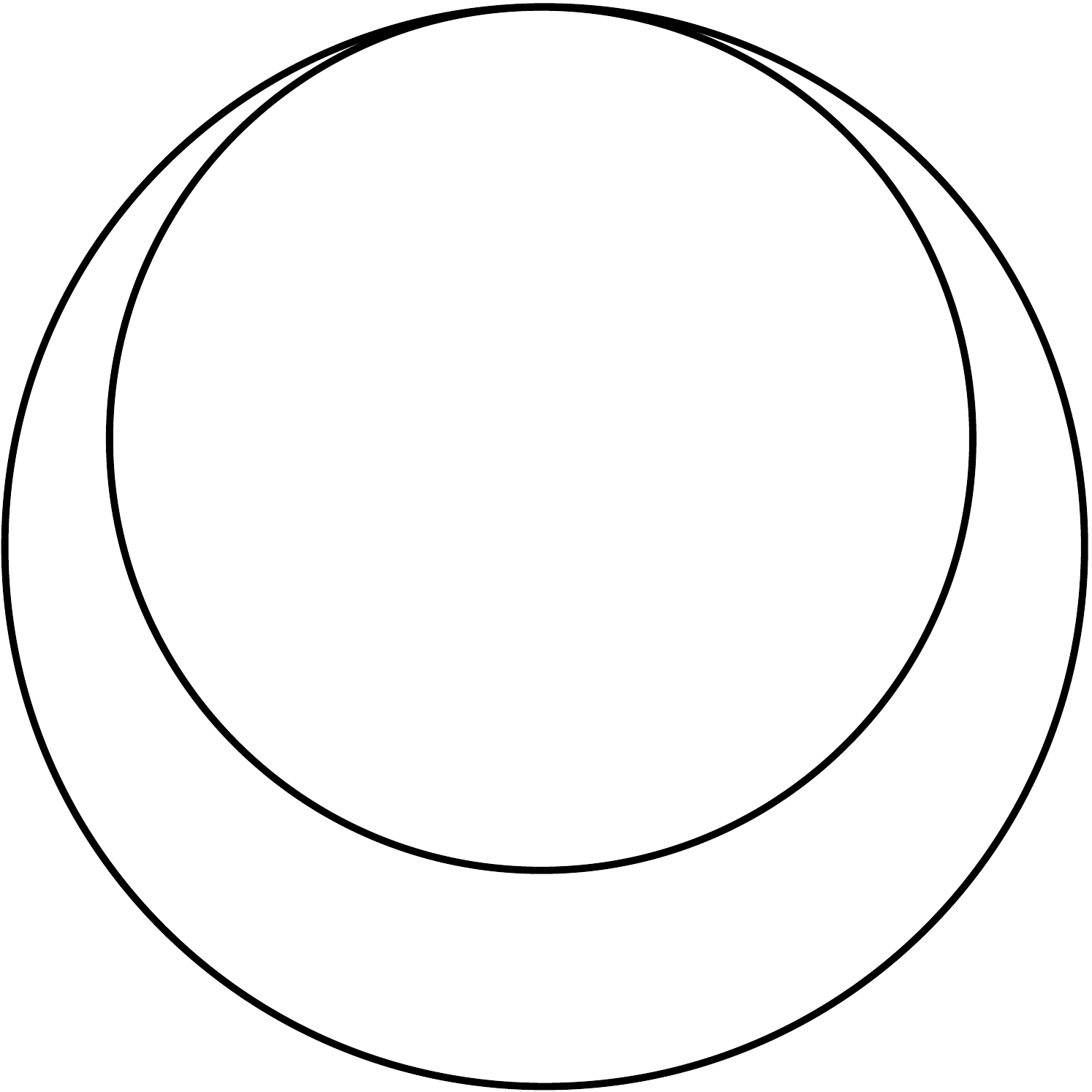} &
\xref{lem:touching-conics-Gm}
\includegraphics[width=70pt,height=80pt]{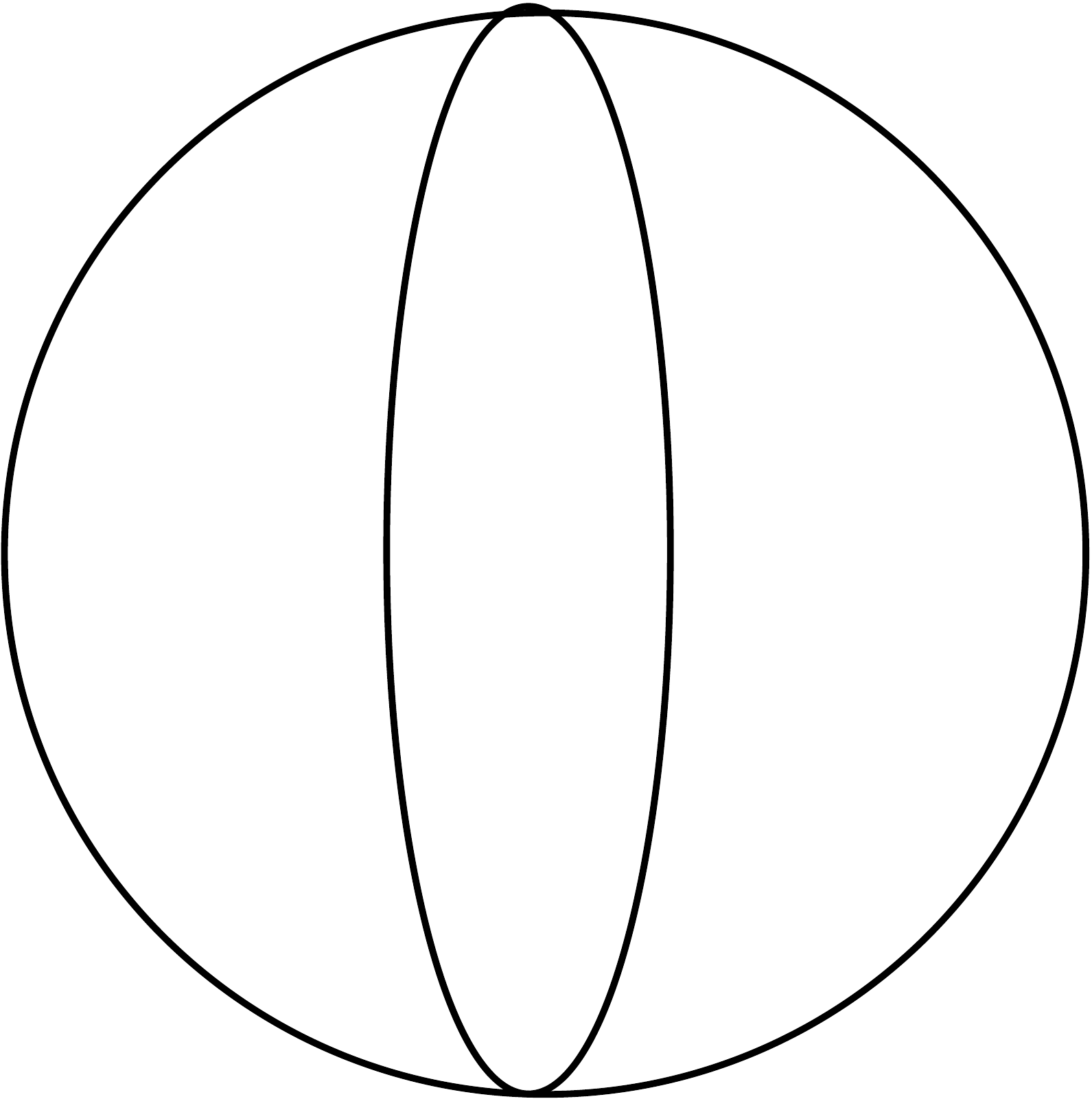}
\end{tabular}
\caption{}
\label{fig-touching-conics}
\end{figure}

\begin{proposition}
\label{lemma-classification-singular-fibers}
With the assumptions and notation of~\xref{nota:quintic} and
\xref{prop:link-2}, suppose 
that $\xi$ has a two-dimensional fiber
$\tilde Y$. Then the following hold.
\begin{enumerate}
\renewcommand\labelenumi{\rm (\alph{enumi})}
\renewcommand\theenumi{\rm (\alph{enumi})}
\item
\label{lemma-classification-singular-fibers-a}
$Y:=\eta(\tilde Y)\subset W$ is a plane, and $Y\cap F$
is a reduced (but possibly reducible) conic.
Conversely, the proper transform
$\tilde Y\subset\tilde W$ of any plane $Y\subset W$ such that $Y\cap F$ is a
conic,
is a two-dimensional fiber of $\xi$.

\item
\label{lemma-classification-singular-fibers-F1}
If $F\cong\FF_1$, then
$\tilde Y$ is a unique
two-dimensional fiber of $\xi$, and $Y\cap F$ is a smooth conic, the
exceptional
section
of $F\cong\FF_1$.

\item
\label{lemma-classification-singular-fibers-d}
One of the following holds.

\begin{enumerate}
\renewcommand\labelenumii{\rm (\roman{enumii})}
\renewcommand\theenumii{\rm (\roman{enumii})}
\item
\label{lemma-classification-singular-fibers-smooth}
$B$ is singular at a general point of $Y:=\eta(\tilde Y)$, $\xi(\tilde Y)$ is
a smooth point
of $V$ and a (unique) singular point of the surface $S$. This point $\xi(\tilde
Y)$ is of type $\frac13 (1,1)$ in $S$.
Furthermore, $B=R$ and $Y=\Xi$.

\item
\label{lemma-classification-singular-fibers-singular}
$B$ is smooth at a general point of $Y$, $\xi(\tilde Y)\in V$ is an
ordinary double point,
and the surface $S$ is smooth at $\xi(\tilde Y)$.
Furthermore, $Y$ is a plane of type $\sigma_{3,1}$ \textup(that is, a
$\Pi$-plane\textup).
\end{enumerate}

\item
\label{lemma-classification-singular-fibers-toric}
If $V$ is smooth, then $F\cong\FF_1$, $\tilde Y$ is the only
two-dimensional fiber of $\xi$, $B=R$, $Y=\Xi$, $S$ is a cone over a twisted
cubic curve, and the exceptional
section of the ruling $F\to\PP^1$ is a smooth conic
$J:=\Xi\cap F$ of one of types
\xref{lem:touching-conics-Gl2}--\xref{lem:touching-conics-Gm} in 
Remark~\xref{rem:touching-conics}, see Figure~\xref{fig-touching-conics}.
\end{enumerate}
\end{proposition}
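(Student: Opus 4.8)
The plan is to analyze the two-dimensional fiber $\tilde Y$ of the divisorial Mori contraction $\xi=\Phi_{|2H^*-\tilde A|}$ by combining the Andreatta--Wisniewski classification of extremal contractions (\cite{Andreatta1998a}), already invoked in the proof of Proposition~\xref{prop:reversion}, with the numerical relations established in Proposition~\xref{prop:link-2}. For part~(a), the starting observation is that $-K_{\tilde W}=3H^*-\tilde A=(2H^*-\tilde A)+H^*=L^*+H^*$ is a sum of the two nef generators of the Mori cone, so $\xi$ is a $K$-negative divisorial contraction of $\tilde B\sim H^*-\tilde A$ onto $S$. Every curve in a fiber of $\xi$ is $L^*$-trivial, hence $-K_{\tilde W}$ restricts to $H^*$ there. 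By \cite[Prop.~4.11]{Andreatta1998a} the fiber $\tilde Y\subset\tilde B$ is $\cong\PP^2$ or a smooth quadric $Q$, with $\OOO_{\tilde Y}(-K_{\tilde W})\cong\OOO(1)$; restricting $L^*=2H^*-\tilde A$ and using $L^*|_{\tilde Y}\cong\OOO$ yields $H^*|_{\tilde Y}\cong\OOO(1)$ and $\tilde A|_{\tilde Y}\cong\OOO(2)$. The quadric case would make $\eta(\tilde Y)$ a quadric surface in $W$ and force $\xi(\tilde Y)$ to be a singularity of $V$ worse than an ordinary double point, contradicting Proposition~\xref{prop:link-2}; thus $\tilde Y\cong\PP^2$ and $\eta|_{\tilde Y}$ embeds $\tilde Y$ as a plane $Y=\eta(\tilde Y)\subset W$. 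Since $\tilde A=\eta^{-1}(F)$ and $\eta|_{\tilde Y}$ is an isomorphism, $Y\cap F$ is the image of $\tilde A\cap\tilde Y$, a conic by $\tilde A|_{\tilde Y}\cong\OOO_{\PP^2}(2)$, its reducedness coming from the smoothness of $\tilde A$ and $\tilde Y$ plus a local transversality computation for $\eta$.

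For the converse in~(a), I would argue that if $Y\subset W$ is a plane with $Y\cap F$ a conic, then $Y\cap F$ is a Cartier divisor on $Y$, so the proper transform $\tilde Y$ is isomorphic to $Y\cong\PP^2$, and for a line $\tilde\ell\subset\tilde Y$ one computes $L^*\cdot\tilde\ell=2H^*\cdot\tilde\ell-\tilde A\cdot\tilde\ell=2-2=0$; hence $\tilde Y$ is $\xi$-contracted, i.e.\ a two-dimensional fiber. For part~(c), since $Y\cap F$ spans $Y$ and lies in $\langle F\rangle$, one has $Y\subset B=W\cap\langle F\rangle$, and the dichotomy is governed by whether $B$ is singular along $Y$. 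If $B$ is singular at a general point of $Y$, then $B$ is a hyperplane section of $W$ singular along a whole plane; by the description in~\xref{def:skew-symmetric} the only such section is $R$, with $\Sing R=\Xi$, so $B=R$ and $Y=\Xi$ is the $\sigma_{2,2}$-plane, giving case~(i) with $V$ smooth at $\xi(\tilde Y)$ and $S$ acquiring its unique $\frac13(1,1)$-singularity there. If $B$ is smooth at a general point of $Y$, then $Y\neq\Xi$, so $Y$ is a $\sigma_{3,1}$-plane $\Pi_\gamma$, $\xi(\tilde Y)$ is an ordinary double point of $V$ (consistent with Proposition~\xref{prop:link-2}), and $S$ is smooth there, which is case~(ii). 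The key point is that although $N_{\tilde Y/\tilde W}\cong\OOO_{\PP^2}(-1)^{\oplus2}$ in both cases, the local structure of the contracted divisor $\tilde B$ (the proper transform of $B$) differs precisely according to the singularity of $B$ along $Y$, and this selects which of the two admissible local models of \cite{Andreatta1998a} occurs.

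For parts~(b) and~(d): when $F\cong\FF_1$, the embedding is by $|s_0+3f|$, so the exceptional section $s_0$ (with $s_0^2=-1$) has degree $(s_0+3f)\cdot s_0=2$ and maps to a smooth conic, while the only reducible conics on $F$ are pairs of rulings $f_1+f_2$. Using $Y\subset B$ and the classification of planes in $W$, I would show that a plane meeting $F$ along two rulings cannot occur, so $Y\cap F=s_0$ is a smooth conic and such a plane $Y$ is unique, proving~(b). For~(d), if $V$ is smooth then case~(ii) is excluded, so we are in case~(i): $B=R$, $Y=\Xi$, $V$ smooth and $S$ singular; a cubic surface $S\subset\PP^4$ (Proposition~\xref{prop:link-2}) with a single $\frac13(1,1)$-point is a cone over a twisted cubic, and by~(b) one has $F\cong\FF_1$ with a unique two-dimensional fiber and $J:=\Xi\cap F=Y\cap F$ a smooth conic. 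Finally, invoking the double cover $\hat\Xi\to\Xi$ branched over $\Upsilon$ from Lemma~\xref{lem:hatR}\xref{lem:hatR-d}, compatibility of the scroll $F$ with this cover forces $J$ to touch $\Upsilon$ with even multiplicity at every point, i.e.\ $J$ is of one of the three types listed in Remark~\xref{rem:touching-conics}.

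The two principal obstacles are, first, matching the external condition ``$B$ singular along $Y$'' to the two singularity types ($\frac13(1,1)$ versus ordinary double point) through the Andreatta--Wisniewski local analysis, since the numerical normal-bundle data of $\tilde Y$ coincide in both cases and the distinction is carried only by the finer structure of $\tilde B$ along $\tilde Y$; and second, the uniqueness statement together with the identification $Y\cap F=s_0$ in part~(b), which require the detailed geometry of the quintic scrolls in $W$ and of the planes contained in $W$. By contrast, the reducedness of $Y\cap F$ and the even-tangency conclusion in~(d) should be comparatively routine once the double-cover picture of~\xref{lem:hatR} is in hand.
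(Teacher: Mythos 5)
Your outline reproduces the paper's architecture for parts (a)--(c) — Andreatta--Wi\'sniewski's Prop.~4.11 plus triviality of $(2H^*-\tilde A)|_{\tilde Y}$ to get $H^*|_{\tilde Y}\cong\OOO_{\PP^2}(1)$ and $\tilde A|_{\tilde Y}\cong\OOO_{\PP^2}(2)$; the dichotomy in (c) governed by whether $B$ is singular along $Y$, with $B=R$ and $Y=\Xi$ forced in the singular case — and your converse computation in (a) is exactly what the paper leaves to the reader. But there is one genuine gap, in part (d): your step ``by (b) one has $F\cong\FF_1$'' is circular, since (b) is conditional on $F\cong\FF_1$ and proves nothing about which scroll occurs. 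Part (a) explicitly allows $Y\cap F$ to be a \emph{reducible} reduced conic, and for $F\cong\FF_3$ (embedded by $|s_0+4f|$, so that $s_0$ is a line) the section $\Xi\cap F=s_0+\Lambda_0$ is a perfectly admissible plane conic containing the exceptional section. Excluding $\FF_3$ is the bulk of the paper's proof of (d): one studies the map $\psi\colon J\to\Upsilon$, $P\mapsto\gamma$ with $\Lambda_P\subset\Pi_\gamma$, observes that its inverse $\gamma\mapsto J\cap l_\gamma$ can be bijective only if $J$ is a tangent line $l_{\gamma_0}$ of $\Upsilon$, and then kills both subcases $\Lambda_{\gamma_0}\not\subset\Xi$ (which would produce a singular $V$ via case (ii) of (c)) and $\Lambda_{\gamma_0}\subset\Xi$ (which contradicts tangency of $\Xi\cap\Pi_\gamma$ to $\Upsilon$). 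Nothing in your proposal substitutes for this argument, and without it the conclusion ``$F\cong\FF_1$ and $J$ is a smooth conic'' in (d) is unproved.

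A second, smaller gap is the reducedness claim in (a): ``smoothness of $\tilde A$ and $\tilde Y$ plus a local transversality computation'' is not a proof, because a plane can well be tangent to a smooth scroll along a line — indeed on $\FF_3$ the tangent plane at a point of $s_0\cap\Lambda$ cuts the curve $2\Lambda+\Lambda'$. The double-line case $Y\cap F=2\Lambda$ has to be excluded by geometry specific to these scrolls, as the paper does: for $\FF_3$ the tangent plane along a ruling necessarily picks up the extra line $\Lambda'$, contradicting $\deg(Y\cap F)=2$; for $\FF_1$, such a $Y$ would contain the tangent lines to the conic $J$ and to the twisted cubic $J'$ at $\Lambda\cap J$ and $\Lambda\cap J'$, forcing $\langle J\rangle\cap\langle J'\rangle\neq\emptyset$, a contradiction. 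Two further soft spots are fixable: your exclusion of the quadric fiber invokes ``a singularity worse than an ordinary double point,'' whereas the paper gets $\tilde Y\cong\PP^2$ from \cite{Andreatta1998a} combined with the fact (Proposition~\xref{prop:link-2}\xref{prop:link-2-c}) that $S$ is normal with at most one singular point; and in (c) the uniqueness of a hyperplane section of $W$ singular along a plane is not contained in~\xref{def:skew-symmetric} but requires the short argument that every line meeting $\Sing(B)$ lies in $B$ and every plane of $W$ meets $Y$, whence $R\subset B$ and $B=R$. On the positive side, your closing idea for the even-tangency statement — using that the lift $\hat F\subset\hat R$ meets $\hat\Xi$ along a curve mapping birationally to $J$, so the preimage of $J$ under the double cover $\hat\Xi\to\Xi$ branched along $\Upsilon$ (Lemma~\xref{lem:hatR}\xref{lem:hatR-d}) splits — is sound and is an economical repackaging of the paper's incidence-correspondence argument, where the $(2,2)$-curve $Z\subset J\times\Upsilon$ splits into two $(1,1)$-components; but it only becomes available after $F\cong\FF_1$ and the smoothness of $J$ are established.
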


\begin{proof}
By Proposition~\xref{prop:link-2}\xref{prop:link-2-c},
$S$ is normal and possesses at most one singular point. It follows from the
main
theorem and Proposition 4.11 in~\cite{Andreatta1998a} that
$Y\cong\PP^2$ and
\begin{equation}
\label{eq:1}
\OOO_{\tilde Y}(3H^*-\tilde A)\cong\OOO_{\tilde Y}(-K_{\tilde W})\cong
\OOO_{\PP^2}(1).
\end{equation}
Since ${\tilde Y}$ is contracted to a point under $\xi=\Phi_{|2H^* -\tilde
A|}$, one has
\begin{equation}
\label{eq:2}
\OOO_{\tilde Y}(2H^* -\tilde A)=\OOO_{\tilde Y}.
\end{equation}
Then~\eqref{eq:1} and~\eqref{eq:2} imply
\begin{equation}
\label{eq:3}
\OOO_{\tilde Y}(H^*)\cong\OOO_{\PP^2} (1)\quad\text{and}\quad
\OOO_{\tilde Y}(\tilde A)\cong\OOO_{\PP^2}(2).
\end{equation}
Thus, $Y\subset W$ is a plane, and $Y\cap F$ is a conic.
The converse statement is left to the reader.

Assume that $Y\cap F=2\Lambda$ is a double line.
If $F\cong\FF_3$, then there exists another line $\Lambda'\subset F$ meeting
$\Lambda$.
The plane $Y$ coincides with the tangent plane to $F$ at the point $\Lambda\cap
\Lambda'$.
Hence, $Y\cap F=2\Lambda+\Lambda'\neq 2\Lambda$.

Thus, we may assume that
$F\cong\FF_1$ and $\Lambda$ is a ruling of $F$.
Let $J$ be the negative section of $F=\FF_1\to\PP^1$,
and let $J'$ be a section disjoint with $J$.
Then $J$ is a conic, $J'$ is a twisted
cubic in $F$, and the linear spans
$\langle J\rangle$ and $\langle J'\rangle$ are disjoint.
The ruling $\Lambda$ meets $J$ in a point, say, $P_0$, and $J'$ in
$P_1$.
By our assumption, the plane $Y$ is tangent to $F$ along $\Lambda$. Hence $Y$
contains the
tangent lines $T_{P_0}J$ and $T_{P_1}J'$. It follows that these lines
intersect, and then also the linear spans $\langle J\rangle$ and $\langle
J'\rangle$ do, a contradiction
This proves~\xref{lemma-classification-singular-fibers-a}.

\xref{lemma-classification-singular-fibers-F1}
In this case the only conic on $F$ is the negative section of $F\cong\FF_1$.

To show~\xref{lemma-classification-singular-fibers-d}
we note that $\eta$ provides a local analytic isomorphism of pairs $(\tilde B,
\tilde Y)$ and $(B,Y)$
at the corresponding generic points of $\tilde Y$ and $Y$, respectively.
The statements of~\xref{lemma-classification-singular-fibers-d}
follow now directly from the main theorem in
\cite{Andreatta1998a} and the proof of Proposition 6.3 in \emph{loc. cit.},
except for the equalities $B=R$ and $Y=\Xi$ 
in~\xref{lemma-classification-singular-fibers-smooth}.
Let us show the latter equalities. Since
the variety $B$ in~\xref{lemma-classification-singular-fibers-smooth}
is a hyperplane section of $W$ singular along a plane $Y$ (see Definition
\xref{nota:quintic}), $B$ contains any line in $W$ meeting $\Sing(B)\supset Y$.
Any two planes in $W$ intersect. In particular, any plane in $W$ meets $Y$.
Hence
$B$ contains any plane in $W$. Consequently, $B$
coincides with the union $R=\bigcup_{\gamma\in\Upsilon}\Pi_\gamma$, see
\xref{def:skew-symmetric}.
It follows finally that $Y=\Sing(R)=\Xi$ and $R=B\supset F$.

\xref{lemma-classification-singular-fibers-toric}
Since $V$ is smooth,~\xref{lemma-classification-singular-fibers-smooth} holds,
and so,
$B=R$, $Y=\Xi$, and $S$ is the cone over a twisted cubic curve.
Since $Y=\Xi$, then according to~\xref{lemma-classification-singular-fibers-a}, 
$F\cap\Xi$ is a conic. This conic contains the exceptional section, say, $J$ of 
$F\cong\FF_n$, $n\in\{1,3\}$. Thus, $J\subset\Xi$.
For $P\in J$, let $\Lambda_P\subset F$ be the ruling
through $P$.

Assume first that $F\cong\FF_3$.
Then $F\cap\Xi=J+\Lambda_0$, where $\Lambda_0$ is the ruling
through the point $P_0=J\cap\Lambda_0$.
For a point $P\in J\setminus\{P_0\}$ one has $\Lambda_P\neq\Lambda_0$,
and so, $\Lambda_P\not\subset\Xi$.
Hence $\Lambda_P$ is contained in a (unique) plane $\Pi_\gamma$.
This defines a (regular) map
\begin{equation*}
\psi: J\setminus\{P_0\}\longrightarrow\Upsilon,
\qquad P\longmapsto\gamma.
\end{equation*}
Since each plane $\Pi_\gamma$ contains a unique ruling $\Lambda_P$,
this map extends to a bijection $J\to\Upsilon$. The intersection 
$l_{\gamma}:=\Pi_{\gamma}\cap\Xi$
is the tangent line to $\Upsilon$ at $\gamma$.
Then $\psi^{-1}$ is defined as follows
\begin{equation*}
\psi^{-1}:\Upsilon\longrightarrow J,\qquad
\gamma\longmapsto P=J\cap l_\gamma.
\end{equation*}
This map is bijective if and only if $J$ is a tangent line to $\Upsilon$.
Thus $J=l_{\gamma_0}$ for some $\gamma_0\in\Upsilon$.
If $\Lambda_{\gamma_0}\not\subset\Xi$, then $\Pi_{\gamma_0}\cap
F=J+\Lambda_{\gamma_0}$.
By~\xref{lemma-classification-singular-fibers-singular} $V$ is singular, a
contradiction.
Hence $\Lambda_{\gamma_0}\subset\Xi$ and $F\cap\Xi=J+\Lambda_{\gamma_0}$.
In this case, $\Lambda_{\gamma_0}\neq J=l_{\gamma_0}$, and so,
$\Lambda_{\gamma_0}\cap\Upsilon=\{\gamma_0,\,\gamma_1\}$, where $\gamma_1\neq
\gamma_0$.
On the other hand, $\Lambda_{\gamma_0}$ is contained in some plane
$\Pi_{\gamma}$,
because this is true for a general $\Lambda_{\gamma}$.
Hence $\Lambda_{\gamma_0}=\Xi\cap\Pi_\gamma$ is tangent to $\Upsilon$,
a contradiction. Thus, $F\cong\mathbb{F}_1$, as stated.

Assume now that $J\neq\Upsilon$. Define a correspondence between
$\Upsilon$ and $J$ via
\begin{equation*}
Z=\{(P,\gamma)\in J\times\Upsilon\mid P\in l_{\gamma}\},
\end{equation*}
where $l_{\gamma}$ is the line on $\Xi$ tangent to $\Upsilon$ at $\gamma$.
Clearly, $Z$ is a curve of bidegree $(2,2)$ on $J\times\Upsilon
\cong\PP^1\times
\PP^1$.

The curve $Z$ admits a natural interpretation as a subvariety of the variety
$\Fl(\PP^2)\subset\PP^2\times {\PP^2}^\vee$
of full flags on $\PP^2$. Under this interpretation
one has $p_1(Z)=J$ and
$p_2(Z)=\Upsilon^\vee$,
where $\Upsilon^\vee\subset {\PP^2}^\vee$ is the dual conic and $p_1,p_2$ are
the canonical projections of the product $\PP^2\times {\PP^2}^\vee$ to the
factors.
Moreover, $Z=p_1^{-1}(J)\cap p_2^{-1}(\Upsilon^\vee)$.
Note that $p_2^{-1}(\Upsilon^\vee)\cong\PP^1\times\PP^1$. The restriction
$\pi:=p_1|_{p_2^{-1}(\Upsilon^\vee)}: p_2^{-1}(\Upsilon^\vee)\to\PP^2$
is a double cover branched along $\Upsilon$.

Consider a generically one-to-one map $\delta: J\to\Upsilon$,
\begin{equation*}
P\in J\longmapsto\Lambda_P\longmapsto\Pi_P\longmapsto l_P=\Xi\cap\Pi_P
\longmapsto\gamma=l_P\cap\Upsilon\in\Upsilon,
\end{equation*}
where $\Pi_P\subset R$ is the plane containing the ruling $\Lambda_P\subset F$.
The graph of $\delta$ in $J\times\Upsilon$ is a component of $Z$.
Therefore, $Z$ splits into two components
$Z_1$ and $Z_2$, which are
curves of bidegree $(1,1)$ on $\PP^1\times\PP^1$. Also the preimage
$\pi^{-1}(J)=p_1^{-1}(J)\cap
p_2^{-1}(\Upsilon^\vee)$ splits into two irreducible components
$Z_1$ and $Z_2$. This is possible only if $J$ touches the branch locus
$\Upsilon$ with even multiplicities, hence only in the cases
\xref{lem:touching-conics-Ga} and
\xref{lem:touching-conics-Gm} in~\xref{rem:touching-conics}.
\end{proof}

\begin{mdefinition}
\label{def:cubic-cones}
Following~\cite{KapustkaRanestad2013} we call 'cubic scrolls` both smooth cubic
scrolls and cones over rational twisted cubic curves. The latter cones in 
$V_{18}$ will be called
'cubic cones` for short. This does not lead to a confusion. Indeed, since $V$ is 
an intersection of
quadrics (\cite[Lem.~2.10]{Iskovskih1977a}), $V$ does not contain any cubic
surface $F$ with $\langle F\rangle\cong\PP^3$.
\end{mdefinition}

From Propositions~\xref{prop:reversion},~\xref{prop:link-2}, and
\xref{lemma-classification-singular-fibers} we deduce the following corollaries.

\begin{scorollary}
\label{cor:diagr-2}
The cubic scroll $S\subset V$ in diagram
\eqref{diagram-2} is a cubic cone if and only if
$V$ is smooth, $F\subset R$, $F\cong\FF_1$, and the exceptional section
$J=F\cap\Xi$ of $F\to\PP^1$ is a smooth conic touching $\Upsilon$ with even 
multiplicities.
There are three types
\xref{rem:touching-conics} \xref{lem:touching-conics-Gl2},~\xref{rem:touching-conics}\xref{lem:touching-conics-Ga}, and
\xref{rem:touching-conics}\xref{lem:touching-conics-Gm}
of such pairs $(F,J)$.
\end{scorollary}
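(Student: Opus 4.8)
The plan is to combine the three preceding propositions to characterize exactly when the cubic scroll $S$ in diagram~\eqref{diagram-2} degenerates to a cubic cone. First I would recall that by Proposition~\xref{prop:link-2}\xref{prop:link-2-c}, the surface $S=\xi(\tilde A)$ is always either a smooth cubic scroll or a cone over a twisted cubic, and that the distinction between these two cases is governed by the singularities of $S$: a smooth scroll $\FF_1$ embedded in $\PP^4$ has no singular points, whereas the cone over a twisted cubic has a single vertex. By Proposition~\xref{prop:link-2}\xref{prop:link-2-d}, the singularities of $S$ occur precisely at the images $\xi(\tilde Y_i)$ of the two-dimensional fibers of $\xi$. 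Therefore $S$ is a cubic cone if and only if $\xi$ possesses a (necessarily unique) two-dimensional fiber whose image is the vertex.

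The heart of the argument is then a direct appeal to Proposition~\xref{lemma-classification-singular-fibers}. Under the standing hypothesis that the resulting $V$ is smooth, part~\xref{lemma-classification-singular-fibers-toric} of that proposition tells us immediately that $F\cong\FF_1$, that $\tilde Y$ is the unique two-dimensional fiber, that $B=R$ and $Y=\Xi$, that $S$ is indeed a cone over a twisted cubic, and that the exceptional section $J=\Xi\cap F$ is a smooth conic of one of the three types in Remark~\xref{rem:touching-conics}. This gives the forward implication essentially verbatim. For the converse, I would observe that if $F\subset R$ is an $\FF_1$-scroll whose exceptional section $J=F\cap\Xi$ is a smooth conic touching $\Upsilon$ with even multiplicities, then by the converse statement in Proposition~\xref{lemma-classification-singular-fibers}\xref{lemma-classification-singular-fibers-a} the proper transform $\tilde Y$ of the plane $Y=\Xi$ is a two-dimensional fiber of $\xi$, so by~\xref{lemma-classification-singular-fibers}\xref{lemma-classification-singular-fibers-F1} it is the unique such fiber, and hence $\xi(\tilde Y)$ is the single singular point of $S$, forcing $S$ to be a cone rather than a smooth scroll.

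The remaining point is the enumeration of the three types of pairs $(F,J)$. Here I would simply transcribe the trichotomy of Remark~\xref{rem:touching-conics}: a smooth conic $J\subset\Xi$ touches $\Upsilon$ with even multiplicities exactly when $J=\Upsilon$, or $J$ is tangent to $\Upsilon$ at a single point with multiplicity~$4$, or $J$ is tangent to $\Upsilon$ at two points—these being cases \xref{lem:touching-conics-Gl2},~\xref{lem:touching-conics-Ga}, and \xref{lem:touching-conics-Gm} respectively. The correspondence between each such $J$ and a valid scroll $F$ is what makes the count exact, and I would note that distinct configurations of tangency yield non-isomorphic pairs.

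The main obstacle I anticipate is not the forward direction, which is packaged cleanly in Proposition~\xref{lemma-classification-singular-fibers}\xref{lemma-classification-singular-fibers-toker}, but rather verifying the converse carefully: one must check that the even-tangency condition on $J$ is genuinely sufficient to produce a two-dimensional fiber (and hence a vertex), and not merely necessary. This requires knowing that for such a configuration the plane $\Xi$ meets $F$ along a conic in the sense required by~\xref{lemma-classification-singular-fibers-a}, and that the scroll $F$ with prescribed exceptional section $J$ actually exists inside $R$ as a quintic scroll of type $\FF_1$. Granting the existence of such $F$ (which is addressed by the surrounding geometry of the triple $W\supset R\supset\Xi$), the statement follows formally from the cited propositions with no further computation.
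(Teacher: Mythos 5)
Your skeleton is the paper's intended one — the corollary is stated there with no separate argument, precisely as a formal combination of Propositions~\xref{prop:reversion},~\xref{prop:link-2} and~\xref{lemma-classification-singular-fibers} — but your converse direction has a genuine gap. You assert, citing Proposition~\xref{prop:link-2}\xref{prop:link-2-d}, that the singularities of $S$ occur \emph{precisely} at the images of the two-dimensional fibers of $\xi$. That proposition gives only one inclusion: $V$ and $S$ are smooth \emph{away} from these images. The reverse inclusion fails in general, and this is exactly the content of the dichotomy in Proposition~\xref{lemma-classification-singular-fibers}\xref{lemma-classification-singular-fibers-d}: a two-dimensional fiber $\tilde Y$ either maps to a $\frac13(1,1)$-point of $S$ (alternative~\xref{lemma-classification-singular-fibers-smooth}) or to an ordinary double point of $V$ at which $S$ is \emph{smooth} (alternative~\xref{lemma-classification-singular-fibers-singular}). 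So from ``the proper transform of $\Xi$ is a two-dimensional fiber, unique by~\xref{lemma-classification-singular-fibers-F1}'' you cannot yet conclude that $\xi(\tilde Y)$ is a singular point of $S$; were the fiber of the second type, $S$ would be a smooth scroll and $V$ would acquire a node instead. The missing step is short but indispensable: since $F\subset R$ and there is exactly one hyperplane section of $W$ through $F$ (established in the proof of Proposition~\xref{prop:reversion}), one has $B=R$, and $B=R$ is singular along $Y=\Xi$; hence alternative~\xref{lemma-classification-singular-fibers-smooth} holds, $\xi(\tilde Y)$ is a $\frac13(1,1)$-point of $S$, and $S$ is the cone. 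As a bonus, in this alternative $\xi(\tilde Y)$ is a smooth point of $V$, so uniqueness of the fiber gives the smoothness of $V$ everywhere — it is a conclusion on this side of the equivalence, not a hypothesis.

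Two lesser remarks. First, in the forward direction you install ``$V$ smooth'' as a standing hypothesis, whereas the corollary's ``only if'' nominally asserts it; your reading mirrors how Proposition~\xref{lemma-classification-singular-fibers}\xref{lemma-classification-singular-fibers-toric} is itself conditioned on smoothness of $V$, but you should at least say that, given $S$ a cone, the vertex forces a fiber of type~\xref{lemma-classification-singular-fibers-smooth} (by the same $\Sing(S)$ versus $\Sing(V)$ dichotomy as above), whence $B=R$, $Y=\Xi$ and $F\subset R$, before~\xref{lemma-classification-singular-fibers-toric} is invoked. Second, your anticipated obstacle about the \emph{existence} of a scroll $F\subset R$ with prescribed exceptional section $J$ is beside the point here: the equivalence quantifies over a given linked pair, so no existence statement is needed; existence is settled later in the paper (Propositions~\xref{prop:quintic-scroll} and~\xref{prop:center-inv-scroll}). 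With the case-\xref{lemma-classification-singular-fibers-smooth} selection inserted, your argument matches the paper's deduction, including the transcription of the trichotomy of Remark~\xref{rem:touching-conics}.
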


The next corollary will be used in the proof of Theorem~\xref{thm:main}; cf.\ 
also Lemma~\xref{lemma--SS}.

\begin{scorollary}
\label{rem:interrompu}
For any cubic cone $S\subset V$
there is a unique hyperplane section
$A$ of $V$ with $\Sing(A)=S$ such that $V\setminus A\cong\CC^4$.
\end{scorollary}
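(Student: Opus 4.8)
The plan is to deduce both existence and uniqueness of $A$ from the Sarkisov link \eqref{diagram-2} determined by the cubic cone $S$, rather than working with hyperplane sections on $V$ directly. Since $S$ is a cubic cone, Proposition~\xref{prop:reversion} realizes $S$ as the blowup center of \eqref{diagram-2}, producing the quintic del Pezzo fourfold $W=W_5$, a smooth quintic scroll $F\subset W$, the hyperplane section $A=\xi(\tilde A)$ of $V$ with $\Sing(A)=S$, and the hyperplane section $B=\eta(\tilde B)=W\cap\langle F\rangle$ of $W$. As $S$ is a cone, its vertex gives a two-dimensional fiber of $\xi$, so Proposition~\xref{lemma-classification-singular-fibers}\xref{lemma-classification-singular-fibers-toric} (equivalently Corollary~\xref{cor:diagr-2}) applies and yields $B=R$, the distinguished hyperplane section of $W$ swept out by planes.

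For existence I would trace the open complements through \eqref{diagram-2}. The morphism $\xi$ is an isomorphism over $V\setminus S\supset V\setminus A$ and contracts $\tilde B$ onto $S\subset A$; hence $\xi^{-1}(A)=\tilde A\cup\tilde B$ and $V\setminus A\cong\tilde W\setminus(\tilde A\cup\tilde B)$. Symmetrically, $\eta$ is an isomorphism over $W\setminus F\supset W\setminus B$ and contracts $\tilde A$ onto $F\subset B$, so $\eta^{-1}(B)=\tilde A\cup\tilde B$ and $\tilde W\setminus(\tilde A\cup\tilde B)\cong W\setminus B$. Combining these with $B=R$ gives $V\setminus A\cong W\setminus R\cong\CC^4$ by Corollary~\xref{cor:2.2.2}, so $A$ has both required properties.

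Uniqueness is the step that needs a genuine argument, and I would in fact prove the stronger claim that $A$ is the only hyperplane section of $V$ singular along $S$. Let $A'$ be any such section. Since $V$ is smooth and $S$ is smooth away from its vertex, $\Sing(A')\supset S$ forces the generic multiplicity $m=\operatorname{mult}_S A'$ to satisfy $m\ge 2$, so the proper transform of $A'$ on $\tilde W$ lies in $|L^*-m\tilde B|$, while $\tilde A\sim L^*-2\tilde B$ by~\eqref{eq:inters-ALB}. The crux is the rigidity of $\tilde A$: being the $\eta$-exceptional divisor, $\tilde A$ is an irreducible $\PP^1$-bundle over $F$ covered by fibers $\ell$ with $\tilde A\cdot\ell=-1<0$, so any effective divisor in $|\tilde A|$ must contain every such $\ell$ and hence equals $\tilde A$; thus $|\tilde A|=\{\tilde A\}$. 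The same rigidity excludes $m\ge 3$, since an effective member $D\in|L^*-m\tilde B|$ would give $D+(m-2)\tilde B\sim\tilde A$ effective, forcing $\tilde A=D+(m-2)\tilde B$ to contain $\tilde B$, which is impossible as $\tilde A\ne\tilde B$ are distinct prime divisors. Therefore $m=2$, the proper transform of $A'$ is the unique member $\tilde A$, and $A'=\xi_*\tilde A=A$.

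The main obstacle is this uniqueness step, specifically the rigidity of $\tilde A$ combined with ruling out higher multiplicity along $S$; existence, by contrast, reduces transparently to the already-established fact that $(W,R)$ compactifies $\CC^4$.
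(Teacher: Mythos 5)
Your proposal is correct and takes essentially the same route as the paper: the Sarkisov link~\eqref{diagram-2} provided by Proposition~\xref{prop:reversion}, the identification $B=R$ via Proposition~\xref{lemma-classification-singular-fibers}\xref{lemma-classification-singular-fibers-smooth} (equivalently Corollary~\xref{cor:diagr-2}), and the chain $V\setminus A\cong\tilde W\setminus(\tilde A\cup\tilde B)\cong W\setminus R\cong\CC^4$ from Corollary~\xref{cor:2.2.2}. Your uniqueness step simply spells out what the paper states tersely --- that $A$ is unique being the image of $\tilde A\sim L^*-2\tilde B$, see~\eqref{eq:inters-ALB} --- by the same non-movability argument (an exceptional divisor covered by curves meeting it negatively has a one-element linear system) that the paper itself uses for $\tilde B$ in the proof of Proposition~\xref{prop:reversion}, together with the correct extra observation that $m\ge 3$ is excluded since otherwise $\tilde B$ would be a component of the prime divisor $\tilde A$.
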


\begin{proof} By Proposition~\xref{prop:reversion}, $V$ and $S$ can be included
in diagram~\eqref{diagram-2}, where $A$ is a hyperplane section of $V$ with
$\Sing(A)=S$. Such a divisor $A$ is unique being the image of the 
$\eta$-exceptional divisor $\tilde A\sim L^*-2\tilde B$, 
see~\eqref{eq:inters-ALB}.
Since $V$ is smooth and $S$ is singular, the cases
\xref{prop:link-2-d} and~\xref{lemma-classification-singular-fibers-singular}
in Propositions~\xref{prop:link-2} and
\xref{lemma-classification-singular-fibers}, respectively, are excluded. So,
there is a unique two-dimensional fiber $\tilde Y$ of $\xi$ with
$\Sing\,S=\xi(\tilde Y)$, and we are in case
\xref{lemma-classification-singular-fibers-smooth} of Proposition
\xref{lemma-classification-singular-fibers}. Due to this proposition, we have
\begin{equation*}
\eta(\tilde Y)=\Xi,\quad B=R,\quad\mbox{and so,}\quad F\subset R .
\end{equation*}
As follows from diagram~\eqref{diagram-2}, there are isomorphisms
\begin{equation*}
V\setminus A\cong
\tilde W\setminus (\tilde A\cup\tilde R)\cong
W\setminus
R\cong\CC^4,
\end{equation*} see Corollary~\xref{cor:2.2.2}.
Thus, the pair $(V,A)$ yields a compactification of $\CC^4$ into a smooth
Fano-Mukai fourfold $V$ of genus $10$.
\end{proof}

Finally, we introduce the following notion.

\begin{definition}
\label{def:linked-pairs}
We say that the pairs $(W,F)$ and $(V,S)$ are \emph{linked} if they fit in 
diagram~\eqref{diagram-2} and verify the conditions of one of 
Propositions~\xref{prop:link-2} and~\xref{lemma-classification-singular-fibers}.

In the sequel we deal only with linked pairs that verify the conditions of 
Corollary~\xref{cor:diagr-2}.
\end{definition}

\section{Automorphisms of $W_5$}
\label{sec-2bis}

Let us introduce the
following notation.

\begin{notation}
\label{nota:3-forms}
Consider the linear space
$M_3=S^3{\CC^2}^\vee$ of binary cubic forms $f_3(x,y)$
and the projectivization $\PP(M_3\oplus\CC)\cong\PP^4$.
A point of $\PP^4$
can be viewed as a class $[(f_3,c)]$, where $f_3\in M_3$, $c\in\CC$, and
$(f_3,c)\neq (0,0)$.
Up to automorphisms of $\PP^4$,
a twisted cubic curve $\Gamma\subset\PP^4$ can be represented as the 
projectivization
\begin{equation*}
\Gamma=\PP\left(\Bigl\{(f_3,0)\in M_3\oplus\CC\setminus\{(0,0)\}\mid f_3
=(\alpha x+\beta y)^3\Bigr\}\right)\subset
\PP(M_3\oplus\{0\})\subset\PP(M_3\oplus\CC).
\end{equation*}
So, the linear span $\langle\Gamma\rangle$ is identified with the linear 
subspace $\{c=0\}$ in $\PP^4$, and the point $P=\PP(\{0\}\oplus 
\CC)\in\PP^4\setminus\langle\Gamma\rangle$ with the class $[(0,1)]$.

The standard representation of $\GL_2(\CC)$ on $\CC^2$ induces an irreducible
representation of $\GL_2(\CC)$ on
$M_3$, $(g,f_3)\longmapsto f_3\circ g^{-1}$. Adding the trivial one-dimensional
representation yields a representation of $\GL_2(\CC)$ on $M_3\oplus\CC$ and,
in
turn,
a $\GL_2(\CC)$-action on $\PP^4$,
which fixes the point $P$ and stabilizes the twisted cubic
$\Gamma$.

Let further $\Delta\subset\PP(M_3\oplus\CC)$
be the closure of the locus of points
$[(f_3,c)]\in\PP(M_3\oplus\CC)$, whose binary cubic form $f_3$
has a multiple factor.
\end{notation}

\begin{remarks}
\label{rem:GL2-action}
\setenumerate[0]{leftmargin=8pt,itemindent=7pt}
\begin{enumerate}
\renewcommand\labelenumi{\rm\arabic{enumi}.}
\renewcommand\theenumi{\rm\arabic{enumi}}
\item
\label{rem:GL2-action-1}
Clearly, $\Delta$ is the cone over
$\Delta_0:=\Delta\cap\PP(M_3)$ with vertex $P$.
The surface $\Delta_0$ in
$\PP^3=\PP(M_3)$ is the zero divisor of
the discriminant $\operatorname{discr}(f_3)$.
This quartic surface has cuspidal singularities along $\Gamma$
and is smooth outside $\Gamma$. In fact,
$\Delta_0$ is
the tangent developable surface of $\Gamma$, that is,
the closure of the union of the tangent lines
to $\Gamma$. The cone $N$ over $\Gamma$ with vertex $P$ is the singular locus
of $\Delta$.

\item
\label{rem:GL2-action-2}
The $\GL_2(\CC)$-action on
$\PP(M_3\oplus\CC)=\PP^4$ has exactly 7 orbits, namely,
\begin{equation*}
\newcommand\probel{\hspace{5pt}}
\{P\},\probel
\Gamma,\probel
\Delta_0\setminus\Gamma,\probel
\PP(M_3)\setminus\Delta_0,\probel
N\setminus (\Gamma\cup\{P\}),\probel
\Delta\setminus (N\cup\Delta_0),\probel
\PP(M_3\oplus\CC)\setminus (\PP(M_3)\cup\Delta).
\end{equation*}
Thus, $P$ is the unique fixed point of the
$\GL_2(\CC)$-action on $\PP^4$, while the last orbit in this list is the open 
orbit.

\item
\label{rem:GL2-action-3}
The center $\Gamma$ of the blowup $\varphi$ in~\eqref{equation-diagram-1} is
invariant under the $\GL_2(\CC)$-action on $\PP^4$ defined 
in~\xref{nota:3-forms}.
Hence this action
lifts to $\hat W$ stabilizing the exceptional divisors $\hat R$ and $\hat E$ of
$\varphi$ and $\rho$, respectively, see Proposition~\xref{prop:link-1}.
The $\GL_2(\CC)$-action on $\hat W$ induces an effective $\GL_2(\CC)$-action on
$W$
via the contraction $\rho:\hat W\to W$ of the $\GL_2(\CC)$-invariant divisor
$\hat E$. Then the birational linear projection $\phi:
W\dashrightarrow\PP^4$ with center $\Xi$ along with diagram
\eqref{equation-diagram-1} are $\GL_2(\CC)$-equivariant.
The $\GL_2(\CC)$-action on
$W$ extends to a linear $\GL_2(\CC)$-action on $\PP^7\supset W$. The latter
action has $W$ as an orbit closure, stabilizes $R$, $\Xi$, and $\Upsilon$, and
induces a linear $\GL_2(\CC)$-action on
$\CC^4=\PP^4\setminus\PP^3\cong W\setminus R $, see~\eqref{eq:2.2.2}, and a
standard $\PGL_2(\CC)$-action on $\Xi$ with a unique closed orbit $\Upsilon$.
\end{enumerate}
\end{remarks}

Using
\xref{nota:3-forms} --
\xref{rem:GL2-action} we describe in
\xref{lemma-GL2-action-1} --~\xref{proposition-GL2-action-c}
the automorphism
group $\Aut(W)$ and its action on $W$.

\begin{lemma}
\label{lemma-GL2-action-1}
Let $\Gamma\subset\PP^4$ be a rational twisted cubic curve, and
$P\in\PP^4\setminus\langle\Gamma\rangle$ be a point. Then there are
isomorphisms
\begin{equation}
\label{eq:GL2}\Aut(\PP^4,\Gamma, P)\cong\GL_2(\CC)/\mumu_3\cong
\GL_2(\CC),
\end{equation}
where the cyclic group $\mumu_3$ of order $3$ is realized as the subgroup of
scalar matrices
$\{\zeta\cdot\id_{\CC^2}\}$ with $\zeta^3=1$.The stabilizer in $\Aut(\PP^4,\Gamma, P)$ of a general point $Q\in \PP^4$ is trivial.
\end{lemma}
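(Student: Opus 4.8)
The plan is to build everything from the homomorphism supplied by Notation~\xref{nota:3-forms}. That construction gives a $\GL_2(\CC)$-action on $\PP(M_3\oplus\CC)=\PP^4$ preserving $\Gamma$ and fixing $P$, hence a homomorphism $\alpha\colon\GL_2(\CC)\to\Aut(\PP^4,\Gamma,P)$ with $\alpha(g)=[\mathrm{diag}(g|_{M_3},1)]$, where $g|_{M_3}$ is the action on $M_3=S^3{\CC^2}^\vee$ and the last coordinate carries the trivial representation. First I would compute $\ker\alpha$: such a $g$ acts by a scalar on $M_3\oplus\CC$, and since the action on $\CC$ is trivial that scalar is $1$, so $g$ acts trivially on the irreducible module $M_3$; an eigenvalue comparison then forces $g=\zeta\cdot\id$ with $\zeta^3=1$, i.e.\ $\ker\alpha=\mumu_3$. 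For the second isomorphism I would exhibit $\GL_2(\CC)/\mumu_3\cong\GL_2(\CC)$ directly via the homomorphism $\Phi(g)=(\det g)\,g$, which is multiplicative, has kernel exactly $\mumu_3$, and is surjective because it restricts to the identity on $\SL_2(\CC)$ and to the (surjective) cubing map on the central $\CC^\ast\cdot\id$.

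The crux of the argument is the surjectivity of $\alpha$. Given $\sigma\in\Aut(\PP^4,\Gamma,P)$, preserving $\Gamma$ forces $\sigma$ to preserve the hyperplane $\langle\Gamma\rangle=\PP(M_3)\cong\PP^3$, while fixing the point $P\notin\langle\Gamma\rangle$ makes any linear lift block-diagonal, say $\mathrm{diag}(A,\mu)$ with respect to $M_3\oplus\CC$. The induced transformation $[A]$ of $\PP^3$ lies in $\Aut(\PP^3,\Gamma)$, and here I would invoke the classical identification $\Aut(\PP^3,\Gamma)\cong\PGL_2(\CC)$ by restriction to the twisted cubic --- injective because $\Gamma$ contains a projective frame, surjective through the $S^3$-embedding $\PP^1\hookrightarrow\PP^3$. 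Hence $A=\nu\cdot(g|_{M_3})$ for some $g\in\GL_2(\CC)$ and a scalar $\nu$, so $\sigma=[\mathrm{diag}(g|_{M_3},\mu/\nu)]$. Finally I would absorb the residual scalar $t:=\mu/\nu$ by replacing $g$ with $\lambda g$: since the central $\lambda\cdot\id$ acts on $M_3$ by $\lambda^{-3}$, a choice $\lambda^3=t$ gives $\sigma=\alpha(\lambda g)$. This proves $\alpha$ onto, whence $\Aut(\PP^4,\Gamma,P)\cong\GL_2(\CC)/\mumu_3\cong\GL_2(\CC)$.

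For the stabilizer of a general point $Q$, I would use Remark~\xref{rem:GL2-action}: a general $Q$ lies in the dense open orbit, so its stabilizer is automatically finite, the orbit having dimension $4=\dim\GL_2(\CC)$. To make this explicit I would take $Q=[(f_3,1)]$ with $f_3$ of three distinct roots; because the last coordinate is fixed, $\sigma(Q)=Q$ is equivalent to the \emph{exact} identity $f_3\circ g^{-1}=f_3$, so the stabilizer is the image in $\GL_2(\CC)/\mumu_3$ of $\{g\in\GL_2(\CC):f_3\circ g^{-1}=f_3\}$. Passing to $\PGL_2(\CC)$, this subgroup permutes the unordered triple of roots of $f_3$ on $\PP^1$, and I would determine it together with its lifts to $\GL_2(\CC)$ on the convenient model $f_3=x^3+y^3$. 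Carrying out this last bookkeeping cleanly --- matching the admissible lifts against $\mumu_3$ --- is the step I expect to be the most delicate, the remaining assertions being either classical or immediate from the orbit description in Remark~\xref{rem:GL2-action}.
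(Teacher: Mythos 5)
Your treatment of the isomorphisms~\eqref{eq:GL2} is correct and complete: the kernel computation for $\alpha$, the explicit homomorphism $\Phi(g)=(\det g)\,g$ (kernel $\mumu_3$, surjective), and the surjectivity of $\alpha$ all check out. On surjectivity you take a genuinely different route from the paper: you lift $\sigma$ to a block-diagonal matrix $\diag(A,\mu)$, identify $[A]$ inside $\Aut(\PP^3,\Gamma)\cong\PGL_2(\CC)$ by the classical frame/Veronese argument, and absorb the residual scalar using the fact that the center of $\GL_2(\CC)$ acts on $M_3$ by cubes; the paper instead compares the two exact sequences $1\to\Gm\to\Aut(\PP^4,\Gamma,P)\to\Aut(\Gamma)=\PGL_2(\CC)\to 1$, one for the full group and one for the image of $\alpha$. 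The two arguments are of comparable length; yours has the merit of exhibiting the kernel $\Gm=\{[\diag(\id_{M_3},\mu)]\}$ explicitly rather than quoting it.

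For the final assertion, however, you stop exactly at the decisive point, and the ``bookkeeping'' you defer does not close --- in fact it refutes the assertion. Your reduction is right: since the last coordinate is untouched, $\alpha(g)$ fixes $Q=[(f_3,1)]$ if and only if $f_3\circ g^{-1}=f_3$ \emph{exactly}. Now carry this out on your own model $f_3=x^3+y^3$: the swap $g=\left(\begin{smallmatrix}0&1\\1&0\end{smallmatrix}\right)$ satisfies $f_3\circ g^{-1}=f_3$, and so does $\diag(1,\omega)$ with $\omega^3=1$; neither is scalar, and modulo $\mumu_3$ they generate a subgroup mapping isomorphically onto the full symmetric group $S_3$ of the three roots. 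This is no accident: if $[g_0]\in\PGL_2(\CC)$ permutes the roots of $f_3$, then $f_3\circ g_0^{-1}=c\,f_3$ for some $c\in\CC^*$, and replacing $g_0$ by $\lambda g_0$ rescales $c$ by $\lambda^{-3}$, so three choices of $\lambda$ achieve $c=1$. Hence the stabilizer in $\Aut(\PP^4,\Gamma,P)\cong\GL_2(\CC)/\mumu_3$ of a general point $Q$ is a group of order $6$ isomorphic to $S_3$, not the trivial group. Your soft argument (open orbit of dimension $4$, hence finite stabilizer) gives exactly what is true --- finiteness --- and that is also all that the later applications of the lemma (existence of the dense open orbit, as in Proposition~\xref{proposition-GL2-action}) actually require. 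So the last sentence of the lemma cannot be established along your lines, nor along any: the paper offers no argument for it (it is dismissed as ``immediate''), and your setup, completed honestly, is precisely the computation that exposes the error.
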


\begin{proof} We leave to the reader to check 
the fact that the
endomorphism $\GL_2(\CC)\to
\GL_2(\CC)$, $A\longmapsto (\det A)\cdot A$, yields the second isomorphism in
\eqref{eq:GL2}.
Up to an automorphism of $\PP^4$ one may suppose that the triple
$(\PP^4,\Gamma, P)$ is chosen as in~\xref{nota:3-forms}. The
$\GL_2(\CC)$-action
on $\PP^4$ introduced in
\xref{nota:3-forms} fixes $P$ and stabilizes $\Gamma$.
The scalar matrices $\zeta\cdot\id\in\GL_2(\CC)$ with $\zeta^3=1$, and only 
these,
act identically on $\PP^4$. This gives an
embedding $\GL_2(\CC)/\mumu_3\hookrightarrow
\Aut(\PP^4,\Gamma, P)$. In fact, this embedding is an isomorphism.
The latter follows by comparing the exact sequences
$$1\longrightarrow\Gm\longrightarrow\Aut(\PP^4,\Gamma, P)
\longrightarrow
\Aut(\Gamma)=\PGL_2(\CC)\longrightarrow 1\,$$
and
$$1\longrightarrow\Gm=\z(\GL_2(\CC)/\mumu_3) \longrightarrow\Aut(\PP^4,\Gamma, P)
\longrightarrow
\Aut(\Gamma)=\PGL_2(\CC)\longrightarrow 1\,,$$
where $\z(G)$ stands for the center of a group $G$. The last assertion is immediate.
\end{proof}

We use below the following fact.

\begin{lemma}\label{lem:PvdV} {\rm (Piontkowski-Van-de-Ven \cite[Thm.~6.6]{Piontkowski-Van-de-Ven-1999})}
There is an exact sequence 
\begin{equation}
\label{eq:2nd-sequence-1}
1\longrightarrow (\Ga)^4\rtimes\Gm
\longrightarrow
\Aut(W)\stackrel{\varrho}{\longrightarrow}
\Aut(\Upsilon)=\PGL_2(\CC)\longrightarrow 1\,.
\end{equation}
Therefore, $\Aut(W)$ is a connected algebraic group.
\end{lemma}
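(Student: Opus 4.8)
The plan is to transport the computation to $\PP^4$ via the isomorphism $\Aut(W)\cong\Aut(\PP^4,\Gamma)$ of Corollary~\xref{cor:Aut-W5} and to read off the extension from the block structure of the projective transformations preserving the twisted cubic $\Gamma$. Working in the coordinates of~\xref{nota:3-forms}, write $\PP^4=\PP(M_3\oplus\CC)$ with $\langle\Gamma\rangle\cong\PP(M_3)$ and $P=\PP(\{0\}\oplus\CC)$. Since the linear span $\langle\Gamma\rangle$ is intrinsic, every $h\in\Aut(\PP^4,\Gamma)$ stabilizes $\PP(M_3)$ and is therefore represented, up to an overall scalar, by a block upper-triangular matrix
\begin{equation*}
\begin{pmatrix} A & v\\ 0 & d\end{pmatrix},\qquad A\in\GL(M_3),\ v\in M_3,\ d\in\CC^*,
\end{equation*}
where $A$ stabilizes $\Gamma\subset\PP(M_3)$. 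Restriction $h\mapsto h|_{\Gamma}$ defines a homomorphism to $\Aut(\Gamma)=\PGL_2(\CC)$; under the identification of $\Gamma$ with $\Upsilon$ furnished by the $\GL_2(\CC)$-equivariance of diagram~\eqref{equation-diagram-1} (see Remark~\xref{rem:GL2-action}.\xref{rem:GL2-action-3} and Lemma~\xref{lem:hatR}.\xref{lem:hatR-d}, where $\Upsilon$ is the branch conic of $\hat\Xi\to\Xi$), this is exactly the map $\varrho$ of the statement.

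For surjectivity I would simply note that the subgroup $\Aut(\PP^4,\Gamma,P)\cong\GL_2(\CC)$ of Lemma~\xref{lemma-GL2-action-1} already surjects onto $\Aut(\Gamma)=\PGL_2(\CC)$ via its defining exact sequence, so \emph{a fortiori} $\varrho$ is onto. For the kernel, $h|_{\Gamma}=\id$ forces $h$ to fix every point of $\Gamma$, and since a twisted cubic contains more than $5$ points in linearly general position in $\langle\Gamma\rangle\cong\PP^3$, this forces $h|_{\langle\Gamma\rangle}=\id$, i.e.\ $A$ is scalar. Normalizing $A=\id_{M_3}$ by the overall scaling, the kernel is
\begin{equation*}
\ker\varrho=\left\{\begin{pmatrix}\id & v\\ 0 & d\end{pmatrix}\ \middle|\ v\in M_3,\ d\in\CC^*\right\}.
\end{equation*}
A direct multiplication shows that this group is the semidirect product $(\Ga)^4\rtimes\Gm$, with $\Gm=\{d\}$ acting on $(\Ga)^4=M_3$ by scalar multiplication; each such matrix indeed fixes $\Gamma$ pointwise, so the description is exact. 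This yields the exact sequence~\eqref{eq:2nd-sequence-1}, and since it exhibits $\Aut(W)$ as an extension of the connected group $\PGL_2(\CC)$ by the connected group $(\Ga)^4\rtimes\Gm$, the group $\Aut(W)$ is connected.

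The step I expect to be the real obstacle is the identification of the intrinsic target $\Aut(\Upsilon)$ with the auxiliary group $\Aut(\Gamma)$, together with the claim that restriction to $\Upsilon$ agrees with restriction to $\Gamma$ under Corollary~\xref{cor:Aut-W5}. Here one uses that $\Upsilon$ is $\Aut(W)$-invariant, being the canonically defined branch conic on $\Xi=\Sing R$, and that the correspondence $\hat\Xi\cong\PP^1\times\PP^1$ of Lemma~\xref{lem:hatR}.\xref{lem:hatR-d} projects $\Upsilon$ isomorphically onto $\Gamma$ in an $\Aut(W)$-equivariant fashion; tracing the $\GL_2(\CC)$-action of Remark~\xref{rem:GL2-action}.\xref{rem:GL2-action-3} through the link then shows that the two $\PGL_2(\CC)$-actions coincide. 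Everything else reduces to the elementary block-matrix bookkeeping sketched above.
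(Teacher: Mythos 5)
Your proof is correct, but note that the paper offers no proof of this lemma at all: it is quoted from Piontkowski--Van de Ven \cite[Thm.~6.6]{Piontkowski-Van-de-Ven-1999}, so your argument is a self-contained replacement rather than a parallel of an argument in the text. What you do differently is transport everything through Corollary~\xref{cor:Aut-W5} to $\Aut(\PP^4,\Gamma)$ and read the extension off the block structure of matrices preserving $M_3\oplus\{0\}$; this is exactly the explicit model the paper records only \emph{after} the lemma, in Remark~\xref{rem:Mostow}.\xref{rem:Mostow-1} and formula~\eqref{eq:action-explicite}, and your kernel $\bigl\{\bigl(\begin{smallmatrix}\id & v\\ 0 & d\end{smallmatrix}\bigr)\bigr\}\cong(\Ga)^4\rtimes\Gm$ is precisely $\Ru\rtimes\z(\Levi)$ as it reappears in~\eqref{eq:2nd-sequence-2}. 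In effect you invert the paper's logical order: the paper deduces the Levi decomposition~\eqref{eq:Levi-decomp} from the quoted sequence, whereas you establish the semidirect-product description directly -- using the elementary facts that a projectivity of $\PP^3$ fixing the twisted cubic pointwise fixes five points in general position and hence is the identity, and that the $\GL_2(\CC)$-action of~\xref{nota:3-forms} realizes all of $\PGL_2(\CC)$ on $\Gamma$ -- and obtain the sequence as a corollary. You also correctly isolate the one genuinely delicate step, namely that $\varrho$, defined as restriction to $\Upsilon\subset\Xi$, corresponds under Corollary~\xref{cor:Aut-W5} to restriction to $\Gamma$; your fix is sound, and can be phrased slightly more precisely than you do: the identification $\Upsilon\cong\Gamma$ is furnished by the ramification $(1,1)$-curve of the double cover $\rho|_{\hat\Xi}:\hat\Xi\to\Xi$ of Lemma~\xref{lem:hatR}\xref{lem:hatR-d}, which maps isomorphically onto $\Upsilon$ under $\rho$ and onto $\Gamma$ under $\varphi$, and since every ingredient ($\Xi=\Sing R$, $\Upsilon$, the blowup $\rho$, the second extremal contraction $\varphi$) is canonical, the identification is automatically $\Aut(W)$-equivariant, so kernel and surjectivity transfer. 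There is no circularity in invoking Lemma~\xref{lemma-GL2-action-1} for surjectivity, since it precedes the statement and is proved independently of it; and the concluding connectedness argument (extension of the connected group $\PGL_2(\CC)$ by the connected group $(\Ga)^4\rtimes\Gm$) is standard. What your route buys is independence from the external reference; what the paper's citation buys is brevity and the greater generality of Piontkowski--Van de Ven's results on linear sections of Grassmannians.
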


In
the next proposition we describe the algebraic Levi decomposition of $\Aut(W)$.

\begin{proposition}
\label{proposition-GL2-action}
\begin{enumerate}
\renewcommand\labelenumi{{\rm (\alph{enumi})}}
\renewcommand\theenumi{{\rm (\alph{enumi})}}
\item
\label{proposition-GL2-action-a}
Let $\Ru=\Ru(\Aut(W))$ be the unipotent
radical of $\Aut(W)$ and $\Levi$ its reductive Levi subgroup.
Then $\Ru\cong (\Ga)^4$ and $\Levi\cong\GL_2(\CC)$.
Therefore,
\begin{equation}
\label{eq:Levi-decomp}
\Aut(W)=\Ru\rtimes\Levi\cong (\Ga)^4\rtimes\GL_2(\CC).
\end{equation}
Furthermore, a $\GL_2(\CC)$-subgroup of $\Aut(W)$ is unique up
to conjugation.

\item
\label{proposition-GL2-action-b}
An isomorphism $\Aut(W)\stackrel{\cong}{\longrightarrow}\Aut(\PP^4,\Gamma)$ as 
in Corollary
\xref{cor:Aut-W5} sends $\Ru$ onto the vector group
$\Transl(\CC^4)\cong (\Ga)^4$ of the vector space
$\CC^4=\PP^4\setminus\langle\Gamma\rangle$, and the reductive Levi subgroup 
$\Levi$
onto
the stabilizer
$\Aut(\PP^4,\Gamma, P)\cong\GL_2(\CC)$ of a point 
$P\in\PP^4\setminus\langle\Gamma\rangle$.

\item
\label{proposition-GL2-action-cc}
The set of reductive Levi subgroups of $\Aut(W)$ coincides with the set of 
stabilizers of points in $W\setminus R$. Any Levi subgroup $\Levi$ of $\Aut(W)$ 
has a unique fixed point in $W\setminus R$, which is the unique fixed point in 
$W\setminus R$ of its center $\z(\Levi)$, and acts on $W\setminus R$ with a principal open orbit.
\end{enumerate}
\end{proposition}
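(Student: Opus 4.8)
The plan is to transport the whole statement through the isomorphism $\Aut(W)\cong\Aut(\PP^4,\Gamma)$ of Corollary~\xref{cor:Aut-W5} and the identification $W\setminus R\cong\PP^4\setminus\langle\Gamma\rangle=\CC^4$ of~\eqref{eq:2.2.2}, and then to read off each assertion from the explicit $\GL_2(\CC)$-geometry recorded in~\xref{nota:3-forms}--\xref{rem:GL2-action}. For~\xref{proposition-GL2-action-a} I would first note that the linear span $\langle\Gamma\rangle\cong\PP^3$ is $\Aut(\PP^4,\Gamma)$-invariant, so the affine chart $\CC^4=\PP^4\setminus\langle\Gamma\rangle$ is invariant too. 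The translation group $\Transl(\CC^4)\cong(\Ga)^4$ fixes $\langle\Gamma\rangle$ pointwise, hence preserves $\Gamma$, and so sits in $\Aut(\PP^4,\Gamma)$ as a normal unipotent subgroup (normalized by the linear part). By Lemma~\xref{lemma-GL2-action-1} the point-stabilizer is $\Aut(\PP^4,\Gamma,P)\cong\GL_2(\CC)$, which is reductive. Since every $g\in\Aut(\PP^4,\Gamma)$ preserves $\langle\Gamma\rangle$ we have $g(P)\in\CC^4$, and as translations act simply transitively on $\CC^4$ one writes $g=t\comp h$ with $t$ the unique translation taking $P$ to $g(P)$ and $h=t^{-1}\comp g\in\Aut(\PP^4,\Gamma,P)$; since $\Transl(\CC^4)\cap\Aut(\PP^4,\Gamma,P)=\{\id\}$, this yields $\Aut(\PP^4,\Gamma)=(\Ga)^4\rtimes\GL_2(\CC)$, confirmed by a dimension count against~\eqref{eq:2nd-sequence-1}. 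Thus $\Ru=\Transl(\CC^4)\cong(\Ga)^4$ and $\Levi\cong\GL_2(\CC)$, and uniqueness of $\Levi$ up to conjugation is Mostow's theorem in characteristic zero.

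Part~\xref{proposition-GL2-action-b} then follows formally: the unipotent radical is a characteristic subgroup, so the isomorphism of Corollary~\xref{cor:Aut-W5} carries $\Ru(\Aut(W))$ onto $\Transl(\CC^4)$, while a Levi subgroup, being reductive of the right dimension, is conjugate to (and hence maps to) a point-stabilizer $\Aut(\PP^4,\Gamma,P)$. For the first claim of~\xref{proposition-GL2-action-cc} I would compute conjugates explicitly: for $g=t_v\comp h$ with $t_v\in\Transl(\CC^4)$ and $h\in\GL_2(\CC)$ one has $g\,\GL_2(\CC)\,g^{-1}=t_v\,\GL_2(\CC)\,t_v^{-1}=\Stab(t_v(P))$. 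As $v$ ranges over $\CC^4$ the points $t_v(P)$ exhaust $\CC^4=W\setminus R$, and conversely every point-stabilizer arises this way; this identifies the set of Levi subgroups with the set of stabilizers of points of $W\setminus R$.

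For the fixed-point and orbit statements I would argue on $\PP(M_3\oplus\CC)$. Item~\xref{rem:GL2-action-2} of Remarks~\xref{rem:GL2-action} gives that $P$ is the unique fixed point of the standard $\GL_2(\CC)$ in all of $\PP^4$, while a one-line weight computation—where a scalar $\lambda\cdot\id$ sends $[(f_3,c)]$ to $[(f_3,\lambda^3c)]$—shows that the fixed locus of the center $\z(\GL_2(\CC))\cong\Gm$ is exactly $\langle\Gamma\rangle\cup\{P\}$, so its only fixed point in $\CC^4$ is again $P$. Transporting by $t_v$, every Levi subgroup $\Levi=\Stab(Q)$ and its center $\z(\Levi)$ share the single fixed point $Q$ in $W\setminus R$. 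Finally, the open $\GL_2(\CC)$-orbit listed in item~\xref{rem:GL2-action-2} of Remarks~\xref{rem:GL2-action} lies in $\PP^4\setminus\PP(M_3)=\CC^4$, so it is the asserted dense open orbit, and it is principal because a general point has trivial stabilizer by Lemma~\xref{lemma-GL2-action-1}.

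The hard part will be the bookkeeping that makes the two descriptions of Levi subgroups coincide set-theoretically: matching the abstract unipotent radical produced by the Piontkowski--Van-de-Ven sequence~\eqref{eq:2nd-sequence-1} with the geometric translation group $\Transl(\CC^4)$, and then checking via the conjugation computation that the abstract conjugacy class of $\GL_2(\CC)$ is precisely the family of geometric point-stabilizers $\{\Stab(Q)\}_{Q\in W\setminus R}$. Once this equivalence is secured, the uniqueness of the fixed point (for $\Levi$ and for $\z(\Levi)$) and the principal open orbit are immediate consequences of the orbit picture in Remarks~\xref{rem:GL2-action}.
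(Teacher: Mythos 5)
Your proposal is correct, and in parts \xref{proposition-GL2-action-b} and \xref{proposition-GL2-action-cc} it runs along essentially the same lines as the paper; the genuine divergence is in \xref{proposition-GL2-action-a}. The paper extracts $\Ru\cong(\Ga)^4$ directly from the Piontkowski--Van-de-Ven exact sequence \eqref{eq:2nd-sequence-1}, exhibits the complement $\Levi_0\cong\GL_2(\CC)$ as the geometric action of Remark~\xref{rem:GL2-action}.\xref{rem:GL2-action-3}, and then invokes Levi--Maltsev--Mostow for uniqueness up to conjugation. You instead build the decomposition from scratch inside $\Aut(\PP^4,\Gamma)$: since $\langle\Gamma\rangle$ is invariant, the group sits in the affine group of $\CC^4=\PP^4\setminus\langle\Gamma\rangle$, where $\Transl(\CC^4)$ is normal and acts simply transitively, so the factorization $g=t\comp h$ with $h\in\Aut(\PP^4,\Gamma,P)$ and the trivial intersection $\Transl(\CC^4)\cap\Aut(\PP^4,\Gamma,P)=\{\id\}$ give $\Aut(\PP^4,\Gamma)=\Transl(\CC^4)\rtimes\GL_2(\CC)$ using only Corollary~\xref{cor:Aut-W5} and Lemma~\xref{lemma-GL2-action-1} (the identification $\Ru=\Transl(\CC^4)$ then follows since the quotient by $\Transl(\CC^4)$ is reductive, forcing any larger normal unipotent subgroup to die). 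This is more elementary and self-contained --- it uses \eqref{eq:2nd-sequence-1} only as a consistency check --- and it delivers part \xref{proposition-GL2-action-b} for free, where the paper has to route through \xref{proposition-GL2-action-a} plus conjugacy. In \xref{proposition-GL2-action-cc}, your explicit conjugation $t_v\Stab(P)t_v^{-1}=\Stab(t_v(P))$ and the weight computation $[(f_3,c)]\mapsto[(f_3,\lambda^3c)]$ showing that $\z(\Levi)$ fixes only $P$ in $\CC^4$ are exactly the details the paper compresses into ``straightforward by virtue of Lemma~\xref{lemma-GL2-action-1}''; the weight computation in particular is a welcome concrete justification of the claim about $\z(\Levi)$, which the paper never spells out.
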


\begin{proof}
\xref{proposition-GL2-action-a}
It follows from \eqref{eq:2nd-sequence-1} that $\Ru$ coincides with the $(\Ga)^4$-subgroup of
$\Aut(W)$. The effective $\GL_2(\CC)$-action on $W$ as in 
Remark~\xref{rem:GL2-action}.\xref{rem:GL2-action-3} defines a 
$\GL_2(\CC)$-subgroup,
say, $\Levi_0\subset\Aut(W)$
with $\Levi_0\cap\Ru=\{e\}$.
Thus, $\Levi_0\cong\GL_2(\CC)$ surjects onto $\PGL_2(\CC)$ with kernel
being the center $\z(\Levi_0)\cong\Gm$. Letting $\Levi=\Levi_0$
we obtain~\eqref{eq:Levi-decomp}.
The last assertion of~\xref{proposition-GL2-action-a} follows from the
Levi-Maltsev-Mostow Theorem, see, e.g., \cite{Mostow1956} or \cite{Hochschild1971}.

\xref{proposition-GL2-action-b}
By~\xref{proposition-GL2-action-a}, the isomorphism 
$\Aut(W)\cong\Aut(\PP^4,\Gamma)$ of Corollary
\xref{cor:Aut-W5} induces an isomorphism of Levi decompositions
\begin{equation*}
\Ru\rtimes\Levi\cong 
(\Ga)^4\rtimes\GL_2(\CC)\cong\Transl(\CC^4)\rtimes\Aut(\PP^4,\Gamma,P).
\end{equation*} Now~\xref{proposition-GL2-action-b} is straightforward.

\xref{proposition-GL2-action-cc} By virtue of~\xref{proposition-GL2-action-b}, 
$W\setminus R$ is the open orbit of $\Ru$ and of $\Aut(W)$. Furthermore, 
the unipotent radical $\Ru\cong (\Ga)^4$ acts freely (by translations) on 
$W\setminus R\cong\PP^4\setminus\langle\Gamma\rangle\cong\CC^4$. The Levi 
subgroup $\Levi\cong\GL_2(\CC)$ goes onto the stabilizer $\Aut(\PP^4,\Gamma, 
P)$ 
of a point $P\in\CC^4=\PP^4\setminus\langle\Gamma\rangle$, hence coincides 
with the stabilizer of the corresponding point in $W\setminus R$. All such 
stabilizers are conjugated via the action of $\Ru\cong\Transl(\CC^4)$ on 
$W\setminus R\cong\CC^4$. 
The last assertion is straightforward by virtue of Lemma \ref{lemma-GL2-action-1}.
\end{proof}

\begin{sremarks}
\label{rem:Mostow}
\setenumerate[0]{leftmargin=8pt,itemindent=7pt}
\begin{enumerate}
\renewcommand\labelenumi{\rm\arabic{enumi}.}
\renewcommand\theenumi{\rm\arabic{enumi}}
\item
\label{rem:Mostow-1}
Under the identification of the triple $(\PP^4,\Gamma, P)$ with
$(\PP(M_3\oplus\CC),\Gamma, P)$, see~\xref{nota:3-forms}, the
$\GL_2(\CC)$-subgroup $\Aut(\PP^4,\Gamma,P)\subset\Aut(\PP^4,\Gamma)$ is
defined as in~\xref{nota:3-forms} and~\xref{lemma-GL2-action-1}. The
$\GL_2(\CC)$-action by
conjugation on $\Ru=\Transl(\CC^4)\cong
(\Ga)^4$
is then given via the standard irreducible representation of $\GL_2(\CC)$ on
the
vector space $M_3\cong\CC^4$ of binary cubic forms. This determines the
structure of $\Aut(W)$ as a semidirect product. Identifying $\Ru=\Transl(\CC^4)$ 
with the additive group $(M_3,+)$, the action of 
$\Aut(W)\cong\Aut(\PP^4,\Gamma)$ on $\PP^4=\PP(M_3\oplus\CC)$ lifts to the 
action on $M_3\oplus\CC$ via
\begin{equation}
\label{eq:action-explicite}
(M_3,+)\rtimes\GL_2(\CC)\ni (h,g): (f,z)\longmapsto (f\circ g^{-1}+zh, z)\in 
M_3\oplus\CC,
\end{equation}
where $g$ is defined modulo multiplication by a cubic root of unity.

\item
\label{rem:Mostow-2}
A torus $T$ in a connected algebraic group $G$ is called \emph{regular} if the 
centralizer $\mathcal{C}_G(T)$ is solvable, and \emph{singular} otherwise. A 
torus $T\subset G$ is regular if and only if it is contained just in a finite 
set of Borel subgroups of $G$. The maximal tori are regular; see, e.g., 
\cite[Ch.~IX, \S~24]{Humphreys1975}.

By the preceding remark we have
$\mathcal{C}_{\Aut(W)}(\z(\Levi))=\Levi$ and $\mathcal{C}_{\Aut(W)}(\Ru)=\Ru$.
Hence the center $\z(\Levi)$ of a Levi subgroup $\Levi\subset\Aut(W)$ uniquely 
determines $\Levi$ and is a singular torus. Using~\eqref{eq:Levi-decomp} it is 
easily seen that, conversely, any singular torus in $\Aut(W)$ is the center 
$\z(\Levi)$ of a Levi subgroup $\Levi\subset\Aut(W)$. Any two singular tori in 
$\Aut(W)$ are conjugated.

\item
\label{rem:Mostow-3} 
The orbits of the $\Aut(\PP^4,\Gamma)$-action on $\PP^4$ are 
(see Remark~\xref{rem:GL2-action}.\xref{rem:GL2-action-2})
\begin{equation}
\label{eq:4-orbits}
\newcommand\probel{\hspace{5pt}}
\Gamma,\qquad
\Delta_0\setminus\Gamma,\qquad
\langle\Gamma\rangle\setminus\Delta_0,\qquad
\PP^4\setminus\langle\Gamma\rangle.
\end{equation}
Due to~\cite[Thm.~6.9]{Piontkowski-Van-de-Ven-1999}, the $\Aut(W)$-action on
$W$ has as well
exactly four orbits of dimensions 1,2,3, and 4, respectively. These
orbits are:
\begin{equation}
\label{eq:Aut-W-orbits}
\Upsilon,\qquad\Xi\setminus\Upsilon,\qquad R\setminus\Xi,\qquad W\setminus R.
\end{equation}
\end{enumerate}
\end{sremarks}

Next we examine
the $\Aut(W)$-action on $W$.

\begin{proposition}
\label{proposition-GL2-action-c}
In the notation as before the following hold.
\begin{enumerate}
\renewcommand\labelenumi{{\rm (\alph{enumi})}}
\renewcommand\theenumi{{\rm (\alph{enumi})}}
\item
\label{proposition-GL2-action-c-a}
The triple $(R,\Xi,\Upsilon)$ is
$\Aut(W)$-invariant.

\item
\label{proposition-GL2-action-c-b} $\Aut(W)$ acts effectively
on $R$.

\item
\label{proposition-GL2-action-c-c} The unipotent radical $\Ru$ of $\Aut(W)$ acts 
freely
on $W\setminus R$ and fixes $\Xi$ pointwise.

\item
\label{proposition-GL2-action-c-d}
Let $T=\z(\Levi)$ be a singular torus in $\Aut(W)$, where $\Levi$ is a reductive 
Levi subgroup.
Then the fixed point locus $W^T$ is $\Levi$-invariant and is a disjoint union 
\begin{equation*}
W^T=\{Q\}\cup\Psi\cup\Xi,
\end{equation*} 
where $Q\in W\setminus R$ is an isolated point and $\Psi\subset R$
is a twisted cubic curve such that $\langle\Psi\rangle\cap\Xi=\emptyset$ and 
$\Psi$ meets any plane $\Pi_\gamma$, $\gamma\in\Upsilon$, in a single point
$Q_\gamma\in\Pi_\gamma$.
\end{enumerate}
\end{proposition}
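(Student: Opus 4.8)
The plan is to treat the four assertions in the order~\xref{proposition-GL2-action-c-a}, \xref{proposition-GL2-action-c-c}, \xref{proposition-GL2-action-c-d}, \xref{proposition-GL2-action-c-b}, using throughout the isomorphism $\Aut(W)\cong\Aut(\PP^4,\Gamma)$ of Corollary~\xref{cor:Aut-W5}, the Levi decomposition $\Aut(W)=\Ru\rtimes\Levi\cong(\Ga)^4\rtimes\GL_2(\CC)$ of Proposition~\xref{proposition-GL2-action}, and the explicit affine action~\eqref{eq:action-explicite} of $\ker\varrho=\Ru\rtimes\z(\Levi)$ on $M_3\oplus\CC$. Part~\xref{proposition-GL2-action-c-a} I would read off from the orbit list~\eqref{eq:Aut-W-orbits}: each of $\Upsilon$, $\Xi=\Upsilon\cup(\Xi\setminus\Upsilon)$ and $R=\Upsilon\cup(\Xi\setminus\Upsilon)\cup(R\setminus\Xi)$ is a union of $\Aut(W)$-orbits, hence $\Aut(W)$-invariant. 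For the fixed-point clause of~\xref{proposition-GL2-action-c-c}, freeness of $\Ru$ on $W\setminus R$ is already recorded in Proposition~\xref{proposition-GL2-action}\xref{proposition-GL2-action-cc}; and since $\Ru\subset\ker\varrho$ acts trivially on $\Upsilon$, while $\Upsilon$ is a smooth conic spanning the plane $\Xi=\langle\Xi\rangle$ (so it contains four points in general position), any element of $\Aut(W)\subset\PGL_8(\CC)$ fixing $\Upsilon$ pointwise is the identity on $\Xi$. Thus $\Ru$ fixes $\Xi$ pointwise.

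The geometric heart of the two remaining parts is the action of $\ker\varrho$ on the planes $\Pi_\gamma$. From~\eqref{eq:action-explicite}, every element of $\ker\varrho$ fixes $\langle\Gamma\rangle=\{z=0\}\cong\PP^3$ pointwise and acts on $\PP^4=\PP(M_3\oplus\CC)$ as a homology or elation with axis $\langle\Gamma\rangle$. Lifting to $\hat W$ and using $\hat R=\PP(\NNN^\vee_{\Gamma/\PP^4})$, whose fibres $\hat\Pi_\gamma$ map isomorphically to $\Pi_\gamma$ over the pointwise-fixed curve $\Gamma$ (Lemma~\xref{lem:hatR}), I would linearize at each $\gamma\in\Gamma$: the induced action on the normal space $N_\gamma$ is trivial on the two directions tangent to $\langle\Gamma\rangle$ and carries the transverse direction with the homology weight $c$. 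Hence a nontrivial central element $t\in\z(\Levi)$ (scaling $f\mapsto cf$ with $c\neq1$) acts on each $\Pi_\gamma\cong\PP^2$ as a \emph{nontrivial} homology with axis $l_\gamma=\Pi_\gamma\cap\Xi$, while a nontrivial $u\in\Ru$ acts as a nontrivial elation with the same axis for general $\gamma$. This is the step I expect to be the main obstacle, as it requires care in matching the exceptional $\PP^2$'s of $\varphi$ with the $\Pi_\gamma$ and in reading off the weights.

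Granting this, I would prove~\xref{proposition-GL2-action-c-d} as follows. The locus $W^T$ is $\Levi$-invariant because $T=\z(\Levi)$ is central in $\Levi$. On $W\setminus R\cong M_3$ the torus $T$ scales, so $W^T\cap(W\setminus R)$ is the single point $Q$, the $\Levi$-fixed point of Proposition~\xref{proposition-GL2-action}\xref{proposition-GL2-action-cc}; by the previous paragraph $T$ fixes $\Xi$ pointwise; and on $R\setminus\Xi=\bigcup_\gamma(\Pi_\gamma\setminus l_\gamma)$ each plane contributes exactly the centre $c_\gamma$ of the nontrivial homology $T|_{\Pi_\gamma}$, so $W^T\cap(R\setminus\Xi)=\{c_\gamma\}=:\Psi$. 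To identify $\Psi$ as the asserted twisted cubic I would take $P$ to be the $\GL_2(\CC)$-fixed vertex: then the cone $N$ over $\Gamma$ with vertex $P$ is $T$-invariant and $T$ scales each ruling $\overline{P\gamma}$ fixing its two endpoints, so $T$ fixes $\hat\Gamma=\hat N\cap\hat R$ pointwise and $\Psi=\rho(\hat\Gamma)$. Lemma~\xref{lem:Gamma} then gives that $\Psi$ is a twisted cubic with $\langle\Psi\rangle\cap\Xi=\emptyset$ meeting each $\Pi_\gamma$ in one point, and the three pieces $\{Q\}$, $\Psi\subset R\setminus\Xi$, and $\Xi$ are pairwise disjoint, yielding the claimed decomposition.

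Finally, for effectiveness~\xref{proposition-GL2-action-c-b}, let $K=\ker(\Aut(W)\to\Aut(R))$. Since $K$ fixes $\Upsilon$, it lies in $\ker\varrho\cong M_3\rtimes\Gm$, the group of affine maps $f\mapsto cf+h$ with $\Gm=\z(\Levi)$ scaling $M_3=\Ru$. The computation above shows that a nontrivial $u\in\Ru$ moves some $\Pi_\gamma\subset R$, whence $K\cap\Ru=\{e\}$. Now $K\trianglelefteq\ker\varrho$, so for $n\in K$ with linear part $c$ and any translation $g\in\Ru$ acting by $f\mapsto f+h'$, normality gives $[g,n]\in K\cap\Ru=\{e\}$; a direct commutator computation yields $[g,n]\colon f\mapsto f+(1-c)h'$, and letting $h'$ range over $M_3$ forces $c=1$, hence $n\in\Ru$ and $n=e$. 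Therefore $K=\{e\}$, i.e.\ $\Aut(W)$ acts effectively on $R$.
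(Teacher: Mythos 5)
Your overall architecture coincides with the paper's: the heart in both cases is linearizing the action of $\ker\varrho$ along $\Gamma$ in $\hat R=\PP(\NNN^\vee_{\Gamma/\PP^4})$ and reading off that $\z(\Levi)$ acts on each $\Pi_\gamma$ as a homology with axis $l_\gamma=\Pi_\gamma\cap\Xi$, which yields \xref{proposition-GL2-action-c-d} essentially as in the paper. Your identification of $\Psi$ as $\rho(\hat\Gamma)$ with $\hat\Gamma=\hat N\cap\hat R$ via Lemma~\xref{lem:Gamma} is a clean shortcut (the paper instead exhibits the fixed curve as the projectivized rank-one subbundle and projects it by $\phi$ onto $\Gamma$; your cone $N=\Cone(\Gamma,P)$ reappears in the paper only later, in the proof of Proposition~\xref{prop:quintic-scroll}). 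Several of your local arguments genuinely differ from the paper's and are correct: for \xref{proposition-GL2-action-c-a} you use the orbit list~\eqref{eq:Aut-W-orbits} (i.e.\ \cite{Piontkowski-Van-de-Ven-1999}) where the paper just observes the triple is intrinsically constructed in~\xref{def:skew-symmetric}; for the pointwise-fixing in \xref{proposition-GL2-action-c-c} your four-points-in-general-position argument on $\Upsilon\subset\Xi$ replaces the paper's citation of \cite[Thm.~6.6]{Piontkowski-Van-de-Ven-1999}; and your commutator endgame for \xref{proposition-GL2-action-c-b} (normality of $K$ in $\ker\varrho$ forces $[g,n]\colon f\mapsto f+(1-c)h'$ into $K\cap\Ru=\{e\}$, whence $c=1$) replaces the paper's more geometric finish, which embeds $K$ into $\z(\GL_2(\CC))\cong\Gm$ and uses that at a point $Q\in\Upsilon$ one has $T_QR=T_QW$ because $Q\in\Sing R$, so the faithful tangent representation of the reductive group $K$ kills it. Both endgames work; yours is more elementary, the paper's avoids coordinates.

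The one place you stop short is exactly the step you flag. Your weight computation does justify the homology statement for semisimple $t\in\z(\Levi)$ (weights $(1,1,c)$ on $N_\gamma$ with $c\neq 1$, hence non-scalar at \emph{every} $\gamma$), and that is all \xref{proposition-GL2-action-c-d} needs; but for unipotent $u\in\Ru$ all weights are $1$ and the nontriviality of the induced elation is precisely what must be proved, and your derivation of $K\cap\Ru=\{e\}$ --- hence all of \xref{proposition-GL2-action-c-b} --- rests on it. This is the paper's Claim~\xref{claim-Transl}, and it closes in two lines inside your own linearization: by~\eqref{eq:action-tang-transl} the differential $dg$ acts trivially on the normal space $\NNN_{\Cone(\Gamma)/(M_3\oplus\CC),(f,0)}$ if and only if $(g,0)\in T_{(f,0)}\Cone(\Gamma)$, and since $\bigcap_{(f,0)\in\Cone(\Gamma)}T_{(f,0)}\Cone(\Gamma)\subset T_{(x^3,0)}\Cone(\Gamma)\cap T_{(y^3,0)}\Cone(\Gamma)=\{0\}$, any $g\neq 0$ acts nontrivially on $\hat\Pi_\gamma$ for some (hence general) $\gamma$. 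With that verification inserted, your proof is complete.
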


\begin{proof}
Statement~\xref{proposition-GL2-action-c-a} is immediate from
\xref{def:skew-symmetric}. To show~\xref{proposition-GL2-action-c-b}
let $K$ be the kernel of the restriction homomorphism
$\Aut(W)\to\Aut(R)$, $\alpha\mapsto\alpha|_R$.
The action of $\Aut(\PP^4,\Gamma)$ induces a representation of this
group by automorphisms of the normal bundle
$\NNN_{\Gamma/\PP^4}$ and $\hat R\cong\PP(\NNN_{\Gamma/\PP^4}^\vee)$. The
unipotent radical $\Transl(\CC^4)$ of $\Aut(\PP^4,\Gamma)$ acts
trivially on the base $\Gamma$. Its action on the total space $\hat R$ induces
the $\Ru$-action on $R$, see Lemma
\xref{lem:hatR}\xref{lem:hatR-a}-\xref{lem:hatR-b}.
The latter action is effective
if and only if the former is.

\begin{sclaim}
\label{claim-Transl}
The induced representation of
$\Transl(\CC^4)\cong (\Ga)^4$ on the normal
bundle $\NNN_{\Gamma/\PP^4}$ is faithful.
\end{sclaim}

\begin{proof}[Proof of Claim~\xref{claim-Transl}]
We use Notation~\xref{nota:3-forms}. Let $\pi:
(M_3\oplus\CC)\setminus\{0\}\to\PP(M_3\oplus\CC)$ be the canonical projection,
and let $\Cone(\Gamma)=\pi^{-1}(\Gamma)$ be the affine cone over $\Gamma$
in $(M_3\oplus\CC)\setminus\{0\}\cong\CC^5\setminus\{0\}$.
The normal bundle of $\Cone(\Gamma)$ in $M_3\oplus\CC$ is the pullback
$\pi^*(\NNN_{\Gamma/\PP^4})$.
The induced representation of $\Transl(\CC^4)\cong (\Ga)^4$ on the normal bundle
$\NNN_{\Gamma/\PP^4}$ is faithful if and only if $(M_3,+)\cong\Transl(\CC^4)$
acts effectively on $\NNN_{\Cone(\Gamma)/(M_3\oplus\CC)}$.

An element $g\in (M_3,+)$ acts on
$M_3\oplus\CC$ linearly via
\begin{equation}
\label{eq:action-transl}
g.(f,z)=(f+z g,z)\quad\forall (f,z)\in M_3\oplus\CC.
\end{equation}
It fixes $\Cone(\Gamma)$ pointwise and acts trivially on the
tangent space $T_{(f,0)}\Cone(\Gamma)$. The induced action on the tangent space
$T_{(f,0)}(M_3\oplus\CC)\cong M_3\oplus\CC$ is given by the same formula
\begin{equation}
\label{eq:action-tang-transl}
dg.(u,v)=(u+v g,v)\quad\forall (u,v)\in M_3\oplus\CC.
\end{equation}
Fix a point $(f,0)=((\alpha x+\beta y)^3,0)\in\Cone(\Gamma)$. The
$g$-action on the normal space
\begin{equation*}
\NNN_{\Cone(\Gamma)/(M_3\oplus\CC),
(f,0)}=T_{(f,0)}(M_3\oplus\CC)/T_{(f,0)}\Cone(\Gamma)
\end{equation*}
is trivial if and
only if $g\in T_{(f,0)}\Cone(\Gamma)$. However, one has
\begin{equation*}
\bigcap_{(f,0)\in
\Cone(\Gamma)}
T_{(f,0)}\Cone(\Gamma)\subset T_{(x^3,0)}\Cone(\Gamma)\cap
T_{(y^3,0)}\Cone(\Gamma)=\{0\}.
\end{equation*}
It follows that the only element of $(M_3,+)$ acting trivially on
$\NNN_{\Cone(\Gamma)/(M_3\oplus\CC)}$ is $g=0$.
\end{proof}

\begin{scorollary}
$K\cap\Ru=\{1\}$.
\end{scorollary}

Therefore,~\eqref{eq:2nd-sequence-1} gives an embedding of $K$
in $\GL_2(\CC)=\Aut(W)/\Ru$.
Since $K$ is normal, $K$ is contained in $\z(\GL_2(\CC))\cong\Gm$.
The latter singular torus
acts on $\Upsilon\cong\PP^1$ with a fixed point, say, $Q$. The tangent
representation of the reductive group $\Gm$ on $T_QW$ is faithful. On the other
hand, $Q\in\Xi=\Sing\,R$, hence $T_QW$ is the Zariski tangent space of $R$
at $Q$. Thus, $K$ acts identically on $T_QW$, and so,
$K=\{1\}$.
This proves~\xref{proposition-GL2-action-c-b}.

The first assertion of~\xref{proposition-GL2-action-c-c} follows from the
correspondence of Proposition
\xref{proposition-GL2-action}\xref{proposition-GL2-action-b}. The second
follows from the construction of exact sequence~\eqref{eq:2nd-sequence-1}
in~\cite[Thm.~6.6]{Piontkowski-Van-de-Ven-1999}.

To prove~\xref{proposition-GL2-action-c-d}
we chose coordinates in $\PP^4$ so that the induced action of $\z(\Levi)$ on 
$\PP^4$
is given by the diagonal matrix $\diag(1,1,1,1,\lambda)$,
$\lambda\in\CC^*$; cf.\ the proof of~\xref{proposition-GL2-action-b}. This
action is
identical on $\langle\Gamma\rangle\cong\PP^3$, hence also on the normal bundle
$\NNN_{\Gamma/\PP^3}$. However, it is effective on the normal bundle
$\NNN_{\Gamma/\PP^4}$. This allows to decompose $\NNN_{\Gamma/\PP^4}$ into a
direct sum of proper subbundles of ranks $2$ and 1, respectively. The rank two
subbundle projects in $\hat R\cong\PP(\NNN_{\Gamma/\PP^4}^\vee)$ to a
$\PP^1$-subbundle, and the rank one projects to a disjoint section, say,
$\hat\Psi$. In each plane $\hat\Pi_\gamma$, $\gamma\in\Gamma$, this gives a line
$\hat l_\gamma$ of fixed points of $\z(\Levi)$ and an isolated fixed point
$\hat\Psi\cap\hat\Pi_\gamma$.
The image of $\hat l_\gamma$ in $\Pi_\gamma$ is the line
$l_\gamma=\Pi_\gamma\cap\Xi$. The image $\Psi\subset R$ of the curve
$\hat\Psi\subset\hat R$ is a second irreducible component of the fixed point
locus of $\z(\Levi)|_R$. This curve $\Psi$ is disjoint with $\Xi$ and meets each 
plane
$\Pi_\gamma$ in a point. The linear projection $\phi:
\PP^7\dashrightarrow\PP^4$ with center $\Xi$ as in~\eqref{equation-diagram-1}
sends $\Psi$ isomorphically onto the rational twisted cubic curve $\Gamma$. The 
linear
span $\langle\Psi\rangle$ is sent by $\phi$ onto $\langle
\Gamma\rangle\cong\PP^3$. It follows that $\Psi$ is a rational twisted cubic 
curve, and
$\langle\Psi\rangle\cap\Xi=\emptyset$.
\end{proof}

In the sequel we need the following simple lemma.

\begin{lemma}
\label{lem:GaxGm}
Let $\mathfrak{H}$ be a subgroup of $\Aut(W)$ isomorphic either to $\GL_2(\CC)$, 
or to the $2$-torus $(\Gm)^2$.
Then there is a unique Levi subgroup $\Levi$ of $\Aut(W)$ such that 
$\z(\Levi)\subset\mathfrak{H}\subset\Levi$.
\end{lemma}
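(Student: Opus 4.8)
The plan is to exploit the semidirect decomposition $\Aut(W)=\Ru\rtimes\Levi\cong(\Ga)^4\rtimes\GL_2(\CC)$ from~\eqref{eq:Levi-decomp}, together with the two facts recorded in Remark~\xref{rem:Mostow}.\xref{rem:Mostow-2}: a $1$-dimensional torus $S\subset\Aut(W)$ is the center $\z(\Levi')$ of a (necessarily unique) Levi subgroup $\Levi'$ if and only if $S$ is singular, i.e.\ $\mathcal{C}_{\Aut(W)}(S)$ is non-solvable, and in that case $\Levi'=\mathcal{C}_{\Aut(W)}(S)$. I would then treat the two cases of the hypothesis separately. If $\mathfrak{H}\cong\GL_2(\CC)$, then by the uniqueness up to conjugacy of $\GL_2(\CC)$-subgroups in Proposition~\xref{proposition-GL2-action}\xref{proposition-GL2-action-a}, the group $\mathfrak{H}$ is itself conjugate to $\Levi$, hence is a reductive Levi subgroup; taking $\Levi=\mathfrak{H}$ gives $\z(\Levi)=\z(\mathfrak{H})\subset\mathfrak{H}=\Levi$, which settles existence. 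For uniqueness, any Levi subgroup $\Levi'$ with $\mathfrak{H}\subset\Levi'$ is connected of the same dimension $4$ as $\mathfrak{H}$, whence $\Levi'=\mathfrak{H}$.

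Now suppose $\mathfrak{H}\cong(\Gm)^2$. Since $\rk\Aut(W)=\rk\GL_2(\CC)=2$, the torus $\mathfrak{H}$ is a maximal torus of $\Aut(W)$, so by conjugacy of maximal tori I may write $\mathfrak{H}=g\,T_{\Levi}\,g^{-1}$, where $T_{\Levi}\subset\Levi\cong\GL_2(\CC)$ is the diagonal maximal torus. For existence I would set $\Levi'=g\Levi g^{-1}$, a Levi subgroup, and observe $\mathfrak{H}=gT_{\Levi}g^{-1}\subset g\Levi g^{-1}=\Levi'$ together with $\z(\Levi')=g\z(\Levi)g^{-1}\subset gT_{\Levi}g^{-1}=\mathfrak{H}$, the last inclusion because $\z(\Levi)\subset T_{\Levi}$.

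The crux is uniqueness, which I would reduce to the following claim: a maximal torus $\mathfrak{H}$ of $\Aut(W)$ contains exactly one singular $1$-dimensional subtorus. Granting this, if $\Levi_1,\Levi_2$ both satisfy $\z(\Levi_i)\subset\mathfrak{H}\subset\Levi_i$, then $\z(\Levi_1)$ and $\z(\Levi_2)$ are singular $1$-dimensional subtori of $\mathfrak{H}$, hence coincide by the claim; consequently $\Levi_1=\mathcal{C}_{\Aut(W)}(\z(\Levi_1))=\mathcal{C}_{\Aut(W)}(\z(\Levi_2))=\Levi_2$ by Remark~\xref{rem:Mostow}.\xref{rem:Mostow-2}.

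To prove the claim, since singularity is invariant under conjugation I may conjugate $\mathfrak{H}$ to $T_{\Levi}\subset\GL_2(\CC)=\Levi$. For a $1$-dimensional subtorus $S\subset T_{\Levi}$, a direct inspection in the semidirect product shows $\mathcal{C}_{\Aut(W)}(S)=\Ru^{S}\rtimes\mathcal{C}_{\Levi}(S)$, where $\Ru^{S}$ is the fixed-point subgroup of $S$ acting on $\Ru$; being closed in the unipotent group $\Ru\cong(\Ga)^4$, it is solvable, so $\mathcal{C}_{\Aut(W)}(S)$ is non-solvable if and only if $\mathcal{C}_{\Levi}(S)$ is. In $\GL_2(\CC)$ the latter holds precisely when $S=\z(\Levi)$: a non-central $S\subset T_{\Levi}$ contains a regular semisimple element, so $\mathcal{C}_{\Levi}(S)=T_{\Levi}$ is solvable, whereas $\mathcal{C}_{\Levi}(\z(\Levi))=\GL_2(\CC)$ is not. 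Hence $\z(\Levi)$ is the unique singular $1$-dimensional subtorus of $T_{\Levi}$, proving the claim. I expect the main obstacle to be exactly this last step: verifying that the centralizer in $\Aut(W)$ of a subtorus of $\Levi$ splits as the stated semidirect product, so that solvability is controlled entirely by the $\GL_2(\CC)$-factor, and then pinning down which $1$-parameter subgroups of the maximal torus have non-solvable centralizer.
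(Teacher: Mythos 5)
Your proof is correct, and its first half (the $\GL_2(\CC)$ case, and existence when $\mathfrak{H}\cong(\Gm)^2$) is essentially the paper's own argument: conjugacy of $\GL_2(\CC)$-subgroups, respectively of maximal tori, places $\mathfrak{H}$ inside a Levi subgroup, which then automatically contains its center. Where you genuinely diverge is the uniqueness step in the torus case. The paper argues geometrically: after conjugating to the standard situation of~\xref{nota:3-forms}, the explicit weights $(\lambda^{-3},\lambda^{-2}\mu^{-1},\lambda^{-1}\mu^{-2},\mu^{-3})$ of the diagonal torus on $W\setminus R\cong M_3\cong\CC^4$ show that $\mathfrak{H}$ has a unique fixed point $P$ there; any Levi subgroup containing $\mathfrak{H}$ must fix $P$, and since Levi subgroups are precisely the point stabilizers in $W\setminus R$, each with a unique fixed point (Proposition~\xref{proposition-GL2-action}\xref{proposition-GL2-action-cc}), this pins down $\Levi$. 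You instead stay inside group theory: your centralizer formula is correct, since in $\Ru\rtimes\Levi$ one computes $(u,h)(0,s)(u,h)^{-1}=(u-(hsh^{-1})\cdot u,\,hsh^{-1})$, whence $\mathcal{C}_{\Aut(W)}(S)=\Ru^S\rtimes\mathcal{C}_{\Levi}(S)$ for $S\subset T_{\Levi}$; solvability is thus governed by $\mathcal{C}_{\Levi}(S)$, and in $\GL_2(\CC)$ only the central $\Gm$ has non-solvable centralizer, so $\z(\Levi)$ is the unique singular one-dimensional subtorus of a maximal torus, and Remark~\xref{rem:Mostow}.\xref{rem:Mostow-2} (namely, $\z(\Levi)$ is singular and $\mathcal{C}_{\Aut(W)}(\z(\Levi))=\Levi$) finishes. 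Your route buys independence from the geometric dictionary of Proposition~\xref{proposition-GL2-action}\xref{proposition-GL2-action-cc}: it needs only the abstract structure $(\Ga)^4\rtimes\GL_2(\CC)$, solvability of unipotent groups, and centralizers of tori in $\GL_2(\CC)$; in effect you supply a proof of the assertion that Remark~\xref{rem:Mostow}.\xref{rem:Mostow-2} declares ``easily seen''. The paper's route is shorter given that the point-stabilizer description is already established, and the fixed point $P$ it exhibits is a geometric object reused later (e.g., in Lemma~\xref{lem:center-inv-cone}).
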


\begin{proof} In the case $\mathfrak{H}\cong\GL_2(\CC)$ the assertion follows 
from Proposition~\xref{proposition-GL2-action}\xref{proposition-GL2-action-a}.
If $\mathfrak{H}\cong (\Gm)^2$, then $\mathfrak{H}$ is a maximal torus in 
$\Aut(W)$. Being
reductive, $\mathfrak{H}$ is contained in a Levi subgroup $\Levi$ of $\Aut(W)$
and contains its center $\z(\Levi)$. Up to conjugation, one may suppose that
$\Levi$ is the standard $\GL_2(\CC)$-subgroup of 
$\Aut(W)\hookrightarrow\operatorname{Aff}(\CC^4)$ as in~\xref{nota:3-forms}, and 
$\mathfrak{H}$ is the diagonal torus.
That is, $W\setminus R\cong\PP^4\setminus\PP^3=\CC^4$ is realized as the vector
space $M_3$ of binary cubic forms, where $\Ru$ acts by translations and
$\GL_2(\CC)$ acts via $h.f=f\circ h^{-1}$ for any $h\in\GL_2(\CC)$ and $f\in
M_3$. In particular, $\mathfrak{H}$ acts via
\begin{equation*}
(\lambda,\mu).(a_0,a_1,a_2,a_3)=(\lambda^{-3}a_0,\lambda^{-2}\mu^{-1}
a_1,\lambda^{-1}\mu^{-2} a_2,\mu^{-3} a_3).
\end{equation*}
Hence $\mathfrak{H}$ has a unique fixed point, say, $P$, which is the unique
fixed point of $\Levi$ in $ W\setminus R$. This point determines $\Levi$
uniquely, see Proposition
\xref{proposition-GL2-action}\xref{proposition-GL2-action-cc}.
\end{proof}

\section{Quintic scrolls in $W_5$ and their stabilizers}
\label{sec-3-bis}

In the next proposition we construct a special quintic scroll
$F\cong\FF_1$ in $W$ contained
in $R$ and acted upon by $\GL_2(\CC)$.

\begin{proposition}
\label{prop:quintic-scroll}
With the notation as in~\xref{nota:3-forms} the following hold.

\begin{enumerate}
\renewcommand\labelenumi{\rm (\alph{enumi})}
\renewcommand\theenumi{\rm (\alph{enumi})}
\item
\label{prop:quintic-scroll-b}
Let $\hat\Delta$ be the proper transform of $\Delta$ in
$\hat W$.
Then the image $\rho(\hat\Delta)\subset W$ is
cut out on $W$ by a quadric.

\item
\label{prop:quintic-scroll-c}
The reduced intersection
$F=(\rho(\hat\Delta)\cap R)_{\red}\subset\PP^6$
is a smooth quintic scroll isomorphic to $\FF_1$ and invariant under
an effective $\GL_2(\CC)$-action on $W$.

\item
\label{remark-uniqueness}
$F\cap\Xi=\Upsilon$ and $\Xi$ is the only plane in $W$ meeting $F\cong\FF_1$ 
along a
conic.
\end{enumerate}
\end{proposition}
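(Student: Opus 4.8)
The plan is to establish the three assertions in turn, with the last two resting on an explicit description of $F$ coming from the tangent-developable geometry of $\Delta$ along $\Gamma$. For~\xref{prop:quintic-scroll-b} I would first note that the discriminant of a binary cubic is homogeneous of degree $4$ in the coefficients, so $\Delta_0\subset\PP(M_3)$ is a quartic surface and its cone $\Delta\subset\PP^4$ satisfies $\Delta\sim 4L$. Since $\Delta_0$ is the tangent developable of $\Gamma$, it carries a cuspidal edge along $\Gamma$ whose transverse cusp has a double line as tangent cone; hence $\Delta$ has multiplicity exactly $2$ along $\Gamma$. The proper transform under the blowup $\varphi$ of $\Gamma$ is therefore $\hat\Delta\sim 4L^*-2\hat R$, and substituting the relations of~\eqref{eq:exprim} (equivalently $L^*\sim H^*-\hat E$ and $\hat R\sim H^*-2\hat E$) gives $\hat\Delta\sim 2H^*=\rho^*(2H)$. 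In particular $\hat\Delta$ contains no $\hat E$-component, so $\rho_*\hat\Delta=\rho(\hat\Delta)$ is an honest divisor in $|2H|$, cut out on $W$ by a quadric; the same identity forces $\Xi\not\subset\rho(\hat\Delta)$, a fact I reuse below.

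For~\xref{prop:quintic-scroll-c} the $\GL_2(\CC)$-invariance is immediate: the discriminant is a $\GL_2(\CC)$-semi-invariant, so $\Delta$, $\hat\Delta$, $\rho(\hat\Delta)$ and $R$ are invariant for the equivariant structure of Remark~\xref{rem:GL2-action}.\xref{rem:GL2-action-3}, whence so is $F$, and the action is effective by Proposition~\xref{proposition-GL2-action}. To describe $F$ I would work on $\hat R$, a $\PP^2$-bundle over $\Gamma\cong\PP^1$ by Lemma~\xref{lem:hatR}\xref{lem:hatR-a}. Restricted to a fibre $\hat\Pi_\gamma\cong\PP^2$, the divisor $\hat\Delta$ cuts out the projectivized tangent cone of $\Delta$ along $\Gamma$, which by the cuspidal-edge picture is a double line $2\hat\ell_\gamma$. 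Thus the reduced surface $\hat F:=\bigcup_\gamma\hat\ell_\gamma\subset\hat R$ is a $\PP^1$-subbundle, a Hirzebruch surface, and $F=\rho(\hat F)$ is ruled over $\Gamma\cong\PP^1$. The intersection cycle $\rho(\hat\Delta)\cdot R$ then equals the non-reduced scheme $2F$ (reflecting the double lines $2\hat\ell_\gamma$), of degree $2H\cdot H\cdot H^2=2H^4=10$, so $\deg F=5$ and $F$ is a rational normal quintic scroll as in Remark~\xref{remark-quintic-scroll}.

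To decide between $\FF_1$ and $\FF_3$ and to prove~\xref{remark-uniqueness}, I would analyse $F$ along $\Xi$ through $\hat\Xi=\hat E\cap\hat R\cong\PP^1\times\PP^1$. In each fibre $\hat\Pi_\gamma$ the ruling line $\hat\ell_\gamma$ meets the line $\hat\Pi_\gamma\cap\hat\Xi$ in a single point, so $\hat F\cap\hat\Xi$ is a section of $\hat\Xi\to\Gamma$; and since $\rho$ is an isomorphism off $\hat E$, the morphism $\rho|_{\hat F}$ is injective away from this section. I claim $\rho|_{\hat F}$ is in fact an isomorphism onto $F$: matching the cuspidal (double-line) directions with the branched double cover $\rho|_{\hat\Xi}\colon\hat\Xi\to\Xi$ of Lemma~\xref{lem:hatR}\xref{lem:hatR-d}, the section $\hat F\cap\hat\Xi$ is precisely the ramification curve, which maps isomorphically onto the branch conic $\Upsilon$. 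This simultaneously shows that $F$ is smooth, that $F\cap\Xi=\Upsilon$, and that $\Upsilon$ is the minimal section of the scroll; as the minimal section of a quintic scroll in $\PP^6$ is a conic exactly for $\FF_1$ (and a line for $\FF_3$), we conclude $F\cong\FF_1$. Uniqueness in~\xref{remark-uniqueness} follows because the only planes in $W$ are $\Xi$ and the $\Pi_\gamma$ ($\gamma\in\Upsilon$) of~\xref{def:skew-symmetric}, and the scroll $F$ meets each $\Pi_\gamma$ along a single ruling, hence in a line and not a conic. The main obstacle is the isomorphism claim of this last paragraph: verifying that the double-line tangent directions $\hat\ell_\gamma$ cut $\hat\Xi$ exactly along its ramification curve, so that $\rho$ folds the minimal section isomorphically onto $\Upsilon$. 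This is where the precise local structure of the tangent developable along $\Gamma$ must be matched carefully with the branched-cover geometry of $\hat\Xi\to\Xi$.
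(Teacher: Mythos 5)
Your part~\xref{prop:quintic-scroll-b} is exactly the paper's argument: $\Delta$ is a quartic with multiplicity two along $\Gamma$, so $\hat\Delta\sim 4L^*-2\hat R\sim 2H^*$ by~\eqref{eq:exprim}, and your parenthetical that this identity forces $\Xi\not\subset\rho(\hat\Delta)$ is correct, since $|2H^*|=\rho^*|2H|$, so the irreducible divisor $\hat\Delta$, not containing $\hat E$, is the total transform of a quadric section of $W$ not containing $\Xi$. For~\xref{prop:quintic-scroll-c} your organization genuinely differs from the paper's. The paper first constructs $F$ synthetically as the join of $\Upsilon$ and the twisted cubic $\Psi=\rho(\hat\Gamma)$ of Lemma~\xref{lem:Gamma}, whose linear spans are disjoint; smoothness, $F\cong\FF_1$, rational normality in $\PP^6$, and $F\cap\Xi=\Upsilon$ then come for free from the classical join construction, and only afterwards does it identify $F$ with $(\rho(\hat\Delta)\cap R)_{\red}$ via the containment $F\subset(\rho(\hat\Delta)\cap R)_{\red}$ and the same degree count you use ($\deg(\rho(\hat\Delta)\cap R)=10$, $\deg F=5$, the factor $2$ coming from the tangency of $\hat R$ and $\hat\Delta$ along the cuspidal edge). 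Your fiberwise identification of $\hat\Pi_\gamma\cap\hat\Delta$ with the double line on the projectivized tangent cone appears verbatim in the paper's proof; what is new in your route is defining $\hat F\subset\hat R$ as a $\PP^1$-subbundle and descending, which is more self-contained but obliges you to control $\rho|_{\hat F}$ by hand.

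The obstacle you flag at the end is a genuine gap as written, but it does not require the delicate local matching you anticipate: since $\hat F$ is $\GL_2(\CC)$-invariant (which you established at the start of~\xref{prop:quintic-scroll-c}), the image $\rho(\hat F\cap\hat\Xi)$ is a $\GL_2(\CC)$-invariant curve in $\Xi$ (a curve, because $\rho|_{\hat\Xi}$ is finite), and $\Upsilon$ is the \emph{only} invariant curve in $\Xi$, cf.\ Remark~\xref{rem:GL2-action}.\xref{rem:GL2-action-2} --- this is precisely how the paper pins down $(\rho(\hat\Delta)\cap\Xi)_{\red}$. Since $(\rho|_{\hat\Xi})^{-1}(\Upsilon)$ is the ramification $(1,1)$-curve of Lemma~\xref{lem:hatR}\xref{lem:hatR-d} and $\hat F\cap\hat\Xi$ is a section of $\hat\Xi\to\Gamma$, the two must coincide. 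Two further touch-ups are needed. First, injectivity of $\rho|_{\hat F}$ alone does not give an isomorphism onto $F$, hence smoothness of $F$: you must check $d\rho$ is injective along $\hat F\cap\hat\Xi$; this holds because the ruling direction maps isomorphically onto $T_{q_\gamma}\Lambda_\gamma\not\subset T_{q_\gamma}\Xi$ while the section direction maps isomorphically onto $T_{q_\gamma}\Upsilon\subset T_{q_\gamma}\Xi$, so the images are independent. Second, your dichotomy ``minimal section a conic for $\FF_1$, a line for $\FF_3$'' tacitly assumes $F$ is rational normal in $\PP^6$, which your construction has not yet supplied; it is safer to argue intrinsically: on $\FF_3$ embedded by $|s_0+4f|$ every section has degree $1$ or $\ge 4$, so a smooth quintic scroll containing a conic section must be $\FF_1$ with the conic as exceptional section, as in Remark~\xref{remark-quintic-scroll}. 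With these repairs your argument closes and is a legitimate alternative to the paper's proof.
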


\begin{proof}
\xref{prop:quintic-scroll-b} 
Since $\Delta$ is singular along $\Gamma$, we have
$\hat\Delta\sim 4L^*-2\hat R\sim 2H^*$ on $\hat W$, see~\eqref{eq:exprim}.
Hence $\rho(\hat\Delta)$ is cut out on $W$ by a quadric.

\xref{prop:quintic-scroll-c}
First we construct a special quintic scroll $F\subset R$, and then we show that 
it coincides with $(\rho(\hat\Delta)\cap R)_{\red}$. Let $N=\Cone(\Gamma, 
P)\subset\PP^4$ be as in Remark~\xref{rem:GL2-action}.\xref{rem:GL2-action-1}, 
and let $\hat\Gamma=\hat N\cap\hat R$ be as in Lemma~\xref{lem:Gamma}. By 
Lemma~\xref{lem:hatR} the normalization morphism $\rho|_{\hat R}:\hat R\to R$ 
sends each fiber $\hat\Pi_\gamma$ of the $\PP^2$-bundle $\varphi|_{\hat R}:\hat 
R\to\Gamma$ isomorphically onto a plane $\Pi_\gamma\subset 
R=\bigcup_{\gamma'\in\Gamma}\Pi_{\gamma'}$. The curve $\hat\Gamma\subset\hat R$ is 
a section of $\varphi|_{\hat R}$ meeting any plane $\hat{\Pi}_\gamma$ in a 
single point, say, $\hat p_\gamma$. Set $\Psi:=\rho(\hat\Gamma)$. By 
Lemma~\xref{lem:Gamma} 
one has $\langle\Psi\rangle\cap\Xi=\emptyset$ and $\Psi$ is a twisted cubic 
curve. Hence 
$p_\gamma:=\rho(\hat{p}_\gamma)\in\Pi_\gamma\setminus\Xi$. Letting 
$\{q_\gamma\}=\Pi_\gamma\cap\Upsilon$, consider the one-parameter family of 
lines $\Lambda_\gamma\subset\Pi_\gamma\subset R$ joining $p_\gamma$ and 
$q_\gamma$. Thus, any $\Lambda_\gamma$ meets the conic $\Upsilon\subset\Xi$ and 
the twisted cubic $\Psi\subset R\setminus\Xi$. Since 
$\langle\Psi\rangle\cap\langle\Upsilon\rangle=\emptyset$, the join
\begin{equation}
\label{equation-F-join-corresponding-points}
F=\bigcup_{\gamma\in\Upsilon}\Lambda_\gamma
\end{equation}
of corresponding points
of $\Upsilon$ and $\Psi$ is
a rational normal quintic scroll in $R$
(\cite[Ch. 4, \S~3]{Griffiths-Harris-1994}).

Let us show that $F\subset (\rho(\hat\Delta)\cap R)_{\red}$ for $F$ as in
\eqref{equation-F-join-corresponding-points}.
Due to our choice of $\hat\Gamma$ we have $\hat\Delta\cap\hat R\supset\hat 
N\cap\hat R=\hat
\Gamma$.
So, $(\rho(\hat\Delta)\cap R)_{\red}\supset\Psi$.
On the other hand, $(\rho(\hat\Delta)\cap\Xi)_{\red}$ is a 
$\GL_2(\CC)$-invariant
curve.
Since $\Upsilon$ is the only $\GL_2(\CC)$-invariant curve in $\Xi$, one has
$\Upsilon=(\rho(\hat\Delta)\cap\Xi)_{\red}\subset (\rho(\hat
\Delta)\cap R)_{\red}$.

For any fiber $\hat\Pi_{\gamma}$ of $\varphi|_{\hat R}:\hat R\to\Gamma$,
where $\gamma\in\Gamma$, the intersection $\hat\Pi_{\gamma}\cap\hat\Delta$
is the projectivization of the tangent cone to $\Delta\cap\PP^3$ at $\gamma$,
where $\PP^3\subset\PP^4$ is a general hyperplane passing through $\gamma$.
Since the singularity of $\Delta\cap\PP^3$ at $\gamma$ looks locally like
a product of a simple cusp singularity by a smooth curve germ,
this projectivization is a line $\hat\Lambda_\gamma$ in $\hat
\Pi_{\gamma}\cong\PP^2$.
The image $\Lambda_\gamma=\rho(\hat\Lambda_\gamma)$ is a line in $W$ passing 
through the
points
$p_\gamma\in\Psi\cap\Pi_\gamma$
and $q_\gamma\in\Pi_\gamma\cap\Upsilon$.
Thus, $F\subset (\rho(\hat\Delta)\cap R)_{\red}$.
On the other hand, $\deg (\rho(\hat\Delta)\cap R)=10$ and $\deg F=5$. 
Furthermore, $\Delta$ has cuspidal singularities along
$\Gamma=\varphi(\hat R)$, hence $\hat R$
is tangent to $\hat\Delta$
along $(\hat\Delta\cap\hat R)_{\red}$.
Therefore, one has
$F=(\rho(\hat\Delta)\cap R)_{\red}$.

The hyperplane section $R$ is invariant under the $\Aut(W)$-action, and
$\rho(\hat\Delta)\subset W$ is invariant under the $\GL_2(\CC)$-action on $W$
as in Remark~\xref{rem:GL2-action}.\xref{rem:GL2-action-3}. Hence
the quintic scroll $F=(\rho(\hat\Delta)\cap R)_{\red}\subset W$
is $\GL_2(\CC)$-invariant
as well.

\xref{remark-uniqueness}
Indeed, by construction,$F\cap\Xi=\Upsilon$ and 
any plane $\Pi_\gamma$ meets $F$ along a ruling $\Lambda_\gamma$ and is
not tangent to $F$ along $\Lambda_\gamma$.
\end{proof}

\begin{lemma}
\label{lem:aut-pencils}
Let $J\subset\Xi$, $J\neq\Upsilon$, be a smooth conic meeting $\Upsilon$ with 
even multiplicities. Then 
the following hold.
\begin{enumerate}
\renewcommand\labelenumi{{\rm (\alph{enumi})}}
\renewcommand\theenumi{{\rm (\alph{enumi})}}
\item
\label{lem:aut-pencils-a}
The group $\Aut(\Xi,\Upsilon,J)$ of
automorphisms of $\Xi$ which leave $\Upsilon$ and $J$ invariant is given by 
the following table:
\par\medskip\noindent
\setlength{\extrarowheight}{1pt}
\newcommand{\heading}[1]{\multicolumn{1}{c|}{#1}}
\newcommand{\headingl}[1]{\multicolumn{1}{c}{#1}}
{\rm
\begin{tabularx}{0.85\textwidth}{p{0.15\textwidth}|p{0.5\textwidth}}
\heading{$J$}&\headingl{$\Aut(\Xi,\Upsilon,J)$}
\\\hline
\xref{rem:touching-conics}\xref{lem:touching-conics-Gl2}&
$\Aut(\Upsilon)\cong\PGL_2(\CC)$
\\
\xref{rem:touching-conics}\xref{lem:touching-conics-Ga}&
$\Ga\rtimes (\ZZ/2\ZZ)$
\\
\xref{rem:touching-conics}\xref{lem:touching-conics-Gm}&
$\Gm\rtimes (\ZZ/2\ZZ)$\\\hline
\end{tabularx}\par\vspace{10pt}\noindent
}
\item
\label{lem:aut-pencils-b}
Up to the natural action of $\Aut(\Xi)$, there is a unique configuration 
$(\Upsilon,J)$ of type~\xref{rem:touching-conics}\xref{lem:touching-conics-Ga}, 
while the configurations of type 
\xref{rem:touching-conics}\xref{lem:touching-conics-Gm} form a one-parameter 
family.
\end{enumerate}
\end{lemma}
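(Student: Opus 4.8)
The plan is to realize $\Aut(\Xi,\Upsilon,J)$ as a subgroup of $\PGL_2(\CC)$ and to read off its structure from an explicit coordinate model. Since $\Upsilon$ is a smooth conic in $\Xi\cong\PP^2$, the restriction homomorphism $\Aut(\Xi,\Upsilon)\to\Aut(\Upsilon)\cong\PGL_2(\CC)$ is an isomorphism: it is surjective because every automorphism of $\Upsilon\cong\PP^1$ extends to the ambient $\PP^2$ via the Veronese embedding $\PP(\CC^2)\hookrightarrow\PP(S^2\CC^2)=\Xi$, and injective because an automorphism fixing $\Upsilon$ pointwise fixes a projective frame of $\Xi$ and is therefore trivial. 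Consequently $\Aut(\Xi,\Upsilon,J)$ is exactly the stabilizer of $J$ in this copy of $\PGL_2(\CC)$. Any element of this stabilizer preserves the scheme $\Upsilon\cap J$, hence its support; by Remark~\xref{rem:touching-conics} and the hypothesis $J\neq\Upsilon$ this support is a single point in case~\xref{rem:touching-conics}\xref{lem:touching-conics-Ga} and a pair of points in case~\xref{rem:touching-conics}\xref{lem:touching-conics-Gm}.

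Next I would fix the coordinate model $\Upsilon=\{xz=y^2\}$ parametrized by the Veronese map $[s:t]\mapsto[s^2:st:t^2]$, so that restricting a conic $Q=ax^2+bxy+cxz+dy^2+eyz+fz^2$ to $\Upsilon$ yields the binary quartic $as^4+bs^3t+(c+d)s^2t^2+est^3+ft^4$, whose zero divisor is $\Upsilon\cap J$. Imposing the required contact pins down the pencil of conics through $\Upsilon\cap J$: in case~\xref{rem:touching-conics}\xref{lem:touching-conics-Ga} (contact of order $4$ at $p=[1:0]$) the pencil is $\langle\Upsilon,z^2\rangle$ with smooth members $J=\{xz-y^2+fz^2=0\}$, $f\neq0$; in case~\xref{rem:touching-conics}\xref{lem:touching-conics-Gm} (tangency at $[1:0]$ and $[0:1]$) the pencil is $\langle xz,y^2\rangle$ with smooth members $J=\{xz+\mu y^2=0\}$, $\mu\neq0,-1$. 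These normal forms, together with the explicit $\PGL_2(\CC)$-action on the coefficients $f$ and $\mu$, will give both assertions at once.

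Finally I would compute the stabilizers inside the relevant subgroup fixing the support. In case~\xref{rem:touching-conics}\xref{lem:touching-conics-Ga} the stabilizer lies in the Borel subgroup $\bb\cong\Ga\rtimes\Gm$ fixing $p$; a direct substitution shows the unipotent part $\Ga$ fixes $J$ while the torus $\diag(\alpha,\delta)$ sends $f\mapsto f\cdot(\delta/\alpha)^2$, so preserving $J$ forces $(\delta/\alpha)^2=1$ and yields $\Aut(\Xi,\Upsilon,J)\cong\Ga\rtimes(\ZZ/2\ZZ)$ with the involution acting by inversion. Since the torus scales $f$ transitively over $\CC^*$, the configuration is unique up to $\Aut(\Xi)$, proving part~\xref{lem:aut-pencils-b} in this case. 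In case~\xref{rem:touching-conics}\xref{lem:touching-conics-Gm} the stabilizer lies in the normalizer $N\cong\Gm\rtimes(\ZZ/2\ZZ)$ of the diagonal torus; here both the torus and the swap $x\leftrightarrow z$ merely rescale the defining equation of $J$, hence fix $J$, so $\Aut(\Xi,\Upsilon,J)=N\cong\Gm\rtimes(\ZZ/2\ZZ)$, again with the involution acting by inversion; moreover $\mu$ is $N$-invariant, so it is a complete invariant and the type~\xref{rem:touching-conics}\xref{lem:touching-conics-Gm} configurations form a one-parameter family.

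The main obstacle is precisely the discrepancy between stabilizing $J$ and stabilizing its support $\Upsilon\cap J$: the support-stabilizer is the full Borel, respectively the full torus-normalizer, and one must check by the explicit action on the pencil parameter which of its elements actually fix $J$. This is what breaks the torus down to $\mumu_2$ in case~\xref{rem:touching-conics}\xref{lem:touching-conics-Ga} while leaving the whole normalizer intact in case~\xref{rem:touching-conics}\xref{lem:touching-conics-Gm}, and it is where the $f$- versus $\mu$-normalization (hence the uniqueness versus one-parameter dichotomy of part~\xref{lem:aut-pencils-b}) is decided. The remaining type~\xref{rem:touching-conics}\xref{lem:touching-conics-Gl2}, namely $J=\Upsilon$, is the trivial boundary case giving all of $\Aut(\Xi,\Upsilon)\cong\PGL_2(\CC)$.
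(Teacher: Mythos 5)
Your proof is correct, and it is essentially the paper's approach: both arguments reduce to explicit normal forms for the pencil of conics spanned by $\Upsilon$ and $J$ and a direct coordinate computation of the stabilizer. The organization differs slightly. The paper first shows that $\Aut^0(\Xi,\Upsilon,J)$ preserves \emph{every} member of the pencil (because it permutes the finitely many degenerate members, hence acts trivially on the base $\PP^1$ of the pencil), and then computes in coordinates adapted to each case; you instead identify $\Aut(\Xi,\Upsilon)\cong\PGL_2(\CC)$ at the outset via the Veronese model and locate the stabilizer of $J$ inside the Borel subgroup (case \xref{rem:touching-conics}\xref{lem:touching-conics-Ga}) or the normalizer of the torus (case \xref{rem:touching-conics}\xref{lem:touching-conics-Gm}) by using that it must preserve the support of the scheme $\Upsilon\cap J$. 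Your normal forms $xz-y^2+fz^2$ and $xz+\mu y^2$ match the paper's $a(x^2+yz)+bz^2$ and $ax^2+byz$ up to a coordinate change, and your computations of the unipotent/torus actions on the parameters $f$ and $\mu$ are accurate (in particular the reduction of the torus to $\mumu_2$ in the $\Ga$-case, and the full normalizer surviving in the $\Gm$-case). One genuine added value of your write-up: you actually prove part \xref{lem:aut-pencils-b} — the torus acting transitively on $f\in\CC^*$ gives uniqueness in the $\Ga$-case, while the $N$-invariance of $\mu$ together with the support-preservation argument shows $\mu$ is a complete invariant in the $\Gm$-case — whereas the paper explicitly leaves these ``elementary computations'' to the reader.
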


\begin{proof}
\xref{lem:aut-pencils-a}
We leave aside the trivial case $J=\Upsilon$. If $J\neq\Upsilon$, then the 
action of $\Aut^0(\Xi,\Upsilon,J)$ leaves invariant $\Upsilon$, $J$, and the
degenerate members of the pencil of conics $\PPP$ generated by $\Upsilon$ and
$J$. Hence $\Aut^0(\Xi,\Upsilon,J)$ acts identically on the base $\PP^1$ of 
$\PPP$. So, $\Aut^0(\Xi,\Upsilon,J)$ leaves
invariant each member of $\PPP$.

For $J$ of type~\xref{rem:touching-conics}\xref{lem:touching-conics-Gm}, in 
appropriate homogeneous coordinates $(x:y:z)$ in $\Xi\cong\PP^2$ the general 
member $J_{(a:b)}$ of $\PPP$ can be given by equation $ax^2+byz=0$. The 
projective transformations fixing the common points of $J$ and $\Upsilon$ are 
of 
the form $(x:y:z)\longmapsto (x:\lambda y:\lambda^{-1}z)$, where 
$\lambda\in\CC^*$. Thus, $\Aut^0 (\Xi,\Upsilon,J)\cong\Gm$. The group 
$\Aut(\Xi,\Upsilon,J)$ is generated by $\Aut^0 (\Xi,\Upsilon,J)$ and the 
involution 
\begin{equation*}
\kappa: (x:y:z)\longmapsto (x:z:y) 
\end{equation*}
that interchanges the two 
points of the intersection $\Upsilon\cap J$.

For $J$ of type~\xref{rem:touching-conics}\xref{lem:touching-conics-Ga}, the
general member $J_{(a:b)}$ of $\PPP$ can be given by the equation
$a(x^2+yz)+bz^2=0$. Then
\begin{equation*}
\Aut^0 (\Xi,\Upsilon,J)=\{(x:y:z)\longmapsto (x+ez:y+2ex+e^2z:z)\mid 
e\in\CC\}\cong\Ga,
\end{equation*}
while $\Aut(\Xi,\Upsilon,J)$ is generated by $\Aut^0 (\Xi,\Upsilon,J)$ and the 
involution 
\begin{equation*}
\kappa: (x:y:z)\longmapsto (-x:y:z).
\end{equation*}
The proof of~\xref{lem:aut-pencils-b} results from elementary
computations. We leave this to the reader.
\end{proof}

\begin{proposition}
\label{prop:center-inv-scroll}
Fix a nondegenerate conic $J\subset\Xi$ which touches $\Upsilon$ with even 
multiplicities, see Figure~\xref{fig-touching-conics}. Fix also a reductive 
Levi 
subgroup $\Levi\subset\Aut(W)$. Let an $\Levi$-invariant
twisted cubic curve $\Psi$ be the set of fixed points of $\z(\Levi)$ in 
$R\setminus\Xi$, see 
Proposition~\xref{proposition-GL2-action-c}\xref{proposition-GL2-action-c-d}.
\begin{enumerate}
\renewcommand\labelenumi{\rm (\alph{enumi})}
\renewcommand\theenumi{\rm (\alph{enumi})}
\item
\label{prop:center-inv-scroll-a}
There exists
a $\z(\Levi)$-invariant
rational normal quintic scroll $F\subset R$, $F\cong\FF_1$, with exceptional 
section
$J=F\cap\Xi$.

\item
\label{prop:center-inv-scroll-b}
If $J=\Upsilon$, then $F$ as in~\xref{prop:center-inv-scroll-a} is unique and 
can be transformed into the scroll 
$(\rho(\hat\Delta)\cap R)_{\red}$ as in Proposition~\xref{prop:quintic-scroll} 
by an automorphism from $\Ru$. If $J\neq
\Upsilon$, then there are exactly two different scrolls $F_1$ and $F_2$ as 
in~\xref{prop:center-inv-scroll-a}. Any such scroll contains $\Psi$.

\item
\label{prop:center-inv-scroll-c}
Furthermore, any $F$ as in~\xref{prop:center-inv-scroll-a} is invariant under 
the action on $W$ of the
identity component $G=G(\Levi,J)$ of the subgroup 
$\Levi\cap\varrho^{-1}(\Aut(\Xi,\Upsilon,J))$, where
$\varrho:\Aut(W)\to\Aut(\Upsilon)=\Aut(\Xi,\Upsilon)$ is the restriction 
homomorphism, see
\eqref{eq:2nd-sequence-1}.

\item
\label{prop:center-inv-scroll-d}
For $J\neq\Upsilon$, consider the involution 
$\kappa\in\Aut(\Xi,\Upsilon,J)\setminus\Aut^0(\Xi,\Upsilon,J)$ \textup{(}see 
the 
proof of Lemma~\xref{lem:aut-pencils}\xref{lem:aut-pencils-a}\textup{)}. Let 
$\tilde\kappa\in\Levi\cap\varrho^{-1}(\Aut(\Xi,\Upsilon,J))\subset\Aut(W)$ be 
such that $\varrho(\tilde\kappa)=\kappa$. Then $\tilde\kappa$ interchanges 
$F_1$ 
and $F_2$. In particular, $(W,F_1)\cong (W,F_2)$.
\end{enumerate}
\end{proposition}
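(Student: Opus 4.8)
The plan is to construct the scrolls explicitly and then verify invariance by tracking everything through the normalization $\rho|_{\hat R}:\hat R\to R$, using the description of $R$ as a union of planes $\Pi_\gamma$ over $\gamma\in\Upsilon$. Recall from Proposition~\xref{prop:quintic-scroll} and its proof that a quintic scroll $F\cong\FF_1$ with $F\subset R$, $F\cap\Xi=J$, and containing $\Psi$ arises as the join $\bigcup_{\gamma}\Lambda_\gamma$ of corresponding points of two curves: the conic $J\subset\Xi$ and the twisted cubic $\Psi\subset R\setminus\Xi$ (the fixed locus of $\z(\Levi)$ by Proposition~\xref{proposition-GL2-action-c}\xref{proposition-GL2-action-c-d}). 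The scroll is determined once we fix a correspondence between $J$ and $\Psi$ realizing the rulings $\Lambda_\gamma\subset\Pi_\gamma$; so the whole statement reduces to counting and analyzing such correspondences compatible with the plane structure of $R$, then checking equivariance.

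For part~\xref{prop:center-inv-scroll-a} I would proceed as in Proposition~\xref{prop:quintic-scroll}: each plane $\Pi_\gamma$ meets $\Xi$ along the tangent line $l_\gamma$ to $\Upsilon$ and meets $\Psi$ in a single point $p_\gamma$. A point of $J\cap l_\gamma$ together with $p_\gamma$ spans a ruling $\Lambda_\gamma\subset\Pi_\gamma$, and the union of these rulings is a scroll $F\cong\FF_1$ whose exceptional (negative) section is precisely $J$. Since $\Psi$ and $J$ are both $\z(\Levi)$-invariant and the correspondence $\gamma\mapsto(\Pi_\gamma,l_\gamma)$ is canonical, the resulting $F$ is $\z(\Levi)$-invariant, giving~\xref{prop:center-inv-scroll-a}. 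For~\xref{prop:center-inv-scroll-b}, the number of such scrolls is governed by the number of intersection points $J\cap l_\gamma$. When $J=\Upsilon$ we recover the construction of Proposition~\xref{prop:quintic-scroll}, and uniqueness plus the transitivity of $\Ru$ on $W\setminus R$ (Proposition~\xref{proposition-GL2-action-c}\xref{proposition-GL2-action-c-c}) lets me move it to $(\rho(\hat\Delta)\cap R)_{\red}$. When $J\neq\Upsilon$, the condition that $J$ touch $\Upsilon$ with even multiplicities (Remark~\xref{rem:touching-conics}) is exactly what makes the incidence curve $Z=\{(P,\gamma)\mid P\in l_\gamma\}\subset J\times\Upsilon$ split into \emph{two} components of bidegree $(1,1)$, as already observed in the proof of Proposition~\xref{lemma-classification-singular-fibers}\xref{lemma-classification-singular-fibers-toric}. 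Each component is a graph of a bijection $J\to\Upsilon$ and hence yields one scroll, producing exactly the two scrolls $F_1,F_2$; both contain $\Psi$ by construction.

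For part~\xref{prop:center-inv-scroll-c} I would argue that $G=G(\Levi,J)$ preserves the two curves $J$ and $\Psi$ (it lies in $\Levi$, which fixes $\Psi$ via $\z(\Levi)$, and maps to $\Aut(\Xi,\Upsilon,J)$, which fixes $J$), and that it acts on the parameter curve $\Upsilon$ through its connected image in $\Aut^0(\Xi,\Upsilon,J)$; hence each connected family of rulings, and so each $F$, is $G$-stable. Finally, part~\xref{prop:center-inv-scroll-d} is the cleanest: the involution $\kappa$ interchanges the two points of $\Upsilon\cap J$ (see the explicit formulas in the proof of Lemma~\xref{lem:aut-pencils}\xref{lem:aut-pencils-a}), hence interchanges the two bijections $J\to\Upsilon$ parametrizing $F_1$ and $F_2$, so any lift $\tilde\kappa$ swaps $F_1$ and $F_2$; the induced isomorphism of pairs $(W,F_1)\cong(W,F_2)$ follows.

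The main obstacle I anticipate is part~\xref{prop:center-inv-scroll-b}: rigorously establishing the \emph{exact} count of two scrolls, and in particular that each of the two components of $Z$ produces a genuine $\FF_1$ (rather than a degenerate or coincident scroll) and that the two are distinct. This requires the even-tangency splitting of $Z$ to be used not merely numerically but as an actual decomposition into graphs over $\Upsilon$, and then verifying that the two resulting joins are distinct $\z(\Levi)$-invariant scrolls with the prescribed exceptional section $J$. The remaining equivariance statements in~\xref{prop:center-inv-scroll-c} and~\xref{prop:center-inv-scroll-d} should then be comparatively formal consequences of the group-theoretic description already available from Lemma~\xref{lem:aut-pencils} and Proposition~\xref{proposition-GL2-action-c}.
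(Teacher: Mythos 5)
Your construction of the scrolls (joins of $J$ and $\Psi$ inside the planes $\Pi_\gamma$, indexed by the two $(1,1)$-components of the incidence curve $Z$) is exactly the paper's, and parts~\xref{prop:center-inv-scroll-a} and~\xref{prop:center-inv-scroll-c} can be pushed through along your lines. The genuine gap is in part~\xref{prop:center-inv-scroll-b}, and it is not the obstacle you flag. Statement~\xref{prop:center-inv-scroll-b} asserts that there are \emph{exactly} two scrolls \emph{as in}~\xref{prop:center-inv-scroll-a}, i.e.\ among \emph{all} $\z(\Levi)$-invariant scrolls $F\subset R$ with $F\cong\FF_1$ and $F\cap\Xi=J$, and, as a separate conclusion, that every such scroll contains $\Psi$. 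Your opening reduction (``a quintic scroll with $F\cap\Xi=J$ \emph{and containing} $\Psi$ arises as a join'') smuggles this conclusion into the hypotheses: counting components of $Z$ then only counts the joins through $\Psi$, which yields ``at least two'' and leaves the containment claim unproved. The missing step --- the heart of the paper's proof of~\xref{prop:center-inv-scroll-b} --- is a fixed-point argument: since $\Xi$, hence $J$, is pointwise fixed by $\z(\Levi)$ (Proposition~\xref{proposition-GL2-action-c}\xref{proposition-GL2-action-c-d}), the induced action on the base of the ruling $F\to\PP^1$ is trivial, so \emph{every} ruling of an invariant $F$ is $\z(\Levi)$-invariant; the action on a ruling cannot be trivial, because a pointwise fixed line in $R$ not contained in $\Xi$ would contradict $W^{\z(\Levi)}=\{Q\}\cup\Psi\cup\Xi$ (same proposition); hence each ruling carries exactly two fixed points, one on $J$, and the second, lying in $R\setminus\Xi$, is forced onto $\Psi$ because $Q\notin R$. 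This gives $\Psi\subset F$, identifies every invariant scroll as one of your joins, and only then does the splitting of $Z$ produce the exact count (and the uniqueness when $J=\Upsilon$). Note that your arguments for~\xref{prop:center-inv-scroll-c} and~\xref{prop:center-inv-scroll-d} quietly rely on this classification too, since you only ever manipulate rulings joining $J$ to $\Psi$.

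There is a second, smaller flaw in~\xref{prop:center-inv-scroll-d}. Your reason --- ``$\kappa$ interchanges the two points of $\Upsilon\cap J$, hence interchanges the two bijections'' --- is inapplicable in type~\xref{rem:touching-conics}\xref{lem:touching-conics-Ga}, where $\Upsilon\cap J$ is a \emph{single} point (of multiplicity $4$) fixed by $\kappa$, although~\xref{prop:center-inv-scroll-d} is asserted (and true) for all $J\neq\Upsilon$. Even in the bitangent case~\xref{rem:touching-conics}\xref{lem:touching-conics-Gm} the implication is not valid: both $(1,1)$-components of $Z$ pass through both points of $Z$ lying over the tangency points, so these points cannot distinguish the components. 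A correct verification: choose a $\kappa$-fixed point $\gamma\in\Upsilon$ which is \emph{not} a tangency point of $J$ with $\Upsilon$ (such a point exists in both types), note that then $l_\gamma$ is $\tilde\kappa$-invariant, and check in the coordinates of the proof of Lemma~\xref{lem:aut-pencils}\xref{lem:aut-pencils-a} that $\kappa$ exchanges the two points of $l_\gamma\cap J$; these are precisely the points of $Z_1$ and $Z_2$ over $\gamma$, so $\tilde\kappa$ swaps the two components of $Z$ and hence swaps $F_1$ and $F_2$.
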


\begin{proof}
\xref{prop:center-inv-scroll-a} Assume first that $J=\Upsilon$.
By Proposition~\xref{proposition-GL2-action-c}~\xref{proposition-GL2-action-c-d}
any plane $\Pi_\gamma$, $\gamma\in\Upsilon$, meets $\Psi$ in a unique point,
say, $p_\gamma$, and meets $\Upsilon$ tangentially in a unique point $q_\gamma$. 
Indeed, $l_\gamma:=\Pi_\gamma\cap\Xi$ is the tangent line to $\Upsilon$ at 
$q_\gamma$. The lines $(p_\gamma,q_\gamma)$ sweep up
a normal rational quintic scroll $F=F(\Psi)$. This scroll
$F$ is $\Levi$-invariant and meets the plane $\Xi$ along $\Upsilon$. In 
particular, $F$ is $\z(\Levi)$-invariant. If $\Levi=\Levi_0$ comes from the 
standard $\GL_2(\CC)$-action
on $\PP(M_3\oplus\CC)$ as in~\xref{nota:3-forms}, then $F$ coincides
with $(\rho(\hat\Delta)\cap R)_{\red}$,
see the proof of Proposition
\xref{prop:quintic-scroll}. Otherwise, $\Levi=g\Levi_0g^{-1}$ for some $g\in\Ru$ 
such that $g(F)=(\rho(\hat\Delta)\cap R)_{\red}$.

Let further $J\neq\Upsilon$. Consider the $2:1$ morphism
\begin{equation*}
\delta: J\longrightarrow\Upsilon,\quad\Pi_\gamma\cap 
J\longmapsto\Pi_\gamma\cap\Upsilon=\{q_\gamma\}.
\end{equation*}
By our assumption, the ramification indices of $\delta$ are even. Hence 
$\delta$ 
admits two distinct sections, say, $\sigma_1,\sigma_2:\Upsilon\to J$, see the 
proof of 
Proposition~\xref{lemma-classification-singular-fibers}\xref{lemma-classification-singular-fibers-toric}. Letting 
$t_{\gamma,i}=\sigma_i(q_\gamma)$, consider the smooth quintic scroll 
$F_i=F_i(J,\Levi)\cong\FF_1$ formed by the lines $(p_\gamma,t_{\gamma,i})$, 
$\gamma\in\Upsilon$. The conic $J\supset\Xi$ is a common exceptional section of 
$F_i\to\PP^1$, $i=1,2$, and $\Psi$ is a common section with $\Psi^2=1$. 
Besides, 
the scrolls $F_1$ and $F_2$ have common rulings passing through the points of 
$J\cap\Upsilon$. Since $J$ and $\Psi$ are pointwise fixed under the 
$\z(\Levi)$-action (see Proposition 
\xref{proposition-GL2-action-c}\xref{proposition-GL2-action-c-d}), each ruling 
of $F_i$ is $\z(\Levi)$-invariant and represents an orbit closure of 
$\z(\Levi)$. Hence $F_i$ is $\z(\Levi)$-invariant for $i=1,2$.

\xref{prop:center-inv-scroll-b}
Let now $ F\cong\FF_1$ be a smooth rational $\z(\Levi)$-invariant quintic
scroll in $R$ with exceptional section $J\subset\Xi$. 
Since $\z(\Levi)$ acts identically on $\Xi$, see again 
Proposition~\xref{proposition-GL2-action-c}\xref{proposition-GL2-action-c-d}, 
the $\z(\Levi)$-action on $F$ leaves invariant each ruling $\Lambda_\gamma$, 
where $\Lambda_\gamma$ represents a $\z(\Levi)$-orbit closure. Hence, there are 
exactly two fixed points of $\z(\Levi)$ on
$\Lambda_\gamma$. One of these points
runs over $J$ when $\gamma$ runs over $\Upsilon$, and the other one runs over 
$\Psi$. In particular, $\Psi\subset F$.

The line $\Lambda_\gamma$ is contained in a unique plane $\Pi_\gamma$ through
the point $p_\gamma\in\Psi\cap\Lambda_\gamma$. It passes through the point 
$p_\gamma$ and one of
the intersection points $t_{\gamma,1}$, $t_{\gamma,2}\in\Pi_\gamma\cap J$. It
follows that $F$ coincides with one of the scrolls $F_1$ and $F_2$
constructed in~\xref{prop:center-inv-scroll-a}, where $F_1=F_2=F$ in the case 
$J=\Upsilon$.

To complete the proof of~\xref{prop:center-inv-scroll-b} in the case 
$J=\Upsilon$, consider the standard quintic scroll $F^{\s}:=(\rho(\hat
\Delta)\cap R)_{\red}$ with $F^{\s}\cap\Xi=\Upsilon$. It is invariant under the 
action on $W$ of a special Levi subgroup $\Levi^{\s}\cong\GL_2(\CC)$ as in 
Proposition~\xref{prop:quintic-scroll}\xref{prop:quintic-scroll-c}. The Levi 
subgroups $\Levi$ and $\Levi^{\s}$ are conjugated via an element $g\in\Ru$. 
Clearly, $g$ conjugates their singular tori $\z(\Levi)$ and $\z(\Levi^{\s})$, 
and sends the twisted cubic curve $\Psi\subset F$ to the corresponding 
$\z(\Levi^{\s})$-fixed curve $\Psi^{\s}\subset F^{\s}$. By virtue of the 
preceding uniqueness result, it follows that $g(F)=F^{\s}$.

Now~\xref{prop:center-inv-scroll-c} is immediate. Notice that $G$ is a connected
subgroup of $\Levi$ which leaves $J$ invariant. We claim that $G$ sends the
rulings of $F$ into rulings. Indeed, these rulings are $\z(\Levi)$-orbit 
closures
meeting $J$. Since $\Levi=\mathcal{C}_L(\z(\Levi))$, $\Levi$ acts on the set of
$\z(\Levi)$-orbits. Hence $G$ sends the rulings of $F$ into $\z(\Levi)$-orbit
closures meeting $J$. By connectedness of $G$, it sends the rulings of $F$ into
rulings. Thus, the scroll $F$ is invariant under $G$.

\xref{prop:center-inv-scroll-d}
Notice that $\tilde\kappa\in\Levi$, $\Psi$ is $\Levi$-invariant, and $J$ is 
invariant under $\kappa=\tilde\kappa|_{\Xi}$. Furthermore,
$\tilde\kappa\in\Levi$ commutes with $\z(\Levi)$, hence sends the 
$\z(\Levi)$-orbits into $\z(\Levi)$-orbits. It follows from the construction of 
$F_1$ and $F_2$ in
\xref{prop:center-inv-scroll-b} that $F_1\cup F_2$ is invariant under 
$\tilde\kappa$. It can be readily seen that $\tilde\kappa(F_i)=F_j$ for $i\neq 
j$.
\end{proof}

From Lemma~\xref{lem:aut-pencils}\xref{lem:aut-pencils-a} and 
Proposition~\xref{prop:center-inv-scroll}\xref{prop:center-inv-scroll-c} we 
deduce such a corollary.

\begin{scorollary}
\label{cor:aut-center}
Consider a $\z(\Levi)$-invariant quintic scroll $F\subset R$ as in 
Proposition~\xref{prop:center-inv-scroll}\xref{prop:center-inv-scroll-a} along 
with the subgroup $G=G(\Levi,J)\subset\Levi$ as 
in~\xref{prop:center-inv-scroll}\xref{prop:center-inv-scroll-c}. Then the 
following hold.
\begin{enumerate}
\renewcommand\labelenumi{\rm (\alph{enumi})}
\renewcommand\theenumi{\rm (\alph{enumi})}
\item
\label{cor:aut-center-a}
$\Aut^0(W,F)=(\Ru\cap\Aut^0(W,F))\rtimes G$, and

\item
\label{cor:aut-center-b} $G$ is isomorphic either to $\GL_2(\CC)$, or to 
$\Ga\times\Gm$, or to $(\Gm)^2$ provided the conic $J=F\cap\Xi$ is of 
type~\xref{rem:touching-conics}\xref{lem:touching-conics-Gl2}, 
\xref{rem:touching-conics}\xref{lem:touching-conics-Ga}, 
and~\xref{rem:touching-conics}\xref{lem:touching-conics-Gm}, respectively. If 
$G\cong\Ga\times\Gm$, then the $\Ga$-subgroup of $G$ acts nontrivially on $J$, 
and the $\Gm$-subgroup preserves each ruling of $F\to\PP^1$;

\item
\label{cor:aut-center-c}
Let $U=\Ru\cap\Aut^0(W,F)\neq\{1\}$. Then $U$ acts trivially on $J$, fiberwise 
on $F\to\PP^1$, and either $U\cong\Ga$, or $U\cong (\Ga)^2$. For any $t\in U$ 
the rational twisted cubic curve $t(\Psi)\subset F$ is pointwise fixed under the 
singular torus $\z(\Levi_t)$, where $\Levi_t=t\cdot\Levi\cdot t^{-1}$.
\end{enumerate}
\end{scorollary}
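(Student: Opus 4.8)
The plan is to build the decomposition in~\xref{cor:aut-center-a} on the already established inclusion $G\subset\Aut^0(W,F)$ (which follows from Proposition~\xref{prop:center-inv-scroll}\xref{prop:center-inv-scroll-c}, as $G$ is connected), and to read off the isomorphism type of $G$ in~\xref{cor:aut-center-b} by computing the preimage under $\varrho|_\Levi$ of the groups listed in Lemma~\xref{lem:aut-pencils}\xref{lem:aut-pencils-a}. First I would settle~\xref{cor:aut-center-b}. Since $\varrho|_\Levi\colon\Levi\cong\GL_2(\CC)\to\PGL_2(\CC)=\Aut(\Upsilon)$ is the central quotient with kernel $\z(\Levi)\cong\Gm$, and $G=G(\Levi,J)$ is by definition the identity component of $(\varrho|_\Levi)^{-1}(\Aut(\Xi,\Upsilon,J))$, it suffices to compute the preimage of $\Aut^0(\Xi,\Upsilon,J)$ in $\GL_2(\CC)$ in each of the three cases of Lemma~\xref{lem:aut-pencils}\xref{lem:aut-pencils-a}: for type~\xref{lem:touching-conics-Gl2} one gets all of $\GL_2(\CC)$; for type~\xref{lem:touching-conics-Ga} the preimage of a unipotent $\Ga\subset\PGL_2(\CC)$ is the group of matrices $\lambda\left(\begin{smallmatrix}1&t\\0&1\end{smallmatrix}\right)$, isomorphic to $\Ga\times\Gm$; and for type~\xref{lem:touching-conics-Gm} the preimage of the torus $\Gm\subset\PGL_2(\CC)$ is the diagonal maximal torus $(\Gm)^2$. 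In each case this preimage is already connected, hence equals $G$. When $G\cong\Ga\times\Gm$, the $\Gm$-factor is the kernel $\z(\Levi)$ of $\varrho|_G$, which fixes $\Xi$ pointwise by Proposition~\xref{proposition-GL2-action-c}\xref{proposition-GL2-action-c-d} and preserves each ruling of $F$ since the rulings are $\z(\Levi)$-orbit closures (see the proof of Proposition~\xref{prop:center-inv-scroll}\xref{prop:center-inv-scroll-b}), while the $\Ga$-factor maps isomorphically onto $\Aut^0(\Xi,\Upsilon,J)=\Ga$, which acts nontrivially on $J$.

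For~\xref{cor:aut-center-a}, note that $\Ru\cap\Aut^0(W,F)$ is normal in $\Aut^0(W,F)$ and meets $G\subset\Levi$ trivially, because $\Ru\cap\Levi=\{e\}$ in the decomposition~\eqref{eq:Levi-decomp}. It remains to prove surjectivity: given $h\in\Aut^0(W,F)$, write $h=ug$ with $u\in\Ru$, $g\in\Levi$. Since $\Xi$ is $\Aut(W)$-invariant and $h$ preserves $F$, it preserves $J=F\cap\Xi$, so $\varrho(h)\in\Aut^0(\Xi,\Upsilon,J)$; as $u\in\Ru\subset\ker\varrho$ this gives $\varrho(g)\in\Aut^0(\Xi,\Upsilon,J)$, whence $g$ lies in the connected preimage computed above, i.e.\ $g\in G$. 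Then $u=hg^{-1}\in\Aut^0(W,F)$, so $u\in\Ru\cap\Aut^0(W,F)$ and $h\in(\Ru\cap\Aut^0(W,F))\rtimes G$. As $G$ normalizes the normal subgroup $\Ru\cap\Aut^0(W,F)$, this is a genuine semidirect product.

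Finally I would treat~\xref{cor:aut-center-c}, which is where the real work lies. Set $U=\Ru\cap\Aut^0(W,F)\neq\{e\}$. By Proposition~\xref{proposition-GL2-action-c}\xref{proposition-GL2-action-c-c} the group $\Ru$ fixes $\Xi$ pointwise, so $U$ acts trivially on $J\subset\Xi$; and since $U$ preserves $F$ while fixing each point of $J$, it permutes the rulings of $F$ fixing their intersection points with $J$, hence fixes each ruling, i.e.\ acts fiberwise on $F\to\PP^1$. Because the decomposition $\Aut^0(W,F)=U\rtimes G$ exhibits $U$ as a direct factor (as a variety) of the connected group $\Aut^0(W,F)$, the group $U$ is connected, so $U\cong(\Ga)^k$. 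To bound $k$, I would pass to the induced action on $F\cong\FF_1$: realizing $\FF_1$ as $\PP^2$ blown up at a point, an automorphism fixing the exceptional curve $J$ pointwise lies in $\Gm\ltimes(\Ga)^2$, whose unipotent part is exactly $(\Ga)^2$, so the image of the unipotent group $U$ in $\Aut(F)$ has dimension at most $2$. The map $U\to\Aut(F)$ is injective: if $t\in U$ acted trivially on $F$, then, $\Aut(W)$ acting linearly on $\PP^7$ and $F$ being a rational normal quintic scroll spanning the hyperplane $\langle F\rangle=\langle R\rangle\cong\PP^6$ (see Remark~\xref{remark-quintic-scroll}), the element $t$ would fix $\langle F\rangle$ pointwise and hence fix $R\subset\langle F\rangle$ pointwise, forcing $t=e$ by the faithfulness of the $\Ru$-action on $R$ (Proposition~\xref{proposition-GL2-action-c}\xref{proposition-GL2-action-c-b} together with Claim~\xref{claim-Transl}). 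Thus $1\le k\le 2$ and $U\cong\Ga$ or $U\cong(\Ga)^2$. The last assertion is then immediate: $\Psi$ is pointwise fixed by $\z(\Levi)$ by Proposition~\xref{proposition-GL2-action-c}\xref{proposition-GL2-action-c-d}, so $t(\Psi)$ is pointwise fixed by $t\,\z(\Levi)\,t^{-1}=\z(\Levi_t)$, and $t(\Psi)\subset t(F)=F$ since $t\in U$ preserves $F$. The main obstacle is precisely this dimension bound: one must pin down the fiberwise automorphisms of $\FF_1$ fixing the exceptional section and, above all, establish the injectivity of $U\to\Aut(F)$, for which the linearity of the $\Aut(W)$-action on $\PP^7$ and the faithfulness of $\Ru$ on $R$ are the decisive inputs.
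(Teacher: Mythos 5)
Your proof is correct, and for part \xref{cor:aut-center-a} it takes a genuinely different route from the paper's. The paper proves the inclusion $\Aut^0(W,F)\subseteq(\Ru\cap\Aut^0(W,F))\rtimes G$ geometrically: writing $g=r\cdot l$ with $r\in\Ru$, $l\in\Levi$, it checks step by step that $r(\Psi)\subset F$, that $r$ preserves each plane $\Pi_\gamma$ and hence each ruling $\Lambda_\gamma$, so that $r(F)=F$ and then $l(F)=F$, and only afterwards places $l$ in $G$ by a connectedness argument. You reverse the order: from $h(J)=J$ and connectedness you get $\varrho(h)\in\Aut^0(\Xi,\Upsilon,J)$, and since $\Ru\subset\ker\varrho$ this gives $\varrho(g)\in\Aut^0(\Xi,\Upsilon,J)$; as the kernel $\z(\Levi)\cong\Gm$ of $\varrho|_{\Levi}$ is connected, the preimage of $\Aut^0(\Xi,\Upsilon,J)$ in $\Levi$ is connected and therefore equals $G$ (this general observation even makes your case-by-case matrix computation unnecessary for \xref{cor:aut-center-a}, though those computations are exactly what \xref{cor:aut-center-b} requires); then $u=hg^{-1}$ preserves $F$ for free by Proposition~\xref{prop:center-inv-scroll}\xref{prop:center-inv-scroll-c}. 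This is shorter, but note that the paper's geometric detour is not wasted: it establishes en route that $U$ fixes $J$ pointwise and preserves each ruling, facts the paper then recycles in its proof of \xref{cor:aut-center-c} (``We have shown before\dots''). Since you bypass that geometry, you rightly re-derive these facts at the start of your treatment of \xref{cor:aut-center-c}, from $\Ru$ acting identically on $\Xi$ (Proposition~\xref{proposition-GL2-action-c}\xref{proposition-GL2-action-c-c}) and the uniqueness of the ruling of $\FF_1$ through each point of $J$ — without this your argument would have a gap, so it was essential that you supplied it. In \xref{cor:aut-center-c} your skeleton (contract $J\subset F\cong\FF_1$ to a point $p\in\PP^2$ and land in the unipotent part $(\Ga)^2$ of the group $\Gm\ltimes(\Ga)^2$ of projectivities fixing $p$ and each line through $p$) coincides with the paper's, but you prove a point the paper leaves implicit, namely the injectivity of $U\to\Aut(F)$ (the paper merely asserts ``yields an embedding''): your argument via linearity of the $\Aut(W)$-action on $\PP^7$, the equality $\langle F\rangle=\langle R\rangle$, and the effectivity of the $\Aut(W)$-action on $R$ (Proposition~\xref{proposition-GL2-action-c}\xref{proposition-GL2-action-c-b}) is exactly what is needed and is a genuine gain in rigor. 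The remaining steps — normality of $\Ru\cap\Aut^0(W,F)$ and trivial intersection with $G$ for the semidirect product, connectedness of $U$ via the variety-level product decomposition (alternatively: a closed subgroup of $(\Ga)^4$ in characteristic zero is a linear subspace, hence connected), and the conjugation argument giving that $t(\Psi)\subset F$ is pointwise fixed by $\z(\Levi_t)$ — are all sound.
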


\begin{proof}
\xref{cor:aut-center-a}
We have $\Aut(W)=\Ru\rtimes L$, see~\eqref{eq:Levi-decomp}. Let $G$ be as in 
Proposition~\xref{prop:center-inv-scroll}\xref{prop:center-inv-scroll-c}.
Since $G\subset L\cap\Aut^0(W,F)$, then
\[
(\Ru\cap\Aut^0(W, F))\rtimes G=(\Ru\cap\Aut^0(W, F))\cdot G\subset\Aut^0(W,F).
\]

To show the opposite inclusion we proceed as follows. Let $g=r\cdot 
l\in\Aut^0(W,F)$, where $r\in\Ru$ and $l\in\Levi$.
\begin{itemize}

\item
Since $r|_{\Xi}=\id_{\Xi}$, see 
Proposition~\xref{proposition-GL2-action-c}\xref{proposition-GL2-action-c-c}, 
and $g(J)=J$, then $l(J)=J$.

\item
Since $l(\Psi)=\Psi$ and $g(\Psi)\subset F$, then $r(\Psi)\subset F$.

\item
Since $r|_{\Xi}=\id_{\Xi}$, then $r(\Pi_\gamma)=\Pi_\gamma$ 
$\forall\gamma\in\Upsilon$.

\item
Since $r(\Psi)\subset F$ and $r(\Psi\cap\Pi_\gamma)\subset 
F\cap\Pi_\gamma=\Lambda_\gamma$, then $r(\Lambda_\gamma)=\Lambda_\gamma$ 
$\forall\gamma\in\Upsilon$.

\item
Thus, $r(F)=F$, and since $g(F)=F$, then $l(F)=F$.

\item
It follows that
$\Aut^0(W,F)=(\Ru\cap\Aut^0(W,F))\rtimes (\Levi\cap\Aut^0(W,F))$.

\item
Since $\Ru\cap\Aut^0(W,F)$ is connected, then $\Levi\cap\Aut^0(W,F)$ is.

\item
Since $l\in (\Levi\cap\Aut^0(W,F))^0$ and $\varrho(l)\in\Aut^0(\Xi,\Upsilon,J)$, 
then $l\in G$.

\item
Therefore, $G=\Levi\cap\Aut^0(W,F)$, and so,~\xref{cor:aut-center-a} follows.
\end{itemize}

\xref{cor:aut-center-b} is straightforward from Lemma
\xref{lem:aut-pencils} and the definition of $G$.

\xref{cor:aut-center-c} Let $g\in (\Ru\cap\Aut^0(W,F))\setminus\{1\}$, and let 
$U$ be the unique one-parameter subgroup of $\Ru$ containing $g$. Then $U$ is 
the Zariski closure of the group generated by $g$. Hence $U$ stabilizes $F$. We 
have shown before that the action of $U=\Ru\cap\Aut^0(W,F)$ on $F$ preserves 
each ruling and fixes $J$ pointwise. The restriction $U|_f$ to a general ruling 
$f\cong\PP^1$ acts via the unipotent part of the Borel subgroup of 
$\Aut(f)\cong\PGL_2(\CC)$ fixing the point $f\cap J$. Contracting $J$ to a 
point 
$p\in\PP^2$ yields an embedding of $U$ onto an abelian unipotent group of 
linear 
transformations fixing $p$ and preserving each line through $p$.
Now the remaining assertions follow.
\end{proof}

In the following lemma we indicate the case, where the condition 
$\Ru\cap\Aut^0(W,F)=\{1\}$ is automatically fulfilled.

\begin{lemma}
\label{lem:j=upsilon}
Let $F\subset R$ be a 
rational normal quintic scroll which meets $\Xi$ along a smooth conic $J$. Then 
the following conditions are equivalent:
\begin{enumerate}
\item
\label{lem:j=upsilon-i} 
$\Aut(W,F)$ contains a subgroup $\mathcal{H}\cong\SL_2(\CC)$;

\item
\label{lem:j=upsilon-ii} 
$\Aut(W,F)\cong\GL_2(\CC)$ and $\Ru\cap\Aut(W,F)=\{1\}$;

\item
\label{lem:j=upsilon-iii} 
$J=\Upsilon$ and $F$ is invariant under the action of a singular torus of $\Aut(W)$.
\end{enumerate}
\end{lemma}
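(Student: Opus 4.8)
The plan is to prove the cycle of implications \ref{lem:j=upsilon-i}$\Rightarrow$\ref{lem:j=upsilon-iii}$\Rightarrow$\ref{lem:j=upsilon-ii}$\Rightarrow$\ref{lem:j=upsilon-i}. The last arrow is immediate: if $\Aut(W,F)\cong\GL_2(\CC)$ then $\mathcal{H}:=[\Aut(W,F),\Aut(W,F)]\cong\SL_2(\CC)$ does the job. Throughout I will use that $\Aut(W)$ is connected (Lemma~\ref{lem:PvdV}) and that it acts linearly on $\PP^7$ (the embedding being given by $|{-\tfrac13}K_W|=|H|$), so that every reductive subgroup lies in a Levi subgroup and $\CC^8=H^0(W,\OOO(H))^\vee$ is a rational $\Levi$-module.

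For \ref{lem:j=upsilon-i}$\Rightarrow$\ref{lem:j=upsilon-iii} I first show $J=\Upsilon$. Compose the inclusion $\mathcal{H}\hookrightarrow\Aut(W)$ with $\varrho$ from \eqref{eq:2nd-sequence-1}. Since $\ker\varrho=(\Ga)^4\rtimes\Gm$ is solvable, $\mathcal{H}\cap\ker\varrho$ is a normal solvable, hence finite central, subgroup of $\SL_2(\CC)$; therefore $\varrho(\mathcal{H})$ is three–dimensional and equals $\PGL_2(\CC)=\Aut(\Xi,\Upsilon)$. As $\mathcal{H}$ preserves $F$, it preserves the smooth conic $J=F\cap\Xi$, so $J$ is a $\PGL_2(\CC)$-invariant conic in $\Xi\cong\PP^2$. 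The unique such conic is $\Upsilon$ (one has $\operatorname{Sym}^2(S^2\CC^2)=S^4\CC^2\oplus S^0\CC^2$, so there is a single invariant conic; alternatively invoke Lemma~\ref{lem:aut-pencils}\ref{lem:aut-pencils-a}, which leaves no room for $\PGL_2(\CC)$ when $J\neq\Upsilon$). Hence $J=\Upsilon$.

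The remaining, and main, difficulty is to produce a singular torus stabilizing $F$. By reductivity $\mathcal{H}$ lies in a Levi subgroup $\Levi'$ of $\Aut(W)$, with $\mathcal{H}=[\Levi',\Levi']$ and $\Levi'=\mathcal{H}\cdot\z(\Levi')$; let $\Psi'\subset R\setminus\Xi$ be the twisted cubic fixed pointwise by $\z(\Levi')$, which is $\Levi'$-invariant (Proposition~\ref{proposition-GL2-action-c}\ref{proposition-GL2-action-c-d}). I would prove $\Psi'\subset F$. The three $\z(\Levi')$-eigenspaces in $\CC^8$ are $\langle\Psi'\rangle\cong S^3\CC^2$, $\Xi\cong S^2\CC^2$ and the line spanned by the isolated fixed point, which are pairwise non-isomorphic $\SL_2(\CC)$-isotypes; since $\mathcal{H}$ preserves $F$ it preserves $\langle F\rangle=\langle\Psi',\Xi\rangle\cong S^3\CC^2\oplus S^2\CC^2$. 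A representation-theoretic computation on $H^0(F,\OOO_F(s_0+f))$ (which decomposes as a trivial plus a standard $\mathcal{H}$-module) yields a \emph{unique} $\mathcal{H}$-invariant twisted cubic section $\Psi_F\subset F$; its linear span is an $S^3\CC^2$-subrepresentation of $\langle F\rangle$, hence equals the isotypic summand $\langle\Psi'\rangle$, and as $\PP(S^3\CC^2)$ carries a unique $\mathcal{H}$-invariant rational normal cubic one gets $\Psi_F=\Psi'$. Then $\z(\Levi')$ fixes $\Upsilon$ and $\Psi'\subset F$ pointwise and preserves each ruling of $F$ (a line joining two of its fixed points), so $F$ is $\z(\Levi')$-invariant, establishing \ref{lem:j=upsilon-iii}. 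Identifying $\Psi_F$ with the torus-fixed cubic $\Psi'$ via this isotypic decomposition is where I expect the real work to lie.

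For \ref{lem:j=upsilon-iii}$\Rightarrow$\ref{lem:j=upsilon-ii}, with $J=\Upsilon$ and $F$ invariant under $\z(\Levi)$, Proposition~\ref{prop:center-inv-scroll}\ref{prop:center-inv-scroll-a} shows $F$ is $\Levi$-invariant, so Corollary~\ref{cor:aut-center}\ref{cor:aut-center-a},\ref{cor:aut-center-b} (type~\ref{lem:touching-conics-Gl2}) gives $\Aut^0(W,F)=U\rtimes\Levi$ with $\Levi\cong\GL_2(\CC)$ and $U=\Ru\cap\Aut^0(W,F)$. Now $U$ is a connected subgroup of the vector group $\Ru$, hence a linear subspace, and it is normalized by $\Levi$; but the conjugation action of $\Levi\cong\GL_2(\CC)$ on $\Ru\cong M_3$ is the \emph{irreducible} representation $S^3\CC^2$ (Remark~\ref{rem:Mostow-1}), so $U\in\{0,\Ru\}$. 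The case $U=\Ru$ is impossible, since it would force $F$ to be $\Aut^0(W)$-invariant, whereas the only two-dimensional invariant irreducible closed surface is $\Xi\neq F$ (see \eqref{eq:Aut-W-orbits}). Hence $U=\{1\}$, so $\Aut^0(W,F)=\Levi\cong\GL_2(\CC)$ and $\Ru\cap\Aut(W,F)=\Ru\cap\Aut^0(W,F)=\{1\}$ (a closed subgroup of a vector group is connected). Finally $\Aut(W,F)$ is connected: any of its elements normalizes the unique maximal reductive subgroup $\Levi=\Aut^0(W,F)$, hence lies in $N_{\Aut(W)}(\Levi)=\Levi$ (because $\mathcal{C}_{\Aut(W)}(\z(\Levi))=\Levi$ by Remark~\ref{rem:Mostow-2} and $\Ru\cap\Levi=\{1\}$). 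Thus $\Aut(W,F)\cong\GL_2(\CC)$, which is \ref{lem:j=upsilon-ii}.
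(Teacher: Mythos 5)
Your proof is correct, and it genuinely differs from the paper's. The paper runs the cycle the other way: \xref{lem:j=upsilon-iii}$\Rightarrow$\xref{lem:j=upsilon-i} is read off from Corollary~\xref{cor:aut-center}\xref{cor:aut-center-b}, \xref{lem:j=upsilon-ii}$\Rightarrow$\xref{lem:j=upsilon-iii} is exactly your uniqueness-of-the-invariant-conic remark, and all the work goes into \xref{lem:j=upsilon-i}$\Rightarrow$\xref{lem:j=upsilon-ii}. There, where you use isotypic decompositions, the paper uses the central involution: $-\id\in\z(\mathcal{H})\subset\z(\Levi)$ acts trivially on $\Xi$, its fixed locus in each plane $\Pi_\gamma$ is $l_\gamma\cup\{\Psi\cap\Pi_\gamma\}$, and each ruling of $F$ (which is $-\id$-invariant because $-\id$ fixes the base $J$ pointwise) has a second fixed point off $l_\gamma$, forcing $\Psi\subset F$; then every ruling joins two $\z(\Levi)$-fixed points, so $F$ is $\z(\Levi)$- and $\Levi$-invariant. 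Your substitute for this step --- $\langle F\rangle$ is the unique $7$-dimensional $\mathcal{H}$-submodule $S^3\CC^2\oplus S^2\CC^2$ of $\CC^8$, uniqueness of the invariant member of $|s_0+f|$ on $F\cong\FF_1$, uniqueness of the invariant twisted cubic in $\PP(S^3\CC^2)$ --- is heavier but sound, granted the multiplicity-one decomposition of $\CC^8$ that you assert; note this does follow from Proposition~\xref{proposition-GL2-action-c}\xref{proposition-GL2-action-c-d}, since coinciding $\z(\Levi)$-eigenvalues would produce a divisor of torus-fixed points in $W$. Past this point the two proofs use the same two ingredients (irreducibility of the $\Levi$-module $\Ru$, Remark~\xref{rem:Mostow}.\xref{rem:Mostow-1}, and the orbit list~\eqref{eq:Aut-W-orbits}), with one structural difference: the paper observes that $K=\Aut(W,F)\cap\Ru$ is normalized by $\Levi$ and by the abelian group $\Ru$, hence is normal in $\Aut(W)$, so the dichotomy $K=\{1\}$ or $K=\Ru$ applies to the full, possibly disconnected group and yields $\Aut(W,F)=\Levi$ in one stroke; you instead work with $\Aut^0(W,F)$ via Corollary~\xref{cor:aut-center}\xref{cor:aut-center-a} and therefore need your (correct) extra normalizer argument $N_{\Aut(W)}(\Levi)=\mathcal{C}_{\Aut(W)}(\z(\Levi))=\Levi$ to exclude further components. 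One citation nitpick: in \xref{lem:j=upsilon-iii}$\Rightarrow$\xref{lem:j=upsilon-ii}, the $\Levi$-invariance of $F$ requires the uniqueness statement of Proposition~\xref{prop:center-inv-scroll}\xref{prop:center-inv-scroll-b}, which identifies the given $\z(\Levi)$-invariant scroll with the one constructed in part~\xref{prop:center-inv-scroll-a}, and not part~\xref{prop:center-inv-scroll-a} alone.
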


\begin{proof}
The implication~\xref{lem:j=upsilon-iii}$\Rightarrow$\xref{lem:j=upsilon-i} is 
straightforward from Corollary~\xref{cor:aut-center}\xref{cor:aut-center-b}.
The implication~\xref{lem:j=upsilon-ii}$\Rightarrow$\xref{lem:j=upsilon-iii} is
immediate; indeed, $\Upsilon$ is a unique $\PGL_2(\CC)$-invariant conic in 
$\Xi$. 

It remains to prove
\xref{lem:j=upsilon-i}$\Rightarrow$\xref{lem:j=upsilon-ii}. The
$\SL_2(\CC)$-subgroup $\mathcal{H}\subset\Aut(W,F)$ is contained in a unique
reductive Levi subgroup $\Levi\cong\GL_2(\CC)$ of $\Aut(W)$. 
Let $T:=\z(\Levi)$.
The center $\z(\mathcal{H})$ has an isolated fixed point on 
$\Pi_\gamma\setminus\Xi$, $\forall\gamma\in\Upsilon$.
Therefore, $\z(\mathcal{H})$ has a curve of fixed points $\Psi\subset 
R\setminus\Xi$.
Since $\z(\mathcal{H})\subset T$, the points of $\Psi$ are fixed by $T$, 
see 
Proposition~\xref{proposition-GL2-action-c}\xref{proposition-GL2-action-c-d}.
The lines joining the corresponding points of $J$ and $\Psi$ are $T$-orbits. 
Therefore, $\Psi$, $J=F\cap\Xi$, and $F$ are $\Levi$-invariant.

The subgroup $\Levi\subset\Aut(W, F)$ normalizes both $\Aut(W,F)$ and 
$\Ru\cong(\Ga)^4$. Hence it normalizes the intersection $K:=\Aut(W,F)\cap\Ru$. 
Since $K$ is normalized by $\Levi$ and $\Ru$, it is a normal subgroup in 
$\Aut(W)$. However, $K$ cannot be a proper subgroup of $\Ru$. Indeed, the 
representation of $\Levi\cong\GL_2(\CC)$ on $\Ru\cong\CC^4$ by conjugation is 
irreducible, see Remark~\xref{rem:Mostow}.\xref{rem:Mostow-1}. Therefore, there 
is an alternative: either $\Aut(W,F)=\Levi$, or $\Aut(W,F)=\Aut(W)$. The latter 
is impossible since $R\setminus\Xi$ is an orbit of $\Aut(W)$, see 
\eqref{eq:Aut-W-orbits}. This proves~\xref{lem:j=upsilon-ii}.
\end{proof}

\begin{scorollary}
\label{cor:unique-GL2}
Let $F_1, F_2\subset R$ be two quintic scrolls meeting 
$\Xi$ along smooth conics. If $\Aut(W,F_i)\cong\GL_2(\CC)$ for $i=1,2$, then 
$F_2=g(F_1)$ for some $g\in\Ru$. Consequently, the pairs $(V_1,S_1)$ and $(V_2, 
S_2)$ linked to $(W,F_1)$ and $(W, F_2)$, respectively, are isomorphic.
\end{scorollary}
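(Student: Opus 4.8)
The plan is to reduce both scrolls to the single standard scroll $F^{\s}=(\rho(\hat\Delta)\cap R)_{\red}$ of Proposition~\xref{prop:quintic-scroll}, and then to carry the whole Sarkisov link~\eqref{diagram-2} across by the resulting automorphism of $W$.

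First I would determine the exceptional conics $J_i=F_i\cap\Xi$. Since $\GL_2(\CC)$ contains a subgroup isomorphic to $\SL_2(\CC)$, the hypothesis $\Aut(W,F_i)\cong\GL_2(\CC)$ shows that condition~\xref{lem:j=upsilon-i} of Lemma~\xref{lem:j=upsilon} is satisfied for each $F_i$. By the equivalences proved there, condition~\xref{lem:j=upsilon-iii} holds as well: $J_i=\Upsilon$ and $F_i$ is invariant under a singular torus $T_i$ of $\Aut(W)$, for $i=1,2$.

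Next I would normalize each $F_i$. By Remark~\xref{rem:Mostow}.\xref{rem:Mostow-2} the singular torus $T_i$ is the center $\z(\Levi_i)$ of a unique reductive Levi subgroup $\Levi_i\cong\GL_2(\CC)$ of $\Aut(W)$. Thus each $F_i$ is a $\z(\Levi_i)$-invariant rational normal quintic scroll with exceptional section $J_i=\Upsilon$, and Proposition~\xref{prop:center-inv-scroll}\xref{prop:center-inv-scroll-b} applies in the case $J=\Upsilon$: such a scroll is unique, and there is an element $g_i\in\Ru$ with $g_i(F_i)=F^{\s}$. Since the target $F^{\s}$ does not depend on $i$, it follows that $g_1(F_1)=g_2(F_2)$, whence $F_2=(g_2^{-1}g_1)(F_1)=g(F_1)$ with $g:=g_2^{-1}g_1\in\Ru$. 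This proves the first assertion.

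Finally I would deduce the isomorphism of the linked pairs. The element $g\in\Ru\subset\Aut(W)$ carries the pair $(W,F_1)$ isomorphically onto $(W,F_2)$. The Sarkisov link~\eqref{diagram-2} attached to a pair $(W,F)$ by Proposition~\xref{prop:reversion} is canonical: the morphism $\eta$ is the blowup of $F$, and $\xi$ is the contraction defined by the natural linear system $|2H^*-\tilde A|$, both equivariant under $\Aut(W,F)$. Hence $g$ lifts, by the universal property of the blowup, to an isomorphism of the two resolutions $\tilde W_1\to\tilde W_2$ matching $\tilde A_1,\tilde B_1$ with $\tilde A_2,\tilde B_2$, and then descends through the contractions $\xi_1,\xi_2$ (which send $\tilde B_i$ to $S_i$) to an isomorphism $(V_1,S_1)\cong(V_2,S_2)$. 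The only point requiring care is precisely this functoriality of the construction; but since every morphism in~\eqref{diagram-2} is given by an $\Aut(W)$-invariantly defined linear system, it is automatic, and the proof is complete.
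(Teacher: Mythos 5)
Your proof is correct and follows essentially the same route as the paper's: Lemma~\xref{lem:j=upsilon} yields $J_i=\Upsilon$ together with invariance under a singular torus, Proposition~\xref{prop:center-inv-scroll}\xref{prop:center-inv-scroll-b} then transforms both scrolls into the standard scroll $(\rho(\hat\Delta)\cap R)_{\red}$ by elements of $\Ru$, and the equivariance of the link~\eqref{diagram-2} carries the resulting automorphism over to the linked pairs. Your explicit verification of the functoriality of the Sarkisov link in the last step merely spells out what the paper compresses into ``Now the assertions follow.''
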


\begin{proof}
Due to Lemma~\xref{lem:j=upsilon}, for $i=1,2$ one has $F_i\cap\Xi=\Upsilon$, 
$\Levi_i:=\Aut(W,F_i)$ is a Levi subgroup in $\Aut(W)$, and so, $\z(L_i)$ 
stabilizes $F_i$. Letting $\Psi_i$ be the component of the fixed point set of 
$\z(\Levi_i)|_{F_i}$ different from $\Upsilon$, we see that all the assumptions 
of Proposition~\xref{prop:center-inv-scroll}\xref{prop:center-inv-scroll-b} are 
satisfied for $F_1$ and $F_2$. By this proposition, both $F_1$ and $F_2$ can be 
transformed into $(\rho(\hat\Delta)\cap R)_{\red}$ by suitable automorphisms 
from $\Ru$. Now the assertions follow.
\end{proof}

In the next lemma we construct some $\Levi$-invariant cubic cone in $W$.

\begin{lemma}
\label{lem:center-inv-cone}
Let $\Levi$ be a reductive Levi
subgroup of $\Aut(W)$, let $Q$ be the unique fixed point of the singular torus 
$T=\z(\Levi)$
in $W\setminus R$, and let $\Psi\subset R\setminus\Xi$ be the one-dimensional 
component of the
fixed point set $W^{T}$, see 
Proposition~\xref{proposition-GL2-action-c}\xref{proposition-GL2-action-c-d}. 
Consider the cubic cone $\Cone(\Psi, Q)
\subset\PP^7$ over $\Psi$ with vertex $Q$. Then $\Cone(\Psi, Q)$ is contained in 
$W$ and $\Levi$-invariant.
\end{lemma}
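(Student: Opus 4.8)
The plan is to prove the two assertions separately: the $\Levi$-invariance is soft and follows from the invariance of the fixed-point data, whereas the containment $\Cone(\Psi,Q)\subset W$ is the real content. Throughout I would first reduce to the standard situation $\Levi=\Levi_0$ of Notation~\xref{nota:3-forms} and Remark~\xref{rem:GL2-action}.\xref{rem:GL2-action-3}. This is legitimate because, by Proposition~\xref{proposition-GL2-action}\xref{proposition-GL2-action-a}, every reductive Levi subgroup is $\Ru$-conjugate to $\Levi_0$; since $\Aut(W)$ acts linearly on $\PP^7\supset W$, an element $g\in\Ru$ carries $W$ to $W$, the pair $(Q,\Psi)$ attached to $\Levi_0$ to the one attached to $g\Levi_0g^{-1}$, and the cone to the corresponding cone. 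For $\Levi_0$ I would use the explicit description $Q=\psi(P)$ and $\Psi=\rho(\hat\Gamma)$, where $P$ is the $\GL_2(\CC)$-fixed point of~\xref{nota:3-forms}, $\psi=\phi^{-1}$ is the inverse of the equivariant projection of~\eqref{equation-diagram-1}, and $\hat\Gamma=\hat N\cap\hat R$ with $N=\Cone(\Gamma,P)$ as in Lemma~\xref{lem:Gamma} and the proof of Proposition~\xref{prop:quintic-scroll}\xref{prop:quintic-scroll-c}. Here $Q=\psi(P)$ because $\psi$ is equivariant and $P$ is the unique fixed point of $T=\z(\Levi_0)$ on $\PP^4\setminus\langle\Gamma\rangle\cong M_3$, and $\rho(\hat\Gamma)$ is a $T$-invariant twisted cubic in $R\setminus\Xi$, hence the one-dimensional component of $W^T$.

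For the invariance I would argue that by Proposition~\xref{proposition-GL2-action-c}\xref{proposition-GL2-action-c-d} the locus $W^T$ is $\Levi$-invariant; since $\Levi$ is connected, it must fix the isolated point $Q$ and leave the curve $\Psi$ invariant. As $\Levi$ acts linearly on $\PP^7$, every $g\in\Levi$ sends the ruling $\overline{Qp}$ to $\overline{Q\,g(p)}$ with $g(p)\in\Psi$, so the union of rulings $\Cone(\Psi,Q)$ is preserved.

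The heart of the matter is the containment, and the key idea is to produce each ruling of the cone as a genuine line on $W$ via the Sarkisov link~\eqref{equation-diagram-1}. For $\gamma\in\Gamma$ I would take the ruling $\ell_\gamma=\langle P,\gamma\rangle$ of $N=\Cone(\Gamma,P)\subset\PP^4$ and its proper transform $\hat\ell_\gamma$ in $\hat W$. Since $P\notin\langle\Gamma\rangle$, the line $\ell_\gamma$ meets $\Gamma$ transversally in the single point $\gamma$, so $L^*\cdot\hat\ell_\gamma=1$ and $\hat R\cdot\hat\ell_\gamma=1$; then $H^*\cdot\hat\ell_\gamma=(2L^*-\hat R)\cdot\hat\ell_\gamma=1$ by~\eqref{eq:exprim}, whence $\rho(\hat\ell_\gamma)\subset W$ is a line. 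I would then pin down its two endpoints: the point over $P$ is $\rho(\varphi^{-1}(P))=\psi(P)=Q$ (as $\varphi^{-1}(P)\notin\hat E$), while the point of $\hat\ell_\gamma$ on the exceptional divisor is $\hat\ell_\gamma\cap\hat R=\hat\ell_\gamma\cap\hat\Gamma=\hat p_\gamma$, using $\hat\ell_\gamma\subset\hat N$, the relation $\hat N\cap\hat R=\hat\Gamma$, and the isomorphism $\varphi|_{\hat N}\colon\hat N\to N$ carrying $\hat\Gamma$ to $\Gamma$ and $\ell_\gamma\cap\Gamma=\{\gamma\}$. Its image is $p_\gamma=\rho(\hat p_\gamma)\in\Psi$. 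Thus $\rho(\hat\ell_\gamma)$ is the unique line through the distinct points $Q\in W\setminus R$ and $p_\gamma\in\Psi\subset R$, i.e. the ruling $\overline{Qp_\gamma}$, and it lies on $W$; letting $\gamma$ vary over $\Gamma$ forces every ruling of $\Cone(\Psi,Q)$ into $W$, so $\Cone(\Psi,Q)\subset W$.

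I expect the main obstacle to be precisely this identification of the rulings as lines \emph{on} $W$, and in particular avoiding a tempting but fallacious shortcut. One is tempted to note that $\Cone(\Psi,Q)$ is disjoint from the center $\Xi$, so $\phi$ maps it isomorphically onto $N$, and then to conclude containment from $\phi^{-1}(N\cap M_3)\subset W\setminus R$. This fails because $\phi$ is a projection with $\PP^3$-fibers: a point of the cone and the unique point of $W\setminus R$ lying over the same image generally differ, so knowing the image sits in $W\setminus R$ says nothing about the cone point itself. The honest route is the degree computation above, which forces $\rho(\hat\ell_\gamma)$ to be a line rather than the conic one would obtain from a general line of $\PP^4$ under the quadratic map $\phi^{-1}$ defined by quadrics through $\Gamma$; the drop in degree is caused exactly by $\ell_\gamma$ meeting the blow-up center $\Gamma$. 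A secondary point needing care is matching the fixed-curve $\Psi$ in the statement with $\rho(\hat\Gamma)$: both are $T$-invariant twisted cubics in $R\setminus\Xi$, and since the one-dimensional part of $W^T$ is unique by Proposition~\xref{proposition-GL2-action-c}\xref{proposition-GL2-action-c-d}, they coincide.
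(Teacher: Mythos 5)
Your proof is correct, but it takes a genuinely different route from the paper's. The paper argues intrinsically on $W$: through $Q$ there passes a one-dimensional family of lines (citing \cite[Prop.~5.3]{Iskovskih1977a} and \cite[Lem.~2.2.6]{Kuznetsov-Prokhorov-Shramov}), whose union $S_Q$ is a cone with vertex $Q$; since $W_5\subset\PP^7$ is an intersection of quadrics, $S_Q=W\cap T_QW$, which gives both $S_Q\subset W$ and the $\Levi$-invariance for free, and the torus $T$ then has a curve of fixed points on $S_Q$ which must be $S_Q\cap R=\Psi$, whence $S_Q=\Cone(\Psi,Q)$. You instead build the cone ruling-by-ruling through the link~\eqref{equation-diagram-1}: the proper transform of each line $\langle P,\gamma\rangle$ of $N=\Cone(\Gamma,P)$ meets $\hat R$ transversally in one point, so $H^*\cdot\hat\ell_\gamma=(2L^*-\hat R)\cdot\hat\ell_\gamma=1$ by~\eqref{eq:exprim} forces $\rho(\hat\ell_\gamma)$ to be a line on $W$, and the endpoint analysis via $\hat N\cap\hat R=\hat\Gamma$ identifies it with the ruling joining $Q$ to $p_\gamma\in\Psi$. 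The paper's argument buys brevity and the stronger statement that $\Cone(\Psi,Q)$ is exactly $W\cap T_QW$, the union of \emph{all} lines through $Q$; yours buys self-containedness (only Lemma~\xref{lem:Gamma}, \eqref{eq:exprim}, and the equivariance of~\eqref{equation-diagram-1} are used, with no external references) and makes the cone explicit as the transform of $N$, while your warning about the fallacious shortcut through the projection $\phi$ — whose fibers are $\PP^3$'s through $\Xi$, so containment of images proves nothing about the cone itself — is well taken. One small wording repair: when you say $\rho(\hat\Gamma)$ is a ``$T$-invariant'' twisted cubic ``hence the one-dimensional component of $W^T$,'' invariance alone does not place a curve inside $W^T$; you need pointwise fixedness. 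This is immediate, though: $T$ acts trivially on $\Gamma$ and $\varphi|_{\hat\Gamma}\colon\hat\Gamma\to\Gamma$ is a $T$-equivariant isomorphism, so $\hat\Gamma$, and hence $\Psi=\rho(\hat\Gamma)$, is fixed pointwise, after which the uniqueness in Proposition~\xref{proposition-GL2-action-c}\xref{proposition-GL2-action-c-d} applies exactly as you intend.
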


\begin{proof}
It is easy to see that there is a one-dimensional family of lines in $W$ passing 
through $Q$
(cf.~\cite[Proposition~5.3]{Iskovskih1977a},~\cite[Lemma~2.2.6]{
Kuznetsov-Prokhorov-Shramov}).
The union of these lines form an $\Levi$-invariant surface $S_Q$, 
which is a cone with vertex $Q$.
Moreover, $S_Q=W\cap T_{Q}W$ because $W=W_5\subset\PP^7$ is an intersection of 
quadrics.
The singular torus $T$ has a curve of fixed points on $S_Q$, and this curve must 
coincide with 
$S_Q\cap R=\Psi$. Therefore, $S_Q=\Cone(\Psi, Q)$.
\end{proof}

\begin{mdefinition}
\label{nota:diagr-2}
Let us fix the following setup. Consider two linked pairs $(W,F)$ and $(V,S)$ 
as 
in Corollary~\xref{cor:diagr-2}. That is, $F\subset R$ is a smooth rational 
normal quintic scroll such that $F\cong\FF_1$, $J=F\cap\Xi$ is a smooth conic 
touching $\Upsilon$ with even multiplicities, $V$ is a smooth Fano-Mukai 
fourfold of genus $10$, and $S\subset V$ is a cubic cone with 
$\Aut^0(V,S)\cong\Aut^0(W,F)$. Suppose that $F$ is invariant under a singular 
torus $\z(\Levi)$, where $\Levi\subset\Aut(W)$ is a reductive Levi subgroup, 
see 
Remark~\xref{rem:Mostow}.\xref{rem:Mostow-2}. Via the above isomorphism, the 
identity component 
$G_{\Levi}$ of $\Levi\cap\Aut(W,F)$ acts effectively on $V$ stabilizing $S$ 
(cf.\ Corollary~\xref{cor:aut-center}). In the following lemma we construct a 
pair of disjoint $G_{\Levi}$-invariant cubic cones in $V$.
\end{mdefinition}

\begin{slemma}
\label{lem:disjoint-cones}
In the setting of~\xref{nota:diagr-2} the following hold.
\begin{enumerate}
\renewcommand\labelenumi{\rm (\alph{enumi})}
\renewcommand\theenumi{\rm (\alph{enumi})}
\item
\label{nota:diagr-2-a}
There exists a unique $G_{\Levi}$-invariant cubic cone
$S'\subset V$ disjoint with $S$.

\item
\label{nota:diagr-2-b} If $\Ru\cap\Aut^0(W,F)=\{1\}$, then $S'$ 
in~\xref{nota:diagr-2-a} is $\Aut^0(V,S)$-invariant.

\item
\label{nota:diagr-2-c} If $\Ru\cap\Aut^0(W,F)\neq\{1\}$, then there exists a 
one-parameter family $(S'_t)$ of cubic cones in $V$ disjoint with $S$.
\end{enumerate}
\end{slemma}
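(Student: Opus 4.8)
The plan is to transfer the $\Levi$-invariant cubic cone on $W$ produced in Lemma~\xref{lem:center-inv-cone} across the Sarkisov link~\eqref{diagram-2} to the side of $V$, and then to read off the three assertions from the structure of $G_{\Levi}$ recorded in Corollary~\xref{cor:aut-center} together with the freeness of the $\Ru$-action on $W\setminus R$. For the existence part of~\xref{nota:diagr-2-a} I would start from $C:=\Cone(\Psi,Q)\subset W$, which is $\Levi$-invariant (hence $G_{\Levi}$-invariant), has vertex $Q\in W\setminus R$ and base $\Psi\subset R\setminus\Xi$. Since $\Psi\subset F$ by Proposition~\xref{prop:center-inv-scroll}\xref{prop:center-inv-scroll-b}, while $C\cap R=\Psi$ and $Q\notin R\supset F$, one has $C\cap F=\Psi$, a Cartier divisor on $C$ avoiding the vertex; hence the proper transform $\tilde C\subset\tilde W$ under $\eta$ is isomorphic to $C$.

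The key claim is that $\tilde C\cap\tilde B=\emptyset$. Since $B=R$ in the cubic-cone case (Corollary~\xref{rem:interrompu}), $\eta(\tilde C\cap\tilde B)\subset C\cap R=\Psi$, so it suffices to separate the two points cut out on each fibre $\PP(\NNN_{F/W,p})\cong\PP^1$, $p\in\Psi$, by $\tilde C$ and by $\tilde B=\tilde R$. Because $R$ is a hyperplane section of $W$ with $Q\notin R$, the ruling $\overline{Qp}$ of $C$ meets $R$ transversally at $p$, so $T_pC\not\subset T_pR$; as $T_pF\subset T_pR$ and $R$ is smooth along $\Psi$ (here $\Psi\cap\Xi=\emptyset$), the images of $T_pC$ and of $T_pR$ in $\NNN_{F/W,p}$ are distinct lines, whence the two points differ. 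Thus $S':=\xi(\tilde C)\cong\tilde C\cong C$ is disjoint from $S=\xi(\tilde B)$ (the unique two-dimensional fibre $\tilde Y$ of $\xi$ lies in $\tilde B$, so $\tilde C$ avoids it as well), and $S'$ is $G_{\Levi}$-invariant by equivariance of~\eqref{diagram-2}. That $S'$ is genuinely a cubic cone I would confirm by computing $(L^*)^2\cdot[\tilde C]$ from~\eqref{equation-intersectin-theory-W} to get degree $3$ and $\dim\langle S'\rangle=4$, after which Proposition~\xref{prop:link-2}\xref{prop:link-2-c} and Definition~\xref{def:cubic-cones} identify the singular degree-$3$ surface $S'$ as a cone over a twisted cubic. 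For uniqueness, let $S'$ be any $G_{\Levi}$-invariant cubic cone disjoint from $S$; running the transversality analysis in reverse sends $S'$ to a $G_{\Levi}$-invariant cubic cone $C'\subset W$ whose base $C'\cap R$ lies on $F$ and whose vertex lies in $W\setminus R$. Since $\z(\Levi)\subset G_{\Levi}$ fixes $\Xi$ pointwise and $\Psi$ pointwise, its fixed locus in $F\cong\FF_1$ consists exactly of the two sections $J=F\cap\Xi$ and $\Psi$; as the base must avoid $\Xi$, it equals $\Psi$, and as the only $\z(\Levi)$-fixed point of $W\setminus R$ is $Q$ (Proposition~\xref{proposition-GL2-action-c}\xref{proposition-GL2-action-c-d}), the vertex equals $Q$. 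Hence $C'=\Cone(\Psi,Q)=C$ and $S'$ is unique. I expect the precise verification that this reverse transfer really yields a cubic cone $C'$ with base on $F$ — the exact counterpart of the transversality computation above — to be the main technical obstacle.

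Assertion~\xref{nota:diagr-2-b} is then immediate: by Lemma~\xref{lem:j=upsilon} the hypothesis $\Ru\cap\Aut^0(W,F)=\{1\}$ forces $J=\Upsilon$ and $\Aut^0(W,F)=\Levi=G_{\Levi}$, so that $G_{\Levi}$-invariance coincides with $\Aut^0(V,S)$-invariance under the given isomorphism $\Aut^0(V,S)\cong\Aut^0(W,F)$; thus the $S'$ of~\xref{nota:diagr-2-a} is $\Aut^0(V,S)$-invariant.

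Finally, for~\xref{nota:diagr-2-c} set $U:=\Ru\cap\Aut^0(W,F)\neq\{1\}$. By Corollary~\xref{cor:aut-center}\xref{cor:aut-center-c}, for each $t\in U$ the curve $\Psi_t:=t(\Psi)\subset F$ is the one-dimensional fixed locus of the singular torus $\z(\Levi_t)$, $\Levi_t=t\Levi t^{-1}$, so $C_t:=t(C)=\Cone(\Psi_t,t(Q))$ is $\Levi_t$-invariant by Lemma~\xref{lem:center-inv-cone}. The existence argument of~\xref{nota:diagr-2-a} applies verbatim to $C_t$, since again $C_t\cap F=\Psi_t\subset F$ and $t(Q)\notin R$, producing a cubic cone $S'_t:=\xi(\tilde C_t)\subset V$ disjoint from $S$. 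As $\Ru$ acts freely on $W\setminus R$ (Proposition~\xref{proposition-GL2-action-c}\xref{proposition-GL2-action-c-c}), the vertices $t(Q)$ are pairwise distinct, hence so are the cones $C_t$ and the resulting $S'_t$, giving the desired one-parameter family.
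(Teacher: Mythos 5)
Your treatment of \xref{nota:diagr-2-a} and \xref{nota:diagr-2-c} is, in substance, the paper's own proof: the paper likewise takes $S'_W=\Cone(\Psi,Q)$ from Lemma~\xref{lem:center-inv-cone}, observes that it meets $R$ transversally along $\Psi\subset F$ so that its proper transform in $\tilde W$ is disjoint from $\tilde B$, pushes it down to $V$, and proves uniqueness by transporting an arbitrary $G_{\Levi}$-invariant cone disjoint from $S$ back to $W$ and pinning its base and vertex down via the fixed locus of $\z(\Levi)$ (Proposition~\xref{proposition-GL2-action-c}\xref{proposition-GL2-action-c-d}); part \xref{nota:diagr-2-c} is read off from Corollary~\xref{cor:aut-center}\xref{cor:aut-center-c} exactly as you do. One simplification you missed: the ``main technical obstacle'' you anticipate (computing $(L^*)^2\cdot[\tilde C]$ and $\dim\langle S'\rangle$, and verifying that the reverse transfer yields a cubic cone) evaporates, because $S'\cap\langle S\rangle\subset S'\cap V\cap\langle S\rangle=S'\cap S=\emptyset$ by Claim~\xref{claim:they-coinside}, so the projection $\theta$ with center $\langle S\rangle$ restricts to a \emph{linear} isomorphism between $S'$ and $S'_W$ (and, in the reverse direction, between a given cone $S''\subset V$ and $\theta(S'')\subset W$); this is how the paper identifies $S'$ as a cubic cone in one line, with no intersection theory. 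Your uniqueness step also quietly upgrades ``$\z(\Levi)$-invariant'' to ``contained in the fixed locus''; this is harmless, since every one-dimensional orbit closure of $\z(\Levi)$ in $F$ is a ruling, so an invariant irreducible curve of degree $3$ in $F$ must be $\Psi$ --- the paper glosses this point at the same level.

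There is, however, a genuine error in your proof of \xref{nota:diagr-2-b}: Lemma~\xref{lem:j=upsilon} does not say that $\Ru\cap\Aut^0(W,F)=\{1\}$ forces $J=\Upsilon$. Its condition \xref{lem:j=upsilon-ii} is the \emph{conjunction} ``$\Aut(W,F)\cong\GL_2(\CC)$ and $\Ru\cap\Aut(W,F)=\{1\}$'', and only that conjunction is equivalent to $J=\Upsilon$. The implication you invoke is in fact false within the setting of~\xref{nota:diagr-2}: for a generic $V_{18}$ (case~\xref{cor:final-iii} of Theorem~\xref{cor:final}) one has $\Aut^0(W,F)\cong\Aut^0(V)\cong(\Gm)^2$, hence $\Ru\cap\Aut^0(W,F)=\{1\}$ by Corollary~\xref{cor:aut-center}\xref{cor:aut-center-a}, while $(\Upsilon,J)$ is of type~\xref{rem:touching-conics}\xref{lem:touching-conics-Gm}; so both of your intermediate claims ``$J=\Upsilon$'' and ``$\Aut^0(W,F)=\Levi$'' fail there. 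The conclusion of \xref{nota:diagr-2-b} nevertheless survives, because the correct input is Corollary~\xref{cor:aut-center}\xref{cor:aut-center-a} alone: $\Aut^0(W,F)=(\Ru\cap\Aut^0(W,F))\rtimes G_{\Levi}$, so triviality of the unipotent part gives $\Aut^0(W,F)=G_{\Levi}$ directly, after which the $G_{\Levi}$-invariance of $S'$ from \xref{nota:diagr-2-a} and the equivariance of diagram~\eqref{diagram-2} yield $\Aut^0(V,S)$-invariance --- which is precisely the paper's proof of \xref{nota:diagr-2-b}.
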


\begin{proof}
\xref{nota:diagr-2-a}
Let $Q$ be the
unique fixed point of $\z(\Levi)$ in $W\setminus R$, and let $\Psi\subset 
R\setminus\Xi$ be the 1-dimensional component
of the fixed point set of $\z(\Levi)$. Consider the $\Levi$-invariant cubic cone
$S'_W:=\Cone(\Psi, Q)\subset W$, see Lemma~\xref{lem:center-inv-cone}.
Since $S'_W$
meets the hyperplane section $R$ transversely along
$\Psi$, the proper
transform $S'_{\tilde W}$ of $S'_W$ in $\tilde W$ is isomorphic to $S'_W$ and 
disjoint with $\tilde R\subset\tilde W$.
Then the image $S':=\xi(S'_{\tilde W})\subset V$ is disjoint with
$S=\xi(\tilde R)$ and $G$-invariant. The linear projection 
$\theta:\PP^{12}\dashrightarrow\PP^7$ with center
$\langle S\rangle$ sends $S'$ isomorphically onto $S'_W$. Therefore, $S'$ is a 
cubic cone disjoint with $S$. This yields the existence 
in~\xref{nota:diagr-2-a}.

Let $S''\subset V$ be a $G_{\Levi}$-invariant cubic cone disjoint with $S$. Then
$S''_W=\theta(S'')\subset W$ is a $G_{\Levi}$-invariant cubic cone in $W$. The 
hyperplane section $A=A_S\subset V$ as in~\xref{diagram-2} meets $S''$ along a 
$G_{\Levi}$-invariant rational twisted cubic curve. Its image $\Psi''\subset F$ 
under $\theta$ is also a $G_{\Levi}$-invariant rational twisted cubic curve and 
a section of the ruling $F\to J$. Since $\z(\Levi)\subset G_{\Levi}$, 
see~\xref{nota:diagr-2}, $\Psi''$ is $\z(\Levi)$-invariant. It follows that
$\Psi''$ is a component of the fixed point set of $\z(\Levi)$ in 
$R\setminus\Xi$, hence $\Psi''=\Psi$. The vertex $Q''\in W\setminus R$ of 
$S''_W$ coincides with the unique isolated fixed point $Q$ of $\z(\Levi)$. 
Thus, 
$S''_W=\Cone(\Psi'', Q'')=\Cone(\Psi, Q)=S'_W$. Therefore, $S''=S'$. This 
proves 
the uniqueness in~\xref{nota:diagr-2-a}.

\xref{nota:diagr-2-b} If $\Ru\cap\Aut^0(W,F)=\{1\}$, then 
$\Aut^0(W,F)=G_{\Levi}$, see 
Corollary~\xref{cor:aut-center}\xref{cor:aut-center-a}. By 
Lemma~\xref{lem:center-inv-cone}, the cubic cone $S'_W=\Cone(\Psi, Q)\subset W$ 
is $\Aut^0(W,F)$-invariant. Then $S'$ is $\Aut^0(V,S)$-invariant, because 
diagram~\eqref{diagram-2} is $\Aut^0(W,F)$-equivariant.

\xref{nota:diagr-2-c} If $\Ru\cap\Aut^0(W,F)\neq\{1\}$, then the Sarkisov 
link~\eqref{diagram-2} provides a one-parameter family of cubic cones in $V$ 
disjoint with $S$, see Corollary~\xref{cor:aut-center}\xref{cor:aut-center-c}.
\end{proof}

\begin{construction}
\label{construction}
Let us investigate more closely the rational normal quintic scrolls $F\subset 
R$ 
with $F\cong\FF_1$ and $\Ru\cap\Aut^0(W,F)\neq\{1\}$. We use the notation from 
Lemma~\xref{lem:hatR} and Proposition~\xref{proposition-GL2-action-c}. The 
proper transform $\hat F$ of $F$ in $\hat W$ (see diagram 
\eqref{equation-diagram-1}) is a smooth scroll which meets 
$\hat\Xi\cong\PP^1\times\PP^1$ along a nondegenerate conic, say, $\hat J$. The 
morphism $\varphi|_{\hat F}:\hat F\to\Gamma$ induces on $\hat F$ a structure of 
a 
$\PP^1$-subbundle of the $\PP^2$-bundle $\varphi|_{\hat R}:\hat 
R\to\Gamma$, see Lemma~\xref{lem:hatR}\xref{lem:hatR-a}. This is the 
projectivization of a rank $2$ vector subbundle $\mathcal{L}\to\Gamma$ of the 
normal bundle $\NNN_{\Gamma/\PP^4}$. In each fiber of $\NNN_{\Gamma/\PP^4}$, 
$\mathcal{L}$ is transversal to the rank $2$ vector subbundle 
$\NNN_{\Gamma/\PP^3}$, where $\PP^3=\PP(M_3\oplus\{0\})$. The pullback 
$\tilde{\mathcal{L}}$ of $\mathcal{L}$ in the tangent bundle $T\PP^4|_{\Gamma}$ 
is a vector subbundle of corank $1$ containing the tangent bundle of $\Gamma$.

Consider the canonical projection $\pi: 
(M_3\oplus\CC)\setminus\{0\}\to\PP(M_3\oplus\CC)\cong\PP^4$ along with the 
affine cone (with the vertex removed) $\Cone(\Gamma)\subset 
(M_3\oplus\CC)\setminus\{0\}$, cf.\ the proof of 
Proposition~\xref{proposition-GL2-action-c}\xref{proposition-GL2-action-c-b}. 
The pullback 
$\tilde{\mathcal{L}}^*:=\pi^*\tilde{\mathcal{L}}$ is a vector subbundle of 
corank $1$ of the trivial vector bundle $T(M_3\oplus\CC)|_{\Cone(\Gamma)}\cong 
(M_3\oplus\CC\setminus\{0\})\times\Cone(\Gamma)$ over $\Cone(\Gamma)$. This 
subbundle is invariant under the natural $\Gm$-action on 
$(M_3\oplus\CC)\setminus\{0\}$ and contains the tangent bundle 
$T(\Cone(\Gamma))$. The fibers of $\tilde{\mathcal{L}}^*$ define a 
one-parameter 
family of hyperplanes $(\tilde{\mathcal{L}}^*_{\gamma})_{\gamma\in\Gamma}$ of 
the vector 5-space $M_3\oplus\CC$ transversal to $\tilde 
E:=M_3\oplus\{0\}=\langle\Cone(\Gamma)\rangle$.
\end{construction}

In the following lemma we provide a criterion as to when $F$ is invariant under
the action of a nontrivial subgroup of the unipotent radical $\Ru=\Ru(\Aut(W))$.

\begin{slemma}
\label{lem:unipotent-actions}
In the notation as in~\xref{construction}, consider the family 
$(H_{\gamma})_{\gamma\in\Gamma}$ of projective planes in the projective 
$3$-space $\PP(\tilde E)=\PP(M_3)$, where 
$H_{\gamma}:=\PP(\tilde{\mathcal{L}}^*_{\gamma}\cap\tilde E)$. Then the 
following holds.
\begin{itemize}

\item
$\Ru\cap\Aut^0(W,F)\neq\{1\}$ if and only if the planes $H_{\gamma}$ pass 
through a common point;

\item
$\dim(\Ru\cap\Aut^0(W,F))\ge 2$ if and only if the planes $H_{\gamma}$ vary in a 
pencil.
\end{itemize}
\end{slemma}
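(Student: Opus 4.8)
The plan is to translate the condition on $\Ru\cap\Aut^0(W,F)$ into a linear condition on the family of hyperplanes $\tilde{\mathcal{L}}^*_\gamma$. First I would recall from~\eqref{eq:action-transl} that an element $h\in\Ru=(M_3,+)$ acts on $M_3\oplus\CC$ by the \emph{linear} map $\tau_h\colon(f,z)\mapsto(f+zh,z)$, which fixes $\tilde E=M_3\oplus\{0\}$ pointwise, fixes $\Cone(\Gamma)$ pointwise, and coincides with its own differential on each tangent space. Since the passage from $F$ to $\hat F$, to $\mathcal{L}$, to $\tilde{\mathcal{L}}$, and finally to $\tilde{\mathcal{L}}^*$ in~\xref{construction} is canonical and $\Aut(W)$-equivariant (the $\Aut(W)$-action lifts to $\hat W$ preserving $\hat R$ and $\hat E$, see Remark~\xref{rem:GL2-action}.\xref{rem:GL2-action-3}), the scroll $\tau_h(F)$ corresponds to the family $(\tau_h(\tilde{\mathcal{L}}^*_\gamma))_{\gamma\in\Gamma}$; and because $\tau_h$ fixes $\Gamma$ pointwise this family is indexed over the same base. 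Hence $\tau_h(F)=F$ if and only if $\tau_h(\tilde{\mathcal{L}}^*_\gamma)=\tilde{\mathcal{L}}^*_\gamma$ for every $\gamma\in\Gamma$. I expect this reduction to the per-fibre condition to be the only delicate point, since it is where the equivariance of the whole construction of~\xref{construction} under $\Ru$ is used; the remaining steps are linear algebra.

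Next, for a fixed $\gamma$ I would set $\hat H_\gamma:=\tilde{\mathcal{L}}^*_\gamma\cap\tilde E$, so that $H_\gamma=\PP(\hat H_\gamma)$ is a plane in $\PP(\tilde E)=\PP(M_3)$ by the transversality of $\tilde{\mathcal{L}}^*_\gamma$ to $\tilde E$ noted in~\xref{construction}. As $\tilde{\mathcal{L}}^*_\gamma\not\subseteq\tilde E$, I can choose $w_\gamma\in\tilde{\mathcal{L}}^*_\gamma$ with $z$-coordinate equal to $1$, so that $\tilde{\mathcal{L}}^*_\gamma=\hat H_\gamma\oplus\CC w_\gamma$. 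Then $\tau_h$ fixes $\hat H_\gamma\subset\tilde E$ pointwise, while $\tau_h(w_\gamma)=w_\gamma+(h,0)$. Therefore $\tau_h(\tilde{\mathcal{L}}^*_\gamma)=\tilde{\mathcal{L}}^*_\gamma$ if and only if $w_\gamma+(h,0)\in\tilde{\mathcal{L}}^*_\gamma$, that is, if and only if $(h,0)\in\tilde{\mathcal{L}}^*_\gamma$; and since $(h,0)\in\tilde E$ this means $(h,0)\in\tilde{\mathcal{L}}^*_\gamma\cap\tilde E=\hat H_\gamma$, equivalently $[h]\in H_\gamma$ for $h\neq 0$.

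Combining the two steps, I would conclude that
\[
\Ru\cap\Aut^0(W,F)=\{\tau_h\mid h\in\textstyle\bigcap_{\gamma\in\Gamma}\hat H_\gamma\},
\]
a vector subgroup of $\Ru$ isomorphic to the linear subspace $\bigcap_{\gamma}\hat H_\gamma\subset M_3$; this subspace is connected, so $\Aut^0$ and $\Aut$ agree here. It is nonzero if and only if $\bigcap_{\gamma}\hat H_\gamma\neq\{0\}$, i.e.\ $\bigcap_{\gamma}H_\gamma\neq\emptyset$, which is exactly the statement that the planes $H_\gamma$ pass through a common point; this gives the first equivalence. Its dimension is at least $2$ if and only if $\dim\bigcap_{\gamma}\hat H_\gamma\ge 2$, equivalently the defining forms of the hyperplanes $\hat H_\gamma$ span a subspace of $\tilde E^\vee$ of dimension at most $2$, equivalently the corresponding planes $H_\gamma$ lie in a pencil; this gives the second equivalence. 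Everything beyond the equivariant reduction of the first paragraph is the direct computation carried out above.
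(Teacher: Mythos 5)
Your proof is correct and follows essentially the same route as the paper: both reduce the invariance of $F$ under $\tau_h\in\Ru$ (using that $\tau_h$ is linear, fixes $\Cone(\Gamma)$ pointwise, and equals its own differential) to the fiberwise condition that $\tau_h$ preserve each hyperplane $\tilde{\mathcal{L}}^*_{\gamma}$, and both conclude by the same linear algebra that this holds if and only if $(h,0)\in\tilde{\mathcal{L}}^*_{\gamma}\cap\tilde E$, whence the intersection $\bigcap_{\gamma}\hat H_{\gamma}$ controls both assertions. The only cosmetic difference is that the paper checks the fiberwise criterion via a defining linear form $\langle u,w\rangle+\delta v=0$, while you use the splitting $\tilde{\mathcal{L}}^*_{\gamma}=\hat H_{\gamma}\oplus\CC w_{\gamma}$.
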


\begin{proof}
The unipotent radical $\Ru$ of $\Aut(W)$ can be identified with the group of 
translations $(M_3,+)\cong\Transl(\CC^4)$ acting on $M_3\oplus\CC$ via
\begin{equation*}
(M_3,+)\ni g: (f,z)\longmapsto (f+z g,z)\in M_3\oplus\CC,
\end{equation*} see
\eqref{eq:action-transl}.
This action is trivial on the
tangent space $T_{(f,0)}\Cone(\Gamma)$, where $(f,0)=((\alpha x+\beta 
y)^3,0)\in\Cone(\Gamma)$.
The tangent action on
$T_{(f,0)}(M_3\oplus\CC)\cong M_3\oplus\CC$ is given by
$dg: (u,v)\longmapsto (u+v g,v)$, see~\eqref{eq:action-tang-transl}.

Let $\gamma=\pi(f,0)\in\Gamma$. We claim that $dg|(T_{(f,0)}(M_3\oplus\CC))$ 
preserves the hyperplane $\tilde{\mathcal{L}}^*_{\gamma}\subset 
T_{(f,0)}(M_3\oplus\CC)\cong M_3\oplus\CC$ if and only if $(g,0)\in 
\tilde{\mathcal{L}}^*_{\gamma}\cap\tilde E$. Indeed, 
$\tilde{\mathcal{L}}^*_{\gamma}$ can be given in 
$T_{(f,0)}(M_3\oplus\CC)=M_3\oplus\CC$ by equation of the form $\langle 
u,w\rangle+\delta v=0$, where $w\in\tilde E=M_3\oplus\{0\}$ is a nonzero vector 
and $\delta\in\CC$. Let $(u,v)\in\tilde{\mathcal{L}}^*_{\gamma}$. Then 
$dg(u,v)=(u+v g,v)\in\tilde{\mathcal{L}}^*_{\gamma}$ if and only if $v\cdot 
\langle g,w\rangle=0$. Since $\tilde{\mathcal{L}}^*_{\gamma}\neq\tilde E$ then 
$v\neq 0$ for a general $(u,v)\in\tilde{\mathcal{L}}^*_{\gamma}$. So, 
$dg(u,v)\in\tilde{\mathcal{L}}^*_{\gamma}$ for any $(u,v)\in 
\tilde{\mathcal{L}}^*_{\gamma}$ if and only if $(g,0)\in\ 
\tilde{\mathcal{L}}^*_{\gamma}\cap\tilde E$. Now the claim follows.

Therefore, $dg$ preserves the subbundle $\tilde{\mathcal{L}}^*\subset 
T(M_3\oplus\CC)|_{\Cone(\Gamma)}$ if and only if $(g,0)\in 
\left(\bigcap_{\gamma\in\Gamma}\tilde{\mathcal{L}}^*_{\gamma}\right)\cap\tilde 
E$. Applying this with $g\neq 0$ yields the first assertion. To show the 
second, 
if suffices to apply the same argument to a pair of non-collinear vectors 
$(g_1,0)$ and $(g_2,0)$ from $\tilde E=M_3\oplus\{0\}\cong\Ru$.
\end{proof}

The following simple lemma should be classically known. For a lack of reference, 
we provide an elementary argument.

\begin{slemma}
\label{lem:quartic-scroll}
Consider the tangent developable quartic surface $\Delta_0\subset\PP^3$ of the 
twisted cubic curve $\Gamma$, cf.~\xref{nota:3-forms}. Then any line $l$ on 
$\Delta_0$ is a tangent line to $\Gamma$.
\end{slemma}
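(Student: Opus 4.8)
The plan is to realize $\Delta_0$ as the image of an explicit parametrization by $\PP^1\times\PP^1$ and then to read off all the lines from a single bidegree computation. Recall from Notation~\xref{nota:3-forms} that $\PP^3=\PP(M_3)$ with $M_3=S^3{\CC^2}^\vee$, that $\Gamma$ is the locus of classes $[L^3]$ of cubes of linear forms, and (Remark~\xref{rem:GL2-action}.\xref{rem:GL2-action-1}) that $\Delta_0$ is the closure of the locus of binary cubics with a repeated root and is smooth away from $\Gamma$. Every cubic with a repeated root factors as $L^2M$ for linear forms $L,M$, so $\Delta_0$ is the image of the morphism
\[
\nu\colon \PP^1\times\PP^1\longrightarrow \PP^3=\PP(M_3),\qquad ([L],[M])\longmapsto [L^2M].
\]
First I would record that the tangent line to $\Gamma$ at a point $[L_0^3]$ equals the fiber $T_{L_0}:=\nu(\{[L_0]\}\times\PP^1)=\{[L_0^2M]\mid M\ \text{linear}\}$: differentiating a local parametrization $t\mapsto[L_t^3]$ gives tangent direction $L_t^2L_t'$, and since $L_0,L_0'$ span all linear forms, the tangent line is $\PP\bigl(L_0^2\cdot{\CC^2}^\vee\bigr)=T_{L_0}$. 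Thus the claim to be proved is that \emph{every} line on $\Delta_0$ is one of these fibers.

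Next I would establish two elementary facts about $\nu$. Writing $L=a_0x+a_1y$ and $M=b_0x+b_1y$, the four coefficients of $L^2M$ are forms of bidegree $(2,1)$ in $([a_0:a_1],[b_0:b_1])$ with no common zero on $\PP^1\times\PP^1$; hence $\nu$ is a morphism and $\nu^*\OOO_{\PP^3}(1)\cong\OOO_{\PP^1\times\PP^1}(2,1)$. Moreover $\nu$ is finite and birational onto $\Delta_0$: by unique factorization of binary forms, if $[L^2M]=[L'^2M']$ with $L\not\propto M$ (that is, off $\Gamma$), then the multiset of roots forces $L\propto L'$ and $M\propto M'$, so $\nu$ is injective over $\Delta_0\setminus\Gamma$. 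As $\Delta_0$ is smooth there, Zariski's main theorem gives that $\nu$ restricts to an isomorphism over $\Delta_0\setminus\Gamma$.

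Finally, given a line $l\subset\Delta_0$, I would lift it to $\PP^1\times\PP^1$. Since the twisted cubic $\Gamma$ contains no line, $l\not\subset\Gamma$, so $l$ meets the open set $\Delta_0\setminus\Gamma$ densely and the closure $C$ of $\nu^{-1}(l\setminus\Gamma)$ is an irreducible curve with $\nu|_C\colon C\to l$ birational. Setting $d_1=C\cdot(\{pt\}\times\PP^1)$ and $d_2=C\cdot(\PP^1\times\{pt\})$, the projection formula gives
\[
1=\deg l=C\cdot\nu^*\OOO_{\PP^3}(1)=C\cdot\OOO(2,1)=2d_1+d_2,
\]
whence $d_1=0$ and $d_2=1$. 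Then $C$ lies in a single fiber $\{[L_0]\}\times\PP^1$ of the first projection and, having $d_2=1$, is that entire fiber. Consequently $l=\nu(C)=T_{L_0}$ is the tangent line to $\Gamma$ at $[L_0^3]$, as claimed. The only point requiring genuine care is the birational identification of $\Delta_0$ with $\PP^1\times\PP^1$ together with the computation $\nu^*\OOO(1)=\OOO(2,1)$; once these are in place, the numerical constraint $2d_1+d_2=1$ does all the work.
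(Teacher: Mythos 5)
Your proof is correct, but it takes a genuinely different route from the paper's. The paper argues by contradiction in three lines: if a line $l\subset\Delta_0$ were not tangent to $\Gamma$, projecting $\PP^3\dasharrow\PP^2$ from a general point of $l$ would produce a rational plane cubic $\pi(\Gamma)$ whose tangent lines at smooth points all pass through the single point $\pi(l)$ (every tangent line of $\Gamma$ meets $l$, since the tangent lines sweep $\Delta_0$ and the induced map $l\dashrightarrow\Gamma$ is nonconstant), and such concurrent tangency is impossible in characteristic zero by duality --- no strange curves. Your argument instead classifies \emph{all} lines on $\Delta_0$ directly, via the parametrization $\nu\colon\PP^1\times\PP^1\to\Delta_0$, $([L],[M])\mapsto[L^2M]$, the computation $\nu^*\OOO_{\PP^3}(1)\cong\OOO_{\PP^1\times\PP^1}(2,1)$, and the resulting constraint $2d_1+d_2=1$, which forces the lift of $l$ to be a fiber $\{[L_0]\}\times\PP^1$, i.e.\ $l=\PP\bigl(L_0^2\cdot{\CC^2}^\vee\bigr)$, the tangent line at $[L_0^3]$. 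Both proofs are sound; the paper's is shorter but leans on the classical duality fact, while yours is self-contained and constructive: it exhibits the normalization of $\Delta_0$, identifies the tangent lines with the rulings of one projection, and reproves $\deg\Delta_0=4$ as a byproduct of $(2f_1+f_2)^2=4$. One point you handled that is genuinely load-bearing: the birationality of $\nu|_C\colon C\to l$. Without it, the projection formula only gives $2d_1+d_2=\deg(\nu|_C)$, and the case $(d_1,d_2)=(1,0)$ with $\deg(\nu|_C)=2$ would not be excluded a priori (the curves $\PP^1\times\{[M_0]\}$ do map to conics $\{[L^2M_0]\}$, not lines, but one needs the degree bookkeeping to know this); your appeal to injectivity of $\nu$ off the diagonal, smoothness of $\Delta_0\setminus\Gamma$, and Zariski's main theorem closes this cleanly.
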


\begin{proof}
Assuming the contrary, consider the projection $\pi:\PP^3\dasharrow\PP^2$ 
with center at a general point of $l$. The image $\pi(\Gamma)$ is a rational 
plane cubic such that the tangent lines to $\pi(\Gamma)$ at smooth points pass 
all through the same point $\pi(l)$. However, this cannot happen in 
characteristic zero by the duality argument.
\end{proof}

\begin{scorollary}
\label{cor:unipotent-actions}
Let $F\subset R$ be a rational normal quintic scrolls such that
$J=F\cap\Xi$ is a smooth conic.
Then $\dim(\Ru\cap\Aut^0(W,F))\le 1$.
\end{scorollary}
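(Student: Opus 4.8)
The plan is to translate the statement, by means of Lemma~\xref{lem:unipotent-actions}, into an elementary incidence fact about the twisted cubic $\Gamma\subset\PP^3=\PP(M_3)$, and then to exclude the bad configuration using Lemma~\xref{lem:quartic-scroll}. By Lemma~\xref{lem:unipotent-actions} the inequality $\dim(\Ru\cap\Aut^0(W,F))\le 1$ is equivalent to saying that the planes $H_\gamma=\PP(\tilde{\mathcal L}^*_\gamma\cap\tilde E)$, $\gamma\in\Gamma$, do not vary in a pencil; equivalently, that there is no line $\ell\subset\PP^3$ contained in every $H_\gamma$. So it suffices to prove the latter.

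The key preliminary step is to recognise the geometry of the planes $H_\gamma$. By the construction in~\xref{construction}, the subbundle $\tilde{\mathcal L}^*$ contains the tangent bundle $T(\Cone(\Gamma))$, and $\Cone(\Gamma)\subset\tilde E=M_3\oplus\{0\}$. Hence for a point $(f,0)\in\Cone(\Gamma)$ lying over $\gamma\in\Gamma$ one has $T_{(f,0)}\Cone(\Gamma)\subset\tilde{\mathcal L}^*_\gamma\cap\tilde E$, and passing to projectivizations shows that the embedded tangent line $T_\gamma\Gamma$ to $\Gamma$ at $\gamma$ is contained in the plane $H_\gamma$. Thus every $H_\gamma$ contains the corresponding tangent line of $\Gamma$; this identification is the crux of the argument, since it converts the abstract condition of Lemma~\xref{lem:unipotent-actions} into a statement about transversals to the tangent lines of $\Gamma$.

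I would then argue by contradiction. Suppose all the $H_\gamma$ contain a common line $\ell$. For each $\gamma$ the two lines $\ell$ and $T_\gamma\Gamma$ lie in the plane $H_\gamma\cong\PP^2$, hence meet; so $\ell$ meets every tangent line of $\Gamma$. Distinguish two cases according to the tangent developable quartic $\Delta_0=\bigcup_{\gamma}T_\gamma\Gamma$. If $\ell\not\subset\Delta_0$, then $\ell\cap\Delta_0$ is a finite set (of at most $\deg\Delta_0=4$ points); since distinct tangent lines of a twisted cubic are mutually skew, every point of $\PP^3$ lies on at most one tangent line, so $\gamma\mapsto\ell\cap T_\gamma\Gamma$ is an injection of the infinite set $\Gamma$ into the finite set $\ell\cap\Delta_0$, which is impossible. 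If $\ell\subset\Delta_0$, then by Lemma~\xref{lem:quartic-scroll} the line $\ell$ is itself a tangent line, say $\ell=T_{\gamma_0}\Gamma$; but then $\ell$ is skew to $T_\gamma\Gamma$ for all $\gamma\ne\gamma_0$ and fails to meet them, a contradiction. Hence no common line $\ell$ exists, the planes $H_\gamma$ do not form a pencil, and $\dim(\Ru\cap\Aut^0(W,F))\le 1$. The only classical inputs I would verify carefully are that $\deg\Delta_0=4$ and that distinct tangent lines of a twisted cubic are skew (equivalently, that each point of $\Delta_0$ lies on a single tangent line), both of which follow at once from the parametrization $t\mapsto(1:t:t^2:t^3)$.
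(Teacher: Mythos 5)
Your proof is correct and follows essentially the same route as the paper: reduce via Lemma~\xref{lem:unipotent-actions} to excluding a common line in the planes $H_\gamma$, note that each $H_\gamma$ contains the tangent line to $\Gamma$ at $\gamma$, and derive the contradiction from the skewness of the tangent lines together with Lemma~\xref{lem:quartic-scroll}. You merely make explicit two steps the paper compresses (that a transversal to infinitely many pairwise skew tangent lines must lie on $\Delta_0$, and that it cannot itself be a tangent line), which is a fair expansion rather than a different argument.
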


\begin{proof}\
Assuming the contrary, by Lemma~\xref{lem:unipotent-actions} the planes 
$H_{\gamma}$, $\gamma\in\Gamma$, contain a common line $l$. Besides, each plane 
$H_{\gamma}$ contains the tangent line $l_{\gamma}$ to $\Gamma$ at the point 
$\gamma\in\Gamma$, see~\xref{construction}. Hence $l$ meets each $l_{\gamma}$. 
Since distinct tangent lines $l_{\gamma}$ are disjoint, $l\subset\Delta_0$ is 
different from any $l_{\gamma}$. Now Lemma~\xref{lem:quartic-scroll} gives a 
contradiction.
\end{proof}

\section{$\G$-construction}
\label{appendix}
In this section we exploit the description of the Fano-Mukai fourfolds of genus 
$10$ based on a beautiful construction of S.~Mukai (\cite{Mukai-1988}), see 
Theorem~\xref{thm:Mukai}. Using a lemma due to Kapustka and Ranestad 
\cite[Lem.~1]{KapustkaRanestad2013} we find the stabilizers of elements in the 
Lie algebra of the exceptional group $\G$, and interpret these in terms of the 
automorphism groups of Fano-Mukai fourfolds $V_{18}$. Let us start by recalling 
the following result.

\begin{theorem}[{\cite{Mukai-1989}}]
\label{thm:Mukai}
Let
$\G$ be the simple algebraic group of exceptional type $\G$.
Consider the adjoint variety $\Omega=\G/P\subset\PP(\mathfrak{g}_2)=\PP^{13}$, 
where $P\subset\G$ is a parabolic subgroup of dimension $9$
corresponding to a long root, and $\mathfrak{g}_2$ is the Lie algebra of $\G$.
Then any Fano-Mukai fourfold of genus $10$ is isomorphic to a hyperplane
section
of the homogeneous fivefold $\Omega$.
\end{theorem}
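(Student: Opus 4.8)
The plan is to follow Mukai's vector-bundle method: realize the fourfold inside $\Omega=\G/P$ by producing a canonical, globally generated bundle whose sections carry precisely the invariant structure that defines the exceptional group $\G$. Since moduli of sheaves are most transparent on K3 surfaces, I would first cut down in dimension. Taking a general codimension-two linear section $S=V\cap\PP^{10}$ of a Fano-Mukai fourfold $V=V_{18}\subset\PP^{12}$ yields, by adjunction and the Lefschetz hyperplane theorem, a smooth K3 surface with a primitive polarization $L$ of genus $10$, so that $L^2=18$ and $h^0(S,L)=11$. The problem then splits into two parts: realizing genus $10$ polarized K3 surfaces as linear sections of $\Omega$, and propagating that realization back up from $S$ to $V$.

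The heart of the matter is a distinguished \emph{rigid} vector bundle $E$ on $S$. One selects a Mukai vector $v=v(E)=\bigl(\rk E,\,c_1(E),\,\chi(E)-\rk E\bigr)$ with $\langle v,v\rangle=-2$, so that the moduli space of $L$-stable sheaves with Mukai vector $v$ is a single reduced point; this produces a canonical, globally generated bundle $E$. The numerology $L^2=18$ is exactly what brings in the seven-dimensional fundamental representation of $\G$: the bundle $E$ gives rise to a seven-dimensional vector space $U$ equipped with a generic alternating $3$-form $\omega\in\wedge^3 U^\vee$, whose stabilizer in $\GL(U)\cong\GL_7(\CC)$ is precisely the exceptional group $\G$. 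This is the mechanism that distinguishes genus $10$ from the neighbouring cases, where one finds instead $\operatorname{Spin}_{10}(\CC)$, $\operatorname{Sp}_6(\CC)$, or special linear groups.

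With $E$ in hand, its global sections define a classifying morphism from $S$ into an auxiliary Grassmannian, and $\G$-invariance forces the image into the closed $\G$-orbit $\Omega=\G/P\subset\PP(\mathfrak{g}_2)=\PP^{13}$. Since $\dim\Omega=5$ and $S$ is a surface of the correct degree $18$, a degree count shows that $S$ is cut out on $\Omega$ by a linear subspace of codimension three. Finally I would lift this description one dimension at a time: because $E$ is rigid, hence unobstructed, it deforms uniquely along the family interpolating between $S$ and its ambient fourfold $V$, and the corresponding bundle on $V$ again realizes $V$ as a hyperplane section $\Omega\cap\PP^{12}$.

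The principal obstacle is the construction of $E$ together with the identification of its structure group with $\G$. This demands the full strength of Mukai's theory of moduli of sheaves on K3 surfaces: one must verify that a stable bundle with $\langle v,v\rangle=-2$ exists, is globally generated, and that the induced trilinear form on the seven-dimensional space attached to its sections is \emph{generic}, so that its stabilizer is exactly $\G$ and not some larger or degenerate subgroup. By contrast, once the K3 case is settled, the passage back to the fourfold is comparatively formal, resting on the rigidity and unobstructedness of $E$ together with the Lefschetz-type comparison between $S$ and $V$.
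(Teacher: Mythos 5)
The paper offers no proof of this statement to compare against: Theorem~\ref{thm:Mukai} is imported verbatim from Mukai's work (\cite{Mukai-1989}, with details in \cite{Mukai-1988}), and the authors use it as a black box. Your sketch is, in substance, a faithful reconstruction of Mukai's actual argument for the coindex-$3$, genus-$10$ case: the codimension-two K3 section $S=V\cap\PP^{10}$ with $L^2=18$, $h^0(L)=11$; the rigid stable bundle singled out by a Mukai vector of square $-2$ --- concretely $v=(2,L,5)$, of rank $2$ with $\det E=L$ and $h^0(E)=7$, which is where the seven-dimensional space $U$ comes from; the canonically induced alternating $3$-form on $U$ whose genericity forces the stabilizer to have identity component exactly $\G$ (the full stabilizer in $\GL_7(\CC)$ is $\G\times\mumu_3$, which is harmless); and the classifying map $S\to\Gr(2,7)$ landing in the $\omega$-isotropic locus, which for generic $\omega$ is precisely the adjoint fivefold $\Omega$, with $S$ a codimension-three linear section by degree reasons. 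You also correctly locate the hard analytic core in the existence, global generation, and genericity statements for $E$.

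One step is mis-stated, though the standard argument repairs it: there is no ``family interpolating between $S$ and its ambient fourfold $V$'' along which $E$ could deform --- $S$ is a subvariety of $V$, not a deformation of it, so deformation theory of $E$ on $S$ by itself gives nothing on $V$. Mukai's actual mechanism is an \emph{extension} argument run one dimension at a time through the intermediate genus-$10$ threefold section $V'=V\cap\PP^{11}$: one shows, via Kodaira-type vanishing on the ample divisor inclusions $S\subset V'\subset V$, that the bundle extends uniquely across each inclusion, that $h^0$ and global generation are preserved, and only then that the resulting map embeds $V$ into $\Omega$ as a hyperplane section. Rigidity of $E$ on $S$ enters as input to uniqueness of the extension, not as an unobstructedness statement propagating through a family; as phrased, your final paragraph describes a mechanism that does not exist, even though the conclusion it is meant to deliver is exactly what the extension argument proves.
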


\begin{notation}
We let $\mathfrak{g}_2$ be the Lie algebra of $\G$,
$\mathfrak{h}$ be a fixed Cartan subalgebra of $\mathfrak{g}_2$, and 
$\Delta\subset\mathfrak{h}^\vee$ be the corresponding root system.
Choose a base of simple roots $\boldsymbol\alpha_1,\,\boldsymbol\alpha_2$ of 
$\Delta$ satisfying (\cite{Bourbaki2002})
\begin{equation*}
(\boldsymbol\alpha_1,\boldsymbol\alpha_1)=2,\quad
(\boldsymbol\alpha_2,\boldsymbol\alpha_2)=6,\quad
(\boldsymbol\alpha_1,\boldsymbol\alpha_2)=-3.
\end{equation*}
The Dynkin diagram $\G$ looks like
\begin{equation}
\label{Dynkin-G2}
\begin{tikzpicture}[scale=1]
\node[label=below:$\boldsymbol\alpha_1$] at (0,-0.20) {};
\node[label=below:$\boldsymbol\alpha_2$] at (4,-0.20) {};

\draw[thick,fill=white]
(0,0) circle [radius=.2]
(4,0) circle [radius=.2];

\draw [thick]
(0,-.2) --++(4,0)
(0.2,0) --++(3.6,0)
(0,+.2) --++(4,0);

\draw[thick]
(2,0) --++(-60:.4)
(2,0) --++(60:.4);
\end{tikzpicture}
\end{equation}
\end{notation}

\begin{mdefinition}
Recall that $\G$ has dimension $14$, rank $2$, and its center is trivial.
It can be realized as the automorphism group of the octonion algebra 
$\OO$ over $\CC$. Let $\OO_0\subset\OO$ be the hyperplane of 
pure octonions. Then the adjoint variety $\Omega$ can be realized as the 
subvariety in the Grassmannian 
$\Gr(2,\OO_0)$ of the isotropic two-dimensional subspaces $\Lambda\subset\OO_0$ 
with respect to the multiplication in $\OO$. The latter means 
that the multiplication restricts as zero to such a subspace.

In the presentation $\Omega=\G/P$ (cf.~Theorem~\xref{thm:Mukai}) one can choose 
for $P$
the parabolic subgroup of $\G$ with Lie algebra
\begin{equation*}
\mathfrak{p}=\mathfrak{h}
\oplus\left(\bigoplus_{\boldsymbol\alpha\in\Delta^+}\mathfrak{g}_{
\boldsymbol\alpha}\right)
\oplus\mathfrak{g}_{-\boldsymbol\alpha_1}=\mathfrak{b}\oplus\mathfrak{g}_{
-\boldsymbol\alpha_1},
\end{equation*}
where $\mathfrak{b}$ stands for a Borel subalgebra of $\mathfrak{g}_2$. 
A reductive Levi subgroup of $P$ is, e.g., the $\GL_2(\CC)$-subgroup with Lie 
algebra 
$\mathfrak{h}\oplus\mathfrak{g}_{\boldsymbol\alpha_1}\oplus\mathfrak{g}_{
-\boldsymbol\alpha_1}$. Respectively, the $\SL_2(\CC)$-subgroup with Lie algebra
$[\mathfrak{g}_{\boldsymbol\alpha_1},
\mathfrak{g}_{-\boldsymbol\alpha_1}]\oplus\mathfrak{g}_{\boldsymbol\alpha_1}
\oplus\mathfrak{g}_{-\boldsymbol\alpha_1}$ is a semisimple
Levi subgroup of $P$
(cf.~\cite[\S~1]{Mukai-1988}, \cite[\S~1]{Hwang-Mok-2002}, and 
also~\cite[\S~2.3]{LandsbergManivel2003}).

\begin{mdefinition}
\label{sit:Omega}
By~\cite{Mukai-1988}, $\Omega$ is a Fano-Mukai fivefold of Fano index $3$ with 
$\rk\Pic(\Omega)=1$. The linear system $\left|-\frac 13 K_{\Omega}\right|$ 
defines an embedding $\Omega\hookrightarrow\PP^{13}$ onto a variety of degree 
$\deg\Omega=\left(-\frac 13K_{\Omega}\right)^5=18$. By the adjunction formula 
and the Lefschetz hyperplane section theorem, any smooth hyperplane section 
$V=\Omega\cap\PP^{12}\subset\PP^{13}$ is a Fano-Mukai fourfold of genus $10$. 
According to Theorem~\xref{thm:Mukai} any such variety occurs to be a 
hyperplane 
section of $\Omega\subset\PP^{13}$. 
\end{mdefinition}

By definition, the adjoint variety $\Omega$ is
the orbit of the highest weight vector in the projectivized adjoint
representation of $\G$. This is the projectivized minimal (nonzero) 
nilpotent orbit of the $\Ad(\G)$-action on $\mathfrak{g}_2$ (\cite[Thm.\ 4.3.3]{Collingwood-McGovern1993}).
The adjoint variety $\Omega$
contains the points $\PP(\mathfrak{g}_{\boldsymbol\alpha})$, where $\boldsymbol\alpha$ 
runs over the set $\Delta_{\ol}\subset\Delta$ of long roots 
(\cite[\S~8.5]{Tevelev2005}).
The dual variety $D_{\ol}:=\Omega^*\subset
\PP(\mathfrak{g}_2)^\vee$ of $\Omega$, that is, the locus of hyperplanes in 
$\PP(\mathfrak{g}_2)$
which define singular hyperplane sections of $\Omega$, is $\Ad(\G)$-invariant. 
In fact, $D_{\ol}$
is an irreducible hypersurface in $\PP(\mathfrak{g}_2^\vee)$ of degree 6. The 
latter follows, e.g., from the classification of defective varieties of small 
dimension (see
\cite[\S~7.3.3]{Tevelev2005}), or, alternatively, by 
comparing~\cite[Thm.~7.47]{Tevelev2005} with 
Proposition~\xref{proposition-Omega}\xref{proposition-Omega-4} below.

We let $\delta_{\ol}$ be a homogeneous polynomial of degree 6 defining 
$D_{\ol}$. There exists a second $\Ad(\G)$-invariant sextic hypersurface 
$D_{\s}$ in $\PP(\mathfrak{g}_2^\vee)$ given by a homogeneous polynomial 
$\delta_{\s}$ of degree 6, where $\delta_{\ol}$ and $\delta_{\s}$ are elements 
of 
the graded subalgebra $\CC[\mathfrak{g}_2]^{\Ad(\G)}$ of $\Ad(\G)$-invariant 
functions on $\mathfrak{g}_2$, see~\cite[\S~8.5]{Tevelev2005}. Indeed, let 
$\CC[\mathfrak{h}]^{\W}$ be the graded subalgebra of ${\W}$-invariant functions 
on $\mathfrak{h}$, where ${\W}$ is the Weyl group of $\G$. It is known (see, e.g., 
\cite[Ch. VI, \S~12, Table IX, Summary 32-33]{Bourbaki2002}) that 
$\CC[\mathfrak{h}]^{\W}=\CC[\operatorname f_2,\operatorname f_6]$, where $\deg 
f_2=2$ and $\deg f_6=6$. In particular, the graded piece 
$(\CC[\mathfrak{h}]^{\W})_6$ of dimension $2$ is spanned by $f_2^3$ and $f_6$. 
By 
the Chevalley restriction theorem (\cite[Thm.~3.1.38]{ChrissGinzburg-1997}) the 
restriction map gives an isomorphism of graded algebras
\begin{equation}
\label{eq-invariants}
\CC[\mathfrak{g}_2]^{\Ad(\G)}\cong\CC[\mathfrak{h}]^{\W}.
\end{equation}
Hence
$(\CC[\mathfrak{g}_2]^{\Ad(\G)})_6\cong\langle f_2^3,f_6\rangle$ is spanned by 
a 
pair $(\delta_{\ol},\,\delta_{\s})$ of irreducible invariants chosen so that 
(\cite[Thm.~8.25]{Tevelev2005})
\begin{equation}
\label{equation-discriminant}
\delta_{\ol}|_{\mathfrak{h}}=\prod_{\boldsymbol\alpha\in 
\Delta_{\ol}}\boldsymbol\alpha\quad\mbox{and}\quad 
\delta_{\s}|_{\mathfrak{h}}=\prod_{\boldsymbol\beta\in 
\Delta_{\s}}\boldsymbol\beta,
\end{equation}
where $\Delta_{\s}$ stands for the set of short roots.
\end{mdefinition}

\begin{mdefinition}
\label{sit:adjoint-representation}
Let $G$ be a simple algebraic group of rank $r$ with Lie algebra $\Lie(G)$. For 
an element $g\in\Lie(G)$ we let $\Stab_{G}(g)$ stand for the stabilizer of 
$g$ in $G$ under the adjoint representation. The Lie algebra of $\Stab_{G}(g)$ 
is the centralizer of $g$ in $\Lie(G)$. The element $g$ is called 
\emph{regular} 
if $\dim\Stab_{G}(g)=r$ and \emph{singular} otherwise. We have the following 
facts.
\end{mdefinition}

\begin{sproposition}
\label{prop:regular-centralizers}
For an element $g\in\Lie(G)$ the following hold.
\begin{enumerate}
\renewcommand\labelenumi{\rm (\alph{enumi})}
\renewcommand\theenumi{\rm (\alph{enumi})}
\item
\label{prop:regular-centralizers-a} 
\textup{(}\cite[1.3]{SpringerSteinberg1970}\textup{)} The stabilizer 
$\Stab_{G}(g)$ contains an abelian subgroup of dimension $r=\rk G$.

\item
\label{prop:regular-centralizers-b} 
\textup{(}\cite[Thms.~A and $\alpha$]{Kurtzke1983}; cf.~\cite[1.4, 
1.16-1.18]{SpringerSteinberg1970}\textup{)} $g$ is regular if and only if
the stabilizer $\Stab_{G}(g)$ is abelian.

\item
\label{prop:regular-centralizers-c} 
\textup{(}\cite[Cor.~3.4]{Steinberg1965}\textup{)} 
If $g$ is singular, then $\dim\Stab_{G}(g)\ge r+2$.

\item
\label{prop:regular-centralizers-d} 
\textup{(}\cite[1.7, 3.9]{SpringerSteinberg1970}\textup{)} 
If $g$ is semisimple, then $\Stab_{G}(g)$ is a reductive group. A semisimple $g$ 
is regular if and only if $\Stab_{G}(g)^0$ is a maximal torus of $G$.
\end{enumerate}
\end{sproposition}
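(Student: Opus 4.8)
All four assertions are classical facts about the adjoint action of a reductive group, and the plan is to organize the argument around the Jordan decomposition $g=g_{\s}+g_{\n}$ of $g$ into commuting semisimple and nilpotent parts, so that $\Stab_{G}(g)=\Stab_{G}(g_{\s})\cap\Stab_{G}(g_{\n})$. I would first settle the semisimple case. If $g=g_{\s}$, then $g$ lies in the Lie algebra $\mathfrak h$ of some maximal torus $T$, and $\Stab_{G}(g)$ is generated by $T$ together with the root subgroups $U_{\boldsymbol\alpha}$ for which $\boldsymbol\alpha(g)=0$. These roots form a closed subsystem $\Delta_g\subseteq\Delta$, symmetric under $\boldsymbol\alpha\mapsto-\boldsymbol\alpha$; hence $\Stab_{G}(g)$ is reductive with maximal torus $T$ and root system $\Delta_g$, which gives the reductivity in~\xref{prop:regular-centralizers-d}. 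Here $\dim\Stab_{G}(g)=\rk G+|\Delta_g|$, so regularity is equivalent to $\Delta_g=\emptyset$, that is, to $\Stab_{G}(g)^0=T$ being a maximal torus; this is the second assertion of~\xref{prop:regular-centralizers-d}.

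For the parity statement~\xref{prop:regular-centralizers-c} I would argue uniformly, using two inputs. First, the minimal value of $\dim\Stab_{G}(\cdot)$ on $\Lie G$ equals $\rk G$ and is attained exactly on the (dense open) regular locus, so $\dim\Stab_{G}(g)\ge\rk G$ for all $g$. Second, every adjoint orbit $G\cdot g\subset\Lie G$ carries the Kirillov--Kostant--Souriau symplectic form (via the Killing form identification $\Lie G\cong(\Lie G)^{\vee}$), hence is even-dimensional; since $\dim G-\rk G$ is even as well, it follows that $\dim\Stab_{G}(g)\equiv\rk G\pmod2$. Thus a singular $g$, for which $\dim\Stab_{G}(g)>\rk G$, must satisfy $\dim\Stab_{G}(g)\ge\rk G+2$, which is~\xref{prop:regular-centralizers-c}. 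To produce the abelian subgroup in~\xref{prop:regular-centralizers-a} I would reduce to the nilpotent case: setting $M=\Stab_{G}(g_{\s})^0$, a connected reductive group of rank $\rk G$ with $g_{\n}\in\Lie M$ nilpotent and $\Stab_{G}(g)^0=\Stab_{M}(g_{\n})^0$, I would choose an $\sll_2$-triple $(e,h,f)$ through $e=g_{\n}$ (Jacobson--Morozov). A maximal torus of the reductive part of $\Stab_{M}(g_{\n})$, together with a suitable abelian subgroup of its unipotent radical, then yields an abelian subgroup of dimension $\rk M=\rk G$, as in \cite[1.3]{SpringerSteinberg1970}.

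The main obstacle is the equivalence~\xref{prop:regular-centralizers-b}. The two extreme cases are transparent: a regular \emph{nilpotent} element has abelian centralizer via its principal $\sll_2$, while a regular \emph{semisimple} element has centralizer with identity component $T$ by the first step. The substance of \cite[Thms.~A and $\alpha$]{Kurtzke1983} (refining \cite[1.4, 1.16--1.18]{SpringerSteinberg1970}) is to establish abelianness of $\Stab_{G}(g)$ for \emph{every} regular $g$, and conversely that abelianness forces regularity; I would invoke this as a black box. I emphasize that the converse direction does \emph{not} follow formally from~\xref{prop:regular-centralizers-a} and~\xref{prop:regular-centralizers-c}, since an abelian group may well have dimension larger than $\rk G$; one genuinely needs Kurtzke's analysis of the unipotent part of the centralizer carried out in the reduction above. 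Assembling the four parts then completes the proof.
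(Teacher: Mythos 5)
Your proposal is sound, but it follows a genuinely different route from the paper, for the simple reason that the paper does not prove this proposition at all: all four statements are classical facts about the adjoint action, and the paper's ``proof'' consists of the citations attached to each item (Springer--Steinberg for (a) and (d), Steinberg for (c), Kurtzke for (b)). You instead reconstruct arguments. For (d), the description of the centralizer of a semisimple element via the root subsystem $\{\boldsymbol\alpha : \boldsymbol\alpha(g)=0\}$ gives both reductivity and the characterization of regularity; for (c), your parity argument (adjoint orbits are symplectic via the Killing-form identification, hence even-dimensional, and $\dim G\equiv\rk G\pmod 2$) is a clean, self-contained substitute for Steinberg's Corollary~3.4; for (b) you invoke Kurtzke as a black box exactly as the paper does, and your remark that abelianness of $\Stab_G(g)$ forcing regularity is \emph{not} a formal consequence of (a) and (c) is correct and worth making. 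Two of your steps are softer than the rest, though both are ultimately covered by the very references the paper cites: the input $\dim\Stab_G(g)\ge\rk G$ for every $g$, which your proof of (c) needs, is itself a nontrivial classical fact --- after your Jordan reduction it follows from $\sll_2$-theory, since for a nilpotent element the centralizer dimension equals the number of irreducible $\sll_2$-summands of the adjoint module, which is at least the dimension of the zero weight space of $h$ and hence at least the rank; and your construction for (a) (``maximal torus of the reductive part plus a suitable abelian subgroup of the unipotent radical'') is left vague, whereas the standard argument behind Springer--Steinberg~1.3 takes a limit of Cartan subalgebras $\mathfrak{z}(y_t)$ along regular semisimple elements $y_t\to g$ in the Grassmannian of $r$-dimensional subspaces of $\Lie(G)$, the limit being abelian and contained in $\mathfrak{z}(g)$. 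As for what each approach buys: yours makes (c) and (d) self-contained, which has expository value; the paper's pure citation is the appropriate choice in its context, since nothing downstream depends on how these facts are proved, only on their statements.
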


\begin{sdefinition}\label{subregular} 
An element $g\in \Lie(G)$ with $\dim \Stab_G(g)=r+2$ is called 
\emph{subregular}. The orbit of $g$ in $\Lie(G)$ is also called 
\emph{subregular}. It should be noted that an orbit $\mathcal{O}$ of the 
adjoint $G$-action on $\Lie(G)$ is conic if and only if $\mathcal{O}$ is a 
nilpotent orbit (\cite[Lem.\ 4.3.1]{Collingwood-McGovern1993}). The image of a 
nilpotent orbit $\mathcal{O}$ in $\PP(\Lie(G))$ is an orbit of the same 
codimension as the one of $\mathcal{O}$ in $\Lie(G)$. In contrast, for a 
non-nilpotent orbit $\mathcal{O}\subset \Lie(G)$, its image in $\PP(\Lie(G))$ 
is an orbit of the same dimension as the one of $\mathcal{O}$.
\end{sdefinition}

\begin{notation}
\label{nota:g-notation}
We identify $\mathfrak{g}_2$ and $\mathfrak{g}_2^\vee$ via the Killing form. 
Under this identification, the adjoint representation $\Ad(\G)$ and its dual 
have the same orbits.

For a nonzero element $g\in\mathfrak{g}_2$ we let $[g]$ be the image of $g$ in 
$\PP(\mathfrak{g}_2)$. Let $V^g_{18}=\Omega\cap\PP(g_2^\bot)$ be the section of 
$\Omega$ by the projectivized hyperplane $g^\bot$ orthogonal to $g$ with respect 
to the Killing form.
\end{notation}

Using Proposition~\xref{prop:regular-centralizers} and Lemma~1 
in~\cite{KapustkaRanestad2013} we deduce the following results.

\begin{proposition}[{\cite{KapustkaRanestad2013}}]
\label{prop:orbits}
Consider the pencil $\mathcal{D}$ of
$\Ad(\G)$-invariant sextic hypersurfaces in 
$\PP(\mathfrak{g}_2^\vee)=\PP(\mathfrak{g}_2)$ generated by
$D_{\ol}$ and $D_{\s}$.
Then the following hold.
\begin{enumerate}
\renewcommand\labelenumi{\rm (\alph{enumi})}
\renewcommand\theenumi{\rm (\alph{enumi})}
\item
\label{prop:orbits-a}
For any $D\in\mathcal{D}$ different from $D_{\ol}=\delta_{\ol}^*(0)$ and 
$D_{\s}=\delta_{\s}^*(0)$ the complement $D\setminus D_{\ol}$ is the 
$\Ad(\G)$-orbit of $[g]$, where $g\in\mathfrak{g}_2$ is regular semisimple with 
an abelian group
$\Stab_{\G}(g)$ and with
$\Stab_{\G} (g)^0\cong (\Gm)^2$.

\item
\label{prop:orbits-b} The complement $D_{\s}\setminus D_{\ol}$ is a union of 
exactly two $\Ad(\G)$-orbits, the orbit of $[g_{\s}]$ and the orbit of 
$[g_{\s}+g_{\n}]$, where
$g_{\s},g_{\n}\in\mathfrak{g}_2$ are commuting elements such that
\begin{itemize}
\item
$g_{\s}$ is subregular semisimple, $g_{\n}\neq 0$ is nilpotent, and 
$g_{\s}+g_{\n}$ is regular;

\item
the orbit $\Ad(\G).[g_{\s}+g_{\n}]$ is an open, dense subvariety of 
$D_{\s}\setminus D_{\ol}$;

\item
the orbit $\Ad(\G).[g_{\s}]$ is a closed subvariety of $D_{\s}\setminus D_{\ol}$ 
of codimension $2$;

\item
the stabilizer $\Stab_{\G}(g_{\s}+g_{\n})$ is abelian, and 
$\Stab_{\G}(g_{\s}+g_{\n})^0\cong\Ga\times\Gm$;

\item
the stabilizer $\Stab_{\G}(g_{\s})$ is a non-abelian reductive group of 
dimension $4$ and of rank $2$.
\end{itemize}
\end{enumerate}
\end{proposition}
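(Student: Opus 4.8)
The plan is to realize the pencil $\mathcal{D}$ as the family of fibres of the $\Ad(\G)$-invariant rational map $\Phi=[\delta_{\ol}:\delta_{\s}]\colon\PP(\mathfrak{g}_2)\dashrightarrow\PP^{1}$, which is well defined since $\delta_{\ol}$ and $\delta_{\s}$ are homogeneous of the same degree $6$. Each member $D_{[\lambda:\mu]}=\{\mu\,\delta_{\ol}=\lambda\,\delta_{\s}\}$ is then $\Ad(\G)$-invariant, hence a union of orbits, with $D_{\s}=D_{[1:0]}$ and $D_{\ol}=D_{[0:1]}$. The affine cone over $D_{[\lambda:\mu]}$ is swept out by the fibres of the adjoint quotient $\chi=(\delta_{\ol},\delta_{\s})\colon\mathfrak{g}_2\to\CC^{2}$ over the ray $\{\mu s=\lambda t\}$; because $\delta_{\ol},\delta_{\s}$ scale with weight $6$, all these affine fibres project to the same orbits in $\PP(\mathfrak{g}_2)$, so I reduce the entire statement to a single fibre $\chi^{-1}(s,t)$ with $(s,t)\neq(0,0)$. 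By the Chevalley isomorphism~\eqref{eq-invariants}, $\chi$ factors through $\mathfrak{h}/\W\cong\CC^{2}$, which is bijective on points; hence each fibre contains a \emph{unique} semisimple orbit, namely the $\W$-orbit of an $h\in\mathfrak{h}$ with $\bigl(\prod_{\boldsymbol\alpha\in\Delta_{\ol}}\boldsymbol\alpha,\ \prod_{\boldsymbol\beta\in\Delta_{\s}}\boldsymbol\beta\bigr)(h)=(s,t)$. The remaining points of the fibre have nonzero nilpotent part lying in the centralizer $\mathfrak{z}(h)$; this is the description extracted from Lemma~1 of~\cite{KapustkaRanestad2013}.

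For~\xref{prop:orbits-a} I take $[\lambda:\mu]$ with $\lambda\mu\neq0$. On $D_{[\lambda:\mu]}\setminus D_{\ol}$ both $\delta_{\ol}$ and $\delta_{\s}$ are nonzero, so the semisimple part $h$ lies off every root hyperplane, i.e.\ is regular semisimple; then $\mathfrak{z}(h)=\mathfrak{h}$ contains no nonzero nilpotent, and the fibre is a single orbit. By Proposition~\xref{prop:regular-centralizers}\xref{prop:regular-centralizers-d} the connected stabilizer is a maximal torus $(\Gm)^2$, and by~\xref{prop:regular-centralizers-b} the full stabilizer is abelian. Since a nonzero semisimple orbit is not conic, the remark in Definition~\xref{subregular} shows that projectivization preserves its dimension $14-2=12$; thus $\Ad(\G).[g]$ equals the whole $12$-dimensional locus $D_{[\lambda:\mu]}\setminus D_{\ol}$, proving~\xref{prop:orbits-a}.

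For~\xref{prop:orbits-b}, on $D_{\s}\setminus D_{\ol}$ one has $\delta_{\s}=0\neq\delta_{\ol}$, so $h$ lies on a short-root wall but on no long-root one. In the rank-two system of $\G$ any two distinct root hyperplanes meet only at $0$, so $h$ lies on exactly one short wall; as $\W$ permutes short roots transitively there is a single such semisimple orbit, the subregular orbit $\Ad(\G).[g_{\s}]$. Its centralizer is the minimal Levi $\mathfrak{z}(g_{\s})=\mathfrak{h}\oplus\mathfrak{g}_{\boldsymbol\beta}\oplus\mathfrak{g}_{-\boldsymbol\beta}$, a non-abelian reductive algebra of rank $2$ and dimension $4$ with derived group of type $A_1$; this is the subregular case of Definition~\xref{subregular}, matching Proposition~\xref{prop:regular-centralizers}\xref{prop:regular-centralizers-c},\xref{prop:regular-centralizers-d}. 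Such a group has exactly two nilpotent orbits, $\{0\}$ and the regular one; choosing $g_{\n}$ in the latter leaves the semisimple part, and hence $\chi$, unchanged, and produces $g_{\s}+g_{\n}$ whose centralizer in $\G$ equals the centralizer of $g_{\n}$ in $\mathfrak{z}(g_{\s})$. Computing the latter (the central torus together with the unipotent centralizer of a regular nilpotent in the $A_1$-factor) gives $\Stab_{\G}(g_{\s}+g_{\n})^{0}\cong\Ga\times\Gm$ of dimension $2=\rk\G$; so $g_{\s}+g_{\n}$ is regular, and its stabilizer is abelian by~\xref{prop:regular-centralizers}\xref{prop:regular-centralizers-b}. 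Therefore $D_{\s}\setminus D_{\ol}$ is the union of exactly these two orbits: the closed semisimple orbit $\Ad(\G).[g_{\s}]$ of dimension $10$ (codimension $2$ in the sextic) and the open dense regular orbit $\Ad(\G).[g_{\s}+g_{\n}]$ of dimension $12$.

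The step I expect to be the main obstacle is the precise fibre analysis of $\chi$: one must control, for each value, which nilpotent elements of $\mathfrak{z}(h)$ actually occur and verify that they contribute no orbits beyond those listed. For the generic and the $D_{\s}$-values this reduces to a nilpotent-orbit count (none in $\mathfrak{h}$, exactly two in the rank-one centralizer), but checking that $g_{\s}+g_{\n}$ is genuinely \emph{regular} — that its centralizer drops to dimension exactly $2$ rather than more — is the delicate point, and it is here that Lemma~1 of~\cite{KapustkaRanestad2013} together with Proposition~\xref{prop:regular-centralizers} is used. A secondary bookkeeping issue is to confirm that projectivization neither merges distinct orbits nor alters their codimensions; this is handled uniformly by the homogeneity of $\delta_{\ol},\delta_{\s}$ and by Definition~\xref{subregular}, since all orbits at issue are non-nilpotent and so retain their dimension in $\PP(\mathfrak{g}_2)$.
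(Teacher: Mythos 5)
Your overall strategy --- deriving the orbit decomposition directly from the Jordan decomposition and the $\G$ root system, instead of quoting Lemma~1 of \cite{KapustkaRanestad2013} for the orbit structure as the paper does --- is viable, and your stabilizer computations in part (b) (the rank-one Levi centralizer $\mathfrak{h}\oplus\mathfrak{g}_{\boldsymbol\beta}\oplus\mathfrak{g}_{-\boldsymbol\beta}$, its two nilpotent orbits, and $\Stab_{\G}(g_{\s}+g_{\n})^0\cong\Ga\times\Gm$) agree with what the paper obtains. However, there is a genuine error at the foundation of your fibre analysis: the pair $(\delta_{\ol},\delta_{\s})$ is \emph{not} the adjoint quotient of $\mathfrak{g}_2$, and the induced map $\mathfrak{h}/\W\to\CC^2$ is \emph{not} bijective. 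By~\eqref{eq-invariants} the invariant ring is $\CC[f_2,f_6]$ with $\deg f_2=2$, $\deg f_6=6$; your two sextics only span the degree-$6$ graded piece $\langle f_2^3,f_6\rangle$. Writing $\delta_{\ol}=\alpha f_2^3+\beta f_6$ and $\delta_{\s}=\gamma f_2^3+\delta f_6$, the map $\chi=(\delta_{\ol},\delta_{\s})$ factors through $(f_2,f_6)\mapsto(f_2^3,f_6)$ and is therefore generically $3{:}1$ on $\mathfrak{h}/\W$. Consequently a fibre $\chi^{-1}(s,t)$ contains, generically, \emph{three} semisimple $\Ad(\G)$-orbits rather than one: the claim ``each fibre contains a unique semisimple orbit'' is false, and with it, as written, your assertion that the fibre is a single orbit in case (a), and that there is ``a single such semisimple orbit'' in case (b).

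The conclusions nevertheless survive, because the several semisimple orbits inside one $\chi$-fibre differ from one another by scalar multiplication by sixth roots of unity: if $(f_2,f_6)(h')=(\omega f_2(h),\,f_6(h))$ with $\omega^3=1$, then $h'$ is conjugate to $\zeta h$ for the sixth root of unity $\zeta$ with $\zeta^2=\omega$, since $(f_2,f_6)(\zeta h)=(\zeta^2f_2(h),\,\zeta^6f_6(h))$ and fibres of the genuine adjoint quotient $(f_2,f_6)$ through regular semisimple (resp.\ semisimple) points are single orbits. Hence all of them have the same image in $\PP(\mathfrak{g}_2)$, and your scaling reduction (``all affine fibres project to the same orbits'') still yields that $D\setminus D_{\ol}$ is one projective orbit in (a), and that $D_{\s}\setminus D_{\ol}$ consists of exactly the two orbits $\Ad(\G).[g_{\s}]$ and $\Ad(\G).[g_{\s}+g_{\n}]$ in (b), with the stated stabilizers and codimensions. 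But this repair --- identifying the finitely many semisimple orbits within a fibre up to the $\CC^*$-action before projectivizing --- must be made explicit; as it stands, your argument asserts a false statement about the affine orbit structure and silently relies on projectivization to absorb the discrepancy.
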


\begin{proof}
\xref{prop:orbits-a} 
By~\cite[Lem.~1(2)]{KapustkaRanestad2013}, for $D\neq D_{\s}, D_{\ol}$, the 
complement $D\setminus D_{\ol}$ coincides with the orbit of some 
$[g]\in\PP(\mathfrak{g}_2)$. On the other hand (\cite[\S~8.5]{Tevelev2005}), 
$g\in\mathfrak{g}_2$ is regular semisimple if and only if $[g]\notin D_{\ol}\cup 
D_{\s}$. Now~\xref{prop:orbits-a} follows by 
Proposition~\xref{prop:regular-centralizers}\xref{prop:regular-centralizers-b} 
and~\xref{prop:regular-centralizers-d}.

\xref{prop:orbits-b} By~\cite[Lem.~1(1)]{KapustkaRanestad2013} and its proof,
$D\setminus D_{\ol}$ is a union of 
a semisimple subregular orbit $O_1=\Ad(\G).[g_{\s}]$ and a mixed regular orbit $O_2=\Ad(\G).[g_{\s}+g_{\n}]$, 
where $g_{\n}\neq 0$ is nilpotent and
commutes with $g_{\s}$. 
Thus, $\codim_{\PP(\mathfrak{g}_2)} O_1=3$ and
$\codim_{\PP(\mathfrak{g}_2)} O_2=1$, see \ref{subregular}. In particular, $O_2$ is open and dense in
$D_{\s}\setminus D_{\ol}$, and its complement $O_1$ in
$D_{\s}\setminus D_{\ol}$ is closed of codimension $2$, see~\cite[Rem.~1]{KapustkaRanestad2013}. 
Since $g_{\s}$ is subregular, $\Stab_{\G}(g_{\s})$ is a
non-abelian reductive group of dimension $4$ and of rank $2$ with semisimple 
part $\SL_2(\CC)$, see (\cite{Collingwood-McGovern1993}) and 
Proposition~\xref{prop:regular-centralizers}~\xref{prop:regular-centralizers-d}. 

Since $g_{\s}+g_{\n}$ is regular,
$\Stab_{\G}(g_{\s}+g_{\n})$ is an abelian group of dimension $2$, see 
Proposition~\xref{prop:regular-centralizers}~\xref{prop:regular-centralizers-b}.
The unipotent radical of
$\Stab_{\G}(g_{\s}+g_{\n})^0$ contains the one-parameter unipotent
subgroup $\exp(t g_{\n})$, $t\in\CC$. On the other hand, the semisimple element
$g_{\s}$ centralizes $g_{\s}+g_{\n}$, hence
$\Stab_{\G}(g_{\s}+g_{\n})^0$ is not unipotent. It follows that
$\Stab_{\G}(g_{\s}+g_{\n})^0\cong\Ga\times\Gm$.
\end{proof}

\begin{lemma}
\label{lem:stabilizers-in-Omega}
The stabilizer $\Stab_{\G}(g)$ acts effectively on the hyperplane section 
$V=V^g_{18}$.
\end{lemma}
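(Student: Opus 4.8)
The plan is to prove that the kernel $K$ of the $\Stab_{\G}(g)$-action on $V$ is trivial. First I note that $\Stab_{\G}(g)$ does preserve $V$: if $\Ad(h)g=g$, then $h$ preserves $g^\bot$ because the Killing form $(\,\cdot\,,\cdot\,)$ is $\Ad$-invariant, and $h$ preserves $\Omega$ because $\Omega$ is $\G$-stable; hence $h$ preserves $V=\Omega\cap\PP(g^\bot)$. Since $\G$ has trivial center, the adjoint representation $\Ad\colon\G\to\GL(\mathfrak{g}_2)$ is faithful, so it suffices to show that every $h\in K$ satisfies $\Ad(h)=\id$ on $\mathfrak{g}_2$.

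The key geometric input is that $V$ spans the hyperplane $\PP(g^\bot)\cong\PP^{12}$. Indeed, $\Omega\subset\PP^{13}$ is irreducible, nondegenerate (being the projectivized orbit of a highest weight vector in the irreducible module $\mathfrak{g}_2$), and of dimension $5$. If $V$ were contained in some $\PP^{11}\subsetneq\PP(g^\bot)$, then $V=\Omega\cap\PP(g^\bot)=\Omega\cap\PP^{11}$, forcing this codimension-two linear section of the nondegenerate irreducible fivefold $\Omega$ to have dimension $4$ instead of $3$, a contradiction. Consequently, an $h\in K$ fixes $V$ pointwise, hence fixes $\langle V\rangle=\PP(g^\bot)$ pointwise, so $\Ad(h)|_{g^\bot}=\lambda\cdot\id$ for some $\lambda\in\CC^*$.

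Next I would pin down $\lambda=1$. If $g$ is isotropic, i.e.\ $(g,g)=0$, then $g\in g^\bot$, and comparing $\Ad(h)g=g$ with $\Ad(h)g=\lambda g$ gives $\lambda=1$. If $(g,g)\neq 0$, then $\mathfrak{g}_2=\CC g\oplus g^\bot$ with $\Ad(h)=\id$ on $\CC g$ and $\Ad(h)=\lambda\cdot\id$ on $g^\bot$; invariance of the (now nondegenerate) Killing form on $g^\bot$ forces $\lambda^2=1$, while $\det\Ad(h)=\lambda^{13}=1$ since $\G$ is semisimple and so admits no nontrivial characters, and together these give $\lambda=1$.

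Finally, with $\lambda=1$ the automorphism $\Ad(h)$ fixes the $13$-dimensional subspace $g^\bot$ pointwise, so its fixed locus $\mathfrak{g}_2^{\Ad(h)}$ is a subalgebra of dimension at least $13$. Since every proper subalgebra of $\mathfrak{g}_2$ has dimension at most $9$ (the maximal value, attained by the parabolic subalgebra $\mathfrak{p}$), this forces $\mathfrak{g}_2^{\Ad(h)}=\mathfrak{g}_2$, i.e.\ $\Ad(h)=\id$ and hence $h=e$. The main obstacle I anticipate is the linear-nondegeneracy step, namely controlling $\langle V\rangle$ so as to reduce the problem to the single scalar $\lambda$; once $V$ is known to span $\PP(g^\bot)$, the remaining structure-theoretic facts about $\G$ (triviality of the center, the bound $9$ on the dimension of proper subalgebras, and invariance of the Killing form) close the argument quickly.
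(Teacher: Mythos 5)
Your overall strategy is sound and genuinely different from the paper's. The paper argues in two stages: first it shows the kernel of $\Stab_{\G}(g)\to\Aut(V)$ consists of unipotent elements, using that the adjoint representation of $\G$ is self-dual — so a kernel element $h$ is conjugate to $h^{-1}$, whence $\tr(\Ad(h))=\tr(\Ad(h)^{-1})$, which together with $\det\Ad(h)=1$ forces the eigenvalue on $g^\bot$ to be $1$; then it kills the unipotent part separately, showing that a nilpotent $N$ with $\exp(tN)$ acting trivially on $V$ must annihilate both $g^\bot$ and $\CC g$, hence vanish. You instead treat an arbitrary kernel element $h$ in one pass: $\Ad(h)|_{g^\bot}=\lambda\cdot\id$, you eliminate $\lambda$ by invariance of the Killing form plus $\det\Ad(h)=1$ (replacing the paper's self-duality/trace computation), and you conclude via the structural fact that $\mathfrak{g}_2$ has no proper subalgebra of dimension greater than $9$, so a fixed subalgebra of dimension at least $13$ is all of $\mathfrak{g}_2$. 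This last step is a genuine improvement in care: it treats the isotropic case $(g,g)=0$ uniformly, whereas the paper's closing line ("$N$ vanishes on $g^\bot$ and on $\CC g$, so $N=0$") implicitly uses the decomposition $\mathfrak{g}_2=\CC g\oplus g^\bot$, which is only available when $(g,g)\neq 0$.

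However, your justification of the key geometric input — that $V$ spans $\PP(g^\bot)$ — is invalid as written. You claim that if $V\subset\PP^{11}$, then $V=\Omega\cap\PP^{11}$ would be a codimension-two linear section of dimension $4$ ``instead of $3$, a contradiction.'' But dimension theory bounds such a section only from below: every component has dimension at least $3$, while excess dimension is perfectly possible in general — a $4$-dimensional component is just a divisor of $\Omega$ lying in a $\PP^{11}$, and irreducibility plus nondegeneracy of $\Omega$ do not by themselves rule out degenerate divisors (compare the Veronese surface, whose double-line hyperplane sections have degenerate support). The fact you need is true but requires an extra ingredient. For instance: if $V\subset H'$ for a hyperplane $H'\neq\PP(g^\bot)$, then $V\subseteq\Omega\cap H'$; both $V=\Omega\cap\PP(g^\bot)$ and $\Omega\cap H'$ are members of $|\OOO_\Omega(1)|$ of the same degree $18$, and $V$ (reduced, of pure dimension $4$, in the smooth case at hand) then forces $\Omega\cap\PP(g^\bot)=\Omega\cap H'$ as divisors; the ratio of the two defining linear forms is a rational function on $\Omega$ with trivial divisor, hence constant, which places $\Omega$ in a hyperplane of $\PP^{13}$ and contradicts its nondegeneracy. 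Alternatively, simply invoke the standard linear nondegeneracy of the Mukai embedding $V_{18}\subset\PP^{12}$ — this is exactly what the paper does when it passes without comment from triviality on $V$ to triviality on ``the hyperplane $\langle V\rangle$.'' With this one repair, your argument is complete and correct.
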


\noindent
The proof starts with the following claim.
\begin{sclaim}
\label{claim:reductive}
The kernel of the natural homomorphism $\Stab_{\G}(g)\to\Aut(V)$ is contained in 
the unipotent radical of $\Stab_{\G}(g)$.
\end{sclaim}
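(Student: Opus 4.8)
The plan is to prove that every semisimple element of the kernel $K$ of $\Stab_{\G}(g)\to\Aut(V)$ is trivial; then, since in characteristic zero a closed subgroup all of whose elements are unipotent is automatically connected and unipotent, and $K$ is normal in $\Stab_{\G}(g)$, it will follow that $K$ lies in the unipotent radical. First I recall why the homomorphism exists: because the Killing form is $\Ad(\G)$-invariant, every element of $\Stab_{\G}(g)$ preserves the hyperplane $g^\bot\subset\mathfrak{g}_2$, hence acts on $V=V^g_{18}=\Omega\cap\PP(g^\bot)$, and $K$ consists of those elements that fix $V$ pointwise. As $\Aut(V)$ is an algebraic group and the action is algebraic, this is a homomorphism of algebraic groups; it therefore respects Jordan decompositions, so $K$ contains the semisimple and the unipotent part of each of its elements. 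Consequently $K$ will be unipotent as soon as its only semisimple element is the identity, and this is what I aim to show.

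So let $s\in K$ be semisimple. It fixes $V$ pointwise. The fixed-point locus of $s$ in $\PP(\mathfrak{g}_2)$ is the disjoint union $\bigsqcup_{\mu}\PP(E_\mu)$ of the projectivized eigenspaces $E_\mu$ of $\Ad(s)$ on $\mathfrak{g}_2$. Since $V$ is irreducible, it is contained in a single component $\PP(E_\lambda)$; since $V$ is a nondegenerate fourfold in $\PP(g^\bot)\cong\PP^{12}$ (it is embedded of degree $18$ by the half-anticanonical system, see~\xref{sit:Omega}), its linear span is all of $\PP(g^\bot)$, and hence $g^\bot\subseteq E_\lambda$. Thus $\Ad(s)$ acts on the $13$-dimensional space $g^\bot$ as a single scalar $\lambda$.

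It remains a dimension count inside $\mathfrak{g}_2=\Lie(\G)$, which has dimension $14$ and rank $2$. Choosing a maximal torus containing $s$, the eigenvalue-$1$ space $E_1$ of $\Ad(s)$ is the centralizer $\Lie(\Stab_{\G}(s))$, which is the sum of the corresponding Cartan subalgebra and the root spaces $\mathfrak{g}_\alpha$ with $\alpha(s)=1$. Hence $\dim E_1=2+\#\{\alpha:\alpha(s)=1\}$ is even (roots occur in pairs $\pm\alpha$) and is at least $2$. If $\lambda=1$, then $g^\bot\subseteq E_1$ gives $\dim E_1\ge 13$, whence $\dim E_1=14$ by parity; then $s$ centralizes all of $\mathfrak{g}_2$, so $s$ is central, and $s=1$ as $\G$ has trivial center. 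If $\lambda\neq 1$, then $E_1\cap E_\lambda=0$ while $\dim E_\lambda\ge 13$ and $\dim E_1\ge 2$, forcing $14\ge 15$, a contradiction. Therefore $s=1$ in either case, so $K$ contains no nontrivial semisimple element and the reduction of the first paragraph finishes the argument.

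The main obstacle, and the step I would justify most carefully, is the passage from \emph{fixes $V$ pointwise} to \emph{acts as a scalar on $g^\bot$}: this is exactly where the irreducibility of $V$ (placing it in one eigenspace) and its nondegeneracy in $\PP(g^\bot)$ (forcing that eigenspace to be everything) are both indispensable. Once this is in place the conclusion is forced by the parity of $\dim E_1$, which excludes the odd value $13$ and thereby rules out both $\lambda\neq1$ and a nontrivial $s$ with $\lambda=1$.
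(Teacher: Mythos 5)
Your argument is correct, and it takes a genuinely different route from the paper's. Both proofs share the opening move: since $V$ is irreducible and spans $\PP(g^\bot)$, an element of the kernel, having $V$ inside its fixed locus (a union of projectivized eigenspaces), must act on the $13$-dimensional space $g^\bot$ by a single scalar $\lambda$. From there the paper treats an \emph{arbitrary} kernel element $h$ directly: it uses $h(g)=g$ to pin the remaining eigenvalue to $1$, gets $\lambda^{13}=1$ from $\det\Ad(h)=1$, and then invokes self-duality of the adjoint representation (the unique $14$-dimensional irreducible representation of $\G$) to conclude $\tr(h)=\tr(h^{-1})$, i.e.\ $13\lambda+1=13\lambda^{-1}+1$, forcing $\lambda=1$, so the semisimple part of $h$ is trivial and $h$ is unipotent. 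You instead reduce to semisimple elements via the Jordan decomposition --- legitimate, since the kernel of a homomorphism of algebraic groups is closed under taking Jordan parts, and in characteristic zero a closed normal subgroup consisting of unipotent elements lies in the unipotent radical --- and then kill a semisimple $s$ by a root-theoretic count in $\mathfrak{g}_2$: the fixed space $E_1=\Lie(\Stab_{\G}(s))$ has dimension $2+\#\{\boldsymbol\alpha:\boldsymbol\alpha(s)=1\}$, which is even (roots pair off as $\pm\boldsymbol\alpha$) and at least $2$, and this is incompatible with a $13$-dimensional eigenspace $E_\lambda\supseteq g^\bot$ unless $\lambda=1$ and $\dim E_1=14$, whence $\Ad(s)=\id$ and $s=1$ by triviality of the center of $\G$. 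Your route trades the paper's determinant-and-trace trick (which needs the self-duality of the adjoint representation) for elementary structure theory of $\G$, and notably never uses $h(g)=g$ --- only the invariance of $g^\bot$ --- while the paper's version, in exchange, handles all kernel elements at once without the Jordan reduction. Both arguments are sound.
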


\begin{proof}
The simple group $\G$ acts effectively on $\Omega$. If $h\in\Stab_{\G}(g)$ acts 
trivially on $V$, then it acts trivially on the hyperplane $\langle 
V\rangle\subset\PP(\mathfrak{g}_2)$. Hence the adjoint action of $h$ on 
$\mathfrak{g}_2$ has at most two distinct eigenvalues $\lambda_1$ and 
$\lambda_2$ of multiplicity 13 and 1, respectively. Since $h(g)=g$, one has 
$\lambda_2=1$. The Lie algebra $\mathfrak{g}_2$ being semisimple, one has
$\det h=1=\lambda_1^{13}$. Up to equivalence, the adjoint representation is a 
unique irreducible representation of $\G$ of dimension 14. Hence it is 
equivalent to the coadjoint representation. It follows that $h$ and $h^{-1}$ are 
conjugated, and so, $\tr(h)=\tr(h^{-1})$. Thus, 
$13\lambda_1+1=13\lambda_1^{-1}+1$, which implies $\lambda_1=1$. Then the 
semisimple part of $h$ equals 1, and $h$ is unipotent.
Finally, the kernel of the representation $\Stab_{\G}(g)\to\Aut(V)$ is a normal 
closed subgroup of $\Stab_{\G}(g)$ consisting of unipotent elements.
\end{proof}

The next claim ends the proof of Lemma~\xref{lem:stabilizers-in-Omega}.

\begin{sclaim}
\label{claim: unipotent}
The unipotent radical of $\Stab_{\G}(g)$ acts effectively on $V$.
\end{sclaim}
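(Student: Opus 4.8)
The plan is to reduce immediately to the only case in which the unipotent radical is nontrivial. By Proposition~\xref{prop:orbits}, the stabilizer $\Stab_{\G}(g)$ is reductive---either a rank-two torus, or a reductive group of dimension four with semisimple part $\SL_2(\CC)$---except when $g$ is $\Ad(\G)$-conjugate to a mixed regular element $g_{\s}+g_{\n}$ with $g_{\n}\neq 0$ nilpotent. In the reductive cases $\Ru(\Stab_{\G}(g))=\{1\}$ and the assertion is vacuous, so I would assume $g=g_{\s}+g_{\n}$. Here $\Ru(\Stab_{\G}(g))$ is the one-parameter unipotent subgroup $U=\{\exp(t g_{\n})\mid t\in\CC\}\cong\Ga$, acting on $\mathfrak{g}_2$ via $\exp(t\operatorname{ad}g_{\n})$. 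Since $\Ga$ has no nontrivial proper closed subgroups, the kernel of $U\to\Aut(V)$ is either trivial or all of $U$; thus it suffices to show that $U$ does not act trivially on $V=V^g_{18}=\Omega\cap\PP(g^\perp)$.

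Next I would pass to the infinitesimal level. If $U$ acted trivially on $V$, then the vector field on $\PP(g^\perp)$ generated by $\operatorname{ad}g_{\n}$ would vanish along $V$, which at a point $[x]\in V$ means $\operatorname{ad}(g_{\n})(x)\in\CC x$. As $\operatorname{ad}g_{\n}$ is nilpotent its only eigenvalue is $0$, so in fact $\operatorname{ad}(g_{\n})(x)=0$, and hence $V\subseteq\PP(\ker\operatorname{ad}g_{\n})$. Because $V$ is a hyperplane section of the irreducible, linearly non-degenerate fivefold $\Omega\subset\PP^{13}$ (see~\xref{sit:Omega}), it is linearly non-degenerate in the hyperplane $\PP(g^\perp)$; therefore $g^\perp\subseteq\ker\operatorname{ad}g_{\n}$, and so $\operatorname{rank}(\operatorname{ad}g_{\n})\le\dim\mathfrak{g}_2-\dim g^\perp=1$.

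Finally I would derive a contradiction from the structure of nilpotent orbits: since $\operatorname{rank}(\operatorname{ad}g_{\n})$ equals the dimension of the $\Ad(\G)$-orbit of $g_{\n}$, and a nonzero nilpotent orbit is a positive-dimensional symplectic (Kostant--Kirillov) manifold, this dimension is even and at least $2$. Equivalently, completing $g_{\n}$ to an $\sll_2$-triple $(e,h,f)$ with $e=g_{\n}$ exhibits $h=[e,f]$ and $e=-\frac12[e,h]$ as two independent vectors in the image of $\operatorname{ad}g_{\n}$, whence $\operatorname{rank}(\operatorname{ad}g_{\n})\ge 2$. This contradicts the bound $\operatorname{rank}(\operatorname{ad}g_{\n})\le 1$, proving that $U$ acts effectively. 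The step to get right is the combination of the first reduction with the non-degeneracy argument: it is designed so that one never has to decide whether $g$ is isotropic for the Killing form (which would obstruct a naive ``fixes a spanning set, hence is scalar'' argument), everything being reduced to the clean numerical fact that a nonzero nilpotent element cannot have $\operatorname{ad}$ of rank at most one.
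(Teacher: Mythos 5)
Your argument is correct, and its core coincides with the paper's: suppose a one-parameter unipotent subgroup $\{\exp(tN)\}\subset\Stab_{\G}(g)$ acts trivially on $V$; a unipotent transformation fixing a point $[x]$ must kill the vector $x$ itself (the only eigenvalue is $1$, resp.\ $0$ for $\operatorname{ad}N$), and since $V$ spans $\PP(g^\perp)$ this forces $\operatorname{ad}N$ to vanish on all of $g^\perp$. The genuine difference is in how each proof closes. The paper finishes in one line: $\operatorname{ad}N$ also vanishes on $\CC g$ (because $\exp(tN)\in\Stab_{\G}(g)$), hence $N=0$. That step implicitly uses $\mathfrak{g}_2=g^\perp\oplus\CC g$, which holds only when $(g,g)\neq 0$; if $g$ is isotropic for the Killing form, then $g\in g^\perp$ and one obtains only $\operatorname{rank}(\operatorname{ad}N)\le 1$. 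Such $g$ do occur in the range relevant here: the two isotropic lines of the Killing form on a Cartan subalgebra of $\mathfrak{g}_2$ avoid all root hyperplanes, so they consist of regular semisimple elements with $[g]\notin D_{\ol}\cup D_{\s}$. Your finishing step --- a nonzero nilpotent $N$ has $\operatorname{rank}(\operatorname{ad}N)\ge 2$, by completing to an $\sll_2$-triple (or by evenness and positivity of nilpotent orbit dimensions) --- is exactly what is needed to cover this case, so your proof is in fact more complete than the one in the paper. Conversely, your opening reduction via Proposition~\xref{prop:orbits} to the single case $g=g_{\s}+g_{\n}$ is unnecessary and slightly costly: your second and third paragraphs never use it, and as written it restricts the claim to $[g]\notin D_{\ol}$, whereas the paper's formulation (treat an arbitrary one-parameter unipotent subgroup of $\Stab_{\G}(g)$, with no case analysis) is what lets the claim hold for every $g\neq 0$. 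The cleanest version of your argument drops the first paragraph and runs the rank bound for an arbitrary nilpotent $N$ with $\exp(tN)\in\Stab_{\G}(g)$.
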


\begin{proof}
It suffices to show that any one-parameter unipotent subgroup 
$U=\{\exp(tN)\}_{t\in\CC}\subset\Stab_{\G}(g)$, where $N\in\mathfrak{g}_2$ is 
nilpotent, acts effectively on $V$. Suppose this is not the case. Then $N$ 
vanishes on the affine hyperplane $ g^\bot\subset\mathfrak{g}_2$. Since it also 
vanishes on the line $\CC g$, one has $N=0$.
\end{proof}

\begin{scorollary}
\label{cor:aut-orbits}
Let $V=V^g_{18}$, where $g\in\mathfrak{g}_2\setminus\{0\}$ and $[g]\notin 
D_{\ol}$. Assume 
that $V$ contains an $\Aut^0(V)$-invariant cubic cone $S$. Then the following 
hold.
\begin{enumerate}
\item
\label{cor:aut-orbits-i}
If $g$
is singular semisimple, then $\Aut^0(V)\cong\GL_2(\CC)$;

\item
\label{cor:aut-orbits-ii}
if $g$
is regular
non-semisimple, then $\Aut^0(V)\supset\Ga\times\Gm$;

\item
\label{cor:aut-orbits-iii}
if $g$
is regular
semisimple, then $\Aut^0(V)\supset (\Gm)^2$.

\end{enumerate}
In particular, if $\rk (\Aut^0(V))=1$
then we are in case~\xref{cor:aut-orbits-ii}.
Any two Fano-Mukai fourfolds satisfying~\xref{cor:aut-orbits-i} 
\textup{(}resp.,~\xref{cor:aut-orbits-ii}\textup{)} are isomorphic via an 
automorphisms of $\Omega$ provided by the $\Ad(\G)$-action on $\Omega$.
\end{scorollary}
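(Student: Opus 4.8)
The plan is to combine the effective action of the stabilizer $\Stab_{\G}(g)$ on $V=V^g_{18}$ (Lemma~\ref{lem:stabilizers-in-Omega}) with the structural description of $\Aut^0(V)$ coming from the Sarkisov link attached to the invariant cubic cone. First I would record that, since $S$ is $\Aut^0(V)$-invariant, one has $\Aut^0(V)=\Aut^0(V,S)$; and since $(V,S)$ is a linked pair (Proposition~\ref{prop:reversion}, Definition~\ref{def:linked-pairs}, and the setup of~\ref{nota:diagr-2}), there is a quintic scroll $F\cong\FF_1$ in $W$ with $\Aut^0(V)=\Aut^0(V,S)\cong\Aut^0(W,F)$. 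On the other hand, Lemma~\ref{lem:stabilizers-in-Omega} gives an embedding $\Stab_{\G}(g)^0\hookrightarrow\Aut^0(V)$, which provides a lower bound for $\Aut^0(V)$ in each of the three cases via the computation of $\Stab_{\G}(g)^0$ in Proposition~\ref{prop:orbits}.

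The two containment statements are then immediate. In case~\ref{cor:aut-orbits-ii} the element $g$ is regular non-semisimple, so $[g]$ lies in the orbit $\Ad(\G).[g_{\s}+g_{\n}]$ and $\Stab_{\G}(g)^0\cong\Ga\times\Gm$ by Proposition~\ref{prop:orbits}\ref{prop:orbits-b}; the embedding gives $\Aut^0(V)\supset\Ga\times\Gm$. In case~\ref{cor:aut-orbits-iii} the element $g$ is regular semisimple, whence $\Stab_{\G}(g)^0\cong(\Gm)^2$ by Proposition~\ref{prop:orbits}\ref{prop:orbits-a}, and the embedding gives $\Aut^0(V)\supset(\Gm)^2$.

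Case~\ref{cor:aut-orbits-i} is the crux, as it asserts an isomorphism rather than a mere containment, so an upper bound for $\Aut^0(V)$ is needed. Here $g$ is singular semisimple with $[g]\notin D_{\ol}$, so by Proposition~\ref{prop:orbits} it must lie in the subregular semisimple orbit $\Ad(\G).[g_{\s}]$, and $\Stab_{\G}(g)^0$ is the non-abelian reductive group of dimension $4$ and rank $2$ whose semisimple part is $\SL_2(\CC)$. In particular $\Aut^0(V)$, and hence $\Aut^0(W,F)\cong\Aut^0(V)$, contains a copy of $\SL_2(\CC)$; thus $\Aut(W,F)$ satisfies condition~\ref{lem:j=upsilon-i} of Lemma~\ref{lem:j=upsilon}. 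Invoking that lemma yields $\Aut(W,F)\cong\GL_2(\CC)$ and $\Ru\cap\Aut(W,F)=\{1\}$. As $\GL_2(\CC)$ is connected this forces $\Aut^0(W,F)\cong\GL_2(\CC)$, and therefore $\Aut^0(V)\cong\GL_2(\CC)$. (Alternatively one may argue directly from Corollary~\ref{cor:aut-center}: the presence of $\SL_2(\CC)$ forces the factor $G$ to be of type~\ref{rem:touching-conics}\ref{lem:touching-conics-Gl2}, i.e.\ $G\cong\GL_2(\CC)$, and kills the at most one-dimensional unipotent part.) I expect this passage --- producing the upper bound in case~\ref{cor:aut-orbits-i} --- to be the main obstacle, since the embedding of the stabilizer alone bounds $\Aut^0(V)$ only from below.

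For the final two assertions I would argue as follows. Since $[g]\notin D_{\ol}$ and $g\neq 0$, Proposition~\ref{prop:orbits} leaves exactly the three possibilities~\ref{cor:aut-orbits-i}--\ref{cor:aut-orbits-iii}, and cases~\ref{cor:aut-orbits-i} and~\ref{cor:aut-orbits-iii} produce groups of rank $2$ (namely $\GL_2(\CC)$, and a group containing $(\Gm)^2$); hence $\rk(\Aut^0(V))=1$ rules them out and leaves case~\ref{cor:aut-orbits-ii}. Finally, for the isomorphism via $\Ad(\G)$: two fourfolds both satisfying~\ref{cor:aut-orbits-i} (resp.~\ref{cor:aut-orbits-ii}) correspond to elements $g,g'$ lying in the \emph{single} $\Ad(\G)$-orbit $\Ad(\G).[g_{\s}]$ (resp.\ $\Ad(\G).[g_{\s}+g_{\n}]$) by Proposition~\ref{prop:orbits}\ref{prop:orbits-b}. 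Choosing $h\in\G$ with $\Ad(h)[g]=[g']$ and using the $\Ad(\G)$-invariance of the Killing form, $\Ad(h)$ sends $g^\bot$ to $g'^\bot$, hence $\PP(g^\bot)$ to $\PP(g'^\bot)$; as $\G$ preserves $\Omega$, the induced automorphism of $\Omega$ carries $V^g_{18}=\Omega\cap\PP(g^\bot)$ onto $V^{g'}_{18}$, as required.
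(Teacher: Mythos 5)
Your proposal is correct and follows essentially the same route as the paper: lower bounds in cases \xref{cor:aut-orbits-ii}--\xref{cor:aut-orbits-iii} from the effective $\Stab_{\G}(g)$-action (Lemma~\xref{lem:stabilizers-in-Omega} plus Proposition~\xref{prop:orbits}), the upper bound in case \xref{cor:aut-orbits-i} via the $\SL_2(\CC)$-action transported through the equivariant Sarkisov link to $\Aut^0(W,F)$ and Lemma~\xref{lem:j=upsilon}, and the final assertions from the fact that each of the relevant loci is a single $\Ad(\G)$-orbit. The only differences are cosmetic: you spell out the Killing-form invariance in the last step and offer an alternative finish to case \xref{cor:aut-orbits-i} via Corollary~\xref{cor:aut-center}, while the paper cites Lemma~\xref{cor:unique-GL2} as its alternative.
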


\begin{proof}
Statements~\xref{cor:aut-orbits-ii} and~\xref{cor:aut-orbits-iii} are 
straightforward from 
Propositions~\xref{prop:regular-centralizers}\xref{prop:regular-centralizers-a}
-\xref{prop:regular-centralizers-b} and~\xref{prop:orbits} and 
Lemma~\xref{lem:stabilizers-in-Omega}. By 
Propositions~\xref{prop:regular-centralizers}\xref{prop:regular-centralizers-b}
-\xref{prop:regular-centralizers-c} and~\xref{prop:orbits}\xref{prop:orbits-b} 
and Lemma~\xref{lem:stabilizers-in-Omega}, in case~\xref{cor:aut-orbits-i} 
$\Aut^0(V)$ contains a non-abelian reductive subgroup of dimension $4$ and of 
rank~$2$. Hence $V$ admits a nontrivial $\SL_2(\CC)$-action. Let $(W,F)$ be the 
pair 
linked to $(V,S)$ via an $\Aut^0(V)$-equivariant Sarkisov link 
\eqref{diagram-2}. Due to Lemma~\xref{lem:j=upsilon} one has 
$\Aut^0(V)\cong\Aut^0(W,F)\cong\GL_2(\CC)$. 

By~\xref{cor:aut-orbits-i}--\xref{cor:aut-orbits-iii} the equality $\rk 
(\Aut^0(V))=1$ implies that $g$ is regular non-semisimple. According to 
Proposition~\xref{prop:orbits}\xref{prop:orbits-b} any two such elements 
$g_1,\,g_2\in\mathfrak{g}_2$ belong to the same $\Ad(\G)$-orbit. Hence the 
corresponding smooth hyperplane sections $V_{18}^{g_1}$ and $V_{18}^{g_2}$ of 
$\Omega$ are isomorphic under the $\Ad(\G)$-action on $\Omega$. The same 
argument applies in case~\xref{cor:aut-orbits-i} (alternatively, see 
Lemma~\xref{cor:unique-GL2}).
\end{proof}

\begin{snotation}
\label{nota:aut-orbits}
We let $V_{18}^{\s}$ and $V_{18}^{\aaa}$, respectively denote a unique, up to 
isomorphism, Fano-Mukai fourfold $V^g_{18}$ as in 
Corollary~\xref{cor:aut-orbits}\xref{cor:aut-orbits-i} and~\xref{cor:aut-orbits-ii}, respectively. The existence of 
these fourfolds is guaranteed by 
Proposition~\xref{prop:orbits}\xref{prop:orbits-b}. Thus, 
$\Aut^0(V_{18}^{\s})\cong\GL_2(\CC)$ and 
$\Aut^0(V_{18}^{\aaa})\supset\Ga\times\Gm$.
\end{snotation}

\section{Lines in $V_{18}$}
\label{sec:lines-in-V}

To study the lines in a Fano-Mukai fourfold $V_{18}$,
let us first investigate the Fano variety of lines in
the homogeneous space $\Omega=\G/P$.
\begin{proposition}
\label{proposition-Omega}
Let $\Omega=\G/P$ be as above.
Then the following hold.

\begin{enumerate}
\renewcommand\labelenumi{\rm (\alph{enumi})}
\renewcommand\theenumi{\rm (\alph{enumi})}
\item
\label{proposition-Omega-0}
$\Omega$ contains a line.

\item
\label{proposition-Omega-1}
For any line $l\subset\Omega$ one has
\begin{equation}
\label{equation-normal-bundle-Omega}
\NNN_{l/\Omega}\cong
\OOO_{\PP^1}\oplus\OOO_{\PP^1}\oplus\OOO_{\PP^1}\oplus\OOO_{\PP^1}(1).
\end{equation}

\item
\label{proposition-Omega-2}
The Hilbert scheme $\Sigma(\Omega)$ of lines on $\Omega$ is a non-singular
variety of
dimension $5$.

\item
\label{proposition-Omega-3}
Consider the universal family of lines on $\Omega$
\begin{equation}
\label{proposition-Omega-3-diagram}
\vcenter{
\xymatrix@R=10pt{
&\LLL(\Omega)\ar[dl]_{}\ar[dr]^{}&
\\
\Sigma(\Omega)&&\Omega
}}
\end{equation}
where $\LLL(\Omega)\to\Sigma(\Omega)$ is a $\PP^1$-bundle. Then
any fiber of $\LLL(\Omega)\to\Omega$ is one-dimensional.

\item
\label{proposition-Omega-4}
For any point $p\in\Omega$ the union $L\Omega_p$ of the lines in $\Omega$
passing
through $p$ is a cone over a rational twisted cubic curve.
\end{enumerate}
\end{proposition}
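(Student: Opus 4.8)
The plan is to use the homogeneity of $\Omega=\G/P$ together with the index-$3$ condition $-K_\Omega\sim 3H$ and the contact grading of $\mathfrak{g}_2$; here $H$ is the hyperplane class, so $H\cdot l=1$ and $-K_\Omega\cdot l=3$ for every line $l\subset\Omega$. First I would settle \xref{proposition-Omega-0}: since $\Omega$ is a rational homogeneous space of Picard number one it is covered by its minimal rational curves, and under the embedding $\Omega\hookrightarrow\PP(\mathfrak{g}_2)$ these are lines (alternatively, one exhibits a line explicitly as the image of a root $\SL_2\hookrightarrow\G$ attached to a long root). For \xref{proposition-Omega-1}, fix any line $l$. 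Homogeneity makes the action map $\mathfrak{g}_2\otimes\OOO_\Omega\to T_\Omega$ surjective, so $T_\Omega$, and hence $T_\Omega|_l$, is globally generated; from the sequence $0\to T_l\to T_\Omega|_l\to\NNN_{l/\Omega}\to 0$ the normal bundle is a globally generated quotient, so on $l\cong\PP^1$ it splits as $\bigoplus_{i=1}^{4}\OOO_{\PP^1}(a_i)$ with all $a_i\ge 0$. By adjunction $\deg\NNN_{l/\Omega}=-K_\Omega\cdot l-2=1$, which forces three of the $a_i$ to vanish and one to equal $1$, giving \eqref{equation-normal-bundle-Omega}. Part \xref{proposition-Omega-2} is then immediate: $H^1(l,\NNN_{l/\Omega})=0$ shows $\Sigma(\Omega)$ is smooth at every $[l]$, while $h^0(l,\NNN_{l/\Omega})=5$ gives $\dim\Sigma(\Omega)=5$.

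For \xref{proposition-Omega-3}, the projection $\LLL(\Omega)\to\Sigma(\Omega)$ is a $\PP^1$-bundle because it is the universal family of lines over the Hilbert scheme, each fibre being the corresponding line. Since $\dim\LLL(\Omega)=\dim\Sigma(\Omega)+1=6$ and, by \xref{proposition-Omega-0} and transitivity of $\G$ on $\Omega$, the evaluation map $\LLL(\Omega)\to\Omega$ is surjective, a general fibre has dimension $6-5=1$. Because $\G$ acts transitively on $\Omega$ and the whole diagram \eqref{proposition-Omega-3-diagram} is $\G$-equivariant, all fibres of $\LLL(\Omega)\to\Omega$ are isomorphic, hence all one-dimensional.

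Finally, for \xref{proposition-Omega-4} fix $p\in\Omega$ and let $P=\Stab_\G(p)$. By \xref{proposition-Omega-3} the fibre of $\LLL(\Omega)\to\Omega$ over $p$ is the one-dimensional curve $\mathcal{C}_p$ parametrizing the lines through $p$, and their union $L\Omega_p$ is a cone with vertex $p$. The tangent map $[l]\mapsto[T_p l]$ embeds $\mathcal{C}_p$ into $\PP(T_p\Omega)=\PP^4$ as the variety of minimal rational tangents. I would identify it using the contact grading $\mathfrak{g}_2=\mathfrak{g}_{-2}\oplus\mathfrak{g}_{-1}\oplus\mathfrak{g}_0\oplus\mathfrak{g}_1\oplus\mathfrak{g}_2$ defined by $P$, with $\dim\mathfrak{g}_{\pm 2}=1$, $\dim\mathfrak{g}_{\pm 1}=4$, $\dim\mathfrak{g}_0=4$: then $T_p\Omega\cong\mathfrak{g}_2/\mathfrak{p}\cong\mathfrak{g}_{-1}\oplus\mathfrak{g}_{-2}$, where the semisimple part $\SL_2$ of the Levi acts on $\mathfrak{g}_{-1}\cong S^3\CC^2$ (binary cubics) and trivially on $\mathfrak{g}_{-2}\cong\CC$. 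Being a $P$-stable irreducible curve, $\mathcal{C}_p$ must be the closed $\SL_2$-orbit in $\PP(\mathfrak{g}_{-1})=\PP^3$, i.e.\ the twisted cubic $\{[\ell^3]\}=v_3(\PP^1)$. Since $L\Omega_p$ is the cone over the tangent-direction curve $\mathcal{C}_p$ with vertex $p$, and $\mathcal{C}_p$ is a twisted cubic spanning a $\PP^3$, the surface $L\Omega_p\subset\PP^{13}$ is a cone over a rational twisted cubic curve, proving \xref{proposition-Omega-4}. The hard part is precisely this last step: the representation-theoretic computation of the isotropy action on $T_p\Omega$ and the identification of $\mathcal{C}_p$ with the twisted-cubic orbit of $S^3\CC^2$ (equivalently, pinning down the contact grading of $\mathfrak{g}_2$); everything else reduces to the degree count $\deg\NNN_{l/\Omega}=1$ and global generation.
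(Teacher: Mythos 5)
Your proof is correct, and parts \xref{proposition-Omega-1}--\xref{proposition-Omega-3} coincide with the paper's argument verbatim: global generation of $T_\Omega|_l$ by homogeneity, the degree count $\deg\NNN_{l/\Omega}=-K_\Omega\cdot l-2=1$ forcing the splitting type $(0,0,0,1)$, then $H^1(l,\NNN_{l/\Omega})=0$ and $h^0(l,\NNN_{l/\Omega})=5$ for smoothness and the dimension of $\Sigma(\Omega)$, and equivariance of the universal family for the fibre statement. Where you genuinely diverge is in \xref{proposition-Omega-0} and \xref{proposition-Omega-4}. For the existence of a line, the paper cuts $\Omega$ by a general $\PP^{11}$ to obtain an anticanonically embedded Fano threefold and invokes Shokurov's theorem, while your root-$\SL_2$ construction is more elementary and Lie-theoretic: for the highest root $\theta$ and a long root $\beta$ with $\theta-\beta$ a root, the root $\SL_2(\CC)$ attached to $\theta-\beta$ acts transitively on the nonzero vectors of $\mathfrak{g}_\theta\oplus\mathfrak{g}_\beta$, so the pencil $\PP(\mathfrak{g}_\theta\oplus\mathfrak{g}_\beta)$ lies in the projectivized minimal nilpotent orbit and gives an explicit line. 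For \xref{proposition-Omega-4} the paper merely records that $L\Omega_p=T_p\Omega\cap\Omega$ is the tangent cone and cites Hwang--Mok and Kapustka--Ranestad, whereas you rederive the result via the contact grading of $\mathfrak{g}_2$ with dimensions $(1,4,4,4,1)$ and the identification of $\mathfrak{g}_{-1}$ with the $\SL_2(\CC)$-module $S^3\CC^2$ of binary cubics; this makes the proposition self-contained at the cost of carrying out the isotropy computation that the paper outsources. Two small points you should make explicit to close your argument in \xref{proposition-Omega-4}: first, why the curve $\mathcal{C}_p$ of tangent directions is irreducible -- or argue componentwise, since each irreducible component is invariant under the connected isotropy group $P^0$; second, why the twisted cubic is the \emph{only} possible closed one-dimensional $P^0$-invariant subvariety of $\PP(T_p\Omega)$ -- here it suffices to note that $S^3\CC^2$ is an irreducible $\SL_2(\CC)$-module, so the only fixed point in $\PP(T_p\Omega)$ is $[\mathfrak{g}_{-2}]$, and the only points with stabilizer of dimension at least $2$ are the highest weight vectors $[\ell^3]$, whence the only closed orbits are that fixed point and $v_3(\PP^1)$, and a $P^0$-invariant closed curve must be the latter. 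With these two remarks supplied, your proof is complete and constitutes a valid self-contained alternative to the paper's citation-based treatment of \xref{proposition-Omega-0} and \xref{proposition-Omega-4}.
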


\begin{proof}
\xref{proposition-Omega-0}
It suffices to show that a general section $\Omega\cap\PP^{11}$
contains a line. However, the latter follows from Shokurov's theorem
\cite{Shokurov1980}.

\xref{proposition-Omega-1}
Let $l$ be a line on $\Omega\subset\PP^{13}$.
Since $\Omega$ is a homogeneous space, the tangent bundle $T_\Omega$ is
generated by global sections (that is, the vector fields from the corresponding 
Lie
algebra).
Hence the vector bundles $T_\Omega|_l$ and $\NNN_{l/\Omega}=T_\Omega|_l/T_l$
are
also
generated by global sections.
This means that the
integers $a,b,c,d$ in the decomposition
\begin{equation*}
\NNN_{l/\Omega}=\OOO_{\PP^1}(a)\oplus
\OOO_{\PP^1}(b)\oplus\OOO_{\PP^1}(c)\oplus\OOO_{\PP^1}(d)
\end{equation*}
are all non-negative.
On the other hand,
\begin{equation*}
\deg\NNN_{l/\Omega}=-K_{\Omega}\cdot l -2+2\operatorname{g}(l)=1.
\end{equation*}
This implies~\eqref{equation-normal-bundle-Omega}.

\xref{proposition-Omega-2}
follows from~\eqref{equation-normal-bundle-Omega} and the
standard facts of the deformation theory. Indeed, we have
$H^1(V,\NNN_{l/\Omega})=0$
and $\dim H^0(V,\NNN_{l/\Omega})=5$. Statement~\xref{proposition-Omega-3}
follows from the facts that $\Omega$ is homogeneous and
the diagram~\eqref{proposition-Omega-3-diagram} is equivariant.
To show~\xref{proposition-Omega-4} we note that $L\Omega_p=T_p\Omega\cap\Omega$
is the tangent cone and $T_p\Omega\cap\Omega$ is a cone over a rational twisted
cubic curve, see \cite[\S~1, Prop.~1]{Hwang-Mok-2002} or
\cite[Proof of Lemma~3]{KapustkaRanestad2013}.
\end{proof}

Next we study the lines in a Fano-Mukai fourfold $V_{18}$.

\begin{proposition}
\label{lem:a-d}
Let $V=V_{18}\subset\PP^{12}$ be
a smooth Fano-Mukai fourfold of genus $10$.
Then the following hold.

\begin{enumerate}
\renewcommand\labelenumi{\rm (\alph{enumi})}
\renewcommand\theenumi{\rm (\alph{enumi})}

\item
\label{lem:a-d(00)}
$V$ contains a line.

\item
\label{lem:a-d(0)}
For any line $l\subset V$ one has
\begin{equation}
\label{equation-normal-bundle-V}
\NNN_{l/V}\cong
\begin{cases}
\OOO_{\PP^1}\oplus\OOO_{\PP^1}\oplus\OOO_{\PP^1}& (*)\qquad\text{or}
\\
\OOO_{\PP^1}\oplus\OOO_{\PP^1}(-1)\oplus\OOO_{\PP^1}(1)&(**)
\end{cases}
\end{equation}

\item
\label{lem:a-d(a)}
The Hilbert scheme $\Sigma(V)$ of lines on $V$ is a non-singular variety of
dimension $3$.

\item
\label{lem:a-d(b)}
Consider the universal family of lines on $V$
\begin{equation}
\label{equation-universal-family-V}
\vcenter{
\xymatrix@R=10pt{
&\LLL(V)\ar[dl]_{r}\ar[dr]^{s}&
\\
\Sigma(V)&&V
}}
\end{equation}
where $r:\LLL(V)\to\Sigma(V)$ is a $\PP^1$-bundle. Then $s$ is a generically
finite morphism of degree $3$. Consequently, $V$ is covered by lines.

\item
\label{lem:a-d(d)}
For $p\in V$, let $LV_p$ be the union of all lines in $V$ passing through $p$.
If $\dim LV_p=2$, then $LV_p$ is a cubic cone.
Otherwise, $LV_p$ is nonempty and consists of at most three lines.

\item
\label{lem:a-d(c)}
A general line $l$ on $V$ has normal bundle of type $(*)$ in
\eqref{equation-normal-bundle-V}.
Any line $l$ on $V$ with normal bundle of type $(**)$
is contained in the branching divisor $\mathcal{B}\subset V$ of $s$.

\item
\label{lem:a-d(e)}
Any line $l$ on $V$ contained in a cubic cone
has normal bundle of type $(**)$.
\end{enumerate}
\end{proposition}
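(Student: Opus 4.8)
The plan is to read off the splitting type of $\NNN_{l/V}$ from the deformations of $l$ that keep a well-chosen point fixed, using that a cubic cone carries a one-parameter family of lines through its vertex. First I would reduce to the geometry of the cone. Let $S\subset V$ be the cubic cone containing $l$ and let $p=\Sing(S)$ be its vertex, so that $S$ is the cone over a rational twisted cubic curve spanning $\langle S\rangle\cong\PP^3$, see~\xref{def:cubic-cones} and~\xref{prop:link-2-c}. Projecting $S$ from $p$ onto this directrix twisted cubic, any line $m\subset S$ not passing through $p$ would map isomorphically onto a line of the directrix; but a twisted cubic contains no line, so $m$ must be contracted, i.e.\ $p\in m$. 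Hence every line on $S$ is a ruling through $p$; in particular $p\in l$. Consequently $S\subseteq LV_p$, where $LV_p$ is the union of all lines of $V$ through $p$, as in~\xref{lem:a-d(d)}. Since $\dim S=2$ we get $\dim LV_p=2$, so by~\xref{lem:a-d(d)} the locus $LV_p$ is itself a cubic cone; being irreducible of the same dimension as $S$ and containing it, $LV_p=S$. Thus the lines of $V$ through $p$ are exactly the rulings of $S$ and form a one-parameter family which contains $[l]$.

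The key step is the deformation-theoretic translation of this family. The lines of $V$ passing through $p$ are parameterized by a closed subscheme $\Sigma_p(V)\subset\Sigma(V)$ whose Zariski tangent space at $[l]$ is $H^0(l,\,\NNN_{l/V}\otimes\OOO_l(-p))$, by the standard deformation theory of a smooth rational curve with a marked point in the smooth fourfold $V$. By the previous paragraph every line through $p$ is a ruling of $S=LV_p$, so $\Sigma_p(V)$ is at least one-dimensional at $[l]$, whence $h^0(\NNN_{l/V}\otimes\OOO_l(-p))\ge 1$. Now I would compare this with the two possibilities of~\xref{lem:a-d(0)}. For type $(*)$ one has $\NNN_{l/V}\otimes\OOO_l(-p)\cong\OOO_{\PP^1}(-1)^{\oplus 3}$, so $h^0=0$; for type $(**)$ one has $\NNN_{l/V}\otimes\OOO_l(-p)\cong\OOO_{\PP^1}\oplus\OOO_{\PP^1}(-1)\oplus\OOO_{\PP^1}(-2)$, so $h^0=1$. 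The inequality $h^0\ge 1$ therefore rules out type $(*)$ and forces $l$ to be of type $(**)$, as claimed.

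I expect the only delicate point to be the identification of the tangent space to $\Sigma_p(V)$ at $[l]$ with $H^0(\NNN_{l/V}\otimes\OOO_l(-p))$, together with the passage from the geometric statement ``the rulings through $p$ sweep out the surface $S$'' to the cohomological lower bound $h^0\ge 1$. Both are routine once the reduction $LV_p=S$ is in place, so no genuine obstacle arises beyond making that identification precise; the substance of the argument is entirely contained in~\xref{lem:a-d(d)} and the elementary fact that a cubic cone is swept out by a positive-dimensional family of lines through its vertex.
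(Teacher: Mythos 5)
Your proposal proves only the last assertion \xref{lem:a-d(e)} of the proposition and leaves the other six parts untouched. The statement also asserts: the existence of a line \xref{lem:a-d(00)}, the dichotomy \eqref{equation-normal-bundle-V} for the normal bundle of an \emph{arbitrary} line \xref{lem:a-d(0)}, smoothness and three-dimensionality of $\Sigma(V)$ \xref{lem:a-d(a)}, that $s$ is generically finite of degree $3$ \xref{lem:a-d(b)}, the description of $LV_p$ \xref{lem:a-d(d)}, and that a general line is of type $(*)$ while every type $(**)$ line lies in the branch divisor \xref{lem:a-d(c)}. None of these is addressed; worse, your argument explicitly takes \xref{lem:a-d(0)} and \xref{lem:a-d(d)} as inputs, so what you have is a proof of one part conditional on two unproved parts. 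That is where the real work of the proposition lies: the paper realizes $V$ as a hyperplane section of the adjoint variety $\Omega=\G/P$ (Theorem \xref{thm:Mukai}) and uses Proposition \xref{proposition-Omega} — the exact sequence $0\to\NNN_{l/V}\to\NNN_{l/\Omega}\to\OOO_{\PP^1}(1)\to 0$ together with $\NNN_{l/\Omega}\cong\OOO_{\PP^1}^{\oplus 3}\oplus\OOO_{\PP^1}(1)$ forces \eqref{equation-normal-bundle-V}; the identity $LV_p=L\Omega_p\cap H$, where $L\Omega_p=T_p\Omega\cap\Omega$ is a cubic cone, gives \xref{lem:a-d(b)} and \xref{lem:a-d(d)}; \xref{lem:a-d(00)} follows from Shokurov's theorem applied to a general hyperplane section; and \xref{lem:a-d(c)} follows by analyzing the differential \eqref{equation-ds}. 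Your text contains no substitute for any of this, and the cone geometry you invoke cannot recover it (in particular it cannot explain why only the two splitting types occur, nor why a two-dimensional $LV_p$ must be a cubic cone).

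For the single item you do treat, your argument is correct and takes a genuinely different route from the paper's. The paper lifts $l$ to a ruling $l'$ of a surface $S'\cong\FF_3$ in $\LLL(V)$ and notes that $s|_{S'}$ contracts the exceptional section of $S'$ to the vertex $p$, so the cokernel of $ds$ in \eqref{equation-ds} is nontrivial at $p$, which excludes the trivial splitting. You instead show that all lines on $S$ pass through $p$, deduce $LV_p=S$ from \xref{lem:a-d(d)}, and then use that the locus of lines through $p$ is at least one-dimensional at $[l]$, so its Zariski tangent space $H^0\bigl(l,\NNN_{l/V}\otimes\OOO_l(-p)\bigr)$ is nonzero — impossible for type $(*)$ since $h^0\bigl(\OOO_{\PP^1}(-1)^{\oplus 3}\bigr)=0$. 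Both arguments exploit the same one-parameter family of rulings through the vertex, but your pointed-deformation computation is a clean alternative to the paper's use of the universal family; had the remaining six parts been supplied, it would be a perfectly acceptable replacement for the paper's proof of \xref{lem:a-d(e)}.
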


\begin{proof}
\xref{lem:a-d(00)}
The proof is similar to the one of~\xref{proposition-Omega}
\xref{proposition-Omega-0}.

\xref{lem:a-d(0)}
From the standard exact
sequence
\begin{equation*}
\xymatrix@R=5pt{
0\ar[r]&\NNN_{l/V}\ar[r]&\NNN_{l/\Omega}\ar[r]&\NNN_{V/\Omega}|_{l}\ar@{=}[d]
\ar[r]& 0
\\
&&&\OOO_{\PP^1}(1)
}
\end{equation*}
one deduces that all the summands in a decomposition of $\NNN_{l/V}$ into a sum 
of line bundles are of
degree $\le 1$,
and there is at most one summand of degree $1$.
Thus~\eqref{equation-normal-bundle-Omega} implies
\eqref{equation-normal-bundle-V}.

\xref{lem:a-d(a)} follows from~\eqref{equation-normal-bundle-V}.

\xref{lem:a-d(b)}-\xref{lem:a-d(d)}
Recall that $V$ is a section of $\Omega\subset
\PP^{13}$ by a hyperplane $H$. For $p\in V$, the subvariety $LV_p$ is a 
hyperplane
section of $L\Omega_p$, that is,
\begin{equation*}
LV_p=L\Omega_p\cap V=L\Omega_p\cap H.
\end{equation*}
By Proposition~\xref{proposition-Omega}\xref{proposition-Omega-4}, $L\Omega_p$ 
is
a cubic cone, that is, a cone over a
rational twisted cubic curve.
Hence, $LV_p$ is non-empty, $\dim LV_p\le 2$, and $\dim LV_p=2$ if and only if
$LV_p=L\Omega_p$.
Thus, $s$ is dominant. So, this is a generically finite morphism
of degree 3. Indeed, for a general point $p\in V$, $H$ cuts the cubic cone 
$L\Omega_p$ on $V$ with vertex $p$ along three distinct
lines. The last
assertion in~\xref{lem:a-d(d)} is now immediate.

\xref{lem:a-d(c)}
To any line $l\in\Sigma(V)$ there corresponds a smooth rational curve
$l'\subset\LLL(V)$. Clearly, the restriction
$s|_{l'}: {l'}\to l\subset V$ is an isomorphism.
Consider the differential
\begin{equation}\label{equation-ds}
ds:\OOO_{\PP^1}\oplus\OOO_{\PP^1}
\oplus\OOO_{\PP^1}=\NNN_{{l'}/\LLL(V)}\longrightarrow\NNN_{l/V}.
\end{equation}
In case $(**)$ the map $ds$ has cokernel of rank $1$ along $l'$.
Hence $s$ is ramified along $l'$.

\xref{lem:a-d(e)}
Consider a line $l\subset S$, where $S\subset V$ is
a cubic cone. Then $l$ lifts to a ruling $l'$ of a scroll $S'\cong\FF_3$ in
$\LLL(V)$. The map $s|_{S'}: S'\to S$ contracts the exceptional section
of $S'$ to the vertex $p$ of $S$. 
Therefore, the cokernel of $d s$ in~\eqref{equation-ds} in nontrivial at $p$, 
and so,
$\NNN_{l/V}\not\cong\OOO_{\PP^1}\oplus\OOO_{\PP^1}
\oplus\OOO_{\PP^1}$.
\end{proof}

\section{Cubic scrolls in $V_{18}$}
\label{sec:thm-1.2}
In this section we study the cubic scrolls in a Fano-Mukai fourfold
$V=V_{18}$ of genus $10$.
The following facts are proven in~\cite[Prop.~1 and 2 and the proof of 
Prop.~4]{KapustkaRanestad2013}.

\begin{theorem}
\label{prop:KaRa}
For any Fano-Mukai fourfold $V=V_{18}$ of genus $10$ the
following hold.

\begin{enumerate}
\renewcommand\labelenumi{\rm (\alph{enumi})}
\renewcommand\theenumi{\rm (\alph{enumi})}

\item
\label{prop:KaRa-a} 
Let $\Sigma(V)$ be
the Hilbert scheme of lines in $V$. Then $\Sigma(V)$ is isomorphic to
a smooth divisor of bidegree $(1,1)$ on $\PP^2\times\PP^2$.

\item
\label{prop:KaRa-b} Let $\SSS(V)$ be
the Hilbert scheme of cubic scrolls in $V$. Then $\SSS(V)$
is isomorphic to
a disjoint union of two projective planes.
\end{enumerate}
\end{theorem}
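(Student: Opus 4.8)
The plan is to read off both statements from the Mukai realization $V=V_{18}=\Omega\cap\PP^{12}$ as a hyperplane section of the adjoint fivefold $\Omega=\G/P\subset\PP(\mathfrak{g}_2)=\PP^{13}$ (Theorem~\ref{thm:Mukai}, \ref{sit:Omega}), together with the homogeneous description of lines on $\Omega$ in Proposition~\ref{proposition-Omega}. The universal family of lines $\LLL(\Omega)\to\Sigma(\Omega)$ is itself $\G$-homogeneous: since its fibres over $\Omega$ are the twisted cubic $\cong\PP^1$ of the \emph{VMRT} (Proposition~\ref{proposition-Omega}\ref{proposition-Omega-4}), $\LLL(\Omega)$ is the full flag variety $\G/B$, whose second projection exhibits $\Sigma(\Omega)$ as the remaining five-dimensional homogeneous space $\G/P_1$. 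Because $\G\subset SO_7$ acts transitively on isotropic lines in $\OO_0$, one identifies $\G/P_1$ with the smooth quadric $Q^5\subset\PP^6=\PP(\OO_0)$. Now $\Sigma(V)$ is exactly the locus of lines of $\Omega$ contained in the hyperplane $H=\langle V\rangle$, hence the zero scheme of the section of the rank-two bundle $\mathcal E$ on $\Sigma(\Omega)$ with fibre $H^0(l,\OOO_l(1))$ over $[l]$, cut out by the form $\phi\in\mathfrak{g}_2^\vee$ defining $H$. By Proposition~\ref{lem:a-d}\ref{lem:a-d(a)} this zero scheme is already known to be a smooth threefold, so the whole issue is to pin down its projective model.

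For part~\ref{prop:KaRa-a} I would first record the elementary fact that a $(1,1)$-divisor on $\PP^2\times\PP^2$ is a nondegenerate bilinear form on $\CC^3$, so that \emph{any} smooth such divisor is projectively equivalent to the incidence flag variety $\Fl(1,2;3)$; thus the content of~\ref{prop:KaRa-a} is precisely the construction of two morphisms $q_1,q_2\colon\Sigma(V)\to\PP^2$ whose product is a closed immersion onto a divisor of bidegree $(1,1)$. These two morphisms come from the $\Ad(\G)$-orbit analysis underlying \cite[Lem.~1]{KapustkaRanestad2013} and Proposition~\ref{prop:orbits}: one exploits the decomposition $\mathfrak{g}_2=\sll_3\oplus\CC^3\oplus(\CC^3)^\vee$ under the $A_2$-subalgebra spanned by the long roots, the two conjugate summands furnishing the two planes $\PP^2=\PP(\CC^3)$ and $(\PP^2)^\vee=\PP((\CC^3)^\vee)$. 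I would then verify by a Chern-class computation of $c_1(\mathcal E)$ and $c_2(\mathcal E)$ on $Q^5$ that the class of the zero locus $\Sigma(V)$ agrees with that of a $(1,1)$-divisor; combined with the already established smoothness (Proposition~\ref{lem:a-d}\ref{lem:a-d(a)}), this forces $q_1\times q_2$ to realize $\Sigma(V)$ as $\Fl(1,2;3)$, proving~\ref{prop:KaRa-a}.

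Part~\ref{prop:KaRa-b} I would then derive from~\ref{prop:KaRa-a}. Every cubic scroll in $V$ — smooth scroll or cubic cone, in the sense of Definition~\ref{def:cubic-cones} — is ruled by a pencil of lines, and by Proposition~\ref{lem:a-d}\ref{lem:a-d(d)}, \ref{lem:a-d(e)} these rulings form a $\PP^1$ in $\Sigma(V)$; conversely each fibre of a projection $q_i\colon\Fl(1,2;3)\to\PP^2$ is such a $\PP^1$ and sweeps out a cubic scroll under the evaluation $s\colon\LLL(V)\to V$ of~\ref{lem:a-d(b)}. Hence $S\mapsto\{\text{rulings of }S\}$ identifies the two components of $\SSS(V)$ with the two base planes of $q_1$ and $q_2$, giving $\SSS(V)\cong\PP^2\sqcup\PP^2$. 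Degeneration of a smooth scroll to a cubic cone occurs along the discriminant locus in each $\PP^2$ where the ruling $\PP^1$ collapses a section to the cone vertex — consistent with the enumeration of $\Aut^0(V)$-invariant cones quoted in the introduction — and the two components are disjoint because the two rulings of $\Fl(1,2;3)$ are exchanged only by the outer symmetry and cannot be deformed into one another.

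The main obstacle is the heart of part~\ref{prop:KaRa-a}: genuinely producing the two maps $q_1,q_2$ to $\PP^2$ and controlling the vanishing of the section of $\mathcal E$ on $Q^5$. Everything else is either homogeneity (the structure of $\Sigma(\Omega)$), smoothness and dimension already in hand (Proposition~\ref{lem:a-d}), or formal bookkeeping once the bidegree $(1,1)$ model is available. The representation-theoretic input — the $A_2\subset\G$ decomposition of $\mathfrak{g}_2$ and the orbit description of Proposition~\ref{prop:orbits} and \cite[Lem.~1]{KapustkaRanestad2013} — is exactly what converts the abstract smooth threefold $Z(\phi)\subset Q^5$ into the explicit incidence flag variety, and that is where the real computation lives.
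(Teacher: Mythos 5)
The paper offers no internal proof of Theorem~\ref{prop:KaRa}: it is quoted verbatim from \cite[Prop.~1 and~2]{KapustkaRanestad2013}, so your attempt is really measured against that source, and your scaffolding is in fact the known route: $\LLL(\Omega)\cong\G/B$, $\Sigma(\Omega)\cong Q^5\subset\PP(\OO_0)$, and $\Sigma(V)$ the zero scheme of the section $s_\phi$ of the rank-two homogeneous bundle $\mathcal E=\pi_{Q*}\pi_\Omega^*\OOO_\Omega(1)$ (a twist of Ottaviani's Cayley bundle), smooth of dimension $3$ by Proposition~\ref{lem:a-d}\ref{lem:a-d(a)}. All of that is sound. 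The genuine gap is exactly where you locate ``the real computation''. First, a Chern-class computation gives only the cohomology class of $Z=Z(s_\phi)$ in $Q^5$; a class plus smoothness cannot ``force'' $Z\cong\Fl(1,2;3)$, since the class of a smooth subvariety does not determine its isomorphism type. Second, your proposed source of the two maps $q_1,q_2$ does not exist as stated: the decomposition $\mathfrak{g}_2=\sll_3\oplus\CC^3\oplus(\CC^3)^\vee$ depends on a choice of Cartan subalgebra (equivalently, of a long-root $A_2$), which is in no way attached to the hyperplane $g^\perp$; and for $V=V_{18}^{\aaa}$ the element $g$ is regular \emph{non-semisimple}, lies in no Cartan subalgebra, and singles out no $\sll_3$ at all. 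So the construction is non-canonical where it exists and is empty precisely for the special members of the family, and nothing in the proposal shows $q_1\times q_2$ would be a morphism on all of $\Sigma(V)$, let alone a closed immersion of bidegree $(1,1)$.

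The step can be closed with the two planes appearing as outputs rather than inputs: one computes $c_1(\mathcal E)=3h$ and $c_2(\mathcal E)\cdot h^3=6$, so by adjunction $K_Z=(K_{Q^5}+c_1(\mathcal E))|_Z=-2h|_Z$ and $Z$ is a smooth del Pezzo threefold of degree $6$ (connectedness following, e.g., from ampleness of $\mathcal E$); the classification leaves $\Fl(1,2;3)$ and $\PP^1\times\PP^1\times\PP^1$, and the latter is excluded because its half-anticanonical model in $\PP^7$ has secant variety filling $\PP^7$, hence admits no isomorphic projection into $\PP^6\supset Q^5$, whereas the secant variety of $\Fl(1,2;3)$ is the $6$-dimensional cubic of rank-$\le 2$ traceless matrices. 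Then $q_1,q_2$ are simply the two extremal contractions of $Z$. Part~\ref{prop:KaRa-b} as you write it is also only a bijection on closed points: you must actually prove that a fibre of $q_i$ sweeps out a surface of degree $3$ and, conversely, that the ruling pencil $\Lambda(S)$ of \emph{every} cubic scroll $S\subset V$ has bidegree $(1,0)$ or $(0,1)$ in $\Sigma(V)$ (e.g.\ by showing a pencil of lines of bidegree $(a,b)$ sweeps a surface of degree $3(a+b)$, using the degree-$3$ covering of Proposition~\ref{lem:a-d}\ref{lem:a-d(b)}); and since the assertion concerns the Hilbert scheme, you need unobstructedness of cubic scrolls ($h^0(\NNN_{S/V})=2$, $h^1(\NNN_{S/V})=0$) to upgrade the pointwise bijection to an isomorphism of schemes $\SSS(V)\cong\PP^2\sqcup\PP^2$. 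Finally, beware the appeal to ``the enumeration of $\Aut^0(V)$-invariant cones quoted in the introduction'': everything the paper proves about cubic cones (Sections 8--11) is downstream of Theorem~\ref{prop:KaRa}, so using it here would be circular.
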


\begin{sremarks}
\label{rem:ext-action}
\begin{enumerate}[noitemsep,nolistsep,leftmargin=15pt,itemindent=7pt]
\item\label{rem:ext-action-1}
By the Lefschetz hyperplane section theorem 
we have $\Pic(\Sigma(V))=\ZZ\cdot
[\mathcal{F}_1]\oplus\ZZ\cdot [\mathcal{F}_2]$, where $\mathcal{F}_i$, $i=1,2$, 
are the pull-backs of
lines on the corresponding factors of $\PP^2\times\PP^2$. 

\item\label{rem:ext-action-2}
Note that the embedding $\Sigma(V)\hookrightarrow\PP^2\times\PP^2$ onto a 
smooth divisor $D$ of bidegree $(1,1)$ guaranteed by 
Theorem~\xref{prop:KaRa}\xref{prop:KaRa-a} is defined canonically. Indeed, the 
projections $\pr_i:\Sigma(V)\to\PP^2$, $i=1,2$, to the factors of 
$\PP^{2}\times\PP^{2}$ are extremal contractions. By~\xref{rem:ext-action-1}, 
the projections $\pr_1$ and $\pr_2$ 
are the only extremal contractions of the threefold $\Sigma(V)$. It follows 
that 
the $\Aut^0(\Sigma(V))$-action on $\Sigma(V)$ induces an action of 
$\Aut^0(\Sigma(V))$ on the factors of $\PP^{2}\times\PP^{2}$ making the 
morphisms $\pr_i$, $i=1,2$, equivariant. 

\item\label{rem:ext-action-3}
One can treat $\Sigma(V)\subset (\PP^2)^\vee\times\PP^2$ as the variety of full 
flags in $\PP^{2}$. Using this representation, it can be easily seen that the 
maps $\Aut^0(\Sigma(V))\to\Aut((\PP^2)^\vee))$ and 
$\Aut^0(\Sigma(V))\to\Aut(\PP^{2})$ are isomorphisms.
\end{enumerate}
\end{sremarks}

We use below the following facts.

\begin{lemma}
\label{lemma--SS}
Given a cubic scroll $S\in\SSS(V)$, there is a unique hyperplane section
$A_S$ of $V$ with $\Sing(A_S)=S$. This hyperplane section $A_S$ coincides with
the union of lines in $V$ which meet $S$. Given a point $P\in A_S\setminus S$, 
there is a unique line in $A_S$ through $P$ which meets $S$.
\end{lemma}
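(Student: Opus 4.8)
The plan is to extract all three assertions from the Sarkisov link~\eqref{diagram-2} attached to $(V,S)$ by Proposition~\xref{prop:reversion}, using the relations $\tilde B\sim H^*-\tilde A$ and $\tilde A\sim L^*-2\tilde B$ from~\eqref{eq:decompos-2} and~\eqref{eq:inters-ALB} (so that $L^*\sim 2H^*-\tilde A$ and $H^*\sim L^*-\tilde B$), where $L^*=\xi^*L$, $H^*=\eta^*H$, while $\xi\colon\tilde W\to V$ is the blowup of $S$ with exceptional divisor $\tilde B$ and $\eta\colon\tilde W\to W$ is the blowup of $F$ with exceptional divisor $\tilde A\cong\PP(\NNN_{F/W}^\vee)$, a $\PP^1$-bundle over $F$.

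For the uniqueness of $A_S$, note that any hyperplane section $A'$ of $V$ with $\Sing(A')=S$ is singular along $S$, so its proper transform on $\tilde W$ equals $L^*-m\tilde B$ with $m\ge 2$. Since $\tilde A=L^*-2\tilde B$ is irreducible, the identity $L^*-m\tilde B=\tilde A-(m-2)\tilde B$ forces $m=2$, so the proper transform lies in $|\tilde A|$. Exactly as in the proof of Proposition~\xref{prop:reversion} for $\tilde B$, the divisor $\tilde A$ is covered by the fibres $f$ of $\tilde A\to F$, for which $\tilde A\cdot f=-1<0$; hence $\tilde A$ is rigid and $|\tilde A|=\{\tilde A\}$. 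Therefore $A'=\xi(\tilde A)=A_S$.

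Next I identify $A_S$ with the union $T$ of all lines in $V$ meeting $S$. For $T\subseteq A_S$ I use a tangency argument: writing $A_S=V\cap\mathcal H$ for a hyperplane $\mathcal H\subset\PP^{12}$ and recalling that $V$ is smooth, the condition $\Sing(A_S)=S$ means that $\mathcal H$ contains the embedded tangent space $T_PV$ for every $P\in S$. If $\ell\subset V$ is a line through such a $P$, then $\ell=T_P\ell\subseteq T_PV\subseteq\mathcal H$, so $\ell\subseteq V\cap\mathcal H=A_S$. For the reverse inclusion I use the fibres of $\tilde A\to F$: each fibre $f$ satisfies $H^*\cdot f=0$ and $\tilde A\cdot f=-1$, whence $L^*\cdot f=1$ and $\tilde B\cdot f=1$. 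Thus $\xi(f)$ is a line in $V$ (its $L$-degree is $1$, so it is not contracted) meeting $S=\xi(\tilde B)$, and as $f$ ranges over the $\PP^1$-bundle $\tilde A\to F$ these lines sweep out $\xi(\tilde A)=A_S$. Hence $A_S\subseteq T$, and so $A_S=T$.

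Finally, fix $P\in A_S\setminus S$. Existence of a line in $A_S$ through $P$ meeting $S$ is immediate from $A_S=T$. For uniqueness, observe that by Propositions~\xref{prop:link-2}\xref{prop:link-2-d} and~\xref{lemma-classification-singular-fibers} the finitely many two-dimensional fibres of $\xi$ all lie over $S$, so $\xi$ is an isomorphism over $A_S\setminus S$ and $\xi^{-1}(P)=\{\tilde P\}$ is a single point of $\tilde A$. Any line $\ell\ni P$ in $A_S$ meeting $S$ has irreducible proper transform $\tilde{\ell}\subset\tilde A$ with $L^*\cdot\tilde{\ell}=1$ and $\tilde B\cdot\tilde{\ell}\ge 1$; since $H^*\sim L^*-\tilde B$ is nef, $H^*\cdot\tilde{\ell}=1-\tilde B\cdot\tilde{\ell}\le 0$ forces $H^*\cdot\tilde{\ell}=0$. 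Then $\tilde{\ell}$ is contracted by $\eta$, hence is a fibre of the $\PP^1$-bundle $\tilde A\to F$; as $\tilde P$ lies on a unique such fibre, $\ell$ is unique. The main obstacle is precisely this last identification, namely that the lines in $A_S$ through $P$ meeting $S$ correspond bijectively to the $\eta$-fibres through $\tilde P$; this hinges both on the intersection computation pinning down $H^*\cdot\tilde{\ell}=0$ and on the $\PP^1$-bundle structure of $\tilde A\to F$.
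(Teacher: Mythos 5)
Your proof is correct and takes essentially the same route as the paper: everything is read off the Sarkisov link~\eqref{diagram-2}, with the lines meeting $S$ identified with the images of the rulings of $\eta|_{\tilde A}\colon\tilde A\to F$ and the last assertion obtained from the unique ruling through a point of $\tilde A\setminus\tilde B$. You merely make explicit some steps the paper leaves implicit (the rigidity $|\tilde A|=\{\tilde A\}$ giving uniqueness of $A_S$, cf.\ Corollary~\xref{rem:interrompu} and the analogous argument for $\tilde B$ inside the proof of Proposition~\xref{prop:reversion}; the tangent-space inclusion $T_PV\subset\mathcal{H}$ for one inclusion; and the nefness computation $H^*\cdot\tilde{\ell}=0$ pinning down proper transforms of such lines as $\eta$-fibres, where the paper reads this off the projection $\theta$) --- with the one cosmetic caveat that in your uniqueness paragraph the conclusion $m=2$ really follows from effectivity of the proper transform combined with the rigidity you establish in the following sentence, not from the displayed identity alone.
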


\begin{proof}
By Proposition~\xref{prop:reversion} there is a hyperplane section $A=A_S$ of 
$V$
with $\Sing(A_S)=S$. 
The linear projection $\theta: V\dashrightarrow W$ in diagram
\eqref{diagram-2} sends $A$ to a smooth quintic scroll $F\subset W$ contracting
the lines meeting $S$ and not contained in $S$. These lines correspond to the
rulings of $\eta|_{\tilde A}:\tilde A\to F$ in~\eqref{diagram-2}. It follows
that $A=\xi(\tilde A)$ is covered by such lines. Since
through any point in $\tilde A\setminus\tilde B$ passes a unique ruling of
$\eta|_{\tilde A}:\tilde A\to F$, the last statement follows.
\end{proof}

\begin{lemma}
\label{cor:common-exc-section} A line $l$ in $V$ can be a common exceptional
section for at most two smooth cubic scrolls in $V$, and can be contained in at
most finite number of cubic cones. Consequently, the morphism
$s^{-1}(l)\to l$
\textup{(}see diagram~\eqref{equation-universal-family-V}\textup{)} is
generically finite.
\end{lemma}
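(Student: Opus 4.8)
The plan is to deduce all three assertions from a single claim: \emph{for every line $l\subset V$ the general point of $l$ is not the vertex of a cubic cone}, equivalently, only finitely many points of $l$ are such vertices, equivalently the family of lines meeting $l$ is at most one-dimensional. Granting this claim, the three statements follow quickly. For the consequence about $s^{-1}(l)\to l$: the fibre of $s$ over a point $p$ is the set of lines through $p$, which by Proposition~\xref{lem:a-d}\xref{lem:a-d(d)} is finite unless $p$ is the vertex of a cubic cone; since by the claim a general $p\in l$ is not such a vertex, the restriction $s^{-1}(l)\to l$ has finite general fibre, hence is generically finite.

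For the bound on smooth scrolls, I would argue as follows. Suppose $S_1,\dots,S_k$ are pairwise distinct smooth cubic scrolls, each $\cong\FF_1$, sharing $l$ as exceptional section. On each $S_i$ the exceptional section is a section of the ruling $S_i\to\PP^1$, so through a general point $p\in l$ there passes exactly one ruling $m_i(p)\neq l$ of $S_i$. If $m_i(p)=m_j(p)$ for infinitely many $p$, then $S_i$ and $S_j$ would share infinitely many rulings and hence coincide; so for general $p$ the lines $l,m_1(p),\dots,m_k(p)$ are $k+1$ distinct lines through $p$. By the claim a general $p\in l$ is not a cone vertex, whence by Proposition~\xref{lem:a-d}\xref{lem:a-d(d)} at most three lines pass through $p$; thus $k+1\le 3$ and $k\le 2$. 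For the cubic cones, note that if a cubic cone $S$ contains $l$ then $l$ is a ruling of $S$, so the vertex of $S$ lies on $l$; conversely a point $v\in l$ which is a cone vertex determines its cone uniquely as $S=LV_v$, since $S\subseteq LV_v$ and $LV_v$ is itself a cubic cone by Proposition~\xref{lem:a-d}\xref{lem:a-d(d)}. Thus the cubic cones containing $l$ correspond bijectively to the cone vertices lying on $l$, a finite set by the claim, and $l$ lies in only finitely many cubic cones.

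It remains to prove the claim, which I expect to be the main obstacle. Were a general point of $l$ a cone vertex, the rulings of the cones $LV_v$, $v\in l$, would sweep out a \emph{two}-dimensional family of lines all meeting $l$ (two rulings of distinct cones $LV_v,LV_{v'}$ coincide only if the common line passes through both $v$ and $v'$, forcing it to be $l$). The task is therefore to show that the locus of lines meeting a fixed $l$ is one-dimensional in $\Sigma(V)$, and this is exactly the delicate point, since a naive dimension count is circular: the dimension of this locus equals one precisely when the general point of $l$ is not a vertex.

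To break the circularity I would use genuinely global input. One route is the explicit model $\Sigma(V)\subset\PP^2\times\PP^2$ of Theorem~\xref{prop:KaRa}\xref{prop:KaRa-a} together with the flag interpretation of Remarks~\xref{rem:ext-action}: in these coordinates the lines meeting a fixed $l$ should be cut out by incidence conditions attached to the two projections $\pr_1,\pr_2$, and one checks that this locus is always a curve, hence never the surface produced above. An alternative route is the Mukai model $V=\Omega\cap H$ with $\Omega=\G/P$, where $LV_p=L\Omega_p\cap H$ and $L\Omega_p$ is a cubic cone spanning a $\PP^4$ by Proposition~\xref{proposition-Omega}\xref{proposition-Omega-4}; the vertex locus is then $\{p\in V:\langle L\Omega_p\rangle\subseteq H\}$, and the $\G$-equivariant span map $p\mapsto\langle L\Omega_p\rangle$ together with the Schubert condition $\langle L\Omega_p\rangle\subseteq H$ should show that this locus contains no line. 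In either approach the hardest case is the uniform control of the special fourfolds $V_{18}^{\s}$ and $V_{18}^{\aaa}$, whose vertex locus is positive-dimensional; there I would fall back on the explicit cone families to verify that the vertex curves are not lines.
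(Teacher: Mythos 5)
Your reduction is sound, and it coincides with the opening move of the paper's own proof: if $k\ge 3$ smooth scrolls shared $l$ as exceptional section, then through a general point $p\in l$ there would pass at least four distinct lines, so by Proposition~\xref{lem:a-d}\xref{lem:a-d(d)} a general point of $l$ would be the vertex of a cubic cone; likewise the cubic cones containing $l$ biject with the cone vertices lying on $l$, and generic finiteness of $s^{-1}(l)\to l$ reduces to the same claim. But that central claim --- a line cannot generically consist of cone vertices --- is exactly where your proof stops, and neither of your two routes is carried out. Route one (``the lines meeting $l$ should be cut out by incidence conditions \ldots one checks that this locus is always a curve'') is precisely the Kapustka--Ranestad description of $\Sigma(l)\setminus\{l\}$ as a $(1,1)$-curve, which the paper quotes only later (in~\xref{nota:4cases}) and which you neither prove nor cite; the translation of incidence of lines in $V$ into incidence of flags is itself nontrivial, so ``one checks'' does not close it. Route two is a sequence of ``should''s. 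Moreover, your fallback for $V^{\s}_{18}$ and $V^{\aaa}_{18}$ --- verifying ``that the vertex curves are not lines'' via the explicit cone families --- would invoke facts established much later (that those vertex loci are twisted cubics) by machinery that uses this very lemma (Proposition~\xref{lem:fixed-pt} quotes it), so as stated that route is circular.

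The gap can be closed with a tool already at your disposal that you did not use: Lemma~\xref{lemma--SS}. Suppose a one-parameter family $\mathcal{C}(l)$ of cubic cones has vertices sweeping $l$; by your own uniqueness observation these cones are the $LV_v$, $v\in l$, and each contains $l$ as a ruling. For any two members $S',S''\in\mathcal{C}(l)$, every ruling of $S''$ meets $l\subset S'$, and since $A_{S'}$ is the union of the lines meeting $S'$ (Lemma~\xref{lemma--SS}), this gives $S''\subset A_{S'}$. Hence the cones of $\mathcal{C}(l)$ sweep out a three-dimensional, dense subset of the irreducible threefold $A_{S'}$, so $A_{S'}$, and therefore $S'=\Sing(A_{S'})$, is independent of the choice of $S'\in\mathcal{C}(l)$ --- a contradiction, since the members of $\mathcal{C}(l)$ have distinct vertices. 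This is the paper's argument: instead of computing the dimension of the family of lines meeting $l$ (your claim, which is indeed delicate to attack head-on), it rules out the degenerate configuration directly through the uniqueness of the singular hyperplane section $A_S$, after which all three assertions follow exactly as in your reduction.
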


\begin{proof} We claim that $l$ cannot be a common exceptional section
of three or more smooth cubic scrolls. Indeed,
assuming the contrary, through a general point $p$ of $l$ pass at least 4
distinct lines in $V$. By Proposition~\xref{lem:a-d}\xref{lem:a-d(d)} this 
implies
that $p$ is a vertex of a cubic cone in $V$. So, the cubic cones with vertices
on $l$ form a one-parameter family, say, $\mathcal{C}(l)$. By Lemma
\xref{lemma--SS} for any pair of cubic cones $S',S''$ through $l$ one has
$S''\subset A_{S'}$ and $S'\subset A_{S''}$. Hence for any $S'\in\mathcal{C}(l)$
there is a Zariski dense open subset of $A_{S'}$ swept up by the cubic cones in
$\mathcal{C}(l)$. Therefore, $A_{S'}$ and $S'=\Sing(A_{S'})$ do not depend on 
the
choice of $S'$,
a contradiction.
\end{proof}

By virtue of Theorem~\xref{prop:KaRa} and the next lemma, a general cubic 
scroll 
in $V$ is smooth.\footnote{In~\cite[Cor.~5.13]{Prokhorov-Zaidenberg-2015} we 
constructed a pair $(V,S)$ such that $S\in\SSS(V)$ is a smooth cubic scroll.}

\begin{lemma}
\label{lem:cubic-cones}
Any cubic cone in $V$ is contained in the branching divisor $\mathcal{B}$ of
$s:\LLL(V)\to V$.
The family of cubic cones in $V$ is at most one-dimensional.
\end{lemma}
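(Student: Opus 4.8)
The plan is to establish the two assertions separately: the inclusion of every cubic cone in $\mathcal{B}$ via a normal-bundle argument, and the one-dimensionality of the family of cubic cones via a dimension count inside the Hilbert scheme of lines $\Sigma(V)$.

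For the first assertion I would argue as follows. A cubic cone $S\subset V$ is, set-theoretically, the union of its rulings, i.e.\ of the lines joining its vertex to the points of its twisted cubic directrix; being the image of a $\PP^1$-bundle under a proper morphism, this union is closed. Every ruling $l\subset S$ is a line of $V$ contained in a cubic cone, so by Proposition~\xref{lem:a-d}\xref{lem:a-d(e)} its normal bundle is of type $(**)$, and then by Proposition~\xref{lem:a-d}\xref{lem:a-d(c)} the line $l$ lies in the branching divisor $\mathcal{B}$ of $s$. Since $\mathcal{B}$ is closed and contains all rulings of $S$, it contains $S$ itself.

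For the second assertion, write $\Sigma_{**}\subset\Sigma(V)$ for the locus of lines whose normal bundle is of type $(**)$. Because a general line on $V$ is of type $(*)$ (Proposition~\xref{lem:a-d}\xref{lem:a-d(c)}) and $\Sigma(V)$ is an irreducible smooth threefold (Theorem~\xref{prop:KaRa}\xref{prop:KaRa-a}, see also Remark~\xref{rem:ext-action}), semicontinuity of the splitting type shows that $\Sigma_{**}$ is a proper closed subset, whence $\dim\Sigma_{**}\le 2$. Now let $\mathcal{Z}$ denote the family of cubic cones in $V$ and put $k=\dim\mathcal{Z}$. The rulings of the cones form a family $\mathcal{U}\to\mathcal{Z}$ with one-dimensional fibres (each cone carries a $\PP^1$ of rulings), so $\dim\mathcal{U}=k+1$, and the map $\mathcal{U}\to\Sigma(V)$ sending a ruling $l$ of a cone $S$ to its point $[l]\in\Sigma(V)$ takes values in $\Sigma_{**}$ by Proposition~\xref{lem:a-d}\xref{lem:a-d(e)}.

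The crux is the finiteness of this last map: by Lemma~\xref{cor:common-exc-section} a line contained in a cubic cone lies in only finitely many cubic cones, so each point of $\Sigma_{**}$ is the image of at most finitely many pairs $(S,l)$. Hence $k+1=\dim\mathcal{U}\le\dim\Sigma_{**}\le 2$, giving $k\le 1$, as required. The only step demanding genuine care is precisely this finiteness, which is supplied by Lemma~\xref{cor:common-exc-section}; everything else is a bookkeeping of dimensions, and I would double-check that $\Sigma_{**}$ is indeed proper in $\Sigma(V)$, which rests on the existence of lines of type $(*)$ together with the irreducibility of $\Sigma(V)$.
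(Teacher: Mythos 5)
Your proof is correct, and while your first half is exactly the paper's argument (the inclusion $S\subset\mathcal{B}$ is immediate from Proposition~\xref{lem:a-d}\xref{lem:a-d(c)}--\xref{lem:a-d(e)} applied to the rulings), your proof of the dimension bound takes a genuinely different route. The paper argues by contradiction using the component structure of the Hilbert scheme from Theorem~\xref{prop:KaRa}\xref{prop:KaRa-b}: a family of cubic cones of dimension $\ge 2$ would force an entire component $\SSS_i(V)\cong\PP^2$ to consist of cones; their vertices (pairwise distinct by Proposition~\xref{lem:a-d}\xref{lem:a-d(d)}) would sweep a surface $\mathcal{T}_i\subset\mathcal{B}$, and analyzing $s^{-1}(\mathcal{T}_i)$ as a $\PP^1$-fibration over a surface in $\Sigma(V)$ shows that the cone $S_t$ with vertex at a general $t\in\mathcal{T}_i$ equals $\mathcal{T}_i$ itself, hence is independent of $t$ --- a contradiction. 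You instead run a direct dimension count on the incidence variety of pairs (cone, ruling): it has dimension $k+1$, lands in the jumping locus of $(**)$-lines, which is a proper closed subset of the irreducible threefold $\Sigma(V)$ by semicontinuity and Proposition~\xref{lem:a-d}\xref{lem:a-d(c)}, hence of dimension $\le 2$; and the map has finite fibers because, by Lemma~\xref{cor:common-exc-section}, a line lies in only finitely many cubic cones (note that every line on a cubic cone is a ruling, so the fiber really is controlled by that lemma). This gives $k\le 1$ directly, with no contradiction argument and no appeal to the precise structure $\SSS_i(V)\cong\PP^2$ --- only irreducibility of $\Sigma(V)$ from Theorem~\xref{prop:KaRa}\xref{prop:KaRa-a} is used, and all cited ingredients precede the lemma, so there is no circularity. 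What each approach buys: the paper's sweeping argument avoids the semicontinuity input and produces along the way the geometric picture of the vertex surface and the $\FF_3$-resolutions sitting inside $\LLL(V)$, which is reused informally later; yours is shorter and isolates the one genuinely substantive input, the finiteness in Lemma~\xref{cor:common-exc-section}, exactly as you say. Your residual worry about properness of the jumping locus is resolved as you indicate; alternatively you could replace that locus by the closed set of lines contained in the divisor $\mathcal{B}$, which a general line avoids by Proposition~\xref{lem:a-d}\xref{lem:a-d(b)}--\xref{lem:a-d(d)}, sidestepping semicontinuity altogether.
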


\begin{proof}
The first statement is immediate from Proposition
\xref{lem:a-d}\xref{lem:a-d(c)}--\xref{lem:a-d(e)}.

To show the second,
suppose the contrary. Then an entire component of $\SSS(V)$, say, $\SSS_i(V)$, 
$i\in\{1,2\}$, consists of cubic cones. By 
Proposition~\xref{lem:a-d}\xref{lem:a-d(d)}
the vertices of these cones are all distinct and cover a surface, say, 
$\mathcal{T}_i\subset\mathcal{B}$.

The pull-back $\tilde{\mathcal{T}_i}=s^{-1}(\mathcal{T}_i)\subset\LLL(V)$ is 
$\PP^1$-fibered over $\mathcal{T}_i$. Indeed, let $S_t\subset\mathcal{B}$ be 
the 
cubic cone with vertex $t\in\mathcal{T}_i$. Then the fiber 
$s_t=s^{-1}(t)\cong\PP^1$ parameterizes the family of lines in $V$ through $t$, 
that is, the family of rulings of $S_t$.\footnote{In fact, $s_t$ is the 
exceptional section of a surface $\tilde S_t\cong\FF_3$ in $\LLL(V)$. The 
restriction $s|_{\tilde S_t}:\tilde S_t\to S_t$ is the minimal resolution of 
singularity of the cubic cone $S_t$.} Thus, $\tilde{\mathcal{T}}_i$ is a 
component of the ramification divisor $\mathcal{R}$ of $s$.

By Proposition~\xref{lem:a-d}\xref{lem:a-d(d)}, a general line in $V$ is not 
contained in $\mathcal{B}$. Hence the image $r(\tilde{\mathcal{T}}_i)$ is a 
proper subvariety in $\Sigma(V)$, see 
diagram~\eqref{equation-universal-family-V}. 
So, $r|_{\tilde{\mathcal{T}}_i}: 
\tilde{\mathcal{T}}_i\to r(\tilde{\mathcal{T}}_i)$ is a $\PP^1$-fibration over 
a 
surface $r(\tilde{\mathcal{T}}_i)\subset\Sigma(V)$. The points of this surface 
parametrize a family of lines in $V$ contained in $\mathcal{T}_i$. So, there 
is a one-parameter family of lines passing through a general point 
$t\in\mathcal{T}_i$. These lines sweep up the cubic cone $S_t$ with vertex $t$, 
see again Proposition~\xref{lem:a-d}\xref{lem:a-d(d)}. Thus, 
$S_t=\mathcal{T}_i$ 
for a general $t\in\mathcal{T}_i$. Hence $S_t$ does not depend on $t$, a 
contradiction.
\end{proof}

\begin{proposition}
\label{lem:fixed-pt}
Each component $\SSS_i(V)$, $i=1,2$, of the Hilbert scheme $\SSS(V)$ of cubic 
scrolls in $V$ \textup{(}see 
Theorem~\xref{prop:KaRa}\xref{prop:KaRa-b}\textup{)} contains at least one 
$\Aut^0(V)$-invariant cubic cone.
\end{proposition}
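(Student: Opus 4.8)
The plan is to let $G:=\Aut^0(V)$ act on the Hilbert scheme $\SSS(V)$ and, since $G$ is connected, to note that it preserves each component $\SSS_i(V)\cong\PP^2$ of Theorem~\ref{prop:KaRa}\ref{prop:KaRa-b}. The proposition then amounts to producing, for each $i$, a $G$-fixed point of $\SSS_i(V)$ lying in the cone locus $C_i\subset\SSS_i(V)$, which is closed, $G$-invariant and of dimension $\le 1$ by Lemma~\ref{lem:cubic-cones}. The first step I would take is to recognise the $G$-action on $\SSS_i(V)\cong\PP^2$ as a genuinely projective action coming from lines: sending a cubic scroll to its one-parameter family of rulings identifies $\SSS_1(V)$ and $\SSS_2(V)$ with the two factors of $\Sigma(V)\subset\PP^2\times\PP^2$. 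By Remark~\ref{rem:ext-action} the $G$-action on $\Sigma(V)$ comes from the embedding $G\hookrightarrow\Aut^0(\Sigma(V))\cong\PGL_3(\CC)$, which is faithful because $V$ is covered by lines (Proposition~\ref{lem:a-d}\ref{lem:a-d(b)}), and the two factor representations $\Aut^0(\Sigma(V))\to\Aut(\PP^2)$ are isomorphisms. Hence $G$ acts faithfully on each $\SSS_i(V)\cong\PP^2$ through a representation $\rho_i\colon G\hookrightarrow\PGL_3(\CC)$ (standard on one factor, dual on the other).

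Next I would produce a $G$-fixed point on $\PP^2=\SSS_i(V)$. By Lemma~\ref{lem:stabilizers-in-Omega} and Proposition~\ref{prop:orbits} the group $G$ contains $\Stab_{\G}(g)^0$, which in each of the three orbit types has a nontrivial central torus; accordingly $G$ carries a nontrivial central torus $T_0$. Since $\rho_i$ is faithful, $\rho_i(T_0)$ is a nontrivial torus in $\PGL_3(\CC)$, so its fixed locus on $\PP^2$ is a nonempty union of linear subspaces containing at least one isolated point; as $T_0$ is central this locus is $G$-invariant, and the finite set of isolated fixed points is permuted, hence fixed pointwise, by the connected group $G$. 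This yields a $G$-fixed point of $\SSS_i(V)$, i.e.\ a $G$-invariant cubic scroll $S_i$, for $i=1,2$. Put differently, $\rho_i$ cannot be the irreducible $\operatorname{SO}_3=\PGL_2(\CC)$-action, since $\Stab_{\G}(g)^0$—being $(\Gm)^2$, $\GL_2(\CC)$ or $\Ga\times\Gm$—never embeds in $\PGL_2(\CC)$; thus $\rho_i$ is reducible and admits an invariant point.

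It remains to verify that the $G$-invariant scroll $S_i$ is a cubic \emph{cone}. Here I would feed $(V,S_i)$ into the Sarkisov link~\eqref{diagram-2} of Proposition~\ref{prop:reversion}, obtaining a quintic scroll $F_i\subset W$ with $\Aut^0(V,S_i)\cong\Aut^0(W,F_i)\supseteq G$. When $G$ contains $\SL_2(\CC)$, Lemma~\ref{lem:j=upsilon} forces $F_i\cap\Xi=\Upsilon$, whence $S_i$ is a cone by Corollary~\ref{cor:diagr-2}. For the remaining, solvable possibilities for $G$ I would argue on the $W$-side, using Corollaries~\ref{cor:aut-center} and~\ref{cor:unipotent-actions} together with the orbit structure~\eqref{eq:Aut-W-orbits}, that a $G$-stable quintic scroll must lie in $R$ and meet $\Xi$ along a smooth conic touching $\Upsilon$ with even multiplicities, so that again $S_i$ is a cone by Corollary~\ref{cor:diagr-2}. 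A more direct route to an invariant cone reads it off on the $V$-side as $LV_p$ for a $G$-fixed point $p\in V$ with $\dim LV_p=2$ (Proposition~\ref{lem:a-d}\ref{lem:a-d(d)}).

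The main obstacle is precisely this last step—excluding a $G$-invariant \emph{smooth} cubic scroll, i.e.\ showing the $G$-fixed member of each $\SSS_i(V)$ is forced to degenerate to a cone. This is where the study of $G$-stable quintic scrolls in $W$ from Sections~\ref{sec-2bis}--\ref{sec-3-bis} (the position of $F_i$ relative to $R$ and $\Xi$, and the equivalence in Lemma~\ref{lem:j=upsilon}) carries the real weight. A subsidiary point demanding care is the $G$-equivariant identification of $\SSS_i(V)$ with a factor of $\Sigma(V)$, on which the faithfulness of $\rho_i$—and hence the existence of the fixed point—rests.
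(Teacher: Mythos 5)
Your proposal takes a genuinely different route from the paper, but both of its key steps have real gaps. The critical one is the fixed-point step. You infer that $G=\Aut^0(V)$ has a nontrivial \emph{central} torus because it contains $\Stab_{\G}(g)^0$, which has one; this is a non-sequitur, since a group containing a subgroup with nontrivial center need not have nontrivial center. At this stage of the paper nothing more than the inclusion $\Stab_{\G}(g)^0\subseteq\Aut^0(V)$ is available: the structure results for $\Aut^0(V)$ (Proposition~\xref{thm:toric-case}, Theorem~\xref{cor:final}) are proved \emph{later} and rely on Proposition~\xref{lem:fixed-pt} itself, so invoking them would be circular. Your fallback, ``$\rho_i$ is reducible, hence admits an invariant point,'' is also false: reducibility gives an invariant line \emph{or} an invariant point. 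Concretely, the stabilizer of a line in $\PP^2$, the parabolic $P\cong\GL_2(\CC)\ltimes\CC^2\subset\PGL_3(\CC)$, is connected, contains a maximal torus $(\Gm)^2$, a copy of $\GL_2(\CC)$, and a copy of $\Ga\times\Gm$, yet it has trivial center and \emph{no} fixed point on $\PP^2$ (it acts transitively on the invariant line and on its complement). Thus none of the information you are entitled to use excludes $\rho_i(G)$ from being such a subgroup for one of the two components; duality would then hand you a fixed point on the \emph{other} component only, whereas the proposition demands an invariant cone in both.

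The second gap is the passage from ``$G$-invariant cubic scroll'' to ``cubic cone.'' Lemma~\xref{lem:j=upsilon} assumes as hypotheses that the linked quintic scroll satisfies $F\subset R$ and that $F\cap\Xi$ is a smooth conic; by Corollary~\xref{cor:diagr-2} these conditions hold precisely when $S$ is a cone, i.e.\ they are what you are trying to prove, and if $S_i$ were a smooth invariant scroll the lemma simply would not apply. Likewise Corollaries~\xref{cor:aut-center} and~\xref{cor:unipotent-actions} presuppose scrolls in $R$ invariant under a singular torus, so they cannot be used to show that a $G$-stable $F$ lies in $R$. For the ``remaining solvable cases'' no argument is actually given.

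For comparison, the paper's proof needs no information about $G$ at all. The assignment sending a smooth scroll to its exceptional section defines an $\Aut^0(V)$-equivariant rational map $\varsigma\colon\SSS_i(V)\cong\PP^2\dashrightarrow\Sigma(V)$, regular on the open set $\mathcal{U}_i$ of smooth scrolls and generically finite onto its image by Lemma~\xref{cor:common-exc-section}, hence non-constant. Since every morphism $\PP^2\to\Sigma(V)$ is constant (Lemma~\xref{lem:no-morphism}, an intersection-theoretic computation on the $(1,1)$-divisor), $\varsigma$ cannot be a morphism, so its indeterminacy locus is a nonempty finite subset of the cone locus $\SSS_i(V)\setminus\mathcal{U}_i$; being $\Aut^0(V)$-invariant and finite, it is fixed pointwise by the connected group $\Aut^0(V)$. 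This yields existence and invariance of the cone in each component in one stroke --- exactly the robustness that the group-theoretic route lacks.
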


In the proof we use the following auxiliary results.

\begin{slemma}
\label{lem:no-morphism}
Any morphism $f:\PP^2\to\Sigma(V)$ is constant.
\end{slemma}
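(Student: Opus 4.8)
The plan is to exploit the two $\PP^1$-bundle structures on $\Sigma(V)$. By Theorem~\xref{prop:KaRa} and Remark~\xref{rem:ext-action} I identify $\Sigma(V)$ with the incidence variety of full flags $\{(p,\ell)\mid p\in\ell\}\subset\PP^2\times(\PP^2)^\vee$, equipped with its two projections $\pr_1,\pr_2$ and with $\Pic(\Sigma(V))=\ZZ\,[\mathcal F_1]\oplus\ZZ\,[\mathcal F_2]$, where $\mathcal F_i=\pr_i^*\OOO(1)$. Given $f\colon\PP^2\to\Sigma(V)$, I first study the compositions $g_1=\pr_1\circ f\colon\PP^2\to\PP^2$ and $g_2=\pr_2\circ f\colon\PP^2\to(\PP^2)^\vee$, writing $f^*\mathcal F_1=aH$ and $f^*\mathcal F_2=bH$ with $H$ the hyperplane class on the source.

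The first reduction disposes of the cases where one $g_i$ is constant. I use that every morphism $\PP^2\to\PP^1$ is constant, since two distinct fibres would be disjoint members of a system $|dH|$ with $d\ge1$, contradicting $H^2>0$. If $g_1$ is constant, equal to a point $p_0$, then $g_2$ takes values in the pencil of lines through $p_0$, a $\PP^1\subset(\PP^2)^\vee$, hence $g_2$ is constant as well; symmetrically if $g_2$ is constant. When both $g_i$ are constant, $f(\PP^2)$ lies in the intersection of a fibre of $\pr_1$ and a fibre of $\pr_2$, which is a single point, so $f$ is constant. Thus it remains to exclude the case where both $g_1$ and $g_2$ are nonconstant, whence $a,b\ge1$.

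In coordinates I write $g_1=[F_0:F_1:F_2]$ with $\deg F_i=a$ and no common zero, and $g_2=[G_0:G_1:G_2]$ with $\deg G_i=b$ and no common zero, the $G_i$ being the coefficients of the line $g_2(x)$. The incidence $g_1(x)\in g_2(x)$ becomes the identity $\sum_i F_iG_i\equiv0$. The key step is to encode this bundle-theoretically: the base-point-free forms $F_i$ give a surjection $\OOO^{\oplus3}\to\OOO(a)$ with locally free kernel $K$ of rank $2$, and from $0\to K\to\OOO^{\oplus3}\to\OOO(a)\to0$ one computes $c_1(K)=-aH$ and $c_2(K)=a^2$. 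The relation $\sum_i F_iG_i=0$ says precisely that $(G_0,G_1,G_2)$ is a global section of $K(b)=K\otimes\OOO(b)$, and the absence of a common zero of the $G_i$ says this section is nowhere vanishing. A nowhere-vanishing section of a rank $2$ bundle on a surface forces the top Chern class to vanish; but $c_2(K(b))=c_2(K)+b\,c_1(K)+b^2=a^2-ab+b^2$, which is strictly positive for $a,b\ge1$. This contradiction rules out the remaining case and finishes the proof.

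The main obstacle is handling arbitrary degrees $a,b$: a direct coordinate analysis of $\sum_i F_iG_i=0$ is unwieldy, and the clean resolution is the Chern-class computation $c_2(K(b))=a^2-ab+b^2>0$, which is the heart of the argument. The auxiliary points — nonexistence of nonconstant morphisms $\PP^2\to\PP^1$, and the fact that fibres of $\pr_1$ and $\pr_2$ meet in a single point — are elementary.
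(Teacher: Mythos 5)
Your proof is correct, but it takes a genuinely different route from the paper's. The paper argues on the image: assuming $f$ nonconstant, the image $\mathcal{T}=f(\PP^2)$ is a surface of class $a_1\mathcal{F}_1+a_2\mathcal{F}_2$, and comparing $\deg\mathcal{T}=3(a_1+a_2)$ (computed from the Segre polarization on $\Sigma(V)$) with $\deg\mathcal{T}=d(a_1+a_2)^2$ (computed by pulling back to $\PP^2$, where $d=\deg f$) forces $d(a_1+a_2)=3$, hence $d=1$ and $\{a_1,a_2\}=\{1,2\}$; this residual case is excluded by the Lefschetz theorem, since $\mathcal{T}\cong\PP^2$ would be an ample divisor on $\Sigma(V)$ with $\Pic(\Sigma(V))\hookrightarrow\Pic(\mathcal{T})$, impossible as the ranks are $2$ and $1$. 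You instead work entirely on the source: after the correct elementary reductions (nonexistence of nonconstant morphisms $\PP^2\to\PP^1$ handles the cases where one projection is constant), you encode the incidence relation $\sum_i F_iG_i\equiv 0$ as a nowhere-vanishing section of the twisted syzygy bundle $K(b)$, where $K=\ker\bigl(\OOO^{\oplus 3}\to\OOO(a)\bigr)$ is nothing but $g_1^*\Omega_{\PP^2}(1)$ by the Euler sequence, and derive the contradiction $c_2(K(b))=a^2-ab+b^2>0$ for $a,b\ge 1$. Your approach buys self-containedness: it needs neither the finiteness of $f$ onto its image (used implicitly in the paper when writing $\deg f$), nor the bidegree bookkeeping, nor the Lefschetz theorem --- only the Euler sequence and the Whitney formula --- and it isolates the actual content of the lemma, namely that the flag threefold $\Fl(\PP^2)$ receives no nonconstant morphism from $\PP^2$. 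The paper's argument, in exchange, is intrinsic to $\Sigma(V)$ as a smooth $(1,1)$-divisor in $\PP^2\times\PP^2$ and requires no coordinate identification with the flag variety; your use of that identification is nonetheless legitimate, since the paper itself establishes it (cf.\ Remark~\xref{rem:ext-action} and Notation~\xref{nota:two-kinds}).
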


\begin{proof}
We regard $\Sigma(V)$ as a smooth divisor in $\PP^2\times\PP^2$ of bidegree
$(1,1)$.
Suppose to the contrary that $\mathcal{T}:=f(\PP^2)\subset\Sigma(V)$ is not a 
point. Then $\dim
\mathcal{T}=2$ and
$f:\PP^2\to\mathcal{T}$ is a finite morphism of degree, say, $d$.
By Remark~\xref{rem:ext-action}\xref{rem:ext-action-1} one has $\mathcal{T}\sim 
a_1\mathcal{F}_1+a_2\mathcal{F}_2$ for some
integers $a_1, a_2\ge 0$. There are relations
\begin{equation}
\label{eq:relations-in-homologies}
\mathcal{F}_1^3=\mathcal{F}_2^3=0,\quad\mathcal{F}_1^2\cdot
\mathcal{F}_2=\mathcal{F}_1\cdot\mathcal{F}_2^2=1.
\end{equation}
Since $\Sigma(V)$ is smooth, in suitable bihomogeneous coordinates
$(x_0:x_1:x_2\, ;\, y_0:y_1:y_2)$ in $\PP^2\times
\PP^2$ the equation of $\Sigma(V)$ can be written as
\begin{equation}
\label{eq:equation-of-Sigma} x_0y_0+x_1y_1+x_2y_2=0.
\end{equation}
Hence any fiber of the projections to the factors
$\pr_i|_{\Sigma(V)}:\Sigma(V)\to\PP^2$, $i=1,2$, is isomorphic to $\PP^1$.
It follows that the restrictions
$\pi_i=\pr_i|_{\mathcal{T}}:\mathcal{T}\to\PP^2$, $i=1,2$, are finite morphisms.
Indeed,
$\pi_i(\mathcal{T})$ is neither a point, nor a curve, since $\PP^2$ does not 
admit any
dominant morphism to a curve. In particular, 
$\mathcal{T}\cdot\mathcal{F}_i^2>0$, $i=1,2$, hence
$a_1,a_2>0$.

The degree of $\pi_1\circ f:\PP^2\to\PP^2$
equals $(\deg f)(\deg\pi_1)=d (\mathcal{T}\cdot\mathcal{F}_1^2)=da_2$, while
$\deg(\pi_2\circ f)=da_1$.
Identifying $\Sigma(V)$ with
a smooth hyperplane section
$D_6\subset\PP^7$ of the image of $\PP^2\times\PP^2$ under the Segre embedding,
we obtain 
$\OOO_{\mathcal{T}}(1)=\OOO_{\mathcal{T}}(\mathcal{F}_1+\mathcal{F}_2)$. It 
follows that
\begin{equation*}
\deg\mathcal{T}=(\mathcal{F}_1+\mathcal{F}_2)^2\cdot\mathcal{T}=3(a_1+a_2).
\end{equation*}
On the other hand, $f^*\OOO_{\mathcal{T}}(\mathcal{F}_1)=\OOO_{\PP^2}(da_2)$ and
$f^*\OOO_{\mathcal{T}}(\mathcal{F}_2)=\OOO_{\PP^2}(da_1)$.
So, $f^*\OOO_{\mathcal{T}}(1)=\OOO_{\PP^2}(da_1+da_2)$, and
\begin{equation*}
\deg\mathcal{T}=\frac 1 d (da_1+da_2)^2=d(a_1+a_2)^2.
\end{equation*}
Thus,
$d(a_1+a_2)=3$.
Up to a transposition in
indices, the only possibility is $d=1$, $a_1=2$, $a_2=1$.
In particular, $f$ and $\pi_1$ are birational, hence $\mathcal{T}\cong\PP^2$.
Since $\mathcal{T}$ is an ample divisor on $\Sigma(V)$, by the Lefschetz 
theorem,
the restriction map $\Pic (\Sigma(V))\to\Pic(\mathcal{T})$ is injective, a 
contradiction.
\end{proof}

\begin{proof}[Proof of Proposition~\xref{lem:fixed-pt}.]
By Lemma~\xref{lem:cubic-cones}, for $i=1,2$
there is an
$\Aut^0(V)$-invariant
Zariski dense, open subset $\mathcal{U}_i$
of $\SSS_i(V)$, whose points correspond to smooth
cubic scrolls in $V$. So, each point in $\SSS_i(V)\setminus\mathcal{U}_i$
corresponds to a cubic cone. Any smooth cubic
scroll contains a unique distinguished line, which is its
exceptional section.
This provides an $\Aut^0(V)$-equivariant rational map
\begin{equation}
\label{eq:scrolls-to-lines-map}
\varsigma:\SSS_i(V)\cong\PP^2\dashrightarrow\Sigma(V)\hookrightarrow\PP^2\times\
\PP^2
\end{equation}
sending
a smooth cubic scroll to its exceptional section. This map is regular
on $\mathcal{U}_i$. By Lemma~\xref{cor:common-exc-section} the
restriction $\varsigma|_{\mathcal{U}_i}:\mathcal{U}_i\to\varsigma(\mathcal
U_i)$ has finite fibers. Hence $\dim\varsigma(\mathcal{U}_i)=2$, $i=1,2$.

In particular, $\varsigma$ is non-constant.
By Lemma~\xref{lem:no-morphism}, $\varsigma$
cannot be a morphism. So, its indeterminacy set is a nonempty subset of $
\SSS_i(V)\setminus\mathcal{U}_i$ of dimension zero. It consists of isolated 
fixed
points of $\Aut^0(V)$ that
correspond to $\Aut^0(V)$-invariant cubic cones in $V$.
\end{proof}

\begin{slemma}
\label{lemma-nul-intersection-of-scrolls}
Two general cubic scrolls in $V$ either are disjoint, or meet
transversely in a single point.
\end{slemma}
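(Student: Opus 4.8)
The plan is to reduce the intersection of two cubic scrolls to a proper (zero\-dimensional) intersection in the fourfold $V$, and then to bound the number of intersection points by the relevant intersection number, which I will show is at most $1$.

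First I would record that a general cubic scroll is smooth: by Lemma~\xref{lem:cubic-cones} the cubic cones form an at most one\-dimensional subfamily of each component $\SSS_i(V)\cong\PP^2$ of $\SSS(V)$, so a general member $S\in\SSS_i(V)$ is a smooth scroll $\cong\FF_1$. Next, for any cubic scroll $S$ one has $V\cap\langle S\rangle=S$ scheme\-theoretically (Claim~\xref{claim:they-coinside}); hence for two scrolls $S_1,S_2$
\[
S_1\cap S_2=(V\cap\langle S_1\rangle)\cap(V\cap\langle S_2\rangle)=\langle S_1\rangle\cap S_2 .
\]
Since $\langle S_i\rangle\cong\PP^4$ and $S_2$ is a surface, for a general pair the two surfaces meet properly, i.e.\ in a finite set; I would justify this by a standard dimension count together with the fact that the members of each $\PP^2$\-family move (a general pair has no common component, since by Lemma~\xref{cor:common-exc-section} they cannot share an exceptional section). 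For a proper intersection of the effective cycles $S_1,S_2$ the number of intersection points, counted with multiplicity, equals the intersection number $[S_1]\cdot[S_2]\in H^8(V)=\ZZ$.

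The core step is to show $[S_1]\cdot[S_2]\le 1$, separating two cases. If $S_1,S_2$ lie in different components of $\SSS(V)$, I would evaluate $[S_1]\cdot[S_2]$ on the distinguished pair of disjoint $\Aut^0(V)$\-invariant cubic cones produced in Lemma~\xref{lem:disjoint-cones}\xref{nota:diagr-2-a} (one in each component, cf.\ Proposition~\xref{lem:fixed-pt}); being disjoint they give $[S_1]\cdot[S_2]=0$, so a general cross\-component pair is disjoint. If $S_1,S_2$ lie in the same component they are homologous, and $[S_1]\cdot[S_2]=[S]^2=\deg c_2(\NNN_{S/V})$ for a smooth scroll $S\cong\FF_1$. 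I would compute $\deg c_2(\NNN_{S/V})$ directly from the Sarkisov link~\eqref{diagram-2}: the exceptional divisor $\tilde B=\xi^{-1}(S)$ equals $\PP(\NNN_{S/V})$ with $\OOO_{\tilde B}(\tilde B)=\OOO(-1)$, so the intersection numbers $\tilde B^4$, $L^*\cdot\tilde B^3$, $(L^*)^2\cdot\tilde B^2$ recorded in~\eqref{eq:inters-indices}, together with $L^2\cdot S=3$, determine the Segre and hence the Chern numbers of $\NNN_{S/V}$ via the blowup formulas, and in particular pin down $\deg c_2(\NNN_{S/V})$.

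Finally, once $[S_1]\cdot[S_2]\le 1$ is established, the dichotomy follows: a general pair meets properly in at most one point; if this set is empty the scrolls are disjoint, while if it is a single point $P$ then $P$ carries local intersection multiplicity $1$, which for the smooth surfaces $S_1,S_2\subset V$ forces $T_PS_1\oplus T_PS_2=T_PV$, i.e.\ transversality. The main obstacle I anticipate is the Chern\-class computation in the same\-component case: extracting $\deg c_2(\NNN_{S/V})$ cleanly from~\eqref{eq:inters-indices} and checking that it equals $0$ or $1$ rather than a larger value (any value $\ge 2$ would contradict the asserted conclusion, so this computation is exactly what makes the lemma true).
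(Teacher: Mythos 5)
Your overall strategy (reduce to a proper intersection of cycles, then bound the number of points by the cohomological intersection number) is coherent in outline: your same-component Segre-class computation would indeed reproduce the paper's Lemma~\xref{lem:square} ($S^2=\tilde B^4=1$), and "proper intersection with total multiplicity one implies a single transverse point" is standard. But the decisive step --- that a \emph{general} pair of scrolls meets in a finite set --- is exactly the hard part of the lemma, and your justification does not address it. "No common component" is automatic for two distinct irreducible surfaces and is beside the point: the danger is excess intersection along a curve, which genuinely occurs in this geometry (two cubic cones with a common ruling meet along that ruling, cf.\ Corollary~\xref{cor:intersection-of-cones}, and a scroll $S'\subset A_S$ can meet $S$ along a curve). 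Concretely, if $S_1\cap S_2$ contains a curve dominating the base of the ruling of $S_2$, then every ruling of $S_2$ meets $S_1$, hence $S_2\subset A_{S_1}$ by Lemma~\xref{lemma--SS}; so ruling out one-dimensional intersections for general pairs amounts to showing (i) that the scrolls contained in $A_{S_1}$ form a family of dimension at most $1$, and (ii) that general pairs share no ruling. Point (i) is precisely what the paper's proof establishes via the Sarkisov link (the claim $\dim\mathcal{S}(S)\le 1$), after which it obtains finiteness \emph{and} transversality in one stroke by proving $\dim(\langle S\rangle\cap\langle S'\rangle)\le 0$. A "standard dimension count" via the incidence variety would, moreover, need finiteness of the set of scrolls containing a general line, which the paper only derives later (Corollary~\xref{claim:common-ruling}) as a consequence of this very lemma --- a circularity trap.

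There is a second, more localized gap in your cross-component count. Lemma~\xref{lem:disjoint-cones} is proved only in the setting of~\xref{nota:diagr-2}, i.e.\ under the hypothesis that the quintic scroll $F$ linked to $(V,S)$ is invariant under a singular torus of $\Aut(W)$; at this point of the paper that hypothesis is verified only for fourfolds \emph{constructed} from such an $F$ (Proposition~\xref{prop:center-inv-scroll}), not for an arbitrary $V_{18}$. The paper bridges this (in Lemma~\xref{lem:zero-intersection}, which is logically independent of the present lemma) by a specialization argument: one special $V$ contains two disjoint cubic cones, disjointness forces them into different components of $\SSS(V)$ because $S^2=1$, and the intersection number is constant in the irreducible family of hyperplane sections of $\Omega$. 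You would need to import both that deformation step and the $S^2=1$ remark for your evaluation $[S_1]\cdot[S_2]=0$ to apply to every $V$. As written, the proposal assumes in two places what the paper's argument actually has to establish.
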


\begin{proof}
By Lemma~\xref{lem:cubic-cones} it is enough to deal with a pair of smooth cubic 
scrolls in $V$.
Given a smooth cubic scroll $S$ in $V$, consider the family
$\mathcal{S}(S)$ of all $S'\in\SSS(V)$ such that $S'\subset A_S$.
We claim that $\mathcal{S}(S)$ is at most one-dimensional.
Indeed,
starting with the pair $(V,S)$ one can produce a diagram~\eqref{diagram-2}. If
$\tilde{S'}\subset\tilde{A_S}$ is the proper transform of $S'$ in $\tilde W$,
then $\eta(\tilde{S'})\subset\eta(\tilde{A_S})=F$. Suppose that $\tilde{S'}$
dominates $F$ under $\eta$. Then $\tilde{S'}$ meets each ruling $f_p=\tilde
\eta^{-1}(p)$, $p\in F$, of $\eta:\tilde{A_S}\to F$. Hence $S'$ meets each
line $\xi(f_p)\subset A_S$. Thus, $\xi(f_p)\subset A_{S'}$ for any
$p\in F$. This family of lines covers $A_S$, and so, $A_S=A_{S'}$. Then also the
singular loci $S$ and $S'$ of $A_S=A_{S'}$ coincide, a contradiction.

Thus, $\eta(\tilde{S'})=\theta(S')$ is an irreducible curve in $F$, which can
be either a line, or a smooth conic. Anyway, $\tilde{S'}$ belongs to a
one-parameter family of ruled surfaces in $\tilde{A_S}$. Hence 
$\dim\mathcal{S}(S)\le 1$.

It follows that $S'\not\subset A_S$ for a general pair of cubic scrolls $(S,S')$ 
in $V$. Then $\theta|_{S'}: S'\to
\theta(S')\subset W$ is birational. Hence $\dim(\langle S\rangle\cap\langle
S'\rangle)\le 1$. If $\dim(\langle S\rangle\cap\langle S'\rangle)=1$, then
$\theta (\langle S'\rangle)$ is a plane in $\PP^7$. Then
$\theta(S')\subset W$ is as well a plane, which belongs to a one-parameter 
family of
planes in $W$. Therefore, for a general $S'\in\SSS(V)$ one has $\dim(\langle
S\rangle\cap\langle S'\rangle)\le 0$. So, either $S$ and $S'$ are disjoint, or
they meet transversely in a single point.
\end{proof}

Next we describe the middle cohomology group of $V$.

\begin{proposition}
\label{lem:cohomology}
The group $H^4(V,\ZZ)$ is generated by the classes of two cubic scrolls
$S_1$ and $S_2$ from different components of $\SSS(V)$. In the
cohomology ring $H^*(V,\ZZ)$ these classes satisfy the relations
$[S_1]^2=[S_2]^2=1$, $[S_1]\cdot [S_2]=0$.
\end{proposition}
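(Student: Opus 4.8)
The plan is to determine the Betti numbers of $V$, then to evaluate the cup-product pairing on $H^4(V,\ZZ)$ on the two scroll classes, and finally to use unimodularity of Poincar\'e duality to conclude that these classes form a basis. Since $V=V_{18}\subset\PP^{12}$ is a smooth hyperplane section of the $5$-fold $\Omega=\G/P$ with $\rk\Pic(\Omega)=1$, the Lefschetz hyperplane theorem gives $\bb_0(V)=\bb_2(V)=1$ and $\bb_1(V)=\bb_3(V)=0$; by Poincar\'e duality the only Betti number left to determine is $\bb_4(V)$. To compute it I would use the Sarkisov link of Proposition~\xref{prop:reversion} for a \emph{smooth} cubic scroll $S\subset V$, which exists by Theorem~\xref{prop:KaRa}\xref{prop:KaRa-b} and Lemma~\xref{lem:cubic-cones} (a general member of a component $\cong\PP^2$ of $\SSS(V)$ is smooth). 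Then $\tilde W$ is simultaneously the blowup of $V$ along $S$ and the blowup of $W=W_5$ along a quintic scroll $F$, so the blowup formula yields $\bb_4(V)+\bb_2(S)=\bb_4(\tilde W)=\bb_4(W)+\bb_2(F)$. As $S\cong\FF_1$ and $F\cong\FF_1$ or $\FF_3$ both have $\bb_2=2$, this gives $\bb_4(V)=\bb_4(W)$. I would compute $\bb_4(W)=2$ from diagram~\eqref{equation-diagram-1}, where $\hat W$ is the blowup of $\PP^4$ along the twisted cubic $\Gamma$ (codimension $3$) and of $W$ along the plane $\Xi$ (codimension $2$), whence $3=\bb_4(\PP^4)+\bb_2(\Gamma)+\bb_0(\Gamma)=\bb_4(\hat W)=\bb_4(W)+\bb_2(\Xi)=\bb_4(W)+1$. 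Thus $\bb_4(V)=2$, and applying the same blowup formulae integrally, starting from the torsion-free $\PP^4$, shows $H^*(V,\ZZ)$ is torsion-free, so $H^4(V,\ZZ)\cong\ZZ^2$.

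Next I would compute the self-intersections. For any smooth cubic scroll $S\subset V$ one has $[S]\cdot[S]=\int_S c_2(\NNN_{S/V})$. Using the exceptional divisor $\tilde B$ of $\xi$ and the standard Segre-class identity for a codimension-two blowup, $\tilde B^4=\int_S c_2(\NNN_{S/V})-\int_S c_1(\NNN_{S/V})^2$, together with the equality $\tilde B^4=1$ established in the proof of Proposition~\xref{prop:reversion}, it remains to evaluate $c_1(\NNN_{S/V})$. By adjunction $c_1(\NNN_{S/V})=K_S-K_V|_S=K_S+2L|_S$, where $L$ is the hyperplane class and $-K_V=2L$; on $S\cong\FF_1$ with $L|_S=C_0+2f$ (the cubic-scroll polarization, $C_0$ the $(-1)$-section, $f$ a ruling) and $K_S=-2C_0-3f$ this equals $f$, so $\int_S c_1(\NNN_{S/V})^2=f^2=0$ and $[S]\cdot[S]=\tilde B^4=1$. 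Since this holds for smooth scrolls in either component, $[S_1]\cdot[S_1]=[S_2]\cdot[S_2]=1$.

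Finally I would pin down the cross term and distinguish the two components. By Proposition~\xref{lem:fixed-pt} each component of $\SSS(V)$ contains an $\Aut^0(V)$-invariant cubic cone; for such a cone $S_1$ the Sarkisov link produces, by Lemma~\xref{lem:disjoint-cones}, a cubic cone $S_2$ disjoint from $S_1$, so $[S_1]\cdot[S_2]=0$. As members of a single component are homologous and $[S_1]\cdot[S_1]=1\neq0$, the disjoint cone $S_2$ cannot be homologous to $S_1$ and therefore lies in the other component. Hence $[S_1],[S_2]$ generate a rank-two sublattice of $H^4(V,\ZZ)\cong\ZZ^2$ with intersection matrix $\begin{pmatrix}1&0\\0&1\end{pmatrix}$; in particular they are linearly independent. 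Since the cup-product pairing on $H^4$ of the closed oriented $8$-manifold $V$ is unimodular by Poincar\'e duality and this matrix has determinant $1$, the sublattice has index $1$, i.e.\ $[S_1],[S_2]$ generate $H^4(V,\ZZ)$ and satisfy $[S_1]^2=[S_2]^2=1$, $[S_1]\cdot[S_2]=0$, as claimed.

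The main obstacle is the intersection form, and specifically the separation of the two components at the homological level. The available geometric input, namely that a general pair of cubic scrolls meets in at most one point, does not by itself distinguish whether scrolls from the same or from different components meet; this is why I would route the vanishing $[S_1]\cdot[S_2]=0$ through the existence of \emph{disjoint} invariant cubic cones in Lemma~\xref{lem:disjoint-cones}, and why the self-intersection must be computed honestly (to be $1$ rather than $0$) via the normal bundle, the bound ``at most one point'' being insufficient to exclude the isotropic case.
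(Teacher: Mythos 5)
Your computation of the rank and torsion-freeness of $H^4(V,\ZZ)$, via the integral blowup formula applied twice along the links~\eqref{diagram-2} and~\eqref{equation-diagram-1}, is correct, and so is your evaluation $[S]^2=\tilde B^4=1$: the paper's Lemma~\xref{lem:square} arrives at the same value by an equivalent route (the identity of \cite[Lem.~2.3]{Prokhorov-Zaidenberg-4-Fano} combined with the intersection numbers of \cite[Lem.~5.5]{Prokhorov-Zaidenberg-2015}, rather than your Segre-class identity $\tilde B^4=\int_S c_2(\NNN_{S/V})-\int_S c_1(\NNN_{S/V})^2$ and adjunction). Your final unimodularity argument also matches the paper's implicit endgame.

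The gap is in the cross term. Lemma~\xref{lem:disjoint-cones} is not an unconditional output of the Sarkisov link: it is stated in the setting of~\xref{nota:diagr-2}, whose standing hypothesis is that the quintic scroll $F$ linked to $(V,S_1)$ is invariant under a \emph{singular torus} $\z(\Levi)\subset\Aut(W)$. You apply the lemma to an arbitrary $V$ and the invariant cone of Proposition~\xref{lem:fixed-pt} without verifying this. The generic case can be patched: for $g$ regular semisimple, Corollary~\xref{cor:aut-orbits}\xref{cor:aut-orbits-iii} gives $(\Gm)^2\subset\Aut^0(V)\cong\Aut^0(W,F)$, and by Lemma~\xref{lem:GaxGm} a $2$-torus in $\Aut(W)$ contains a singular torus, so the hypothesis holds; the same works when $\Aut^0(V)\cong\GL_2(\CC)$. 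But for $V\cong V^{\aaa}_{18}$ one only has $\Ga\times\Gm$, whose $\Gm$-factor need not be a singular torus of $\Aut(W)$ --- and indeed, by Remark~\xref{rem:two-linked-actions}, for two of the four cubic cones in $V^{\aaa}_{18}$ it is not. That a ``good'' cone exists in $V^{\aaa}_{18}$ is only established later (Lemma~\xref{cor:case-ii}), via Proposition~\xref{thm:toric-case}, whose proof invokes the very proposition you are proving; so your route is circular, or at best incomplete, in exactly this case. The paper avoids the issue with a global step you omitted (Lemma~\xref{lem:zero-intersection}): it produces a \emph{single} fourfold $V_0$ containing two disjoint cubic cones, built from the explicit $\z(\Levi)$-invariant scroll of Proposition~\xref{prop:center-inv-scroll}\xref{prop:center-inv-scroll-b}, for which the hypothesis of Lemma~\xref{lem:disjoint-cones} holds by construction; it then notes (via $[S]^2=1$) that disjoint cones lie in different components of $\SSS(V_0)$, and transfers $S_1\cdot S_2=0$ to \emph{every} $V$ by deformation invariance of intersection numbers, using Theorem~\xref{thm:Mukai} that all $V_{18}$ form the irreducible family of smooth hyperplane sections of $\Omega$ (a possible monodromy swap of the two components is harmless since the pairing is symmetric). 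Running your direct argument on one well-chosen $V_0$ and adding this deformation step repairs the proof.
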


The proof is preceded by the following lemma.

\begin{slemma}
\label{lem:square}
For any cubic scroll $S$ in $V$ one has $S^2=1$ in $H^*(V,\ZZ)$.
\end{slemma}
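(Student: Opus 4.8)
The plan is to reduce to the case of a smooth cubic scroll and then read off $S^2$ from the Sarkisov link~\eqref{diagram-2} by means of the self-intersection formula. First I would observe that by Theorem~\ref{prop:KaRa}\ref{prop:KaRa-b} the Hilbert scheme $\SSS(V)$ is a disjoint union of two copies of $\PP^2$, so the members of each connected component are algebraically, hence homologically, equivalent and therefore define the same class in $H^4(V,\ZZ)$. Since a general member of either component is a smooth cubic scroll $S\cong\FF_1$ (Lemma~\ref{lem:cubic-cones} together with Theorem~\ref{prop:KaRa}), it suffices to prove $S^2=1$ for such a smooth $S$; the value for a cubic cone then follows by specialization within its component.

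So assume $S$ is smooth. Then $\xi\colon\tilde W\to V$ in~\eqref{diagram-2} is the blowup of the smooth surface $S$ in the smooth fourfold $V$, with exceptional divisor $\tilde B=\PP(\NNN^\vee_{S/V})$ a $\PP^1$-bundle over $S$. I would invoke two standard facts for the blowup of a codimension-two smooth center. The self-intersection formula gives
\[
S^2=\int_S c_2(\NNN_{S/V}),
\]
while the top self-intersection of the exceptional divisor is
\[
\tilde B^4=c_2(\NNN_{S/V})-c_1(\NNN_{S/V})^2,
\]
which follows from $\tilde B|_{\tilde B}=c_1(\OOO_{\tilde B}(-1))$ and the Grothendieck relation for the rank-two bundle $\NNN_{S/V}$. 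Combining the two displays yields $S^2=\tilde B^4+c_1(\NNN_{S/V})^2$, and $\tilde B^4=1$ is already recorded in~\eqref{eq:inters-indices} and in the proof of Proposition~\ref{prop:reversion}.

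It remains to verify $c_1(\NNN_{S/V})^2=0$. Taking determinants in the normal bundle sequence gives $c_1(\NNN_{S/V})=K_S-K_V|_S$, and since $-K_V=2L$ we obtain $c_1(\NNN_{S/V})=K_S+2L|_S$. I would then evaluate the square from the numerical invariants of the embedded scroll: $(L|_S)^2=\deg S=3$, the value $K_S\cdot L|_S=-5$ already computed in the proof of Proposition~\ref{prop:link-2}, and $K_S^2=8$ because $S\cong\FF_1$. This gives
\[
c_1(\NNN_{S/V})^2=(K_S+2L|_S)^2=K_S^2+4\,K_S\cdot L|_S+4\,(L|_S)^2=8-20+12=0,
\]
whence $S^2=\tilde B^4=1$, as claimed.

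I expect the only delicate point to be the singular case: the two blowup formulas above hold only when $S$ is smooth, since otherwise $\tilde B$ is not the projectivized normal bundle and $\NNN_{S/V}$ is not locally free. The deformation argument of the first paragraph---invariance of the homology class within the connected component $\SSS_i(V)\cong\PP^2$---is precisely what bridges this gap and lets the smooth computation govern the cubic cones as well.
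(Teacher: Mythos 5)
Your proof is correct and takes essentially the same route as the paper's: both reduce to a smooth cubic scroll (a general member of each component $\SSS_i(V)\cong\PP^2$, via Lemma~\xref{lem:cubic-cones}), relate $S^2$ to $\tilde B^4$ through Chern-class identities for the blowup $\xi$ in diagram~\eqref{diagram-2}, and conclude from $\tilde B^4=1$. The differences are purely organizational: the paper's relation $\tilde B^4=c_2(V)\cdot S+K_V|_S\cdot K_S-c_2(S)-K_V^2\cdot S$ combined with its normal-bundle sequence is, after substituting $c_1(\NNN_{S/V})=K_S-K_V|_S$, exactly your formula $\tilde B^4=c_2(\NNN_{S/V})-c_1(\NNN_{S/V})^2$ together with the self-intersection formula, and the paper evaluates $\tilde B^4=(H^*-\tilde A)^4=1$ from~\eqref{equation-intersectin-theory-W} where you instead quote the value recorded in the proof of Proposition~\xref{prop:reversion}.
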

\begin{proof}
By Lemma~\xref{lem:cubic-cones} one can choose $S\in\SSS(V)$ to be smooth. 
Consider
the corresponding diagram~\eqref{diagram-2}. For the exceptional divisor
$\tilde B\sim H^*-\tilde A$ of $\xi:\tilde W\to V$ one has 
(\cite[Lem.~2.3]{Prokhorov-Zaidenberg-4-Fano})
\begin{equation}
\label{eq:relation}
\tilde B^4=c_2(V)\cdot S+K_V|_S\cdot K_S-c_2(S)-K_V^2\cdot S.
\end{equation}
Letting $s_0$ and $f$ be the exceptional section and a ruling of 
$S\cong\FF_1\to\PP^1$, respectively, one computes
\begin{equation}
\label{eq:numbers}
K_V^2\cdot S=(2H^2)\cdot S=12,\quad c_2(S)=4,\quad K_V|_S\cdot
K_S=-2(s_0+2f)(-2s_0-3f)=10.
\end{equation}
Plugging~\eqref{eq:numbers} in~\eqref{eq:relation} gives
\begin{equation}
\label{eq:numbers-1} B^4=S^2-6.
\end{equation}
From the exact sequence
\begin{equation*}
0\longrightarrow T_S\longrightarrow 
T_V|_S\longrightarrow\NNN_{V/S}\longrightarrow 0
\end{equation*}
one deduces a relation for the Chern classes
\begin{equation*}(1-K_St+c_2(S)t^2)(1+c_1(\NNN_{V/S})t+c_2(\NNN_{V/S})t^2)=
1-{K_V}|_St+c_2(V)|_St^2.
\end{equation*}
This yields a system
\begin{equation*}
\begin{aligned} c_1(\NNN_{V/S})&=K_S-K_V|_S,
\\[5pt]
c_2(V)\cdot S&=c_2(S)-K_S\cdot
c_1(\NNN_{V/S})+c_2(\NNN_{V/S})
\\
&
=c_2(S)-K_S^2+K_S\cdot K_V|_S+S^2.
\end{aligned}
\end{equation*}
Using~\eqref{eq:numbers} one obtains the relation $c_2(V)\cdot S=S^2+6$. It 
follows
by~\eqref{eq:numbers-1} that
$
\tilde B^4=S^2.
$
Finally, from~\cite[Lem.~5.5]{Prokhorov-Zaidenberg-2015} one gets
\begin{equation*}
S^2=\tilde B^4=(H^*-\tilde A)^4=(H^*)^4+6(H^*)^2\cdot\tilde A^2-4H^*\cdot\tilde
A^3+\tilde A^4=1.
\end{equation*}
\end{proof}

\begin{slemma}
\label{lem:zero-intersection}
For any two cubic scrolls $S_i\in\SSS_i(V)$, $i=1,2$, one has $S_1\cdot S_2=0$.
\end{slemma}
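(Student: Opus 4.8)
The plan is to reduce the statement to the evaluation of the intersection number for a single, conveniently chosen pair, using the fact that the fundamental class of a cubic scroll is constant along each component of $\SSS(V)$. By Theorem~\xref{prop:KaRa}\xref{prop:KaRa-b} each component $\SSS_i(V)\cong\PP^2$ is connected and, being a component of a Hilbert scheme, carries a flat universal family of degree-$3$ surfaces; hence the class $[S]\in H^4(V,\ZZ)$ is locally constant, and so constant, as $S$ varies within $\SSS_i(V)$ — including the degenerate members that are cubic cones. Consequently $S_1\cdot S_2$ is independent of the choice of $S_1\in\SSS_1(V)$ and $S_2\in\SSS_2(V)$, so it will suffice to evaluate it for one pair. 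Moreover, by Lemma~\xref{lem:square} one has $[S]^2=1$ for every cubic scroll $S$; therefore any two cubic scrolls lying in the \emph{same} component have intersection number $[S]^2=1\neq0$, and in particular cannot be disjoint. This observation is the pivot: it turns the problem into producing a disjoint pair.

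Next I would produce a pair of \emph{disjoint} cubic cones in $V$. By Proposition~\xref{lem:fixed-pt}, $V$ carries an $\Aut^0(V)$-invariant cubic cone $S$. Using Proposition~\xref{prop:reversion} I would include $(V,S)$ in a Sarkisov link~\eqref{diagram-2} with a linked pair $(W,F)$; since $V$ is smooth and $S$ is a cubic cone, this linked pair satisfies the hypotheses of Corollary~\xref{cor:diagr-2}, so the framework of~\xref{nota:diagr-2} applies, and Lemma~\xref{lem:disjoint-cones}\xref{nota:diagr-2-a} then furnishes a cubic cone $S'\subset V$ disjoint from $S$. Thus $S$ and $S'$ are disjoint cubic cones.

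To conclude, disjointness forces $S\cdot S'=0$. By the first paragraph, two cubic scrolls in one and the same component meet (their intersection number equals $1$); hence the disjoint cones $S$ and $S'$ must lie in distinct components of $\SSS(V)$. As there are exactly two components (Theorem~\xref{prop:KaRa}\xref{prop:KaRa-b}), I may assume $S\in\SSS_1(V)$ and $S'\in\SSS_2(V)$. Finally, by the constancy of classes established above, $S_1\cdot S_2=[S]\cdot[S']=0$ for all $S_1\in\SSS_1(V)$ and $S_2\in\SSS_2(V)$, which is the assertion.

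The hard part will be the construction of a genuinely disjoint pair of cubic cones, i.e.\ the verification that Lemma~\xref{lem:disjoint-cones} is applicable: one must check that the scroll $F$ linked to the invariant cone $S$ is invariant under a singular torus, as required in~\xref{nota:diagr-2}, which should follow from the $\Aut^0(V)$-equivariance of the link~\eqref{diagram-2} together with the $\Aut^0(V)$-invariance of $S$. Once this disjoint pair is in hand, the remaining steps are formal, relying only on the constancy of the fundamental class over each $\PP^2$-component of $\SSS(V)$ (so that the degenerate cone members represent the same class as the smooth scrolls) and on the normalization $[S]^2=1$ of Lemma~\xref{lem:square}.
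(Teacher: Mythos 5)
Your reduction is sound as far as it goes: the constancy of the class $[S]\in H^4(V,\ZZ)$ along each connected component of the Hilbert scheme is standard, and combined with Lemma~\xref{lem:square} it correctly shows that a single disjoint pair of cubic cones in the given $V$ would finish the proof (disjoint scrolls necessarily lie in different components since scrolls in one component have intersection number $1$). The genuine gap is exactly at the point you flag and defer: Lemma~\xref{lem:disjoint-cones} is stated only in the setting of~\xref{nota:diagr-2}, which requires the linked scroll $F$ to be invariant under a \emph{singular} torus $\z(\Levi)$ of $\Aut(W)$, and this does \emph{not} ``follow from the $\Aut^0(V)$-equivariance of the link together with the $\Aut^0(V)$-invariance of $S$.'' Equivariance only yields $\Aut^0(W,F)\cong\Aut^0(V,S)$; whether this group contains a singular (rather than regular) torus is precisely the delicate issue. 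If $\Aut^0(V)$ contains a $2$-torus, you can close the gap via Lemma~\xref{lem:GaxGm} together with the $\G$-construction of Section~\xref{appendix} (which is available at this stage); but nothing available at this stage excludes $\rk\Aut^0(V)=1$, and this case really occurs: for $V\cong V^{\aaa}_{18}$ one has $\Aut^0(V)\cong\Ga\times\Gm$, and Remark~\xref{rem:two-linked-actions} shows that for some $\Aut^0(V)$-invariant cubic cones the $\Gm$-subgroup of $\Aut^0(W,F_S)$ acts nontrivially on $J_S$, i.e.\ is \emph{not} a singular torus, so your hypothesis fails for those cones. That a suitable cone nevertheless exists in $V^{\aaa}_{18}$ is the content of Lemma~\xref{cor:case-ii}, but its proof (through Proposition~\xref{thm:toric-case}, Lemma~\xref{lem:exceptional-cases-c}, and the analysis of $\Sigma_{\s}(V)$) rests on Proposition~\xref{lem:cohomology} — that is, on the very lemma you are proving. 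In the rank-one case your route is therefore circular.

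The paper avoids this entirely by not constructing the disjoint pair on the given $V$: it produces \emph{one} fourfold containing two disjoint cubic cones, starting from $W$ and choosing $F$ to be $\z(\Levi)$-invariant from the outset (Proposition~\xref{prop:center-inv-scroll}, so the hypothesis of Lemma~\xref{lem:disjoint-cones} holds by construction), and then transfers $S_1\cdot S_2=0$ to every $V$ via Theorem~\xref{thm:Mukai}: all smooth $V_{18}$ form an irreducible family of hyperplane sections of $\Omega$, and the intersection number is deformation invariant across this family. This global step is what your ``local'' strategy was designed to replace, but it is exactly the device that makes the singular-torus verification unnecessary for an arbitrary $V$. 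To repair your argument, either handle the case $\rk\Aut^0(V)=2$ by your method (via Lemma~\xref{lem:GaxGm}) and fall back on the deformation argument for the single exceptional isomorphism class $V^{\aaa}_{18}$, or adopt the paper's one-fourfold-plus-irreducibility argument wholesale.
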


\begin{proof}
By Lemma~\xref{lem:square} any two disjoint cubic scrolls belong to different 
components of $\SSS(V)$. According to 
Proposition~\xref{prop:center-inv-scroll}\xref{prop:center-inv-scroll-b} and 
Lemma~\xref{lem:disjoint-cones}, there is a Fano-Mukai fourfold $V=V_{18}$ of 
genus 10, which contains two disjoint cubic cones. By Theorem~\xref{thm:Mukai}, 
any Fano-Mukai fourfold of genus 10 appears as a smooth hyperplane section of 
the adjoint variety $\Omega\subset\PP^{13}$. Since the family of such 
hyperplane 
sections of $\Omega$ is irreducible, the lemma follows.
\end{proof}

\begin{proof}[Proof of Proposition~\xref{lem:cohomology}]
Using diagram~\eqref{diagram-2} one derives isomorphisms
\begin{equation*}
H^4(V,\ZZ)\cong H^4(W,\ZZ)\cong\ZZ\oplus\ZZ,
\end{equation*} 
see~\cite[Lem.~3.4]{Prokhorov-Zaidenberg-2015}.
By Lemmas~\xref{lem:square} and~\xref{lem:zero-intersection} the classes 
$[S_1]$ and $[S_2]$ are independent and generate $H^4(V,\ZZ)$.
\end{proof}

Let us introduce the following notation.

\begin{notation}
\label{nota:sigma}
Given a cubic scroll $S\in\SSS(V)$, consider the following objects:
\begin{itemize}

\item
the variety $\Lambda(S)\subset\Sigma(V)$ of rulings of $S$ (clearly,
$\Lambda(S)\cong\PP^1$);

\item
the variety $\Sigma (S)\subset\Sigma(V)$ of lines in $V$
which meet $S$.
\end{itemize}
\end{notation}

\begin{slemma}
\label{lem:div-sigma-S}
$\Sigma (S)$ is a divisor on $\Sigma(V)$.
\end{slemma}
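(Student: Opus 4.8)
The plan is to identify $\Sigma(S)$ inside the universal family and to pin down its dimension. In diagram~\eqref{equation-universal-family-V} one has $\Sigma(S)=r\bigl(s^{-1}(S)\bigr)$: a point of $\LLL(V)$ is a pair $(l,P)$ with $P\in l$, so $s^{-1}(S)=\{(l,P):P\in l\cap S\}$, and its image under the proper morphism $r$ is exactly the closed set of lines meeting $S$. Since by Theorem~\ref{prop:KaRa}\ref{prop:KaRa-a} the threefold $\Sigma(V)$ is a smooth divisor of bidegree $(1,1)$ on $\PP^2\times\PP^2$, it is irreducible and smooth of dimension $3$; hence it suffices to prove that $\Sigma(S)$ is a proper closed subvariety of pure dimension $2$.

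For the lower bound I would exploit Lemma~\ref{lemma--SS}. The union of all lines in $V$ meeting $S$ is the hyperplane section $A_S$, which is an irreducible threefold (a member of $|H|$ with $\Pic V=\ZZ\cdot H$), and through every point $P\in A_S\setminus S$ there passes a unique line $l_P$ meeting $S$. Thus $P\mapsto[l_P]$ defines a dominant rational map $\mu\colon A_S\dashrightarrow\Sigma(S)$ whose fibre over $[l]$ is a dense open subset of the curve $l$, of dimension $1$. Consequently the closure $\Sigma_0$ of the image of $\mu$ is an irreducible surface with $\Sigma_0\subseteq\Sigma(S)$ and $\dim\Sigma_0=\dim A_S-1=2$.

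For the upper bound I would note that $\dim\Sigma(S)=\dim r(s^{-1}(S))\le\dim s^{-1}(S)$. By Proposition~\ref{lem:a-d}\ref{lem:a-d(b)} the morphism $s$ is generically finite of degree $3$ and surjective, and its positive-dimensional fibres occur only over the vertices of cubic cones, a subset of $V$ of dimension $\le 1$ by Lemma~\ref{lem:cubic-cones}. Hence $s^{-1}(S)$ splits into the part lying over $S$ minus these vertices, of dimension $\le\dim S=2$, and the part over the (at most one-dimensional) vertex locus, where the fibres are one-dimensional, again of dimension $\le 2$. Therefore $\dim s^{-1}(S)\le 2$, so every component of $\Sigma(S)$ has dimension $\le 2$; combined with the previous paragraph this gives $\dim\Sigma(S)=2$.

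The main obstacle is purity: I must rule out components of $\Sigma(S)$ of dimension $<2$ not contained in $\Sigma_0$, the obvious candidates being the curve $\Lambda(S)\cong\PP^1$ of rulings of $S$ (Notation~\ref{nota:sigma}) and lines through a cone vertex. The cleanest remedy is to show $\Sigma(S)=\Sigma_0$: for any line $l$ meeting $S$ one has $l\subset A_S$, and for a general point $P\in l$ the map $\mu$ is defined with $\mu(P)=[l]$, so $[l]\in\Sigma_0$; the remaining exceptional lines not reached this way---rulings of $S$ and lines through cone vertices, finite in number along any fixed line by Lemma~\ref{cor:common-exc-section}---are recovered as limits of general lines meeting $S$, using the smoothness of $\Sigma(V)$. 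This identifies $\Sigma(S)$ with the irreducible surface $\Sigma_0$, a prime divisor on $\Sigma(V)$, as claimed.
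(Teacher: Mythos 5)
Your core argument is exactly the paper's: the paper's entire proof is that the statement is ``immediate from Lemma~\xref{lemma--SS}'', which amounts precisely to your lower-bound paragraph --- $A_S$ is the irreducible threefold swept out by the lines meeting $S$, through each point of $A_S\setminus S$ there passes exactly one such line, hence the family of lines meeting $S$ has dimension $3-1=2$. That part of your proposal is correct and carefully done. Your upper-bound paragraph is also correct, but it is more work than needed: since a line through any point of $V\setminus A_S$ cannot meet $S$ (every line meeting $S$ lies in $A_S$ by Lemma~\xref{lemma--SS}), $\Sigma(S)$ is a \emph{proper} closed subset of the irreducible threefold $\Sigma(V)$, so every component automatically has dimension $\le 2$.

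The genuine gap is in your final paragraph, at the step you yourself call the main obstacle. Two points. First, ``lines through a cone vertex'' are not actually exceptional: if $l$ meets $S$ and $l\not\subset S$, then a general $P\in l$ lies in $A_S\setminus S$ and the uniqueness in Lemma~\xref{lemma--SS} holds at \emph{every} such point, so $\mu(P)=[l]$ and $[l]\in\Sigma_0$, whether or not $l$ contains a vertex of some cone. The truly exceptional lines are those \emph{contained} in $S$, namely the rulings $\Lambda(S)$ (plus the exceptional section of $S$ when $S$ is smooth), along which $\mu$ is nowhere defined. Second, your disposal of these --- ``recovered as limits of general lines meeting $S$, using the smoothness of $\Sigma(V)$'' --- is an assertion, not an argument: smoothness of the ambient Hilbert scheme gives no reason why a \emph{specific} ruling $l_0$ lies in the closure of the locus of lines meeting $S$ properly (a limit of the lines $l_{P_t}$, $P_t\to Q\in l_0$, is merely \emph{some} line through $Q$ meeting $S$, of which there may be up to three, and nothing forces it to be $l_0$), and Lemma~\xref{cor:common-exc-section} is about a line being exceptional for finitely many scrolls, which has no bearing here. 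The inclusion $\Lambda(S)\subset\Sigma_0$ is in fact true, but in the paper it only emerges in the \emph{next} lemma (Lemma~\xref{claim:lines-in-Sigma}), where the divisor class of $\Sigma(S)$ is computed to be $\mathcal{F}_i$ and $\Sigma(S)$ is thereby identified with the pullback of a line, an $\FF_1$ having $\Lambda(S)$ as exceptional section. So either prove the lemma in the weaker sense the paper uses it (a two-dimensional, i.e.\ codimension-one, closed subset --- which your first paragraphs already establish), or, if you insist on pure codimension one, obtain it by running that class computation on the divisorial part $\Sigma_0$ and deducing $\Sigma(S)=\Sigma_0$ a posteriori, rather than by an unsubstantiated limit argument.
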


\begin{proof}
The statement is immediate from Lemma~\xref{lemma--SS}.
\end{proof}

\begin{slemma}
\label{claim:lines-in-Sigma}
Consider the
standard projections $\pr_i:\Sigma(V)\to\PP^2$, $i=1,2$ \textup(see 
Theorem~\xref{prop:KaRa}\xref{prop:KaRa-a}\textup). Then for
any $S\in\SSS(V)$ the variety $\Lambda(S)\subset\Sigma(V)$ is a fiber of one
of these projections, while $\Sigma(S)$ is the pull-back of a line in $\PP^2$
under the other one. 
In particular, $\Sigma(S)\cong\FF_1$, and $\Lambda(S)\subset\Sigma(S)$ is the 
exceptional section.
\end{slemma}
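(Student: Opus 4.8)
The plan is to pin down the classes of $\Lambda(S)$ and $\Sigma(S)$ in the N\'eron--Severi and Mori groups of the flag threefold $\Sigma(V)$, and then to read off the geometry from these numerical data. Write $\mathcal{F}_1,\mathcal{F}_2$ for the two divisor classes of Remark~\ref{rem:ext-action}, and let $\Phi_1,\Phi_2$ be the classes of the fibres of $\pr_1,\pr_2$; from the product structure of $\PP^2\times\PP^2$ one has $\mathcal{F}_i\cdot\Phi_i=0$ and $\mathcal{F}_i\cdot\Phi_j=1$ for $i\neq j$. By Remark~\ref{rem:ext-action}, $\operatorname{N}^1(\Sigma(V))=\ZZ\mathcal{F}_1\oplus\ZZ\mathcal{F}_2$, and since $\pr_1,\pr_2$ are the only extremal contractions, the Mori cone $\NE(\Sigma(V))$ is generated by $\Phi_1,\Phi_2$. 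Adjunction on the $(1,1)$-divisor $\Sigma(V)\subset\PP^2\times\PP^2$ gives $-K_{\Sigma(V)}=2\mathcal{F}_1+2\mathcal{F}_2$, whence $-K_{\Sigma(V)}\cdot\Phi_i=2$.

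The heart of the argument is the computation of the intersection numbers $\Lambda(S)\cdot\Sigma(S')$ of a ruling-curve against a divisor of meeting-lines; geometrically this counts the rulings of $S$ that meet $S'$. If $S\in\SSS_a(V)$, $S'\in\SSS_b(V)$ with $a\neq b$, then general such scrolls are disjoint, because $[S]\cdot[S']=0$ in $H^4(V,\ZZ)$ by Proposition~\ref{lem:cohomology} together with Lemma~\ref{lemma-nul-intersection-of-scrolls}; no ruling of $S$ can then meet $S'$, so $\Lambda(S)\cdot\Sigma(S')=0$. If $a=b$ and $S\neq S'$ are general, they are smooth (Lemma~\ref{lem:cubic-cones}) and meet transversely in a single point $q$, again by Proposition~\ref{lem:cohomology} and Lemma~\ref{lemma-nul-intersection-of-scrolls}; as $L\cap S'\subseteq S\cap S'=\{q\}$ for a ruling $L\subseteq S$, exactly one ruling of $S$ meets $S'$, namely the one through $q$, and this incidence is transverse for general $S,S'$, so $\Lambda(S)\cdot\Sigma(S')=1$. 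Since each $\SSS_a(V)\cong\PP^2$ is connected, the classes $[\Lambda(S)]$ and $[\Sigma(S)]$ (the latter a divisor by Lemma~\ref{lem:div-sigma-S}) are constant on each component, so these two values govern all pairings between the ruling-curves and the meeting-divisors of the two families.

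Writing $[\Lambda(S)]=b_1\Phi_1+b_2\Phi_2$, $[\Sigma(S)]=a_1\mathcal{F}_1+a_2\mathcal{F}_2$ for $S\in\SSS_1(V)$ and analogously $d_i,c_i$ for $S\in\SSS_2(V)$, the four values above translate into
\begin{equation*}
b_1c_2+b_2c_1=0,\qquad d_1a_2+d_2a_1=0,\qquad b_1a_2+b_2a_1=1,\qquad d_1c_2+d_2c_1=1 .
\end{equation*}
Here $b_i,d_i\ge0$ because $\NE(\Sigma(V))$ is generated by $\Phi_1,\Phi_2$, and $a_i,c_i\ge0$ because $a_2=\Sigma(S)\cdot\Phi_1$, $a_1=\Sigma(S)\cdot\Phi_2$ are intersections of an effective divisor with the covering fibre classes. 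These constraints force, up to interchanging $\pr_1$ and $\pr_2$, the unique solution $[\Lambda(S)]=\Phi_1,\ [\Sigma(S)]=\mathcal{F}_2$ for $S\in\SSS_1(V)$ and $[\Lambda(S)]=\Phi_2,\ [\Sigma(S)]=\mathcal{F}_1$ for $S\in\SSS_2(V)$. It remains to pass from classes to geometry. Since $\mathcal{F}_i\cdot\Lambda(S)=0$ for one index $i$, the projection $\pr_i$ contracts the irreducible curve $\Lambda(S)\cong\PP^1$ to a point, and being contained in the $\PP^1$-fibre over that point, $\Lambda(S)$ equals that fibre. For the other projection $\pr_j$, the divisor $\Sigma(S)$ lies in the class $\mathcal{F}_j=\pr_j^*\OOO_{\PP^2}(1)$; by the projection formula $|\mathcal{F}_j|=\pr_j^*|\OOO_{\PP^2}(1)|$, so $\Sigma(S)=\pr_j^{-1}(m)$ for a line $m\subset\PP^2$. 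In particular $\Sigma(S)$ is the preimage of $m$ under the $\PP^1$-bundle $\pr_j$, hence a smooth Hirzebruch surface in which $\Lambda(S)$ is a section; using $\deg\NNN_{\Lambda(S)/\Sigma(V)}=-K_{\Sigma(V)}\cdot\Lambda(S)-2=0$ and $\NNN_{\Sigma(S)/\Sigma(V)}|_{\Lambda(S)}=\OOO(\mathcal{F}_j)|_{\Lambda(S)}$ of degree $\mathcal{F}_j\cdot\Lambda(S)=1$ in the normal-bundle sequence of $\Lambda(S)\subset\Sigma(S)\subset\Sigma(V)$ yields $\Lambda(S)^2=-1$ on $\Sigma(S)$, so $\Sigma(S)\cong\FF_1$ and $\Lambda(S)$ is its exceptional section.

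I expect the main obstacle to be the transversality underlying the value $\Lambda(S)\cdot\Sigma(S')=1$ in the equal-family case: one must verify that for general $S,S'$ the unique common ruling contributes the intersection with multiplicity one, i.e.\ that the smooth divisor $\Sigma(S')$ and the curve $\Lambda(S)$ meet transversely at the point of $\Sigma(V)$ representing that ruling. This should follow from the transversality of $S\cap S'=\{q\}$ in $V$, since moving $q$ along $S$ within the ruling family of $S$ immediately leaves $S'$; making this precise is the only delicate step, the remaining verifications being formal.
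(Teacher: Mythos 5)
Your proposal is correct and takes essentially the same route as the paper: the paper's proof likewise pins down the classes of $\Sigma(S)$ and $\Lambda(S)$ from the two intersection numbers $\Sigma(S_1)\cdot\Lambda(S_2)=0$ and $\Sigma(S_1)\cdot\Lambda(S_1')=1$, obtained from Proposition~\xref{lem:cohomology} and Lemma~\xref{lemma-nul-intersection-of-scrolls} exactly as you do. The transversality in $\Sigma(V)$ that you single out as the one delicate step is asserted in the paper without further comment, and your closing normal-bundle computation of $\Lambda(S)^2=-1$ merely makes explicit the ``in particular'' clause that the paper leaves to the reader.
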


\begin{proof}
Let $S_1\in\SSS_1(V)$.
For a general cubic scroll $S_2\in\SSS_2(V)$,
by Lemma~\xref{lemma-nul-intersection-of-scrolls}
and Proposition~\xref{lem:cohomology} one has $S_1\cap S_2=\emptyset$.
Therefore, $\Sigma(S_1)\cdot\Lambda(S_2)=0$. Similarly, for a general 
$S_1'\in\SSS_1(V)$
one has $\Sigma(S_1)\cdot\Lambda(S_1')=1$. In the notation of 
Remark~\xref{rem:ext-action}\xref{rem:ext-action-1}, modulo reindexing 
$\SSS_i(V)$ one obtains $\Sigma(S_1)\sim\mathcal{F}_1$. Thus, $\Sigma(S_1)$ is 
the pull-back of a line
under a projection $\Sigma(V)\to\PP^2$. This shows as well that $\Lambda(S_1)$ 
is a fiber of the other projection.
\end{proof}

The following corollary of~\eqref{eq:relations-in-homologies} and
\xref{claim:lines-in-Sigma} is immediate, cf.~\cite[proof of 
Prop.~2]{KapustkaRanestad2013}.

\begin{scorollary}
\label{claim:common-ruling}
\begin{enumerate}
\renewcommand\labelenumii{\rm (\roman{enumii})}
\renewcommand\theenumii{\rm (\roman{enumii})}

\item
\label{claim:common-ruling-a}
Any line $l\in\Sigma(V)$ is a common ruling of exactly two cubic scrolls
$S_1(l)$ and $S_2(l)$, where $S_i(l)\in\SSS_i(V)$, $i=1,2$. In particular, $V$ 
is covered by the cubic scrolls contained in $V$.

\item
\label{claim:common-ruling-b}
If $l$ is contained in a third cubic scroll $S\neq S_i(l)$, $i=1,2$, then $S$ is
smooth, and $l$ is the exceptional line of $S$.

\item
\label{claim:common-ruling-d} 
Two distinct cubic scrolls $S'\neq S$ in $V$
can have at most one common ruling.
\end{enumerate}
\end{scorollary}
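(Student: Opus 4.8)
The plan is to read off all three assertions from the flag-type description of $\Sigma(V)$ in Lemma~\xref{claim:lines-in-Sigma}, viewing $\Sigma(V)$ as a smooth $(1,1)$-divisor in $\PP^2\times\PP^2$ with its two projections $\pr_1,\pr_2$, and recalling that $\SSS(V)=\SSS_1(V)\sqcup\SSS_2(V)$ by Theorem~\xref{prop:KaRa}\xref{prop:KaRa-b}. After the reindexing fixed in the proof of Lemma~\xref{claim:lines-in-Sigma}, for every $S\in\SSS_1(V)$ one has $\Sigma(S)\sim\mathcal{F}_1$ (the class is constant on the connected component $\SSS_1(V)$) and $\Lambda(S)$ is a fibre of $\pr_2$. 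First I would pin down the dual statement for $\SSS_2(V)$: a general pair $(S_1,S_2)\in\SSS_1(V)\times\SSS_2(V)$ is disjoint by Lemma~\xref{lemma-nul-intersection-of-scrolls} and Proposition~\xref{lem:cohomology}, so no ruling of $S_2$ meets $S_1$ and hence $\Sigma(S_1)\cdot\Lambda(S_2)=0$. Since $\Sigma(S_1)\sim\mathcal{F}_1$, the relations~\eqref{eq:relations-in-homologies} force $\Lambda(S_2)$ to have curve class $\mathcal{F}_1^2$ rather than $\mathcal{F}_2^2$, because $\mathcal{F}_1\cdot\mathcal{F}_2^2=1$ while $\mathcal{F}_1\cdot\mathcal{F}_1^2=\mathcal{F}_1^3=0$. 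Thus $\Lambda(S_2)$ is a fibre of $\pr_1$. Conclusion of this step: the rulings of scrolls in $\SSS_1(V)$ are exactly the fibres of $\pr_2$, and the rulings of scrolls in $\SSS_2(V)$ exactly the fibres of $\pr_1$.

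Next I would observe that $S\mapsto\Lambda(S)$ defines morphisms $\lambda_1:\SSS_1(V)\cong\PP^2\to\PP^2$ and $\lambda_2:\SSS_2(V)\cong\PP^2\to\PP^2$ onto the bases of $\pr_2$ and $\pr_1$, respectively. Each $\lambda_i$ is injective, since a cubic scroll is the union of its rulings and so is recovered from $\Lambda(S)$; an injective morphism $\PP^2\to\PP^2$ is an isomorphism. Hence every fibre of $\pr_2$ equals $\Lambda(S)$ for a unique $S\in\SSS_1(V)$, and similarly for $\pr_1$ and $\SSS_2(V)$. Part~\xref{claim:common-ruling-a} follows at once: a given $l\in\Sigma(V)$ lies in a unique $\pr_2$-fibre and a unique $\pr_1$-fibre, yielding a unique $S_1(l)\in\SSS_1(V)$ and a unique $S_2(l)\in\SSS_2(V)$ having $l$ as a ruling; these are distinct because $\SSS_1(V)$ and $\SSS_2(V)$ are disjoint, and no further scroll has $l$ as a ruling. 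The covering statement is then immediate, as $V$ is covered by lines by Proposition~\xref{lem:a-d}\xref{lem:a-d(b)}.

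For part~\xref{claim:common-ruling-b} I would argue as follows. If $l\subset S$ with $S\neq S_1(l),S_2(l)$, then $l$ is \emph{not} a ruling of $S$ by the "exactly two" count just established, so $l$ is a line of $S$ that is not a ruling. A cubic cone contains no line other than its rulings, all of which pass through its vertex; hence $S$ cannot be a cone, so $S\cong\FF_1$ is a smooth cubic scroll, whose only non-ruling line is the exceptional section. Therefore $l$ is the exceptional section of $S$. For part~\xref{claim:common-ruling-d}, a common ruling of $S\neq S'$ is a point of $\Lambda(S)\cap\Lambda(S')$. If $S,S'$ lie in the same component, then $\Lambda(S)$ and $\Lambda(S')$ are distinct fibres of one projection (distinct by injectivity of $\lambda_i$), hence disjoint. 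If they lie in different components, then $\Lambda(S)=\pr_2^{-1}(y)$ and $\Lambda(S')=\pr_1^{-1}(x)$ for some $x,y\in\PP^2$, and since a point of $\Sigma(V)\subset\PP^2\times\PP^2$ is determined by its two projections, $\Lambda(S)\cap\Lambda(S')$ reduces to at most the single point $(x,y)$. In either case there is at most one common ruling.

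All the computations here are routine; the only point demanding care is the first step, where the two connected components $\SSS_1(V)$ and $\SSS_2(V)$ must be correctly matched to the two distinct projections, combining the relations~\eqref{eq:relations-in-homologies} with the genericity input of Lemma~\xref{lemma-nul-intersection-of-scrolls}. The single genuinely geometric ingredient is the elementary fact used in~\xref{claim:common-ruling-b} that a cubic cone has no lines besides its rulings, which is what forces any extra scroll through $l$ to be smooth with $l$ as its exceptional section.
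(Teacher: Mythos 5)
Your proposal is correct and follows essentially the route the paper intends: the paper declares this corollary immediate from the relations~\eqref{eq:relations-in-homologies} and Lemma~\xref{claim:lines-in-Sigma}, and your argument is precisely a careful unwinding of that — matching the two components $\SSS_i(V)$ with the two projections via the intersection numbers and the disjointness input of Lemma~\xref{lemma-nul-intersection-of-scrolls} with Proposition~\xref{lem:cohomology}, then using bijectivity of $S\mapsto\Lambda(S)$ onto the bases of the rulings. The supplementary details you supply (constancy of the fiber class on each connected component, injectivity forcing an isomorphism $\PP^2\to\PP^2$, and the fact that a cubic cone contains no lines besides its rulings, which yields~\xref{claim:common-ruling-b}) are all sound and consistent with the paper's cross-reference to the proof of Proposition~2 in \cite{KapustkaRanestad2013}.
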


The possible intersections of two cubic cones are as follows.

\begin{scorollary}
\label{cor:intersection-of-cones}
Two distinct cubic cones $S$, $S'$ in $V$ either are disjoint, or meet 
transversally in one point, or finally contain a common ruling which is the only one-dimensional component of their intersection.
\end{scorollary}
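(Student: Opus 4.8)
The plan is to reduce the intersection to a linear section and then control its one–dimensional part. First I would use that $V$ is an intersection of quadrics with $S=V\cap\langle S\rangle$ and $S'=V\cap\langle S'\rangle$ as schemes (see \ref{claim:they-coinside}), so that $S\cap S'=V\cap\Pi=S\cap\Pi$ with $\Pi:=\langle S\rangle\cap\langle S'\rangle$, and $\langle S\rangle\neq\langle S'\rangle$ because $S\neq S'$. Since a cubic cone is a cone over a twisted cubic, the only lines it contains are its rulings (the base carries no line). Hence I would split any one–dimensional irreducible component $C$ of $S\cap S'$ into two kinds: either $C$ is a line, in which case it is a ruling of both $S$ and $S'$, i.e.\ a \emph{common ruling}; or $C$ is not a line, and then projecting the cone $S$ from its vertex $v$ to the base sends $C$ onto the whole base, so $C$ meets every ruling of $S$, and likewise every ruling of $S'$.

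The crux, which I would address next, is to exclude the second kind. So assume $C$ is not a line. Then $C$ is a multisection of both pencils of rulings, whence $S=\Cone(C,v)$ and $S'=\Cone(C,v')$. If $v'\in\langle S\rangle$, then $\langle S'\rangle=\langle C,v'\rangle\subseteq\langle S\rangle$, forcing $\langle S'\rangle=\langle S\rangle$ and $S'=S$, a contradiction; hence $v'\notin\langle S\rangle$. I would then invoke the Sarkisov link \eqref{diagram-2} attached to $(V,S)$ by Proposition \ref{prop:reversion}, with $\theta\colon V\dashrightarrow W$ the projection from $\langle S\rangle\cong\PP^4$, $\xi$ the blowup of $S$, and $\eta$ the blowup of the smooth quintic scroll $F$. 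For each $q\in C\subseteq S\subseteq\langle S\rangle$ the ruling $\overline{v'q}$ of $S'$ meets $\langle S\rangle$ only in $q$, so $\langle\langle S\rangle,p\rangle=\langle\langle S\rangle,v'\rangle$ for every $p\in\overline{v'q}\setminus\{q\}$; thus $\theta$ sends this whole ruling to the single point $[\langle\langle S\rangle,v'\rangle]\in\PP^7$. As these rulings sweep out $S'=\Cone(C,v')$, the map $\theta$ contracts $S'\setminus S$ to one point. But $\eta$ resolves $\theta$ and is the blowup of a smooth surface in a smooth fourfold, so all its fibers have dimension at most one; therefore the proper transform of the surface $S'$ cannot be contracted to a point, a contradiction. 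Hence every one–dimensional component of $S\cap S'$ is a common ruling.

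Granting this, I would finish as follows. By Corollary \ref{claim:common-ruling} \ref{claim:common-ruling-d} two distinct cubic scrolls share at most one common ruling, so $S\cap S'$ has at most one one–dimensional component, and when it has one this is a common ruling $\ell$, giving the third alternative. If instead $S\cap S'$ is finite, the intersection is proper; moreover no vertex lies on the other cone, for $v\in S'$ would make $\overline{vv'}$ a ruling of $S'$ through the vertex $v$ of $S$, hence a line of $V$ through $v$, i.e.\ a ruling of $S$, producing a one–dimensional component—excluded. Thus $S\cap S'$ consists of smooth points of both surfaces, and the positive local intersection multiplicities sum to the cohomological product $[S]\cdot[S']$. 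Since each $\SSS_i(V)\cong\PP^2$ is connected (Theorem \ref{prop:KaRa} \ref{prop:KaRa-b}), the class $[S]=[S_i]$ depends only on the component, so $[S]\cdot[S']=\delta_{ij}$ for $S\in\SSS_i(V)$, $S'\in\SSS_j(V)$ by Proposition \ref{lem:cohomology}. Different components give product $0$, forcing $S\cap S'=\emptyset$ (first alternative); the same component gives product $1$, so $S\cap S'$ is a single point of transversal meeting (second alternative).

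The main obstacle is exactly the exclusion performed in the second paragraph: a priori $S\cap S'$ might contain a common twisted–cubic base $C$, equivalently $S=\Cone(C,v)$ and $S'=\Cone(C,v')$, which is neither a point nor a common ruling. The contraction argument through $\theta$, combined with the fact that the fibers of the blowup $\eta$ are at most one–dimensional, is what rules this out; the rest is the linear–span reduction, the bound on common rulings in \ref{claim:common-ruling} \ref{claim:common-ruling-d}, and the intersection numbers of Proposition \ref{lem:cohomology}.
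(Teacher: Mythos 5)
Your proof is correct, but its central step takes a genuinely different route from the paper's. For the case $\dim(S\cap S')=1$, the paper argues through Lemma~\xref{lemma--SS}: every ruling of $S'$ meets $S$, so $S'\subset A_S$; since through each point of $A_S\setminus S$ passes exactly one line of $A_S$ meeting $S$, the vertex $v(S')$ (which lies on a one-parameter family of such lines) must lie on $S$, and symmetrically $v(S)\in S'$; the line joining the vertices is then a common ruling, and the scheme equalities $S=V\cap\langle S\rangle$, $S'=V\cap\langle S'\rangle$ show that $S\cap S'$ is a union of lines through \emph{both} vertices, which forces uniqueness. You instead exclude a non-line component $C$ directly: $C$ forces $S'=\Cone(C,v')$ with $v'\notin\langle S\rangle$, so the projection $\theta$ from $\langle S\rangle$ would contract the surface $S'$ to the single point $\theta(v')$, impossible because $\theta$ is resolved by $\eta$, the blowup of the smooth scroll $F$ in the smooth $W_5$ (Proposition~\xref{prop:reversion}), whose fibers are at most one-dimensional; uniqueness of the common ruling then comes from Corollary~\xref{claim:common-ruling}\xref{claim:common-ruling-d} instead of the span argument. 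Both are sound; the paper's route buys the extra geometric facts that each vertex lies on the other cone and that the common ruling is the line $\overline{v(S)v(S')}$, which are reused later (e.g.\ in Corollary~\xref{cor:finiteness} and Theorem~\xref{thm:main-aut}), while your route relies only on the fiber-dimension bound of the Sarkisov link, and your finite case is spelled out more carefully (reduction to smooth intersection points, positivity of local multiplicities) than the paper's terse appeal to Proposition~\xref{lem:cohomology}. One small point to patch: in the finite case you tacitly assume $v(S)\neq v(S')$ when forming $\overline{vv'}$; if $v(S)=v(S')=p$, then both cones coincide with $LV_p$ by Proposition~\xref{lem:a-d}\xref{lem:a-d(d)}, contradicting $S\neq S'$ --- this is exactly the point the paper covers by its citation of Proposition~\xref{lem:a-d}.
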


\begin{proof}
By Proposition~\xref{lem:cohomology}, if the intersection $S\cap S'$ is finite, 
then either it is empty, or $S$ and $S'$ meet transversally at a single point, 
depending on whether $S, S'$ belong to different components of $\SSS(V)$ or not.
%

Assume that $\dim (S\cap S')=1$. 
Then any ruling of $S'$ meets $S$ and so it is contained in $A_S$.
Therefore, $S'\subset A_S$. Since through any point of $A_S\setminus S$ 
passes exactly one line in $A_S$ meeting $S$ (see Lemma~\xref{lemma--SS}), the vertex $v(S')$ lies on $S$.
By symmetry, $v(S)$ lies on $S'$. Thus, the line $l$ joining $v(S)$ and $v(S')$
is a common ruling. 

It follows that $v(S')\in\langle S\rangle$, and so, 
$\langle S\rangle \cap S'$ is a union of lines in $V$ passing through $v(S')$.
By symmetry, $\langle S'\rangle \cap S$ is a union of lines passing through $v(S)$. One concludes that 
$$S\cap S'=(\langle S'\rangle \cap S)\cap (\langle S\rangle \cap S')$$ 
is a union of lines in $V$ passing through both $v(S)$ and $v(S')$. Since $v(S)\neq v(S')$, see Proposition \ref{lem:a-d}(e), 
$l$ is the only one-dimensional component of $S\cap S'$.
\end{proof}

We can now reinterpret the embedding $\Sigma(V)\hookrightarrow\PP^2\times\PP^2$
as follows.

\begin{scorollary}
\label{claim:common-ruling-1}
In the notation of~\xref{claim:common-ruling}\xref{claim:common-ruling-a}, the
morphism
\begin{equation}
\label{eq:embedding}
\varrho:\Sigma(V)\longrightarrow\SSS_1(V)\times\SSS_2(V)\cong\PP^2\times\PP^2,
\quad
l\longmapsto (S_1(l),\,S_2(l)),
\end{equation}
realizes $\Sigma(V)$ as a smooth $(1,1)$-divisor in $\PP^2\times\PP^2$.
\end{scorollary}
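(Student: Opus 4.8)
The plan is to show that the morphism $\varrho$ coincides, up to the natural identifications of the parameter spaces $\SSS_i(V)$ with the images of the two projections, with the embedding $\Sigma(V)\hookrightarrow\PP^2\times\PP^2$ already furnished by Theorem~\xref{prop:KaRa}\xref{prop:KaRa-a}. Everything then reduces to matching the two descriptions: once $\varrho$ is recognized as that embedding, the smoothness and the bidegree $(1,1)$ come for free.

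First I would invoke Lemma~\xref{claim:lines-in-Sigma}: for every $S\in\SSS(V)$ the pencil of rulings $\Lambda(S)\cong\PP^1$ is a fiber of one of the two standard projections $\pr_1,\pr_2\colon\Sigma(V)\to\PP^2$. A fixed $\PP^1\subset\Sigma(V)$ cannot be simultaneously a fiber of $\pr_1$ and of $\pr_2$, so this projection is uniquely determined by $S$; since $\SSS_i(V)\cong\PP^2$ is connected and the choice is discrete, all scrolls in one component have their ruling pencils lying in fibers of one and the same projection. By Corollary~\xref{claim:common-ruling}\xref{claim:common-ruling-a} every line is a common ruling of exactly one scroll from each of $\SSS_1(V)$ and $\SSS_2(V)$, so the two components must correspond to the two distinct projections; after relabeling the factors of $\PP^2\times\PP^2$ I may assume that $\Lambda(S)$ is a fiber of $\pr_i$ for $S\in\SSS_i(V)$.

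Next I would set up the identification. Let $\sigma_i\colon\Sigma(V)\to\SSS_i(V)$, $l\mapsto S_i(l)$, denote the $i$-th component of $\varrho$. It is a surjective morphism whose fibers are exactly the pencils $\Lambda(S)$, that is, the fibers of $\pr_i$; thus $\sigma_i$ and $\pr_i$ induce the same fiber partition of the normal variety $\Sigma(V)$. By the rigidity lemma (both maps are proper and surjective with connected fibers, and the targets are normal) there is an isomorphism $\iota_i\colon\SSS_i(V)\to\PP^2$ of projective planes with $\iota_i\comp\sigma_i=\pr_i$; indeed $l\in\Lambda(S_i(l))=\pr_i^{-1}(\pr_i(l))$, so $\iota_i$ sends $S_i(l)$ to $\pr_i(l)$. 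Consequently $(\iota_1\times\iota_2)\comp\varrho$ is precisely the inclusion $\Sigma(V)\hookrightarrow\PP^2\times\PP^2$ of Theorem~\xref{prop:KaRa}\xref{prop:KaRa-a}.

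Since $\iota_1\times\iota_2$ is an isomorphism of $\SSS_1(V)\times\SSS_2(V)$ onto $\PP^2\times\PP^2$ and that inclusion realizes $\Sigma(V)$ as a smooth divisor of bidegree $(1,1)$, the same holds for $\varrho$. The only point requiring some care — and the main, rather mild, obstacle — is the passage from a set-theoretic bijection to an honest isomorphism $\iota_i$ of varieties; this is where I would lean on the rigidity argument above, using that $\Sigma(V)$ is smooth (hence normal) and that $\sigma_i,\pr_i$ are proper surjective morphisms sharing their connected fibers, so each factors through the other by an isomorphism of the target $\PP^2$. All the remaining ingredients are already in place from Theorem~\xref{prop:KaRa} and Lemmas~\xref{claim:lines-in-Sigma} and~\xref{cor:common-exc-section}.
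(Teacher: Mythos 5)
Your proposal is correct and follows essentially the same route as the paper, which states this corollary without a separate proof as an immediate reinterpretation of the embedding from Theorem~\xref{prop:KaRa}\xref{prop:KaRa-a} via Lemma~\xref{claim:lines-in-Sigma} (ruling pencils $\Lambda(S)$ are fibers of the projections) and Corollary~\xref{claim:common-ruling}\xref{claim:common-ruling-a}; your rigidity argument simply makes explicit the identification $\iota_i\colon\SSS_i(V)\stackrel{\cong}{\to}\PP^2$ that the paper leaves tacit. The only point you assume rather than prove is that $l\mapsto S_i(l)$ is a priori a morphism (algebraicity would come from the classifying map to the Hilbert scheme for the flat family of cubic surfaces swept out by the fibers of $\pr_i$), but the corollary's own wording presupposes this as well, so the gap is shared with the paper.
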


We use below the following notation and terminology.

\begin{notation}
\label{nota:4cases}
Given a line $l$ in $V$, consider the following objects:
\begin{itemize}

\item
the family $\Sigma(l)\subset\Sigma(V)$ of
lines in $V$ which meet $l$;

\item
the union $\Theta(l)\subset V$ of lines from $\Sigma(l)$;

\item
the morphism $\pi:\Sigma(l)\setminus\{l\}\to l$,
$l'\longmapsto l'\cap l$.
\end{itemize}

It is known (\cite[proof of Prop.~2]{KapustkaRanestad2013}) that
$\Sigma(l)\setminus\{l\}$ is a curve in $\Sigma(V)$ of bidegree $(1,1)$. Due
to Proposition~\xref{lem:a-d}\xref{lem:a-d(d)} and Lemma
\xref{cor:common-exc-section}, $\pi$ is generically $2:1$ or $1:1$. So,
$\Sigma(l)$ consists of at most two components. Then also $\Theta(l)$ consists
of at most two components of pure dimension $2$.

A line $l\in\Sigma(V)$ will be called a \emph{splitting line}, or an
\emph{$s$-line} for short, if $\Theta(l)$ splits into two components
$\Theta_1(l)$ and $\Theta_2(l)$.
We let $\Sigma_{\s}(V)\subset\Sigma(V)$ denote the subvariety of $s$-lines in 
$V$. Clearly, $\Sigma_{\s}(V)$ is a closed subset in $\Sigma(V)$. We consider it 
with its reduced structure.

The proof of Proposition~2 in~\cite{KapustkaRanestad2013} shows that
$\Theta_1(l)$ and $\Theta_2(l)$ belong to different components of $\SSS(V)$.
We adopt the convention $\Theta_i(l)\in\SSS_i(V)$, $i=1,2$. Adopting a 
terminology from~\cite{KapustkaRanestad2013}, we call a line $l$ in a cubic 
scroll $S$ \emph{exceptional} if either $S$ is smooth and $l$ is the exceptional 
section of $S$, or $S$ is a cone and $l$ is a ruling of $S$.
\end{notation}

Since $(\Theta(l))_{l\in\Sigma(V)}$ is a flat family over an irreducible base, 
the function $l\longmapsto\deg\Theta(l)$
on $\Sigma(V)$ is constant. In statement~\xref{lem:degree-of-theta-a} of the 
following lemma we determine this constant.

\begin{lemma}
\label{lem:degree-of-theta}
\begin{enumerate}
\renewcommand\labelenumi{\rm (\alph{enumi})}
\renewcommand\theenumi{\rm (\alph{enumi})}

\item
\label{lem:degree-of-theta-a}
For any line $l\in\Sigma(V)$
one has $\deg\Theta(l)=6$. If $l$ is an $s$-line with
$\Theta(l)=\Theta_1(l)\cup\Theta_2(l)$, then $\Theta_i(l)$, $i=1,2$, are cubic
scrolls.

\item
\label{claim:s-lines-1}
Let $l\in\Sigma(V)$. Then $l\in\Sigma_{\s}(V)$, that is,
$l$ is an $s$-line,
if and only if $l$ is an exceptional line of a cubic scroll.

\item
\label{claim:s-lines-2}
An $s$-line $l$ is contained in $\mathcal{B}$ if and only if $l$ is a ruling of
a cubic cone.

\end{enumerate}
\end{lemma}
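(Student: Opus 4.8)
The plan is to translate all three assertions into the geometry of the bidegree $(1,1)$ curve $\Sigma(l)\setminus\{l\}$ sitting inside $\Sigma(V)$, which I realize as a smooth $(1,1)$-divisor in $\PP^2\times\PP^2$ (Theorem~\ref{prop:KaRa}\ref{prop:KaRa-a}). The dictionary I rely on, furnished by Lemma~\ref{claim:lines-in-Sigma} and Corollary~\ref{claim:common-ruling}, is that the fibres of the two projections $\pr_i\colon\Sigma(V)\to\PP^2$ are exactly the rulings-curves $\Lambda(S)$ of cubic scrolls $S\in\SSS_i(V)$, a fibre of $\pr_1$ having bidegree $(0,1)$ and a fibre of $\pr_2$ bidegree $(1,0)$. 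Consequently a curve of bidegree $(0,1)$ or $(1,0)$ in $\Sigma(V)$ is precisely one such $\Lambda(S)$, and the assertion "$l$ is an $s$-line" becomes "the $(1,1)$-curve $\Sigma(l)\setminus\{l\}$ splits into two fibres of the projections."

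For (b) both directions go through this dictionary. If $l$ is an exceptional line of a cubic scroll $S$ — the exceptional section when $S\cong\FF_1$, or a ruling through the vertex when $S$ is a cone — then every ruling of $S$ meets $l$, so $\Lambda(S)\subset\Sigma(l)$; as $\Lambda(S)$ has bidegree $(0,1)$, say, the residual of the $(1,1)$-curve $\Sigma(l)\setminus\{l\}$ is effective of bidegree $(1,0)$, hence another fibre $\Lambda(S')$, and $\Theta(l)=S\cup S'$ splits. Conversely, an $s$-line yields a splitting $\Sigma(l)\setminus\{l\}=\Lambda(S_1)\cup\Lambda(S_2)$ with $S_i\in\SSS_i(V)$, so $\Theta_i(l)=S_i$ are cubic scrolls, which is already the second assertion of (a). That $l$ is exceptional in each $S_i$ follows because $\Lambda(S_i)\subset\Sigma(l)$ forces every ruling of $S_i$ to meet $l$; a short case check — using that distinct rulings of a smooth scroll are disjoint, and that by Proposition~\ref{lem:a-d}\ref{lem:a-d(d)} every line through the vertex of a cubic cone is a ruling of it — excludes $l\not\subset S_i$ and the non-exceptional positions. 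For the degree in (a) I would invoke the constancy of $l\mapsto\deg\Theta(l)$ noted just before the lemma and evaluate it at a single $s$-line (these exist, since $\SSS(V)\neq\emptyset$ and every cubic scroll has an exceptional line, which by (b) is an $s$-line): there $\Theta(l)=S_1\cup S_2$ with $S_i$ in different components, whence $\deg\Theta(l)=3+3=6$, and this is the constant.

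For (c) the implication $(\Leftarrow)$ is immediate: a ruling of a cubic cone has normal bundle of type $(**)$ by Proposition~\ref{lem:a-d}\ref{lem:a-d(e)}, hence lies in $\mathcal B$ by Proposition~\ref{lem:a-d}\ref{lem:a-d(c)}. For $(\Rightarrow)$ I would write $\Theta(l)=S_1\cup S_2$ with $l$ exceptional in each $S_i$ (by (b)) and show that if both $S_i\cong\FF_1$ with $l$ their exceptional section, then $l\not\subset\mathcal B$, contradicting the hypothesis; thus one $S_i$ is a cone with $l$ a ruling. Indeed, through a general $p\in l$ pass three lines — $l$ itself and the rulings $m_1\subset S_1$, $m_2\subset S_2$ through $p$ — pairwise distinct for general $p$, since $m_1\neq m_2$ by Corollary~\ref{claim:common-ruling}\ref{claim:common-ruling-d} and $m_i\neq l$ as the exceptional section is not a ruling. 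Since $s$ has degree $3$ (Proposition~\ref{lem:a-d}\ref{lem:a-d(b)}), the fibre $s^{-1}(p)$ is then three reduced points, so $p\notin\mathcal B$; as this holds for general $p\in l$, we get $l\not\subset\mathcal B$.

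I expect the main obstacle to be this last step of (c): verifying that the exceptional section of a smooth cubic scroll is \emph{not} contained in the branch locus. The delicate point is the genericity argument guaranteeing that the three exhibited lines through $p$ are distinct and that $s^{-1}(p)$ is reduced there. The parallel case analysis in the converse of (b) — excluding that a line can meet every ruling of a scroll without being exceptional — is the other spot requiring care, and both are handled by the line-counting through points of $V$ provided by Proposition~\ref{lem:a-d}\ref{lem:a-d(d)} together with the finiteness statements of Lemma~\ref{cor:common-exc-section} and Corollary~\ref{claim:common-ruling}\ref{claim:common-ruling-d}.
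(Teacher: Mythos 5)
Your proof is correct, but it routes the key splitting through the bidegree decomposition in $\PP^2\times\PP^2$ rather than the paper's degree count in $\PP^{12}$, and the logical order is reversed. The paper proves (a) first: it evaluates the constant $\deg\Theta(l)$ at a point of $T_1\cap T_2$, where $T_i$ is the closure of the locus of exceptional sections of smooth scrolls in $\SSS_i(V)$, using a limit of pairs of scrolls to identify $\Theta(l)=S_1\cup S_2$ there; the second assertion of (a) then needs the facts that $V$ contains no plane, no quadric surface, and no cubic surface spanning a $\PP^3$; finally the ``if'' part of (b) follows from (a), since a cubic scroll of degree $3<6$ through $l$ forces a second component. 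You instead prove (b)-``if'' directly: $\Lambda(S)$ is a fiber of a projection (Lemma~\xref{claim:lines-in-Sigma}, Corollary~\xref{claim:common-ruling-1}), so once it sits inside the $(1,1)$-curve $\Sigma(l)\setminus\{l\}$ the residual class $(1,0)$ is forced to be another fiber $\Lambda(S')$, whence $\Theta(l)=S\cup S'$; then (a) follows by evaluating the constant at any exceptional line of any scroll. This buys two things: you avoid both the nonemptiness of $T_1\cap T_2$ and the limit/subsequence argument, and the components of $\Theta(l)$ come out canonically as $\Lambda$-fibers, so the second assertion of (a) is immediate without invoking the exclusion of planes and quadrics. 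Your case analysis for (b)-``only if'' (via Proposition~\xref{lem:a-d}\xref{lem:a-d(d)}) is exactly the content the paper compresses into ``follows from the definition''. In (c), your $(\Leftarrow)$ via the normal-bundle statements \xref{lem:a-d(e)} and \xref{lem:a-d(c)} is a legitimate shortcut; the paper instead uses in both directions the single criterion that $l\subset\mathcal{B}$ iff at most two lines pass through a general point of $l$, which it asserts without proof, so your explicit counting for $(\Rightarrow)$ is at the same level of rigor.

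Two small points to tighten. First, in deducing from the definition of an $s$-line that $\Sigma(l)\setminus\{l\}$ splits into two fibers, add the one-line observation that an irreducible $(1,1)$-curve sweeps an irreducible surface (the restriction of the universal $\PP^1$-bundle over it is irreducible), so a split $\Theta(l)$ forces the $(1,1)$-curve to be reducible, and a reducible $(1,1)$-curve on $\Sigma(V)$ is necessarily a union of a $(1,0)$- and a $(0,1)$-fiber. Second, the reducedness step you flagged in (c) does close: for general $p\in l$ the fiber $s^{-1}(p)$ is finite (Lemma~\xref{cor:common-exc-section}, plus the fact that positive-dimensional fibers of $s$ lie over a proper closed subset), so $s$ is finite over a neighborhood of $p$, hence flat between the smooth fourfolds $\LLL(V)$ and $V$; a degree-$3$ flat fiber with three distinct points is reduced, so $s$ is unramified over $p$ and $p\notin\mathcal{B}$.
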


\begin{proof}
\xref{lem:degree-of-theta-a} Consider the rational surfaces 
$T_i=\overline{\varsigma(\mathcal U_i)}\subset\Sigma(V)$, $i=1,2$, where 
${\mathcal{U}_i}\subset\SSS_i(V)$ is the subset of smooth scrolls, and 
$\varsigma:\mathcal{U}_i\to\Sigma(V)$ sends a scroll $S\in{\mathcal{U}_i}$ to 
its 
exceptional section, cf.\ the proof of Proposition~\xref{lem:fixed-pt}. 
It is easy to see that any two exceptional divisors on $\Sigma(V)$ meet.
Hence, $T_1\cap T_2\neq\emptyset$.

Let $[l]\in T_1\cap T_2\subset\Sigma(V)$. We claim that $\Theta(l)=S_1\cup S_2$
for some $S_i\in\SSS_i(V)$, $i=1,2$. Indeed, one has $l=\lim_{j\to\infty} 
l_{i,j}$, where for any $j\ge 1$, $l_{i,j}\in\varsigma(\mathcal{U}_i)$ is the
exceptional line of a smooth cubic scroll $S_{i,j}\in\SSS_i$, $i=1,2$. Passing
to subsequences one may assume that $\lim_{j\to\infty} S_{i,j}=S_i\in\SSS_i$,
$i=1,2$. If $S_i$ is smooth, then $l\subset S_i$ is the exceptional line of
$S_i$. In any case, $S_1\cup S_2\subset\Theta(l)$. Since $\Theta(l)$ consists of
at most two components, see~\xref{nota:4cases}, one has $\Theta(l)=S_1\cup
S_2$, as claimed. So, $\deg\Theta(l)=6$ $\forall l\in\Sigma(V)$. Now the second
assertion follows as well. Indeed, $\Omega$ (and then also $V$) contains neither
a plane, nor a quadric surface (\cite[Lem.~3]{KapustkaRanestad2013}). Moreover,
since $V$ is an intersection of quadrics (\cite[Lem.~2.10]{Iskovskih1977a}), $V$
does not contain any cubic surface $F$ with $\langle F\rangle\cong\PP^3$.

\xref{claim:s-lines-1} 
To show the ``if'' part, observe that
in both cases the corresponding cubic scroll is a component of $\Theta(l)$. Due
to~\xref{lem:degree-of-theta-a}, $l$ is an $s$-line.
The ``only if'' part follows from the definition. 

\xref{claim:s-lines-2} 
A line $l$ on $V$ is contained in $\mathcal{B}$ if and
only if through a general point
of $l$ pass at most $2$ lines in $V$ including $l$ itself. For an $s$-line $l$
this is the case if and only if at least one of the components $\Theta_1(l)$ and
$\Theta_2(l)$ is a cubic cone.
\end{proof}

\begin{scorollary}
\label{cor:degree-of-theta}
One has $\dim\Sigma_{\s}(V)=2$.
\end{scorollary}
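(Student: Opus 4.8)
The plan is to pin down $\Sigma_{\s}(V)$ set-theoretically using the characterization in Lemma~\xref{lem:degree-of-theta}\xref{claim:s-lines-1}: a line $l$ is an $s$-line if and only if it is an exceptional line of some cubic scroll $S\in\SSS(V)$. Since every cubic scroll is either smooth or a cubic cone — and by Lemma~\xref{lem:cubic-cones} the smooth scrolls form a dense open subset $\mathcal{U}_i$ of each component $\SSS_i(V)\cong\PP^2$, while the cubic cones form a family of dimension at most one — I would split $\Sigma_{\s}(V)$ into the locus of exceptional sections of smooth scrolls and the locus of rulings of cubic cones, and bound the dimension of each piece by two.

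For the lower bound, recall from the proof of Proposition~\xref{lem:fixed-pt} that the map $\varsigma$ sending a smooth cubic scroll to its exceptional section is a morphism $\mathcal{U}_i\to\Sigma(V)$ with finite fibers (Lemma~\xref{cor:common-exc-section}), so that $\dim\varsigma(\mathcal{U}_i)=2$. As $\varsigma(\mathcal{U}_i)\subset\Sigma_{\s}(V)$ and $\Sigma_{\s}(V)$ is closed (see~\xref{nota:4cases}), the closure $T_i:=\overline{\varsigma(\mathcal{U}_i)}$ is contained in $\Sigma_{\s}(V)$, whence $\dim\Sigma_{\s}(V)\ge 2$. For the upper bound, I would cover $\Sigma_{\s}(V)$ by the three loci $\varsigma(\mathcal{U}_1)$, $\varsigma(\mathcal{U}_2)$, and the set $R$ of rulings of cubic cones. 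The first two have dimension $2$ as just noted. For $R$, the cubic cones form a family of dimension $\le 1$ by Lemma~\xref{lem:cubic-cones}, and each cone $S$ contributes the $\PP^1$ of its rulings $\Lambda(S)$; organizing this as an incidence variety of pairs (cone, ruling) and projecting to $\Sigma(V)$ gives $\dim R\le 1+1=2$. Taking closures, $\Sigma_{\s}(V)$ is a union of pieces each of dimension $\le 2$, so $\dim\Sigma_{\s}(V)\le 2$, and together with the lower bound this yields $\dim\Sigma_{\s}(V)=2$.

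The only point requiring care is the \emph{completeness} of this cover — that no $s$-line escapes the description above — which is exactly what the ``if and only if'' in Lemma~\xref{lem:degree-of-theta}\xref{claim:s-lines-1} guarantees, so this step is free once that lemma is invoked. I expect the main (though routine) obstacle to be the dimension count for $R$: one must verify that the rulings of the at-most-one-dimensional family of cubic cones indeed sweep out a subvariety of dimension at most two and not more, which reduces to the uniformity of the $\PP^1$-family $\Lambda(S)$ over the parameter space of cones and is handled by the standard fiber-dimension estimate for the projection of the incidence variety.
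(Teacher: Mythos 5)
Your proposal is correct and follows essentially the same route as the paper: the paper's proof likewise invokes Lemma~\ref{lem:degree-of-theta}\ref{lem:degree-of-theta-a}--\ref{claim:s-lines-1} to decompose $\Sigma_{\s}(V)$ into the exceptional sections of smooth cubic scrolls (a purely two-dimensional family, via the finite-fiber count from Lemma~\ref{cor:common-exc-section} used in Proposition~\ref{lem:fixed-pt}) and the rulings of cubic cones (at most two-dimensional by Lemma~\ref{lem:cubic-cones}). Your write-up merely makes explicit the lower-bound step and the incidence-variety estimate that the paper leaves implicit.
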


\begin{proof}
This follows from 
Lemma~\xref{lem:degree-of-theta}\xref{lem:degree-of-theta-a}--\xref{claim:s-lines-1}. 
Indeed, $\Sigma_{\s}(V)$ consists of the exceptional lines of 
smooth cubic scrolls and of the rulings of cubic cones. The first family of 
lines is purely two-dimensional, and the second is at most two-dimensional due 
to Lemma~\xref{lem:cubic-cones}.
\end{proof}

\begin{proposition}
\label{claim:ruling-in-scroll}
For any $S\in\SSS(V)$ one has
\begin{enumerate}
\renewcommand\labelenumi{\rm (\alph{enumi})}
\renewcommand\theenumi{\rm (\alph{enumi})}

\item
\label{claim:ruling-in-scroll-a}
$\Lambda(S)\subset\Sigma_{\s}(V)$ if and only if $S$ is a cubic cone;

\item
\label{claim:ruling-in-scroll-b}
$\Lambda(S)\cdot\Sigma_{\s}(V)=1$ for any $S\in\SSS(V)$;

\item
\label{claim:ruling-in-scroll-c}
$\Sigma_{\s}(V)\sim\mathcal{F}_1+\mathcal{F}_2$.
\end{enumerate}
\end{proposition}

\begin{proof}
\xref{claim:ruling-in-scroll-a} is immediate from 
Lemma~\xref{lem:degree-of-theta}\xref{lem:degree-of-theta-a}--\xref{claim:s-lines-1}.

\xref{claim:ruling-in-scroll-b} 
Let $S\in\SSS(V)$ be smooth. Let us show that
exactly one ruling of $S$ is an $s$-line. For such a ruling $l$ one has $\Theta(l)=S_1\cup S_2$, 
where $S_i\in\SSS_i(V)$, $S_i\neq S$,
$i=1,2$.
Due to Corollary
\xref{claim:common-ruling}\xref{claim:common-ruling-a}, at least one of $S_1$, 
$S_2$ is smooth. Since each ruling of $S_i$ meets $S$,
one has $S_i\subset A_S$, $i=1,2$.

Starting with the pair $(V,S)$ we construct diagram~\eqref{diagram-2}. The
proper transform $\tilde S_i$ of $S_i$ in $\tilde W$ is a ruled surface
contained in $\tilde A_S$, and its image $\theta(S_i)\subset W$ under the linear
projection
$\theta:\PP^{12}\dashrightarrow\PP^7$ with center $\langle S\rangle$
is a curve contained in $F$.

Assume first that $\langle S\rangle\cap\langle S_i\rangle=l$ for some 
$i\in\{1,2\}$. Then $J:=\theta(S_i)$ is a smooth
conic. Indeed, if $S_i$ is smooth, then $\theta(S_i)$ is the image of a conic
section of $S_i\to\PP^1$ disjoint with $l$, and if $S_i$ is a cubic cone, then
$\theta(S_i)$ is the image of a twisted cubic in $S_i$, which meets $l$ once and
transversely. The rational normal quintic scroll $F$, which contains a 
nondegenerate conic $J$, is isomorphic to $\FF_1$, and
$J$ is its exceptional section. Hence $S_i$ is the only
component of $\Theta(l)$ with $\langle S\rangle\cap\langle
S_i\rangle=l$. Moreover, $l$ is a unique splitting line in $S$, because $S_i$
does not contain two distinct rulings of $S$, see Corollary
\xref{claim:common-ruling}\xref{claim:common-ruling-d}. Thus, in this case
$\Lambda(S)\cap\Sigma_{\s}(V)$ consists of a single point.

Assume further that $\dim(\langle S\rangle\cap\langle S'_i\rangle)=2$ for
$i=1,2$. Then $f_i:=\theta(S_i)$ is a line in $F$ for $i=1,2$. These lines
cannot be both rulings of $F$. Indeed, otherwise
$\eta^{-1}(f_i)$, $i=1,2$, would be ruled surfaces in $\tilde W$ from the same 
irreducible
family of ruled surfaces. Hence also $S_1$ and $S_2$ would belong to the same 
family of surfaces
in $V$ over an irreducible base. However, $S_1$ and $S_2$ belong to different 
components of $\SSS(V)$
(\cite[proof of Prop.~2]{KapustkaRanestad2013}). Due to Proposition
\xref{lem:cohomology} we obtain a contradiction.

So, one of the $f_i$, say, $f_1$ should be the exceptional line of
$F\cong\FF_3$, while $f_2$ should be a ruling of $F$. Once again, $S_1$ is
uniquely determined and does not contain two distinct rulings of $S$. Hence
$\Lambda(S)\cap\Sigma_{\s}(V)$ is a single point in this case as well.

\xref{claim:ruling-in-scroll-c} 
This follows by~\xref{claim:ruling-in-scroll-b} and~\xref{claim:lines-in-Sigma}.
\end{proof}

\begin{scorollary}
\label{cor:disjoint-cones}
If two distinct cubic cones $S$ and $S'$ in $V$ have a common ruling $l$, then 
they belong to different components of $\SSS(V)$.
\end{scorollary}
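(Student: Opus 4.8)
The plan is to deduce the statement directly from the Hilbert-scheme description of lines and cubic scrolls developed above, avoiding any intersection-theoretic bookkeeping on $V$. The decisive input is Corollary~\xref{claim:common-ruling}\xref{claim:common-ruling-a}, which asserts that every line $l\in\Sigma(V)$ is a ruling of \emph{exactly} two cubic scrolls $S_1(l)$ and $S_2(l)$, with $S_i(l)\in\SSS_i(V)$, $i=1,2$.

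First I would observe that, since by Definition~\xref{def:cubic-cones} cubic cones are counted among the cubic scrolls parameterized by $\SSS(V)$, both cones $S$ and $S'$ are points of $\SSS(V)$, and the common ruling $l$ is by hypothesis a ruling of each of them. Thus $S$ and $S'$ both belong to the set of cubic scrolls admitting $l$ as a ruling. By the cited corollary this set has exactly two elements, namely $S_1(l)$ and $S_2(l)$, lying in the two distinct components $\SSS_1(V)$ and $\SSS_2(V)$. Because $S\neq S'$, the pair $\{S,S'\}$ must coincide with $\{S_1(l),S_2(l)\}$, and hence $S$ and $S'$ lie in different components, as required.

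An equivalent geometric formulation, which I would record as a cross-check, uses the realization $\varrho\colon\Sigma(V)\hookrightarrow\SSS_1(V)\times\SSS_2(V)$ of $\Sigma(V)$ as a smooth $(1,1)$-divisor (Corollary~\xref{claim:common-ruling-1}): for $S\in\SSS_i(V)$ the curve $\Lambda(S)$ of rulings of $S$ is precisely the fiber of $\pr_i\comp\varrho$ over $[S]$, since $\{l : S_i(l)=S\}=\Lambda(S)$ (compare Lemma~\xref{claim:lines-in-Sigma}). Two distinct members of the same component therefore have disjoint fibers $\Lambda(S)$, so they cannot share the point $[l]\in\Sigma(V)$; a shared ruling forces the two cones into different components.

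The only point that needs genuine care — and the step I expect to be the main, if modest, obstacle — is to make sure the cone case is not an exception to the counting statement: one must confirm that a line through the vertex of a cubic cone really is a \emph{ruling} in the sense entering Corollary~\xref{claim:common-ruling}\xref{claim:common-ruling-a}, and is therefore subject to the ``exactly two'' count, rather than being a further, \emph{exceptional}, line as in part~\xref{claim:common-ruling-b}. This is settled by the uniform terminology fixed in~\xref{nota:4cases} together with Proposition~\xref{claim:ruling-in-scroll}\xref{claim:ruling-in-scroll-a}, which identifies the rulings of cubic cones with the $s$-lines lying on $\Sigma_{\s}(V)$. I would also note that a purely cohomological route is in principle available — two same-component cones are homologous with $[S]\cdot[S']=[S]^2=1$ by Lemma~\xref{lem:square} — but deriving a contradiction from a shared ruling along that route would require an excess-intersection computation, so I prefer the combinatorial argument above.
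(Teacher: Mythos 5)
Your proposal is correct and is essentially the paper's own argument: the paper proves the statement by noting that $\Lambda(S)$ and $\Lambda(S')$ are lines in $\Sigma_{\s}(V)$ meeting at $[l]$ which, were $S$ and $S'$ in the same component, would be fibers of the same projection $\Sigma(V)\to\PP^2$ through a common point and hence would coincide, forcing $S=S'$ --- exactly the content of Corollary~\xref{claim:common-ruling}\xref{claim:common-ruling-a} that your main argument invokes, and your ``cross-check'' paragraph reproduces this fiber argument almost verbatim. The terminological worry in your final paragraph is a non-issue (every line on a cubic cone is a ruling, and part~\xref{claim:common-ruling-b} concerns only smooth scrolls, so a cone can never play the role of the ``third'' scroll), and you resolve it correctly.
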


\begin{proof}
Indeed, $\Lambda(S)$ and $\Lambda(S')$ are two lines in $\Sigma_{\s}(V)$, which 
meet in a point $[l]$. If $S$ and $S'$ were in the same component, say, 
$\SSS_i(V)$ of $\SSS(V)$, then these lines would project to the same point 
$\pr_j([l])\in\PP^2$, where $j\neq i$. The latter means that $S=S'$, a 
contradiction.
\end{proof}

\section{Cubic cones in $V_{18}$ and lines on singular del Pezzo
sextics}
\label{sec:del-Pezzo}

In this section we associate to each Fano-Mukai fourfold $V=V_{18}$ of
genus 10 a certain (singular) del Pezzo sextic surface. The cubic cones in $V$
occur to be in a one-to-one correspondence with
the lines on this sextic.

\begin{lemma}[{\cite[\S~8.1.1, \S~8.4.2]{Dolgachev-ClassicalAlgGeom}}, 
{\cite[Prop.~ 8.3]{Coray-Tsfasman-1988}}]
\label{claim:Du-Val-del-Pezzo}
Let $X\subset\PP^6$ be a linearly nondegenerate 
normal surface of degree $6$, and let $\tilde X$ be the 
minimal desingularization of $X$. Let 
$\Gamma(\tilde X)$ be the dual graph of the configuration of $(-1)$- and 
$(-2)$-curves in $\tilde X$. Suppose that $X$ admits two different birational
contractions onto $\PP^2$. Then the following hold.
\begin{enumerate}
\renewcommand\labelenumi{\rm (\alph{enumi})}
\renewcommand\theenumi{\rm (\alph{enumi})}
\item 
$X$ has at worst Du Val singularities.
\item 
$X$ admits a canonical embedding in $\PP^2\times\PP^2$. 
\item\label{claim:Du-Val-del-Pezzo-c}
$X$ is one of the following:
\begin{enumerate}
\renewcommand\labelenumii{\rm (\roman{enumii})}
\renewcommand\theenumii{\rm (\roman{enumii})}

\item
\label{claim:Du-Val-del-Pezzo-i} 
a smooth del Pezzo sextic surface, with $\Gamma(\tilde X)$ being a cycle of six 
$(-1)$-vertices;

\item
\label{claim:Du-Val-del-Pezzo-ii} a del Pezzo sextic surface with a unique 
singular
point of type $A_1$ and with
\begin{equation}
\label{eq:A1}
\Gamma(\tilde X):
\qquad
\vcenter{
\xymatrix
{
\overset{-1}\bullet\ar@{-}[r]&\overset{-1}\bullet\ar@{-}[r]&
\overset{-2}\circ\ar@{-}[r]&
\overset{-1}\bullet\ar@{-}[r]&\overset{-1}\bullet
}}
\end{equation}
where the black vertices correspond to the lines in $X$;

\item
\label{claim:Du-Val-del-Pezzo-iii} a del Pezzo sextic surface with a unique 
singular
point of type $A_2$ and with
\begin{equation}
\label{eq:A2}
\Gamma(\tilde X):\quad\vcenter{
\xymatrix@R=6pt@C=50pt{
&\overset{-2}{\circ}\ar@{-}[d]&
\\
\underset{-1}\bullet\ar@{-}[r]&\underset{-2}\circ\ar@{-}[r]&\underset{-1}\bullet
}}
\end{equation}
\end{enumerate}
\item\label{claim:Du-Val-del-Pezzo-d}
The number of lines in $X$ is $6$ for type~\xref{claim:Du-Val-del-Pezzo-i}, $4$ 
for 
type~\xref{claim:Du-Val-del-Pezzo-ii}, and $2$ for type 
\xref{claim:Du-Val-del-Pezzo-iii}. 
There exists a unique divisor $\mathfrak{G}_X\in |-K_X|$ whose support coincides 
with the union of lines
on $X$.
\end{enumerate}
\end{lemma}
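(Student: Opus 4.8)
The plan is to reduce everything to the well-understood geometry of weak del Pezzo surfaces of degree $6$ and to read off the three cases from the combinatorics of their $(-1)$- and $(-2)$-curves, following the cited classification of Dolgachev and of Coray--Tsfasman. First I would identify $X$ as a Gorenstein del Pezzo surface. Since $X\subset\PP^6$ is normal, linearly nondegenerate and of degree $6=\codim_{\PP^6}X+2$, it is a surface of almost minimal degree; together with the existence of a birational morphism onto $\PP^2$ (so that $X$ is neither a cone nor one of the non-normal exceptions), the classification of such surfaces gives that $X$ is anticanonically embedded, $\OOO_X(1)\cong -K_X$ with $-K_X$ ample and Cartier, so that $K_X^2=\deg X=6$. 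Passing to the minimal resolution $\sigma:\tilde X\to X$, the composite of $\sigma$ with either contraction $\pi_i:X\to\PP^2$ is a birational morphism of a smooth surface onto $\PP^2$, hence a sequence of $9-K_{\tilde X}^2$ point blow-ups. Writing $K_{\tilde X}=\sigma^*K_X+\Delta$ with $\Delta\ge 0$ supported on the (negative-definite) exceptional locus, the equalities $K_{\tilde X}^2=K_X^2+\Delta^2$ and $K_X^2=6$ force $\Delta^2=0$, whence $\Delta=0$: the singularities are Du Val and $\tilde X$ is a weak del Pezzo surface of degree $6$, the blow-up of $\PP^2$ at three points in almost general position. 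This yields (a) and the canonical model structure, and identifies the lines on $X$ with the images of the $(-1)$-curves $C$ (those with $-K_X\cdot C=1$), the $(-2)$-curves being exactly the $\sigma$-exceptional ones.

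Next I would enumerate the possible weak del Pezzo surfaces. The classes of $(-2)$-curves span a root subsystem of the lattice $(-K_{\tilde X})^{\perp}\subset\Pic(\tilde X)$, which for degree $6$ is of type $A_1\times A_2$. Running over configurations of the three blown-up points---distinct and non-collinear; distinct and collinear; one infinitely near; a chain of two infinitely near points; and the more degenerate ones---produces the realizable $(-2)$-configurations. The hypothesis that $X$ carry \emph{two} distinct birational contractions onto $\PP^2$ is decisive here: I would show it amounts to the existence of two distinct disjoint blow-down structures on $\tilde X$ each contracting all $(-2)$-curves, which excludes the collinear case (there only $\{E_1,E_2,E_3\}$ is contractible, giving a single contraction and three lines) together with the configurations of type $2A_1$ and $A_1\times A_2$, leaving exactly the smooth case, the single $A_1$ arising from one infinitely near point, and the $A_2$ arising from a chain of two infinitely near points. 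A direct computation of the pairwise intersection numbers of the finitely many $(-1)$- and $(-2)$-curves then gives the dual graphs: the hexagonal cycle of six $(-1)$-curves in case~(i); the chain $\bullet-\bullet-\circ-\bullet-\bullet$ of \eqref{eq:A1} in case~(ii), with $(-1)$-curves $E_2,E_3,L-E_1-E_2,L-E_1-E_3$ and central $(-2)$-curve $E_1-E_2$; and the $T$-shaped graph \eqref{eq:A2} in case~(iii), with $(-1)$-curves $E_3,L-E_1-E_2$ and $(-2)$-curves $E_1-E_2,E_2-E_3$. This establishes (c), and the count of black vertices gives the numbers $6,4,2$ of lines in (d).

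For the embedding (b), I would use the two contractions to form $(\pi_1,\pi_2):X\to\PP^2\times\PP^2$. Since $\pi_1^*\OOO_{\PP^2}(1)+\pi_2^*\OOO_{\PP^2}(1)\sim -K_X=\OOO_X(1)$, this morphism is the composite of the Segre embedding with the embedding given by $|\OOO_X(1)|$, hence is a closed embedding realizing $X$ as a divisor of bidegree $(1,1)$ in $\PP^2\times\PP^2$. To produce $\mathfrak{G}_X$ in (d) I would exhibit the unique effective anticanonical divisor supported on the lines by pulling back to $\tilde X$: solving $\sigma^*\mathfrak{G}_X\sim -K_{\tilde X}=3L-E_1-E_2-E_3$ as a non-negative combination of the strict transforms of the lines and of the $(-2)$-curves, subject to orthogonality to each $(-2)$-curve, and imposing that every line occur with positive multiplicity, pins down the coefficients uniquely. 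This gives the hexagon in case~(i), the combination with line-multiplicities $(2,1,1,2)$ (and $(-2)$-multiplicity $2$) in case~(ii), and $3\ell_1+3\ell_2$ in case~(iii), each of total degree $6$; uniqueness follows because the constraints leave no freedom once all lines must appear.

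The main obstacle is the middle step: verifying that the two-contraction hypothesis excludes precisely the collinear $A_1$ and the $2A_1$ and $A_1\times A_2$ configurations, and that no further normal degree-$6$ surface intervenes. This is exactly the content of the classification in \cite[\S\,8.1.1, \S\,8.4.2]{Dolgachev-ClassicalAlgGeom} and \cite[Prop.~8.3]{Coray-Tsfasman-1988}, which I would invoke rather than reprove; the remaining verifications---the intersection-number computations for the dual graphs and the linear-algebra determination of the anticanonical multiplicities---are routine and can be carried out on $\tilde X$ as indicated above.
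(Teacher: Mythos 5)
The paper offers no proof of this lemma at all: it is imported verbatim from the cited sources (Dolgachev, \S\,8.1.1, \S\,8.4.2, and Coray--Tsfasman, Prop.~8.3). Your proposal does in substance the same thing --- it delegates the decisive classification step to exactly those references --- while additionally supplying the routine verifications, and those verifications are correct: the identification of the lines and $(-2)$-curves in each case, the dual graphs, the count $6,4,2$, and the multiplicities of $\mathfrak{G}_X$ (the hexagon with multiplicity $1$; multiplicities $2,1,1,2$ on the lines $E_2,E_3,L-E_1-E_2,L-E_1-E_3$ with the $(-2)$-curve $E_1-E_2$ entering with multiplicity $2$, of total anticanonical degree $6$; and $3\ell_1+3\ell_2$ in the $A_2$ case) all check out, as does the uniqueness argument via the linear constraints.

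Two local points deserve correction if your supplementary arguments are to stand on their own. First, the Du Val deduction is a non sequitur as written: for the minimal resolution one has $K_{\tilde X}=\sigma^*K_X+\Delta$ with $\Delta\le 0$ (not $\ge 0$), and the identity $K_{\tilde X}^2=K_X^2+\Delta^2$ combined with $K_X^2=6$ only yields $K_{\tilde X}^2\le 6$, i.e.\ \emph{at least} three blow-ups; to force $\Delta=0$ you need the reverse bound, which you never establish. A clean repair: the existence of even one birational morphism $X\to\PP^2$ forces $X$ to have rational singularities (the five-term exact sequence of the Leray spectral sequence for $\tilde X\to X\to\PP^2$, together with $R^1$-vanishing for compositions of point blow-ups, kills $R^1\sigma_*\OOO_{\tilde X}$), and rational plus Gorenstein equals Du Val --- this also rules out, e.g., simple elliptic degree-$6$ candidates that your numerics alone do not. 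Second, a bookkeeping slip in the exclusion step: in the collinear $A_1$ case the unique blow-down structure of $\tilde X$ contracts $E_1,E_2,E_3$ but \emph{not} the $(-2)$-curve $L-E_1-E_2-E_3$, so by your own (correct) criterion the singular surface $X$ admits \emph{no} birational contraction onto $\PP^2$, rather than ``a single contraction''; the exclusion is unaffected. Finally, in (b) note that a subsystem of a very ample system need not be very ample, so the closed-embedding claim for $(\pi_1,\pi_2)$ requires either surjectivity of the multiplication map $H^0(L)\otimes H^0(L')\to H^0(-K_X)$ or the direct check that each curve contracted by $\pi_1$ has degree $1$ under $\pi_2$ (since $C\cdot(-K_X)=1$ and $C\cdot L=0$ imply $C\cdot L'=1$); both are routine, but one of them must be said.
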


Abusing the language we say that a del Pezzo 
sextic $X$ in~\xref{claim:Du-Val-del-Pezzo-ii} (resp., $X$ 
in~\xref{claim:Du-Val-del-Pezzo-iii}) is \emph{of type $A_1$} (resp., \emph{of 
type $A_2$}).

One has the following result.

\begin{proposition}
\label{prop:dP-6}
$\Sigma_{\s}(V)$ is a sextic surface. If it is reducible, then this is the union 
$\mathcal{F}_1\cup\mathcal{F}_2$ of two cubic scrolls, where $\mathcal{F}_i$ is 
the pullback of a line in $\PP^2$
under the projection $\pr_i:\Sigma(V)\to\PP^2$, $i=1,2$. If it is irreducible, 
then $\Sigma_{\s}(V)$ is
a del Pezzo sextic which is either smooth, or of type $A_1$, or of type $A_2$.
\end{proposition}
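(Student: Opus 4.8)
The plan is to extract everything from the class identity $\Sigma_{\s}(V)\sim\mathcal{F}_1+\mathcal{F}_2$ of Proposition~\xref{claim:ruling-in-scroll}\xref{claim:ruling-in-scroll-c}, combined with the description of $\Sigma(V)$ as a smooth $(1,1)$-divisor in $\PP^2\times\PP^2$ (Theorem~\xref{prop:KaRa}\xref{prop:KaRa-a}) and the fact, recorded in the proof of Lemma~\xref{lem:no-morphism}, that the Segre hyperplane class of $\Sigma(V)\subset\PP^7$ is $\OOO_{\Sigma(V)}(1)=\mathcal{F}_1+\mathcal{F}_2$. Since $\Sigma_{\s}(V)$ is two–dimensional by Corollary~\xref{cor:degree-of-theta}, its degree in $\PP^7$ is $(\mathcal{F}_1+\mathcal{F}_2)^2\cdot\Sigma_{\s}(V)=(\mathcal{F}_1+\mathcal{F}_2)^3=6$ by the relations~\eqref{eq:relations-in-homologies}; this gives the first assertion that $\Sigma_{\s}(V)$ is a sextic surface.

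First I would dispose of the reducible case. The fibres of $\pr_i\colon\Sigma(V)\to\PP^2$ have classes $\mathcal{F}_i^2$ and cover $\Sigma(V)$, so intersecting an effective divisor $a\mathcal{F}_1+b\mathcal{F}_2$ with $\mathcal{F}_1^2$ and $\mathcal{F}_2^2$ forces $a,b\ge 0$; as $(1,1)$ cannot be written as a sum of more than two nonzero effective classes, a reducible $\Sigma_{\s}(V)$ must split into exactly two irreducible components of classes $\mathcal{F}_1$ and $\mathcal{F}_2$. Because $\pr_{i*}\OOO_{\Sigma(V)}=\OOO_{\PP^2}$, one has $|\mathcal{F}_i|=\pr_i^*|\OOO_{\PP^2}(1)|$, so each component is $\pr_i^{-1}(\ell_i)$ for a line $\ell_i\subset\PP^2$, a $\PP^1$-bundle over $\ell_i\cong\PP^1$ of degree $(\mathcal{F}_1+\mathcal{F}_2)^2\cdot\mathcal{F}_i=3$, i.e.\ a cubic scroll. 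This is exactly the claimed decomposition.

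For the irreducible case the key point is that, its class being the hyperplane class $\mathcal{F}_1+\mathcal{F}_2=\OOO_{\Sigma(V)}(1)$, the surface $\Sigma_{\s}(V)$ is the intersection $\Sigma(V)\cap D'$ of the smooth threefold $\Sigma(V)$ with a second $(1,1)$-divisor $D'$ (the restriction map $H^0(\PP^2\times\PP^2,\OOO(1,1))\to H^0(\Sigma(V),\OOO(1,1))$ being surjective). Thus $\Sigma_{\s}(V)$ is a Cartier divisor in a smooth variety, hence Cohen--Macaulay, in particular $S_2$, and it lies in the hyperplane $\mathcal H\cong\PP^6$. Linear nondegeneracy follows from the sequence $0\to\OOO_{\Sigma(V)}\to\OOO_{\Sigma(V)}(1)\to\OOO_{\Sigma_{\s}(V)}(1)\to 0$ together with $H^1(\Sigma(V),\OOO)=0$, which gives $h^0(\Sigma_{\s}(V),\OOO(1))=8-1=7$. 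The two projections restrict to morphisms $\pr_i|_{\Sigma_{\s}(V)}\colon\Sigma_{\s}(V)\to\PP^2$ of degree $\Sigma_{\s}(V)\cdot\mathcal{F}_i^2=1$, hence to two distinct birational contractions onto $\PP^2$. Granting normality, Lemma~\xref{claim:Du-Val-del-Pezzo} then applies verbatim and exhibits $\Sigma_{\s}(V)$ as a del Pezzo sextic that is smooth, of type $A_1$, or of type $A_2$, completing the proof.

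The hard part will be normality, that is, Serre's criterion $R_1$: since $S_2$ is automatic, I must show that an irreducible $\Sigma_{\s}(V)=\Sigma(V)\cap D'$ has only finitely many singular points, equivalently that $D'$ is not tangent to $\Sigma(V)$ along a curve. I would argue this through the complete intersection picture: writing the two $(1,1)$-divisors by $3\times 3$ matrices $M,M'$, a singular point of $\Sigma_{\s}(V)$ occurs exactly where the two linear forms $x^{\mathsf t}M$ and $x^{\mathsf t}M'$ (equivalently $My$ and $M'y$) become proportional, and a one-dimensional such degeneracy locus would make the complete intersection $\{x^{\mathsf t}My=x^{\mathsf t}M'y=0\}$ reducible, against our assumption. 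Hence in the irreducible case $\Sing\Sigma_{\s}(V)$ is finite, so $R_1$ holds and $\Sigma_{\s}(V)$ is normal. Once normality is secured, the verified hypotheses (normal, linearly nondegenerate, degree $6$ in $\PP^6$, two distinct birational contractions onto $\PP^2$) feed directly into Lemma~\xref{claim:Du-Val-del-Pezzo}, yielding the stated trichotomy.
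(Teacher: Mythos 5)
Your proof is correct, and its skeleton coincides with the paper's: both extract the degree and the reducible case from the class identity $\Sigma_{\s}(V)\sim\mathcal{F}_1+\mathcal{F}_2$ of Proposition~\xref{claim:ruling-in-scroll}\xref{claim:ruling-in-scroll-c}, and both settle the irreducible case by feeding the two restricted projections into Lemma~\xref{claim:Du-Val-del-Pezzo}. You are in fact more scrupulous than the paper about two hypotheses it leaves implicit: linear nondegeneracy in $\PP^6$ (your count $h^0(\Sigma_{\s}(V),\OOO(1))=8-1=7$) and the $S_2$ property; note also that your computation $\Sigma_{\s}(V)\cdot\mathcal{F}_i^2=1$ simultaneously gives dominance and birationality of $\pr_i|_{\Sigma_{\s}(V)}$, so nothing further is needed there, and your effective-cone analysis in the reducible case correctly fills in what the paper states without comment. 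The genuine divergence is at the crux, normality: the paper argues geometrically that if $p$ is a singular point of the hyperplane section $\Sigma_{\s}(V)=\Sigma(V)\cap H$, then $H$ contains the embedded tangent space $T_p\Sigma(V)$, hence both rulings through $p$ (lines through $p$ lie in $T_p\Sigma(V)$) are contained in $\Sigma_{\s}(V)$; thus $\Sing\Sigma_{\s}(V)$ sits inside the intersection of the exceptional loci of $p_1$ and $p_2$, which share no component (a curve contracted by both projections would be collapsed by the embedding $p_1\times p_2$), and so is finite. You instead import the matrix model of $\Sigma(V)$ as rank-one traceless matrices --- which the paper only sets up later, in Construction~\xref{constr:tensor-product}, with no circularity --- and run a linear-algebra degeneracy argument; this buys an explicit criterion that dovetails with the normal forms of Lemma~\xref{lem:centralizers}, while the paper's route is softer and coordinate-free.

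However, your normality step is under-justified at exactly its key point. Normalizing $M=\id$ (so $\Sigma(V)=\{x^ty=0\}$ and $D'=\{x^tCy=0\}$), a point $([x],[y])$ is singular on $\Sigma_{\s}(V)$ if and only if the full differentials $(y,x)$ and $(Cy,C^tx)$ are proportional with a \emph{common} scalar, i.e.\ if and only if $Cy=\mu y$ and $C^tx=\mu x$ for one and the same eigenvalue $\mu$ (together with $x^ty=0$); each of your one-sided proportionality conditions is necessary but not sufficient, and they are not equivalent to each other, so both ``exactly'' and ``equivalently'' are off. More importantly, the implication ``one-dimensional degeneracy locus $\Rightarrow$ reducible'' is asserted with no mechanism. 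It is true, and the missing argument is short: $\Sing\Sigma_{\s}(V)$ is the union, over the eigenvalues $\mu$ of $C$, of $\bigl(\PP(\ker(C^t-\mu\id))\times\PP(\ker(C-\mu\id))\bigr)\cap\Sigma(V)$; if this is infinite, some $\mu$ has geometric multiplicity at least $2$, i.e.\ $\rk(C-\mu\id)\le 1$, so $C=\mu\id+vw^t$ with $v,w\neq 0$ (and $C$ is not scalar, being nonzero of trace $0$). Then on $\Sigma(V)$ one has $x^tCy=(x^tv)(w^ty)$, so $\Sigma_{\s}(V)$ splits into the two pullback divisors $\{x^tv=0\}$ and $\{w^ty=0\}$ of classes $\mathcal{F}_1$ and $\mathcal{F}_2$, contradicting irreducibility. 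With these few lines inserted your proof is complete; alternatively, you could substitute the paper's tangent-space/ruling argument sketched above, which obtains the same finiteness without coordinates.
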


\begin{proof}
By Proposition~\xref{claim:ruling-in-scroll}\xref{claim:ruling-in-scroll-c}\ 
$\Sigma_{\s}(V)$ is
a sextic hyperplane section of 
$\Sigma(V)$ with respect to the polarization on $\Sigma(V)$ induced by the 
Segre 
embedding $\PP^2\times\PP^2\hookrightarrow\PP^{8}$.
Moreover, if the surface $\Sigma_{\s}(V)$ is reducible, then 
$\Sigma_{\s}(V)=\mathcal{F}_1\cup\mathcal{F}_2$. Assume that
$\Sigma_{\s}(V)$ is irreducible. Again by 
Proposition~\xref{claim:ruling-in-scroll}\xref{claim:ruling-in-scroll-c}
the restrictions $p_i|_{\Sigma_{\s}(V)}: 
\Sigma_{\s}(V)\to\SSS_i(V)=\PP^2$, $i=1,2$, yield two birational contractions, 
whose exceptional divisors do not possess any common component. Moreover, any 
ruling of $p_i$ passing 
through a singular point of $\Sigma_{\s}(V)$ is contained in 
$\Sigma_{\s}(V)$. Therefore $\Sigma_{\s}(V)$ is normal.
The rest of the proof is straightforward from 
Lemma~\xref{claim:Du-Val-del-Pezzo}\xref{claim:Du-Val-del-Pezzo-c}.
\end{proof}

\begin{scorollary}
\label{cor:family-of-rulings} 
The set of cubic cones in $V$ is finite if and only if $\Sigma_{\s}(V)$ is 
irreducible.
If it is finite, then any cubic cone $S\subset V$ is invariant under the 
$\Aut^0(V)$-action.
\end{scorollary}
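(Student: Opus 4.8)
The plan is to read off the statement from the behavior of the two projections $\pr_i\colon\Sigma(V)\to\SSS_i(V)=\PP^2$, $i=1,2$. Recall from Corollary~\xref{claim:common-ruling-1} that the embedding $\Sigma(V)\hookrightarrow\SSS_1(V)\times\SSS_2(V)$ sends a line $l$ to the pair $(S_1(l),S_2(l))$ of cubic scrolls of which $l$ is a ruling; hence $\pr_i$ sends $l\mapsto S_i(l)$, and for $S\in\SSS_i(V)$ the fiber $\pr_i^{-1}(S)$ is exactly the line $\Lambda(S)\cong\PP^1$ of rulings of $S$. By Proposition~\xref{claim:ruling-in-scroll}\xref{claim:ruling-in-scroll-a}, the scroll $S$ is a cubic cone if and only if $\Lambda(S)\subset\Sigma_{\s}(V)$. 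Thus the cubic cones contained in $\SSS_i(V)$ are precisely the scrolls whose $\pr_i$-fiber lies in $\Sigma_{\s}(V)$, equivalently the points of $\PP^2=\SSS_i(V)$ over which $\pr_i|_{\Sigma_{\s}(V)}$ has a one-dimensional fiber.

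First I would dispose of the reducible case. If $\Sigma_{\s}(V)=\mathcal{F}_1\cup\mathcal{F}_2$ with $\mathcal{F}_i=\pr_i^{-1}(\ell_i)$ for a line $\ell_i\subset\PP^2$ (Proposition~\xref{prop:dP-6}), then for every $S\in\ell_i$ one has $\Lambda(S)=\pr_i^{-1}(S)\subset\mathcal{F}_i\subset\Sigma_{\s}(V)$, so $S$ is a cubic cone by the criterion above. As $\ell_i\cong\PP^1$, this produces a one-parameter family of cubic cones, whence their number is infinite.

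Conversely, suppose $\Sigma_{\s}(V)$ is irreducible. By Proposition~\xref{prop:dP-6} each restriction $\pr_i|_{\Sigma_{\s}(V)}\colon\Sigma_{\s}(V)\to\PP^2$ is a birational morphism of projective surfaces. By the first paragraph, a scroll $S\in\SSS_i(V)$ is a cubic cone precisely when $\pr_i|_{\Sigma_{\s}(V)}$ has a one-dimensional fiber (namely $\Lambda(S)$) over the point $S\in\PP^2$; a birational morphism of surfaces has only finitely many such fibers. Hence there are finitely many cubic cones in each $\SSS_i(V)$, and so finitely many in total. This proves the stated equivalence; moreover these contracted fibers may be identified with the lines of the del Pezzo sextic $\Sigma_{\s}(V)$ counted in Lemma~\xref{claim:Du-Val-del-Pezzo}\xref{claim:Du-Val-del-Pezzo-d}, giving the exact counts $6$, $4$, and $2$.

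For the last assertion, assume the set of cubic cones is finite. Every $\phi\in\Aut^0(V)$ carries a cubic cone to a cubic cone, since being a cone over a twisted cubic is intrinsic (Definition~\xref{def:cubic-cones}); hence $\Aut^0(V)$ permutes this finite set. As $\Aut^0(V)$ is connected, the orbit of each cubic cone is a connected subset of a finite set, i.e.\ a single point, so every cubic cone $S\subset V$ is $\Aut^0(V)$-invariant. The only delicate bookkeeping is the bijectivity of the correspondence between cubic cones and contracted fibers: injectivity within a fixed $\SSS_i(V)$ is clear, while a cone in $\SSS_1(V)$ and one in $\SSS_2(V)$ yield fibers of $\pr_1$ and $\pr_2$ respectively, which cannot coincide because a fiber of $\pr_1$ maps isomorphically onto a line under $\pr_2$ and so is not $\pr_2$-contracted. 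This check is the main, and only mild, obstacle; the corollary itself follows at once from the already established Proposition~\xref{prop:dP-6}.
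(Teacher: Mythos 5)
Your proof is correct and matches the paper's intended argument: the paper states this corollary without proof as an immediate consequence of Proposition~\xref{prop:dP-6} together with the correspondence of Proposition~\xref{claim:ruling-in-scroll}\xref{claim:ruling-in-scroll-a} (cubic cones $\leftrightarrow$ fibers $\Lambda(S)$ of the projections lying in $\Sigma_{\s}(V)$), which is exactly the route you take, and your connectedness argument for the invariance statement is the standard one the authors intend. Your substitution of the general fact that a birational surface morphism contracts only finitely many fibers for the explicit line count of Lemma~\xref{claim:Du-Val-del-Pezzo}\xref{claim:Du-Val-del-Pezzo-d} is a cosmetic variant (and consistent with the counts $6$, $4$, $2$ recorded later in Corollary~\xref{cor:finiteness} and Proposition~\xref{thm:toric-case}).
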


\begin{notation}
\label{nota:two-kinds}
Up to an automorphism of $(\PP^2)^\vee\times\PP^2$, which does not interchange
the factors, one may assume that $\Sigma(V)$ coincides with the variety $\Sigma$
of complete flags in $\PP^2$ given in $(\PP^2)^\vee\times\PP^2$ by equation
\eqref{eq:equation-of-Sigma}, cf.~\xref{rem:ext-action}. Up to an automorphism
of $\Sigma(V)$ one may suppose that $(\mathcal{F}_1,\mathcal{F}_2)$ is one of
the following:
\begin{enumerate}
\item\label{(i_1)}
$\mathcal{F}_1=\{x_0=0\}$ and $\mathcal{F}_2=\{y_0=0\}$;

\item\label{(i_2)}
$\mathcal{F}_1=\{x_0=0\}$ and $\mathcal{F}_2=\{y_1=0\}$.
\end{enumerate}
We say that a pair $(\mathcal{F}_1,\mathcal{F}_2)$ is \emph{of the first}
(resp., \emph{second}) \emph{kind} if it is equivalent to a pair~\xref{(i_1)} 
(resp.,
\xref{(i_2)}). For a pair $(\mathcal{F}_1,\mathcal{F}_2)$ of the first kind in
\xref{(i_1)}, the intersection $C=\mathcal{F}_1\cap\mathcal{F}_2$ is the
$(1,1)$-conic in $(\PP^1)^\vee\times\PP^1$ given by equation $x_1y_1+x_2y_2=0$.
The projection $p_2|_{\mathcal{F}_1}:\mathcal{F}_1\to\SSS_2(V)\cong\PP^2$
sends birationally $\mathcal{F}_1$ onto $\PP^2$ contracting the exceptional
section $s_1$ of $\mathcal{F}_1\cong\FF_1$ to the point $q_2=(1:0:0)$, and sends
the conic section $C\subset\mathcal{F}_1$ (disjoint with $s_1$) to the line
$h_2=\{y_0=0\}$, and symmetrically for the projection
$p_1|_{\mathcal{F}_2}:\mathcal{F}_2\to\SSS_1(V)\cong\PP^2$.

For a pair $(\mathcal{F}_1,\mathcal{F}_2)$ of the second kind the intersection
$C=\mathcal{F}_1\cap\mathcal{F}_2$ is reducible and consists of two lines
$s_1\cup s_2$,
where $s_1$ (resp. $s_2$) is the exceptional section of $\mathcal{F}_1$ (resp.
$\mathcal{F}_2$) and a ruling of $\mathcal{F}_2$ (resp. $\mathcal{F}_1$). Thus,
$p_2(s_1)=q_2\in h_2=\{y_1=0\}$, and $p_1(s_2)=q_1\in h_1=\{x_0=0\}$. The lines
$s_1$ and $s_2$ meet at a single point.
\end{notation}

\begin{slemma}
\label{lem:first-kind}
Assume that
$\Sigma_{\s}(V)=\mathcal{F}_1\cup\mathcal{F}_2$. Let $(W,F)$ be the pair linked 
to $(V,S_2)$.
Suppose also that the group $\Aut^0(W,F)$ contains a singular torus $\z(\Levi)$ 
of $\Aut(W)$, where $\Levi\subset\Aut(W)$ is a Levi subgroup.
Then $(\mathcal{F}_1,\mathcal{F}_2)$ is a pair of the first kind. This holds, in 
particular, if $\Aut(V)\supset (\Gm)^2$.
\end{slemma}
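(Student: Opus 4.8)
\emph{Reduction of the final clause.} I first reduce the case $\Aut(V)\supset(\Gm)^2$ to the main hypothesis. Since $\Sigma_{\s}(V)$ is reducible, Corollary~\xref{cor:family-of-rulings} provides a one-dimensional family of cubic cones in $V$, and this family is $\Aut^0(V)$-invariant. The $2$-torus $(\Gm)^2$ therefore preserves the line of cones inside $\SSS_2(V)\cong\PP^2$ and, acting on this $\PP^1$, fixes one of them; call the corresponding cone $S_2$. By the equivariance of the Sarkisov link~\eqref{diagram-2} one gets $(\Gm)^2\subset\Aut^0(V,S_2)\cong\Aut^0(W,F)$, and Lemma~\xref{lem:GaxGm} shows that this torus contains the centre $\z(\Levi)$ of a Levi subgroup of $\Aut(W)$, i.e.\ a singular torus. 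Thus the main hypothesis is fulfilled, and it remains to prove the principal assertion.

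\emph{A disjointness criterion.} Using the embedding $\Sigma(V)\hookrightarrow\SSS_1(V)\times\SSS_2(V)\cong\PP^2\times\PP^2$ of Corollary~\xref{claim:common-ruling-1} and Proposition~\xref{claim:ruling-in-scroll}, the cubic cones in $\SSS_i(V)$ are exactly those $S$ whose ruling family $\Lambda(S)=\pr_i^{-1}(S)$ lies in $\Sigma_{\s}(V)=\mathcal{F}_1\cup\mathcal{F}_2$; inside $\mathcal{F}_i\cong\FF_1$ these fibres are the rulings of $\mathcal{F}_i$ together with exactly one further $\pr_i$-fibre, namely the one contained in the opposite component $\mathcal{F}_{3-i}$. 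This singles out in each component a distinguished cone $S_i^{\circ}$, the one whose rulings form the exceptional section of $\mathcal{F}_{3-i}$. As $\Aut^0(V)$ is connected, it preserves $\Sigma_{\s}(V)$ and each of the components $\mathcal{F}_1,\mathcal{F}_2$, hence their exceptional sections, so $S_1^{\circ}$ and $S_2^{\circ}$ are $\Aut^0(V)$-invariant. Comparing with the two models of Notation~\xref{nota:two-kinds}, I will verify that $(\mathcal{F}_1,\mathcal{F}_2)$ is of the first kind exactly when $C=\mathcal{F}_1\cap\mathcal{F}_2$ is an irreducible conic, equivalently when $S_1^{\circ}$ and $S_2^{\circ}$ are disjoint, and of the second kind exactly when $C=s_1\cup s_2$ is the union of the two exceptional sections, equivalently when $S_1^{\circ}$ and $S_2^{\circ}$ have a common ruling. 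It therefore suffices to prove that $S_1^{\circ}$ and $S_2^{\circ}$ are disjoint.

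\emph{Locating the invariant cones via the singular torus.} By \xref{nota:diagr-2} the singular torus $T:=\z(\Levi)$ lies in $G_{\Levi}\subset\Aut^0(V,S_2)$, so $T$ acts on $V$ fixing $S_1^{\circ}$ and $S_2^{\circ}$ together with their vertices $v(S_1^{\circ})$ and $v(S_2^{\circ})$; recall that these vertices are distinct by Proposition~\xref{lem:a-d}\xref{lem:a-d(e)}. On the side of $W$, the torus $T$ fixes the plane $\Xi$ pointwise and has as its remaining fixed locus the isolated point $Q\in W\setminus R$ and the twisted cubic $\Psi\subset R\setminus\Xi$, together with the $\Levi$-invariant cone $\Cone(\Psi,Q)$ of Lemma~\xref{lem:center-inv-cone} (see Proposition~\xref{proposition-GL2-action-c}\xref{proposition-GL2-action-c-d}). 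I will transport this fixed-point geometry through the link~\eqref{diagram-2}: the $\theta$-images of $S_1^{\circ}$ and $S_2^{\circ}$ are $T$-invariant cubic cones in $W$ assembled from $\Psi$, $Q$ and the $T$-fixed data on $\Upsilon$ and on $J=F\cap\Xi$, and their mutual position is controlled by whether the relevant vertices and fixed sections are separated or coincide. Establishing that they are separated — equivalently, that $v(S_1^{\circ})$ and $v(S_2^{\circ})$ are \emph{not} joined by a line of $V$, so that $S_1^{\circ}$ and $S_2^{\circ}$ share no ruling — yields the first kind by the criterion above.

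\emph{The main obstacle.} The heart of the matter is exactly this positional statement, and I expect it to be the hard part. The mere existence of a disjoint pair of $T$-invariant cones in the two components (such as is furnished by Lemma~\xref{lem:disjoint-cones}) does \emph{not} settle the dichotomy, since both models of Notation~\xref{nota:two-kinds} contain abundant disjoint pairs of cones from $\SSS_1(V)$ and $\SSS_2(V)$; what is special about the \emph{distinguished} cones $S_1^{\circ},S_2^{\circ}$, and what must be exploited, is that $T$ is \emph{singular}. Its centraliser in $\Aut(W)$ is the full $\GL_2(\CC)=\Levi$ rather than a torus (Remark~\xref{rem:Mostow}\xref{rem:Mostow-2}), and this additional symmetry of the $T$-fixed locus — the pointwise-fixed plane $\Xi$ and the cubic $\Psi$ — is what forces the two exceptional sections of $\mathcal{F}_1$ and $\mathcal{F}_2$, hence the cones $S_1^{\circ}$ and $S_2^{\circ}$, into the disjoint configuration. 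Carrying this singular-torus symmetry faithfully across the Sarkisov link onto the $\FF_1$-structures of $\mathcal{F}_1$ and $\mathcal{F}_2$ is the principal technical difficulty of the argument.
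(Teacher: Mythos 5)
Your reduction of the final clause and your dichotomy criterion (first kind if and only if the two distinguished cones are disjoint; second kind if and only if they share a common ruling) are both essentially correct, but the proof has a genuine gap: the key positional statement --- that under the hypothesis the distinguished cones cannot share a ruling --- is never established. You explicitly defer it as ``the principal technical difficulty,'' so the argument stops exactly where all of the content lies, and the proposed route (transporting the fixed-point geometry of the singular torus across the Sarkisov link onto the $\FF_1$-structures of $\mathcal{F}_1$ and $\mathcal{F}_2$) is left entirely unexecuted.

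Worse, the reason you give for discarding Lemma~\xref{lem:disjoint-cones} rests on a misreading of its conclusion. The lemma does not merely produce ``a disjoint pair of $T$-invariant cones in the two components'': applied to the linked pair $(V,S_2)$ with $\z(\Levi)\subset\Aut^0(W,F)$, it produces a cubic cone $S'$ disjoint from the \emph{specific} cone $S_2$ of the link. And in the second-kind model \emph{no} cubic cone of $V$ is disjoint from $S_2$: a cone of $\SSS_2(V)$ meets $S_2$ because their intersection number is $1$ (Lemma~\xref{lem:square}, Proposition~\xref{lem:cohomology}), while every cone of $\SSS_1(V)$ shares a ruling with $S_2$, since $\Lambda(S_2)=s_1$ is the exceptional section of $\mathcal{F}_1$ and hence meets every ruling of $\mathcal{F}_1$ as well as $s_2=\Lambda(S_1)$ (Corollary~\xref{cor:finiteness}\xref{cor:finiteness-b}). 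This is precisely the paper's proof: the disjoint cone $S'$ lies in $\SSS_1(V)$, differs from $S_1$ (which shares a ruling with $S_2$ in the second kind), hence equals some $S_{1,t}$ --- but each $S_{1,t}$ also shares a ruling with $S_2$, a contradiction. So disjointness \emph{relative to $S_2$} settles the dichotomy in a few lines; your observation that the second-kind model contains abundant disjoint pairs $(S_{1,t},S_{2,t'})$, $t\neq t'$, is true but irrelevant. A small further slip: in your reduction of the final clause you redefine $S_2$ as a torus-fixed member of the one-parameter family, whereas the hypothesis concerns the distinguished cone $S_2$ of the statement; the correct route is simply that $[S_2]=q_2$ is already $\Aut^0(V)$-invariant by Corollary~\xref{cor:finiteness}\xref{cor:finiteness-b}, so $(\Gm)^2\subset\Aut^0(V,S_2)\cong\Aut^0(W,F)$, and Lemma~\xref{lem:GaxGm} then supplies the singular torus.
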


\begin{proof}
Suppose to the contrary that $(\mathcal{F}_1,\mathcal{F}_2)$ is a pair of the 
second kind. By Corollary~\xref{cor:finiteness}\xref{cor:finiteness-b} the cubic 
cones $S_1$ and $S_2$ are $\Aut^0(V)$-invariant. Since 
$\z(\Levi)\subset\Aut^0(W,F)$, then $\SSS_1(V)$ contains a cubic cone $S_1'$ 
disjoint with $S_2$, see Lemma~\xref{lem:disjoint-cones}. The cones $S_1$ and 
$S_2$ have a common ruling, hence $S_1'\neq S_1$. So, $S_1'$ corresponds to a 
rulings $\Lambda(S_{1,t})$ of $\mathcal{F}_1$ for some $t\in\PP^1$, see 
Corollary~\xref{cor:finiteness}\xref{cor:finiteness-b}. However, the cones $S_2$ 
and $S_1'=S_{1,t}$ as well have a common ruling. This gives a desired 
contradiction.
\end{proof}

From Proposition~\xref{prop:dP-6} we deduce

\begin{scorollary}
\label{cor:finiteness}
\begin{enumerate}
\renewcommand\labelenumi{\rm (\alph{enumi})}
\renewcommand\theenumi{\rm (\alph{enumi})}

\item
\label{cor:finiteness-a}
If $\Sigma_{\s}(V)$ is irreducible, then every component $\SSS_i(V)$, $i=1,2$, 
contains exactly $3$, $2$ or $1$
cubic cone\textup(s\textup) according to types~\xref{claim:Du-Val-del-Pezzo-i},
\xref{claim:Du-Val-del-Pezzo-ii}, and~\xref{claim:Du-Val-del-Pezzo-iii}
in Lemma~\xref{claim:Du-Val-del-Pezzo}, respectively.

\item
\label{cor:finiteness-b}
If $\Sigma_{\s}(V)=\mathcal{F}_1\cup\mathcal{F}_2$, then the following hold.
\begin{itemize}

\item
For $i=1,2$ the subvariety $\mathcal{C}_i(V)$ of $\SSS_i(V)\cong\PP^2$ whose 
points correspond to cubic cones in $V$ consists of a line $h_{i}\subset\PP^2$ 
and a reduced
$\Aut^0(V)$-invariant point $q_i=[S_i]$. For a pair 
$(\mathcal{F}_1,\mathcal{F}_2)$ of the first kind, $q_i\notin h_i$, otherwise 
$q_i\in h_{i}$.

\item
Each ruling $l_{i,t}$ of $S_i$, $t\in\PP^1$, is a common ruling of $S_i$ and of 
a unique cubic cone $S_{j,t}\in\mathcal{C}_{j}(V)$, $j\neq i$.

\item
If $(\mathcal{F}_1,\mathcal{F}_2)$ is a pair of the first kind, then
$S_j$ does not contain the vertex of $S_i$, $i\neq j$.
\end{itemize}
\end{enumerate}
\end{scorollary}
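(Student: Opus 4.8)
The plan is to combine Proposition~\xref{prop:dP-6} with the dictionary between cubic cones in $V$ and lines on the sextic surface $X:=\Sigma_{\s}(V)$. By Lemma~\xref{claim:lines-in-Sigma} and Corollary~\xref{claim:common-ruling-1}, the embedding $\Sigma(V)\hookrightarrow\SSS_1(V)\times\SSS_2(V)\cong\PP^2\times\PP^2$ identifies the projection $\pr_i:\Sigma(V)\to\SSS_i(V)=\PP^2$ so that $\Lambda(S)=\pr_i^{-1}([S])$ for every $S\in\SSS_i(V)$, while by Proposition~\xref{claim:ruling-in-scroll}\xref{claim:ruling-in-scroll-a} the scroll $S$ is a cubic cone exactly when $\Lambda(S)\subset X$. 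Thus the cubic cones in $\SSS_i(V)$ correspond bijectively to the fibers of $\pr_i$ contained in $X$. First I would observe that, $X$ being polarized by the restriction $\mathcal{F}_1+\mathcal{F}_2$ of the Segre form, any line $L$ on $X$ satisfies $\deg\pr_1(L)+\deg\pr_2(L)=1$, so exactly one of its images $\pr_i(L)$ is a point and $L$ is then a fiber of that projection. Hence the lines on $X$ are partitioned into $\pr_1$-fibers and $\pr_2$-fibers, two lines that are fibers of the same projection are disjoint, and the cubic cones in $\SSS_i(V)$ are exactly the lines contracted by $\pr_i$.

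For~\xref{cor:finiteness-a} assume $X$ irreducible, so that $X$ is a del Pezzo sextic of one of the types in Lemma~\xref{claim:Du-Val-del-Pezzo}. Since $\pr_i|_X:X\to\PP^2$ is birational but not an isomorphism ($\deg X=6\neq 9$), it contracts at least one line, so each of the two classes in the partition is nonempty. By Lemma~\xref{claim:Du-Val-del-Pezzo}\xref{claim:Du-Val-del-Pezzo-d} the total number of lines on $X$ is $6$, $4$, $2$ for types~\xref{claim:Du-Val-del-Pezzo-i}--\xref{claim:Du-Val-del-Pezzo-iii}, and from the three dual graphs the maximal number of pairwise disjoint (equivalently, pairwise non-adjacent) lines is $3$, $2$, $2$ respectively. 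As each class is an independent set, is nonempty, and the two classes exhaust all lines, these numerical constraints force the splitting to be $3+3$, $2+2$, $1+1$. Therefore each component $\SSS_i(V)$ carries $3$, $2$, or $1$ cubic cone(s), which is~\xref{cor:finiteness-a}.

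For~\xref{cor:finiteness-b} the surface is reducible, $X=\mathcal{F}_1\cup\mathcal{F}_2$ with $\mathcal{F}_i=\pr_i^{-1}(\ell_i)$ for a line $\ell_i\subset\SSS_i(V)$. An irreducible fiber of $\pr_i$ lying in $X$ lies in one of the two scrolls: those lying in $\mathcal{F}_i$ are precisely the fibers over $\ell_i$, producing the line $h_i:=\ell_i$ of cubic cones in $\SSS_i(V)$, while a fiber $\pr_i^{-1}(x)$ lies in $\mathcal{F}_j$, $j\neq i$, iff its $\pr_j$-image (a line in $\SSS_j(V)$) equals $\ell_j$, which by projective duality singles out a unique reduced point $q_i=[S_i]$. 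Hence $\mathcal{C}_i(V)=h_i\cup\{q_i\}$. Reading off the normal forms of Notation~\xref{nota:two-kinds} then gives $q_i\notin h_i$ for a pair of the first kind and $q_i\in h_i$ for the second kind, since there $q_1=(1:0:0)$, resp.\ $q_1=(0:1:0)$, while $h_1=\{x_0=0\}$ in both cases. The invariance of $q_i$ follows because $\Aut^0(V)$ preserves each projection $\pr_i$ (Remark~\xref{rem:ext-action}) and hence each scroll $\mathcal{F}_i$, so it fixes the distinguished fiber $\pr_i^{-1}(q_i)\subset\mathcal{F}_j$.

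It remains to settle the last two bullets of~\xref{cor:finiteness-b}. The rulings $l_{i,t}$ of $S_i$ are the points of $\Lambda(S_i)=\pr_i^{-1}(q_i)\subset\mathcal{F}_j$, and by Corollary~\xref{claim:common-ruling}\xref{claim:common-ruling-a} each $l_{i,t}$ is a common ruling of $S_i$ and of the scroll $S_{j,t}$ with $[S_{j,t}]=\pr_j(l_{i,t})\in\ell_j=h_j$; as $h_j\subset\mathcal{C}_j(V)$, this $S_{j,t}$ is a cubic cone, and uniqueness holds since $\pr_j$ maps $\Lambda(S_i)$ isomorphically onto $h_j$. Finally, for a pair of the first kind the points $q_1=q_2=(1:0:0)$ violate the incidence relation~\eqref{eq:equation-of-Sigma}, so $(q_1,q_2)\notin\Sigma(V)$ and $S_1,S_2$ share no ruling; by Corollary~\xref{cor:intersection-of-cones} their intersection is then finite, and since vertex-containment would force a common ruling, $S_j$ does not contain the vertex of $S_i$. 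I expect the crux to be the counting in~\xref{cor:finiteness-a}: proving that each projection contracts a nonempty, pairwise-disjoint set of lines and that the dual-graph bounds force an even split. Once the bidegree dictionary is in place, the reducible case~\xref{cor:finiteness-b} reduces to tracking the projective duality and invoking Corollaries~\xref{claim:common-ruling} and~\xref{cor:intersection-of-cones}.
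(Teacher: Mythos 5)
Your proposal is correct and takes essentially the same route as the paper: both rest on the dictionary of Proposition~\xref{claim:ruling-in-scroll}\xref{claim:ruling-in-scroll-a} identifying cubic cones in $\SSS_i(V)$ with the fibers of $\pr_i$ contained in $\Sigma_{\s}(V)$, the line counts of Lemma~\xref{claim:Du-Val-del-Pezzo} for part (a), and, for part (b), the duality description of $\mathcal{C}_i(V)=h_i\cup\{q_i\}$, Corollary~\xref{claim:common-ruling} for the common rulings, and the contradiction that a vertex of $S_i$ lying on $S_j$ would force a common ruling of $S_1$ and $S_2$. Your only deviations are presentational: the explicit independent-set forcing of the splits $3+3$, $2+2$, $1+1$ (which the paper dismisses as straightforward), and recasting the paper's observation that $\Lambda(S_1)$ and $\Lambda(S_2)$ are disjoint from $C=\mathcal{F}_1\cap\mathcal{F}_2$ as the equivalent incidence failure $(q_1,q_2)\notin\Sigma(V)$.
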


\begin{proof}
Statement~\xref{cor:finiteness-a} is straightforward from 
Proposition~\xref{claim:ruling-in-scroll}\xref{claim:ruling-in-scroll-a}.

\xref{claim:ruling-in-scroll-b}
By Proposition~\xref{claim:ruling-in-scroll}\xref{claim:ruling-in-scroll-a}
the cubic cones in $\SSS_i(V)$ correspond to the lines in
$\Sigma_{\s}(V)=\mathcal{F}_1\cup\mathcal{F}_2$ contracted under the projection
$p_i:\Sigma_{\s}(V)\to\SSS_i(V)=\PP^2$, $i,j=1,2$. Hence they correspond to the 
rulings of $\mathcal{F}_i$ and the exceptional sections $s_i$ of 
$\mathcal{F}_i$, $i=1,2$. This gives an isomorphism $\mathcal{C}_{i}(V)\cong 
h_i\cup\{q_i\}$ and the equalities $[S_i]=\{q_i\}$ and 
$s_j=\Lambda(S_i)=p_i^{-1}(q_i)$, $i,j=1,2$, $j\neq i$. Any ruling of 
$p_i:\mathcal{F}_i\to h_i$ meets the exceptional section $s_i$ in a point, which 
corresponds to the unique common ruling of the assigned cubic cones, cf.\ 
Corollary~\xref{claim:common-ruling}. This proves the first two assertions 
of~\xref{claim:ruling-in-scroll-b}.

To show the last one, we let $v_i$ be the vertex of the cone $S_i$, $i=1,2$. 
Suppose to the contrary that $v_i\in S_j$ for some choice of $i,j\in\{1,2\}$, 
$i\neq j$. Let $l$ be the unique ruling of $S_j$ passing through $v_i\neq v_j$. 
Since any line in $V$ through $v_i$ is a ruling of $S_i$, see 
Proposition~\xref{lem:a-d}\xref{lem:a-d(d)}, $l$ is a common ruling of $S_i$ and 
$S_j$. However, for a pair $(\mathcal{F}_1,\mathcal{F}_2)$ of the first kind 
both $\Lambda(S_1)\subset\mathcal{F}_2$ and $\Lambda(S_2)\subset\mathcal{F}_1$ 
are disjoint with the common conic section $C=\mathcal{F}_1\cap\mathcal{F}_2$. 
Hence, $S_1$ and $S_2$ do not have any ruling in common, a contradiction.
\end{proof}

\section{Automorphism groups of singular del Pezzo sextics}
\label{sec:1.3-1.4}
In this section we prove the following proposition.

\begin{proposition}
\label{thm:toric-case}
For any Fano-Mukai fourfold $V=V_{18}$ of genus $10$ one of the following 
cases~\xref{thm:toric-case-i}--\xref{thm:toric-case-iv} occurs.
\par\medskip\noindent
\setlength{\extrarowheight}{1pt}
\newcommand{\heading}[1]{\multicolumn{1}{c|}{#1}}
\newcommand{\headingl}[1]{\multicolumn{1}{c}{#1}}
{\rm
\begin{tabularx}{\textwidth}{p{0.02\textwidth}|p{0.37\textwidth}|p{
0.53\textwidth}}
&\heading{$\Sigma_{\s}(V)$}&\headingl{$\Aut(V)$}
\\\hline
\mbox{\nr\label{thm:toric-case-i}} &a union of two smooth cubic scrolls
meeting along a smooth conic&$\GL_2(\CC)\subset\Aut(V)\subset\GL_2(\CC)\rtimes 
(\ZZ/2\ZZ)$
\\
\mbox{\nr\label{thm:toric-case-ii}}
& a del Pezzo sextic of type $A_1$
& $\Ga\times\Gm\subset\Aut(V)\subset (\Ga\times\Gm)\rtimes (\ZZ/2\ZZ)$
\\\mbox{\nr\label{thm:toric-case-iii}}
& a smooth del Pezzo sextic
& 
\mbox{$(\Gm)^2\subset\Aut(V)\subset (\Gm)^2\rtimes (\ZZ/6\ZZ)$}
\\
\mbox{\nr\label{thm:toric-case-iv}}
& a union of two smooth cubic scrolls
meeting along a pair of intersecting lines
& $\Ga\times\Gm\subset\Aut(V)\subset B(\PGL_3(\CC))\rtimes 
(\ZZ/2\ZZ)$,\linebreak
where $B(\PGL_3(\CC))$ is a Borel subgroup of $\PGL_3(\CC)$\\\hline
\end{tabularx}
}\par\vspace{10pt}\noindent
In Cases~\xref{thm:toric-case-i} and~\xref{thm:toric-case-iv} the variety
$V$ contains two one-parameter families of cubic cones and two
$\Aut^0(V)$-invariant cubic cones $S_i\in\SSS_i(V)$, $i=1,2$. The number of
cubic cones in $V$ equals $4$ in Case~\xref{thm:toric-case-ii} and $6$ in Case
\xref{thm:toric-case-iii}; all of these cones are $\Aut^0(V)$-invariant.
\end{proposition}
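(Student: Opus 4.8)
The plan is to pin down $\Aut(V)$ by triangulating three descriptions: its faithful action on the scheme of lines $\Sigma(V)$, the Sarkisov link of Proposition~\ref{prop:reversion} together with the stabilizer computations of Section~\ref{sec-3-bis}, and the $\G$-construction of Section~\ref{appendix}. First I would record faithfulness of the action on lines: since $V$ is covered by lines (Proposition~\ref{lem:a-d}) and a general point is cut out by the lines through it, any automorphism acting trivially on $\Sigma(V)$ fixes $V$ pointwise, so $\Aut(V)\hookrightarrow\Aut(\Sigma(V))$. By Remark~\ref{rem:ext-action} one has $\Aut^0(\Sigma(V))\cong\PGL_3(\CC)$, and in the flag model $\Sigma(V)\subset(\PP^2)^\vee\times\PP^2$ the full group is $\PGL_3(\CC)\rtimes(\ZZ/2\ZZ)$, the involution being the duality interchanging $\pr_1,\pr_2$, equivalently the components $\SSS_1(V),\SSS_2(V)$. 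As $\Sigma_{\s}(V)$ is intrinsic, $\Aut(V)$ lands in the stabilizer of the configuration $\Sigma_{\s}(V)$; computing this stabilizer in each case of Proposition~\ref{prop:dP-6} yields the upper bounds.

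For the reducible cases I would read the stabilizer off the normal forms of~\ref{nota:two-kinds}: for the first kind the stabilizer in $\PGL_3(\CC)$ of a point off a line is $\GL_2(\CC)$, while for the second kind the stabilizer of a flag is a Borel subgroup $B(\PGL_3(\CC))$; in both the duality is admissible, giving the bounds of Cases~\ref{thm:toric-case-i} and~\ref{thm:toric-case-iv}. A direct computation of the automorphism group of a del Pezzo sextic of type $A_1$ gives the bound $(\Ga\times\Gm)\rtimes(\ZZ/2\ZZ)$ of Case~\ref{thm:toric-case-ii}. For the smooth sextic the identity component of the stabilizer is the diagonal torus $(\Gm)^2$, but the finite part is delicate and I would obtain it from the $\G$-side rather than from abstract del Pezzo geometry.

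The identity components and the lower bounds come from the cubic cones. By Proposition~\ref{lem:fixed-pt} the variety $V$ carries an $\Aut^0(V)$-invariant cubic cone $S$; linking $(V,S)$ to $(W,F)$ through~\eqref{diagram-2} and invoking Corollary~\ref{cor:aut-center}, the conic $J=F\cap\Xi$ is of one of the three types of Remark~\ref{rem:touching-conics}, and $\Aut^0(V)\cong\Aut^0(W,F)=(\Ru\cap\Aut^0(W,F))\rtimes G$ with $G\cong\GL_2(\CC)$, $\Ga\times\Gm$ or $(\Gm)^2$ accordingly. Cross-checking through Corollary~\ref{cor:aut-orbits}, the element $g\in\mathfrak{g}_2$ with $V=V^g_{18}$ is singular semisimple, regular non-semisimple, or regular semisimple, and by Theorem~\ref{thm:G/P} and Lemma~\ref{lem:stabilizers-in-Omega} one identifies $\Aut^0(V)$ with $\Stab_{\G}(g)^0$, equal to $\GL_2(\CC)$, $\Ga\times\Gm$, or $(\Gm)^2$. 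This supplies the lower bounds and matches the three identity components to the del Pezzo types: $\GL_2(\CC)$, being non-abelian reductive, occurs only for the first-kind configuration (Case~\ref{thm:toric-case-i}); $(\Gm)^2$ for the smooth sextic (Case~\ref{thm:toric-case-iii}); and $\Ga\times\Gm$ for the remaining Cases~\ref{thm:toric-case-ii} and~\ref{thm:toric-case-iv}.

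Finally I would treat the smooth-sextic finite part, exclude type $A_2$, and read off the cone statements. Granting that the whole group $\Aut(V)$ extends to $\Omega=\G/P$, so that $\Aut(V)\hookrightarrow\Aut(\Omega)=\G$ (the group $\G$ being adjoint, without outer automorphisms) and hence into $\Stab_{\G}([g])$, one notes that for regular semisimple $g$ the stabilizer $\Stab_{\G}([g])$ normalizes the maximal torus $T=\Stab_{\G}(g)^0=\Aut^0(V)$; thus the finite part $\Aut(V)/\Aut^0(V)$ embeds into $N_{\G}(T)/T=\W$ as $\Stab_{\W}([g])$, where $\W$ is the Weyl group of $\G$. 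The assignment $w\mapsto\lambda$ with $w\cdot g=\lambda g$ embeds $\Stab_{\W}([g])$ into $\Gm$ (its kernel $\Stab_{\W}(g)$ being trivial by regularity), so $\Stab_{\W}([g])$ is cyclic, contains the class of $-\mathrm{id}\in\W$, and, every element order in $\W$ dividing $6$, has order $2$ or $6$; this gives the cyclic bound $(\Gm)^2\rtimes(\ZZ/6\ZZ)$ of Case~\ref{thm:toric-case-iii}, excluding the non-cyclic finite groups a purely configurational analysis in $\PGL_3(\CC)\rtimes(\ZZ/2\ZZ)$ would otherwise allow. Type $A_2$ is excluded directly: it would force exactly one cubic cone in each component (Corollary~\ref{cor:finiteness}), and the two lines of an $A_2$ sextic meet at its singular point, so the two cones share a ruling and are not disjoint (Corollary~\ref{cor:intersection-of-cones}); yet Lemma~\ref{lem:disjoint-cones} produces a cubic cone disjoint from the first, necessarily the unique cone of the other component, a contradiction. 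The cone counts ($6$ in Case~\ref{thm:toric-case-iii}, $4$ in Case~\ref{thm:toric-case-ii}) then follow from Corollary~\ref{cor:finiteness} and Lemma~\ref{claim:Du-Val-del-Pezzo}, their $\Aut^0(V)$-invariance from Corollary~\ref{cor:family-of-rulings}, and in the reducible Cases~\ref{thm:toric-case-i},~\ref{thm:toric-case-iv} the two one-parameter families and the invariant pair $S_i\in\SSS_i(V)$ from Lemma~\ref{lem:disjoint-cones} and Corollary~\ref{cor:finiteness}. The principal obstacle is precisely this smooth-sextic finite part: both the extension of the entire group $\Aut(V)$ to $\Omega$ and the cyclicity of order dividing $6$ of $\Stab_{\W}([g])$ are needed to obtain the cyclic bound $\ZZ/6\ZZ$ rather than a dihedral one.
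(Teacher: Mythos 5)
Your overall architecture --- faithfulness of the $\Aut(V)$-action on $\Sigma(V)$, upper bounds from the stabilizer of $\Sigma_{\s}(V)$, lower bounds from invariant cubic cones via Proposition~\ref{lem:fixed-pt} and Corollary~\ref{cor:aut-orbits} --- coincides with the paper's, but the step you yourself flag as the principal obstacle is a genuine gap, not a removable blemish. Your derivation of the cyclic bound $(\Gm)^2\rtimes(\ZZ/6\ZZ)$ in the smooth-sextic case assumes that \emph{all} of $\Aut(V)$, not just $\Aut^0(V)$, extends to $\Omega=\G/P$. Theorem~\ref{thm:G/P} extends only the identity component, and this is not a technicality: the paper observes in its final section that the outer involution of $V^{\aaa}_{18}$ does not come from any element of $\G$, so the analogous full-group extension can genuinely fail. (Granting the extension, your normalizer argument --- $\Stab_{\G}([g])$ normalizes the maximal torus $\Stab_{\G}(g)^0$, the component group embeds in $\Stab_{\W}([g])$, and the character $w\mapsto\lambda$ forces cyclicity of order dividing $6$ --- is correct; but the grant is unearned.) The paper needs no such extension: by Corollary~\ref{cor:embedding-Aut-V-to-Aut-X} the whole group $\Aut(V)$ embeds into $\Aut(\Sigma(V),\Sigma_{\s}(V))$, and Construction~\ref{constr:tensor-product} together with Lemma~\ref{lem:pcentralizer} identifies this stabilizer with the projective centralizer $\operatorname{PCent}_{\tilde{\PGL_3(\CC)}}(C)$ of a traceless $3\times 3$ matrix $C$ in Jordan form; Lemma~\ref{lem:centralizers} then computes all cases by elementary linear algebra, giving $(\Gm)^2\rtimes(\ZZ/6\ZZ)$ for eigenvalues $(1,\zeta,\zeta^2)$ and $(\Gm)^2\rtimes(\ZZ/2\ZZ)$ generically. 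Your stated motivation for the detour through $\G$ --- that a ``purely configurational analysis'' would allow non-cyclic finite parts --- conflates the abstract group $\Aut(\Sigma_{\s}(V))$ of the sextic surface (which does contain a dihedral group) with the stabilizer inside $\Aut(\Sigma(V))$, which is the relevant and much smaller group.

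Two further steps are under-justified. First, your matching of identity components to configurations asserts without proof that $(\Gm)^2$ cannot occur together with the second-kind configuration of Case~\ref{thm:toric-case-iv}; since $B(\PGL_3(\CC))$ has rank $2$ this is not automatic, and it is exactly what the lower bound $\Ga\times\Gm\subset\Aut(V)$ in that row requires. The paper proves it as Lemma~\ref{lem:first-kind}: a $2$-torus in $\Aut(V)$ produces, via Lemma~\ref{lem:GaxGm}, a singular torus inside $\Aut^0(W,F)$, which forces the configuration to be of the first kind. Second, your exclusion of type $A_2$ invokes Lemma~\ref{lem:disjoint-cones}, but that lemma operates under the hypothesis of~\ref{nota:diagr-2} that the linked scroll $F$ is invariant under a singular torus $\z(\Levi)$; in the $A_2$ scenario the only surviving possibility for the identity component is $\Ga\times\Gm$, and there the $\Gm$-factor need not be a singular torus (compare Remark~\ref{rem:two-linked-actions}, where $\Ru\cap\Aut^0(W,F')\cong\Ga$ and the torus acts nontrivially on $J'$), so the lemma does not apply as stated. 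The paper's exclusion is simpler and complete: $\Aut(\Sigma,X_2)\cong((\Ga)^2\rtimes\Gm)\rtimes(\ZZ/2\ZZ)$ contains none of $\GL_2(\CC)$, $\Ga\times\Gm$, $(\Gm)^2$ (Lemma~\ref{lem:centralizers}\ref{lem:centralizers-v}), contradicting the lower bound you have already established. Your remaining items --- the reducible-case stabilizers, the cone counts via Corollary~\ref{cor:finiteness} and Lemma~\ref{claim:Du-Val-del-Pezzo}, and their $\Aut^0(V)$-invariance via Corollary~\ref{cor:family-of-rulings} --- are sound and agree with the paper.
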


In fact, Case~\xref{thm:toric-case-iv} does not occur, see 
Corollary~\xref{cor:stable-cones} in the next section.
The proof of Proposition~\xref{thm:toric-case} is done 
in~\xref{proof-of-thm:toric-case}. It is preceded by some preliminary facts and 
constructions.

\begin{lemma}
\label{lem:group-embedding}
\begin{enumerate}
\renewcommand\labelenumi{\rm (\alph{enumi})}
\renewcommand\theenumi{\rm (\alph{enumi})}

\item
\label{lem:group-embedding-a}
The induced $\Aut(V)$-action on $\Sigma(V)$ is effective and leaves invariant 
the divisor $\Sigma_{\s}(V)\subset\Sigma(V)$.

\item
\label{lem:group-embedding-b}
$\Aut^0(V)$ acts effectively on any component of $\Sigma_{\s}(V)$.

\item
\label{lem:group-embedding-c}
The action of $\Aut(V)$ on $\Sigma_{\s}(V)$ is effective.
\end{enumerate}
\end{lemma}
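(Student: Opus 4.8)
The plan is to treat the three assertions in turn, using throughout the intrinsic description of $\Sigma_{\s}(V)$ as the locus of $s$-lines (see~\xref{nota:4cases}) and the embedding $\Sigma(V)\hookrightarrow\PP^2\times\PP^2$ of Theorem~\xref{prop:KaRa}\xref{prop:KaRa-a}.

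\textbf{Part (a).} Invariance of $\Sigma_{\s}(V)$ is immediate: an automorphism of $V$ sends lines to lines and preserves the property of being an $s$-line, hence stabilizes $\Sigma_{\s}(V)$. For effectiveness on $\Sigma(V)$ I would argue that if $\alpha\in\Aut(V)$ fixes every point of $\Sigma(V)$, then $\alpha$ maps each line of $V$ onto itself; through a general point $p\in V$ pass exactly three lines $l_1,l_2,l_3$ by Proposition~\xref{lem:a-d}\xref{lem:a-d(d)}, each $\alpha$-invariant, so $\alpha(p)\in l_1\cap l_2=\{p\}$ since two distinct lines of $V\subset\PP^{12}$ meet in at most one point. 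Thus $\alpha$ fixes a dense open subset of $V$, whence $\alpha=\id$.

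\textbf{Parts (b) and (c).} The common device is to combine effectiveness on $\Sigma(V)$ with Remark~\xref{rem:ext-action}: the projections $\pr_i\colon\Sigma(V)\to\SSS_i(V)\cong\PP^2$ are the only two extremal contractions, and each induces an isomorphism $\Aut^0(\Sigma(V))\xrightarrow{\ \sim\ }\Aut(\PP^2)=\PGL_3(\CC)$. By part (a) the map $\Aut(V)\to\Aut(\Sigma(V))$ is injective, and since $\Aut^0(V)$ is connected it lands in $\Aut^0(\Sigma(V))$; hence $\Aut^0(V)\to\Aut(\SSS_i(V))=\PGL_3(\CC)$ is injective for $i=1,2$. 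For (b) I would note that each component $Z$ of $\Sigma_{\s}(V)$ dominates one of the factors: if $\Sigma_{\s}(V)$ is irreducible it admits birational contractions to both $\SSS_i(V)$ (Proposition~\xref{prop:dP-6}), while if $\Sigma_{\s}(V)=\mathcal F_1\cup\mathcal F_2$ then $\pr_j|_{\mathcal F_i}$ is birational onto $\SSS_j(V)$ for $j\neq i$ (Notation~\xref{nota:two-kinds}). As $\Aut^0(V)$ is connected it preserves each $Z$, and $\pr_i|_Z$ is equivariant (Remark~\xref{rem:ext-action}\xref{rem:ext-action-2}); so a $g\in\Aut^0(V)$ acting trivially on $Z$ acts trivially on the dense subset $\pr_i(Z)\subset\SSS_i(V)$, forcing the corresponding element of $\PGL_3(\CC)$ to be trivial, and then $g=\id$ by the injectivity above.

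\textbf{Part (c), and the main obstacle.} Set $K=\ker(\Aut(V)\to\Aut(\Sigma_{\s}(V)))$; the goal is $K=\{1\}$. Any $g\in K$ fixes the two-dimensional surface $\Sigma_{\s}(V)$ (Corollary~\xref{cor:degree-of-theta}) pointwise, so its image in $\Aut(\Sigma(V))$ has fixed locus of dimension at least two. The hard part — and the one place where the full group, rather than $\Aut^0(V)$, matters — is to show that an automorphism of $\Sigma(V)$ lying outside $\Aut^0(\Sigma(V))$ cannot fix a surface pointwise, which is what rules out a $g$ interchanging the two families of cubic scrolls while fixing all $s$-lines. I would verify this in the incidence model $\Sigma(V)=\{(x,y)\in\PP(\CC^3)\times\PP((\CC^3)^\vee)\mid y(x)=0\}$ of Remark~\xref{rem:ext-action}\xref{rem:ext-action-3}: by Remark~\xref{rem:ext-action}\xref{rem:ext-action-2} such an automorphism swaps the two projections, so it has the form $(x,y)\mapsto([Ay],[Bx])$ for linear isomorphisms $A,B$, and its fixed points force $x$ to be an eigenvector of $AB$ lying on the conic $\{(Bx)(x)=0\}$, a set of dimension at most one. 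Hence $g$ lies in $\Aut^0(\Sigma(V))=\PGL_3(\CC)$, and applying the argument of (b) to a component $Z\subset\Sigma_{\s}(V)$ dominating some $\SSS_i(V)$ — on which $g$ still acts trivially — shows the $\PGL_3(\CC)$-element of $g$ is trivial, so $g=\id$ by (a).
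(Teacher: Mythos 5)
Your proof is correct, and parts (a) and (b) coincide with the paper's own argument: effectiveness on $\Sigma(V)$ is recovered from the three lines through a general point of $V$ (Proposition~\xref{lem:a-d}\xref{lem:a-d(b)}--\xref{lem:a-d(d)}), and faithfulness on every component of $\Sigma_{\s}(V)$ follows from dominance of one of the projections $p_i$ together with the identification $\Aut^0(\Sigma(V))\cong\Aut(\SSS_i(V))$. The genuine divergence is in part (c), at the step excluding an automorphism that fixes $\Sigma_{\s}(V)$ pointwise while interchanging the two factors of $(\PP^2)^\vee\times\PP^2$. The paper argues geometrically: rulings of $p_i$ contained in $\Sigma_{\s}(V)$ exist (they correspond to the cubic cones, Proposition~\xref{claim:ruling-in-scroll}\xref{claim:ruling-in-scroll-a} and Corollary~\xref{cor:finiteness}); being pointwise fixed they are invariant, whereas a factor-swapping automorphism must carry fibers of $p_1$ to fibers of $p_2$. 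You instead prove an a priori fact about the flag variety: any automorphism interchanging the projections, written $(x,y)\mapsto([Ay],[Bx])$, has fixed locus injecting (via $(x,y)\mapsto x$, since $y=[Bx]$ at a fixed point) into the locus $\{(Bx)(x)=0\}$, hence of dimension at most one, too small to contain the surface $\Sigma_{\s}(V)$. This computation is sound, but you should say explicitly why $\{(Bx)(x)=0\}$ is a curve rather than all of $\PP^2$: the quadratic form $x\mapsto (Bx)(x)$ vanishes identically only if the associated bilinear form is alternating, which is impossible for an invertible $B\colon\CC^3\to(\CC^3)^\vee$ because a skew-symmetric matrix of odd size is singular. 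With that half-line added, your route buys something the paper's does not: part (c) becomes independent of the existence of cubic cones (Corollary~\xref{cor:finiteness}), resting only on linear algebra in the incidence model, and it yields the general statement that factor-swapping automorphisms of the flag variety have at most one-dimensional fixed loci; the paper's version is shorter and stays inside the geometry already developed. Both arguments then conclude identically by reducing to part (a).
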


\begin{proof}
\xref{lem:group-embedding-a} By Proposition~\xref{lem:a-d}\xref{lem:a-d(b)}, 
there are exactly three lines passing through a general point of $V$. This 
allows to reconstruct the $\Aut(V)$-action on $V$ from the induced 
$\Aut(V)$-action on $\Sigma(V)$. So, the latter action is effective. Now
the assertion is straightforward.

\xref{lem:group-embedding-b} One may identify 
$\Sigma(V)\subset(\PP^2)^\vee\times\PP^2$ with the variety $\Sigma$ of complete 
flags in $\PP^2$, see~\xref{nota:two-kinds}. The duality permutes the factors 
$(\PP^2)^\vee$ and $\PP^2$ inducing an involution $\iota$ of $\Sigma$. Thus, 
$\Aut(\Sigma)=\Aut^0(\Sigma)\rtimes (\ZZ/2\ZZ)$, where 
$\Aut^0(\Sigma)\cong\Aut(\PP^2)$.

For $i=1,2$ the action of $\Aut^0(\Sigma(V))$ on the ruling $p_i: 
\Sigma(V)\to\SSS_i(V)$ induces an isomorphism 
$\Aut^0(\Sigma(V))\cong\Aut(\SSS_i(V))$ and also an injection 
$\Aut^0(V)\hookrightarrow\Aut(\SSS_i(V))$. For any irreducible component 
$\mathcal{T}$ of the divisor $\Sigma_{\s}(V)$, at least one of the projections 
$p_i|_{\mathcal{T}}:\mathcal{T}\to\SSS_i(V)$, $i=1,2$, is dominant. Hence 
the representation $\Aut^0(V)\to\Aut(\mathcal{T})$ is faithful.

\xref{lem:group-embedding-c}
Assume that $\alpha\in\Aut(V)$ induces the identity on $\Sigma_{\s}(V)$. In
particular, any ruling of $p_i:\Sigma(V)\to\SSS_i(V)$ contained in
$\Sigma_{\s}(V)$ is invariant under $\alpha$. The rulings on $\Sigma_{\s}(V)$
correspond to the cubic cones in $V$, and they do exist, see, e.g., Corollary
\xref{cor:finiteness}. It follows that $\alpha(\SSS_i(V))=\SSS_i(V)$, $i=1,2$,
that is, $\alpha$ does not interchange the factors of $(\PP^2)^\vee\times\PP^2$.
Since the projections $p_i|_{\Sigma_{\s}(V)}:\Sigma_{\s}(V)\to\SSS_i(V)$,
$i=1,2$, are dominant and $\alpha$-equivariant, $\alpha$ acts identically on the
factors of $(\PP^2)^\vee\times\PP^2$, hence also on $\Sigma$. By
\xref{lem:group-embedding-a}, $\alpha=\id_V$.
\end{proof}

The following corollary is straightforward.

\begin{scorollary}
\label{cor:embedding-Aut-V-to-Aut-X}
There is an embedding 
\begin{equation*}
\Aut(V)\hookrightarrow\Aut((\PP^2)^\vee\times\PP^2,\,\Sigma(V),\Sigma_{\s}
(V))=\Aut(\Sigma(V),\Sigma_{\s}(V)).
\end{equation*}
\end{scorollary}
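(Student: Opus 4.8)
The plan is to build the embedding in two steps, combining Lemma~\xref{lem:group-embedding} with the canonicity of the embedding $\Sigma(V)\hookrightarrow(\PP^2)^\vee\times\PP^2$ recorded in Remark~\xref{rem:ext-action}. First I would observe that any $\alpha\in\Aut(V)$ sends lines on $V$ to lines on $V$, and hence induces by functoriality an automorphism $\alpha_*$ of the Hilbert scheme $\Sigma(V)$. This gives a group homomorphism $\Aut(V)\to\Aut(\Sigma(V))$, $\alpha\mapsto\alpha_*$, which by Lemma~\xref{lem:group-embedding}\xref{lem:group-embedding-a} is injective and whose image leaves invariant the divisor $\Sigma_{\s}(V)$ of splitting lines. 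Thus one already obtains an embedding $\Aut(V)\hookrightarrow\Aut(\Sigma(V),\Sigma_{\s}(V))$.

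The second step is to justify the equality $\Aut(\Sigma(V),\Sigma_{\s}(V))=\Aut((\PP^2)^\vee\times\PP^2,\,\Sigma(V),\Sigma_{\s}(V))$ asserted in the statement. Here I would invoke Remark~\xref{rem:ext-action}\xref{rem:ext-action-2}: the two projections $\pr_i\colon\Sigma(V)\to\PP^2$, $i=1,2$, are the only extremal contractions of the threefold $\Sigma(V)$, so the realization of $\Sigma(V)$ as a smooth $(1,1)$-divisor in $(\PP^2)^\vee\times\PP^2$ is intrinsic. Consequently every automorphism of $\Sigma(V)$ permutes the pair $\{\pr_1,\pr_2\}$, hence acts on the targets $\SSS_1(V)$ and $\SSS_2(V)$ making both projections equivariant (cf.\ Remark~\xref{rem:ext-action}\xref{rem:ext-action-3}); the induced action on the product $(\PP^2)^\vee\times\PP^2$ then preserves $\Sigma(V)$ and restricts to the given automorphism of $\Sigma(V)$. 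In this way restriction to $\Sigma(V)$ defines an isomorphism between $\Aut((\PP^2)^\vee\times\PP^2,\Sigma(V))$ and $\Aut(\Sigma(V))$, and this isomorphism carries the subgroup stabilizing $\Sigma_{\s}(V)$ onto $\Aut(\Sigma(V),\Sigma_{\s}(V))$. Composing with the embedding of the first step yields the claim.

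The single point that requires care — and the reason the corollary is ``straightforward'' rather than entirely formal — is the extension statement in the second step, namely that an abstract automorphism of $\Sigma(V)$ is induced by an automorphism of the ambient product $(\PP^2)^\vee\times\PP^2$. This is precisely the canonicity content of Remark~\xref{rem:ext-action}: since the two projections are the only extremal contractions, they are permuted by $\Aut(\Sigma(V))$, which forces the product structure to be respected up to a possible interchange of the factors. I expect no genuine obstacle beyond correctly citing this canonicity, since effectivity on $V$, on $\Sigma(V)$, and on $\Sigma_{\s}(V)$ is already supplied by Lemma~\xref{lem:group-embedding}\xref{lem:group-embedding-a}--\xref{lem:group-embedding-c}.
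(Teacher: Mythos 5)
Your proposal is correct and takes essentially the same route as the paper: the embedding $\Aut(V)\hookrightarrow\Aut(\Sigma(V),\Sigma_{\s}(V))$ is exactly the paper's use of Lemma~\xref{lem:group-embedding}\xref{lem:group-embedding-a}, and the equality with $\Aut((\PP^2)^\vee\times\PP^2,\,\Sigma(V),\Sigma_{\s}(V))$ rests, as in the paper, on the canonicity of the two extremal contractions from Remark~\xref{rem:ext-action}. The extension argument you spell out in your second step is precisely what the paper's proof compresses into the phrase that the pair of projections to the factors is canonical.
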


\begin{proof}
Indeed, the pair of projections of $(\PP^2)^\vee\times\PP^2$ to the factors 
being canonical (see~\xref{rem:ext-action}), the inclusion 
$\Aut((\PP^2)^\vee\times\PP^2,\,\Sigma(V))\subset\Aut(\Sigma(V))$ is in fact the 
equality.
\end{proof}

The Hilbert scheme of lines $\Sigma(V)$ admits the following alternative 
description.

\begin{sconstruction}
\label{constr:tensor-product} 
\setenumerate[0]{leftmargin=8pt,itemindent=7pt}
\begin{enumerate}
\renewcommand\labelenumi{\bf\arabic{enumi}.}
\renewcommand\theenumi{\bf\arabic{enumi}}
\item
For a vector $\y\neq 0$ in a vector space $U=\CC^{n+1}$, we let $[\y]$ denote 
the image of $\y$ in $\PP(U)=\PP^n$. Consider the Segre embedding
\begin{equation*}
\nu: (\PP^n)^\vee\times\PP^n\hookrightarrow\PP^{n^2-1},\quad 
([\x^*],[\y])\longmapsto [\x^*\otimes\y],\quad\x^*\in (U)^\vee,\,\,\y\in 
U,\,\,\x^*,\y\neq 0.
\end{equation*}
Using the standard isomorphism $U^\vee\otimes 
U\cong\operatorname{End}(U)=\gl(U)$, we regard
$U^\vee\otimes U$ as the vector space of square matrices of order $n+1$ over 
$\CC$. We let $(e_0,\ldots,e_n)$ be the standard basis of $U=\CC^{n+1}$ and 
$(e^*_0,\ldots,e^*_n)$ be its dual, so that $e_i^*\otimes e_j=E_{i,j}$ is the 
elementary matrix with the only nonzero entry $e_{i,j}=1$. Under this 
identification, the image of $U^\vee\times U$ in $\gl(U)$ consists of matrices 
of rank 1:
\begin{equation}
\label{eq:matrix}
\x^*\otimes\y=
\begin{pmatrix}
x_0y_0 & \ldots & x_0y_n \\
\vdots & \vdots & \vdots \\
x_ny_0 & \ldots & x_ny_n
\end{pmatrix},
\end{equation}
where $\x^*=x_0e_0^*+\ldots+x_ne_n^*$ and $\y=y_0e_0+\ldots+y_ne_n$.

In the sequel we let $n=2$, so that $\gl(U)=\gl_3(\CC)$.
Assuming that $\Sigma(V)$ is realized as the subvariety of complete flags in 
$\PP^2$ embedded in $(\PP^2)^\vee\times\PP^2$ with 
equation~\eqref{eq:equation-of-Sigma}
\begin{equation*}
x_0y_0+x_1y_1+x_2y_2=0,
\end{equation*}
its image $\Sigma$ in $\PP^8=\PP(\gl(U))$ lies in the hyperplane 
$\sll(U)=\sll_3(\CC)$ of matrices with zero trace. We identify $\Sigma(V)$ with 
$\Sigma$. In this way, $\Sigma(V)$ is realized as a smooth hyperplane section of
$\nu((\PP^2)^\vee\times\PP^2)\subset\PP^8$, where $\nu$ stands for the Segre 
embedding, and, alternatively, as the projectivization of the cone of $(3\times 
3)$-matrices of rank $1$ with zero trace.

For a square matrix $M$ of order $3$ and of rank $1$ one has:
\begin{equation*}
\tr(M)=0\Longleftrightarrow M^2=0\Longleftrightarrow\operatorname{im} 
(M)\subset\ker(M).
\end{equation*}
Thus, for $[M]\in\Sigma(V)$, $\operatorname{im} (M)$ is a line in the plane 
$\ker(M)$, and
\begin{equation*}
\PP(\operatorname{im} (M))\subset\PP(\ker(M))\subset\PP^2
\end{equation*}
is a complete flag in $\PP^2$. The maps $M\longmapsto\ker(M)$ and 
$M\longmapsto\operatorname{im}(M)$ yield the projections $\Sigma(V)\to 
(\PP^2)^\vee$ and $\Sigma(V)\to\PP^2$, respectively.

\item
The $\GL_3(\CC)$-action on $\gl_3(\CC)$ by conjugation: $(A,M)\longmapsto 
AMA^{-1}$ descends to a $\PGL_3(\CC)$-action on $\PP^8=\PP(\gl_3(\CC))$. The 
Segre embedding is equivariant with respect to the latter $\PGL_3(\CC)$-action 
on $\PP^8$ and the $\PGL_3(\CC)$-action on $(\PP^2)^\vee\times\PP^2$ given by
\begin{equation*}
(A, ([\x^*],[\y]))\longmapsto ([{(A^{-1})}^t\x^*],\,[A\y]).
\end{equation*}
Any square matrix $M$ of order $3$ and of rank $1$ with zero trace has Jordan 
form 
$\left(\begin{smallmatrix}
0 & 1 & 0\\
0 & 0 & 0\\
0 & 0 & 0
\end{smallmatrix}\right)$.
Therefore, the induced $\PGL_3(\CC)$-action on $\Sigma$ is transitive. In fact, 
$\Sigma\cong\PGL_3(\CC)/B_0$, where $B_0\subset\PGL_3(\CC)$ is the Borel 
subgroup of upper triangular matrices.

Recall that $\Aut(\Sigma)\cong\PGL_3(\CC)\rtimes (\ZZ/2\ZZ)$, see the proof of 
Lemma~\xref{lem:group-embedding}.
One can take the matrix transposition for the generator of $\ZZ/2\ZZ$ 
interchanging the factors of $(\PP^2)^\vee\times\PP^2$.
The (maximal) diagonal torus $\TT_0\cong (\Gm)^2$ of $B_0$ acts effectively on 
$\Sigma$. Any $(\Gm)^2$-subgroup of $\Aut(\Sigma)$ is conjugate to $\TT_0$ in 
$\PGL_3(\CC)$.

\item
To a square matrix $C\neq 0$ of order $3$ one can associate a hyperplane $H_C$ 
in the vector space $\gl_3(\CC)$, where
\begin{equation*}
H_C=\{M\in\gl_3(\CC)\mid\tr (M\cdot C)=0\}.
\end{equation*}
If $C\neq 0$ is a scalar matrix, then $H_C$ coincides with the subspace
$\sll_3(\CC)\subset\gl_3(\CC)$ of matrices with zero trace. Since the bilinear
form $(A,B)\longmapsto\tr (A\cdot B)$ on $\sll_3(\CC)$ is
nondegenerate, any hyperplane in $\sll_3(\CC)$ coincides with
$H_C\cap\sll_3(\CC)$ for a suitable $C\in\sll_3(\CC)$.

\item
According to 
Proposition~\xref{claim:ruling-in-scroll}\xref{claim:ruling-in-scroll-c}, 
$\Sigma_{\s}(V)$ is
a hyperplane section of $\Sigma(V)=\Sigma$ in $\PP^8$. Therefore, there exists a
$(3\times 3)$-matrix $C=C(V)$ with zero trace such that
\begin{equation*}
\Sigma_{\s}(V)=\Sigma\cap\PP(H_{C})=\nu\left((\PP^2)^\vee\times\PP^2\right)\cap
\PP\left(\sll_3(\CC)\cap H_{C}\right).
\end{equation*}
Such a matrix $C$ is defined uniquely
up to a nonzero scalar factor. The $\PGL_3(\CC)$-action on
$\PP^8=\PP(\gl_3(\CC))$ by conjugation leaves the pair
$(\nu((\PP^2)^\vee\times\PP^2),\,\Sigma)$ invariant. Replacing $C$ (defined up
to a scalar factor) by its Jordan form, any hyperplane section $X$ of $\Sigma$
can be sent by an automorphism of $\Sigma$ to one of the sections
\begin{equation}
\label{eq:pencil}
X_{(a:b)}=\Sigma\cap H_{C_{(a,b)}},\,\,(a,b)\neq (0,0),\quad\mbox{and}\quad 
X_i=\Sigma\cap H_{C_i},\,\,i=1,2,3,
\end{equation}
where
\begin{equation}
\label{eq:two-matrices}
\begin{array}{lll}
&C_{(a,b)}=\diag(a,b,-a-b),
\\[5pt]
C_1=
\begin{pmatrix}1&
1&
0\\
0&
1&
0\\
0&
0&
{-2}\\
\end{pmatrix},
&
C_2=
\begin{pmatrix}0&
1&
0\\
0&
0&
1\\
0&
0&
0\\
\end{pmatrix},
&
C_3=
\begin{pmatrix}0&
1&
0\\
0&
0&
0\\
0&
0&
0\\
\end{pmatrix}.
\end{array}
\end{equation}

\item
Consider the group $\tilde{\GL_n(\CC)}=\GL_n(\CC)\rtimes (\ZZ/2\ZZ)$, where
the generator $\tau$ of $\ZZ/2\ZZ$ acts on $\GL_n(\CC)$ via the Cartan involution 
$A\longmapsto(A^t)^{-1}$. The action of $\GL_n(\CC)$ via conjugation on $\gl(U)$ with
$U=\CC^n$ extends to an action of $\tilde{\GL_n(\CC)}$, where $\tau$ acts on
$\gl(U)$ via $C\longmapsto C^t$. Let also
\begin{eqnarray*}
\operatorname{PCent}_{\tilde{\GL_n(\CC)}}(C)&=&\left\{\tilde 
A\in\tilde{\GL_n(\CC)}\,\left|\,
\tilde A . C=\alpha C\quad\mbox{for some}\quad\alpha\in\CC^*\right.\right\},
\\
\tilde{\PGL_n(\CC)}&=&\tilde{\GL_n(\CC)}/\z(\tilde{\GL_n(\CC)}
)\cong\PGL_n(\CC)\rtimes (\ZZ/2\ZZ),
\\
\operatorname{PCent}_{\tilde{\PGL_n(\CC)}}(C)&=&\operatorname{PCent}_{\tilde{
\GL_n(\CC)}}(C)/\z(\tilde{\GL_n(\CC)}).
\end{eqnarray*}
\end{enumerate}
\end{sconstruction}

\begin{lemma}
\label{lem:pcentralizer}
For a hyperplane section $X_C=\Sigma\cap H_C$, where $C\in\sll_3(\CC)$, $C\neq 
0$, one has
\begin{equation*}
\Aut(\Sigma, 
X_C)=\operatorname{PCent}_{\tilde{\PGL_3(\CC)}}(C)\subset\tilde{\PGL_3(\CC)}.
\end{equation*}
\end{lemma}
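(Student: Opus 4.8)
The plan is to reduce everything to the linear action of $\Aut(\Sigma)=\tilde{\PGL_3(\CC)}$ on $\PP^8=\PP(\gl_3(\CC))$ and its effect on the hyperplanes $H_C$. Both conjugation $M\mapsto AMA^{-1}$ by $A\in\GL_3(\CC)$ and the transposition $\tau\colon M\mapsto M^t$ are linear on $\gl_3(\CC)$, preserve $\sll_3(\CC)$ and the subvariety $\Sigma$; moreover the pairing $\langle M,C\rangle=\tr(MC)$ is $\tilde{\GL_3(\CC)}$-invariant, since $\tr(AMA^{-1}\cdot ACA^{-1})=\tr(MC)$ and $\tr(M^tC^t)=\tr(MC)$. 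A one-line computation using cyclicity of the trace then gives $A(\PP(H_C))=\PP(H_{ACA^{-1}})$ and $\tau(\PP(H_C))=\PP(H_{C^t})$, that is $\tilde A(\PP(H_C))=\PP(H_{\tilde A.C})$ for every $\tilde A\in\tilde{\GL_3(\CC)}$, where $\tilde A.C$ denotes the action of $\tilde{\GL_3(\CC)}$ on $\gl_3(\CC)$ fixed in~\xref{constr:tensor-product} ($\GL_3(\CC)$ by conjugation, $\tau$ by transposition). Since $\tilde A(\Sigma)=\Sigma$, writing $X_C=\Sigma\cap\PP(H_C)$ this yields the key compatibility
\begin{equation*}
\tilde A(X_C)=\tilde A(\Sigma)\cap\tilde A(\PP(H_C))=\Sigma\cap\PP(H_{\tilde A.C})=X_{\tilde A.C}.
\end{equation*}

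Granting this, the inclusion $\operatorname{PCent}_{\tilde{\PGL_3(\CC)}}(C)\subseteq\Aut(\Sigma,X_C)$ is immediate: if $\tilde A.C=\alpha C$ with $\alpha\in\CC^*$, then $H_{\tilde A.C}=H_{\alpha C}=H_C$, whence $\tilde A(X_C)=X_C$.

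For the opposite inclusion I would first establish that $\Sigma$ is linearly nondegenerate in $\PP^7=\PP(\sll_3(\CC))$. Indeed, the affine cone over $\Sigma$ is the minimal nilpotent orbit $\{M\neq 0\mid M^2=0\}$ of rank-one trace-zero matrices, which is invariant under the adjoint $\PGL_3(\CC)$-action; as the Lie algebra $\sll_3(\CC)$ is simple, this representation is irreducible, so the linear span of the nonzero orbit is all of $\sll_3(\CC)$. Consequently the restriction map $H^0(\PP^7,\OOO(1))\to H^0(\Sigma,\OOO_\Sigma(1))$ is injective, and hence the assignment $[C]\mapsto X_C$ is injective on divisors. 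To see the latter, suppose $X_{C'}=X_C$; writing $\ell_C=\langle\,\cdot\,,C\rangle$ for the linear form cut out by $C$ (nonzero on $\sll_3(\CC)$ because the trace form is nondegenerate there), the two nonzero sections $\ell_C|_\Sigma$ and $\ell_{C'}|_\Sigma$ of $\OOO_\Sigma(1)$ have the same divisor of zeros, hence are proportional; thus $\ell_C-\lambda\ell_{C'}$ vanishes on $\Sigma$ and therefore vanishes identically, and since $C\mapsto\ell_C$ is an isomorphism $\sll_3(\CC)\to\sll_3(\CC)^\vee$ this forces $[C']=[C]$.

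The remaining inclusion now follows: if $\tilde A\in\Aut(\Sigma,X_C)$, then $X_{\tilde A.C}=\tilde A(X_C)=X_C$ by the compatibility above, so $[\tilde A.C]=[C]$ by injectivity, i.e.\ $\tilde A.C=\alpha C$ for some $\alpha\in\CC^*$ and $\tilde A\in\operatorname{PCent}_{\tilde{\PGL_3(\CC)}}(C)$. Combining the two inclusions gives the asserted equality. The main obstacle is precisely the injectivity of $[C]\mapsto X_C$; the rest is formal. I would take care to carry this out at the level of Cartier divisors (sections up to a scalar), because an element of $\Aut(\Sigma,X_C)$ a priori only preserves the subscheme $X_C$, which is exactly what the divisor argument uses.
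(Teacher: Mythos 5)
Your proof is correct and takes essentially the same route as the paper: identify $\Aut(\Sigma)$ with $\tilde{\PGL_3(\CC)}$ acting linearly on $\PP(\gl_3(\CC))$ via the invariant trace pairing, and show that preserving the hyperplane section forces $\tilde A.C=\alpha C$. The only difference is one of bookkeeping and explicitness: the paper works with the pencil of hyperplanes in $\PP^8$ through $\PP(\sll_3(\CC))$, writing $g.C=\alpha C+\beta\operatorname{E}$ and killing $\beta$ by taking traces, and treats the injectivity of $[C]\mapsto X_C$ as obvious, whereas you work directly in $\PP(\sll_3(\CC))$ and justify that injectivity via the linear nondegeneracy of $\Sigma$ (irreducibility of the adjoint representation of $\sll_3(\CC)$) together with the divisor-level argument --- a worthwhile filling-in of the step the paper's ``clearly'' leaves implicit.
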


\begin{proof}
We have $\Aut(\Sigma)=\tilde{\PGL_3(\CC)}$.
Clearly, $g\in\tilde{\GL_3(\CC)}$ induces an automorphism of $(\Sigma,X_C)$ if 
and only if $g(H_{C})$ is a member of the pencil generated by $H_C$ and $H_{0}$,
if and only if $g . C=\alpha C+\beta\operatorname{E}$ for some 
$\alpha,\beta\in\CC$. For $A\in\GL_3(\CC)$ one has $A . C=ACA^{-1}$ and $A\tau . 
C=AC^tA^{-1}$. Anyway, under the latter condition, $\tr(\alpha 
C+\beta\operatorname{E})=\tr(C)=0$, hence $\beta=0$. Now the claim follows.
\end{proof}

\begin{lemma}
\label{lem:centralizers}
Given a matrix $C$ as in~\eqref{eq:two-matrices}, the following hold.
\par\medskip\noindent
{\rm
\setlength{\extrarowheight}{1pt}
\newcommand{\heading}[1]{\multicolumn{1}{c|}{#1}}
\newcommand{\headingl}[1]{\multicolumn{1}{c}{#1}}
\begin{tabularx}{0.97\textwidth}{p{0.03\textwidth}|p{0.36\textwidth}|p{
0.2\textwidth}|p{0.35\textwidth}}
&\heading{$C$} &\heading{$\Sing(X_C)$} &\headingl{$\Aut(\Sigma, X_C)$}
\\\hline
\mbox{\nnr\label{lem:centralizers-i}} &
$C_{(1,1)}$, $C_{(1,-2)}$, $C_{(-2,1)}$ & a smooth conic
&
$\tilde{\GL_2(\CC)}=\GL_2(\CC)\rtimes (\ZZ/2\ZZ)$
\\
\mbox{\nnr\label{lem:centralizers-ii}} &
$C_{(1,\zeta)}$,\quad $\zeta\neq 1$, $\zeta^3=1$& $\emptyset$&
$(\Gm)^2\rtimes (\ZZ/6\ZZ)$
\\
\mbox{\nnr\label{lem:centralizers-iii}} &
$C_{(1,b)}$,\quad $b\notin\{-2,\, -1/2\}$, $b^3\neq 1$& $\emptyset$&
$(\Gm)^2\rtimes (\ZZ/2\ZZ)$
\\
\mbox{\nnr\label{lem:centralizers-iv}} &
$C_1$& $A_1$& $(\Ga\times\Gm)\rtimes (\ZZ/2\ZZ)$
\\
\mbox{\nnr\label{lem:centralizers-v}} & $C_2$& $A_2$ &
$((\Ga)^2\rtimes\Gm)\rtimes (\ZZ/2\ZZ)$
\\
\mbox{\nnr\label{lem:centralizers-vi}} &
$C_3$&two intersecting lines &\mbox{$B(\PGL_3(\CC))\rtimes (\ZZ/2\ZZ)$}\\\hline
\end{tabularx}
}
\par\vspace{10pt}\noindent
where
$B(\PGL_3(\CC))$ stands for a Borel subgroup of $\PGL_3(\CC)$,
in Cases~\xref{lem:centralizers-iii} and~\xref{lem:centralizers-iv}
the generator of $\ZZ/2\ZZ$ acts by the inversion $g\mapsto g^{-1}$ on the first 
factor,
and in Case~\xref{lem:centralizers-v} the group $\Aut(\Sigma, X_C)$
does not contain the product $\Ga\times\Gm$.
\end{lemma}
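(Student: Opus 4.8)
\emph{Strategy.} By Lemma~\xref{lem:pcentralizer} it suffices to compute the projective centralizer $\operatorname{PCent}_{\tilde{\PGL_3(\CC)}}(C)$ for each normal form $C$ in~\eqref{eq:two-matrices}, that is, the image in $\tilde{\PGL_3(\CC)}$ of the set of $g\in\tilde{\GL_3(\CC)}$ with $g.C=\alpha C$ for some $\alpha\in\CC^*$, where $A.C=ACA^{-1}$ and $A\tau.C=AC^tA^{-1}$. I would organize each computation in three steps: first determine the ordinary centralizer $\mathcal{C}_{\GL_3(\CC)}(C)$; then the set of scalars $\alpha$ for which $C$ and $\alpha C$ are conjugate, realized by suitable monomial matrices; and finally the contribution of the Cartan involution $\tau$, passing to the quotient by the center only at the end. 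Since the transpose $C^t$ is conjugate to $C$ in every case, $\tau$ contributes a factor $\ZZ/2\ZZ$ throughout, and the real point is to pin down how it acts on the identity component.

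\emph{Diagonal cases.} For $C=C_{(a,b)}=\diag(a,b,-a-b)$ the centralizer $\mathcal{C}_{\GL_3(\CC)}(C)$ is the maximal torus when the three eigenvalues are distinct, and the Levi subgroup $\GL_2(\CC)\times\Gm$, hence $\GL_2(\CC)$ in $\PGL_3(\CC)$, when exactly two coincide. A scalar $\alpha$ with $\alpha C\sim C$ corresponds to a pair $(\sigma,\alpha)$, $\sigma\in S_3$, with $\sigma$ permuting the eigenvalue multiset and $\alpha$ rescaling it, realized by a permutation matrix. For the configuration $\{1,\zeta,\zeta^2\}$ ($\zeta^3=1$, $\zeta\neq1$) the cyclic permutations force $\alpha\in\{1,\zeta,\zeta^2\}$, producing a $\ZZ/3\ZZ$ (case~\xref{lem:centralizers-ii}); for a generic triple no nontrivial $(\sigma,\alpha)$ survives (case~\xref{lem:centralizers-iii}); and the doubled eigenvalue of $\{1,1,-2\}$ forces $\alpha=1$ (case~\xref{lem:centralizers-i}). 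In each case $\tau$ fixes the symmetric matrix $C$ and acts on the diagonal torus by inversion, giving the stated extension by $\ZZ/2\ZZ$, which combines with the $\ZZ/3\ZZ$ above into $\ZZ/6\ZZ$. The delicate point here is that one must also discard the finitely many ``harmonic'' configurations projectively equivalent to $\{1,-1,0\}$: these carry the extra scaling involution $\alpha=-1$ and hence a larger finite part, so the genericity hypotheses in~\xref{lem:centralizers-iii} are to be read as excluding them as well.

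\emph{Jordan cases.} For $C_1$ (a single $2\times2$ Jordan block with eigenvalue $1$ together with the eigenvalue $-2$) the centralizer consists of the matrices $\left(\begin{smallmatrix}a&b\\0&a\end{smallmatrix}\right)\oplus(d)$; modulo scalars this is $\Ga\times\Gm$ with $\Gm$ acting trivially on $\Ga$, and the repeated eigenvalue again forces $\alpha=1$. For the nilpotent forms $C_2,C_3$ the orbit is conic, so every $\alpha\in\CC^*$ gives $\alpha C\sim C$, realized by $\diag(t^2,t,1)$, which contributes an extra $\Gm$. For the regular nilpotent $C_2$ the unipotent centralizer is $(\Ga)^2=\langle C_2,C_2^2\rangle$, on which this $\Gm$ acts with the nonzero weights $1,2$; hence $(\Ga)^2\rtimes\Gm$ contains no subgroup $\Ga\times\Gm$, as claimed. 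For $C_3=E_{0,1}$ the relation $ACA^{-1}\in\CC^*C$ forces $A$ to stabilize a full flag, cutting out a connected solvable subgroup of dimension $5$, i.e.\ a Borel subgroup of $\PGL_3(\CC)$. Adjoining $\tau$, which sends each $C_i$ to a conjugate of itself, supplies the final $\ZZ/2\ZZ$ in all three cases.

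\emph{Singular loci and the main obstacle.} To determine $\Sing(X_C)$ I would use the description of $X_C\subset\PP^2\times\PP^2$ as the common zero locus of the pencil of bilinear forms with matrices $sI+tC$, of which $\Sigma$ is the smooth member $t=0$. The singularities are governed by the degenerate members $\det(sI+tC)=0$ and their Segre type: three distinct eigenvalues give only corank-one degenerations, whence a smooth del Pezzo sextic (cases~\xref{lem:centralizers-ii} and~\xref{lem:centralizers-iii}); a corank-two member, occurring for a diagonalizable doubled eigenvalue, makes $X_C$ split into two cubic scrolls meeting along a smooth conic (case~\xref{lem:centralizers-i}, seen at once from $F_1-F_2=3x_2y_2$); and the Jordan forms $C_1,C_2,C_3$ yield respectively an $A_1$ point, an $A_2$ point, and a pair of intersecting lines, read off by differentiating the two bilinear equations and matched against Lemma~\xref{claim:Du-Val-del-Pezzo}. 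I expect the main obstacle to be the bookkeeping of the finite part: deciding exactly which permutation-and-scaling pairs, and which $\tau$-twisted elements, preserve \emph{both} $X_C$ and the distinguished member $\Sigma$ of the pencil, fixing the precise extension and the inversion action on the identity component, and isolating the exceptional diagonal configurations carrying an additional involution.
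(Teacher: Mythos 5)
Your computation is correct and runs along essentially the same lines as the paper's: both reduce, via Lemma~\xref{lem:pcentralizer}, to computing $\operatorname{PCent}_{\tilde{\PGL_3(\CC)}}(C)$, organized exactly as you do — ordinary centralizer, then the scalars $\alpha$ with $\alpha C$ conjugate to $C$, then the $\tau$-coset. The one methodological difference is the singular-locus column: you analyze degenerate members of the pencil of $(1,1)$-forms, whereas the paper exhibits the splittings directly in cases~\xref{lem:centralizers-i} and~\xref{lem:centralizers-vi} (e.g.\ $X_{(1:1)}=\{x_2y_2=0\}\cap\Sigma$), and in cases~\xref{lem:centralizers-iv} and~\xref{lem:centralizers-v} it counts lines (four lines cut out on $X_1$ by $H_{C_{(1,1)}}$, two on $X_2$ by $H_{C_2^2}$) and identifies the types $A_1$, $A_2$ via Lemma~\xref{claim:Du-Val-del-Pezzo}\xref{claim:Du-Val-del-Pezzo-d}. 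Your pencil analysis locates the singular points at once, but to distinguish $A_1$ from $A_2$ you would still have to carry out the local computation you only allude to; the paper's line count settles this more economically.

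The substantive point is your remark on the ``harmonic'' configurations, and you should be aware that it is a genuine correction rather than a cautionary aside: the paper's proof of case~\xref{lem:centralizers-iii} consists of the single sentence ``the proof is similar'' and misses it. Concretely, for $b=0$ or $b=-1$ the eigenvalues of $C_{(1,b)}$ are $1,0,-1$ up to order; then $-C_{(1,b)}$ is conjugate to $C_{(1,b)}$, e.g.\ for $C=\diag(1,0,-1)$ by $A_0=\left(\begin{smallmatrix}0&0&1\\0&1&0\\1&0&0\end{smallmatrix}\right)$, so the component group of $\operatorname{PCent}_{\tilde{\PGL_3(\CC)}}(C)$ is $(\ZZ/2\ZZ)^2$, generated by the commuting involutions $[A_0]$ and $[\tau]$, and $\Aut(\Sigma,X_C)\cong(\Gm)^2\rtimes(\ZZ/2\ZZ)^2$, strictly larger than the table entry. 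Since the printed hypotheses $b\notin\{-2,-1/2\}$, $b^3\neq 1$ do not exclude $b\in\{0,-1\}$, row~\xref{lem:centralizers-iii} is false as stated for these two values, and the defect propagates to the upper bounds $(\Gm)^2\rtimes(\ZZ/6\ZZ)$ in Proposition~\xref{thm:toric-case}\xref{thm:toric-case-iii} and Theorem~\xref{thm:main-aut-n}\xref{thm:main-aut-n-Gm}: either those bounds must also allow $(\Gm)^2\rtimes(\ZZ/2\ZZ)^2$, or one must show that the harmonic section is never realized as $\Sigma_{\s}(V)$ — which the paper does not do (cf.\ Remark~\xref{lem:Aut-sing-DP}, where even the realizability of the $\ZZ/6\ZZ$ case is left open). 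So your reading of the genericity hypotheses as excluding the harmonic values is exactly the right fix, and with that amendment your argument is complete.
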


\begin{proof}
\xref{lem:centralizers-i}
Let $C\in\{\diag(1,1,-2),\diag(1,-2,1),\diag(-2,1,1)\}$. 
The hyperplane section $X_{(1:1)}=\{x_2y_2=0\}\cap\Sigma$ has two irreducible 
components given by $x_2=0$ and $y_2=0$, respectively. Hence
$X_{(1:1)}$ is a union $\mathcal{F}_1\cup\mathcal{F}_2$ of the first kind 
(see~\xref{nota:two-kinds}), and similarly $X_{(1:-2)}$ and $X_{(-2:1)}$ are.
By the spectral mapping theorem (\cite{Ekedahl2004}) one obtains
\begin{equation*}
\operatorname{PCent}_{\tilde{\GL_3(\CC)}}(C)=\operatorname{Cent}_{\tilde{
\GL_3(\CC)}}(C)\cong\tilde{\GL_2(\CC)}\times\z(\tilde{\GL_3(\CC)}).
\end{equation*}
Now the assertion follows from Lemma~\xref{lem:pcentralizer}.

\xref{lem:centralizers-ii}
For $C=\diag(1,\zeta,\zeta^2)$, where $\zeta\neq 1$ and $\zeta^3=1$, one has
\begin{equation*}
\operatorname{PCent}_{\tilde{\GL_3(\CC)}}(C)=\TT(3)\rtimes ((\ZZ/3\ZZ)\times 
(\ZZ/2\ZZ)),
\end{equation*}
where $\TT(3)=\operatorname{Cent}_{\GL_3(\CC)}(C)$ is the diagonal 3-torus of 
$\GL_3(\CC)$, the factor $\ZZ/3\ZZ$ is generated by the cyclic permutation 
matrix $
\left(\begin{smallmatrix}
0 & 0 & 1\\
1 & 0 & 0\\
0 & 1 & 0
\end{smallmatrix}\right)
$,
and $\ZZ/2\ZZ=\langle\tau\rangle$ acts on $\TT(3)$ by the inversion. Passing to 
the quotient by the center $\z(\tilde{\GL_3(\CC)})\subset\TT(3)$ gives the 
result, see Lemma~\xref{lem:pcentralizer}. One can easily check that $X_C$ is 
smooth.

The proof of~\xref{lem:centralizers-iii} is similar.

\xref{lem:centralizers-iv} 
For $C_1$ as in~\eqref{eq:two-matrices}
one has
\begin{equation*}
\operatorname{PCent}_{\tilde{\GL_3(\CC)}}(C_1)=\operatorname{Cent}_{\GL_3(\CC)}
(C_1)=\left\{\left.
\begin{pmatrix}
\lambda & a & 0\\
0 &\lambda & 0\\
0 & 0 &\mu
\end{pmatrix}
,\,\,
\begin{pmatrix}
a &\lambda & 0\\
\lambda & 0 & 0\\
0 & 0 &\mu
\end{pmatrix}
\cdot\tau\,\right|\,\lambda ,\mu\in\CC^*,\,\,a\in\CC\right\}.
\end{equation*}
This group contains the center $\z(\tilde{\GL_3(\CC)})\cong\Gm$ and an element $
\left(\begin{smallmatrix}
0 & 1 & 0\\
1 & 0 & 0\\
0& 0 & 1
\end{smallmatrix}\right)
\cdot\tau$ of order $2$. The quotient by the center is isomorphic to $(\Ga\times
\Gm)\rtimes (\ZZ/2\ZZ)$. The generator of the factor $\ZZ/2\ZZ$ acts by the
inversion $g\mapsto g^{-1}$ on $\Ga\times\Gm$.

We use the notation $X_i=X_{C_i}$, $i=1,2,3$, see~\eqref{eq:pencil}.
The hyperplane section $\mathfrak{G}_{X_1}\in |-K_{X_1}|$ as in 
Lemma~\xref{claim:Du-Val-del-Pezzo}\xref{claim:Du-Val-del-Pezzo-d}
is given by $X_1\cap H_{C_{(1,1)}}$. It is easy to see that $X_1\cap 
H_{C_{(1,1)}}$
consists of four lines. Then by 
Lemma~\xref{claim:Du-Val-del-Pezzo}\xref{claim:Du-Val-del-Pezzo-d}
the surface $X_1$ is a singular del Pezzo 
sextic of type $A_1$.

\xref{lem:centralizers-v} Similarly, for $C_2$ as in~\eqref{eq:two-matrices} one 
has
\begin{equation*}
\operatorname{PCent}_{\tilde{\GL_3(\CC)}}(C_2)=\left\langle
\left.
\begin{pmatrix}
\mu^2\lambda &\mu a & b\\
0 &\mu\lambda & a\\
0 & 0 &\lambda
\end{pmatrix},\,\,
\begin{pmatrix}
b &\mu a &\mu^2\lambda\\
a &\mu\lambda & 0\\
\lambda & 0 & 0
\end{pmatrix}
\cdot\tau\,\right|\,\lambda,\mu\in\CC^*,\,\,a,b\in\CC\right\rangle .
\end{equation*}
This group contains the center 
$\z(\tilde{\GL_3(\CC)})=\{\lambda\operatorname{E}\}_{\lambda\in\CC^*}$ and an 
element $
\left(\begin{smallmatrix}
0 & 0 & 1\\
0 & 1 & 0\\ 
1 & 0 & 0
\end{smallmatrix}\right)
\cdot\tau$ of order~$2$. The quotient 
$\operatorname{PCent}_{\tilde{\GL_3(\CC)}}(C_2)/\z(\tilde{\GL_3(\CC)})$ is 
isomorphic to $((\Ga)^2\rtimes\Gm)\rtimes (\ZZ/2\ZZ)$. None of the 
one-parameter unipotent subgroups is centralized by a $\Gm$-subgroup, as stated.
The corresponding surface $X_2$ contains exactly two lines 
given by $X_2\cap H_{C_{2}^2}$.
We conclude as in~\xref{lem:centralizers-iv} that $X_2$ is an $A_2$-surface.

\xref{lem:centralizers-vi} The matrix $C_3$ is conjugate to $C_2^2$
with centralizer
\begin{equation*}
\operatorname{PCent}_{\tilde{\GL_3(\CC)}}(C_2^2)=\left\{\left.
\begin{pmatrix}
\lambda_0 & a_0 & a_1\\
0 &\lambda_1 & a_2\\
0 & 0 &\lambda_2
\end{pmatrix},\,\,
\begin{pmatrix}
a_1 & a_0 &\lambda_0\\
a_2 &\lambda_1 & 0\\
\lambda_2 & 0 & 0
\end{pmatrix}\cdot\tau
\,\right|\,\lambda_i\in\CC^*,\,a_i\in\CC,\,\,i=0,1,2\right\}.
\end{equation*}
Thus, $\operatorname{PCent}_{\tilde{\PGL_3(\CC)}}(C_3)\cong 
B(\PGL_3(\CC))\rtimes (\ZZ/2\ZZ)$. By Lemma~\xref{lem:pcentralizer} the latter 
group is isomorphic to $\Aut(\Sigma, X_3)$. The hyperplane section 
$X_{3}=\{x_1y_0=0\}\cap\Sigma$ has two irreducible components given by $x_1=0$ 
and $y_0=0$, respectively. Hence $X_{3}$ is a union 
$\mathcal{F}_1\cup\mathcal{F}_2$ of the second kind, see~\xref{nota:two-kinds}.
\end{proof}

\begin{mdefinition}\emph{Proof of Proposition~\xref{thm:toric-case}.}
\label{proof-of-thm:toric-case}
According to~\xref{constr:tensor-product}.4, the pair 
$(\Sigma(V),\Sigma_{\s}(V))$ is equivalent to one of the pairs 
$(\Sigma,X_{(a:b)})$ and $(\Sigma,X_i)$, $i=1,2,3$.
The automorphism groups of the latter pairs are described in 
Lemma~\xref{lem:centralizers}. By 
Corollary~\xref{cor:embedding-Aut-V-to-Aut-X}, 
$\Aut(V)$ embeds in one of this groups. On the other hand, due to Corollary 
\xref{cor:aut-orbits} and Proposition~\xref{lem:fixed-pt}, $\Aut(V)$ contains 
one of the groups $\GL_2(\CC)$, $\Ga\times\Gm$, and $(\Gm)^2$, which is not the 
case for $\Aut(\Sigma,X_2)$. Hence either 
$\Aut(V)\hookrightarrow\Aut(\Sigma,X_{(a:b)})$, or 
$\Aut(V)\hookrightarrow\Aut(\Sigma,X_i)$, $i\in\{1,3\}$.

Inspecting Lemma~\xref{lem:centralizers} we see that, if 
$\GL_2(\CC)\subset\Aut(V)$, then there is an embedding
$\Aut(V)\hookrightarrow\Aut(\Sigma,X_{(1:1)})\cong\tilde{\GL_2(\CC)}$. 
Furthermore, in this case $\Aut^0(V)\cong\GL_2(\CC)$, see 
Lemma~\xref{lem:j=upsilon}, and $\Sigma_{\s}(V)$ is
a union $\mathcal{F}_1\cup\mathcal{F}_2$ of the first kind, see 
Lemma~\xref{lem:centralizers}~\xref{lem:centralizers-i}. The $\GL_2(\CC)$-action 
on $ 
\Sigma_{\s}(V)=\mathcal{F}_{1}\cup\mathcal{F}_{2}$ preserves two disjoint 
lines, 
which are the exceptional sections of the cubic scrolls 
$\mathcal{F}_{i}\cong\FF_1$, $i=1,2$. These lines correspond to the families of 
rulings $\Lambda(S_i)$, $i=1,2$, where the cubic cones $S_1\in\SSS_1(V)$ and 
$S_2\in\SSS_2(V)$ are $\Aut^0(V)$-invariant and have no common ruling, see 
Proposition~\xref{claim:ruling-in-scroll}.
Since $S_1\cdot S_2=0$ in $H^*(V,\ZZ)$, see Proposition~\xref{lem:cohomology}, 
then
$S_1\cap S_2=\emptyset$ due to Corollary~\xref{cor:intersection-of-cones}. This 
corresponds to Case~\xref{thm:toric-case-i} of Proposition 
\xref{thm:toric-case}.

Suppose now that $\rk\Aut(V)\ge 2$, but
$\GL_2(\CC)\not\subset\Aut(V)$. By Lemma~\xref{lem:centralizers}
one of the following holds: either, for a suitable $(a:b)\in\PP^1$,
\begin{equation*}
(\Gm)^2\subset\Aut(V)\subset\Aut(\Sigma,X_{(a:b)})\subset (\Gm)^2\times 
(\ZZ/6\ZZ),
\end{equation*} 
or
\begin{equation}
\label{eq:second-kind}
(\Gm)^2\subset\Aut(V)\subset\Aut(\Sigma,X_3)=
B(\PGL_3(\CC))\rtimes (\ZZ/2\ZZ).
\end{equation}
In the latter case, by Lemma~\xref{lem:centralizers}~\xref{lem:centralizers-iv}
one has $\Sigma_{\s}(V)\cong X_3=Y_{(0:1)}=\mathcal{F}_1\cup\mathcal{F}_2$ is a 
union of the second kind. However, the latter contradicts 
Lemma~\xref{lem:first-kind}.
In the former case, by 
Lemma~\xref{lem:centralizers}~\xref{lem:centralizers-ii}~\xref{lem:centralizers-iii}, 
$\Sigma_{\s}(V)$ is a smooth del Pezzo sextic. This corresponds to Case 
\xref{thm:toric-case-iii} of Proposition~\xref{thm:toric-case}.

Assume further that $\rk\Aut(V)=1$. By Lemma~\xref{lem:centralizers} either 
\eqref{eq:second-kind} holds and we are in Case~\xref{thm:toric-case-iv}, or 
there are embeddings
\begin{equation*}
\Ga\times\Gm\hookrightarrow\Aut(V)\hookrightarrow\Aut(\Sigma,
X_1)=(\Ga\times\Gm)\rtimes (\ZZ/2\ZZ),
\end{equation*} 
and $\Sigma_{\s}(V)\cong X_1$ is a normal del Pezzo sextic of type $A_1$, see 
Lemma~\xref{lem:centralizers}\xref{lem:centralizers-iv}. The latter
corresponds to Case~\xref{thm:toric-case-ii} of Proposition 
\xref{thm:toric-case}. \qed
\end{mdefinition}

\begin{sremark}
\label{lem:Aut-sing-DP}
According to Lemma~\xref{lem:centralizers}\xref{lem:centralizers-ii}, for the
matrix $C=\diag(1,\zeta,\zeta^2)$, where $\zeta$ is a primitive cubic root of
unity, one has $\Aut(\Sigma, X_C)\cong (\Gm )^2\times (\ZZ/6\ZZ)$. However, we
do not know whether this group can be realized as the automorphism group
$\Aut(V)$ for some Fano-Mukai fourfold $V=V_{18}$. In other words, we ignore
whether $X_C$ is equivalent to some $\Sigma_{\s}(V)$ under the
$\Aut(\Sigma)$-action on $\Sigma$.
\end{sremark}

\section{Automorphism groups of $V_{18}$}
\label{sec:1.5-1.6} Let $V=V_{18}$ be a Fano-Mukai fourfold of genus $10$.
By Proposition~\xref{thm:toric-case}, $\Aut^0(V)$ is one of the groups 
$\GL_2(\CC)$, $\Ga\times\Gm$, and $(\Gm)^2$.
The central result of this section is the following theorem.

\begin{theorem}
\label{cor:final}
Given an $\Aut^0(V)$-invariant cubic cone $S\subset V$, consider the pair 
$(W,F_S)$ linked to $(V,S)$, and let $J_S=F_S\cap\Xi$.
With this notation, 
Case~\xref{thm:toric-case-iv} of Proposition~\xref{thm:toric-case} does not 
occur, that is, $\Sigma_{\s}(V)$ 
cannot be a union $\mathcal{F}_1\cup\mathcal{F}_2$ of the second type. Moreover, 
one of 
the following possibilities occurs:
\par\medskip\noindent
{\rm
\setlength{\extrarowheight}{1pt}
\newcommand{\heading}[1]{\multicolumn{1}{c|}{#1}}
\newcommand{\headingl}[1]{\multicolumn{1}{c}{#1}}
\begin{tabularx}{\textwidth}{p{0.03\textwidth}|p{0.1\textwidth}|p{0.12\textwidth
}|p{0.28\textwidth}|X}
&\heading{$\Aut^0(V)$} 
&\heading{$V$}&\heading{$\Sigma_{\s}(V)$}&\headingl{$(\Upsilon,J_S)$}
\\\hline
\mbox{\nnnr
\label{cor:final-i}}&
$\GL_2(\CC)$ & 
$V^{\s}_{18}$ & 
$\mathcal{F}_1\cup\mathcal{F}_2$ of the first kind & 
$J_S=\Upsilon$
\\
\mbox{\nnnr
\label{cor:final-ii}}&
$\Ga\times\Gm$ & 
$V^{\aaa}_{18}$ & 
an $A_1$-del Pezzo sextic & 
\xref{rem:touching-conics}\xref{lem:touching-conics-Ga} for a suitable 
choice\footnote{The cubic cones $S\subset V$ such that $(\Upsilon,J_S)$ 
is of type~\xref{rem:touching-conics}\xref{lem:touching-conics-Ga} correspond 
actually to the two outer $(-1)$-vertices in diagram~\eqref{eq:A1}. The two 
inner $(-1)$-vertices in~\eqref{eq:A1} correspond to the cubic cones $S\subset 
V$ with $(\Upsilon,J_S)$ of 
type~\xref{rem:touching-conics}\xref{lem:touching-conics-Gm}; see 
Remark~\xref{rem:two-linked-actions}.} of $S$
\\
\mbox{\nnnr
\label{cor:final-iii}}&
$(\Gm)^2$ & 
$\not\cong V^{\s}_{18},\,V^{\aaa}_{18}$ & 
a smooth del Pezzo sextic & 
\xref{rem:touching-conics}\xref{lem:touching-conics-Gm} for any choice of 
$S$\\\hline
\end{tabularx}
}\par\vspace{4pt}\noindent
\end{theorem}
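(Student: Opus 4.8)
The statement refines Proposition~\ref{thm:toric-case}, whose four cases are distinguished by the surface $\Sigma_{\s}(V)$. The plan is, in each case, to fix an $\Aut^0(V)$-invariant cubic cone $S$ (which exists by Proposition~\ref{lem:fixed-pt}), to pass to the linked pair $(W,F_S)$ so that $\Aut^0(V)=\Aut^0(V,S)\cong\Aut^0(W,F_S)$, and to read off both the isomorphism type of $V$ and the configuration $(\Upsilon,J_S)$ from the structure of $\Aut^0(W,F_S)$ via the decomposition of Corollary~\ref{cor:aut-center}. I would dispose first of the two cases where $\Aut^0(V)$ is already rigid. If $\Aut^0(V)\cong\GL_2(\CC)$ (case~\ref{thm:toric-case-i}), then $\Aut^0(W,F_S)\cong\GL_2(\CC)$ contains $\SL_2(\CC)$, so Lemma~\ref{lem:j=upsilon} yields $J_S=\Upsilon$ and $\Ru\cap\Aut(W,F_S)=\{1\}$; by Corollary~\ref{cor:unique-GL2} any two such fourfolds are isomorphic, whence $V\cong V_{18}^{\s}$, the cone $S$ being one of the two $\GL_2(\CC)$-invariant cones and $\Sigma_{\s}(V)$ a first-kind union as already recorded in Proposition~\ref{thm:toric-case}. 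If $\Aut^0(V)\cong(\Gm)^2$ (case~\ref{thm:toric-case-iii}), then for each of the six invariant cones the group $\Aut^0(W,F_S)\cong(\Gm)^2$ is a maximal torus of $\Aut(W)$ and hence contains a singular torus $\z(\Levi)$ by Lemma~\ref{lem:GaxGm}; thus $F_S$ is $\z(\Levi)$-invariant, the decomposition of Corollary~\ref{cor:aut-center}\ref{cor:aut-center-a} has trivial unipotent part, and Corollary~\ref{cor:aut-center}\ref{cor:aut-center-b} gives $G\cong(\Gm)^2$ and $J_S$ of type~\ref{rem:touching-conics}\ref{lem:touching-conics-Gm} for \emph{any} choice of $S$, while $V\not\cong V_{18}^{\s},V_{18}^{\aaa}$ by the trichotomy of Corollary~\ref{cor:aut-orbits}.

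The remaining cases~\ref{thm:toric-case-ii} and~\ref{thm:toric-case-iv} both have $\Aut^0(V)\cong\Ga\times\Gm$ of rank one. By the last assertion of Corollary~\ref{cor:aut-orbits} such a $V$ comes from a regular non-semisimple $g\in\mathfrak{g}_2$, and by Proposition~\ref{prop:orbits}\ref{prop:orbits-b} all such $g$ form a single $\Ad(\G)$-orbit; hence $V\cong V_{18}^{\aaa}$ is unique. It follows that cases~\ref{thm:toric-case-ii} and~\ref{thm:toric-case-iv} cannot both be realized, and it remains to show that $V_{18}^{\aaa}$ falls under case~\ref{thm:toric-case-ii}.

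To this end I would realize $V_{18}^{\aaa}$ by the reverse construction and compare the mutual position of its invariant cones. Starting from a $\z(\Levi)$-invariant scroll $F\subset R$ with $J=F\cap\Xi$ of type~\ref{rem:touching-conics}\ref{lem:touching-conics-Ga} --- which exists by Proposition~\ref{prop:center-inv-scroll}\ref{prop:center-inv-scroll-a} and is unique up to $\Aut(\Xi)$ by Lemma~\ref{lem:aut-pencils}\ref{lem:aut-pencils-b} --- Corollary~\ref{cor:aut-center}\ref{cor:aut-center-b} gives $G\cong\Ga\times\Gm$. Linking $(W,F)$ to a pair $(V,S)$ and using that $\Aut^0(V)\cong(\Ru\cap\Aut^0(W,F))\rtimes G$ must lie in the short list of Proposition~\ref{thm:toric-case} forces $\Ru\cap\Aut^0(W,F)=\{1\}$, whence $\Aut^0(W,F)=G\cong\Ga\times\Gm$ is of rank one, so the linked fourfold is $V_{18}^{\aaa}$ and its invariant cone $S$ has $(\Upsilon,J_S)$ of type~\ref{rem:touching-conics}\ref{lem:touching-conics-Ga}. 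Since $\Ru\cap\Aut^0(W,F)=\{1\}$, Lemma~\ref{lem:disjoint-cones}\ref{nota:diagr-2-b} furnishes a second $\Aut^0(V)$-invariant cone $S'$ \emph{disjoint} from $S$. On the other hand, in a second-kind configuration the two $\Aut^0(V)$-invariant cones share a common ruling, by Corollary~\ref{cor:finiteness}\ref{cor:finiteness-b}, and hence meet by Corollary~\ref{cor:intersection-of-cones}. As $V_{18}^{\aaa}$ possesses two disjoint invariant cones, it cannot be of the second kind; therefore case~\ref{thm:toric-case-iv} never occurs and $\Sigma_{\s}(V_{18}^{\aaa})$ is an $A_1$-del Pezzo sextic.

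I expect the genuine obstacle to be precisely this exclusion of case~\ref{thm:toric-case-iv}: because $V_{18}^{\aaa}$ is produced abstractly as the unique rank-one fourfold, one must certify its $\Sigma_{\s}$ concretely, and the $\z(\Levi)$-tools behind Lemma~\ref{lem:first-kind} do not reach a second-kind configuration, whose invariant cones are linked to scrolls stabilized only by a regular torus together with a one-parameter unipotent subgroup $\Ru\cap\Aut^0(W,F)\cong\Ga$ (controlled through Construction~\ref{construction} and Lemma~\ref{lem:unipotent-actions}). The construction above circumvents this difficulty by exhibiting in $V_{18}^{\aaa}$ a pair of disjoint invariant cones, which is incompatible with the second kind. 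Finally, the qualifier ``for a suitable choice of $S$'' in case~\ref{thm:toric-case-ii} records that only the two outer vertices of the $A_1$-sextic~\eqref{eq:A1} give cones with $J_S$ of type~\ref{rem:touching-conics}\ref{lem:touching-conics-Ga}, the two inner vertices yielding type~\ref{rem:touching-conics}\ref{lem:touching-conics-Gm}; this finer dictionary between the lines of $\Sigma_{\s}(V)$ and the configurations $(\Upsilon,J_S)$ I would establish separately, as in Remark~\ref{rem:two-linked-actions}.
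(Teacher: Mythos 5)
Your handling of cases~\xref{thm:toric-case-i} and~\xref{thm:toric-case-iii} is correct and close to the paper's, and your reduction of the rank-one cases to the single fourfold $V^{\aaa}_{18}$ (via Corollary~\xref{cor:aut-orbits} and Proposition~\xref{prop:orbits}) is legitimate, provided you justify that case~\xref{thm:toric-case-iv} forces $\rk\Aut^0(V)=1$ — this needs Lemma~\xref{lem:first-kind}, which you never actually invoke at that point; the stronger claim you assert there, namely $\Aut^0(V)\cong\Ga\times\Gm$ in case~\xref{thm:toric-case-iv}, is exactly the paper's Lemma~\xref{lem:F1-plus-F2-elimination} and cannot be assumed for free. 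The serious trouble is in your exclusion of case~\xref{thm:toric-case-iv}, where the argument breaks down twice. First, the ``forcing'' of $\Ru\cap\Aut^0(W,F)=\{1\}$ is circular: if $\Ru\cap\Aut^0(W,F)\neq\{1\}$, then by Corollary~\xref{cor:aut-center} and Corollary~\xref{cor:unipotent-actions} one gets $\Aut^0(V)\cong\Ga\rtimes(\Ga\times\Gm)$, a three-dimensional solvable group of rank one. This is indeed incompatible with cases~\xref{thm:toric-case-i}--\xref{thm:toric-case-iii}, but it is perfectly compatible with case~\xref{thm:toric-case-iv}, whose only constraint is $\Ga\times\Gm\subset\Aut^0(V)\subset B(\PGL_3(\CC))$; a Borel subgroup of $\PGL_3(\CC)$ does contain three-dimensional rank-one solvable subgroups containing $\Ga\times\Gm$. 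So ``must lie in the short list'' forces nothing unless case~\xref{thm:toric-case-iv} has already been excluded — which is precisely what you are trying to prove. (This is why the paper states the corresponding fact, Lemma~\xref{lem:exceptional-cases-c}, only \emph{after} Corollary~\xref{cor:stable-cones}.)

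Second, your concluding dichotomy rests on a false premise. Corollary~\xref{cor:finiteness}\xref{cor:finiteness-b} asserts that the points $q_1,q_2$ are $\Aut^0(V)$-invariant; it does not say that the corresponding cones $S_1,S_2$ (which in the second kind share a ruling) are the \emph{only} invariant cubic cones. In fact, if case~\xref{thm:toric-case-iv} occurred, inequality~\eqref{eq:stabiliser} would force \emph{every} cubic cone in $V$ to be $\Aut^0(V)$-invariant (this is the opening step of the paper's proof of Corollary~\xref{cor:stable-cones}), and among the cones $S_{1,t}$, $S_{2,t'}$ parameterized by $h_1$ and $h_2$ there are many disjoint pairs: whenever the rulings $\Lambda(S_{1,t})\subset\mathcal{F}_1$ and $\Lambda(S_{2,t'})\subset\mathcal{F}_2$ do not meet in $\Sigma(V)$ (the generic situation in the second-kind model), the two cones have no common ruling and, having intersection number zero by Proposition~\xref{lem:cohomology}, are disjoint by Corollary~\xref{cor:intersection-of-cones}. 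Hence exhibiting a pair of disjoint $\Aut^0(V)$-invariant cubic cones in $V^{\aaa}_{18}$ does not contradict the second kind at all. The contradiction the paper actually extracts is of a different nature: once all cones are invariant, $\Aut^0(V)\cong\Ga\times\Gm$ preserves every ruling of each $\mathcal{F}_i$, so at least one of the factors $\Ga$, $\Gm$ acts trivially on $\mathcal{F}_i$, contradicting the effectiveness of the $\Aut^0(V)$-action on each component of $\Sigma_{\s}(V)$ (Lemma~\xref{lem:group-embedding}\xref{lem:group-embedding-b}); and this step in turn requires knowing $\Aut^0(V)\cong\Ga\times\Gm$ in the second kind, i.e.\ Lemma~\xref{lem:F1-plus-F2-elimination}, whose proof uses the dimension estimates of Lemma~\xref{lem:ineq-J} against Corollary~\xref{cor:unipotent-actions}. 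Some version of this invariance-plus-effectiveness argument is indispensable; the disjoint-cones dichotomy cannot replace it.
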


For the proof of the first assertion see Corollary~\xref{cor:stable-cones}. 
Then the equivalence between the first and the third 
properties in~\xref{cor:final-i}--\xref{cor:final-iii} follows from Proposition 
\xref{thm:toric-case}. The proof of the remaining statements is done in 
\xref{proof:thm-cor:final} after a certain preparation and several auxiliary 
results.

\begin{lemma}
\label{lem:fixed-pt-unique}
Suppose that $\Aut(V)\supset\SL_2(\CC)$. Then any component
$\SSS_i(V)$, $i=1,2$, of $\SSS(V)$ contains a unique $\Aut^0(V)$-fixed point, 
and this point
corresponds to an $\Aut^0(V)$-invariant cubic cone $S_i\in\SSS_i(V)$. 
Furthermore, one has $S_1\cap S_2=\emptyset$.
\end{lemma}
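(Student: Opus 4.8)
The plan is to analyze the induced linear action of $\Aut^0(V)$ on each of the planes $\SSS_i(V)\cong\PP^2$ and to pin down its fixed locus. First I would identify the group: since $\SL_2(\CC)\subset\Aut(V)$, the identity component $\Aut^0(V)$ is nonabelian, so by Proposition~\ref{thm:toric-case} (cf.\ Corollary~\ref{cor:aut-orbits}\ref{cor:aut-orbits-i} and Lemma~\ref{lem:j=upsilon}) one has $\Aut^0(V)\cong\GL_2(\CC)$ and we are in Case~\ref{thm:toric-case-i}, whence $\Sigma_{\s}(V)=\mathcal{F}_1\cup\mathcal{F}_2$ is a pair of the first kind (see~\ref{nota:two-kinds}). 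Corollary~\ref{cor:finiteness}\ref{cor:finiteness-b} then supplies, for each $i$, the cubic-cone locus $\mathcal{C}_i(V)=h_i\cup\{q_i\}\subset\SSS_i(V)$, where $h_i$ is a line, $q_i=[S_i]\notin h_i$ is an isolated $\Aut^0(V)$-invariant point, and $S_i$ is the sought invariant cubic cone. Being connected, $\Aut^0(V)$ fixes $q_i$ and preserves the disjoint line $h_i$; this already yields existence of the invariant cone.

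The core of the argument is the uniqueness of the fixed point, and the main obstacle is to show that $\SL_2(\CC)$ acts nontrivially on each $\SSS_i(V)$ — a priori the $\GL_2$-action on $\PP^2$ could factor through the determinant and acquire extra fixed points. To exclude this I would invoke the canonical equivariant Segre embedding $\varrho\colon\Sigma(V)\hookrightarrow\SSS_1(V)\times\SSS_2(V)\cong\PP^2\times\PP^2$ of Corollary~\ref{claim:common-ruling-1}. By Construction~\ref{constr:tensor-product} one has $\Aut^0(\Sigma(V))\cong\PGL_3(\CC)$, acting on the two factors through the dual and the standard representation, both faithful on $\PP^2$. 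The homomorphism $\Aut(V)\to\Aut(\Sigma(V))$ is injective by Lemma~\ref{lem:group-embedding}\ref{lem:group-embedding-a}, so the connected subgroup $\SL_2(\CC)$ embeds into $\PGL_3(\CC)$ and hence acts faithfully, in particular nontrivially, on each factor $\SSS_i(V)$.

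With nontriviality established I would conclude by a direct fixed-point count. Lifting the action to $\GL_3(\CC)$ compatibly with the invariant decomposition $\CC^3=L\oplus P$ given by $q_i=\PP(L)$ and $h_i=\PP(P)$, the restriction $\SL_2(\CC)\to\Aut(h_i)=\PGL_2(\CC)$ has kernel a proper normal subgroup of the simple group $\SL_2(\CC)$, hence finite; its image is therefore all of $\PGL_2(\CC)$, acting transitively on $h_i\cong\PP^1$ and irreducibly on $P$. Consequently $P$ admits no $\GL_2$-invariant line and no common eigenvector, so $\PP^2$ has no $\Aut^0(V)$-fixed point on $h_i$ and none off $h_i\cup\{q_i\}$; thus $q_i=[S_i]$ is the unique fixed point, as required.

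It remains to prove $S_1\cap S_2=\emptyset$. Here $[S_1]\cdot[S_2]=0$ in $H^*(V,\ZZ)$ since $S_1,S_2$ lie in different components of $\SSS(V)$ (Proposition~\ref{lem:cohomology}). By Corollary~\ref{cor:intersection-of-cones} two distinct cubic cones are disjoint, meet transversally in one point, or share a common ruling. The last possibility is excluded for a pair of the first kind, because $\Lambda(S_1)$ and $\Lambda(S_2)$ are then disjoint from $C=\mathcal{F}_1\cap\mathcal{F}_2$, so $S_j$ does not contain the vertex of $S_i$ (Corollary~\ref{cor:finiteness}\ref{cor:finiteness-b}); the middle possibility would give $[S_1]\cdot[S_2]=1$, contradicting $[S_1]\cdot[S_2]=0$. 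Hence $S_1$ and $S_2$ are disjoint, completing the proof.
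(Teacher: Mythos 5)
Your proof is correct, and while its engine is the same as the paper's (linearize the $\SL_2(\CC)$-action on $\SSS_i(V)\cong\PP^2$ and apply representation theory of $\SL_2(\CC)$ on $\CC^3$), the packaging differs at both ends. For uniqueness the paper runs a leaner reductio that needs none of your Case-\xref{thm:toric-case-i} input: if the $\SL_2(\CC)$-action on $\SSS_i(V)$ had two fixed points, then, $\SL_2(\CC)$ being simply connected and without nontrivial characters, the lifted linear action on $\CC^3$ would be trivial on the two corresponding lines, hence trivial on all of $\CC^3$ by complete reducibility --- contradicting the effectiveness coming from Lemma~\xref{lem:group-embedding}. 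You instead first pass through Proposition~\xref{thm:toric-case} and Corollary~\xref{cor:finiteness}\xref{cor:finiteness-b} to obtain the invariant configuration $h_i\cup\{q_i\}$ with $q_i\notin h_i$, and then pin down the lifted representation as $(\text{standard})\oplus(\text{trivial})$; this buys an explicit description of the fixed locus but at the cost of heavier prior machinery. For disjointness the paper's primary argument is different from yours: by the uniqueness just proved, the unordered pair $(S_1,S_2)$ must coincide with the disjoint pair of invariant cones constructed in Lemma~\xref{lem:disjoint-cones}\xref{nota:diagr-2-a}--\xref{nota:diagr-2-b}, so disjointness comes for free; your route via $S_1\cdot S_2=0$ (Proposition~\xref{lem:cohomology}), Corollary~\xref{cor:intersection-of-cones}, and the first-kind exclusion of a common ruling is precisely the alternative the paper records in a footnote to this lemma, so it is equally valid. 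One step you compress: the properness of the kernel of $\SL_2(\CC)\to\Aut(h_i)$ is asserted rather than argued --- it follows because a subgroup of $\PGL_3(\CC)$ fixing $h_i$ pointwise and fixing $q_i\notin h_i$ is a one-dimensional torus of homologies, and $\SL_2(\CC)$ admits no nontrivial homomorphism to $\Gm$, so full kernel would force triviality on $\PP^2$, against faithfulness; the fix is one line and does not affect correctness.
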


\begin{proof} The existence of an $\Aut^0(V)$-fixed point in $\SSS_i(V)$ that 
corresponds to a cubic cone is established in Proposition~\xref{lem:fixed-pt}. 
We claim that an
$\SL_2(\CC)$-fixed point in $\SSS_i(V)$ is unique; then, of course, this point 
is a unique
$\Aut^0(V)$-fixed point as well.

Suppose to the contrary that the
$\SL_2(\CC)$-action on $\SSS_i(V)\cong\PP^2$ admits at least two fixed points.
Since
the group $\SL_2(\CC)$ is simply connected, the induced $\SL_2(\CC)$-action on
$\PP^2$ can be lifted to a linear $\SL_2(\CC)$-action on $\CC^3$ trivial
on the one-dimensional subspaces
which correspond to the fixed points. Due to the
complete reducibility, such an $\SL_2(\CC)$-action on $\CC^3$ is trivial. This 
is a contradiction.

By Lemma~\xref{lem:j=upsilon}, under our assumptions one has 
$\Aut^0(V)=\GL_2(\CC)$.

Due to the uniqueness, the unordered pair $(S_1,S_2)$ coincides with the pair 
$(S,S')$ of $\Aut^0(V)$-invariant cubic cones as in 
Lemma~\xref{lem:disjoint-cones}\xref{nota:diagr-2-a}. However, the latter cones 
are disjoint.
\footnote{Alternatively, one can notice that the cones $S_1$ and 
$S_2$ correspond to the disjoint exceptional sections of the components $\mathcal{F}_1$ 
and $\mathcal{F}_2$ of $\Sigma_{\s}(V)$. It follows that 
$S_1\cap S_2$ is zero-dimensional, hence empty since $S_1\cdot S_2=0$ in 
$H^*(V,\ZZ)$, see Proposition \xref{lem:cohomology} 
and Corollary \xref{cor:intersection-of-cones}.}
\end{proof}

\begin{lemma}
\label{lem:F1-plus-F2}
Assume that $\Sigma_{\s}(V)=\mathcal{F}_1\cup\mathcal{F}_2$.
For $i=1,2$ let $S_i\in\SSS_i(V)$ be the $\Aut^0(V)$-invariant cubic cone 
provided by the exceptional section of the ruling $\mathcal{F}_j\to\PP^1$, 
$j\neq i$, see
Corollary~\xref{cor:finiteness}\xref{cor:finiteness-b}. Let $(W,F)$ be the pair 
linked to $(V,S_2)$. Suppose that
$\Aut^0(W,F)$ contains a singular torus of $\Aut^0(W)$.
Then the following hold.
\begin{enumerate}
\renewcommand\labelenumi{\rm (\alph{enumi})}
\renewcommand\theenumi{\rm (\alph{enumi})}

\item
\label{cor:finiteness-aa}
$(\mathcal{F}_1,\mathcal{F}_2)$ is a pair of the first kind;

\item
\label{cor:finiteness-bb}
$\Ru\cap\Aut^0(W,F)=\{1\}$, and $\Aut^0(V)$ is isomorphic to one of the groups 
$\GL_2(\CC),\,\Ga\times\Gm$, and $(\Gm)^2$.
\end{enumerate}
\end{lemma}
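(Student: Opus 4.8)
The plan is to treat the two assertions separately, leaning throughout on the structural results of Sections~\ref{sec-3-bis} and~\ref{sec:del-Pezzo}. Assertion~\ref{cor:finiteness-aa} requires no new work: it is exactly the conclusion of Lemma~\ref{lem:first-kind}, whose hypotheses---that $\Sigma_{\s}(V)=\mathcal{F}_1\cup\mathcal{F}_2$ and that $\Aut^0(W,F)$ contains a singular torus of $\Aut(W)$ for the pair $(W,F)$ linked to $(V,S_2)$---are precisely the ones we are granted. So I would simply invoke it, and for the rest assume $(\mathcal{F}_1,\mathcal{F}_2)$ is of the first kind.

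For assertion~\ref{cor:finiteness-bb}, the crux is to exclude $\Ru\cap\Aut^0(W,F)\neq\{1\}$, and my plan is to do so by contradiction via a census of the cubic cones disjoint from $S=S_2$. First I would prove that $S_1$ is the \emph{only} cubic cone in $V$ disjoint from $S_2$. Since $(\mathcal{F}_1,\mathcal{F}_2)$ is of the first kind, Corollary~\ref{cor:finiteness}\ref{cor:finiteness-b} exhibits every cubic cone as a point of $\mathcal{C}_1(V)\cup\mathcal{C}_2(V)$, where $\mathcal{C}_i(V)=h_i\cup\{q_i\}$ with $q_i=[S_i]$ and $q_i\notin h_i$. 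I would then classify each cone by its intersection with $S_2$. A cone corresponding to a point of $h_2$ lies in the same component $\SSS_2(V)$ as $S_2$, hence has intersection number $1$ with $S_2$ by Proposition~\ref{lem:cohomology} and cannot share a ruling with it by Corollary~\ref{cor:disjoint-cones}, so it meets $S_2$ transversally in one point by Corollary~\ref{cor:intersection-of-cones}. A cone corresponding to a point of $h_1$ shares a ruling with $S_2$ by Corollary~\ref{cor:finiteness}\ref{cor:finiteness-b}, so is again not disjoint. This leaves only $q_1=S_1$, and $S_1$ has no common ruling with $S_2$ (first kind) while $S_1\cdot S_2=0$, so Corollary~\ref{cor:intersection-of-cones} forces $S_1\cap S_2=\emptyset$.

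With this uniqueness established, I would apply Lemma~\ref{lem:disjoint-cones}\ref{nota:diagr-2-c}: were $\Ru\cap\Aut^0(W,F)\neq\{1\}$, there would exist a one-parameter family of cubic cones disjoint from $S_2$, contradicting uniqueness; hence $\Ru\cap\Aut^0(W,F)=\{1\}$. Then Corollary~\ref{cor:aut-center}\ref{cor:aut-center-a} gives $\Aut^0(W,F)=G$, and Corollary~\ref{cor:aut-center}\ref{cor:aut-center-b} identifies $G$ with one of $\GL_2(\CC)$, $\Ga\times\Gm$, $(\Gm)^2$; since $(W,F)$ is linked to $(V,S_2)$ with $S_2$ being $\Aut^0(V)$-invariant, $\Aut^0(V)=\Aut^0(V,S_2)\cong\Aut^0(W,F)=G$, which finishes the proof. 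The only delicate point I anticipate is the cone census of the previous paragraph: one must account for every cubic cone and invoke the correct intersection dichotomy in each case. Everything else is a direct appeal to results already in place.
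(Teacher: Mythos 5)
Your proof is correct, but it takes a genuinely different route from the paper's in both halves. For~(a) you are right that nothing new needs to be proved: Lemma~\xref{lem:first-kind} has literally the same hypotheses and conclusion, and citing it involves no circularity (it is established earlier, from Lemma~\xref{lem:disjoint-cones} and Corollary~\xref{cor:finiteness}); the paper instead re-runs essentially that same contradiction argument, using Lemma~\xref{lem:disjoint-cones}\xref{nota:diagr-2-a} and Proposition~\xref{lem:cohomology} to place a cone disjoint from $S_2$ in $\SSS_1(V)$ and derive a common ruling in the second-kind case. For~(b) the two arguments really differ. The paper's is a fixed-point argument: the vertex $v_1$ of the $\Aut^0(V)$-invariant cone $S_1$ is fixed by $\Aut^0(V)$ and lies off $A_{S_2}$ (equivalently, $S_1\cap S_2=\emptyset$), so $\theta(v_1)$ is an $\Aut^0(W,F)$-fixed point in $W\setminus R$, where $\Ru$ acts freely by Proposition~\xref{proposition-GL2-action-c}\xref{proposition-GL2-action-c-c}; hence $\Ru\cap\Aut^0(W,F)=\{1\}$. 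You instead make a census of all cubic cones via Corollary~\xref{cor:finiteness}\xref{cor:finiteness-b} to show $S_1$ is the \emph{unique} cone disjoint from $S_2$ --- the census is sound, including the same-component intersection number $S'\cdot S_2=1$, a deduction from Proposition~\xref{lem:cohomology} that the paper itself makes in the proof of Theorem~\xref{thm:main-aut} --- and then exclude $\Ru\cap\Aut^0(W,F)\neq\{1\}$ by Lemma~\xref{lem:disjoint-cones}\xref{nota:diagr-2-c}, since the one-parameter family it produces is non-constant (the cones constructed there have pairwise distinct vertices $t(Q)\in W\setminus R$, again by freeness of the $\Ru$-action), contradicting uniqueness. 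Both proofs conclude identically via Corollary~\xref{cor:aut-center}\xref{cor:aut-center-a}--\xref{cor:aut-center-b}, using that $S_2$ is $\Aut^0(V)$-invariant so that $\Aut^0(V)=\Aut^0(V,S_2)\cong\Aut^0(W,F)$. What your route buys: it is self-contained exactly where the paper is terse, since the paper's step ``$v_1\notin S_2$, hence $\theta(v_1)\notin R$'' in fact needs $v_1\notin A_{S_2}$, i.e.\ that no ruling of $S_1$ meets $S_2$, which is the disjointness $S_1\cap S_2=\emptyset$ that the final case of your census proves explicitly via Corollary~\xref{cor:intersection-of-cones}. What the paper's route buys: brevity --- no census is needed at all.
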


\begin{proof}
\xref{cor:finiteness-aa}
By Lemma~\xref{lem:disjoint-cones}\xref{nota:diagr-2-a} there exists a cubic 
cone
$S'\subset V$ disjoint with $S_2$. Since $S'\cdot S_2=0$, these cubic cones
belong to different components of $\SSS(V)$, see Proposition
\xref{lem:cohomology}. Thus, $S'\in\SSS_1(V)$, and so,
$\Lambda(S')\subset\Sigma_{\s}(V)$ is either a ruling of $\mathcal{F}_1$, or
the exceptional section of $\mathcal{F}_2$. However, if
$(\mathcal{F}_1,\mathcal{F}_2)$ were a pair of the second kind, then in both
cases $S'$ and $S_2$ would possess a common ruling, a contradiction. This
proves~\xref{cor:finiteness-aa}.

\xref{cor:finiteness-bb}
Therefore, $(\mathcal{F}_1,\mathcal{F}_2)$ is a pair of
the first kind. Since the cubic cone $S_1$ is $\Aut^0(V)$-invariant, its vertex
$v_1$ is fixed under the action of $\Aut^0(V)$ on $V$. By Corollary
\xref{cor:finiteness}\xref{cor:finiteness-b}, $v_1\notin S_2$. It follows that
$\theta(v_1)\notin R$, see diagram~\eqref{diagram-2}, where $B=R$. The
projection $\theta: V\dashrightarrow W$ with center $\langle S_2\rangle$ as
in diagram~\eqref{diagram-2} sends $v_1$ to a fixed point $\theta(v_1)$ of
$\Aut^0(W,F)$. However, by Proposition
\xref{proposition-GL2-action-c}\xref{proposition-GL2-action-c-c} the unipotent
radical $\Ru$ of $\Aut^0(W)$ acts freely in $W\setminus R$. Thus, $\Ru\cap
\Aut^0(W,F)=\{1\}$. The last assertion follows from Corollary
\xref{cor:aut-center}\xref{cor:aut-center-a}-\xref{cor:aut-center-b}.
\end{proof}

\begin{scorollary}
\label{cor:F1-plus-F2}
Suppose that $\Sigma_{\s}(V)=\mathcal{F}_1\cup\mathcal{F}_2$ is a pair of the 
second kind. Let $(W,F)$ be linked to $(V,S)$, where $S\subset V$ is an 
$\Aut^0(V)$-invariant cubic cone. Then the following hold.
\begin{itemize}
\item
$\rk\Aut(V)=1$;

\item
$\Aut(V)\supset\Ga\times\Gm$;

\item
Any $\Gm$-subgroup of $\Aut(W,F)$ acts nontrivially on $J=F\cap\Xi$.
\end{itemize}
\end{scorollary}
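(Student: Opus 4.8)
The plan is to deduce all three statements from the second-kind hypothesis together with the classification in Proposition~\ref{thm:toric-case} and the obstruction in Lemma~\ref{lem:first-kind}. First I would record the contrapositive of the ``in particular'' clause of Lemma~\ref{lem:first-kind}: since $\Sigma_{\s}(V)=\mathcal{F}_1\cup\mathcal{F}_2$ is of the second kind, the group $\Aut(V)$ cannot contain a $2$-torus $(\Gm)^2$, for otherwise $\Aut^0(W,F)$ would contain a singular torus, forcing $(\mathcal{F}_1,\mathcal{F}_2)$ to be of the first kind by Lemma~\ref{lem:first-kind}. By Proposition~\ref{thm:toric-case} the identity component $\Aut^0(V)$ is one of $\GL_2(\CC)$, $\Ga\times\Gm$, and $(\Gm)^2$; the first and third each contain a $2$-torus, so they are excluded, leaving $\Aut^0(V)\cong\Ga\times\Gm$. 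This yields at once the first two bullets: $\rk\Aut(V)=1$ and $\Aut(V)\supset\Ga\times\Gm$.

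For the third statement I would first transport the group along the equivariant Sarkisov link: since $S$ is $\Aut^0(V)$-invariant we have $\Aut^0(W,F)\cong\Aut^0(V,S)=\Aut^0(V)\cong\Ga\times\Gm$, and by the same contrapositive of Lemma~\ref{lem:first-kind} the group $\Aut^0(W,F)$ contains no singular torus of $\Aut(W)$. Now suppose, for contradiction, that some $\Gm$-subgroup $T'\subset\Aut(W,F)$ acts trivially on $J=F\cap\Xi$. As $T'$ is connected it lies in $\Aut^0(W,F)$. The key reduction is to show that such a $T'$ must be a singular torus. I would use the exact sequence~\eqref{eq:2nd-sequence-1}: the $\Aut(W)$-action on $\Xi$ factors through $\varrho\colon\Aut(W)\to\Aut(\Upsilon)=\Aut(\Xi,\Upsilon)=\PGL_2(\CC)$ with kernel $\Ru\rtimes\z(\Levi)$. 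If $\varrho(T')\neq 1$, then $\varrho(T')\cong\Gm$ acts on $\Xi\cong\PP^2$ via the symmetric square of the standard representation, hence with only finitely many fixed points, so it cannot fix the conic $J$ pointwise; therefore $\varrho(T')=1$, that is, $T'\subset\ker\varrho=\Ru\rtimes\z(\Levi)$. Since this kernel is a connected solvable group of rank one, $T'$ is one of its maximal tori, hence $\Ru$-conjugate to $\z(\Levi)$ and thus a singular torus by Remark~\ref{rem:Mostow}.\ref{rem:Mostow-2}. This contradicts the absence of singular tori in $\Aut^0(W,F)$, and the third bullet follows.

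The main obstacle I anticipate is the bookkeeping around the singular torus: I must convert ``$T'$ acts trivially on $J$'' into ``$T'$ acts trivially on all of $\Xi$'' (which is what places $T'$ in $\ker\varrho$), and this hinges on the representation-theoretic fact that the induced $\PGL_2(\CC)$-action on $\Xi$ has no curve of fixed points, in particular no conic. A secondary point to handle carefully is that Lemma~\ref{lem:first-kind} is literally stated for the cone $S_2$, whereas the corollary allows an arbitrary $\Aut^0(V)$-invariant cubic cone $S$; here I would note that the contradiction is obtained exactly as in the proof of Lemma~\ref{lem:first-kind}, namely by producing via Lemma~\ref{lem:disjoint-cones}.\ref{nota:diagr-2-a} a cubic cone disjoint from $S$ that nonetheless shares a ruling with it, so that the argument is insensitive to the choice of invariant cone.
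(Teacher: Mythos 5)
Your proposal tracks the paper's own proof very closely. The first two bullets are obtained, as in the paper, from the singular-torus obstruction (you use the ``in particular'' clause of Lemma~\xref{lem:first-kind}; the paper cites the equivalent Lemma~\xref{lem:F1-plus-F2}\xref{cor:finiteness-aa}, reaching the singular torus via Lemma~\xref{lem:GaxGm}) together with the lower bounds of Proposition~\xref{thm:toric-case}; and the third bullet follows the same chain as the paper: triviality on $J$ forces $Z\subset\ker\varrho=\Ru\rtimes\z(\Levi)$, hence $Z$ is a singular torus, contradicting the obstruction. The two steps you spell out --- that a nontrivial image $\varrho(Z)\cong\Gm$ acts on $\Xi$ through the symmetric square with finite fixed locus, so cannot fix the conic $J$ pointwise, and that a maximal torus of the rank-one solvable kernel is $\Ru$-conjugate to $\z(\Levi)$, hence singular by Remark~\xref{rem:Mostow}.\xref{rem:Mostow-2} --- are exactly what the paper asserts without proof, and your justifications are correct. (A minor point: in Case~\xref{thm:toric-case-iv} Proposition~\xref{thm:toric-case} gives only $\Ga\times\Gm\subset\Aut(V)\subset B(\PGL_3(\CC))\rtimes(\ZZ/2\ZZ)$, not that $\Aut^0(V)$ \emph{equals} one of the three model groups; but both bullets still follow from ``no $2$-torus'' plus this lower bound, so nothing essential is lost.)

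The genuine gap is your closing claim that the contradiction of Lemma~\xref{lem:first-kind} is ``insensitive to the choice of invariant cone.'' It is not. That contradiction rests on a special feature of the distinguished cone $S_2$: its family of rulings $\Lambda(S_2)=s_1$ is simultaneously the exceptional section of $\mathcal{F}_1$ and a ruling of $\mathcal{F}_2$, so it meets \emph{every} line of $\Sigma_{\s}(V)$ contracted by $p_1$, whence no cone of $\SSS_1(V)$ can be disjoint from $S_2$ and Lemma~\xref{lem:disjoint-cones}\xref{nota:diagr-2-a} gives a contradiction. But for a pair of the second kind one has $q_i\in h_i$ (Corollary~\xref{cor:finiteness}\xref{cor:finiteness-b}), so besides $S_1,S_2$ there are the families $S_{i,t}$, and nothing in the hypothesis excludes $S=S_{2,t}\neq S_2$ (indeed, in this scenario every cubic cone turns out to be $\Aut^0(V)$-invariant --- this is precisely how Corollary~\xref{cor:stable-cones} is later derived). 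For such an $S$, $\Lambda(S)$ is a general ruling of $\mathcal{F}_2$, which among the $p_1$-contracted lines meets only $s_2=\Lambda(S_1)$; a general cone $S_{1,t'}$ therefore has no common ruling with $S$, has intersection number zero with it (Proposition~\xref{lem:cohomology}), and hence is \emph{disjoint} from it by Corollary~\xref{cor:intersection-of-cones}. So the cone produced by Lemma~\xref{lem:disjoint-cones}\xref{nota:diagr-2-a} yields no contradiction, and your argument for the third bullet collapses for such $S$. In fairness, the paper's proof commits the same slip --- it cites Lemma~\xref{lem:F1-plus-F2}\xref{cor:finiteness-aa}, stated for the pair linked to $(V,S_2)$, for the pair linked to an arbitrary $S$ --- and the clean repair is the same for both: the first two bullets concern only $V$ and can be proved by running the argument through $S_2$ (invariant by Corollary~\xref{cor:finiteness}\xref{cor:finiteness-b}), while the third bullet should either be restricted to the distinguished cones $S\in\{S_1,S_2\}$, which suffices for its use in Lemma~\xref{lem:F1-plus-F2-elimination}, or be given a separate argument for the remaining cones.
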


\begin{proof}
Suppose to the contrary that $\rk\Aut(V)=2$. Then $\Aut^0(W,F)$ contains a 
singular torus of $\Aut^0(W)$, and so, by 
Lemma~\xref{lem:F1-plus-F2}\xref{cor:finiteness-aa}, 
$(\mathcal{F}_1,\,\mathcal{F}_2)$ is a pair of the first kind, a contradiction.

Since $\Aut(V)$ of rank $1$ contains one of the groups $\GL_2(\CC)$, 
$\Ga\times\Gm$, $(\Gm)^2$, it contains $\Ga\times\Gm$.

Suppose that there is a $\Gm$-subgroup, say, $Z\subset\Aut(W,F)$ acting 
trivially on $J$. Then $Z$ is contained in the kernel $\Ru\rtimes\z(\Levi)$ of 
the homomorphism $\varrho:\Aut(W,F)\to\PGL_2(\CC)$ in 
\eqref{eq:2nd-sequence-1}. Hence $Z$ is a singular torus of $\Aut(W)$. Since 
$F$ 
is $Z$-invariant, this leads again to a contradiction with Lemma~
\xref{lem:F1-plus-F2}\xref{cor:finiteness-aa}.
\end{proof}

\begin{lemma}\label{lem:ineq-J}
Let $S\subset V$ be a cubic cone, let $(W,F)$ be linked to $(V,S)$, and
let $J=F\cap\Xi$ be the exceptional section of $F$.
Then the following hold.
\begin{enumerate}
\renewcommand\labelenumi{\rm (\alph{enumi})}
\renewcommand\theenumi{\rm (\alph{enumi})}

\item
\label{lem:ineq-J-a}
\begin{equation}
\label{eq:stabiliser}
\dim\Aut^0(W,F)=\dim\Aut^0(V,S)\ge
\begin{cases} 2 &\quad\mbox{if}\quad J\neq\Upsilon,
\\
4&\quad\mbox{if}\quad J=\Upsilon.
\end{cases}
\end{equation}

\item
\label{lem:ineq-J-b} Let $\mathfrak{F}_J$ be the family of all rational normal 
quintic scrolls $F'\subset R$ with exceptional section $J$. Then the equality 
in~\eqref{eq:stabiliser} holds if and only the orbit of $F\in\mathfrak{F}_J$ 
under the natural $\Stab_{\Aut(W)}(J)$-action on $\mathfrak{F}_J$ is open.
\end{enumerate}
\end{lemma}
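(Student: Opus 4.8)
The plan is to treat the two assertions separately, the second resting on an orbit--stabilizer count.

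For \xref{lem:ineq-J-a}, the equality $\dim\Aut^0(W,F)=\dim\Aut^0(V,S)$ is the $\Aut^0$-equivariance of the Sarkisov link \eqref{diagram-2}: an automorphism of $W$ fixing $F$ lifts to $\tilde W$ fixing $\tilde A$ and $\tilde B$, hence descends through $\xi$ to an automorphism of $V$ fixing $A=\xi(\tilde A)$ and its singular locus $S=\Sing(A)$, and conversely; this is the identification recorded in \xref{nota:diagr-2}, an isomorphism of algebraic groups. Since $S$ is a cubic cone, by \xref{cor:diagr-2} the linked scroll $F\cong\FF_1$ has exceptional section $J=F\cap\Xi$ touching $\Upsilon$ with even multiplicities, and it is invariant under the singular torus $\z(\Levi)$ of a Levi subgroup $\Levi$, so \xref{cor:aut-center} applies. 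Writing $u:=\dim(\Ru\cap\Aut^0(W,F))$ and $G=G(\Levi,J)$, the decomposition $\Aut^0(W,F)=(\Ru\cap\Aut^0(W,F))\rtimes G$ of \xref{cor:aut-center}\xref{cor:aut-center-a} together with \xref{cor:aut-center}\xref{cor:aut-center-b} (which gives $G\cong\GL_2(\CC)$ if $J=\Upsilon$ and $G\cong\Ga\times\Gm$ or $(\Gm)^2$ otherwise) yields $\dim\Aut^0(W,F)=u+\dim G\ge\dim G$, equal to $4$ if $J=\Upsilon$ and to $2$ if $J\neq\Upsilon$. This is \eqref{eq:stabiliser}; moreover, by \xref{cor:unipotent-actions} one has $u\in\{0,1\}$, so equality in \eqref{eq:stabiliser} holds exactly when $u=0$, i.e.\ $\Ru\cap\Aut^0(W,F)=\{1\}$.

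For \xref{lem:ineq-J-b} I would run orbit--stabilizer for the $\Stab_{\Aut(W)}(J)$-action on $\mathfrak{F}_J$. Since $\Xi$ is $\Aut(W)$-invariant and $\Ru$ fixes $\Xi$ pointwise (\xref{proposition-GL2-action-c}\xref{proposition-GL2-action-c-c}), the action on $\Xi$ factors through $\varrho$, whence $\Stab_{\Aut(W)}(J)=\varrho^{-1}(\Aut(\Xi,\Upsilon,J))$; its identity component is $\Ru\rtimes G$ by the Levi decomposition \eqref{eq:Levi-decomp}, so $\dim\Stab_{\Aut(W)}(J)=4+\dim G$. Every automorphism fixing $F$ fixes $J=F\cap\Xi$, so the stabilizer of the point $F\in\mathfrak{F}_J$ is precisely $\Aut(W,F)$, of dimension $u+\dim G$. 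Hence the orbit of $F$ has dimension $(4+\dim G)-(u+\dim G)=4-u$, the factor $\dim G$ cancelling, and the orbit is open in $\mathfrak{F}_J$ if and only if $4-u=\dim_F\mathfrak{F}_J$. Everything therefore reduces to the single equality $\dim\mathfrak{F}_J=4$.

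To establish $\dim\mathfrak{F}_J=4$ I would parametrize $\mathfrak{F}_J$ by complementary sections. Any $F'\in\mathfrak{F}_J$ is the join of $J$ with a twisted cubic section $\Psi'\subset R\setminus\Xi$ of the ruling of $R$ by the planes $\Pi_\gamma$, the rulings of $F'$ being the lines of $\Pi_\gamma$ through $\Psi'\cap\Pi_\gamma$ and a point of $J\cap\Pi_\gamma$ (cf.\ the constructions in \xref{prop:quintic-scroll} and \xref{prop:center-inv-scroll}); this requires only the open condition $\langle\Psi'\rangle\cap\Xi=\emptyset$. Passing to the normalization $\hat R=\PP(\NNN^\vee_{\Gamma/\PP^4})\to\Gamma$ of \xref{lem:hatR}, such $\Psi'$ correspond to sections of $\hat R\to\Gamma$ disjoint from $\hat\Xi=\PP(\NNN^\vee_{\Gamma/\PP^3})$. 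As $\NNN_{\Gamma/\PP^3}\cong\OOO_{\PP^1}(5)^{\oplus2}$ (\xref{lem:hatR}\xref{lem:hatR-d}) and the extension $0\to\NNN_{\Gamma/\PP^3}\to\NNN_{\Gamma/\PP^4}\to\OOO_{\PP^1}(3)\to0$ splits (its class lies in $\operatorname{Ext}^1(\OOO(3),\OOO(5)^{\oplus2})=H^1(\OOO(2)^{\oplus2})=0$), these sections form a torsor over $H^0(\PP^1,\OOO(2)^{\oplus2})$, an irreducible $6$-dimensional family. The assignment $\Psi'\mapsto F'$ is dominant onto $\mathfrak{F}_J$, and its fibres are the systems of twisted cubic sections inside a fixed $F'\cong\FF_1$: these form the linear system $|s_0+f|\cong\PP^2$, whose members all meet $\Xi$ only in $\Psi'\cap J=\emptyset$ (because $(s_0+f)\cdot s_0=0$) and whose general member moreover satisfies $\langle\Psi'\rangle\cap\Xi=\emptyset$. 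Thus the general fibre is $2$-dimensional, giving $\dim\mathfrak{F}_J=6-2=4$.

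Combining the two counts, the orbit of $F$ is open in $\mathfrak{F}_J$ if and only if $4-u=4$, i.e.\ $u=0$, which by the first paragraph is exactly equality in \eqref{eq:stabiliser}; this proves \xref{lem:ineq-J-b}. I expect the main obstacle to be the equality $\dim\mathfrak{F}_J=4$: the naive parameter space of complementary twisted cubics is $6$-dimensional, and the reduction to $4$ hinges on recognizing that each scroll carries an entire $\PP^2=|s_0+f|$ of such sections. Secondary care is needed in fixing the projective-bundle convention in $\hat R=\PP(\NNN^\vee_{\Gamma/\PP^4})$ and in checking that the generic member of $|s_0+f|$ lands in the admissible open locus $\{\langle\Psi'\rangle\cap\Xi=\emptyset\}$.
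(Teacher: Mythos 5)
Your argument for part \xref{lem:ineq-J-a} has a genuine gap: you apply Corollary~\xref{cor:aut-center} to the linked scroll $F$, but that corollary is stated and proved only for scrolls as in Proposition~\xref{prop:center-inv-scroll}\xref{prop:center-inv-scroll-a}, i.e.\ scrolls invariant under a singular torus $\z(\Levi)$ of $\Aut(W)$. Being linked to a cubic cone gives, by Corollary~\xref{cor:diagr-2}, only that $F\subset R$, $F\cong\FF_1$, and $J=F\cap\Xi$ touches $\Upsilon$ with even multiplicities; it does \emph{not} give invariance under a singular torus, and that invariance is in fact false for some cubic cones. Concretely, by Remark~\xref{rem:two-linked-actions}, for $V\cong V^{\aaa}_{18}$ and either cone of the common-ruling pair, the linked scroll satisfies $\Aut^0(W,F)\cong\Ga\times\Gm$ with $\Ru\cap\Aut^0(W,F)\cong\Ga$ and with the $\Gm$-factor acting nontrivially on $J$; since a singular torus fixes $\Xi$ pointwise, this abelian group contains no singular torus, so Corollary~\xref{cor:aut-center} does not apply. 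Your formula $\dim\Aut^0(W,F)=u+\dim G$ would give $1+2=3$ for these scrolls, whereas the true dimension is $2$; worse, your derived criterion ``equality in \eqref{eq:stabiliser} holds exactly when $u=0$'' fails for them, since equality holds there with $u=1$. You use this same decomposition in both (a) and (b), so both proofs break on these examples.

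The repair is to invert your logic, and this is exactly the paper's route: everything follows from the orbit--stabilizer count alone, with no structural decomposition of $\Aut^0(W,F)$. Your two correct computations --- $\dim\mathfrak{F}_J=4$ (your normal-bundle argument is a more detailed version of the paper's deformation count $6-2=4$) and $\dim\Stab_{\Aut(W)}(J)=4+\dim G$, i.e.\ $8$ if $J=\Upsilon$ and $6$ otherwise --- already yield, for \emph{any} $F\in\mathfrak{F}_J$ (noting that any automorphism preserving $F$ preserves $J=F\cap\Xi$, so the stabilizer of the point $F$ in $\Stab_{\Aut(W)}(J)$ is $\Aut(W,F)$),
\begin{equation*}
\dim\Aut^0(W,F)=\dim\Stab_{\Aut(W)}(J)-\dim\bigl(\Stab_{\Aut(W)}(J)\cdot F\bigr)\ge\dim\Stab_{\Aut(W)}(J)-\dim\mathfrak{F}_J,
\end{equation*}
which is precisely \eqref{eq:stabiliser}; and equality holds if and only if the orbit has dimension $4$, if and only if it is open in the pure four-dimensional variety $\mathfrak{F}_J$, which is part \xref{lem:ineq-J-b}. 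The identification $\dim\Aut^0(W,F)=\dim\Aut^0(V,S)$ via the equivariance of the link \eqref{diagram-2} is fine as you state it.
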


\begin{proof}
We claim that $\mathfrak{F}_J$ is of pure dimension 4.
Indeed, by a deformation argument
the family of twisted cubics in $R\setminus\Xi=R\setminus\Sing(R)$ has dimension 
$6$.
Given a scroll $F\in\mathfrak{F}_J$, the twisted cubic curves $\Psi$ contained 
in $F$ are the sections of $F\to\PP^1$ disjoint with $J$, that is, the
sections with $\Psi^2=1$.
Therefore, the family of
twisted cubic curves $\Psi\subset F$ is two-dimensional. Now the claim follows.

The group $\Aut(\Upsilon)\cong\PGL_2(\CC)$ acts
naturally on the plane $\Xi\cong\PP^2$ viewed as the symmetric square of
$\Upsilon\cong\PP^1$. By Lemma~\xref{lem:aut-pencils}\xref{lem:aut-pencils-a} 
one has
\begin{equation*}
\dim\Stab_{\Aut(\Xi,\Upsilon)} (J)=
\begin{cases} 
1 &\quad\mbox{if}\quad J\neq\Upsilon,
\\
3&\quad\mbox{if}\quad J=\Upsilon.
\end{cases}
\end{equation*}
The stabilizer $\mathcal{G}:=\Stab_{\Aut(W)} (J)$ acts
on $\mathfrak{F}_J$. From~\eqref{eq:2nd-sequence-1} and 
Proposition~\xref{proposition-GL2-action-c}\xref{proposition-GL2-action-c-c} one 
can deduce the equalities
\begin{equation}
\label{eq:dim-stab}
\dim\mathcal{G}=5+\dim\Stab_{\Aut(\Xi,\Upsilon)} (J)=
\begin{cases} 
6&\quad\mbox{if}\quad J\neq\Upsilon,
\\
8&\quad\mbox{if}\quad J=\Upsilon.
\end{cases}
\end{equation}
Notice that $\Aut(W,F)$ is the stabilizer of $F\in\mathfrak{F}_J$
under the $\mathcal{G}$-action
on $\mathfrak{F}_J$. For the dimension of the orbit $\mathcal{G}.F$ of $F$ under 
this action one has
\begin{equation}
\label{eq:dim-orbit} 
\dim\mathcal{G}.F=\dim\mathcal{G}-\dim\Stab_{\mathcal{G}}(F)=\dim\mathcal{G}
-\dim\Aut^0(W,F)\le\dim\mathfrak{F}_J=4.
\end{equation}
Then~\eqref{eq:dim-stab} and~\eqref{eq:dim-orbit} imply~\eqref{eq:stabiliser}. 
This gives~\xref{lem:ineq-J-a}. Now~\xref{lem:ineq-J-b} is straightforward.
\end{proof}

\begin{lemma}
\label{lem:F1-plus-F2-elimination}
Suppose that $\Sigma_{\s}(V)$ is a union $\mathcal{F}_1\cup\mathcal{F}_2$ of the 
second kind. Then
$\Aut^0(V)\cong\Ga\times\Gm$.
\end{lemma}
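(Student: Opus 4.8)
The plan is to derive $\Aut^0(V)\cong\Ga\times\Gm$ by combining the structural constraints already assembled for a second-kind splitting with the rank computation of Lemma~\xref{lem:ineq-J}. First I would fix an $\Aut^0(V)$-invariant cubic cone $S\subset V$ (which exists by Proposition~\xref{lem:fixed-pt} since both components $\SSS_i(V)$ contain at least one invariant cubic cone, and in the second-kind case all such cones are invariant by Corollary~\xref{cor:family-of-rulings} is not available—instead one uses the invariant cones $S_1,S_2$ provided by Corollary~\xref{cor:finiteness}\xref{cor:finiteness-b}), and pass to the linked pair $(W,F)$ as in diagram~\eqref{diagram-2}, setting $J=F\cap\Xi$. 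By Proposition~\xref{thm:toric-case}, $\Aut^0(V)$ is one of $\GL_2(\CC)$, $\Ga\times\Gm$, or $(\Gm)^2$, so it suffices to exclude the first and the third.

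The key input is Corollary~\xref{cor:F1-plus-F2}, which already tells us that in the second-kind situation $\rk\Aut(V)=1$ and $\Aut(V)\supset\Ga\times\Gm$. The rank-one conclusion immediately rules out $\Aut^0(V)\cong(\Gm)^2$ (rank two) and $\Aut^0(V)\cong\GL_2(\CC)$ (rank two as well), leaving only $\Aut^0(V)\cong\Ga\times\Gm$ among the three possibilities listed in Proposition~\xref{thm:toric-case}. Thus the heart of the argument is really the verification that the second-kind hypothesis forces $\rk\Aut(V)=1$, which is exactly the content of Corollary~\xref{cor:F1-plus-F2}: if the rank were $2$, then $\Aut^0(W,F)$ would contain a singular torus $\z(\Levi)$ of $\Aut(W)$, and Lemma~\xref{lem:F1-plus-F2}\xref{cor:finiteness-aa} would force $(\mathcal{F}_1,\mathcal{F}_2)$ to be of the first kind, contradicting our standing assumption.

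Concretely I would argue as follows. Suppose $\Sigma_{\s}(V)=\mathcal{F}_1\cup\mathcal{F}_2$ is of the second kind. By Corollary~\xref{cor:F1-plus-F2} we have $\rk\Aut(V)=1$ and $\Ga\times\Gm\subseteq\Aut(V)$. Since $\rk\Aut(V)=1$, neither $\GL_2(\CC)$ nor $(\Gm)^2$ can occur as $\Aut^0(V)$ in the trichotomy of Proposition~\xref{thm:toric-case}, as both have rank $2$. The only remaining option is $\Aut^0(V)\cong\Ga\times\Gm$. To pin down that the identity component is exactly $\Ga\times\Gm$ and not a larger rank-one group, I would note that by Corollary~\xref{cor:aut-center}\xref{cor:aut-center-b} the group $G=\Levi\cap\Aut^0(W,F)$ is one of $\GL_2(\CC)$, $\Ga\times\Gm$, $(\Gm)^2$, determined by the type of the conic $J=F\cap\Xi$; the rank-one constraint together with $\Aut^0(V)\cong\Aut^0(W,F)$ and $G\cong\Ga\times\Gm$ (type~\xref{rem:touching-conics}\xref{lem:touching-conics-Ga}) yields the claim. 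The inclusion $\Ga\times\Gm\subseteq\Aut^0(V)$ combined with the dimension bound $\dim\Aut^0(V,S)=\dim\Aut^0(W,F)\ge 2$ from Lemma~\xref{lem:ineq-J}\xref{lem:ineq-J-a} shows the identity component has dimension exactly $2$, so $\Aut^0(V)\cong\Ga\times\Gm$.

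The main obstacle I anticipate is ensuring that the reduction to Corollary~\xref{cor:F1-plus-F2} is clean—specifically, confirming that an $\Aut^0(V)$-invariant cubic cone $S$ genuinely gives rise to a linked pair $(W,F)$ whose stabilizer group $\Aut^0(W,F)$ is canonically identified with $\Aut^0(V,S)=\Aut^0(V)$, so that the rank transfers correctly across the Sarkisov link~\eqref{diagram-2}. This identification is the linchpin; once it is in place, the exclusion of the rank-two groups is automatic from Proposition~\xref{thm:toric-case}, and the matching with type~\xref{rem:touching-conics}\xref{lem:touching-conics-Ga} via Corollary~\xref{cor:aut-center}\xref{cor:aut-center-b} finishes the identification of the abstract isomorphism type.
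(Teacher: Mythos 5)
Your reduction to Corollary~\xref{cor:F1-plus-F2} is fine as far as it goes (rank one and $\Ga\times\Gm\subset\Aut(V)$), but the step that finishes your argument --- ``by Proposition~\xref{thm:toric-case}, $\Aut^0(V)$ is one of $\GL_2(\CC)$, $\Ga\times\Gm$, $(\Gm)^2$, and rank one leaves only $\Ga\times\Gm$'' --- is not available under the hypothesis of this lemma, and invoking it is circular. The second-kind hypothesis puts you precisely in Case~\xref{thm:toric-case-iv} of Proposition~\xref{thm:toric-case}, where the proposition gives only the sandwich $\Ga\times\Gm\subset\Aut(V)\subset B(\PGL_3(\CC))\rtimes(\ZZ/2\ZZ)$; it does not pin down $\Aut^0(V)$. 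A priori $\Aut^0(V)$ could be any connected rank-one subgroup of the five-dimensional Borel $B(\PGL_3(\CC))$ containing $\Ga\times\Gm$, for instance $(\Ga)^2\rtimes\Gm$ or the full unipotent radical extended by a suitable $\Gm$; rank one excludes none of these. The clean trichotomy holds only in Cases~\xref{thm:toric-case-i}--\xref{thm:toric-case-iii}, and extending it to all $V$ requires eliminating Case~\xref{thm:toric-case-iv} (Corollary~\xref{cor:stable-cones}), which in the paper is deduced from the very lemma you are proving.

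Your attempted repair does not close this gap. First, Corollary~\xref{cor:aut-center} cannot be applied: it presupposes that $F$ is invariant under a singular torus $\z(\Levi)$, and in the second-kind situation $\Aut^0(W,F_S)$ contains no singular torus (otherwise Lemma~\xref{lem:F1-plus-F2}\xref{cor:finiteness-aa} would force the first kind), so the decomposition $\Aut^0(W,F)=(\Ru\cap\Aut^0(W,F))\rtimes G$ is simply not established here. Moreover, your identification of the type as \xref{rem:touching-conics}\xref{lem:touching-conics-Ga} is wrong: for that type every $\Gm$-subgroup of $G\cong\Ga\times\Gm$ coincides with $\z(\Levi)$, which acts trivially on $J_S$, contradicting the third bullet of Corollary~\xref{cor:F1-plus-F2}; the correct conclusion (reached in the paper) is type \xref{rem:touching-conics}\xref{lem:touching-conics-Gm}. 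Second, the closing dimension count is a non sequitur: Lemma~\xref{lem:ineq-J} provides a \emph{lower} bound $\dim\Aut^0(W,F_S)\ge 2$, and a lower bound together with the inclusion $\Ga\times\Gm\subset\Aut^0(V)$ cannot yield $\dim\Aut^0(V)=2$. The missing upper bound is exactly the substance of the paper's proof: assuming $\Aut^0(V)\not\cong\Ga\times\Gm$, one first excludes $J_S=\Upsilon$, since then $\dim\Aut^0(W,F_S)\ge 4$ (Lemma~\xref{lem:ineq-J}) for a \emph{solvable} rank-one group (solvability coming from Case~\xref{thm:toric-case-iv}), so its unipotent radical has dimension $\ge 3$ and, via the exact sequence~\eqref{eq:2nd-sequence-1}, $\dim(\Ru\cap\Aut^0(W,F_S))\ge 2$, contradicting Corollary~\xref{cor:unipotent-actions}; then, for $J_S$ of type \xref{rem:touching-conics}\xref{lem:touching-conics-Gm}, the same exact sequence exhibits $\Aut^0(W,F_S)$ as an extension of $\Aut^0(\Xi,\Upsilon,J_S)\cong\Gm$ by $\Ru\cap\Aut^0(W,F_S)\cong\Ga$, which forces $\Aut^0(V)\cong\Aut^0(W,F_S)\cong\Ga\times\Gm$. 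None of this machinery appears in your proposal.
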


\begin{proof}
According to Proposition~\xref{lem:fixed-pt} there exists an 
$\Aut^0(V)$-invariant cubic cone $S\subset V$. Let $(W,F_S)$ be the pair linked 
to $(V,S)$, and let $J_S=F_S\cap\Xi$.
Assume to the contrary that $\Aut^0(V)\cong\Aut^0(W,F_S)\not\cong\Ga\times\Gm$. 
By Corollary~\xref{cor:F1-plus-F2}, $\Aut^0(W,F_S)\cong\Aut^0(V)$ has rank 1,
and any $\Gm$-subgroup of $\Aut^0(W,F_S)$ acts nontrivially on $J_S$.
Hence either $J_S=\Upsilon$ is of 
type~\xref{rem:touching-conics}\xref{lem:touching-conics-Gl2}, or 
$(\Upsilon,J_S)$ is of 
type~\xref{rem:touching-conics}\xref{lem:touching-conics-Gm}. Furthermore, by 
Corollary~\xref{cor:F1-plus-F2}, $\Aut^0(W,F_S)$ contains $\Ga\times\Gm$.

Suppose first that $J_S=\Upsilon$. Consider the unipotent radical 
$\Ru(\Aut^0(W,F_S))$ of the solvable group $\Aut^0(W,F_S)\cong\Aut^0(V)\subset 
B(\GL_3(\CC))/\z(\GL_3(\CC))$ of rank 1, see Case \xref{thm:toric-case-iv} of
Proposition~\xref{thm:toric-case}.
By Lemma~\xref{lem:ineq-J}, in our case $\dim\Aut^0(W,F_S)\ge 4$.
Hence $\dim\Ru(\Aut^0(W,F_S))\ge 3$. The image of $\Ru(\Aut^0(W,F_S))$ in 
$\Aut(\Upsilon)=\PGL_2(\CC)$ is contained in the unipotent radical 
$\Ru(B(\PGL_2(\CC))\cong\Ga$ of a Borel subgroup.
The exact sequence~\eqref{eq:2nd-sequence-1} reads:
\begin{equation}
\label{eq:2nd-sequence-2} 
1\longrightarrow\Ru\rtimes\z(\Levi)\longrightarrow\Aut(W)\stackrel{\varrho}{
\longrightarrow}\Aut(\Upsilon)=\PGL_2(\CC)\longrightarrow 1.
\end{equation}
It follows that the kernel of $\varrho|_{\Ru(\Aut^0(W,F_S))}$ has dimension $\ge 
2$ and is contained in $\Ru$.
Therefore, $\dim(\Ru\cap\Aut^0(W,F_S))\ge 2$. The latter contradicts 
Corollary~\xref{cor:unipotent-actions}.

Thus, $(\Upsilon,J_S)$ is a pair of 
type~\xref{rem:touching-conics}\xref{lem:touching-conics-Gm}, 
$\varrho(\Aut^0(W,F_S))=\Aut^0(\Upsilon,J_S)\cong\Gm$, and 
$\ker(\varrho|_{\Aut^0(W,F_S)})\cong\Ga$, see 
Corollary~\xref{cor:unipotent-actions}.
Finally, one has $\Aut^0(V)\cong\Aut^0(W,F_S)\cong\Ga\times\Gm$.
\end{proof}

\begin{scorollary}
\label{cor:stable-cones}
Case~\xref{thm:toric-case-iv} of Proposition~\xref{thm:toric-case} does not 
occur.
\end{scorollary}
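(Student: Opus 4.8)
The plan is to argue by contradiction, exploiting two facts: by Corollary~\xref{cor:aut-orbits} there is, up to isomorphism, only one Fano-Mukai fourfold with $\Aut^0\cong\Ga\times\Gm$, and the number of cubic cones is an isomorphism invariant. The strategy is to show that this unique fourfold must have \emph{finitely} many cubic cones, whereas Case~\xref{thm:toric-case-iv} would endow it with infinitely many.

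First I would assume that Case~\xref{thm:toric-case-iv} occurs, so that $\Sigma_{\s}(V)=\mathcal{F}_1\cup\mathcal{F}_2$ is a union of the second kind. By Lemma~\xref{lem:F1-plus-F2-elimination} this forces $\Aut^0(V)\cong\Ga\times\Gm$; since $\rk\Aut^0(V)=1$ and $V$ carries an $\Aut^0(V)$-invariant cubic cone by Proposition~\xref{lem:fixed-pt}, Corollary~\xref{cor:aut-orbits} places $V$ in case~\xref{cor:aut-orbits-ii} and identifies it, up to isomorphism, with the unique fourfold $V_{18}^{\aaa}$. At the same time, being of the second kind, $\Sigma_{\s}(V)$ is reducible, so by Corollary~\xref{cor:finiteness}\xref{cor:finiteness-b} and Corollary~\xref{cor:family-of-rulings} the fourfold $V$ contains an infinite (one-parameter) family of cubic cones.

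The heart of the argument is to exhibit \emph{another} fourfold $V'$ with $\Aut^0(V')\cong\Ga\times\Gm$ but with only finitely many cubic cones. To this end I would fix a reductive Levi subgroup $\Levi\subset\Aut(W)$ and a smooth conic $J\subset\Xi$ with $(\Upsilon,J)$ of type~\xref{rem:touching-conics}\xref{lem:touching-conics-Ga} (such $J$ exists and is essentially unique by Lemma~\xref{lem:aut-pencils}\xref{lem:aut-pencils-b}), and take the $\z(\Levi)$-invariant scroll $F'\cong\FF_1$ with $F'\cap\Xi=J$ provided by Proposition~\xref{prop:center-inv-scroll}\xref{prop:center-inv-scroll-a}. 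The key point, which I expect to be the main obstacle, is to verify that $\Ru\cap\Aut^0(W,F')=\{1\}$: by Corollary~\xref{cor:unipotent-actions} this intersection is at most one-dimensional, and I would apply the criterion of Lemma~\xref{lem:unipotent-actions} to check, by a direct computation with the family of planes $H_\gamma$ attached to $F'$, that these planes do not share a common point, so that the intersection is trivial. Granting this, Corollary~\xref{cor:aut-center}\xref{cor:aut-center-a}--\xref{cor:aut-center-b} gives $\Aut^0(W,F')=G\cong\Ga\times\Gm$, and the Sarkisov link of Corollary~\xref{cor:diagr-2} produces a smooth Fano-Mukai fourfold $V'$ with a cubic cone $S'$ for which $\Aut^0(V',S')\cong\Aut^0(W,F')\cong\Ga\times\Gm$ and $(\Upsilon,J_{S'})$ is of type~\xref{rem:touching-conics}\xref{lem:touching-conics-Ga}.

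Finally I would pin down $\Aut^0(V')$ and close the loop. It contains $\Aut^0(V',S')\cong\Ga\times\Gm$, so by Proposition~\xref{thm:toric-case} it is $\GL_2(\CC)$ or $\Ga\times\Gm$ (it cannot be $(\Gm)^2$, a torus having no $\Ga\times\Gm$ subgroup). If it were $\GL_2(\CC)$, the orbit of $S'$ under $\Aut^0(V')$ would be two-dimensional, producing a two-parameter family of cubic cones and contradicting Lemma~\xref{lem:cubic-cones}; hence $\Aut^0(V')=\Aut^0(V',S')\cong\Ga\times\Gm$ and $S'$ is $\Aut^0(V')$-invariant. Thus $V'$ falls into Case~\xref{thm:toric-case-ii} or Case~\xref{thm:toric-case-iv}; but in Case~\xref{thm:toric-case-iv} every $\Aut^0(V')$-invariant cubic cone has $(\Upsilon,J)$ of type~\xref{rem:touching-conics}\xref{lem:touching-conics-Gm} (as in the proof of Lemma~\xref{lem:F1-plus-F2-elimination}), contradicting the type~\xref{rem:touching-conics}\xref{lem:touching-conics-Ga} of $S'$. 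Therefore $V'$ is in Case~\xref{thm:toric-case-ii}, so $\Sigma_{\s}(V')$ is irreducible and $V'$ has only finitely many cubic cones. Since $\Aut^0(V')\cong\Ga\times\Gm$, Corollary~\xref{cor:aut-orbits} gives $V'\cong V_{18}^{\aaa}\cong V$; as the number of cubic cones is an isomorphism invariant, this contradicts the fact that $V$ has infinitely many and $V'$ finitely many cubic cones. Hence Case~\xref{thm:toric-case-iv} cannot occur.
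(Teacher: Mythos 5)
Your architecture is genuinely different from the paper's and is mostly viable: the paper argues entirely inside $V$ (it uses~\eqref{eq:stabiliser} to show every cubic cone in Case~\xref{thm:toric-case-iv} is $\Aut^0(V)$-invariant, deduces that the $\Ga\times\Gm$-action preserves every ruling of each $\mathcal{F}_i$, so that one of the factors $\Ga$, $\Gm$ acts trivially on $\mathcal{F}_i$, contradicting the effectiveness statement of Lemma~\xref{lem:group-embedding}), whereas you play off the uniqueness of $V^{\aaa}_{18}$ from Corollary~\xref{cor:aut-orbits} against the isomorphism invariance of the number of cubic cones. However, there is a genuine gap at exactly the step you flag as the main obstacle: the claim $\Ru\cap\Aut^0(W,F')=\{1\}$ is never proved. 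You only announce that you ``would check,'' via the criterion of Lemma~\xref{lem:unipotent-actions}, that the planes $H_\gamma$ attached to $F'$ have no common point; but no such computation is set up, it would require explicit equations for the type-\xref{rem:touching-conics}\xref{lem:touching-conics-Ga} scroll $F'$ (which nothing in your text provides), and Corollary~\xref{cor:unipotent-actions} leaves open the alternative $\Ru\cap\Aut^0(W,F')\cong\Ga$. This is not an isolated lacuna: if that alternative held, then $\Aut^0(V',S')\cong\Ga\rtimes(\Ga\times\Gm)$ would have dimension $3$, the $\GL_2(\CC)$-orbit of $S'$ in your exclusion step would be only one-dimensional, and Lemma~\xref{lem:cubic-cones} would yield no contradiction — so your entire pinning-down of $\Aut^0(V')$ silently rests on the unverified claim. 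Note also that you cannot simply import Lemma~\xref{lem:exceptional-cases-c}, which asserts exactly this triviality: it is proved after the corollary and its proof invokes Lemma~\xref{lem:exceptional-cases-a}, whose proof uses Corollary~\xref{cor:stable-cones} itself, so that would be circular.

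The gap is repairable abstractly, without any computation, and you should close it this way. The group $\Aut^0(V',S')\cong\Aut^0(W,F')\cong\bigl(\Ru\cap\Aut^0(W,F')\bigr)\rtimes(\Ga\times\Gm)$ is solvable of rank $1$ and of dimension $2$ or $3$ by Corollaries~\xref{cor:aut-center} and~\xref{cor:unipotent-actions}. If $S'$ is $\Aut^0(V')$-invariant, then $\Aut^0(V')$ has rank $1$, hence $\Aut^0(V')\cong\Ga\times\Gm$ by Proposition~\xref{thm:toric-case} combined with Lemma~\xref{lem:F1-plus-F2-elimination} (which settles Case~\xref{thm:toric-case-iv} and precedes the corollary), forcing dimension $2$ and $\Ru\cap\Aut^0(W,F')=\{1\}$ after the fact. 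If $S'$ is not invariant, the orbit of its vertex is a curve, so $\dim\Aut^0(V')=\dim\Aut^0(V',S')+1\in\{3,4\}$; the same two results exclude dimension $3$ and force $\Aut^0(V')\cong\GL_2(\CC)$ with $\Aut^0(V',S')$ a connected three-dimensional subgroup of $\GL_2(\CC)$ — but such a subgroup is either $\SL_2(\CC)$ (not solvable) or a Borel subgroup $\cong\Ga\rtimes(\Gm)^2$ (rank $2$), both incompatible with a solvable group of rank $1$. Either way $S'$ is invariant, $\Aut^0(V')\cong\Ga\times\Gm$, and your endgame — type~\xref{rem:touching-conics}\xref{lem:touching-conics-Ga} of $J_{S'}$ excluding Case~\xref{thm:toric-case-iv} for $V'$ via Corollary~\xref{cor:F1-plus-F2}, then $V'\cong V^{\aaa}_{18}\cong V$ with finitely versus infinitely many cubic cones — goes through as you wrote it.
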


\begin{proof}
Assume to the contrary that $\Sigma_{\s}(V)$ is a union 
$\mathcal{F}_1\cup\mathcal{F}_2$ of the second kind.
By Lemma~\xref{lem:F1-plus-F2-elimination} we have $\Aut^0(V)\cong\Ga\times\Gm$. 
This implies that
any cubic cone $S$ in $V$ is $\Aut^0(V)$-invariant. Indeed,
notice that $\Aut(V,S)=\Aut(V,v(S))$ is the stabilizer in $\Aut(V)$ of the 
vertex $v(S)$ of $S$, see Proposition~\xref{lem:a-d}\xref{lem:a-d(d)}. If $S$ 
were not $\Aut^0(V)$-invariant, then the $\Aut^0(V)$-orbit of $v(S)$ would be a 
curve in $V$, see Lemma~\xref{lem:cubic-cones}. Hence the stabilizer of $S$ in 
$\Aut^0(V)$ would have codimension 1, so $\dim\Aut^0(V,S)=1$. The latter 
contradicts~\eqref{eq:stabiliser}. Thus, the cubic cones in $V$ are 
$\Aut^0(V)$-invariant.

It follows that the $\Aut^0(V)$-action on $\Sigma_{\s}(V)$ preserves any ruling 
$f_{i,t}$ of $\mathcal{F}_i\to\PP^1$, $i=1,2$, see 
Proposition~\xref{claim:ruling-in-scroll}\xref{claim:ruling-in-scroll-a}.
If the factor $\Gm$ of $\Aut^0(V)=\Ga\times\Gm$ acts non-trivially on a general 
ruling $f_{i,t}$ of $\mathcal{F}_i$, then its two fixed points must be fixed by 
the $\Ga$-subgroup. Anyway, at least one of the factors $\Ga$ and $\Gm$ of 
$\Aut^0(V)\cong\Ga\times\Gm$ acts trivially on $\mathcal{F}_i$. The latter 
contradicts Lemma~\xref{lem:group-embedding}.
\end{proof}

We can remove now an extra assumption in 
Lemma~\xref{lem:j=upsilon}\xref{lem:j=upsilon-iii}.

\begin{scorollary}
\label{cor:GL2-stable-cone}
Let $S$ be an $\Aut^0(V)$-invariant cubic cone in $V$, and $(W,F_S)$ be the pair 
linked to $(V,S)$. Let $J_S=F_S\cap\Xi$. Then
$\Aut^0(V)\cong\GL_2(\CC)$ if and only if $J_S=\Upsilon$.
\end{scorollary}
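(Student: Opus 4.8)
The statement to prove is Corollary~\ref{cor:GL2-stable-cone}: for an $\Aut^0(V)$-invariant cubic cone $S\subset V$ with linked pair $(W,F_S)$ and $J_S=F_S\cap\Xi$, one has $\Aut^0(V)\cong\GL_2(\CC)$ if and only if $J_S=\Upsilon$. This is a clean upgrade of Lemma~\ref{lem:j=upsilon}, the only issue being that Lemma~\ref{lem:j=upsilon}\ref{lem:j=upsilon-iii} carries the extra hypothesis that $F$ be invariant under a singular torus of $\Aut(W)$, and the implication there is stated for $\SL_2(\CC)$-actions. The plan is to remove that extra hypothesis using the fixed-point and cone analysis already established, and to pass freely between the isomorphic groups $\Aut^0(V,S)\cong\Aut^0(W,F_S)$ guaranteed by the linked-pair setup.

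The paper excerpt prepares all ingredients, so let me sketch the two directions.

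For the forward direction, suppose $\Aut^0(V)\cong\GL_2(\CC)$. Then $\Aut^0(V)\supset\SL_2(\CC)$, and since the Sarkisov link \eqref{diagram-2} is $\Aut^0(V)$-equivariant, the isomorphism $\Aut^0(V,S)\cong\Aut^0(W,F_S)$ transports this $\SL_2(\CC)$ into $\Aut(W,F_S)$. Thus condition~\ref{lem:j=upsilon-i} of Lemma~\ref{lem:j=upsilon} holds for $F_S$, and the chain of equivalences in that lemma yields condition~\ref{lem:j=upsilon-iii}, namely $J_S=\Upsilon$ (and moreover that $F_S$ is invariant under a singular torus). For the converse, suppose $J_S=\Upsilon$. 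By Lemma~\ref{lem:ineq-J}\ref{lem:ineq-J-a} one has $\dim\Aut^0(W,F_S)=\dim\Aut^0(V,S)\ge 4$. By Proposition~\ref{thm:toric-case}, $\Aut^0(V)$ is one of $\GL_2(\CC)$, $\Ga\times\Gm$, $(\Gm)^2$, and the last two are two-dimensional. Since $\dim\Aut^0(V)=\dim\Aut^0(V,S)\ge 4$ (the cone $S$ being $\Aut^0(V)$-invariant, its stabilizer is all of $\Aut^0(V)$), only $\Aut^0(V)\cong\GL_2(\CC)$ is possible. I would spell out the dimension count explicitly: $\dim(\Ga\times\Gm)=\dim((\Gm)^2)=2<4$.

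The genuinely delicate point is the converse, and specifically the equality $\dim\Aut^0(V,S)=\dim\Aut^0(V)$. This holds because $S$ is assumed $\Aut^0(V)$-invariant, so the $\Aut^0(V)$-orbit of $S$ in the Hilbert scheme $\SSS(V)$ is a single point and $\Aut^0(V,S)=\Aut^0(V)$; equivalently the stabilizer of the vertex $v(S)$ (see Proposition~\ref{lem:a-d}\ref{lem:a-d(d)}) is the whole identity component. I expect the main obstacle to be ensuring that the lower bound $\dim\Aut^0(W,F_S)\ge 4$ from Lemma~\ref{lem:ineq-J} transfers cleanly through the linked-pair isomorphism $\Aut^0(W,F_S)\cong\Aut^0(V,S)$: this isomorphism is precisely what Definition~\ref{nota:diagr-2} and Corollary~\ref{cor:aut-center} provide for linked pairs satisfying the hypotheses of Corollary~\ref{cor:diagr-2}, so I would first verify that $(W,F_S)$ and $(V,S)$ indeed satisfy those hypotheses (which they do, $S$ being a cubic cone). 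Once the dimension bound is in hand, the proof closes immediately by the trichotomy of Proposition~\ref{thm:toric-case}, with no further computation. The whole argument is short, essentially a bookkeeping combination of Lemmas~\ref{lem:j=upsilon} and~\ref{lem:ineq-J} with Proposition~\ref{thm:toric-case}, the new content being only that the auxiliary singular-torus hypothesis of Lemma~\ref{lem:j=upsilon} is now automatic.
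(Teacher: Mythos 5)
Your proof is correct and follows essentially the same route as the paper: the ``only if'' direction is exactly the paper's (transport $\SL_2(\CC)\subset\Aut^0(V)=\Aut^0(V,S)\cong\Aut^0(W,F_S)$ through the equivariant link and apply Lemma~\xref{lem:j=upsilon}, whose implication \xref{lem:j=upsilon-i}$\Rightarrow$\xref{lem:j=upsilon-iii} gives $J_S=\Upsilon$), and your converse via the dimension bound of Lemma~\xref{lem:ineq-J}\xref{lem:ineq-J-a} is a legitimate and natural filling of the paper's terse citation. One caveat: you attribute the trichotomy ``$\Aut^0(V)$ is one of $\GL_2(\CC)$, $\Ga\times\Gm$, $(\Gm)^2$'' to Proposition~\xref{thm:toric-case} alone, but that proposition's Case~\xref{thm:toric-case-iv} only bounds $\Aut(V)$ inside $B(\PGL_3(\CC))\rtimes(\ZZ/2\ZZ)$, so a priori $\Aut^0(V)$ there could be a solvable group of dimension up to $5$, and your count ``$\dim\ge 4$ forces $\GL_2(\CC)$'' would not close. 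This is why the paper's proof cites Corollary~\xref{cor:stable-cones} (which eliminates Case~\xref{thm:toric-case-iv}, via Lemma~\xref{lem:F1-plus-F2-elimination}) alongside Proposition~\xref{thm:toric-case}; both are established before this corollary, so adding that one citation makes your argument complete.
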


\begin{proof}
The ``only if'' part follows from Lemma~\xref{lem:j=upsilon}.
The ``if'' part follows from Proposition~\xref{thm:toric-case} and 
Corollary~\xref{cor:stable-cones}.
\end{proof}

In the next lemma we examine the cubic cones in $V$ which are not 
$\Aut^0(V)$-invariant.

\begin{lemma}
\label{lem:exceptional-cases-a}
Let $S\subset V$ be a cubic cone, and let $(W,F_S)$ be the pair linked to 
$(V,S)$. Let $J_S=F_S\cap\Xi$.
Assume that $S$ is
not $\Aut^0(V)$-invariant.
Then $\Aut^0(V)\cong\GL_2(\CC)$ and $(\Upsilon,J_S)$ is a pair of 
type~\xref{rem:touching-conics}\xref{lem:touching-conics-Gm}, see 
Figure~\xref{fig-touching-conics}. Furthermore,
\begin{equation}
\label{eq:dim-3}
\Aut^0(V,S)\cong\Aut^0(W,F_S)\cong\Ga\rtimes (\Gm)^2
\cong (\Ga\times\Gm)\rtimes\Gm
,
\end{equation}
where $\Ga=\Ru\cap\Aut^0(W,F_S)$,
a non-abelian subgroup $\Ga\rtimes\Gm\subset\Aut^0(V,S)$
acts on $F_S$ preserving the rulings,
and the $\Gm$-subgroup centralizing the $\Ga$-subgroup acts effectively on 
$J_S$.
\end{lemma}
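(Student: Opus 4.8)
The plan is to use the trichotomy for $\Aut^0(V)$ already established (Proposition~\ref{thm:toric-case} together with Corollary~\ref{cor:stable-cones}), compute $\dim\Aut^0(V,S)$ by an orbit count, and then read off the type of $(\Upsilon,J_S)$ and the group structure from the classification of $\Aut^0(W,F_S)$ in terms of the conic $J_S=F_S\cap\Xi$ given by Corollaries~\ref{cor:aut-center} and~\ref{cor:unipotent-actions}. First I would pin down $\Aut^0(V)$. Since $S$ is not $\Aut^0(V)$-invariant, Corollary~\ref{cor:family-of-rulings} shows that $V$ carries infinitely many cubic cones, so $\Sigma_{\s}(V)$ is reducible; by Proposition~\ref{prop:dP-6} this means $\Sigma_{\s}(V)=\mathcal{F}_1\cup\mathcal{F}_2$, and by Corollary~\ref{cor:stable-cones} this pair is of the first kind. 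By Case~\ref{thm:toric-case-i} of Proposition~\ref{thm:toric-case} this forces $\Aut^0(V)\cong\GL_2(\CC)$.

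Next I would determine the dimension of the stabilizer. The $\Aut^0(V)$-orbit of $S$ lies inside the family of cubic cones of $V$, which by Lemma~\ref{lem:cubic-cones} is at most one-dimensional; since $S$ is not fixed, this orbit is exactly one-dimensional, so $\dim\Aut^0(V,S)=\dim\GL_2(\CC)-1=3$. Because the Sarkisov link~\eqref{diagram-2} is equivariant, $\Aut^0(V,S)\cong\Aut^0(W,F_S)$, and therefore $\dim\Aut^0(W,F_S)=3$.

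Now I would identify the type of $(\Upsilon,J_S)$. Here $J_S$ is a smooth conic, so Corollary~\ref{cor:unipotent-actions} gives $\dim(\Ru\cap\Aut^0(W,F_S))\le 1$, and by Corollary~\ref{cor:aut-center}(a) we may write $\Aut^0(W,F_S)=U\rtimes G$ with $U=\Ru\cap\Aut^0(W,F_S)$. By Corollary~\ref{cor:aut-center}(b) one has $\dim G=4,2,2$ according to whether $(\Upsilon,J_S)$ is of type~\ref{lem:touching-conics-Gl2}, \ref{lem:touching-conics-Ga}, \ref{lem:touching-conics-Gm} in Remark~\ref{rem:touching-conics}. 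Type~\ref{lem:touching-conics-Gl2} gives $\dim\Aut^0(W,F_S)\ge 4$, contradicting the previous step. For the other two types $\dim G=2$, whence $\dim U=1$ and $U\cong\Ga$. Type~\ref{lem:touching-conics-Ga} would give $G\cong\Ga\times\Gm$, so $\Aut^0(W,F_S)\cong(\Ga)^2\rtimes\Gm$ would have a two-dimensional unipotent radical; but a connected algebraic subgroup of $\GL_2(\CC)\cong\Aut^0(V)$ has unipotent radical of dimension at most $1$, a contradiction. Hence $(\Upsilon,J_S)$ is of type~\ref{lem:touching-conics-Gm}, $G\cong(\Gm)^2$, and $\Aut^0(V,S)\cong\Aut^0(W,F_S)\cong\Ga\rtimes(\Gm)^2$.

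Finally I would pin down the roles of the factors. By Corollary~\ref{cor:aut-center}(c) the group $U\cong\Ga$ acts fiberwise on $F_S\to\PP^1$ (preserving rulings) and trivially on $J_S$; the torus $G=(\Gm)^2$ contains $\z(\Levi)$, which by Proposition~\ref{proposition-GL2-action-c}(d) fixes $\Xi\supset J_S$ pointwise while preserving each ruling. Together $U$ and $\z(\Levi)$ generate the non-abelian subgroup $\Ga\rtimes\Gm$ acting on $F_S$ by preserving rulings (and acting on each ruling $\cong\PP^1$ through the Borel of $\PGL_2(\CC)$ fixing the point on $J_S$). Since $G$ normalizes the line $U\subset\Ru\cong M_3$ and, by Remark~\ref{rem:Mostow} and Lemma~\ref{lem:GaxGm}, the weights of $G$ on $M_3$ are nonzero and pairwise distinct, the one-dimensional subtorus of $G$ acting trivially on $U$ — that is, centralizing $\Ga$ — is not contained in $\z(\Levi)$, hence maps isomorphically up to isogeny onto $G/\z(\Levi)\cong\Aut^0(\Xi,\Upsilon,J_S)\cong\Gm$ and so acts effectively on $J_S$ (cf.\ Lemma~\ref{lem:aut-pencils}). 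This yields $\Ga\rtimes(\Gm)^2\cong(\Ga\times\Gm)\rtimes\Gm$ with the asserted properties. The hard part will be exactly this last bookkeeping inside $G\cong(\Gm)^2$: distinguishing the subtorus that centralizes the unipotent $\Ga$ from $\z(\Levi)$ and confirming it acts effectively on $J_S$, which is where the explicit weight description of the $\Levi$-action on $\Ru\cong M_3$ is needed; the earlier steps are essentially dimension counting and invocation of the already-established structure theory.
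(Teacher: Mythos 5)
Your outline reproduces the paper's architecture almost exactly: reduction to Case~\xref{thm:toric-case-i} of Proposition~\xref{thm:toric-case} via Corollary~\xref{cor:stable-cones} (your route through Corollary~\xref{cor:family-of-rulings} is the same contrapositive the paper compresses into a citation), the orbit count giving $\dim\Aut^0(V,S)=3$, exclusion of types \xref{rem:touching-conics}\xref{lem:touching-conics-Gl2} and \xref{rem:touching-conics}\xref{lem:touching-conics-Ga}, and the structure of $\Aut^0(W,F_S)$ read off from Corollary~\xref{cor:aut-center}. But there is one genuine gap: Corollary~\xref{cor:aut-center} is stated only in the setup of Proposition~\xref{prop:center-inv-scroll}, i.e.\ for quintic scrolls invariant under a singular torus $\z(\Levi)$, and for a non-invariant cone $S$ nothing established up to this point guarantees that $F_S$ is $\z(\Levi)$-invariant. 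So your step ``by Corollary~\xref{cor:aut-center}\xref{cor:aut-center-a} we may write $\Aut^0(W,F_S)=U\rtimes G$'', and with it the table $\dim G=4,2,2$ by type, is not yet licensed (your citation of Corollary~\xref{cor:unipotent-actions} is fine, since that statement carries no torus hypothesis). The paper closes precisely this hole before invoking Corollary~\xref{cor:aut-center}: since the $\Aut^0(V)$-orbit of $S$ is the complete curve $\PP^1$ of rulings of a scroll $\mathcal{F}_i$, the $3$-dimensional stabilizer $\Aut^0(V,S)$ is parabolic, hence a Borel subgroup $B$ of $\GL_2(\CC)$; $B$ contains a $2$-torus, and by Lemma~\xref{lem:GaxGm} any $(\Gm)^2$ in $\Aut(W)$ contains the center $\z(\Levi)$ of a Levi subgroup, whence $F_S$ is $\z(\Levi)$-invariant. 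Alternatively, within your framework: a connected $3$-dimensional subgroup of $\GL_2(\CC)$ is $\SL_2(\CC)$ or a Borel, and $\SL_2(\CC)$ is excluded by Lemma~\xref{lem:j=upsilon}, which would force $\dim\Aut(W,F_S)=4$. With this inserted, your argument goes through, since your final paragraph already presumes a Levi $\Levi$ with $\z(\Levi)\subset G$.

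Your two local deviations are both sound and worth keeping. The exclusion of type \xref{rem:touching-conics}\xref{lem:touching-conics-Ga} via the unipotent radical --- $\Ga\rtimes(\Ga\times\Gm)$ has a $2$-dimensional unipotent radical, impossible in $\GL_2(\CC)$ --- is a valid alternative to the paper's argument, which instead notes that the rank-$2$ torus of $B$ cannot act trivially on $J_S$, so its image in $\Aut^0(\Xi,\Upsilon,J_S)$ contains a torus, ruling out the $\Ga$ of that type; similarly, type \xref{rem:touching-conics}\xref{lem:touching-conics-Gl2} can be excluded by Lemma~\xref{lem:ineq-J} alone ($\dim\ge 4$ when $J_S=\Upsilon$), independently of Corollary~\xref{cor:aut-center}. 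For the effectiveness claim the paper merely says the remaining assertions are immediate; your weight computation is the right substance, but ``isomorphic up to isogeny onto $\Gm$'' only yields finite kernel. To finish: the kernel of the action of $C=(\ker\chi)^0$ on $J_S$ equals $C\cap\z(\Levi)$, and since the four weights $\lambda^{-3},\lambda^{-2}\mu^{-1},\lambda^{-1}\mu^{-2},\mu^{-3}$ of the diagonal torus on $M_3$ all restrict to $\lambda^{-3}$ on $\z(\Levi)$, this intersection is at most the $\mumu_3$ of scalar matrices, which acts trivially already in $\Aut(W)$ by Lemma~\xref{lemma-GL2-action-1}, so $C$ acts effectively on $J_S$ as claimed.
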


\begin{proof}
By Proposition~\xref{thm:toric-case} and Corollary~\xref{cor:stable-cones} one 
has $\Sigma_{\s}(V)=\mathcal{F}_1\cup\mathcal{F}_2$, where 
$(\mathcal{F}_1,\mathcal{F}_2)$ is a pair of the first kind, and 
$\Aut^0(V)\cong\GL_2(\CC)$.
By Lemma~\xref{lem:fixed-pt-unique} there are exactly two $\Aut^0(V)$-invariant 
cubic cones $S_1,S_2$ in $V$; these correspond to the exceptional sections of 
$\mathcal{F}_1$ and $\mathcal{F}_2$.
Under the $\Aut^0(V)$-action, the cone $S\neq S_1, S_2$
varies in a one-parameter family of cubic cones, which correspond to the rulings 
of a scroll $\mathcal{F}_i$, $i\in\{1,2\}$.
It follows that $\dim\Aut^0(V,S)=\dim\Aut^0(V)-1=3$. By 
Corollary~\xref{cor:GL2-stable-cone}, $J_S\neq\Upsilon$. Since 
$\Aut^0(V)/\Aut^0(V,S)\cong\PP^1$, then $\Aut^0(V,S)=:B$ is a Borel subgroup of 
$\Aut^0(V)\cong\GL_2(\CC)$. Hence~\eqref{eq:dim-3} holds, and 
$\Aut^0(W,F_S)\cong B$ contains a singular torus of $\Aut(W)$.

The maximal torus of $\Aut^0(W,F_S)$ acts on $J_S$ non-identically with exactly 
two fixed points. It follows by 
Lemma~\xref{lem:aut-pencils}\xref{lem:aut-pencils-a} that $J_S\neq\Upsilon$ is 
of type~\xref{rem:touching-conics}\xref{lem:touching-conics-Gm}, and
the $\Ga$-subgroup of $\Aut^0(W,F_S)$ acts identically on $J_S$. Hence it 
preserves each ruling of $F_S$.
In the notation of Corollary~\xref{cor:aut-center} one has 
$\Ru\cap\Aut^0(W,F_S)\cong\Ga$ and
$G\cong (\Gm)^2$. Now the remaining assertions are immediate.
\end{proof}

\begin{lemma}
\label{lem:exceptional-cases-c}
Let $S\subset V$ be a cubic cone, let $(W,F_S)$ be the pair linked to $(V,S)$, 
and let $J_S=F_S\cap\Xi$.
Assume that $(\Upsilon,J_S)$ is of 
type~\xref{rem:touching-conics}\xref{lem:touching-conics-Ga} and $F_S$ is 
$Z$-invariant, where $Z\subset\Aut(W)$ is a singular torus. Then
the cone $S$ is $\Aut^0(V)$-invariant, $\Ru\cap\Aut^0(W,F_S)=\{1\}$, and 
$\Aut^0(V)\cong\Ga\times\Gm$.
\end{lemma}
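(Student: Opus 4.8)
The plan is to run everything through the linked-pair machinery of~\ref{nota:diagr-2} and reduce the whole statement to the single claim that the unipotent part of $\Aut^0(W,F_S)$ is trivial. Since $F_S$ is invariant under the singular torus $Z$, Remark~\ref{rem:Mostow}.\ref{rem:Mostow-2} identifies $Z=\z(\Levi)$ for a unique reductive Levi subgroup $\Levi\subset\Aut(W)$, so the hypotheses of~\ref{nota:diagr-2} hold. Writing $U=\Ru\cap\Aut^0(W,F_S)$, I would first invoke Corollary~\ref{cor:aut-center}\ref{cor:aut-center-a},\ref{cor:aut-center-b} to obtain
\[
\Aut^0(V,S)\cong\Aut^0(W,F_S)=U\rtimes G,\qquad G\cong\Ga\times\Gm,
\]
where the shape $G\cong\Ga\times\Gm$ is forced by the standing assumption that $J_S$ is of type~\ref{rem:touching-conics}\ref{lem:touching-conics-Ga}. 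Thus all three conclusions will follow once $U=\{1\}$ is established.

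The crux, and the step I expect to be the main obstacle, is precisely showing $U=\{1\}$; I would argue by contradiction. If $U\neq\{1\}$, then Lemma~\ref{lem:disjoint-cones}\ref{nota:diagr-2-c} produces a one-parameter family of cubic cones in $V$ disjoint from $S$, so $V$ carries infinitely many cubic cones. By Corollary~\ref{cor:family-of-rulings} the surface $\Sigma_{\s}(V)$ is then reducible, hence $\Sigma_{\s}(V)=\mathcal{F}_1\cup\mathcal{F}_2$ by Proposition~\ref{prop:dP-6}; combining Proposition~\ref{thm:toric-case} with the elimination of Case~\ref{thm:toric-case-iv} in Corollary~\ref{cor:stable-cones}, this must be Case~\ref{thm:toric-case-i}, so $\Aut^0(V)\cong\GL_2(\CC)$. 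But in that situation the type of $J_S$ is completely pinned down: if $S$ is $\Aut^0(V)$-invariant then $J_S=\Upsilon$ by Corollary~\ref{cor:GL2-stable-cone}, while if $S$ is not $\Aut^0(V)$-invariant then $(\Upsilon,J_S)$ is of type~\ref{rem:touching-conics}\ref{lem:touching-conics-Gm} by Lemma~\ref{lem:exceptional-cases-a}. Both alternatives contradict the hypothesis that $(\Upsilon,J_S)$ is of type~\ref{rem:touching-conics}\ref{lem:touching-conics-Ga}. Hence $U=\{1\}$, which is the second assertion, and consequently $\Aut^0(V,S)\cong\Aut^0(W,F_S)=G\cong\Ga\times\Gm$ has dimension $2$ (consistent with the lower bound $\dim\Aut^0(W,F_S)\ge 2$ of Lemma~\ref{lem:ineq-J}\ref{lem:ineq-J-a}, as $J_S\neq\Upsilon$).

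Finally I would identify $\Aut^0(V)$ and deduce the invariance of $S$. By Proposition~\ref{thm:toric-case}, $\Aut^0(V)$ is one of $\GL_2(\CC)$, $\Ga\times\Gm$, $(\Gm)^2$. The case $\GL_2(\CC)$ is excluded by the very argument just given (it would force $J_S$ to be of type~\ref{rem:touching-conics}\ref{lem:touching-conics-Gl2} or~\ref{rem:touching-conics}\ref{lem:touching-conics-Gm}). The case $(\Gm)^2$ is excluded because $\Aut^0(V,S)$ is a connected subgroup of $\Aut^0(V)$ of the same dimension $2$, hence would equal $\Aut^0(V)$, contradicting $\Ga\times\Gm\not\cong(\Gm)^2$. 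Therefore $\Aut^0(V)\cong\Ga\times\Gm$; and since $\Aut^0(V,S)\subseteq\Aut^0(V)$ is a connected subgroup with $\dim\Aut^0(V,S)=2=\dim\Aut^0(V)$, it follows that $\Aut^0(V,S)=\Aut^0(V)$, i.e.\ $S$ is $\Aut^0(V)$-invariant. The only routine checks left are these dimension equalities and the inclusion $\Aut^0(V,S)\subseteq\Aut^0(V)$, which are immediate from connectedness.
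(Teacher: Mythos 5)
Your proposal is correct, and it reaches the conclusions by a genuinely different route than the paper. The paper's own proof is a compact rank argument: from Corollary~\xref{cor:aut-center} it gets $\Aut^0(V,S)\cong\Aut^0(W,F_S)\cong(\Ru\cap\Aut^0(W,F_S))\rtimes(\Ga\times\Gm)$, hence $\rk\Aut^0(V,S)=1$; if $S$ is $\Aut^0(V)$-invariant, then $\Aut^0(V)=\Aut^0(V,S)$ has rank $1$, so $\Aut^0(V)\cong\Ga\times\Gm$ by Proposition~\xref{thm:toric-case} and $\Ru\cap\Aut^0(W,F_S)=\{1\}$ follows by a dimension count; if $S$ is not invariant, then $\dim\Aut^0(V)=\dim\Aut^0(V,S)+1\ge 3$ forces $\Aut^0(V)\cong\GL_2(\CC)$, and Lemma~\xref{lem:exceptional-cases-a} then gives $\rk\Aut^0(V,S)=2$, contradicting rank $1$. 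You instead attack $U=\Ru\cap\Aut^0(W,F_S)$ first, through the geometry of cubic cones: $U\neq\{1\}$ yields a one-parameter family of cubic cones (Lemma~\xref{lem:disjoint-cones}\xref{nota:diagr-2-c}), hence $\Sigma_{\s}(V)$ is reducible (Corollary~\xref{cor:family-of-rulings}, Proposition~\xref{prop:dP-6}), hence one is in Case~\xref{thm:toric-case-i} after Corollary~\xref{cor:stable-cones}, so $\Aut^0(V)\cong\GL_2(\CC)$ --- which is incompatible with $(\Upsilon,J_S)$ being of type~\xref{rem:touching-conics}\xref{lem:touching-conics-Ga}, by Corollary~\xref{cor:GL2-stable-cone} in the invariant case and by the $J_S$-type conclusion of Lemma~\xref{lem:exceptional-cases-a} in the non-invariant case. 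You then pin down $\Aut^0(V)$ among the three candidates of Proposition~\xref{thm:toric-case} and obtain the invariance of $S$ from the equality of dimensions of closed connected subgroups; these final steps are sound. So both proofs lean on Proposition~\xref{thm:toric-case} and Lemma~\xref{lem:exceptional-cases-a}, but you use the latter's statement about the type of $J_S$ where the paper uses its statement about the rank of $\Aut^0(V,S)$, and you invoke the cone-counting machinery (Lemma~\xref{lem:disjoint-cones}, Corollary~\xref{cor:family-of-rulings}, Proposition~\xref{prop:dP-6}) that the paper's proof avoids entirely. What the paper's route buys is brevity; what yours buys is that the contradiction is located directly at the unipotent part $U$, making transparent how all three conclusions hang on the single equality $U=\{1\}$.
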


\begin{proof}
Due to 
Corollary~\xref{cor:aut-center}\xref{cor:aut-center-a}-\xref{cor:aut-center-b} 
and its proof one has
\begin{equation}
\label{eq:aut-dim-3}
\Aut^0(V,S)\cong\Aut^0(W,F_S)\cong (\Ru\cap\Aut^0(W,F_S))\rtimes (\Ga\times\Gm),
\end{equation}
where the $\Gm$-subgroup $Z$ preserves the rulings of $F_S$ and the 
$\Ga$-subgroup centralized by the $\Gm$-subgroup acts effectively on $J_S$. In 
particular, $\rk\Aut^0(V,S)=1$.

If the cone $S$ is $\Aut^0(V)$-invariant, then $\rk\Aut^0(V)=1$, and so, 
$\Aut^0(V)\cong\Ga\times\Gm$ by Proposition~\xref{thm:toric-case}. Hence 
$\Ru\cap\Aut^0(W,F_S)=\{1\}$ due to~\eqref{eq:aut-dim-3}.

Otherwise, $\dim\Aut^0(V)=\dim\Aut^0(V,S)+1\ge 3$. Hence by 
Proposition~\xref{thm:toric-case} $\Aut^0(V)=\GL_2(\CC)$. So, 
Lemma~\xref{lem:exceptional-cases-a} applies to the pair $(V,S)$. According to 
this lemma, $\rk\Aut^0(V,S)=2$ contrary to~\eqref{eq:aut-dim-3}.
\end{proof}

\begin{mdefinition}
\label{ss:special-FM-4folds}
We use the notation $V^{\s}_{18}$ and $V^{\aaa}_{18}$ introduced 
in~\xref{nota:aut-orbits}. Let as before $\mathfrak{g}_2=\Lie(\G)$. Recall that 
$V^{\s}_{18}=V^{g_{\s}}$ ($V^{\aaa}_{18}=V^{g_{\aaa}}$, respectively) for a 
singular semisimple element $g_{\s}\in\mathfrak{g}_2$ (a regular non-semisimple 
element $g_{\aaa}\in\mathfrak{g}_2$, respectively). Due to 
Proposition~\xref{prop:orbits}\xref{prop:orbits-b}, such nonzero elements 
$g_{\s}$ and $g_{\aaa}$ do exist, and their images $[g_{\s}]$ and $[g_{\aaa}]$ 
in $\PP(\mathfrak{g}_2)$ form two distinct orbits of the induced 
$\Ad(\G)$-action on $\PP(\mathfrak{g}_2)$. Therefore, the Fano-Mukai fourfolds 
$V^{\s}_{18}$ and $V^{\aaa}_{18}$ of genus 10 do exist and are unique up to 
isomorphism. By Corollary~\xref{cor:aut-orbits} one has 
$\Aut^0(V^{\s}_{18})\cong\GL_2(\CC)$ and 
$\Aut^0(V^{\aaa}_{18})\supset\Ga\times\Gm$. Moreover, the following hold.
\end{mdefinition}

\begin{slemma}
\label{cor:case-ii}
Assume that $V\cong V^{\aaa}_{18}$. Then $\Aut^0(V)=\Ga\times\Gm$, and any cubic 
cone $S\subset V$ is $\Aut^0(V)$-invariant. Let $(W,F_S)$ be the pair linked to 
$(V,S)$. Then, for a suitable choice of $S$, $J_S=F_S\cap\Xi$ is of 
type~\xref{rem:touching-conics}\xref{lem:touching-conics-Ga}, and $F_S$ is 
invariant under a singular torus $\z(\Levi)\subset\Aut(W)$.
\end{slemma}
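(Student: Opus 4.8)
The plan is to establish the three assertions of Lemma~\xref{cor:case-ii} in turn, starting from the already-known facts that $V\cong V^{\aaa}_{18}$ satisfies $\Aut^0(V)\supset\Ga\times\Gm$ and that $V^{\aaa}_{18}=V^{g_{\aaa}}$ for a regular non-semisimple element $g_{\aaa}\in\mathfrak{g}_2$. First I would pin down the identity component: by Proposition~\xref{prop:orbits}\xref{prop:orbits-b} the stabilizer $\Stab_{\G}(g_{\aaa}+g_{\n})^0\cong\Ga\times\Gm$, and by Lemma~\xref{lem:stabilizers-in-Omega} this stabilizer acts effectively on $V$, so it embeds into $\Aut^0(V)$. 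On the other hand Proposition~\xref{thm:toric-case} together with Corollary~\xref{cor:stable-cones} forces $\Aut^0(V)$ to be one of $\GL_2(\CC)$, $\Ga\times\Gm$, or $(\Gm)^2$. Since $\Aut^0(V)$ contains the non-abelian group $\Ga\times\Gm$, it can neither be the torus $(\Gm)^2$ nor, by rank and the classification of subgroups, strictly contain $\Ga\times\Gm$ unless it were $\GL_2(\CC)$; but the latter is excluded because $V\not\cong V^{\s}_{18}$ (the two orbits $[g_{\s}]$ and $[g_{\aaa}]$ being distinct, see~\xref{ss:special-FM-4folds}) and Corollary~\xref{cor:aut-orbits}\xref{cor:aut-orbits-i} would then apply. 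Hence $\Aut^0(V)\cong\Ga\times\Gm$.

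Next I would prove that every cubic cone $S\subset V$ is $\Aut^0(V)$-invariant. Since $\Aut^0(V)\cong\Ga\times\Gm$ has rank one, its corresponding $\Sigma_{\s}(V)$ is, by Proposition~\xref{thm:toric-case}, a del Pezzo sextic of type $A_1$ (Case~\xref{thm:toric-case-ii}), which is irreducible. By Corollary~\xref{cor:family-of-rulings} the set of cubic cones in $V$ is then finite, and every such cone is $\Aut^0(V)$-invariant. This is the cleanest route: once irreducibility of $\Sigma_{\s}(V)$ is secured, invariance of all cubic cones is immediate, avoiding any delicate orbit-dimension count.

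It remains to exhibit a cone $S$ with the prescribed tangency type and the singular-torus invariance of $F_S$. For this I would argue by elimination among the admissible types of the pair $(\Upsilon,J_S)$ from Remark~\xref{rem:touching-conics}. For any $\Aut^0(V)$-invariant cubic cone $S$, the isomorphism $\Aut^0(V,S)\cong\Aut^0(W,F_S)$ and $\Aut^0(V,S)=\Aut^0(V)\cong\Ga\times\Gm$ pin the stabilizer down to $\Ga\times\Gm$; in particular $J_S\neq\Upsilon$, since $J_S=\Upsilon$ would give $\Aut^0(V)\cong\GL_2(\CC)$ by Corollary~\xref{cor:GL2-stable-cone}. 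Thus $(\Upsilon,J_S)$ is of type~\xref{rem:touching-conics}\xref{lem:touching-conics-Ga} or~\xref{rem:touching-conics}\xref{lem:touching-conics-Gm}. Now $\Aut^0(W,F_S)\cong\Ga\times\Gm$ maps under $\varrho$ of~\eqref{eq:2nd-sequence-1} to $\Aut^0(\Xi,\Upsilon,J_S)$; comparing with the table of Lemma~\xref{lem:aut-pencils}\xref{lem:aut-pencils-a} and using Corollary~\xref{cor:aut-center}\xref{cor:aut-center-b}, type~\xref{rem:touching-conics}\xref{lem:touching-conics-Gm} would force the $\Ga$-factor to come from $\Ru\cap\Aut^0(W,F_S)$ and $G\cong(\Gm)^2$, yielding rank two, a contradiction; so a cone $S$ of type~\xref{rem:touching-conics}\xref{lem:touching-conics-Ga} must occur among the finitely many cubic cones, with $\Ru\cap\Aut^0(W,F_S)=\{1\}$, exactly as in Lemma~\xref{lem:exceptional-cases-c}. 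For that $S$ the group $G=G(\Levi,J_S)\cong\Ga\times\Gm$ whose torus $\z(\Levi)$ is a singular torus stabilizes $F_S$, by Proposition~\xref{prop:center-inv-scroll}\xref{prop:center-inv-scroll-c}.

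The main obstacle I anticipate is the final step: ensuring that a cone of type~\xref{rem:touching-conics}\xref{lem:touching-conics-Ga} genuinely appears, rather than merely ruling out the alternatives abstractly. The delicate point is that the $A_1$-diagram~\eqref{eq:A1} has four $(-1)$-vertices, and I must match the two \emph{outer} vertices with the type~\xref{rem:touching-conics}\xref{lem:touching-conics-Ga} cones and the two inner ones with type~\xref{rem:touching-conics}\xref{lem:touching-conics-Gm} (cf.\ the footnote to Theorem~\xref{cor:final}); this requires reading off the tangency type of $J_S$ from the position of the corresponding line in $\Sigma_{\s}(V)\cong X_{C_1}$ via the correspondence of Section~\xref{sec:del-Pezzo}, and checking that the stabilizer computations of Lemma~\xref{lem:centralizers}\xref{lem:centralizers-iv} are consistent with Lemma~\xref{lem:exceptional-cases-c} for the chosen vertex. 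Invoking Lemma~\xref{lem:exceptional-cases-c} directly, once the type is identified, should close the argument.
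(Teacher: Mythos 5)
Your proposal contains two genuine gaps, precisely at the points where the paper's own proof takes a different, constructive route. The first is in your determination of $\Aut^0(V)$: to exclude $\Aut^0(V)\cong\GL_2(\CC)$ you argue that $V\not\cong V^{\s}_{18}$ because the orbits $[g_{\s}]$ and $[g_{\aaa}]$ are distinct. But at this stage of the paper, distinctness of the $\Ad(\G)$-orbits does not imply that the abstract fourfolds are non-isomorphic: an isomorphism $V^{g_{\aaa}}\cong V^{g_{\s}}$ need not be induced by an element of $\G$ (that $\Aut^0(V^g)$ comes from $\G$ is exactly Theorem~\xref{thm:G/P}, whose proof rests on Theorem~\xref{cor:final}, which in turn uses the present lemma --- so your argument is circular). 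Before Lemma~\xref{cor:case-ii} all that is known is $\Aut^0(V^{\aaa}_{18})\supset\Ga\times\Gm$, see~\xref{ss:special-FM-4folds}; since $\GL_2(\CC)$ also contains $\Ga\times\Gm$ (its center times a unipotent subgroup), this containment does not rule out $\GL_2(\CC)$, and Corollary~\xref{cor:aut-orbits}\xref{cor:aut-orbits-i} only gives the implication ``$g$ singular semisimple $\Rightarrow\GL_2(\CC)$'', not the converse you need. (A small slip besides: $\Ga\times\Gm$ is abelian; your exclusion of $(\Gm)^2$ survives only because a torus contains no $\Ga$-subgroup.)

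The second gap is the elimination step meant to produce a cone of type~\xref{rem:touching-conics}\xref{lem:touching-conics-Ga}. Corollary~\xref{cor:aut-center}\xref{cor:aut-center-b}, which you use to get a ``rank two'' contradiction in type~\xref{rem:touching-conics}\xref{lem:touching-conics-Gm}, presupposes that $F_S$ is $\z(\Levi)$-invariant (it concerns scrolls as in Proposition~\xref{prop:center-inv-scroll}\xref{prop:center-inv-scroll-a}); without that hypothesis there is no contradiction, and in fact $\Aut^0(V)$-invariant cones of type~\xref{rem:touching-conics}\xref{lem:touching-conics-Gm} genuinely exist in $V^{\aaa}_{18}$ --- they are the two cones with a common ruling, for which $\Ru\cap\Aut^0(W,F')\cong\Ga$ and the $\Gm$-factor acts nontrivially on $J'$, see Remark~\xref{rem:two-linked-actions} and the footnote to Theorem~\xref{cor:final}\xref{cor:final-ii}. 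So your elimination proves too much, and at the same time never produces a type~\xref{rem:touching-conics}\xref{lem:touching-conics-Ga} cone nor the singular-torus invariance of its scroll: the appeal to Proposition~\xref{prop:center-inv-scroll}\xref{prop:center-inv-scroll-c} presupposes that invariance, and Lemma~\xref{lem:exceptional-cases-c} runs in the opposite direction (it assumes the type plus the singular-torus data and deduces conclusions about $V$). The paper escapes both difficulties by reversing the construction: fix a type~\xref{rem:touching-conics}\xref{lem:touching-conics-Ga} conic $J'$ and a Levi subgroup $\Levi$, obtain a $\z(\Levi)$-invariant scroll $F'$ with $F'\cap\Xi=J'$ from Proposition~\xref{prop:center-inv-scroll}\xref{prop:center-inv-scroll-a}, pass to the linked pair $(V',S')$, and apply Lemma~\xref{lem:exceptional-cases-c} to conclude that $S'$ is invariant and $\Aut^0(V')\cong\Ga\times\Gm$; then Theorem~\xref{thm:Mukai} and the rank-one criterion of Corollary~\xref{cor:aut-orbits} identify $V'\cong V^{\aaa}_{18}$. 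This single construction simultaneously proves $\Aut^0(V^{\aaa}_{18})\cong\Ga\times\Gm$ (making your first step unnecessary) and exhibits the required cone $S$; only your middle observation --- that in the $A_1$-sextic case the finitely many cubic cones are all $\Aut^0(V)$-invariant --- is correct as stated and matches the paper's closing paragraph.
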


\begin{proof}
Fix a smooth conic $J'\subset\Xi$ of 
type~\xref{rem:touching-conics}\xref{lem:touching-conics-Ga} and a Levi subgroup 
$\Levi$ of $\Aut(W)$. By 
Proposition~\xref{prop:center-inv-scroll}\xref{prop:center-inv-scroll-a} there 
exists a $\z(\Levi)$-invariant quintic scroll $F'\subset R$ such that 
$J'=F'\cap\Xi$. Let $(V',S')$ be the pair linked to $(W,F')$. Then by 
Lemma~\xref{lem:exceptional-cases-c}
the cone $S'$ is $\Aut^0(V')$-invariant, and $\Aut^0(V')\cong\Ga\times\Gm$.

By Theorem~\xref{thm:Mukai}, $V'=V^g_{18}$ for some $g\in\mathfrak{g}_2$. By 
Corollary~\xref{cor:aut-orbits}, $g$ is regular non-semisimple. Hence $V'\cong 
V^{\aaa}_{18}\cong V$, and so, $\Aut(V)\cong\Aut(V')\cong\Ga\times\Gm$. We 
identify $V$ and $V'$ via this isomorphism.

By Proposition~\xref{thm:toric-case},
$\Sigma_{\s}(V)$ is irreducible and contains exactly $4$ lines. Therefore, $V$ 
contains exactly $4$ cubic cones, which are all $\Aut^0(V)$-invariant. By 
construction, the cone $S\subset V$ which corresponds to $S'\subset V'$ 
satisfies the conditions of the lemma.
\end{proof}

\begin{mdefinition}\emph{Proof of Theorem~\xref{cor:final}.}
\label{proof:thm-cor:final}
Corollary~\xref{cor:GL2-stable-cone} yields the equivalence
\begin{equation*}
J_S=\Upsilon\Longleftrightarrow\Aut^0(V)\cong\GL_2(\CC).
\end{equation*}
If $V\cong V^{\s}_{18}$, then $\Aut^0(V)\cong\GL_2(\CC)$ by Corollary 
\xref{cor:aut-orbits}\xref{cor:aut-orbits-i}. Conversely, suppose that 
$\Aut^0(V)\cong\GL_2(\CC)$. By Theorem~\xref{thm:Mukai}, $V=V^g_{18}$ for some 
$g\in\mathfrak{g}_2$, where $g$ is singular semisimple, see Proposition 
\xref{prop:regular-centralizers}\xref{prop:regular-centralizers-a}--\xref{prop:regular-centralizers-c}. 
Hence $V\cong V^{\s}_{18}$, see Proposition 
\xref{prop:orbits}. This yields the equivalence
\begin{equation}
\label{eq:GL2-as}
\Aut^0(V)\cong\GL_2(\CC)\quad\Longleftrightarrow\quad V\cong V^{\s}_{18}.
\end{equation}
Combining with Proposition~\xref{thm:toric-case} this ends the proof 
of~\xref{cor:final}\xref{cor:final-i}.

Suppose further that $\Aut^0(V)\cong\Ga\times\Gm$, and let $V=V^g_{18}$ for 
some $g\in\mathfrak{g}_2$. By Proposition~\xref{prop:regular-centralizers} and 
Corollary~\xref{cor:aut-orbits}, $g$ is regular non-semisimple, that is, 
$V\cong 
V^{\aaa}_{18}$. By Lemma~\xref{cor:case-ii} this gives the equivalence
\begin{equation}
\label{eq:Ga-Gm}\Aut^0(V)\cong\Ga\times\Gm\quad\Longleftrightarrow\quad V\cong 
V^{\aaa}_{18}.
\end{equation}
From~\eqref{eq:GL2-as} and~\eqref{eq:Ga-Gm} we deduce
\begin{equation*}
\Aut^0(V)\cong(\Gm)^2\quad\Longleftrightarrow\quad V\not\cong 
V^{\s}_{18},\,V^{\aaa}_{18}.
\end{equation*}

If $\Aut^0(V)\cong(\Gm)^2$, then
the pair $(\Upsilon, J_S)$ is of 
type~\xref{rem:touching-conics}\xref{lem:touching-conics-Gm} for any cubic cone 
$S\subset V$. Thus, if for some $\Aut^0(V)$-invariant cubic cone $S\subset V$ 
the corresponding pair $(\Upsilon, J_S)$ is of 
type~\xref{rem:touching-conics}\xref{lem:touching-conics-Ga}, then 
$\rk\Aut^0(V)=1$ and $\Aut^0(V)\cong\Ga\times\Gm$. By Lemma~\xref{cor:case-ii} 
we have the converse implication, and so, the equivalence
\begin{equation*}
\Aut^0(V)\cong\Ga\times\Gm\quad\Longleftrightarrow\quad (\Upsilon, 
J_S)\,\,\,\mbox{is of 
type~\xref{rem:touching-conics}\xref{lem:touching-conics-Ga}}\,\,\,\mbox{for a 
cubic cone}\,\,\, S\subset V .
\end{equation*}
Together with Proposition~\xref{thm:toric-case} and 
Corollary~\xref{cor:stable-cones} this 
proves~\xref{cor:final}\xref{cor:final-ii}.

From~\xref{cor:final}\xref{cor:final-i}--\xref{cor:final}\xref{cor:final-ii} we 
deduce the equivalence
\begin{equation*}
\Aut^0(V)\cong(\Gm)^2\quad\Longleftrightarrow\quad (\Upsilon,J_S)\,\,\,\mbox{is 
of type~\xref{rem:touching-conics}\xref{lem:touching-conics-Gm} for any cubic 
cone}\,\,\, S\subset V.
\end{equation*}
Due to Proposition~\xref{thm:toric-case} and Corollary~\xref{cor:stable-cones} 
this completes the proof of~\xref{cor:final}\xref{cor:final-iii}.
\qed
\end{mdefinition}

\begin{sremark}
\label{rem-deform-space}
By virtue of 
Proposition~\xref{prop:center-inv-scroll}\xref{prop:center-inv-scroll-b}, for a 
conic $J\subset\Xi$ touching $\Upsilon$ with even multiplicities and a Levi 
subgroup $\Levi$ of $\Aut(W)$ there is a unique $\z(\Levi)$-invariant scroll 
$F\subset R$ with $J=F\cap\Xi$ if $J=\Upsilon$, and exactly two distinct such 
scrolls otherwise. Since the singular tori in $\Aut(W)$ are conjugated under 
the 
$\Ru$-action, the quintic scrolls $F\subset R$ with given $J=F\cap R$ are 
equivalent under the $\Ru$-action on $W$ up to passing to the ``conjugate'' 
scroll in the case $J\neq\Upsilon$. The variety $\mathcal{F}_J$ of such rational 
quintic scrolls is isomorphic to $\CC^4$ if $J=\Upsilon$. For $J$ of 
type~\xref{rem:touching-conics}\xref{lem:touching-conics-Ga}, $\mathcal{F}_J$ 
consists of two disjoint components isomorphic to $\CC^4$. For $J$ of 
type~\xref{rem:touching-conics}\xref{lem:touching-conics-Gm}, $\mathcal{F}_J$ 
has two disjoint components isomorphic to $\CC^4$, and then eventually also 
some 
number of lower-dimensional components. Anyway, the automorphism $\tilde\kappa$ 
as in Proposition~\xref{prop:center-inv-scroll}~\xref{prop:center-inv-scroll-d} 
interchanges these two $\CC^4$-components. Cf.\ also 
Lemma~\xref{lem:ineq-J}\xref{lem:ineq-J-b}.
\end{sremark}

\section{Proofs of the main theorems and beyond}
\label{sec:proofs}
In this section we prove Theorems~\xref{thm:main} --~\xref{thm:main-aut-n} 
from the Introduction. Besides, Remark~\xref{cor:moduli} and 
Theorem~\xref{thm:main-aut} complement the main results.

\begin{mdefinition}\emph{Proof of Theorem~\xref{thm:main}.} By 
Proposition~\xref{lem:fixed-pt}, $V$ contains two distinct $\Aut^0(V)$-invariant 
cubic cones
$S_i\in\SSS_i(V)$, $i=1,2$.
To the pair $(V,S_i)$ there corresponds an $\Aut^0(V)$-equivariant Sarkisov link
\eqref{diagram-2}. Now
Theorem~\xref{thm:main} follows immediately from Corollary
\xref{rem:interrompu}.
\qed
\end{mdefinition}

\begin{mdefinition}
{\emph{Proof of Theorem~\xref{thm:G/P}}.}
By Proposition~\xref{prop:orbits} and Theorem~\xref{cor:final}, for any 
$g\in\mathfrak{g}_2$ such that $[g]\notin D_{\ol}$ one has 
$\Aut^0(V^g)=\Stab_{\G}(g)^0$. Now the result follows.
\qed
\end{mdefinition}

From the proof we deduce the following corollary (cf.~\xref{sit:citations} 
below).

\begin{scorollary}
\label{cor:induced-action}
For each $g\in\mathfrak{g}_2$ such that $[g]\notin D_{\ol}$ the group 
$\Aut^0(V^g)$ is the identity component of the stabilizer of $V^g$ in 
$\Aut^0(\Omega)\cong\G$.
\end{scorollary}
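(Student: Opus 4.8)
The plan is to combine the identity $\Aut^0(V^g)=\Stab_{\G}(g)^0$ already established in the proof of Theorem~\ref{thm:G/P} with a comparison between the stabilizer in $\G$ of the \emph{vector} $g\in\mathfrak{g}_2$ and the stabilizer of the \emph{projective point} $[g]\in\PP(\mathfrak{g}_2)$. Recall that $\G$ acts on $\PP(\mathfrak{g}_2)=\PP^{13}$ through the adjoint representation, identified with the coadjoint one via the Killing form (see~\ref{nota:g-notation}), and that this action preserves $\Omega$; since $\G$ has trivial center and $\Omega=\G/P$ is $\G$-homogeneous, one has $\Aut^0(\Omega)\cong\G$ as in the statement.

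First I would identify the stabilizer of $V^g$ inside $\Aut^0(\Omega)$. The smooth section $V^g=\Omega\cap\PP(g^\bot)$ is a Fano-Mukai fourfold, hence linearly nondegenerate in its span $\langle V^g\rangle=\PP(g^\bot)\cong\PP^{12}$. Therefore an element $h\in\G=\Aut^0(\Omega)$ satisfies $h(V^g)=V^g$ if and only if $h$ preserves the hyperplane $\PP(g^\bot)$, i.e.\ $\Ad(h)(g^\bot)=g^\bot$; and by $\Ad$-invariance of the Killing form this is equivalent to $\Ad(h)(\CC g)=\CC g$. Thus
\[
\Stab_{\Aut^0(\Omega)}(V^g)=\Stab_{\G}([g]).
\]

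The remaining point is the equality $\Stab_{\G}([g])^0=\Stab_{\G}(g)^0$. One always has the inclusion $\Stab_{\G}(g)\subset\Stab_{\G}([g])$, the smaller group being the kernel of the character $h\mapsto\lambda(h)$ determined by $\Ad(h)g=\lambda(h)g$; so the two groups differ in dimension by at most one. To see that their dimensions in fact coincide, I would invoke~\ref{subregular}: the adjoint orbit $\Ad(\G).g$ is conic exactly when $g$ is nilpotent, and for a non-nilpotent (hence non-conic) orbit the projection to $\PP(\mathfrak{g}_2)$ preserves the orbit dimension. For $[g]\notin D_{\ol}$ the element $g$ always has a nonzero semisimple part by Proposition~\ref{prop:orbits} (it is regular semisimple, or the subregular semisimple $g_{\s}$, or the mixed regular element $g_{\s}+g_{\n}$), so $g$ is not nilpotent. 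Consequently $\dim\Ad(\G).g=\dim\Ad(\G).[g]$, and the orbit--stabilizer formula gives $\dim\Stab_{\G}(g)=\dim\Stab_{\G}([g])$; together with the inclusion above this yields $\Stab_{\G}(g)^0=\Stab_{\G}([g])^0$.

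Putting the pieces together, $\Aut^0(V^g)=\Stab_{\G}(g)^0=\Stab_{\G}([g])^0=\bigl(\Stab_{\Aut^0(\Omega)}(V^g)\bigr)^0$, which is the assertion. The only delicate step is the equality of the two stabilizer dimensions, that is, ruling out a one-parameter subgroup of $\G$ rescaling $g$ nontrivially; this is precisely the content of the non-nilpotency of $g$ via~\ref{subregular}, and is exactly where the hypothesis $[g]\notin D_{\ol}$ enters.
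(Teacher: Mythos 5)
Your proposal is correct and takes essentially the same route as the paper: the paper deduces the corollary directly from the identity $\Aut^0(V^g)=\Stab_{\G}(g)^0$ established in the proof of Theorem~\xref{thm:G/P}, treating the remaining identifications as immediate. The two steps you supply --- that $\Stab_{\Aut^0(\Omega)}(V^g)=\Stab_{\G}([g])$ by nondegeneracy of $V^g$ in $\PP(g^\bot)$ together with the $\Ad$-invariance of the Killing form, and that $\Stab_{\G}([g])^0=\Stab_{\G}(g)^0$ because $g$ is non-nilpotent so its orbit is not conic (\xref{subregular}) --- are precisely the details the paper leaves implicit, and you carry them out correctly.
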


\begin{mdefinition}
{\emph{Proof of Theorem~\xref{thm:main-aut-n}}.}
The existence and the uniqueness in 
Theorem~\xref{thm:main-aut-n}\xref{thm:main-aut-n-GL2} 
and~\xref{thm:main-aut-n-Ga} follow from~\xref{ss:special-FM-4folds} by virtue 
of Theorem~\xref{cor:final}. Since $\Aut^0(V_{18}^{\s})\cong\GL_2(\CC)$ comes 
from a Levi subgroup $\Levi\subset\Aut(W)$, and $\Levi$ acts on $W$ with a 
principal open orbit, see Proposition \ref{proposition-GL2-action}(c), then the 
latter holds as well for the induced $\GL_2(\CC)$-action on $V_{18}^{\s}$. 
As for the description of the fixed points in~\xref{thm:main-aut-n-GL2} see Theorem \ref{thm:main-aut}\xref{prop:vertices-a} below. 
Inclusions~\eqref{eq:aut-Gm-2} follow from Proposition~\xref{thm:toric-case} and 
Theorem~\xref{cor:final}. Let us show~\eqref{eq:aut-GL2} 
and~\eqref{eq:aut-Ga-Gm}.

Suppose that $\Aut^0(V)\cong\GL_2(\CC)$, that is, $V\cong V_{18}^{\s}$. We claim 
that there is an exact sequence
\begin{equation}
\label{eq:decomposition}
1\longrightarrow\Aut^0(V)\longrightarrow\Aut(V)\longrightarrow\ZZ/2\ZZ\longrightarrow 0.
\end{equation}
Indeed, let $(S_1,S_2)$
be the unique pair of $\Aut^0(V)$-invariant cubic cones, see 
Lemma~\xref{lem:fixed-pt-unique}.
By Corollary~\xref{cor:unique-GL2} there exists an isomorphism $\tau_V: 
(V,S_1)\stackrel{\cong}{\longrightarrow}(V,S_2)$.
Due to the uniqueness of the pair $(S_1,S_2)$ one has $\tau_V(S_2)=S_1$. 
Therefore, $\tau_V^2\in\Aut(V,S_1)$. Since the group 
$\Aut(V,S_1)\cong\GL_2(\CC)$ is connected, one obtains $\tau_V^2\in\Aut^0(V)$. 
Moreover, $\Aut^0(V)$ and $\tau_V$ generate
$\Aut(V)$. This proves our claim.

Thus, $\Aut(V)/\Aut^0(V)\cong\ZZ/2\ZZ$. It follows that the embedding 
$\Aut(V)\hookrightarrow\GL_2(\CC)\rtimes (\ZZ/2\ZZ)$ in Case \xref{thm:toric-case-i} of 
Proposition~\xref{thm:toric-case} is an isomorphism. 
The 
last assertion of Theorem~\xref{thm:main-aut-n}\xref{thm:main-aut-n-GL2} 
is straightforward.

Similarly, we claim that~\eqref{eq:decomposition} still holds in the case where $V\cong 
V^{\aaa}_{18}$, and so,~\eqref{eq:aut-Ga-Gm} follows by Case \xref{thm:toric-case-ii} of
Proposition~\xref{thm:toric-case}.

Indeed, among the four cubic cones contained in $V$, there is a unique pair of 
disjoint cones. Namely, these are the cubic cones $S_1,S_2$ which do not contain 
the ruling corresponding to the singular point of $\Sigma_{\s}(V)$, see 
diagram~\eqref{eq:A1}.

Consider the pair $(W,F_2)$ linked to $(V,S_2)$. The image of $S_1$ in $W$ is a 
cubic cone $S_{1,W}$, which meets $F_2$ along a twisted cubic section $\Psi$. 
The construction being $\Aut^0(V)$-equivariant, the image of 
$\Aut^0(V)\cong\Ga\times\Gm$ in $\Aut(W)$ is contained in the stabilizer of the 
vertex $v(S_{1,W})\in W\setminus R$.

The latter stabilizer is a Levi subgroup, say, $\Levi_2\subset\Aut(W)$, see 
Proposition~\xref{proposition-GL2-action}\xref{proposition-GL2-action-cc}. The 
$\Ga$-subgroup of $\Aut^0(V)$ cannot preserve the rulings of $S_{1,W}$ and 
$F_2$, respectively. Indeed, otherwise on such a ruling $l$ it would have two 
fixed points $l\cap\Psi$ and $v(S_{1,W})$ ($l\cap\Psi$ and $l\cap\Xi$, 
respectively). In particular, $\Aut^0(V)$ would act identically on $S_1$, which 
is impossible. It follows that
the $\Gm$-subgroup of $\Aut^0(V)$ corresponds to the singular torus 
$\z(\Levi_2)$ fixing $\Psi$ and $J_2:=F_2\cap\Xi$ pointwise and preserving the 
rulings of both $S_{1,W}$ and $F_2$. By contrast, the $\Ga$-subgroup of 
$\Aut^0(W,F_2)\cong\Aut^0(V)$ acts effectively on $J_2$. Hence $(\Upsilon,J_2)$ 
is a pair of type~\xref{rem:touching-conics}\xref{lem:touching-conics-Ga}.

Symmetrically, for the pair $(W,F_1)$ linked to $(V,S_1)$ the $\Gm$-subgroup of
$\Aut^0(W,F_1)\cong\Ga\times\Gm$ acts trivially on $J_1=F_1\cap\Xi$, while the 
$\Ga$-subgroup acts effectively. So, $(\Upsilon,J_1)$ is as well a pair of 
type~\xref{rem:touching-conics}\xref{lem:touching-conics-Ga}.

We claim that the pairs $(W,F_1)$ and $(W,F_2)$ are isomorphic. Indeed, the 
corresponding Levi subgroups $\Levi_1$ and $\Levi_2$ are conjugate in $\Aut(W)$. 
Hence, up to an automorphism of $W$, one may suppose that 
$\Levi_1=\Levi_2=:\Levi$. By 
Lemma~\xref{lem:aut-pencils}\xref{lem:aut-pencils-b} the pairs $(\Upsilon, J_1)$ 
and $(\Upsilon, J_2)$ of 
type~\xref{rem:touching-conics}\xref{lem:touching-conics-Ga} are isomorphic 
under the $\Aut(\Xi,\Upsilon)$-action. This isomorphism can be realized by an 
element of $\Levi$. Hence one may suppose also that $J_1=J_2=:J$. By virtue of 
Proposition~\xref{prop:center-inv-scroll}\xref{prop:center-inv-scroll-b} 
and~\xref{prop:center-inv-scroll-d}, under these assumptions there is an 
isomorphism of pairs $(W,F_1)\cong (W,F_2)$, as claimed.

The latter isomorphism induces an isomorphism of linked pairs $\tau_V: 
(V,S_1)\stackrel{\cong}{\longrightarrow} (V,S_2)$. Since $\{S_1,S_2\}$ is the 
only pair of disjoint cubic cones in $V$, one has $\tau_V^2\in\Aut(V,S_i)$, 
$i=1,2$. Since $\Aut^0(V)\subset\Aut(V,S_i)$ and $[\Aut(V):\Aut^0(V)]\le 2$, see 
Proposition~\xref{thm:toric-case}.\xref{thm:toric-case-ii}, we 
deduce~\eqref{eq:decomposition}, as desired.
\qed
\end{mdefinition}

\begin{sremark}
\label{rem:two-linked-actions}
Let again $V\cong V^{\aaa}_{18}$, and let $\{S_1',S_2'\}$ be the pair of cubic 
cones in $V$ with a common ruling, which corresponds to the unique singular 
point of $\Sigma_{\s}(V)$, cf.\ diagram~\eqref{eq:A1}. For the corresponding 
linked pairs $(W,F_j')$, $j=1,2$, one has 
$\Aut^0(W,F_j')\cong\Aut^0(V,S_j')\cong\Ga\times\Gm$. The argument in the proof 
above shows that the $\Gm$-subgroup of $\Aut^0(W,F_j')$ acts nontrivially on 
$J'_j=F_j'\cap\Xi$, while the $\Ga$-subgroup acts identically on $J'_j$. It 
follows that $\Ru\cap\Aut^0(W,F_j')\cong\Ga$, and $(\Upsilon, J_j')$ is a pair 
of type~\xref{rem:touching-conics}\xref{lem:touching-conics-Gm} for $j=1,2$.
\end{sremark}

\begin{remark}
\label{cor:moduli}
It is a folklore that
the moduli space $\mathcal{M}_{18}$ of the Fano-Mukai fourfolds of genus $10$ is 
one-dimensional.
Indeed, this can be seen as follows.

Identify $\G$ with its image in 
$\Aut(\PP(\mathfrak{g}_2^\vee))\cong\PGL_{14}(\CC)$ under the dual of the 
adjoint representation. The open set 
$\mathcal{U}=\PP(\mathfrak{g}_2^\vee)\setminus (D_{\ol}\cup D_{\s})$ is 
$\G$-invariant. Each point $[g]\in\mathcal{U}$ corresponds to the hyperplane 
section $V^g=\Omega\cap g^\bot$; the latter is a Fano-Mukai fourfolds of genus 
$10$ with $\Aut^0(V^g)\cong (\Gm)^2$, see Theorem 
\xref{thm:main-aut-n}\xref{thm:main-aut-n-Gm}. Then $\mathcal{M}_{18}$ is 
dominated by the one-dimensional quotient $\mathcal{U}/\G$, see 
Proposition~\xref{prop:orbits}. The fiber of $\mathcal{U}\to\mathcal{U}/\G$ 
through a point $[g]\in\mathcal{U}$ is isomorphic to $\G/\Stab_{\G}(V^g)$. By 
Corollary~\xref{cor:induced-action} one has
\begin{equation*}
\Aut^0(V^g)=\Stab_{\G}(V^g)^0\subset\Stab_{\G}(V^g)\subset\Aut(V^g).
\end{equation*}
The fiber of the morphism $\mathcal{U}/\G\to\mathcal{M}_{18}$ though the image 
of $[g]$ in $\mathcal{U}/\G$ is isomorphic to $\Aut(V^g)/\Stab_{\G}(V^g)$. The 
latter is a cyclic group whose order is a factor of $6$, 
see~\eqref{eq:aut-Gm-2}. Hence $\mathcal{U}/\G\to\mathcal{M}_{18}$ is a finite 
morphism, and so, $\dim\mathcal{M}_{18}=1$.

Notice, however, that the moduli space $\mathcal{M}_{18}$ does not exist as a 
separated scheme. Indeed, the points
$[V^{\s}_{18}]$ and $[V^{\aaa}_{18}]$ do not admit disjoint neighborhoods in 
$\mathcal{M}_{18}$, as follows from Proposition~\xref{prop:orbits}.
\end{remark}

In addition to Theorem~\xref{thm:main-aut-n}\xref{thm:main-aut-n-GL2} we have 
the following results. Item \xref{prop:vertices-e} will be used in the next section.

\begin{theorem}
\label{thm:main-aut}
Let $V=V_{18}^{\s}$ be the Fano-Mukai fourfold 
as in Theorem~\xref{thm:main-aut-n}\xref{thm:main-aut-n-GL2} and let 
$S_1,\, S_2 \subset V$ be the unique $\Aut^0(V)$-invariant cubic cones \textup(see 
Lemma~\xref{lem:fixed-pt-unique}\textup). 
Let $A_i=A_{S_i}$,\, $i\in\{1,2\}$ be the unique $\Aut^0(V)$-invariant
hyperplane section with $\Sing(A_i)=S_i$. 
Then the following hold.
\begin{enumerate}
\renewcommand\labelenumi{\rm (\alph{enumi})}
\renewcommand\theenumi{\rm (\alph{enumi})}
\item
\label{prop:vertices-b}
For $i\neq j$, $A_j$ cuts $S_i$ along a rational twisted cubic curve $\Gamma_j\subset S_i$. The curves $\Gamma_1$ and $\Gamma_2$ are disjoint, the union $\Gamma_1\cup\Gamma_2$ is an $\Aut(V)$-orbit contained in $A_1\cap 
A_2$ and pointwise fixed under the $\z(\Aut^0(V))$-action.

\item
\label{prop:vertices-c}
For $j=1,2$ consider the family $(S_{j,t})_{t\in\PP^1}$ of cubic cones on $V$
such that $\Lambda(S_{j,t})$ is a ruling of $\mathcal{F}_j\to\PP^1$
\textup(see Corollary~\xref{cor:finiteness}\xref{cor:finiteness-b}\textup).
Then each point of $\Gamma_j$ is the vertex $v_{j,t}=v(S_{j,t})$,
and for $j\neq i$ one has $A_i=\bigcup_{t\in\PP^1} S_{j,t}$. 

\item
\label{prop:vertices-f} The twisted cubics
$\Gamma_1$ and $\Gamma_2$ are sections of a rational normal scroll $D$ of degree $6$ which is the image of $\PP^1\times \PP^1$ embedded to $\PP^7\subset \PP^{12}$ by the 
linear system of bidegree $(1,3)$. One has $(A_1\cap A_2)_{\rm red}=D$.
\item
\label{prop:vertices-a} 
The vertices $v(S_i)$, $i=1,2$, are the unique
$\GL_2(\CC)$-fixed points in $V$. Furthermore, $D\setminus (\Gamma_1\cup\Gamma_2)$ is an orbit of 
$\GL_2(\CC)$.
\item

\label{prop:vertices-g}
$A_1$ and $A_2$ are two components of the branching divisor $\mathcal{B}$ of the 
morphism $s:\LLL(V)\to V$ in~\eqref{equation-universal-family-V}.

\item
\label{prop:vertices-e}
The Fano fourfold $V$ is covered by the affine charts 
\begin{equation}\label{eq:affine-charts} U_i=V\setminus A_i\cong\CC^4\quad\mbox{and}\quad U_{i,t}=V\setminus A_{i,t}\cong\CC^4,\quad t\in\PP^1,\,\,\, i=1,2\,,\end{equation} 
where $A_{i,t}=A_{S_{i,t}}$ is the unique
hyperplane section of $V$ with $\Sing(A_{i,t})=S_{i,t}$.
\end{enumerate}
\end{theorem}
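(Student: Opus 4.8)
The plan is to carry out the whole analysis on the $\GL_2(\CC)$-equivariant Sarkisov link \eqref{diagram-2} attached to an invariant cone $S_i$, for which $J_{S_i}=\Upsilon$ by Theorem~\ref{cor:final}\ref{cor:final-i}, and to feed it with the cohomology relations $[S_1]^2=[S_2]^2=1$, $[S_1]\cdot [S_2]=0$ of Proposition~\ref{lem:cohomology}. By Lemma~\ref{lem:fixed-pt-unique} the cones $S_1,S_2$ are unique and disjoint, so the intrinsic vertices $v(S_1),v(S_2)$ are $\GL_2(\CC)$-fixed; this already yields the existence part of~\ref{prop:vertices-a}. For~\ref{prop:vertices-b} I would observe that $\Gamma_j:=A_j\cap S_i$ is a hyperplane section of the cubic cone $S_i\subset\langle S_i\rangle\cong\PP^4$, hence a twisted cubic or a union of rulings, and it is $\Aut^0(V)$-invariant. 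Since $\PGL_2(\CC)$ acts transitively on the base $\PP^1$ of $S_i$ while $\z(\Aut^0(V))\cong\Gm$ acts along the rulings, an elementary orbit analysis shows that the only proper $\GL_2(\CC)$-invariant curve in $S_i$ that is not a union of rulings is the $\z(\Aut^0(V))$-fixed section, a twisted cubic avoiding $v(S_i)$; as a hyperplane section cannot equal $S_i$, this forces $\Gamma_j$ to be that section. Disjointness of $\Gamma_1\subset S_2$ and $\Gamma_2\subset S_1$ then follows from $S_1\cap S_2=\emptyset$, the inclusion $\Gamma_1\cup\Gamma_2\subset A_1\cap A_2$ from $\Gamma_j\subset A_j$ and $\Gamma_j\subset S_i\subset A_i$, and the $\Aut(V)$-orbit assertion from transitivity of $\PGL_2(\CC)$ on each $\Gamma_j$ together with the fact that the generator of $\ZZ/2\ZZ$ in $\Aut(V)=\GL_2(\CC)\rtimes(\ZZ/2\ZZ)$ interchanges $S_1,S_2$, hence $\Gamma_1,\Gamma_2$.

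For~\ref{prop:vertices-c} I would use Corollary~\ref{cor:finiteness}\ref{cor:finiteness-b}: each $S_{j,t}$ shares a ruling with $S_i$, so by Corollary~\ref{cor:intersection-of-cones} its vertex lies on $S_i$, and since every ruling of $S_{j,t}$ passes through that vertex it meets $S_i$; thus $S_{j,t}\subset A_{S_i}=A_i$ by Lemma~\ref{lemma--SS}. Irreducibility of $A_i$ and the dimension count then give $A_i=\bigcup_t S_{j,t}$. As $S_{j,t}$ and $S_j$ lie in the same component and meet (intersection number $1$) along a ruling of $S_{j,t}$ through its vertex, that vertex lies on a line meeting $S_j$, so $v(S_{j,t})\in A_j\cap S_i=\Gamma_j$; since the vertices are pairwise distinct (Proposition~\ref{lem:a-d}) the nonconstant map $t\mapsto v(S_{j,t})\colon\PP^1\to\Gamma_j$ is onto.

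For~\ref{prop:vertices-f} I would build $D$ as the scroll swept by the lines joining $\GL_2(\CC)$-corresponding points of the two $\z$-fixed twisted cubics $\Gamma_1,\Gamma_2$; as $\langle\Gamma_1\rangle$ and $\langle\Gamma_2\rangle$ are disjoint three-spaces their join is $\PP^7$, and $D\cong\PP^1\times\PP^1$ embedded by $\OOO(1,3)$, with $\Gamma_1,\Gamma_2$ the two distinguished cubic sections and the joining lines as rulings. The equality $(A_1\cap A_2)_\red=D$ I would then get by a degree–multiplicity count: the codimension-two linear section $A_1\cap A_2$ has degree $18$ and contains $D$ of degree $6$; because $A_1$ is singular along $S_1\supset\Gamma_2$ and $A_2$ along $S_2\supset\Gamma_1$, the two hyperplane sections meet with multiplicity $3$ along $D$, so $3\cdot 6=18$ leaves room for no other component. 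Next, for~\ref{prop:vertices-a} and~\ref{prop:vertices-g}: transporting to $W$, the Levi subgroup $\GL_2(\CC)$ has a unique fixed point $Q\in W\setminus R$ (Proposition~\ref{proposition-GL2-action}\ref{proposition-GL2-action-cc}) and none on $R$, since $\PGL_2(\CC)$ acts without fixed points on $\Xi$, on $\Upsilon$, and on the invariant curve $\Psi$; pulling back through the equivariant resolution $\tilde W$ identifies $v(S_1)$ (over $Q$) and $v(S_2)$ (the vertex contracted by $\xi$) as the only $\GL_2(\CC)$-fixed points of $V$, and the action on $D\cong\PP^1\times\PP^1$ (transitive via $\PGL_2(\CC)$ on the ruling factor, via $\Gm$ on the complementary $\PP^1$ minus the two marked points) shows $D\setminus(\Gamma_1\cup\Gamma_2)$ is a single orbit. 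Item~\ref{prop:vertices-g} is immediate: $A_i=\bigcup_t S_{j,t}$ is a union of cubic cones, each contained in $\mathcal B$ by Lemma~\ref{lem:cubic-cones}, so the irreducible divisors $A_1,A_2$ are two distinct components of $\mathcal B$.

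Finally, for~\ref{prop:vertices-e}: Corollary~\ref{rem:interrompu} gives $V\setminus A_S\cong\CC^4$ for every cubic cone, so all the charts are affine cells; $U_1,U_2$ already cover $V\setminus(A_1\cap A_2)=V\setminus D$, and $v(S_1)\in U_2$, $v(S_2)\in U_1$ by~\ref{prop:vertices-b}. The remaining points form the open orbit $D\setminus(\Gamma_1\cup\Gamma_2)$ and the two curves $\Gamma_1,\Gamma_2$, each a single $\GL_2(\CC)$-orbit; since each family $\bigcup_t U_{i,t}$ is $\GL_2(\CC)$-invariant it either swallows an orbit or misses it entirely, so it suffices to check that the base locus $\bigcap_t A_{i,t}$ of the moving family does not contain these orbits. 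This last verification, together with the scheme-theoretic identification of $A_1\cap A_2$ with $3D$ and the fixed-point count in~\ref{prop:vertices-a}, is the part I expect to be the main obstacle: it is exactly where one must leave the purely combinatorial bookkeeping of cones and rulings and argue with the explicit $\GL_2(\CC)$-geometry on $W$ and on the equivariant resolution $\tilde W$.
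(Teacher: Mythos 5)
Your outline for items (b), (c), (g) and the fixed-point count in (a) follows essentially the same route as the paper (disjointness of $S_1,S_2$ forces $v(S_i)\notin A_j$, the vertices of the family $S_{j,t}$ fill the invariant curve $\Gamma_j$, and $A_i=\bigcup_t S_{j,t}$), and those parts are sound. But there are two genuine gaps. The first is in (f): your justification of the multiplicity-$3$ claim is wrong. The multiplicity of the $2$-cycle $A_1\cdot A_2$ along the surface $D$ is computed at the \emph{generic} point of $D$, and there both $A_1$ and $A_2$ are smooth: $\Sing(A_1)=S_1$ meets $D$ only along the curve $\Gamma_2$, and $\Sing(A_2)=S_2$ only along $\Gamma_1$. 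Singularities of $A_1,A_2$ along curves sitting inside $D$ contribute nothing to the generic multiplicity along $D$, so "because $A_1$ is singular along $S_1\supset\Gamma_2$ \dots the sections meet with multiplicity $3$ along $D$" is a non sequitur. The multiplicity $3$ really comes from a third-order tangency of $A_1$ and $A_2$ along $D$, and establishing it is the substantive step: since $A_1$ passes through the vertex $v_{1,t}$ of each cone $S_{1,t}\subset A_2$, the section $A_1\cap S_{1,t}$ is a degree-$3$ divisor supported on rulings of $S_{1,t}$; one then shows this support is the single ruling $l_t$ (using that $S_1\cdot S_{1,t}=1$, that cones in the same component share no ruling by Corollary~\xref{cor:disjoint-cones}, hence $S_1\cap S_{1,t}$ is one transversal point by Corollary~\xref{cor:intersection-of-cones}, so $l_t$ is the unique ruling of $S_{1,t}$ meeting $S_1$), whence $A_1\cap S_{1,t}=3l_t$ and $A_1\cap A_2=\bigcup_t 3l_t=3D$. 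Without an argument of this kind, your count $3\cdot 6=18$ has nothing to stand on: if the generic multiplicity were $1$ or $2$, there would be room for further components.

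The second gap is (e), which you do not prove: you reduce it to the claim that the base locus $\bigcap_{t}A_{i,t}$ avoids the orbits $\Gamma_1$, $\Gamma_2$ and $D\setminus(\Gamma_1\cup\Gamma_2)$, and then explicitly defer that verification as "the main obstacle". That verification is exactly where the missing idea lies, and it is not a computation on $W$ or $\tilde W$ but another run of the cone calculus: an uncovered point lies in $B:=D\cap\bigcap_{i,t}A_{i,t}$, which is closed and $\Aut^0(V)$-invariant, so by the orbit structure of $D$ one may assume it is some vertex $v_{1,t_1}\in\Gamma_1$; for $t_2\neq t_1$ the cones $S_{1,t_1}$ and $S_{2,t_2}$ are disjoint (their class product vanishes by Proposition~\xref{lem:cohomology} and they share no ruling, so Corollary~\xref{cor:intersection-of-cones} applies), while every line through $v_{1,t_1}$ is a ruling of $S_{1,t_1}$ by Proposition~\xref{lem:a-d}\xref{lem:a-d(d)}; hence no line through $v_{1,t_1}$ meets $S_{2,t_2}$, and by Lemma~\xref{lemma--SS} this means $v_{1,t_1}\notin A_{2,t_2}$, a contradiction. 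A smaller but real omission of the same nature occurs in (a): you assert that $\z(\Aut^0(V))$ acts on each ruling of $D$ with exactly two fixed points, but this nontriviality requires proof; the paper derives it by a rigidity argument (if $\z$ were trivial on one ruling $l_t$, it would be trivial on all of them, hence on every cone $S_{1,t}$, hence on $S_2\subset A_2=\bigcup_t S_{1,t}$, contradicting the homothety action of $\z$ on $V\setminus A_1\cong\CC^4$). So while your global strategy coincides with the paper's, the proposal as written fails at the two pivotal points (f) and (e).
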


\begin{proof} 
\xref{prop:vertices-b} 
Since $S_1\cap S_2=\emptyset$ one has $v(S_j)\notin A_i$ for $j\neq i$. This 
yields the first assertion. The cones $S_1$ and $S_2$ being disjoint (see Lemma 
\ref{lem:fixed-pt-unique}) also the curves $\Gamma_1$ and $\Gamma_2$ are. 
Since $\Gamma_i\subset S_j\subset A_j$ one has $\Gamma_i\subset A_i\cap A_j$, $i=1,2$.

The 
factor $\ZZ/2\ZZ$ of $\Aut(V)$ in 
Theorem~\xref{thm:main-aut-n}~\xref{thm:main-aut-n-GL2} is generated by an 
involution $\tau\in\Aut(V)\setminus\Aut^0(V)$ interchanging $S_1$ and $S_2$. 
Then $\tau$ switches also $\Gamma_1$ and $\Gamma_2$. Since both $A_i$ and $S_j$ are $\Aut^0(V)$-invariant then $\Gamma_i=A_i\cap S_j$ is. 
The curve 
$\Gamma_i$ is an orbit of $\Aut^0(V)$. It is pointwise fixed under the $\z(\Aut^0(V))$-action, and $\Gamma_1\cup\Gamma_2$ is an orbit of $\Aut(V)$. 

\xref{prop:vertices-c} Since $S_{j,t}$ and $S_i$ contain a common ruling one has $v(S_{j,t})\in 
S_i$, and so, $S_{j,t}\subset A_i$ (recall that $A_i$ is the union of lines in $V$ 
meeting $S_i$, see Lemma \ref{lemma--SS}). The union 
$\bigcup_{t\in\PP^1}S_{j,t}\subset A_i$ is a closed subvariety of dimension 3. 
Since $A_i$ is irreducible of dimension $3$ one has 
$A_i=\bigcup_{t\in\PP^1} S_{j,t}$. The set of vertices 
$\{v(S_{j,t})\}_{t\in\PP^1}\subset S_i$ is $\Aut^0(V)$-invariant, hence,
$\{v(S_{j,t})\}$ is a closed one-dimensional $\Aut^0(V)$-orbit in $S_i$. The center $\z(\Aut^0(V))$ acts nontrivially on any ruling $l$ of $S_i$ with just two fixed points, $v(S_i)$ and $\Gamma_i\cap l$. Therefore, $\Gamma_j=S_i\cap A_j$ is the only one-dimensional $\Aut^0(V)$-orbit in $S_i$. 
It follows that $\{v(S_{j,t})\}=\Gamma_j$. 

\xref{prop:vertices-f} 
The conic 
$C=\mathcal{F}_1\cap\mathcal{F}_2\subset\Sigma_{\s}(V)$ parameterizes a family 
of lines $(l_t)$ in $V$, where $l_t\subset S_{1,t}\cap S_{2,t}$. Since 
$v(S_{j,t})\in\Gamma_j\cap l_t$, $j=1,2$, the line $l_t$ meets both $S_1$ and 
$S_2$, hence is contained in $A_1\cap A_2$. Thus, $D=\bigcup_{t\in C} l_t\subset 
A_1\cap A_2$ is a rational normal scroll. 

According to Corollary \ref{cor:finiteness}(b) the exceptional section $\Lambda(S_2)$ of $\mathcal{F}_1$ and a ruling $\Lambda(S_{2, t})$ of $\mathcal{F}_2$ project to two distinct points of $\SSS_2(V)\cong\PP^2$. Since $S_2$ and $S_{2,t}$ are members of the same family $\SSS_2(V)$ one has $S_2\cdot S_{2,t}=1$, see Proposition \ref{lem:cohomology}. Since they have no common ruling, they meet transversally in one point, say, $P_{2,t}$, which is smooth on both $S_2$ and $S_{2, t}$, see Corollary \ref{cor:intersection-of-cones}. Hence there is a unique ruling $l_{2,t}\ni P_{2,t}$ of $S_{2, t}$ which meets $S_2$. By symmetry, there is a unique ruling $l_{1,t}$ of $S_{1, t}$ which meets $S_1$. Notice that the unique common ruling $l_t$ of $S_{1, t}$ and $S_{2, t}$ meets both $S_1$ and 
$S_2$, hence $l_{1,t}=l_{2,t}=l_t\subset D\subset A_1\cap A_2$, see the proof of \xref{prop:vertices-c}. 

The hyperplane section $A_1$ passes through the vertex $v_{1,t}\in\Gamma_1$ of $S_{1,t}\subset A_2$ cutting $S_{1,t}$ along a union of at most 3 rulings. Since $v_{1,t}\in\l_t\subset \Gamma_1=A_1\cap S_2$ each of these rulings meets $S_{2}$. It follows by the preceding that the ruling of $S_{1,t}$ which meets $S_{2}$ is unique and coincides with $l_t$. We conclude that 
$l_t$ is a triple intersection of $S_{1,t}\subset A_2$ and $A_1$. Therefore,
$A_1\cap A_2=3D$, and so, $(A_1\cap A_2)_{\red}=D$, as stated. 
We leave to the 
reader to check the remaining statements of \xref{prop:vertices-f}.

\xref{prop:vertices-a} 
Since $S_1,\, S_2$ are $\Aut^0(V)$-invariant 
their vertices $v(S_1)$ and $v(S_2)$ are two distinct fixed points of $\Aut^0(V)$. 
The hyperplane sections $A_1$ and $A_2$ are 
$\Aut^0(V)$-invariant. For $i=1,2$ the center $\z(\Aut^0(V))$ acts on $V\setminus A_i\cong\mathbb{C}^4$ via homotheties with a unique fixed point $v(S_i)$. According to \xref{prop:vertices-b} and \xref{prop:vertices-c}, $\z(\Aut^0(V))$ fixes also the vertex $v_{j,t}$ of each cone $S_{j,t}$ leaving the cone invariant.

We claim that $\z(\Aut^0(V))$ acts nontrivially on each ruling $l_t$ of $D$. 
Indeed, suppose to the contrary that $\z(\Aut^0(V))$ acts identically on $l_t$. 
The latter is true for any $t\in\PP^1$ due to the rigidity of the reductive 
group actions. Since $l_t$ is a ruling of the cone $S_{1,t}$, then also 
$\z(\Aut^0(V))$ acts identically on any ruling of $S_{1,t}$. Hence it acts 
identically on the cone $S_{1,t}$ for any $t\in\PP^1$. By (b) one has $A_2 = 
\bigcup_{t\in\PP^1} S_{1,t}$. It follows that $\z(\Aut^0(V))$ acts identically 
on $S_2\subset A_2$. However, $\z(\Aut^0(V))$ acts via homotheties in 
$V\setminus A_1\cong \CC^4$ with the unique fixed point $v_2=v(S_2)$. This gives 
a contradiction.

Hence each ruling $l_t$ of $D$ is $\z(\Aut^0(V))$-invariant and contains just 
two $\z(\Aut^0(V))$-fixed points $l_t\cap (\Gamma_1\cup\Gamma_2)$. Now the 
absence of fixed points of $\Aut^0(V)\cong \GL_2(\CC)$ both in $D$ and in 
$V\setminus (D \cup \{v(S_1),\, v(S_2)\})$ follows. The second assertion is now 
straightforward. 

\xref{prop:vertices-g} is also straightforward by virtue of~\xref{prop:vertices-c} 
and Lemma~\xref{lem:cubic-cones}.

\xref{prop:vertices-e} Notice first of all that any affine chart in 
\eqref{eq:affine-charts} 
is isomorphic indeed to $\CC^4$ by Corollary \xref{rem:interrompu}. Suppose to 
the contrary that there is a point $P\in V$ which is not 
covered by any of the affine charts in \eqref{eq:affine-charts}. Thus, all the 
hyperplane sections $A_i$, $A_{i,t}$ pass through $P$. In particular, $P\in 
D=(A_1\cap A_2)_{\rm red}$, see \xref{prop:vertices-f}, and so, $$P\in 
B:=D\cap\bigcap_{i=1,2, \,t\in\PP^1} A_{i,t}\,.$$
Let us show that $B$ should contain a point of $\Gamma_1\cup\Gamma_2$. Indeed, 
$D\setminus (\Gamma_1\cup\Gamma_2)$ is an orbit of $\Aut^0(V)$, see the proof of 
\xref{prop:vertices-a}. If this orbit contains a point $P\in B$ then $B$, being 
$\Aut^0(V)$-invariant, contains the whole orbit $D\setminus 
(\Gamma_1\cup\Gamma_2)$. Thus, also $B\supset D$ since $B$ is closed. 

Therefore, one may assume that, say, $P\in\Gamma_1\subset B$. 
By \xref{prop:vertices-c} the curve $\Gamma_1$ is filled in by the vertices of 
the cones $S_{1,t}$. We claim that for any $t_1\neq t_2$ the cones 
$S_{1,t_1}\in\SSS_1(V)$ and $S_{2,t_2}\in\SSS_2(V)$ are disjoint. Indeed, by 
Proposition \ref{lem:cohomology} one has $S_{1,t_1}\cdot S_{2,t_2}=0$. Since 
these cones have no common ruling, our claim follows by Corollary 
\ref{cor:intersection-of-cones}.

Let us show that the vertex $v_{1,t_1}$ of $S_{1,t_1}$ cannot lie on $A_{2,t_2}$, 
hence also in $B$, which gives a desired contradiction. Indeed, otherwise there 
is a line through $v_{1,t_1}$ meeting $S_{2,t_2}$. The only lines through 
$v_{1,t_1}$ are the rulings of $S_{1,t_1}$. However, as we have seen, the latter 
rulings are dijoint with $S_{2,t_2}$. 
\end{proof}

\section{Flexibility of affine cones over Fano-Mukai fourfolds $V_{18}$}
\label{sec:aut-aff-cones}

\begin{mdefinition}
\label{def:aut-aff-cones}
An affine variety $X$ of dimension at least $2$ is called \emph{flexible} (in 
the sense of (\cite{AFKKZ}) if the subgroup 
$\operatorname{SAut}(X)\subset\Aut(X)$ generated by all the unipotent algebraic 
subgroups of $\Aut(X)$ acts transitively on the smooth locus 
$X_{\operatorname{reg}}$. We say that $X$ is \emph{flexible in codimension one} 
if $\operatorname{SAut}(X)$ admits an open orbit $O_X$ whose complement has 
codimension at least $2$ in $X$. In the latter case $\operatorname{SAut}(X)$ 
acts $m$-transitively on $O_X$ for any natural $m$ (\cite[Theorem~2.2]{AFKKZ}).

Actions of unipotent groups on
affine cones over Fano varieties was recently a subject of intensive studies.
Several families of smooth Fano fourfolds were examined from this viewpoint;
see, e.g.,~\cite{Prokhorov-Zaidenberg-2015},~\cite{Prokhorov-Zaidenberg-4-Fano}
and the references therein. Using a criterion in
\cite{Kishimoto-Prokhorov-Zaidenberg-criterion}, one can deduce from
\cite{Prokhorov-Zaidenberg-2015} and Theorem~\xref{thm:main} such a corollary.
\end{mdefinition}

\begin{scorollary}
Let $V=V_{18}\subset\PP^{12}$ be a Fano-Mukai fourfold of genus $10$, and let
$A$ be a hyperplane section of $V$ containing either a smooth cubic scroll, or a
cubic cone. Then $V\setminus A$ contains a principal open cylinder $U\cong
Z\times\CC$, where $\dim Z=3$. Consequently, any affine cone $X=\Cone_H(V)$,
where $H$ is an ample polarization of $V$, admits an effective
$\mathbb{G}_{\aaa}$-action.
\end{scorollary}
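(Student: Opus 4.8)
The final statement to be proved is the corollary asserting that for any Fano-Mukai fourfold $V=V_{18}$ of genus $10$ with a hyperplane section $A$ containing a cubic scroll or a cubic cone, the complement $V\setminus A$ contains a principal open cylinder $U\cong Z\times\CC$ with $\dim Z=3$, whence any affine cone $X=\Cone_H(V)$ admits an effective $\Ga$-action. The strategy is to reduce the assertion to the known results of~\cite{Prokhorov-Zaidenberg-2015} on cylinders and to apply the transfer criterion of~\cite{Kishimoto-Prokhorov-Zaidenberg-criterion} relating cylinders in a polarized variety to $\Ga$-actions on its affine cone. First I would invoke the Sarkisov link~\eqref{diagram-2} associated, via Proposition~\xref{prop:reversion}, to the pair $(V,S)$, where $S\subset V$ is the given cubic scroll or cubic cone. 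This link realizes $\theta:V\dashrightarrow W$ as the linear projection with center $\langle S\rangle\cong\PP^4$ and sends the divisor $A=A_S$ with $\Sing(A_S)=S$ to a quintic scroll $F\subset W$.

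\textbf{Key steps.} The second step is to identify the open complement $V\setminus A$ with a corresponding open subset of $W$ and to locate a cylinder therein. When $S$ is a cubic cone, Corollary~\xref{rem:interrompu} already furnishes isomorphisms $V\setminus A\cong\tilde W\setminus(\tilde A\cup\tilde R)\cong W\setminus R\cong\CC^4$, and $\CC^4$ itself is trivially a principal cylinder $Z\times\CC$ with $Z\cong\CC^3$; this handles the cone case at once. The substantive case is the smooth cubic scroll, where $V\setminus A$ need not be affine space. Here I would extract the cylinder from the structure of the quintic scroll $F\subset W$ under $\theta$: the divisor $\tilde A$ is a $\PP^1$-bundle $\eta|_{\tilde A}:\tilde A\to F$ over $F$ (a smooth rational scroll), and the rulings of this bundle descend to lines in $A$ meeting $S$, as described in Lemma~\xref{lemma--SS}. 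The complement $V\setminus A$ then carries a fibration whose generic fibers are affine lines, and following the construction in~\cite{Prokhorov-Zaidenberg-2015} I would exhibit a principal Zariski-open subset $U\subset V\setminus A$ of the form $Z\times\CC$ with $\dim Z=3$; the ``principal'' part means $U$ is the complement of the zero set of a single regular function, which is what the cone criterion requires.

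\textbf{Applying the criterion.} The third step is to feed the cylinder $U\cong Z\times\CC\subset V\setminus A$ into the criterion of~\cite{Kishimoto-Prokhorov-Zaidenberg-criterion}: a polarized projective variety $(V,H)$ whose complement of a suitable (ample) section contains a principal cylinder admits an effective $\Ga$-action on the affine cone $\Cone_H(V)$. Because the cylinder lies in the complement of the hyperplane section $A$ and $A$ is cut out by an element of $|H|$ (indeed $A$ is a hyperplane section of $V\subset\PP^{12}$), the polarization hypothesis is met for the natural polarization, and by the standard rescaling argument the conclusion extends to an arbitrary ample polarization $H$. The effective $\Ga$-action on $X=\Cone_H(V)$ follows.

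\textbf{Main obstacle.} The routine reductions (the cone case, the cone criterion) are immediate from earlier results; the genuine work is producing the \emph{principal} cylinder $U\cong Z\times\CC$ in the smooth-scroll case and checking that its complement is cut out by a single section in the right linear system. The hard part will be verifying that the affine-line fibration arising from the $\PP^1$-bundle $\eta|_{\tilde A}:\tilde A\to F$ yields a \emph{trivial} cylinder on a principal open set rather than merely an $\mathbb{A}^1$-fibration, and that the base $Z$ has the correct dimension $3$; this is precisely the content borrowed from~\cite{Prokhorov-Zaidenberg-2015}, so I would cite the relevant cylinder construction there and confine the new input to checking that Theorem~\xref{thm:main} guarantees the existence of the requisite cubic cone (and hence the $\CC^4$-cylinder) on every member of the family, which is what makes the corollary uniform across all $V_{18}$.
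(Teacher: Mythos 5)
Your proposal is correct and takes essentially the same route as the paper: the paper's own justification is exactly this one-line deduction, combining the criterion of \cite{Kishimoto-Prokhorov-Zaidenberg-criterion} with the cylinder construction of \cite{Prokhorov-Zaidenberg-2015} in the smooth-scroll case and with Theorem~\xref{thm:main} (via Corollary~\xref{rem:interrompu}, giving $V\setminus A_S\cong\CC^4$) in the cubic-cone case, the polarity hypothesis being automatic since $\Pic(V)\cong\ZZ$. The only caveat, shared with the paper's own citation-style argument, is that you silently work with the distinguished section $A=A_S$ satisfying $\Sing(A_S)=S$ rather than an arbitrary hyperplane section merely containing $S$.
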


Flexibility of affine cones over Fano varieties was studied recently 
in~\cite{Michalek-Perepechko-Suss-2016}. Theorem~1.4 and Remark 1.2 in [\emph{ibid}] 
lead to the following criterion.

\begin{proposition}
\label{lem:flexibility-of-cones}
Let $V$ be a projective variety with a very ample polarization $H$. Suppose that 
$V$ contains a Zariski open subset $U$ whose complement $V\setminus U$ has 
codimension at least $2$ in $V$ and such that $U=\bigcup_\alpha U_\alpha$ is 
covered by a collection $(U_\alpha)$ of smooth, flexible principal open subsets 
$U_\alpha=V\setminus H_\alpha$ where $H_\alpha\in |H|$. Then the corresponding 
affine cone $X=\Cone _H(V)$ is flexible in codimension one. More precisely, the 
pullback of $U$ in $X$ is contained in the open orbit of 
$\operatorname{SAut}(X)$. In particular, if $U=V$ then $X$ is flexible. 
\end{proposition}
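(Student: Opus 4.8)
The plan is to argue chart by chart, deducing transitivity on the preimage of each $U_\alpha$ from the flexibility criterion of \cite{Michalek-Perepechko-Suss-2016}, and then to glue the resulting orbits and bound codimensions; the only nonroutine ingredient is the cited theorem itself, everything else being bookkeeping.

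First I would set up the geometry of the cone. Since $H$ is very ample, $X=\Cone_H(V)$ carries the natural grading $\Gm$-action by scaling, with the vertex $o\in X$ as its unique fixed point, and on the punctured cone the projection $\pi\colon X\setminus\{o\}\to V$ is a $\Gm$-torsor of total dimension $\dim V+1=\dim X$. For each $\alpha$ the section cutting out $H_\alpha$ is a homogeneous coordinate $f_\alpha$ of degree one on $X$, and localizing the homogeneous coordinate ring at $f_\alpha$ identifies the preimage $\pi^{-1}(U_\alpha)=X\setminus\{f_\alpha=0\}$ with the cylinder $U_\alpha\times\Gm$ over the smooth affine variety $U_\alpha=V\setminus H_\alpha$ (note $U_\alpha$ is affine as the complement of an ample divisor).

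Next, for a fixed $\alpha$ I would invoke \cite[Thm.~1.4]{Michalek-Perepechko-Suss-2016} together with \cite[Rem.~1.2]{Michalek-Perepechko-Suss-2016}. Its hypotheses hold: $U_\alpha$ is a smooth principal affine open subset cut out by a member of $|H|$, and by assumption it is flexible, so $\operatorname{SAut}(U_\alpha)$ acts transitively on $U_\alpha=(U_\alpha)_{\operatorname{reg}}$. The theorem then produces one-parameter unipotent subgroups of $\operatorname{SAut}(X)$ whose joint action is transitive on the whole preimage $\pi^{-1}(U_\alpha)\cong U_\alpha\times\Gm$; in particular $\pi^{-1}(U_\alpha)$ lies in a single $\operatorname{SAut}(X)$-orbit. \emph{This is the only genuinely nontrivial input, and I expect it to be the main obstacle:} beyond lifting the horizontal unipotent actions from the base $U_\alpha$ to $X$, one must also be able to move along the $\Gm$-fibre of $\pi$, and it is exactly this combination of vertical displacement inside the cone with horizontal flexibility of the base that \cite{Michalek-Perepechko-Suss-2016} supplies. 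Our task is just to verify that this machinery applies to each $U_\alpha$.

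Finally I would glue and count. Since $V$ is irreducible, any two of the open sets $U_\alpha,U_\beta$ meet, hence so do $\pi^{-1}(U_\alpha)$ and $\pi^{-1}(U_\beta)$; as each lies in a single orbit and they overlap, all of them lie in one common orbit $O_X$. Ranging over $\alpha$ gives $\pi^{-1}(U)=\bigcup_\alpha\pi^{-1}(U_\alpha)\subseteq O_X$, which is the ``more precisely'' assertion. The complement of $O_X$ in $X$ is contained in $\pi^{-1}(V\setminus U)\cup\{o\}$; since $\codim_V(V\setminus U)\ge 2$ and $\pi$ is a $\Gm$-torsor, $\pi^{-1}(V\setminus U)$ has codimension at least $2$ in $X\setminus\{o\}$, hence in $X$, while $\{o\}$ has codimension $\dim X\ge 2$. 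Thus $O_X$ is the open orbit and its complement has codimension at least two, so $X$ is flexible in codimension one. If moreover $U=V$, then $\pi^{-1}(U)=X\setminus\{o\}$, which is exactly the smooth locus $X_{\operatorname{reg}}$ (the $U_\alpha$ being smooth and covering $V$, the cone is smooth away from the vertex), and it is a single $\operatorname{SAut}(X)$-orbit; hence $X$ is flexible.
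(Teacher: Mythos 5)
Your proof is correct and takes essentially the same route as the paper: the paper states this proposition as a direct consequence of Theorem~1.4 and Remark~1.2 of \cite{Michalek-Perepechko-Suss-2016}, which is precisely the nontrivial input you isolate, with the chart-by-chart lifting, orbit gluing via irreducibility, and the codimension count on $\pi^{-1}(V\setminus U)\cup\{o\}$ being the routine bookkeeping the paper leaves implicit.
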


Applying this criterion in the setting of Theorem~\xref{thm:main} one obtains the 
following result.

\begin{theorem}
\label{thm:flexible-cones}
The affine cone over any polarized Fano-Mukai fourfold $V=V_{18}$ of genus $10$ 
is flexible in codimension one. For $V=V^{\mathrm s}_{18}$ this cone is 
flexible. 
\end{theorem}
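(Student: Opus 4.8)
The plan is to apply the flexibility criterion of Proposition~\xref{lem:flexibility-of-cones} to the polarization $H$ of $V=V_{18}$, so the whole task reduces to producing, for a suitable Zariski open subset $U\subset V$ whose complement has codimension $\ge 2$, an explicit covering of $U$ by flexible principal affine charts $U_\alpha=V\setminus H_\alpha$ with $H_\alpha\in|H|$. The natural candidates for such charts are exactly the complements $V\setminus A$ of the $\Aut^0(V)$-invariant hyperplane sections $A=A_S$ coming from cubic cones $S\subset V$: by Corollary~\xref{rem:interrompu} each such $A_S$ is a genuine hyperplane section with $V\setminus A_S\cong\CC^4$, and $\CC^4$ is flexible (in fact its only nonconstant automorphisms already generate a transitive $\operatorname{SAut}$-action). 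So each $U_\alpha=V\setminus A_S$ is a smooth flexible principal open subset in the sense required.

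For the first assertion (flexibility in codimension one for an arbitrary $V_{18}$), I would take $U=\bigcup_S (V\setminus A_S)$, the union running over all cubic cones $S$ contained in $V$. The key point is that $V$ always contains at least two cubic cones $S_1\in\SSS_1(V)$ and $S_2\in\SSS_2(V)$ from the two distinct components of $\SSS(V)$, by Proposition~\xref{lem:fixed-pt}. The complement $V\setminus U=\bigcap_S A_S$ is then contained in $A_{S_1}\cap A_{S_2}$, the intersection of two distinct hyperplane sections, hence has codimension $\ge 2$ in $V$ (indeed $A_{S_1}\ne A_{S_2}$ since the cones lie in different components of the Hilbert scheme, so their singular loci differ). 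By Proposition~\xref{lem:flexibility-of-cones} the pullback of $U$ lies in the open orbit of $\operatorname{SAut}(X)$ and its complement has codimension $\ge 2$, giving flexibility in codimension one.

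For the sharper statement when $V=V^{\s}_{18}$, the goal is to take $U=V$ itself, so that the criterion yields genuine flexibility. Here I would invoke Theorem~\xref{thm:main-aut}\xref{prop:vertices-e}: the fourfold $V^{\s}_{18}$ is covered by the affine charts $U_i=V\setminus A_i\cong\CC^4$ and $U_{i,t}=V\setminus A_{i,t}\cong\CC^4$ for $i=1,2$ and $t\in\PP^1$, where each $A_i$ and each $A_{i,t}$ is a hyperplane section arising from an invariant cubic cone. Thus $V=\bigcup U_\alpha$ with every $U_\alpha$ a flexible principal chart $V\setminus H_\alpha$, $H_\alpha\in|H|$, and the ``$U=V$'' clause of Proposition~\xref{lem:flexibility-of-cones} gives flexibility of the cone $X=\Cone_H(V^{\s}_{18})$ outright.

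The main obstacle — and the only substantive geometric input — is establishing that the cubic-cone charts actually cover the whole of $V^{\s}_{18}$, i.e.\ Theorem~\xref{thm:main-aut}\xref{prop:vertices-e}, since for a general $V_{18}$ there are only finitely many cubic cones and their charts cannot cover everything (the exceptional locus $\bigcap A_S$ is nonempty of codimension one in general position, whence only codimension-one flexibility). The extra one-parameter families of cones available on $V^{\s}_{18}$, supplied by the $\GL_2(\CC)$-action and recorded in Corollary~\xref{cor:finiteness}\xref{cor:finiteness-b} and Theorem~\xref{thm:main-aut}, are precisely what let the charts $U_{i,t}$ sweep out the residual locus $D=(A_1\cap A_2)_{\red}$ and thereby fill in all of $V$. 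Once this covering is in hand, the application of the criterion is purely formal, so I expect no further difficulty beyond citing Proposition~\xref{lem:flexibility-of-cones} correctly.
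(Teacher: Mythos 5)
Your proposal is correct and follows essentially the same route as the paper's own proof: the same criterion (Proposition~\xref{lem:flexibility-of-cones}) applied to the flexible charts $V\setminus A_S\cong\CC^4$ attached to cubic cones, with the complement of two such charts for cones in different components of $\SSS(V)$ having codimension $2$, and the same appeal to Theorem~\xref{thm:main-aut}\xref{prop:vertices-e} to cover all of $V^{\s}_{18}$ by such charts. The only detail the paper adds that you omit is the remark that $\Pic(V)\cong\ZZ$, which is what makes the charts principal with respect to an \emph{arbitrary} polarization $H$ (if $H=mL$ with $L$ the hyperplane class, one replaces $A_S\in|L|$ by $mA_S\in|H|$, which cuts out the same open set).
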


\begin{proof}
Let $A_i$ be the hyperplane sections of $V$ with $\Sing(A_i)=S_i$, 
$i=1,2$, where $S_1, S_2$ are two distinct cubic cones in $V$, see 
Theorem~\xref{thm:main}. Then the principal open subsets $U_i:=V\setminus 
A_i\cong\CC^4$, $i=1,2$, are flexible, and $\codim_V (V\setminus (U_1\cup 
U_2))=\codim_V (A_1\cap A_2)=2$. Since $\Pic(V)\cong\ZZ$ the criterion of 
Proposition~\xref{lem:flexibility-of-cones} applies and gives the result.

If $V=V^{\mathrm s}_{18}$ then by Theorem \ref{thm:main-aut}\xref{prop:vertices-e}, $V$ is 
covered by the affine charts isomorphic to $\CC^4$. Once again, the flexibility of the cone 
$\Cone _H(V)$ follows from Proposition~\xref{lem:flexibility-of-cones}.
\end{proof}

\section{Final remarks and open questions}
\label{sec:open-problems}

\begin{mdefinition}
\label{sit:citations}
Given a simple affine algebraic group $G$ over $\CC$ of adjoint type and a 
parabolic subgroup $P\subset G$, consider the flag variety $G/P$. It is known 
(see~\cite{Demazure1977},~\cite[Theorem~2 in \S~3.3 and the subsequent 
remark]{Akhiezer1995}) that $\Aut^0(G/P)\cong G$ except in certain three cases. 
In the exceptional cases $G':=\Aut^0(G/P)$ is again a simple affine algebraic group of adjoint type, 
and $G/P=G'/P'$ for a parabolic subgroup $P'\subset G'$.
In particular, for $G=\G$ and $G/P=\Omega$ as in Section \ref{appendix} one has $\Aut^0(G/P)\cong\G$. A 
similar phenomenon might occur as well for smooth hyperplane sections of adjoint 
varieties.
\end{mdefinition}

\begin{problem}
\label{prob:stabilizers} 
Consider a flag variety $G/P$, where $G$ is a semisimple affine algebraic 
group with trivial center and $P\subset G$ is a parabolic subgroup. Choose $G$ 
and $P$ suitable so that $\Aut^0(G/P)\cong G$. Let $\iota\colon 
G/P\hookrightarrow\PP^n$ be a $G$-equivariant embedding with linearly 
nondegenerate image. Since $\Pic(G/P)$ is discrete, one may identify 
$G\cong\Aut^0(G/P)$ with $\Aut^0(\PP^n,\iota(G/P))$.
We wonder as to when for any smooth hyperplane section $H$ of $\iota(G/P)$ one 
has $\Aut^0(H)=\Stab^0_G(H)$.
\end{problem}

Theorem~\xref{thm:G/P} says that this is indeed the case for $G=\G$ and for the 
adjoint orbit $\Omega=\G/P$. By \cite[Lem.\ 8.2(i)]{Fu-Hwang2016} this is 
the case for any irreducible hermitian symmetric space $G/P$ of compact type, 
provided the embedding $\iota\colon G/P\hookrightarrow\PP^n$ is given by the 
ample generator of the Picard group $\Pic(G/P)\cong\ZZ$. See \cite[Prop.\ 8.4 
and Thm.\ 8.5]{Fu-Hwang2016} for concrete examples. 

One can ask the same question more generally for the smooth linear sections $L$ 
of $\iota(G/P)$ provided the Picard group of $L$ is isomorphic to $\ZZ$. The 
answer is known to be affirmative for \emph{general} linear sections of codimension $l \le N - 2$ 
and $l=3$ in $N=4$ of the 
Grassmannians of lines in $\PP^N$, $N\ge 4$, see \cite[Thm.\ 
1.2, Cor.\ 1.3 and its proof]{Piontkowski-Van-de-Ven-1999}. 

\begin{question}
It is known that for any compactification $(V,A)$ of $\CC^3$ with $\bb_2(V)=1$ 
the middle Betti number $\bb_3(V)$ vanishes. In all known examples in dimension 
$4$, that is, for $\PP^4$, $Q^4\subset\PP^5$, the examples in 
\cite{Prokhorov1994}, and the ones in Theorem~\xref{thm:main}, the middle Betti number satisfies the inequality 
$\bb_4(V)\le 2$. \emph{We wonder whether this 
inequality still holds for any compactification $(V,A)$ of $\CC^4$ with 
$\bb_2(V)=1$.}
\end{question}

The next problem arises naturally regarding Theorem~\xref{thm:main-aut-n}, 
cf.\ Remark~\xref{lem:Aut-sing-DP}.

\begin{problem}
Describe explicitly the involution acting on $V=V^{\s}_{18}$ 
\textup{(}$V=V^{\aaa}_{18}$, respectively\textup{)} and interchanging the 
$\Aut^0(V)$-invariant cubic cones $S_1,\, S_2\subset V$. Determine the discrete 
groups $\Aut(V)/\Aut^0(V)$ for the Fano-Mukai fourfolds $V\not\cong 
V^{\s}_{18}, 
V^{\aaa}_{18}$ of genus $10$. 
\end{problem}

\begin{remark}
The group $\Aut(V^{\aaa}_{18})\cong (\Ga\times\Gm)\rtimes(\ZZ/2\ZZ)$ being 
non-abelian, the involution of $V^{\aaa}_{18}$ interchanging the 
$\Aut^0(V)$-invariant cubic cones $S_1,\, S_2$ does not admit an extension to an 
element of $\G$ acting on $\Omega$, see Propositions 
\ref{prop:regular-centralizers}(b) and \ref{prop:orbits}(b). However, we ignore 
if it can be extended to an automorphism of $\Omega$.
\end{remark}

\begin{problem}
Which Fano-Mukai fourfolds $V_{18}$ of genus $10$ admit a
K\"ahler-Einstein metric \textup{(\cite{Tian-2015})}? 
\emph{Notice that 
the group $\Aut^0 V^{\aaa}_{18}\cong\Ga\times\Gm$ is not reductive, 
see Theorem~\xref{thm:main-aut-n}\xref{thm:main-aut-n-Ga}. Hence, 
according to Matsushima's theorem \textup{(\cite{Matsushima-1957})},
the variety $V^{\aaa}_{18}$ does not admit such a metric.}
\end{problem}

\begin{problem}
Study \emph{singular} Fano-Mukai fourfolds of type $V_{18}$ 
\textup(cf.~\cite{Prokhorov-planes},~\cite{Prokhorov-v22}\textup).
\end{problem}

\par\medskip\noindent
\textbf{Acknowledgments.}
The paper started during the first author's stay at the Institute Fourier, 
Grenoble,
in June of 2016. He thanks the institute for its hospitality.
The authors are grateful to Alexander Kuznetsov for useful
discussions, to Michel Brion, Jun-Muk Hwang, and Laurent Manivel for important remarks 
around the material of Sect.~\xref{appendix}, 
and to Ivan Arzhantsev and Alexander Perepechko for a pertinent remark concerning the material of Section \ref{sec:aut-aff-cones}. 
Our thanks are due also to a referee for his remarks improving the style of the paper.


\newcommand{\etalchar}[1]{$^{#1}$}
\def\cprime{$'$}

\end{document}